\author{Amnon Yekutieli}
\title[Nonabelian Multiplicative Integration]{Nonabelian 
Multiplicative Integration on Surfaces}
\address{Department of Mathematics, Ben Gurion University,
Be'er Sheva 84105, Israel}
\email{amyekut@math.bgu.ac.il}
\date{6 March 2015}
\thanks{{\em Mathematics Subject Classification} 2010.
Primary: 22E30; Secondary: 22E60, 57Q15, 57R19, 58A10, 53C08.} 
\keywords{Lie crossed module, nonabelian multiplicative integral, 
nonabelian exponential, piecewise smooth geometry.}
\thanks{{\em Funding}: This research was supported by the Israel Science 
Foundation.\\
\indent {\em Publication}: To appear as a book with World Scientific.}
\newtheorem{thm}[equation]{Theorem}
\newtheorem{cor}[equation]{Corollary}
\newtheorem{prop}[equation]{Proposition}
\newtheorem{lem}[equation]{Lemma}
\theoremstyle{definition}
\newtheorem{dfn}[equation]{Definition}
\newtheorem{rem}[equation]{Remark}
\newtheorem{exa}[equation]{Example}
\newtheorem{conj}[equation]{Conjecture}
\newtheorem{conv}[equation]{Convention}
\numberwithin{equation}{section}
\newcommand{\iso}{\xrightarrow{\simeq}}
\newcommand{\xar}{\xrightarrow}
\newcommand{\opn}{\operatorname}
\newcommand{\bdot}{\bsym{\cdot}}
\newcommand{\ol}{\overline}
\newcommand{\rmitem}[1]{\item[\text{\textup{(#1)}}]}
\newcommand{\mfrak}[1]{\mathfrak{#1}}
\newcommand{\mcal}[1]{\mathcal{#1}}
\newcommand{\mbf}[1]{\mathbf{#1}}
\newcommand{\mrm}[1]{\mathrm{#1}}
\newcommand{\mbb}[1]{\mathbb{#1}}
\newcommand{\smfrac}[2]{{\textstyle \frac{#1}{#2}}}
\newcommand{\tup}[1]{\textup{#1}}
\newcommand{\bsym}[1]{\boldsymbol{#1}}
\newcommand{\boplus}{\bigoplus\nolimits}
\newcommand{\bosum}{\sum\nolimits}
\newcommand{\boprod}{\prod\nolimits}
\newcommand{\til}[1]{\tilde{#1}}
\newcommand{\norm}[1]{\lVert #1 \rVert}
\newcommand{\Norm}[1]{\bigl\lVert\, #1 \, \bigr\rVert}
\newcommand{\abs}[1]{\lvert #1 \rvert}
\renewcommand{\vert}{\, | \,}
\newcommand{\K}{\mbb{K}}
\newcommand{\R}{\mbb{R}}
\newcommand{\Z}{\mbb{Z}}
\newcommand{\N}{\mbb{N}}
\newcommand{\g}{\mfrak{g}}
\newcommand{\h}{\mfrak{h}}
\newcommand{\f}{\mfrak{f}}
\newcommand{\bwedge}{{\textstyle \bigwedge}}
\renewcommand{\d}{\mrm{d}}
\newcommand{\lb}{\linebreak}
\begin{document}

% \titlepages % blank pages (i)--(iv) are for the publisher to prepare
% 
% \thispagestyle{plain}
% \mbox{}
% \vspace{15em}
% \begin{center}
% \large \it To FCR\\[2em]
% For making this work possible...
% \end{center}
% 
% 
% \cleardoublepage
% \markboth{Preface}{Preface}
% \begin{preface}
% 
% \bigskip

\begin{abstract}
We construct a {\em $2$-dimensional twisted nonabelian multiplicative 
integral}. This is done in the context of a {\em Lie crossed module}
(an object composed of two Lie groups interacting), and a {\em pointed
manifold}. The integrand is a {\em connection-curvature pair}, that consists of
a Lie algebra valued $1$-form and a Lie algebra valued $2$-form, satisfying a
certain differential equation. The geometric cycle of the integration is a {\em
kite} in the pointed manifold. A kite is made up of a $2$-dimensional simplex
in the manifold, together with a path connecting this simplex to the base
point of the manifold. The multiplicative integral is an element of the second
Lie group in the crossed module. 

We prove several properties of the multiplicative integral. Among them is the
{\em $2$-dimensional nonabelian Stokes Theorem}, which is a generalization of
Schlesinger's Theorem. Our main result is the {\em $3$-dimensional nonabelian
Stokes Theorem}. This is a totally new result. 

The methods we use are: the CBH Theorem for the nonabelian exponential map;
piecewise smooth geometry of polyhedra; and some basic algebraic topology. 

The motivation for this work comes from {\em twisted deformation quantization}
and {\em descent for nonabelian gerbes}. 
Similar questions arise in {\em nonabelian gauge theory}.
\end{abstract}

\maketitle

\tableofcontents

\cleardoublepage

%\markboth{Nonabelian Multiplicative Integration on Surfaces}{}

%% section %%
\setcounter{page}{1}
\renewcommand\thepage{\arabic{page}} % page number style 1, 2, 3, 4 - Arabic
\cleardoublepage
\setcounter{section}{-1}
\section{Introduction}
\numberwithin{equation}{subsection}

\subsection{}
In this paper we establish a theory of {\em twisted nonabelian multiplicative
integration of $2$-forms on surfaces}. 

Let us begin the exposition with a discussion of 
{\em $1$-dimensional nonabelian multiplicative integration}. 
This goes back to work of Volterra in the 19-th
century, and is also known by the names ``path ordered exponential integration''
and ``holonomy of a connection along a path''. See the book \cite{DF} and
the paper \cite{KMR} for history and various properties. 

In our setup the $1$-dimensional multiplicative integral looks like this. 
Let $X$ be an $n$-dimensional manifold (by which we mean a differentiable
manifold with corners), and let $G$ be a Lie group with Lie algebra $\g$, 
all over the field  $\R$. We denote by
\[ \Omega(X) = \boplus_{p = 0}^n \ \Omega^p(X) \]
the de Rham algebra of $X$ (i.e.\ the algebra of smooth differential forms).
By smooth path in $X$ we mean a smooth map 
$\sigma: \bsym{\Delta}^1 \to X$, where $\bsym{\Delta}^1$ is the $1$-dimensional
simplex. Let
\[ \alpha \in  \Omega^1(X) \otimes \g . \]
The {\em multiplicative integral of $\alpha$ on $\sigma$} is an element
\begin{equation} \label{eqn:163}
\opn{MI}(\alpha \vert \sigma) \in G , 
\end{equation}
obtained as the limit of {\em Riemann products}.
This operation is re-worked and extended in Section \ref{sec:dim1} of our
paper.

\subsection{}
For reasons explained in Subsection \ref{subsec:motiv} of the Introduction, we
found it necessary to devise a theory of {\em $2$-dimensional nonabelian
multiplicative integration}. Our work was guided by the problem at
hand, plus ideas borrowed from the papers \cite{BM, BS, Ko}. 

Instead of a single Lie group, the $2$-dimensional operation involves a pair
$(G, H)$ of Lie groups, with a certain interaction between them. 
This structure is called a {\em Lie crossed module}.
A Lie crossed module
\begin{equation} \label{eqn:230}
\mbf{C} = (G, H, \Psi, \Phi)
\end{equation}
consists, in addition to the Lie groups $H$ and $G$, of an analytic action
$\Psi$ of $G$ on $H$ by automorphisms of Lie groups,
which we call the {\em twisting}; and of a map of Lie groups
$\Phi : H \to G$, called the {\em feedback}. The conditions are that $\Phi$ is
$G$-equivariant (with respect to $\Psi$ and to the conjugation action
$\opn{Ad}_G$ of $G$ on itself); and 
\begin{equation} \label{eqn:165}
\Psi \circ \Phi = \opn{Ad}_H .
\end{equation} 
See  \cite{BM, BS}.
The integrand is now a pair $(\alpha, \beta)$, with 
\begin{equation} \label{eqn:162}
\alpha \in \Omega^1(X) \otimes \g \ \text{ and } \
\beta \in  \Omega^2(X) \otimes \h . 
\end{equation}
Here $\h$ is the Lie algebra of $H$.

Let $x_0$ be a point in $X$, so the pair $(X, x_0)$ is a pointed manifold.
The geometric data (the cycle) for the multiplicative integration is a {\em
kite} $(\sigma, \tau)$ in $(X, x_0)$. 
Let us denote by $v_0, \ldots, v_p$ the vertices of the 
$p$-dimensional simplex $\bsym{\Delta}^p$. By definition a {\em smooth
triangular kite} $(\sigma, \tau)$ in $(X, x_0)$ consists of smooth maps
$\sigma : \bsym{\Delta}^1 \to X$ and
$\tau : \bsym{\Delta}^2 \to X$, such that
$\sigma(v_0) = x_0$ and
$\sigma(v_1) = \tau(v_0)$. 
See Figure \ref{fig:80} for an illustration.

\begin{figure} 
\includegraphics[scale=0.27]{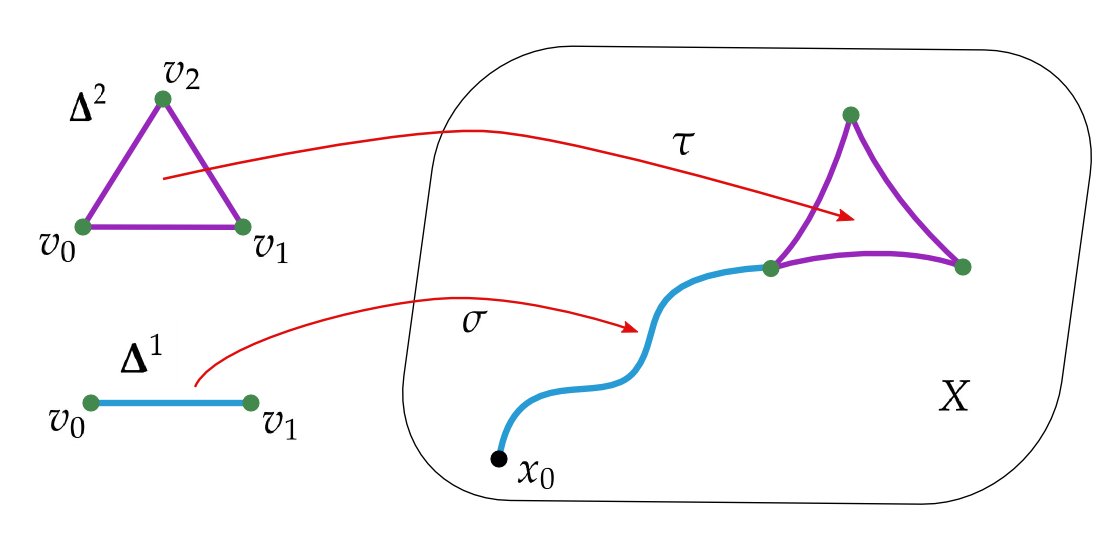}
\caption{A smooth triangular kite $(\sigma, \tau)$ in the pointed manifold 
$(X, x_0)$.} 
\label{fig:80}
\end{figure}

The next theorem summarizes our construction. It encapsulates many results
scattered throughout the paper.

\begin{thm}[Existence of MI on Triangles] \label{thm:18}
Let 
$\mbf{C} = (G, H, \Psi, \Phi)$
be a Lie crossed module, $(X, x_0)$ a pointed manifold, and
$(\alpha, \beta)$ a pair of differential forms as in \tup{(\ref{eqn:162})}.
Given any smooth triangular kite
$(\sigma, \tau)$ in $(X, x_0)$, there is an element
\[ \opn{MI} (\alpha, \beta \vert \sigma, \tau) \in H  \]
called the {\em twisted multiplicative integral of $(\alpha, \beta)$ on 
$(\sigma, \tau)$}.

The operation $\opn{MI}(-,-)$ enjoys these properties:
\begin{enumerate}
\rmitem{a} The element 
$\opn{MI} (\alpha, \beta \vert \sigma, \tau)$
has an explicit formula as the limit of Riemann products.
\rmitem{b} The operation $\opn{MI}(-,-)$ is functorial in 
$\mbf{C}$ and $(X, x_0)$.
\rmitem{c} If $H$ is abelian and $G$ is trivial, then 
\[ \opn{MI} (\alpha, \beta \vert \sigma, \tau) = 
\opn{exp}_H  \bigl( \int_{\tau} \beta \bigr) . \]
\end{enumerate}
\end{thm}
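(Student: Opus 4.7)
The plan is to build $\operatorname{MI}(\alpha, \beta \vert \sigma, \tau)$ as a limit of $2$-dimensional Riemann products, in close analogy to the $1$-dimensional construction recalled in (\ref{eqn:163}), but with the extra bookkeeping needed to handle the twisting $\Psi$ and the attachment path $\sigma$. First I would fix a sequence of triangulations of $\bsym{\Delta}^2$ whose mesh tends to zero; each triangulation gives a family of small triangles $\tau_i = \tau \circ \iota_i$, together with a canonical path in $\bsym{\Delta}^2$ from $v_0$ to the first vertex of $\iota_i$, which when composed with $\sigma$ yields a path $\sigma_i$ from $x_0$ to $\tau_i(v_0)$. The Riemann factor attached to the $i$-th small triangle would be the twisted element
\[ h_i \;:=\; \Psi\bigl( \opn{MI}(\alpha \vert \sigma_i) \bigr) \bigl( \opn{exp}_H(\beta_i) \bigr) \;\in\; H , \]
where $\beta_i \in \h$ is the value of $\beta$ evaluated on a representative $2$-vector of $\tau_i$ (so that $\opn{exp}_H(\beta_i)$ is the naive contribution of $\beta$ on that piece). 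The Riemann product is then the ordered product $\prod_i h_i$, where the ordering is the lexicographic order on the triangulation, and $\opn{MI}(\alpha, \beta \vert \sigma, \tau)$ is defined as its limit.

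The main obstacle is proving that this limit exists and is independent of the chosen triangulations. The strategy is to use the CBH Theorem in the crossed module: each factor $h_i$ has the form $1 + O(\epsilon^2)$ where $\epsilon$ is the mesh, and there are $O(\epsilon^{-2})$ such factors, so the product is $O(1)$, while passing between two nearby triangulations (say, one refining the other) changes each factor by $O(\epsilon^3)$, giving a total change of $O(\epsilon)$. Making these estimates rigorous requires the piecewise smooth geometry machinery developed in the paper, together with the compatibility (\ref{eqn:165}) which guarantees that the twisting on $H$ by elements $\Phi(h)$, for $h$ already produced, can be absorbed into an inner automorphism and contributes no spurious first-order error. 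A Cauchy argument then yields convergence of $\prod_i h_i$ and independence of the triangulation.

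Given the existence of the limit, property (a) is immediate from the construction. For property (b), functoriality in $\mbf{C}$ and in $(X,x_0)$ is inherited from the functoriality of each individual Riemann factor: pullback of $(\alpha,\beta)$ along a pointed smooth map, or pushforward by a morphism of crossed modules, commutes with $\Psi$, $\opn{exp}_H$, and $\opn{MI}$ in the $1$-dimensional sense (by the theory of Section~\ref{sec:dim1}), and hence commutes with the product $\prod_i h_i$ and with its limit. For property (c), when $G$ is trivial the twisting $\Psi$ disappears, so $h_i = \opn{exp}_H(\beta_i)$; when moreover $H$ is abelian, $\opn{exp}_H$ is a group homomorphism, and the ordered product $\prod_i \opn{exp}_H(\beta_i)$ collapses to $\opn{exp}_H(\sum_i \beta_i)$, which is precisely $\opn{exp}_H$ applied to the ordinary Riemann sum of $\beta$ on $\tau$. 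Passing to the limit gives $\opn{exp}_H(\int_\tau \beta)$, as required.
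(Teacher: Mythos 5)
Your refinement--Cauchy skeleton (fix a refining sequence, compare each coarse factor with the ordered product of the fine factors inside it, sum errors of order $\epsilon^{3}$ over $O(\epsilon^{-2})$ cells) is essentially the paper's convergence argument (Theorem \ref{thm:12}, via Lemma \ref{lem:5}), modulo the paper's choice to work on $\mbf{I}^2$ with binary tessellations and then reduce triangles to squares by the retraction trick of Section \ref{sec:simpl}, formula (\ref{eqn:173}). But your proposal contains a genuine gap: the claim that the limit is \emph{independent of the chosen triangulations} (hence of the orderings and attachment paths) does not follow from the mesh/CBH estimates you invoke, and for a general pair $(\alpha,\beta)$ as in (\ref{eqn:162}) it is simply not available. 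Two triangulations of comparable mesh cannot be compared factor by factor; after passing to a common refinement you are comparing the same $N=O(\epsilon^{-2})$ factors in two different orders and with different twisting paths, and since each factor is $\exp_H(O(\epsilon^{2}))$, pairwise commutators are only $O(\epsilon^{4})$ while $O(N^{2})=O(\epsilon^{-4})$ transpositions may be needed -- the naive bound on the total reordering error is $O(1)$, not $o(1)$. This is exactly why the paper does \emph{not} claim such independence in general: it fixes one explicit recursive tessellation with explicit probe strings (Definitions \ref{dfn:14}, \ref{dfn:44}, \ref{dfn:11}) and defines $\opn{MI}$ as the limit of those specific Riemann products; independence of the tessellation is only proved later (Corollary \ref{cor:2}) under the connection--curvature hypothesis, as a consequence of the $2$-dimensional Stokes Theorem \ref{thm:1} together with the free-group combinatorics of Section \ref{sec:puzzles} -- not by a soft estimate. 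Relatedly, your appeal to (\ref{eqn:165}) inside the convergence proof is misplaced: the construction of the limit needs only the twisting setup $(G,H,\Psi_{\h})$, and the Pfeiffer condition plays no role there; it enters only in the Stokes theorems and in the tessellation-independence statement.

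The existence claim itself can be salvaged from your sketch by dropping the independence claim and fixing, once and for all, a canonical self-similar subdivision scheme with specified cell ordering and attachment paths (and checking that the refinement ordering is subordinate to the coarse one, so that the coarse-versus-fine comparison you describe is even meaningful); you would then still need the quantitative inputs the paper supplies -- the CBH estimates of Theorem \ref{thm:6}, bounds on the holonomy operators $\Psi_{\h}(g_i)$ as in Propositions \ref{prop:13} and \ref{prop:21}, and a Taylor-expansion comparison of a cell with its subdivision as in Lemma \ref{lem:5}. Your treatment of properties (b) and (c) is fine and matches the paper's (Propositions \ref{prop:10}, \ref{prop:2} and the abelian computation in Proposition \ref{prop:22}); the defect is confined to the asserted triangulation-independence and the error analysis supporting it.
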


More on the construction in Subsections 
\ref{subsec:methods}-\ref{subsec:binary} and \ref{subsec:triangles} of the
Introduction.

As far as we know, this is the first construction of a nonabelian
multiplicative integration on surfaces of such generality. 
The very special case $G = H = \opn{GL}_m(\R)$ was done by Schlesinger in the
1920's; cf.\ \cite{DF, KMR}. 

\subsection{} \label{subsec:actually}
Actually, in the body of the paper we work in a much more complicated situation.
Instead of a Lie crossed module (\ref{eqn:230}), we work with a {\em Lie
quasi crossed module with additive feedback} (See Section \ref{sec:LQC}). 
The reason for the more complicated setup is that this is what was 
required, at the time, for our work on {\em twisted deformation quantization}
in the paper \cite{Ye3}. More on this in Subsection \ref{subsec:motiv} below.
In the Introduction we stick to the simpler
setup of a Lie crossed module, which is interesting enough. Note
however that all the results mentioned in the Introduction are valid also in the
more complicated setup.

\subsection{}
In this subsection we explain the nonabelian $2$-dimensional Stokes Theorem.
For this to hold it is necessary to
impose a condition on the integrand $(\alpha, \beta)$. 
Recall the feedback $\Phi : H \to G$. Consider the induced Lie algebra
homomorphism
\[ \opn{Lie}(\Phi) : \h \to \g . \]
By tensoring this induces a homomorphism of differential graded Lie algebras
\[ \phi : \Omega(X) \otimes \h \to \Omega(X) \otimes \g . \]
We say that $(\alpha, \beta)$ is a {\em connection-curvature pair}
for $\mbf{C} / X$ if
\begin{equation}  \label{eqn:164}
\phi(\beta) = \d(\alpha) + \smfrac{1}{2} [\alpha, \alpha]   
\end{equation}
in $\Omega^2(X) \otimes \g$.
(In \cite{BM} this condition is called the {\em vanishing of the fake
curvature}.) 

The boundary of a triangular kite $(\sigma, \tau)$ is the closed path 
$\partial (\sigma, \tau)$ depicted in Figure \ref{fig:81}.

\begin{figure} 
\includegraphics[scale=0.30]{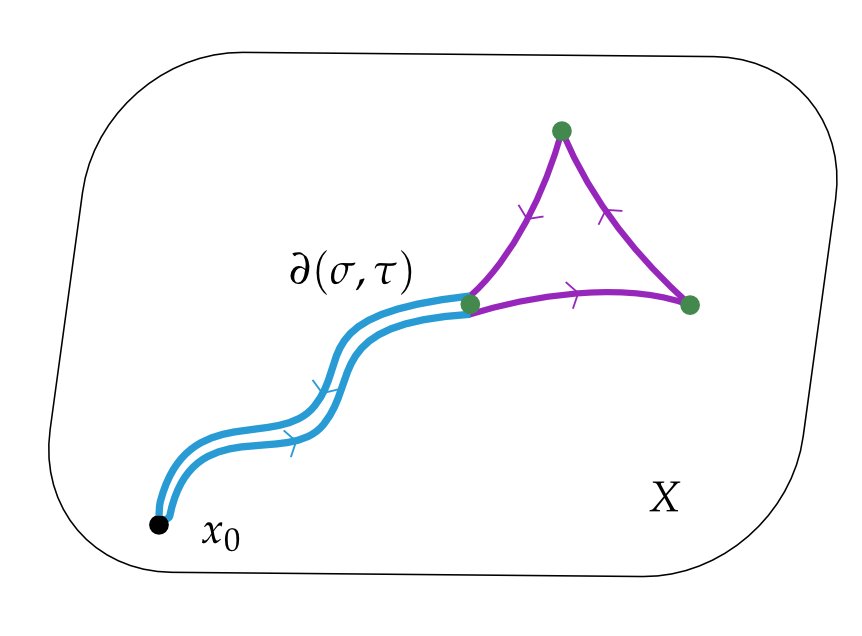}
\caption{The boundary $\partial(\sigma, \tau)$ of the
kite $(\sigma, \tau)$  from Figure \ref{fig:80}.} 
\label{fig:81}
\end{figure}

\begin{thm}[Stokes Theorem for Triangles] \label{thm:19}
Let $\mbf{C} = (G, H, \Psi, \Phi)$
be a Lie crossed module, $(X, x_0)$ a pointed manifold, and
$(\alpha, \beta)$ a connection-curvature pair for $\mbf{C} / X$.
Given any smooth triangular kite 
$(\sigma, \tau)$ in $(X, x_0)$, one has
\[ \Phi \bigl( \opn{MI} (\alpha, \beta \vert \sigma, \tau) \bigr)
= \opn{MI} (\alpha \vert \partial(\sigma, \tau)) \]
in $G$.
\end{thm}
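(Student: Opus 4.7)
The plan is to express both sides as limits of Riemann products over increasingly fine binary subdivisions of $\tau$, and to reduce to an infinitesimal Stokes identity for each small sub-triangle that is forced by the connection-curvature equation. First I would reduce to the tail-free case: the path $\sigma$ contributes a conjugation by $g_{\sigma} := \opn{MI}(\alpha \vert \sigma) \in G$, and because $\Phi$ is $G$-equivariant (with $\Psi$ intertwining $\opn{Ad}_G$), both sides of the asserted identity transform compatibly under conjugation by $g_{\sigma}$. So one may assume $\sigma$ is constant at $x_0$ and $\tau(v_0) = x_0$.

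Next, binarily subdivide $\tau$ into $4^N$ sub-triangles $\tau_i$, and attach to each a path $\sigma_i$ from $x_0$ to $\tau_i(v_0)$ running along the $1$-skeleton of the subdivision, so that each pair $(\sigma_i, \tau_i)$ is itself a triangular kite. By the Riemann-product description in Theorem \ref{thm:18}(a),
\[ \opn{MI}(\alpha, \beta \vert \sigma, \tau) \; = \; \lim_{N \to \infty} \, \sprod_i \opn{MI}_N(\alpha, \beta \vert \sigma_i, \tau_i) , \]
where $\opn{MI}_N$ is the level-$N$ approximation: essentially the conjugation by the path-MI of $\alpha$ along $\sigma_i$ applied to $\opn{exp}_H$ of $\beta$ sampled at a vertex of $\tau_i$, scaled by $\opn{area}(\tau_i)$. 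Internal edges of the subdivision are traversed twice in opposite directions by the boundaries of adjacent kites, hence cancel in the path-ordered product, yielding
\[ \opn{MI}(\alpha \vert \partial(\sigma, \tau)) \; = \; \sprod_i \opn{MI}(\alpha \vert \partial(\sigma_i, \tau_i)) . \]

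The heart of the argument is the infinitesimal Stokes identity for one small kite. For $\tau_i$ of diameter $\epsilon \sim 2^{-N}$, one needs
\[ \Phi\bigl( \opn{MI}_N(\alpha, \beta \vert \sigma_i, \tau_i) \bigr) \; = \; \opn{MI}(\alpha \vert \partial(\sigma_i, \tau_i)) \cdot \bigl( 1 + o(4^{-N}) \bigr) \]
uniformly in $i$. Since $\Phi \circ \opn{exp}_H = \opn{exp}_G \circ \phi$, the left side equals $\opn{exp}_G$ of $\phi(\beta)(p) \cdot \opn{area}(\tau_i)$, conjugated by $\opn{MI}(\alpha \vert \sigma_i)$. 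On the right, the CBH (Magnus) expansion of the path-ordered product around the small closed loop $\partial \tau_i$, combined with a Taylor expansion of $\alpha$ about a reference point and the classical abelian Stokes theorem for the $\g$-valued $1$-form $\alpha$, produces $\opn{exp}_G$ of $\int_{\tau_i}\bigl( \d \alpha + \smfrac{1}{2}[\alpha, \alpha] \bigr) + O(\epsilon^3)$. The connection-curvature condition $\phi(\beta) = \d \alpha + \smfrac{1}{2}[\alpha, \alpha]$ now identifies the two sides.

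The main obstacle will be the uniform error analysis needed to compound $4^N$ factors, each with individual error $o(4^{-N})$, into a total error of $o(1)$ as $N \to \infty$. This requires careful CBH-based bounds in $G$, exploiting the uniform smoothness of $\alpha$ and $\beta$ on the compact image of $\tau$, and a uniform bound on the conjugating path-holonomies $\opn{MI}_N(\alpha \vert \sigma_i)$ in $i$ and $N$. Once these estimates are established, passing to the limit $N \to \infty$ on both sides yields the theorem.
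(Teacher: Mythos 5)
Your proposal is correct in outline, and its analytic core is the same engine the paper uses: subdivide into $4^N$ tiny cells, cancel interior edges in the ordered product of boundary holonomies, show by CBH/Magnus plus Taylor expansion that the loop holonomy around a tiny cell is $\exp_G$ of $\int (\d\alpha + \smfrac{1}{2}[\alpha,\alpha])$ up to $O(\epsilon^3)$, identify this with $\Phi$ of the corresponding $H$-factor via $\Phi \circ \exp_H = \exp_G \circ\, \phi$, the $G$-equivariance of $\Phi$, and the fake-curvature equation, and then compound $4^N$ errors of size $O(8^{-N})$ using CBH estimates and uniform holonomy bounds (this is precisely Lemmas \ref{lem:9} and \ref{lem:10}, Theorem \ref{thm:6}(iv), and Propositions \ref{prop:4}, \ref{prop:13} inside the proof of Theorem \ref{thm:1}). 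Where you genuinely diverge is the bookkeeping: the paper never subdivides the triangle. It proves Stokes for linear quadrangular kites in pointed polyhedra (Theorem \ref{thm:1}), after reducing to $G=H$ by the transfer of twisting setups (Propositions \ref{prop:9}, \ref{prop:2}) and to square kites in $(\mbf{I}^2, v_0)$ by Proposition \ref{prop:14}, keeping the tail $\sigma$ throughout rather than stripping it by conjugation (your conjugation reduction is legitimate in the strict crossed module setting of the statement); the triangular case then follows in a few lines in Section \ref{sec:simpl}, because the triangular MI is \emph{defined} (Definition \ref{dfn:42}, formula (\ref{eqn:173})) by pullback along a piecewise linear retraction to a quadrangular kite. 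Your direct triangle-subdivision route is more intrinsic, but it needs two things the paper's route avoids supplying: first, the interior-edge cancellation $\opn{MI}(\alpha \vert \partial(\sigma,\tau)) = \prod_i \opn{MI}(\alpha \vert \partial(\sigma_i,\tau_i))$ is not automatic in a nonabelian group — it holds only for orderings and tails satisfying a tessellation condition of the type (\ref{eqn:168}), which the paper verifies for its binary tessellations (Lemma \ref{lem:7}); second, you need that the triangular MI can indeed be computed as a limit of Riemann products over medial subdivisions of $\bsym{\Delta}^2$ with your chosen tails, i.e.\ a triangle analogue of the tessellation-independence statement Corollary \ref{cor:2}, whose proof in the paper itself goes through the Fundamental Relation (Theorem \ref{thm:16}) and hence through the $2$-dimensional Stokes theorem. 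If you take the triangle Riemann-product formula as granted by Theorem \ref{thm:18}(a), your argument closes with the estimates you indicate; if it must be justified from the paper's actual definitions, the paper's order of proof (squares first, triangles by pullback) is the cleaner way around that circularity.
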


The special case $G = H = \opn{GL}_m(\R)$ is Schlesinger's Theorem 
(see \cite{DF, KMR}).

\subsection{}
We now approach the main result of the paper, namely the nonabelian
$3$-dimensional Stokes Theorem. 

A {\em smooth triangular balloon} in $(X, x_0)$ is a pair $(\sigma, \tau)$,
consisting of smooth maps
$\sigma : \bsym{\Delta}^1 \to X$ and
$\tau : \bsym{\Delta}^3 \to X$, such that
$\sigma(v_0) = x_0$ and
$\sigma(v_1) = \tau(v_0)$. 
See Figure \ref{fig:82} for an illustration.

\begin{figure} 
\includegraphics[scale=0.27]{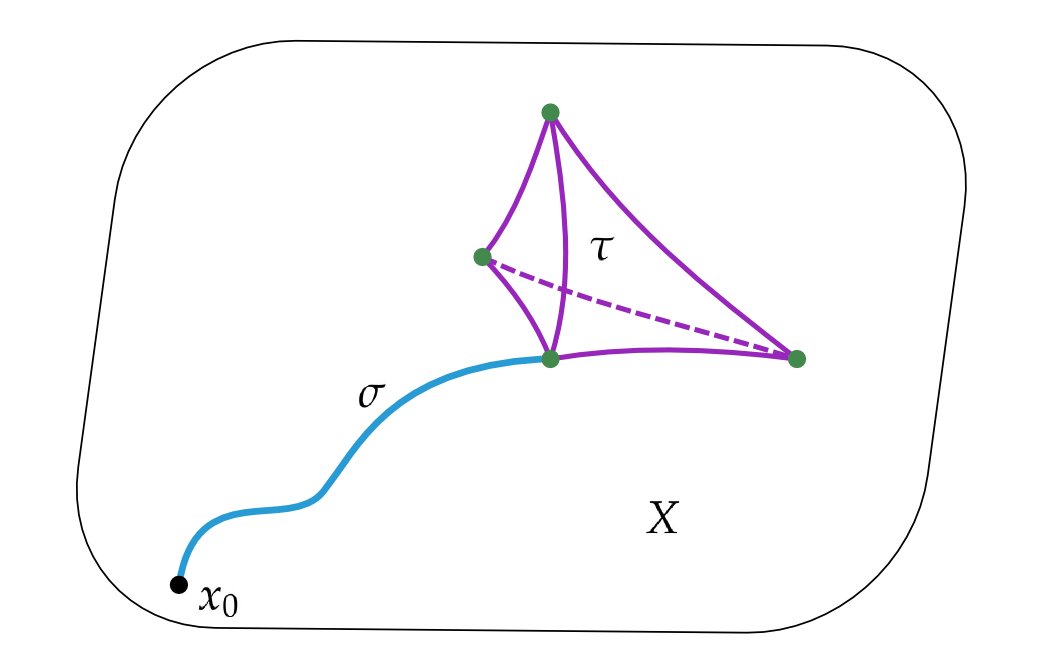}
\caption{A smooth triangular balloon $(\sigma, \tau)$ in the pointed manifold 
$(X, x_0)$.} 
\label{fig:82}
\end{figure}

The {\em boundary} of a balloon $(\sigma, \tau)$ is a sequence
\[ \partial (\sigma, \tau) =
\bigl( \partial_1 (\sigma, \tau),\, \partial_2 (\sigma, \tau),\, 
\partial_3 (\sigma, \tau),\, \partial_4 (\sigma, \tau) \bigr) \]
of triangular kites. See Figure \ref{fig:86}. We write
\begin{equation} \label{eqn:166}
\opn{MI}(\alpha, \beta \vert \partial (\sigma, \tau)) :=
\prod_{1 = 1}^4 \ \opn{MI}(\alpha, \beta \vert \partial_i (\sigma, \tau)) , 
\end{equation}
where the order of the product is left to right.

\begin{figure} 
\includegraphics[scale=0.36]{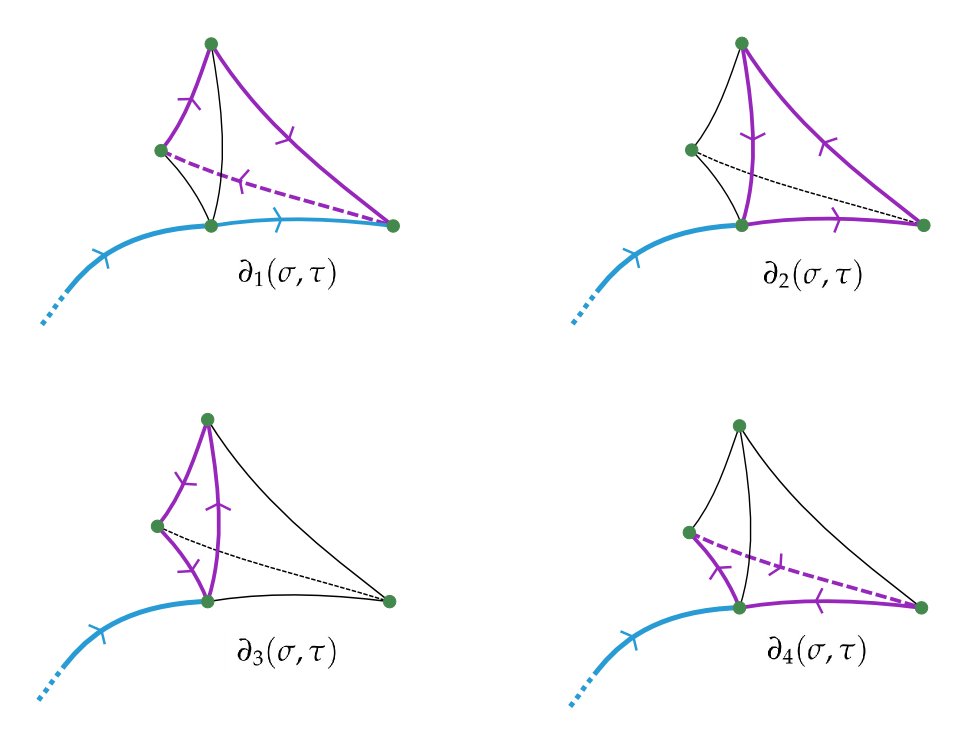}
\caption{The four triangular kites that make up the sequence 
$\partial (\sigma, \tau)$. Here $(\sigma, \tau)$ is the triangular balloon
from Figure \ref{fig:82}.} 
\label{fig:86}
\end{figure}

Let $H_0 := \opn{Ker}(\Phi)$, which is a closed Lie subgroup of $H$. We call it
the {\em inertia subgroup}, and 
$\h_0 := \opn{Lie}(H_0)$ is called the {\em inertia subalgebra}.
Note that $H_0$ is contained in the center of $H$, so $H_0$ is an abelian Lie
group.  A form 
\[ \gamma \in \Omega^p(X) \otimes \h_0  \]
is called an {\em inert $p$-form}.

We say that a form 
$\alpha \in \Omega^1(X) \otimes \g$
is {\em tame connection} if it belongs to some 
connection-curvature pair $(\alpha, \beta)$.
We show (in Subsection \ref{subsec:inert}) that given a tame connection 
$\alpha$ and an inert $3$-form $\gamma$, there is a well defined
{\em twisted abelian multiplicative integral}
\[ \opn{MI}(\alpha, \gamma \vert \sigma, \tau) \in H_0 . \]

For $g \in G$ there is a Lie group automorphism
$\Psi(g) : H \to H$. Its derivative is
\[ \Psi_{\h}(g) := \opn{Lie}(\Psi(g)) : \h \to \h . \]
As $g$ varies we get a map of Lie groups
\begin{equation} \label{eqn:231}
\Psi_{\h} : G \to \opn{GL}(\h) . 
\end{equation}
The derivative of $\Psi_{\h}$ is a map of Lie algebras
\[ \opn{Lie}(\Psi_{\h}) : \g \to  
\mfrak{gl}(\h) = \opn{End}(\h) . \]
This extends by tensor product to a homomorphism of differential graded Lie
algebras
\[ \psi_{\h} :  \Omega(X) \otimes \g \to \Omega(X) \otimes \opn{End}(\h) . \]
Given a connection-curvature $(\alpha, \beta)$, its {\em $3$-curvature}
is the form
\begin{equation} \label{eqn:167}
\gamma := \d(\beta) + \psi_{\h}(\alpha)(\beta) \in 
\Omega^3 \otimes \h .
\end{equation}
(This is the name given in \cite{BM}.)

We can now state the main result of the paper. (See Theorem \ref{thm:22} for the
full version.)

\begin{thm}[Stokes Theorem for Tetrahedra] \label{thm:20}
Let $\mbf{C} = (G, H, \Psi, \Phi)$
be a Lie crossed module, $(X, x_0)$ a pointed manifold, and
$(\alpha, \beta)$ a connection-curvature pair for $\mbf{C} / X$.
Let $\gamma$ be the $3$-curvature of $(\alpha, \beta)$. Then:
\begin{enumerate}
\item The form $\gamma$ is inert.
\item Given any smooth triangular balloon 
$(\sigma, \tau)$ in $(X, x_0)$, one has
\[ \opn{MI} (\alpha, \beta \vert \partial(\sigma, \tau)) =
\opn{MI} (\alpha, \gamma \vert \sigma, \tau) \]
in $H$.
\end{enumerate}
\end{thm}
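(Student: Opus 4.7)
The plan proceeds in two parts, corresponding to the two clauses of the statement. For part (1), I would verify directly that $\phi(\gamma) = 0$ in $\Omega^3(X) \otimes \g$. The ingredients are: the infinitesimal form of the $G$-equivariance of $\Phi$, which yields $\phi \circ \psi_{\h}(\xi) = \opn{ad}_{\xi} \circ \phi$ for $\xi \in \g$, and hence $\phi(\psi_{\h}(\alpha)(\beta)) = [\alpha, \phi(\beta)]$; the connection-curvature equation (\ref{eqn:164}), allowing the replacement $\phi(\beta) = \d\alpha + \tfrac{1}{2}[\alpha, \alpha]$; the identity $\d^2 = 0$; and the graded Jacobi identity, which kills $[\alpha, [\alpha, \alpha]]$. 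The surviving terms cancel, this being the standard Bianchi argument for the curvature of a connection. Hence $\gamma$ takes values in $\h_0$, so it is inert, and $\opn{MI}(\alpha, \gamma \vert \sigma, \tau) \in H_0$ is well defined by the abelian-integration theory developed earlier.

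For part (2), the approach is to refine the balloon $(\sigma, \tau)$ and reduce the assertion to its infinitesimal version on small sub-tetrahedra. Concretely, I would subdivide $\bsym{\Delta}^3$ into a fine mesh of small tetrahedra, and correspondingly subdivide each face $\partial_i \tau$ into small triangles. Using the functoriality of $\opn{MI}$ (Theorem~\ref{thm:18}(b)) together with the concatenation/multiplicativity properties of triangular-kite MIs established earlier in the paper, the product $\opn{MI}(\alpha, \beta \vert \partial(\sigma, \tau))$ rearranges into an ordered product of MIs over all boundary kites of the sub-tetrahedra, with contributions along internal $2$-faces cancelling in inverse pairs. The right-hand side $\opn{MI}(\alpha, \gamma \vert \sigma, \tau)$ splits analogously; since $\gamma$ is $\h_0$-valued and $\h_0$ is central in $\h$, the order of these infinitesimal contributions is immaterial.

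This reduces matters to the local identity on a single infinitesimal balloon. Here both sides admit Taylor expansions via $\opn{exp}_H$ and the CBH formula. For each face, the $2$-dimensional Stokes theorem (Theorem~\ref{thm:19}) and the defining Riemann-product formula express $\opn{MI}(\alpha, \beta \vert \partial_i(\sigma, \tau))$, to first order in the area, as $\opn{exp}_H$ applied to $\int_{\partial_i \tau} \beta$ suitably transported by $\alpha$ back to $x_0$. Summing over the four faces with the signs coming from the simplex orientations, the leading-order remainder is exactly $\int_{\tau} \bigl(\d \beta + \psi_{\h}(\alpha)(\beta)\bigr) = \int_{\tau} \gamma$, by the ordinary (abelian) Stokes formula applied inside $\h$ and by the definition (\ref{eqn:167}) of the $3$-curvature. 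Passing to the limit over the refinement produces the asserted equality.

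The step I expect to be the main obstacle is the uniform control of the CBH remainders as the subdivision is refined, together with the bookkeeping of the parallel transports $\opn{MI}(\alpha \vert \sigma_j)$ that move each face contribution back to the base point $x_0$; without uniform estimates, the "cancellation in inverse pairs" and the reduction to ordinary Stokes are only formal. The piecewise-smooth geometry of polyhedra and the binary-decomposition apparatus developed in the earlier sections of the paper are precisely the tools that should make this analytic control rigorous.
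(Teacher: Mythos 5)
Your part (1) argument is genuinely different from the paper's and, for the statement as quoted (a strict Lie crossed module), it is correct and cleaner.  The infinitesimal $G$-equivariance gives $\phi\circ\psi_{\h}(\xi)=\opn{ad}_{\xi}\circ\phi$, so $\phi(\gamma)=\d(\phi\beta)+[\alpha,\phi\beta]$, and the Bianchi computation kills this.  Be aware, though, that this manipulation uses that $\phi$ is a \emph{constant} Lie algebra map commuting with $\d$.  The paper proves inertness by analytic estimates (Lemma \ref{lem:49} combined with the membership $\opn{MI}(\alpha,\beta\mid\partial(\sigma,\tau))\in H_0$ from Lemma \ref{lem:39}, then letting $\opn{side}(\tau)\to 0$) precisely so that the conclusion persists in the Lie quasi crossed module with additive feedback setting (Definition \ref{dfn:6}), where $\Phi_X$ varies with $x$ and does \emph{not} commute with $\d$ --- the paper even issues an explicit warning about this right after \eqref{eqn:140}.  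So you buy elegance at the cost of generality.

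Your part (2) sketch has the right flavor but glosses over the steps the paper spends two full sections on, and it contains a structural choice the paper deliberately avoids.  You propose subdividing $\bsym{\Delta}^3$ into sub-tetrahedra; the paper instead retracts the triangular balloon onto a cubical one (via the piecewise linear retraction $g:\mbf{I}^3\to Z\cup Y$ constructed just before \eqref{eqn:161}) and invokes the cubical Theorem \ref{thm:10}, exactly because Taylor expansions and the subdivision combinatorics are much more tractable on cubes (see Subsection \ref{subsec:methods}).  More seriously, the claim that the internal $2$-face contributions ``cancel in inverse pairs'' is not a symmetric cancellation: the MIs on adjoining faces are parallel-transported by different holonomies, and getting them to cancel requires the Fundamental Relation (Theorem \ref{thm:16}, which says $\Psi(g)=\opn{Ad}_H(h)$, itself a consequence of $2$-d Stokes) together with the free-group/monoid word combinatorics and homology bookkeeping of Section \ref{sec:puzzles} and Lemmas \ref{lem:20}--\ref{lem:24}.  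You correctly flag uniform CBH control as an obstacle, but the deeper obstacle is this combinatorial structure.  Finally, the local expansion does not come from Theorem \ref{thm:19}: the tool that converts the tiny surface MI into an abelian exponential is Proposition \ref{prop:22}, and the order-$\epsilon^4$ Taylor expansion identifying the sum of six face contributions with $\epsilon^3\cdot\Psi_{\h}(g\cdot g_0)(\til{\gamma}(z_0))$ is Lemma \ref{lem:43}, a nontrivial computation.  As written, your outline would not close without importing essentially all of this machinery.
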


In this generality, Theorem \ref{thm:20} appears to have no precursor
in the literature. The special (but most important) case, namely
$\gamma = 0$, was taken to be true in some papers (e.g.\ \cite{Ko, BS}), but
no proofs were provided there. 

Assertion (1) in the theorem can be viewed as a {\em generalized Bianchi
identity}.

\subsection{} \label{subsec:methods}
In this and the next few subsections we will explain some of the work
leading to the theorems mentioned above.

The methods used in the constructions and proofs are of two kinds: geometric
and infinitesimal. The geometric methods consist of dividing kites into smaller
ones, and studying the effect on the corresponding approximations (Riemann
products). The infinitesimal methods involve Taylor expansions and estimates
for the nonabelian exponential map. 

Throughout most of the paper we work with {\em linear quadrangular kites}
in {\em polyhedra}, and with {\em piecewise smooth differential forms},
rather than with smooth triangular kites in manifolds and smooth differential
forms. The reason for choosing quadrangular kites is mainly that it is much
easier to carry out calculations of Taylor expansions on squares, as compared to
triangles. Also the combinatorics of quadrangular kites and their binary
subdivisions is simpler than that of triangles. See Subsections 
\ref{subsec:kites}-\ref{subsec:bintes}. A review of the piecewise linear
geometry of poyhedra, and of piecewise smooth differential forms, can be found
in Section \ref{sec:pws}.

The key technical result that allows us to calculate nonabelian products is 
Theorem \ref{thm:6} on estimates for the nonabelian exponential map. These
estimates are gotten from the CBH formula. Section \ref{sec:expon} is devoted
to the proof of these estimates.

For heuristic purposes we introduce the concept of ``tiny scale''. By tiny scale
(depending on context of course) we mean
geometric or algebraic elements that are so small that the relevant estimates
(arising from the CBH formula or Taylor expansion) apply to them. See Remark
\ref{rem:3}.

\subsection{}
For $n \geq 0$ let $\mbf{I}^n$ be the $n$-dimensional cube. We give names 
to some vertices:
$v_0 := (0, \ldots, 0)$, and
\[ v_i := (0, \ldots, 1, \ldots, 0) \]
with $1$ in the $i$-th position, for $i = 1, \ldots, n$.
The base point for $\mbf{I}^n$ is $v_0$. 

Let $(X, x_0)$ be a pointed polyhedron. By {\em linear string} in 
$X$ we mean a sequence
$\sigma = (\sigma_1, \ldots, \sigma_m)$ of 
piecewise linear maps $\sigma_i : \mbf{I}^1 \to X$,
such that $\sigma_i(v_1) = \sigma_{i+1}(v_0)$. 
The maps $\sigma_i$ are called the pieces of $\sigma$. 
In Section \ref{sec:dim1} we construct the nonabelian multiplicative 
integral $\opn{MI}(\alpha \vert \sigma)$
of a piecewise smooth differential form
$\alpha \in \Omega^1_{\mrm{pws}}(X) \otimes \g$
on a string $\sigma$. This construction is quite simple and essentially the
same as the classical one. We also prove a few basic properties of this
MI.

\subsection{} \label{subsec:binary}
In Section \ref{sec.MI} we construct the $2$-dimensional
nonabelian multiplicative integral
$\opn{MI}(\alpha, \beta \vert \sigma, \tau)$. 
Here $(\sigma, \tau)$ is a linear quadrangular kite in the pointed polyhedron
$(X, x_0)$. By definition this means that $\sigma$ is a string in $X$, and
$\tau : \mbf{I}^2 \to X$ is a linear map. The conditions are that the initial
point of $\sigma$ is $x_0$, and the terminal point of $\sigma$ is 
$\tau(v_0)$. The integrand $(\alpha, \beta)$ consists of piecewise smooth
differential forms:
\[ \alpha \in \Omega^1_{\mrm{pws}}(X) \otimes \g \ \text{ and } \
\beta \in  \Omega^2_{\mrm{pws}}(X) \otimes \h . \]

The coarse approximation of the $2$-dimensional
nonabelian multiplicative integral is as follows. 
Given a kite  $(\sigma, \tau)$ in $(X, x_0)$, let
\[ g := \opn{MI}(\alpha \vert \sigma) \in G ; \]
so applying the operator $\Psi_{\h}(g)$ from (\ref{eqn:231})
we have a new (twisted) $2$-form
\[ \Psi_{\h}(g)(\beta) \in \Omega_{\mrm{pws}}^2(X) \otimes \h . \]
We define the {\em basic Riemann product} 
of $(\alpha, \beta)$ on $(\sigma, \tau)$ to be
\begin{equation} \label{eqn:238}
\opn{RP}_0(\alpha, \beta \vert \sigma, \tau) :=
 \exp_H \Bigl( \int_{\tau} \Psi_{\h}(g)(\beta) \Bigr) \in H .
\end{equation}
(Actually in the body of the paper we use another formula for \lb
$\opn{RP}_0(\alpha, \beta \vert \sigma, \tau)$, that converges
faster; see Definition \ref{dfn:11} and Remark \ref{rem:4}.)

For the limiting process we introduce the {\em binary
subdivisions} of $\mbf{I}^2$. The $k$-th binary subdivision is the cellular
decomposition of $\mbf{I}^2$ into $4^k$ equal squares, and we denote it by
$\opn{sd}^k \mbf{I}^2$. The $1$-skeleton of $\opn{sd}^k \mbf{I}^2$ is denoted
by $\opn{sk}_1 \opn{sd}^k \mbf{I}^2$. Its fundamental group (based at $v_0$) is
denoted by
$\bsym{\pi}_1(\opn{sk}_1 \opn{sd}^k \mbf{I}^2)$. 
It is very important that the group 
$\bsym{\pi}_1(\opn{sk}_1 \opn{sd}^k \mbf{I}^2)$ is a  free group on $4^k$
generators. 
We say that a kite $(\sigma, \tau)$ is {\em patterned on $\opn{sd}^k \mbf{I}^2$}
if for every piece $\sigma_i$ of the string $\sigma$, the image
$\sigma_i(\mbf{I}^1)$ is a $1$-cell of $\opn{sd}^k \mbf{I}^2$, and
$\tau(\mbf{I}^2)$ is a $2$-cell of $\opn{sd}^k \mbf{I}^2$.

A {\em tessellation} of $\mbf{I}^2$ is by definition a sequence
$\bigl( (\sigma_i, \tau_i) \bigr)_{i = 1, \ldots, 4^k}$
of square kites in $(\mbf{I}^2, v_0)$, each patterned on 
$\opn{sd}^k \mbf{I}^2$, satisfying the following topological condition.
Let us denote by $[\partial (\sigma_i, \tau_i)]$ the element of
$\bsym{\pi}_1(\opn{sk}_1 \opn{sd}^k \mbf{I}^2)$
represented by the closed string $\partial (\sigma_i, \tau_i)$. Then
\begin{equation} \label{eqn:168}
\prod_{1 = 1}^{4^k}\ [\partial (\sigma_i, \tau_i)] = 
[\partial \mbf{I}^2] 
\end{equation}
in $\bsym{\pi}_1(\opn{sk}_1 \opn{sd}^k \mbf{I}^2)$.

For the construction we choose a particular tessellation for every $k$. It is
called the {\em $k$-th binary tessellation}, and the notation is
$\opn{tes}^k  \mbf{I}^2$. The actual definition of $\opn{tes}^k  \mbf{I}^2$
is not important (since it is
quite arbitrary, and chosen for convenience). All that is important are its two
properties:
\begin{itemize}
\item It is a tessellation; i.e.\ equation (\ref{eqn:168}) holds.
\item It has a recursive (self similar) nature.
\end{itemize}

For fixed $k$ and a kite $(\sigma, \tau)$ we obtain, by a simple geometric
operation, the $k$-binary tessellation 
\[ \opn{tes}^k (\sigma, \tau) = 
\bigl( \opn{tes}^k_i (\sigma, \tau) \bigr)_{i = 1, \ldots, 4^k} \]
of $(\sigma, \tau)$, which is a sequence of kites in $(X, x_0)$. 
(See Definition \ref{dfn:44}.)

For every $i \in \{ 1, \ldots, 4^k \}$ we have the basic Riemann product
\[ \opn{RP}_0(\alpha, \beta \vert \opn{tes}^k_i (\sigma, \tau)) \]
on the kite $\opn{tes}^k_i (\sigma, \tau)$. 
We define the {\em $k$-th Riemann product} to be 
\begin{equation} \label{eqn:170}
\opn{RP}_k(\alpha, \beta \vert \sigma, \tau) :=
\prod_{1 = 1}^{4^k}\ 
\opn{RP}_0(\alpha, \beta \vert \opn{tes}^k_i (\sigma, \tau)) .
\end{equation}

In Theorem \ref{thm:12} we prove that the limit
\[ \lim_{k \to \infty} \opn{RP}_k(\alpha, \beta \vert \sigma, \tau) \]
exists in $H$. The proof goes like this: for sufficiently large $k$ the 
kites $\opn{tes}^k_i (\sigma, \tau)$ are all tiny. We use estimates 
to show that the limits
\[ \lim_{k' \to \infty} \opn{RP}_{k'}(\alpha, \beta \vert 
\opn{tes}^k_i (\sigma, \tau) ) \]
exist for every $i$. Due to the recursive nature of the binary tessellations,
this is enough.  We can finally define
\begin{equation} 
\opn{MI}(\alpha, \beta \vert \sigma, \tau) := 
\lim_{k \to \infty} \opn{RP}_k(\alpha, \beta \vert \sigma, \tau) . 
\end{equation}

\subsection{}
In Section \ref{sec:stokes2} we prove the $2$-dimensional Stokes Theorem.
It is the same as Theorem \ref{thm:19}, except that it talks about 
piecewise smooth forms on a pointed polyhedron and linear quadrangular kites. 
Again the proof is by reduction (using the recursive nature of the binary
tessellations) to the case of a tiny kite. And then we use approximations 
(both of Taylor expansions and the exponential map) to do the
calculation. 

A very important technical consequence of the $2$-dimensional Stokes Theorem
is:

\begin{thm}[Fundamental Relation] \label{thm:21}
Let $\mbf{C} = (G, H, \Psi, \Phi)$
be a Lie crossed module, $(X, x_0)$ a pointed polyhedron, 
$(\alpha, \beta)$ a piecewise smooth \lb connection-curvature pair for 
$\mbf{C} / X$, and $(\sigma, \tau)$ a linear quadrangular kite in $(X, x_0)$.
Consider the elements
\[ g := \opn{MI} \bigl( \alpha \vert \partial (\sigma, \tau) \bigr) \in G \]
and
\[ h :=  \opn{MI} (\alpha, \beta \vert \sigma, \tau) \in H . \]
Then
\[ \Psi(g) = \opn{Ad}_H(h) \]
as automorphisms of the Lie group $H$. 
\end{thm}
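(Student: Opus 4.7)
The plan is to derive this relation as an immediate consequence of the $2$-dimensional nonabelian Stokes Theorem (Theorem \ref{thm:19}, in its piecewise smooth, linear quadrangular version proved in Section \ref{sec:stokes2}) combined with the defining crossed module axiom (\ref{eqn:165}).

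First, I would apply Stokes to the connection-curvature pair $(\alpha, \beta)$ on the linear quadrangular kite $(\sigma, \tau)$. This yields the identity $\Phi(h) = g$ in $G$, where $h = \operatorname{MI}(\alpha, \beta \vert \sigma, \tau)$ and $g = \operatorname{MI}(\alpha \vert \partial(\sigma, \tau))$. Next, I would apply the analytic group homomorphism $\Psi \colon G \to \operatorname{Aut}(H)$ to both sides, obtaining the equality $\Psi(g) = \Psi(\Phi(h))$ of automorphisms of $H$. Finally, the crossed module identity (\ref{eqn:165}), namely $\Psi \circ \Phi = \operatorname{Ad}_H$, immediately rewrites the right-hand side as $\operatorname{Ad}_H(h)$, which gives the claim.

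Thus the proof is a two-line formal manipulation once the $2$-dimensional Stokes Theorem is available, and there is essentially no obstacle beyond having Stokes in hand. The one point that requires attention is that, as noted in Subsection \ref{subsec:actually}, the body of the paper operates in the more general setting of a Lie quasi crossed module with additive feedback, in which (\ref{eqn:165}) may only hold up to the additive feedback correction. In that refined setting one must verify that the correction terms encountered along the limit of Riemann products vanish in $\operatorname{Aut}(H)$, so that the identity $\Psi \circ \Phi = \operatorname{Ad}_H$ is still applicable to the integrated element $h$; this verification is where any real technical work would reside, but in the Lie crossed module setup stated here, the proof is exactly the short argument above.
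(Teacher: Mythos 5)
Your proposal is correct and is essentially the paper's own proof: Theorem \ref{thm:1} gives $g = \Phi_0(h)$, and then condition ($*$) of Definition \ref{dfn:22} (the Pfeiffer condition $\Psi \circ \Phi_0 = \opn{Ad}_H$) immediately yields $\Psi(g) = \opn{Ad}_H(h)$. Your closing worry about the quasi crossed module setting is unnecessary: even there the Pfeiffer condition holds exactly (it is an axiom, not weakened by the additive feedback), so no correction terms arise; the only extra content in that setting is that the conclusion shows the pointed-manifold automorphism $\Psi(g)$ is in fact a group automorphism of $H$.
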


In Section \ref{sec:stokes-3} we prove the first version of the main 
result of the paper,
namely the $3$-dimensional Stokes Theorem (Theorem \ref{thm:10}). 
This is like Theorem \ref{thm:20}, only for piecewise smooth
connection-curvature pairs, and for linear quadrangular balloons. 

The strategy of the proof is this. Using Theorem \ref{thm:21}, and a lot of
combinatorics of free groups (done in Section \ref{sec:puzzles}), we reduce the
problem to the case of a tiny balloon. And for a tiny balloon we use Taylor
expansions and the estimates from Theorem \ref{thm:6}.

\subsection{} \label{subsec:triangles}
Finally, in Section \ref{sec:simpl}, we show how to pass from quadrangular
kites  to triangular ones. 
This is very easy, using the following trick. The triangle 
$\bsym{\Delta}^2$ is naturally embedded in the square $\mbf{I}^2$; and 
there is a piecewise linear retraction 
$h : \mbf{I}^2 \to \bsym{\Delta}^2$. 
Consider the ``universal triangular kite'' 
$(\sigma', \tau')$ , and the ``universal quadrangular kite''
$(\sigma', \tau'')$, both 
in $(\mbf{I}^2, v_0)$; see Figures \ref{fig:55}, \ref{fig:84} and \ref{fig:85}.
There is a piecewise linear retraction 
\[ g : \mbf{I}^2 \to \sigma'(\mbf{I}^1) \cup \tau'(\mbf{I}^2) . \]
These maps are
related by 
\[ \tau' \circ h = g \circ \tau'' , \]
as maps $\mbf{I}^2 \to \mbf{I}^2$.

Given a smooth kite $(\sigma, \tau)$ in a pointed manifold
$(X, x_0)$, there a piecewise smooth map
$f :  \mbf{I}^2 \to X$
such that 
$f \circ \sigma' = \sigma$ and
$f \circ \tau' = \tau$. Given a smooth connection-curvature pair
$(\alpha, \beta)$ for $\mbf{C} / X$, the pair
\[ (\alpha', \beta') := (f^*(\alpha), f^*(\beta)) \]
is a piecewise smooth connection-curvature pair
$(\alpha, \beta)$ for $\mbf{C} / \mbf{I}^2$.
We define
\begin{equation} \label{eqn:173}
\opn{MI}(\alpha, \beta \vert \sigma, \tau) :=
\opn{MI}(\alpha', \beta' \vert \sigma', \tau'') . 
\end{equation}

The results proved for quadrangular kites can be easily transferred to
triangular kites by similar geometric tricks.

\subsection{} \label{subsec:motiv}
The reason we became interested in nonabelian multiplicative integration is its
application to twisted deformation quantization of algebraic
varieties, as developed in our paper \cite{Ye3} (see also the survey article
\cite{Ye4}). Twisted deformations of an algebraic variety $X$ are very close to
gerbes and to stacks of algebroids (in the sense of \cite{Ko}). 
Our original strategy was to classify these twisted deformations in
terms of Maurer-Cartan solutions in a DG Lie algebra gotten as the commutative
\v{C}ech resolution of a sheaf of DG Lie algebras on $X$. The multiplicative
integration was supposed to be a key component of this construction. Some of the
details are explained in Subsections \ref{subsec:QTDGLie}, \ref{subsec:CosDGLie}
and  \ref{subsec:conj.desc}.

In the meanwhile we discovered a more direct approach to the classification of
twisted deformations, using descent data in cosimplicial crossed groupoids, and
a related invariance result (see Sections 5-6 in the latest version of
\cite{Ye3}, and Theorem 0.1 in \cite{Ye6}). 

In the version of \cite{Ye3} dated 26 August 2009 there was a ``result''
labeled Theorem 11.2, about cosimplicial quantum type DG Lie algebras,
Deligne crossed groupoids and descent data. 
The proof of this ``result'' was only sketched, and we never completed the
proof (it was supposed to have appeared in a separate paper). It is now
presented as Conjecture \ref{conj:521} at the end of this paper. 

Speaking of conjectures, we should mention Conjecture \ref{conj:520} regarding
the rationality of the $2$-dimensional multiplicative integral when the Lie
groups $G$ and $H$ are unipotent algebraic groups. 

Despite the fact that the original motivation for this paper no longer exists,
we believe that the constructions and results presented here have 
independent value. This can be seen for instance by the role our work has in
the recent paper \cite{BGNT}. See the next subsection for a discussion of 
\cite{BGNT} and other related papers.

\subsection{} \label{subsec:compare}
We end the introduction with a discussion of recent related work. 
Our $2$-dimensional nonabelian multiplicative integral is closely related to
the {\em differential geometry of gerbes}, as developed by Breen and Messing
\cite{BM}. Indeed, the notion of fake curvature, that goes into our definition
of connection-curvature pair, and the notion of $3$-curvature, are both taken
from \cite{BM}. 

There is a series of papers by Baez, Schreiber and Waldorf (\cite{BS},
\cite{SW1}, \cite{SW2} and others) on {\em nonabelian gauge theory}. They
develop a theory of nonabelian curvature that seems to be very similar to our 
$2$-dimensional nonabelian multiplicative integral, even though the flavor of
their work is very different (it is much more abstract). It seems that
\cite{SW2} contains a proof the $3$-dimensional Stokes Theorem in the special
case where the $3$-curvature is zero. 

Very recently the paper \cite{BGNT} by Bressler, Gorokhovsky, Nest and Tsygan
came out. The goal of this paper is to prove that several existing notions of
the nerve of a $2$-groupoid are equivalent (as simplicial sets). 
The comparison between the Hinich simplicial set and the Deligne-Getzler
simplicial set is done using  $2$-dimensional nonabelian multiplicative
integration. The method seems to be quite similar, at least morally, to our
original program (see previous subsection and Conjecture \ref{conj:521}). 
The $2$-dimensional nonabelian multiplicative integral that is developed in
Sections 4-5 of \cite{BGNT} is inspired by our work, and they obtain 
a version of our $3$-dimensional Stokes Theorem in their setting (see 
\cite[Theorems 5.1 and 5.2]{BGNT}). 

The difference between the construction in \cite{BGNT} and ours is that they
work in the special case of a Lie crossed module  $(G, H, \Psi, \Phi)$
consisting of unipotent algebraic groups $G$ and $H$, and an algebraic
connection-curvature pair $(\alpha, \beta)$ with vanishing $3$-curvature. The
construction goes like this: they write a particular partial differential
equation, and the multiplicative integral is its unique
solution. This construction is significantly shorter than ours; 
and presumably it coincides with our construction in this setup. 
Cf.\ Subsection \ref{subsec:PDEs} of our paper. 
See the end of Subsection \ref{subsec:ration} regarding the relation between
the work of \cite{BGNT} and Conjecture \ref{conj:520}.

\medskip \noindent
\subsection{Acknowledgments}
Work on this paper began together with Fredrick Leitner, and I wish to
thank him for his contributions, without which the paper could not
have been written. Thanks also to Michael Artin, Maxim Kontsevich, 
Damien Calaque, Lawrence Breen, Amos Nevo, Yair Glasner, Barak Weiss and Victor
Vinnikov for assistance on various aspects of the paper.

%% section %%
\cleardoublepage
\section{Polyhedra and Piecewise Smooth Geometry}
\label{sec:pws}
\numberwithin{equation}{subsection}

\subsection{Conventions}
In this paper we work over the field $\mbb{R}$ of
real numbers. We denote by $\mbf{A}^{n} = \mbf{A}^{n}(\R)$
the real $n$-dimensional affine
space, with the usual smooth (i.e.\ $\mrm{C}^{\infty}$
differentiable) manifold structure, and the standard euclidean metric. 
The symbol $\otimes$ stands for $\otimes_{\R}$.
The coordinate functions on
$\mbf{A}^{n}$ are $t_1, \ldots, t_n$.

\subsection{Embedded Polyhedra} \label{subsec:embpoly}
By linear map $f : \mbf{A}^{m} \to \mbf{A}^{n}$ we mean the composition of a
linear homomorphism and a translation. Thus the group of invertible linear
maps of $\mbf{A}^{n}$ is $\mrm{GL}_n(\R) \ltimes \R^{n}$. 
By linear subset $X$ of $\mbf{A}^{n}$ we mean the zero locus of some set of
linear functions (not necessarily homogeneous). In other words $X$ is the
image of some injective linear map
$f : \mbf{A}^{m} \to \mbf{A}^{n}$; and then we let $m$ be the dimension of $X$. 

Given a set $S \subset \mbf{A}^{n}$, its linear (resp.\ convex) hull is the 
smallest linear (resp.\ convex) subset of $\mbf{A}^{n}$ containing $S$.
In case $S$ is finite, say $S = \{ x_1, \ldots, x_m \}$, then a point $x$ is in
the linear (resp.\ convex) hull of $S$ if and only if 
$x = \sum_{i = 1}^m a_i x_i$ for some real numbers $a_1, \ldots, a_m$
satisfying $\sum a_i = 1$ (resp.\ and $a_i \geq 0$). 

By {\em embedded polyhedron} \index{Embedded polyhedron} 
we always mean a convex bounded finite polyhedron
in $\mbf{A}^n$, for some $n$. Namely a polyhedron $X$ in $\mbf{A}^n$ is the
convex hull of a finite subset of $\mbf{A}^n$.
A point $x \in X$ is called a {\em vertex} if it is not in the convex hull of
any finite subset of $X - \{ x \}$. 
If we denote by $S$ the set  of vertices of $X$, then $S$ is finite, and
$X$ is the convex hull of $S$. 
The dimension $\opn{dim} X$ is the dimension of the linear hull of
$X$, and we call $\mbf{A}^n$ the {\em ambient linear space} of $X$.
The standard euclidean metric on $\mbf{A}^{n}$ restricts to a global metric (a
distance function) $\opn{dist}_X$ on $X$.
We denote by $\opn{diam}(X)$ the diameter of $X$, namely the maximal distance
between any two points.
The linear functions on $\mbf{A}^{n}$ (spanned by $t_1, \ldots, t_n$) restrict
to functions $X \to \R$, that we also call linear functions. The $\R$-module 
$\mcal{O}_{\mrm{lin}}(X)$ of linear functions has rank equal to 
$m := \opn{dim} X$. An $\R$-basis $\{ s_i \}_{i = 1, \ldots, m}$
of $\mcal{O}_{\mrm{lin}}(X)$ is called a {\em linear
coordinate system} on $X$. 

We shall mostly encounter two kinds of embedded polyhedra. The first is the 
{\em $n$-dimensional cube} $\mbf{I}^n$. This is the subset of
$\mbf{A}^n$ defined by the inequalities
$0 \leq t_1, \ldots, t_n \leq 1$. 
As a convex set it is spanned by its $2^n$ vertices.
See Figure \ref{fig:75} for an illustration. 

\begin{figure}
\includegraphics[scale=0.25]{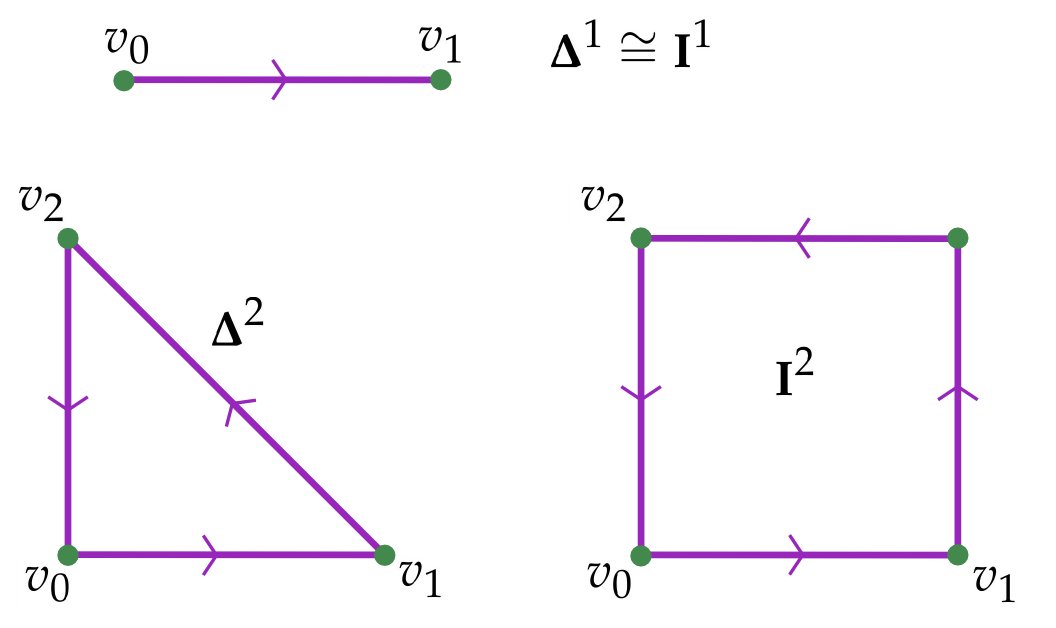}
\caption{The polyhedra $\bsym{\Delta}^1 \cong \mbf{I}^1$,
$\bsym{\Delta}^2$ and $\mbf{I}^2$, with their vertices and orientations.} 
\label{fig:75}
\end{figure}

The second kind of embedded polyhedron is the {\em $n$-dimensional simplex}
$\bsym{\Delta}^n$, embedded in $\mbf{A}^{n+1}$. We use 
barycentric coordinates $t_0, t_1 \ldots, t_n$ on $\mbf{A}^{n+1}$
when dealing with the simplex. In these coordinates
$\bsym{\Delta}^n$ is the subset of $\mbf{A}^{n+1}$ defined by the
conditions
$0 \leq t_0, \ldots, t_n$ and $\sum_{i=0}^n t_i = 1$.
As a convex set it is spanned by its $n+1$ vertices.

Suppose $X$ and $Y$ are both embedded polyhedra. A {\em linear map} 
$f : X \to Y$ is a function that extends to a linear map between the ambient
linear spaces. Such a map $f$ is determined by its value on a sequence
$(v_0, \ldots, v_n)$ of linearly independent vertices of $X$, 
where $n := \opn{dim} X$. Thus we shall often describe the linear map $f$ as 
\[ f(v_0, \ldots, v_n) = (w_0, \ldots, w_n) , \]
or just $f = (w_0, \ldots, w_n)$, where $w_i := f(v_i)$.

In case $X$ is the cube $\mbf{I}^n$, embedded in
$\mbf{A}^{n}$, we shall look at the following linearly independent vertices: 
$v_0$ is the origin $(0, \ldots, 0)$, and for $i = 1, \ldots, n$ we
take
\begin{equation} \label{eqn:1}
v_i := (0, \ldots, 1, \ldots, 0) ,
\end{equation}
with $1$ in the $i$-th position. In case of the simplex $\bsym{\Delta}^n$,
embedded in $\mbf{A}^{n+1}$ with barycentric coordinates, we look at all its
vertices, and use the notation \lb $(v_0, \ldots, v_n)$, where $v_i$ is as in
equation (\ref{eqn:1}).

\subsection{Smooth Maps and Forms}
The algebra of smooth real functions on
$\mbf{A}^n$ is denoted by $\mcal{O}(\mbf{A}^n)$.
The de Rham DG (differential graded) algebra of smooth differential forms on
$\mbf{A}^n$ is
\[ \Omega(\mbf{A}^n) = \bigoplus_{p = 0}^n \
\Omega^p(\mbf{A}^n) , \]
with differential $\d$ and wedge product $\wedge$.

Let $X$ be an $m$-dimensional polyhedron embedded in $\mbf{A}^n$. The set $X$ is
a compact topological space, its interior $\opn{Int} (X)$ is an $m$-dimensional
manifold, and the boundary $\partial X$ is a finite union of
$(m-1)$-dimensional polyhedra. But $X$ also has a differentiable structure.

\begin{dfn} \label{dfn:500}
Let $X \subset \mbf{A}^n$ be an embedded polyhedron, and let $U \subset X$ be
an open set.
\begin{enumerate}
\item A function $f : U \to \R$ is said to be {\em smooth} if 
it extends to a smooth function $\til{f} : \til{U} \to \R$
on some open neighborhood $\til{U}$ of $U$ in $\mbf{A}^n$.

\item A map $f : U \to \mbf{A}^p$ is called smooth if the functions 
$t_i \circ f : U \to \R$ are  smooth for all $i \in \{ 1, \ldots, p \}$.

\item Suppose $Y \subset \mbf{A}^p$ is an embedded polyhedron, and 
$V \subset Y$ is an open set. A map $f : U \to V$ is called smooth if the
composed map $f : U \to \mbf{A}^p$ is smooth. 

\item Let $V$ be an open subset of an embedded polyhedron. A map $f : U \to V$
is called a {\em diffeomorphism} of it is bijective, and both $f$ and $f^{-1}$
are smooth.
\end{enumerate}
\end{dfn}

We denote by $\mcal{O}(U)$ the $\R$-algebra of all smooth functions 
$U \to \R$.
A smooth map $f : U \to V$ between open subsets of embedded polyhedra induces an
$\R$-algebra homomorphism $f^* : \mcal{O}(V) \to \mcal{O}(U)$, namely pullback. 

Suppose $X$ is an embedded polyhedron and $U \subset X$ is an open set. 
We let $\opn{Int}(U) := U \cap \opn{Int}(X)$ and 
$\partial(U) := U \cap \partial(X)$. Note that $\opn{Int}(U)$ is a
manifold, and it is dense in $U$.

\begin{dfn}  \label{dfn:501}
Let $X \subset \mbf{A}^n$ be an embedded polyhedron,
and let $U \subset X$ be an open set.
A {\em smooth differential $p$-form} on $U$ is a differential form
$\alpha \in \Omega^p(\opn{Int} (U))$, that extends to a form
$\til{\alpha} \in \Omega^p(\til{U})$ on some open neighborhood $\til{U}$ of $U$
in $\mbf{A}^n$. 
\end{dfn}

We denote by $\Omega^p(U)$ the set of differential $p$-forms on
$U$, and we let 
\[ \Omega(U) := \bigoplus_{p \geq 0} \ \Omega^p(U) . \]
This is a DG subalgebra of $\Omega(\opn{Int}(U))$. 

Suppose $U$ and $V$ are open subsets of  embedded polyhedra. 
A smooth map $f : U \to V$  induces a DG algebra homomorphism 
$f^* : \Omega(V) \to \Omega(U)$.
If $f$ is an inclusion then we
usually write $\alpha|_U := f^*(\alpha)$ for $\alpha \in \Omega(V)$. 

Let $X$ be an $m$-dimensional embedded polyhedron, and $U \subset X$ a nonempty
open set. Then $\Omega^p(U)$ is a free $\mcal{O}(U)$-module of rank
$\binom{m}{p}$. Indeed, if we choose a linear coordinate system 
$\{ s_i \}_{i = 1, \ldots, m}$ on $X$, then the family
\begin{equation} \label{eqn:2}
\{ \d s_{i_1} \wedge \cdots \wedge \d s_{i_p} \}_{1 \leq i_1 < \cdots < i_p
\leq m}
\end{equation}
is a basis for $\Omega^p(U)$ as $\mcal{O}(U)$-module.

\begin{dfn} \label{dfn:27}
Let $\bsym{s} =  \{ s_i \}_{i = 1, \ldots, m}$ be a linear coordinate system
on the $m$-dimensional polyhedron $X$. Given a form 
$\alpha \in \Omega^p(X)$, it has a unique expansion as a sum
\[ \alpha = \sum_{\bsym{i}} \, 
\til{\alpha}_{\bsym{i}} \cdot \d s_{i_1} \wedge \cdots \wedge \d s_{i_p} , \]
where 
$\bsym{i} = (i_1, \ldots, i_p)$ runs over the set of strictly increasing
multi-indices in the range $\{ 1, \ldots, m \}$,  and 
$\til{\alpha}_{\bsym{i}} \in \mcal{O}(X)$.
The functions $\til{\alpha}_{\bsym{i}}$ are called the {\em coefficients}
of $\alpha$ with respect to the linear coordinate system 
$\bsym{s}$.
\end{dfn}

Let us denote by $\Omega_{\mrm{const}}(\mbf{A}^{n})$ the $\R$-subalgebra of
$\Omega(\mbf{A}^{n})$
generated by the elements $\d t_1, \ldots, \d t_n$. 
Now suppose $X$ is an $m$-dimensional polyhedron in $\mbf{A}^{n}$.
The image of
$\Omega_{\mrm{const}}(\mbf{A}^{n})$ under the canonical surjection
$\Omega(\mbf{A}^{n})\to \Omega(X)$
is denoted by $\Omega_{\mrm{const}}(X)$, and its elements are called
{\em constant differential forms}. There is an isomorphism of graded algebras
\[ \Omega(X) \cong \mcal{O}(X) \otimes \Omega_{\mrm{const}}(X) . \]
If we choose a linear coordinate system 
$\bsym{s} = \{ s_i \}_{i = 1, \ldots, m}$ on $X$, then (\ref{eqn:2}) is an
$\R$-basis for $\Omega^p_{\mrm{const}}(X)$.

\begin{dfn} \label{dfn:26}
Given a smooth form $\alpha \in \Omega^p(X)$ and a point 
$x \in X$, we define the {\em constant form} associated to $\alpha$ at $x$ to be
\[ \alpha(x) := \sum_{\bsym{i}} \, 
\til{\alpha}_{\bsym{i}}(x) \cdot \d s_{i_1} \wedge \cdots \wedge \d s_{i_p}
\in \Omega^p_{\mrm{const}}(X) , \]
where the functions $\til{\alpha}_{\bsym{i}}$ are the coefficients of $\alpha$
relative to some  linear coordinate system 
$\bsym{s} = \{ s_i \}$, as in 
Definition \ref{dfn:27}. 
\end{dfn}

Note that $\alpha(x)$ is independent of the linear coordinate system.

Let $f : X \to Y$ be a linear map between embedded polyhedra. If $g$ is a linear
function on $Y$, then 
$f^*(g) = g \circ f$ is a linear function on $X$; and if 
$\alpha \in \Omega_{\mrm{const}}(Y)$, then 
$f^*(\alpha) \in \Omega_{\mrm{const}}(X)$.

\subsection{Manifolds with Sharp Corners} \label{subsec:mfld-corners}
Manifolds with corners are modeled on 
$\R_{\geq 0}^n$; cf.\ \cite {Le} or \cite{Jo}. 
However not all polyhedra are manifolds with corners; so we introduce a variant
in which the local model is an embedded polyhedron.
See Remark \ref{rem:506} below. 

\begin{dfn} \label{dfn:510}
Let $X$ be a topological space. 
\begin{enumerate}
\item A {\em smooth chart with sharp corners} on $X$
is a pair $(U, \phi)$, where $U$ is an open subset of an embedded polyhedron,
and $\phi : U \to X$ is a map such that $\phi(U)$ is open in $X$ and 
$\phi : U \to \phi(U)$ is a homeomorphism. 

\item Two smooth charts with sharp corners $(U_1, \phi_1)$ and $(U_2, \phi_2)$
are compatible if 
\[ \phi_2^{-1} \circ \phi_1 : \phi_1^{-1} 
\bigl( \phi_1(U_1) \cap \phi_2(U_2) \bigr) \to 
\phi_2^{-1} \bigl( \phi_1(U_1) \cap \phi_2(U_2) \bigr) \]
is a diffeomorphism.

\item A {\em smooth atlas with sharp corners} on $X$ is a collection 
$\{ (U_i, \phi_i) \}$ of compatible charts with sharp corners, such that 
$X = \bigcup_i \phi_i(U_i)$.
\end{enumerate}
\end{dfn}

\begin{dfn} \label{dfn:505}
A {\em smooth manifold with sharp corners}
\index{Smooth manifold with sharp corners}
is a  Hausdorff second countable
topological space $X$ endowed with a smooth atlas with sharp corners
$\{ (U_i, \phi_i) \}$.
\end{dfn}

\begin{dfn} \label{dfn:506}
Let $X$ and $Y$ be smooth manifolds with sharp corners,
with atlases  $\{ (U_i, \phi_i) \}$ and $\{ (V_j, \psi_j) \}$ respectively. A
map $f : X \to Y$ is called {\em smooth} if for every $i,j$ the map 
\[ \psi_j^{-1} \circ f \circ \phi_i : 
(f \circ \phi_i)^{-1} (\psi_j(V_j)) \to V_j \]
is smooth. 
\end{dfn}

\begin{rem} \label{rem:506}
If we were to restrict the class of polyhedra in Definition \ref{dfn:505} to
cubes (or simplices), then we would recover the usual definition of manifold
with corners.  ``Sharp corners'' occur when too many faces of a polyhedron meet
at a vertex (to
be precise, more than the dimension of the manifold; e.g.\ at the top vertex of
a pyramid with square base). 

There are several notions of smooth maps between manifolds with corners in the
literature. Suppose $X$ and $Y$ are smooth manifolds with corners, and 
$f : X \to Y$ is a smooth map in the sense of Definition
\ref{dfn:506}. Then $f$ is called ``weakly smooth'' in \cite{Jo}. In order for
$f$ to be smooth according to \cite{Jo} it must satisfy a condition involving
the boundary strata of $X$ and $Y$; such a condition can't even be stated
when sharp corners are present.
\end{rem}

\begin{prop} \label{prop:500}
The category of smooth manifolds with sharp corners contains the following
categories as full subcategories:
\begin{enumerate}
\item The category of smooth manifolds (without boundary) and smooth maps
between them.
\item The category of embedded polyhedra and smooth maps
between them (Definition \tup{\ref{dfn:500}(3)}).
\end{enumerate}
\end{prop}

We leave out the easy proof. 

A function $f : X \to \R$ is called smooth if when viewed as a map 
$f : X \to \mbf{A}^1$ it is smooth. We denote by $\mcal{O}(X)$ the $\R$-algebra
of smooth functions.

If $U$ is an open subset of a smooth manifold with sharp corners $X$, then $U$
inherits a structure of smooth manifold with sharp corners.

A smooth manifold with sharp corners $X$  has an intrinsic
decomposition $X = \partial X \coprod \opn{Int} (X)$, where the interior
$\opn{Int} (X)$ is a manifold without boundary, and the
boundary $\partial X$ is a manifold with sharp corners.
The condition for a
point $x \in X$ to belong to $\opn{Int} (X)$ is that it has an open
neighborhood that's isomorphic, as manifold with sharp corners, to a manifold
without boundary.

We say that a manifold with sharp corners $X$ has dimension $n$ if 
the manifold $\opn{Int} (X)$ has dimension $n$. (Of course every connected
component of $X$ has a dimension.)  In this case $\partial X$ has dimension
$n-1$.

Let $X$ be a smooth manifold with sharp corners with atlas 
$\{ (U_i, \phi_i) \}$. For every $i$ we have a diffeomorphism
$\phi_i : \opn{Int}(U_i) \to  \opn{Int}(\phi_i(U_i))$
between manifolds without boundaries, and
$\opn{Int}(\phi_i(U_i)) = \phi_i(U_i) \cap \opn{Int} (X)$.
The module of smooth differentials $\Omega^p(U_i)$ was introduced in Definition 
\ref{dfn:501}.

\begin{dfn}  \label{dfn:511}
Let $X$ be a smooth manifold with sharp corners with atlas 
$\{ (U_i, \phi_i) \}$.
A {\em smooth differential $p$-form} on $X$ is a differential form
$\alpha \in \Omega^p(\opn{Int} (X))$, such that for every $i$ 
the form 
\[ \phi_i^* (\alpha|_{\opn{Int}(\phi_i(U_i))}) \in \Omega^p(\opn{Int}(U_i)) \]
extends to a form $\til{\alpha}_i \in \Omega^p(U_i)$.
\end{dfn}

We denote by $\Omega^p(X)$ the set of differential $p$-forms on
$X$, and we let 
\[ \Omega(X) := \bigoplus_{p \geq 0} \ \Omega^p(X) . \]
This is a DG subalgebra of $\Omega(\opn{Int}(X))$. 
Of course $\Omega^0(X) = \mcal{O}(X)$.

A smooth map $f : X \to Y$ between manifolds with sharp corners induces a DG
algebra homomorphism 
$f^* : \Omega(Y) \to \Omega(X)$.

\begin{conv} \label{conv:500}
For the rest of the paper, by ``manifold'' we always mean a smooth manifold
with sharp corners, as defined in Definition \ref{dfn:505}. 
Smooth maps between manifolds are in the sense of  Definition \ref{dfn:506}. 
\end{conv}

\subsection{Abstract Polyhedra} \label{subsec:abspoly}
It turns out that the geometric structure of a polyhedron is easy to
encode, and it is nicer to work
with polyhedra that are not embedded. 

By a {\em global metric} on a topological space $Z$ we mean a distance function 
$\opn{dist}_Z : Z^2 \to \R_{\geq 0}$ that induces the topology of $Z$. 
Recall that if $Z$ is an embedded polyhedron in $\mbf{A}^m$, then $Z$ inherits
a global metric from the ambient affine space, and also an $\R$-module 
of linear functions 
$\mcal{O}_{\mrm{lin}}(Z) \subset \mcal{O}(Z)$. 

\begin{dfn} \label{dfn:503}
A {\em polyhedron} \index{Polyhedron}
 is a smooth manifold with sharp corners
$X$, together with a global metric $\opn{dist}_X$ and an $\R$-submodule
$\mcal{O}_{\mrm{lin}}(X) \subset \mcal{O}(X)$.
The condition is that there exists an isomorphism 
of smooth manifolds with sharp corners
$f : X \iso Z$ for some embedded
polyhedron $Z \subset \mbf{A}^{m}$, with these two properties:
\begin{enumerate}
\rmitem{i} $f$ is an isometry; i.e.\ 
$f : (X, \opn{dist}_X) \to (Z, \opn{dist}_Z)$ is an isomorphism of metric
spaces.

\rmitem{ii} The $\R$-algebra isomorphism
$f^* : \mcal{O}(Z) \to \mcal{O}(X)$
satisfies \lb
$f^*(\mcal{O}_{\mrm{lin}}(Z)) = \mcal{O}_{\mrm{lin}}(X)$.
\end{enumerate}
The composed map $f : X \to \mbf{A}^{m}$ is called a {\em
linear metric embedding}.
If only property (ii) holds, then $f : X \to \mbf{A}^{m}$ is called a {\em
a linear embedding}.
\end{dfn}

In terms of this definition, an embedded polyhedron is a polyhedron $X$ \lb
equipped with a particular linear metric embedding $f : X \to \mbf{A}^{m}$.

\begin{dfn} \label{dfn:512}
Let $X$ and $Y$ be polyhedra, and let $f : X \to Y$ be a map. 
\begin{enumerate}
\item We say $f$ is {\em smooth} if it is a smooth map of smooth manifolds with
sharp corners (see Definition \ref{dfn:506}).

\item We say $f$ is {\em linear} if 
$f^*(\mcal{O}_{\mrm{lin}}(Y)) \subset \mcal{O}_{\mrm{lin}}(X)$.
\end{enumerate}
\end{dfn}

By {\em linear coordinate system} on $X$ we mean an $\R$-basis of
$\mcal{O}_{\mrm{lin}}(X)$.

\begin{prop} \label{prop:511}
Let $X$ be a polyhedron, and let $\bsym{s} = \{ s_i \}_{i = 1, \ldots, n}$
be linear coordinate system on $X$. Consider
the induced smooth map  $f : X \to \mbf{A}^{n}$.
Then the image $Z := f(X)$ is an $n$-dimensional embedded polyhedron in
$\mbf{A}^{n}$, $f : X \to Z$ is an isomorphism of 
smooth manifolds with sharp corners, and 
$f^*(\mcal{O}_{\mrm{lin}}(Z)) = \mcal{O}_{\mrm{lin}}(X)$.
I.e.\ the map $f : X \to \mbf{A}^{n}$ is a linear embedding. 
\end{prop}

The proof is left as an exercise. 

Suppose we are given linear embeddings $X \to \mbf{A}^{m}$ and 
$Y \to \mbf{A}^{n}$. It is easy to see that a map $f : X \to Y$ is linear in
the sense of Definition \ref{dfn:512} if and only if $f$ extends to a linear
map $\mbf{A}^{m} \to \mbf{A}^{n}$. 

Note that we said nothing about the metrics in Proposition \ref{prop:511}.

\begin{dfn}
An {\em orthonormal linear coordinate system} on $X$ is a linear coordinate
system $\bsym{s}$ such that the induced map 
$f : X \to \mbf{A}^{n}$ is a linear metric embedding.
\end{dfn}

Let $\bsym{s}$ be an orthonormal linear coordinate system on $X$.
Then for any $x, y \in X$ the distance between them is
\[ \opn{dist}_X(x, y) = \Bigl( \bosum_{i = 1}^n \bigl( s_i(x) - s_i(y) \bigr)^2
\Bigr)^{1/2} . \]

We can view our polyhedra as ``Riemannian manifolds'', in the
following sense. First note that we can talk about the DG algebra 
$\Omega_{\mrm{const}}(X)$ of constant forms: it is the DG algebra generated
over $\R$ by the family of forms $\{ \d s_i \}_{i = 1, \ldots, n}$, where 
$\bsym{s} = \{ s_i \}_{i = 1, \ldots, n}$ is a linear coordinate system.
If moreover $\bsym{s}$ is an orthonormal linear coordinate system, then
consider the constant symmetric $2$-form
\[ \omega_X := \sum_{i = 1}^n \, \d s_i \otimes \d s_i \in 
\Omega^1_{\mrm{const}}(X) \otimes_{\R} \Omega^1_{\mrm{const}}(X) \subset 
\Omega^1(X) \otimes_{\mcal{O}(X)} \Omega^1(X) . \]
The form $\omega_X$ is independent of the orthonormal linear coordinate system.
It encodes the global metric (the distance function $\opn{dist}_X$) in the
obvious way. 

Suppose $f : X \to Y$ is a linear map between polyhedra. Then
\[ f^*(\omega_Y) \in 
\Omega^1_{\mrm{const}}(X) \otimes_{\R} \Omega^1_{\mrm{const}}(X) . \]
The map $f$ is conformal if and only if
$f^*(\omega_Y) = a \omega_X$
for some positive real number $a$, which we call the scaling factor.
And $f$ is a linear metric embedding if and only if $a = 1$.

Let $X$ be a polyhedron. The linear functions on $X$, i.e.\ the elements of 
$\mcal{O}_{\mrm{lin}}(X)$, allow us to construct the convex hull of any
subset $S \subset X$ (cf.\ Proposition \ref{prop:511}). A sub-polyhedron $Z$ of
$X$ is by definition the convex
hull of a finite subset of $X$. $Z$ inherits from $X$ (by restriction) a
structure of a polyhedron. The inclusion map $Z \to X$ is also called a linear
metric embedding.

\subsection{Piecewise Smooth Forms and Maps} \label{subsec:pws}
Let $X$ be an $n$-dimen\-sional polyhedron (Definition \ref{dfn:503}). A
$p$-dimensional linear
simplex in $X$ is by definition the image of an injective linear map 
$\tau : \bsym{\Delta}^p \to X$. 
A {\em linear triangulation of $X$} is a finite collection
$\{ X_j \}_{j \in J}$ of linear simplices in $X$, such that 
$X_j \neq X_k$ if $j \neq k$; each face of
a simplex $X_j$ is the simplex $X_k$ for some $k \in J$; 
for any $j, k \in J$ the intersection $X_j \cap X_k$ is the simplex
$X_l$ for some $l \in J$; and $X = \bigcup_{j \in J} X_j$. 
We let 
$J_p := \{ j \in J \mid \opn{dim} X_j = p \}$,
so 
$J = \coprod_{0 \leq p \leq n} J_p$.
The topological space 
\[ \opn{sk}_p T := \bigcup_{q \leq p} \ \bigcup_{j \in J_{q}} X_j \] 
is called the {\em $p$-skeleton} of the triangulation $T$. 

We shall have to use piecewise smooth differential forms on polyhedra. 
Suppose $T = \{ X_j \}_{j \in J}$ is a linear triangulation of a
polyhedron $X$. A {\em piecewise smooth differential $p$-form on $X$, relative
to the triangulation $T$}, is a collection 
$\{ \alpha_j \}_{j \in J}$ of differential forms, with
$\alpha_j \in \Omega^p(X_j)$,
such that for any inclusion $X_j \subset X_k$ of simplices one has
$\alpha_j |_{X_k} = \alpha_k$.
Let us denote by $\Omega^p_{\mrm{pws}}(X; T)$ the set of 
piecewise smooth differential $p$-form on $X$ relative to $T$, and
\[ \Omega_{\mrm{pws}}(X; T) := 
\bigoplus_{p = 0}^n \ \Omega^p_{\mrm{pws}}(X; T) , \]
where $n := \opn{dim} X$. This is a DG algebra. 

Next suppose $S = \{ Y_k \}_{k \in K}$ is a linear triangulation of 
$X$ which is a subdivision of $T$. This means that each simplex
$Y_k$ is contained in some simplex $X_j$. Take a piecewise smooth differential
form 
$\alpha = \{ \alpha_j \} \in \Omega^p_{\mrm{pws}}(X; T)$.
Then there is a unique piecewise smooth differential form 
$\beta = \{ \beta_k \} \in \Omega^p_{\mrm{pws}}(X; S)$
satisfying 
$\beta_k = \alpha_j|_{Y_k}$
for any inclusion $Y_k \subset X_j$.
In this way we get a DG algebra homomorphism
$\Omega_{\mrm{pws}}(X; T) \to \Omega_{\mrm{pws}}(X; S)$, which is actually
injective. Since any two triangulations have a common subdivision,
the DG algebras $\Omega_{\mrm{pws}}(X; T)$ form a directed system. 

\begin{dfn} \label{dfn:25}
Let $X$ be a polyhedron. The 
{\em algebra of piecewise smooth differential forms} 
\index{Piecewise smooth differential form}
on $X$ is the DG algebra
\[ \Omega_{\mrm{pws}}(X) := \lim_{T \to}\, \Omega_{\mrm{pws}}(X; T) , \]
where $T$ runs over all linear triangulations of $X$.
\end{dfn}

This definition is very similar to Sullivan's PL forms; see \cite{FHT}. 

Note that any element of
$\Omega^p_{\mrm{pws}}(X)$ is represented by 
some $\alpha \in \lb \Omega^p_{\mrm{pws}}(X; T)$, where $T$ is a
linear triangulation of $X$.
Such a triangulation $T$ is called a {\em smoothing triangulation} for 
$\alpha$. There are injective DG algebra homomorphisms
\begin{equation} \label{eqn:17}
\Omega(X) \to \Omega_{\mrm{pws}}(X; T) \to \Omega_{\mrm{pws}}(X) .
\end{equation}

It is an exercise to show that the cohomology of 
$\Omega(X)$ vanishes in positive degrees, and 
$\mrm{H}^0 \, \Omega(X) = \R$ (recall that $X$ is convex).
Also the homomorphisms (\ref{eqn:17}) are quasi-isomorphisms. 
(We shall not use these facts.)

\begin{dfn} \label{dfn:504}
Let $X$ be a polyhedron, and let $Y$ be a manifold (see Convention
\ref{conv:500}). 
\begin{enumerate}
\item  Let $T = \{ X_j \}_{j \in J}$ be a linear triangulation of $X$.
A map $f : X \to Y$ is called a {\em piecewise smooth map relative to $T$}
if $f|_{X_j} : X_j \to Y$ is smooth for every $j \in J$. 

\item A map $f : X \to Y$ is a  {\em piecewise smooth map}
\index{Piecewise smooth map}
if it is piecewise smooth relative to some linear triangulation $T$; and then 
we say that $T$ is a {\em smoothing triangulation} for $f$.
\end{enumerate}
\end{dfn}

Note that a piecewise smooth map $f : X \to \mbf{A}^1$ is
the same as an element of the $\R$-algebra
$\mcal{O}_{\mrm{pws}}(X) := \Omega^0_{\mrm{pws}}(X)$.
Given a piecewise smooth map $f : X \to Y$, there is an induced DG algebra
homomorphism
\[ f^* : \Omega(Y) \to \Omega_{\mrm{pws}}(X) . \]

Next suppose $X$ and $Y$ are both polyhedra. 
Then we can talk about {\em piecewise linear maps}
$f : X \to Y$, using linear triangulations of $X$ as explained in the previous
paragraph. Given a piecewise linear map $f : X \to Y$, there is an induced DG
algebra homomorphism
\[ f^* : \Omega_{\mrm{pws}}(Y) \to \Omega_{\mrm{pws}}(X) . \]
As usual, if $f$ is an embedding, then we write
$\alpha|_X := f^*(\alpha)$ for 
$\alpha \in \Omega_{\mrm{pws}}(Y)$.

Let $\alpha \in \Omega^p_{\mrm{pws}}(X)$.
The form $\alpha$ can be presented as follows. Choose a smoothing triangulation 
$T = \{ X_j \}_{j \in J}$ for $\alpha$; so that 
$\alpha|_{X_j} \in \Omega^p(X_j)$ for every $j$. Let
$\bsym{s} = (s_1, \ldots, s_n)$ be a linear coordinate system on $X$. For any 
$j$ let 
$\til{\alpha}_{j, \bsym{i}} \in \mcal{O}(X_j)$ be the coefficients of 
$\alpha|_{X_j}$ with respect to $\bsym{s}$, as in Definition \ref{dfn:27}.
Then
\begin{equation} \label{eqn:232}
\alpha|_{X_j} = \sum_{\bsym{i}} \, 
\til{\alpha}_{j, \bsym{i}} \cdot \d s_{i_1} \wedge \cdots \wedge \d s_{i_p} 
\in \Omega^p(X_j) , 
\end{equation}
where the multi-index 
$\bsym{i} = (i_1, \ldots, i_p)$ runs through all strictly increasing elements in
$\{ 1, \ldots, n \}^p$. 

Let $X$ be an $n$-dimensional polyhedron. Given a form
$\alpha \in \Omega_{\mrm{pws}}(X)$ and a point $x \in X$, we say that 
{\em $x$ is a smooth point of $\alpha$} if there exists some $n$-dimensional
simplex $Y$ in $X$, such that $x \in \opn{Int} Y$ and
$\alpha|_{Y} \in \Omega(Y)$.
The smooth locus of $\alpha$, i.e.\ the set of all smooth points of $\alpha$,
is open and dense in $X$. Its complement, called
the singular locus of $\alpha$, is contained in a finite union of simplices of
dimensions $< n$. Indeed, given any triangulation $T$ that smooths $\alpha$, the
singular locus of $\alpha$ is contained in the $(n-1)$-skeleton of $T$. 

Let $f : X \to Y$ be a 
piecewise smooth map from a polyhedron to a manifold. 
As above we can talk about the smooth locus of $f$, and whether 
$f|_Z : Z \to Y$ is smooth
for some simplex $Z \subset X$.

\subsection{Sobolev Norm} \label{subsec:sob}
It shall be very convenient to have a bound for piecewise smooth forms
on polyhedra and some of their derivatives. 

Let $X$ be a polyhedron, with orthonormal linear coordinate
system $\bsym{s} =$ \lb  $(s_1, \ldots, s_n)$. Given a smooth function 
$f \in \mcal{O}(X)$ and a multi-index
\[ \bsym{i}  = (i_1, \ldots, i_q) \in \{ 1, \ldots, n \}^q , \]
we write
\[ \partial_{\bsym{i}} f :=
\smfrac{\partial^q}{\partial s_{i_1} \cdots \partial s_{i_q}} f \in  
\mcal{O}(X) . \]
In the case $q = 0$, so the only sequence is the empty sequence 
$\emptyset$, we write
$\partial_{\emptyset} f := f$. 
We refer to the operators $\partial_{\bsym{i}}$ as the partial derivatives
with respect to $\bsym{s}$.

\begin{dfn}
Let $X$ be a polyhedron of dimension $n$, and let $\bsym{s}$ be an
orthonormal linear coordinate system on $X$. 
\begin{enumerate}
\item Take a smooth differential form $\alpha \in \Omega^p(X)$. 
Consider the coefficients $\til{\alpha}_{\bsym{i}}$ of $\alpha$ with
respect to $\bsym{s}$, as in Definition \ref{dfn:27}. 
Let $\partial_{\bsym{j}}$ be the partial derivatives with respect to
$\bsym{s}$. For a point $x \in X$ and a natural number $q$ we define 
\[ \norm{\alpha}_{\mrm{Sob}; x, q} :=
\Bigl( \bosum_{\bsym{i}, \bsym{j}} \, 
(\partial_{\bsym{j}} \til{\alpha}_{\bsym{i}})(x)^2 \Bigr)^{1/2} \ , \]
where the sum is on all 
$\bsym{j} \in \{ 1, \ldots, n \}^q$ and on all strictly increasing 
$\bsym{i} \in \{ 1, \ldots, n \}^p$.

\item For $\alpha \in \Omega^p(X)$ we define 
\[ \norm{\alpha}_{\mrm{Sob}} :=
\sup_{q = 0, 1, 2} \ \sup_{x \in X} \ \norm{\alpha}_{\mrm{Sob}; x, q} \, . \]

\item Take a piecewise smooth differential form
$\alpha \in \Omega^p_{\mrm{pws}}(X)$. 
Let $\{ X_j \}_{j \in J}$ be some smoothing triangulation for $\alpha$,
and let $\alpha_j := \alpha|_{X_j} \in \Omega^p(X_j)$. Define
\[ \norm{\alpha}_{\mrm{Sob}} :=
\sup_{j \in J} \ \norm{ \alpha_j }_{\mrm{Sob}} \ . \]
\end{enumerate}

The number $\norm{-}_{\mrm{Sob}}$ is called the {\em Sobolev norm to order $2$}.
\end{dfn}

\begin{rem}
Even for a smooth function $f$, our Sobolev norm 
$\norm{f}_{\mrm{Sob}}$ is not the same as the
usual order $2$ Sobolev norm from functional analysis, namely
$\norm{f}_s$ with $s = 2$. But it is in the ``same spirit'', and hence we use
this name.
\end{rem}

\begin{prop}
Let $X$ be an $n$-dimensional polyhedron.
\begin{enumerate}
\item For $\alpha \in \Omega^p(X)$, $x \in X$ and $q \in \N$, the number 
$\norm{\alpha}_{\mrm{Sob}; x, q}$ is  independent of the orthonormal linear
coordinate system $\bsym{s}$ on $X$. Hence the number
$\norm{\alpha}_{\mrm{Sob}}$ is also coordinate independent.

\item Given $\alpha \in \Omega^p_{\mrm{pws}}(X)$, its Sobolev norm 
$\norm{\alpha}_{\mrm{Sob}}$ is independent of the  smoothing triangulation
$\{ X_j \}_{j \in J}$.

\item Let $Z$ be a sub-polyhedron of $X$ and let
$\alpha \in \Omega^p_{\mrm{pws}}(X)$. Then 
$\norm{\alpha|_Z}_{\mrm{Sob}} \leq \norm{\alpha}_{\mrm{Sob}}$.
\end{enumerate}
\end{prop}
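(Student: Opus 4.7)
The three parts are closely intertwined, so I would organize the argument so that part (1) and the smooth case of (3) are proved first, and then parts (2) and (3) are reduced to these.

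For (1), I would give an intrinsic interpretation of $\norm{\alpha}_{\mrm{Sob}; x, q}$ as the Hilbert--Schmidt norm of a tensor. Precisely, let $V := T^*_x X$ with the inner product induced by any orthonormal linear coordinate system (which is the same for all of them, since they are related by an element of $O(n) \ltimes \R^n$ acting on the ambient $\mbf{A}^n$). Then, thanks to the flat connection on $X$, the collection $\{(\partial_{\bsym{j}} \til{\alpha}_{\bsym{i}})(x)\}$ assembles into a tensor
\[ T_{\alpha, x, q} \in V^{\otimes q} \otimes \bwedge^p V \]
that is independent of the orthonormal coordinate system $\bsym{s}$ (it is just the $q$-th iterated covariant derivative of $\alpha$ at $x$). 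The sum defining $\norm{\alpha}_{\mrm{Sob}; x, q}^2$ is exactly the squared Hilbert--Schmidt norm of $T_{\alpha, x, q}$ with respect to the inner product on $V^{\otimes q} \otimes \bwedge^p V$ induced from that on $V$. Since the HS norm depends only on the inner product on $V$ and not on the chosen orthonormal basis, this gives (1).

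For the \emph{smooth} case of (3)---that is, when $\alpha \in \Omega^p(X)$ and $Z \subset X$ is a sub-polyhedron with $m := \opn{dim} Z \leq n := \opn{dim} X$---I would exploit (1) to choose coordinates adapted to $Z$. Namely, pick an orthonormal linear coordinate system $\bsym{s} = (s_1, \ldots, s_n)$ on $X$ so that $(s_1, \ldots, s_m)$ (after a translation) restricts to an orthonormal linear coordinate system on $Z$. Then the coefficients of $\alpha|_Z$ with respect to $(s_1, \ldots, s_m)$ are precisely the restrictions of those $\til{\alpha}_{\bsym{i}}$ with $\bsym{i} \in \{1, \ldots, m\}^p$, and the tangential partials $\partial_{\bsym{j}}$ on $Z$ (with $\bsym{j} \in \{1, \ldots, m\}^q$) are the restrictions of the corresponding partials on $X$. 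The sum defining $\norm{\alpha|_Z}_{\mrm{Sob}; x, q}^2$ at a point $x \in Z$ is therefore a sub-sum of the one defining $\norm{\alpha}_{\mrm{Sob}; x, q}^2$, and the smooth case of (3) follows.

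Next I would prove (2). Since any two triangulations have a common subdivision, it suffices to compare $T$ with a subdivision $T' = \{X'_{j'}\}_{j' \in J'}$. The inequality $\sup_{j'} \norm{\alpha|_{X'_{j'}}}_{\mrm{Sob}} \leq \sup_{j} \norm{\alpha|_{X_j}}_{\mrm{Sob}}$ follows immediately from the smooth case of (3), since each $X'_{j'}$ sits inside some $X_j$. For the reverse inequality---which I expect to be the main technical point---I would first note that lower-dimensional simplices $X_j$ of $T$ contribute no more than the top-dimensional simplex that contains them (again by smooth (3)), so the sup is achieved on top-dimensional simplices. For a top-dimensional $X_j$, the pointwise norm $\norm{\alpha|_{X_j}}_{\mrm{Sob}; x, q}$ is continuous in $x$, so its supremum over $X_j$ equals its supremum over $\opn{Int} X_j$. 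Every point $x \in \opn{Int} X_j$ belongs to some top-dimensional subsimplex $X'_{j'} \subset X_j$ of $T'$; and at such a point, the intrinsic tensor $T_{\alpha|_{X_j}, x, q}$ from part (1) coincides with $T_{\alpha|_{X'_{j'}}, x, q}$, giving equality of the pointwise norms. Taking suprema yields the other inequality.

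Finally, (3) for piecewise smooth $\alpha$ reduces easily to the smooth case via (2): take a smoothing triangulation $T$ of $X$ for $\alpha$, refine it to a triangulation $T'$ whose restriction to $Z$ is a triangulation $T'_Z$ of $Z$, and apply the smooth case of (3) on each simplex of $T'_Z$ (using that it is contained in a simplex of $T'$). Then invoke (2) to compute both norms using these triangulations. The only subtle point throughout the argument is the $\geq$ direction of (2); once that is in hand, the other assertions are essentially bookkeeping.
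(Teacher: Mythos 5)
Your proposal is correct and takes essentially the same route as the paper: part (1) rests on the fact that two orthonormal linear coordinate systems differ by a constant orthogonal transformation, so the sum of squares of derivatives of coefficients is invariant (the paper phrases this via orthonormal bases of $\opn{T}^q(V) \otimes \bwedge^p(V)$ related by an orthogonal matrix, you via the Hilbert--Schmidt norm of the iterated derivative tensor --- the same fact); part (2) is the paper's common-refinement argument with the supremum restricted to a dense set of each cell where the pointwise norms agree; and part (3) uses an orthonormal coordinate system adapted to $Z$, exactly as the paper does. Your explicit invocation of the smooth case of (3) inside (2), and of a triangulation compatible with $Z$ in the piecewise case of (3), merely spells out bookkeeping the paper leaves implicit.
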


\begin{proof}
(1) Let us denote by $V$ the vector space of constant vector fields on $X$.
This is a rank $n$ vector space, endowed with a canonical inner product. The
set $\{ \smfrac{\partial}{\partial s_i} \}_{i = 1, \ldots, n}$
is an orthonormal basis for $V$. 

For $p, q \in \N$ let us denote by $\opn{T}^q(V)$ and $\bwedge^p(V)$ the 
tensor  power and exterior power of $V$, respectively. The vector space
$\opn{T}^q(V) \otimes \bwedge^p(V)$ has an induced  inner product.

For a strictly increasing sequence 
\[ \bsym{i}   = (i_1, \ldots, i_p) \in \{ 1, \ldots, n \}^p \] 
let us write
\[ \pi_{\bsym{i}} := 
\smfrac{\partial}{\partial s_{i_1}} \wedge \cdots \wedge 
\smfrac{\partial}{\partial s_{i_p}} \in \bwedge^p(V) . \]
Then the set 
$\{ \partial_{\bsym{j}} \otimes \pi_{\bsym{i}} \}$,
where the indices run over all $\bsym{j} \in \{ 1, \ldots, n \}^q$ and  all
strictly increasing $\bsym{i} \in \{ 1, \ldots, n \}^p$,
is an orthonormal basis of $\opn{T}^q(V) \otimes \bwedge^p(V)$.

Now $V$ is the linear dual of $\Omega^1_{\mrm{const}}(X)$, so 
$\pi_{\bsym{i}}$ can be viewed as a linear map
$ \Omega^p(X) \to \mcal{O}(X)$.
Given $\alpha \in \Omega^p(X)$, its coefficients with respect to $\bsym{s}$
satisfy
$\til{\alpha}_{\bsym{i}} = \pi_{\bsym{i}}(\alpha)$.
Thus
\[ \partial_{\bsym{j}} \til{\alpha}_{\bsym{i}} = 
(\partial_{\bsym{j}} \circ \pi_{\bsym{i}})(\alpha) \]
in $\mcal{O}(X)$.
  
Suppose $\bsym{s}' = (s'_1, \ldots, s'_n)$ is another orthonormal
linear coordinate system. Let us denote by 
$\partial'_{\bsym{j}}$, $\til{\alpha}'_{\bsym{i}}$ and $\pi'_{\bsym{i}}$ the
new operators and coefficient, namely those with respect to $\bsym{s}'$. The
set 
$\{ \partial'_{\bsym{j}} \otimes \pi'_{\bsym{i}} \}$
is also an orthonormal basis of $\opn{T}^q(V) \otimes \bwedge^p(V)$.
So this set and the set 
$\{ \partial_{\bsym{j}} \otimes \pi_{\bsym{i}} \}$
are related by a constant orthogonal matrix (of size $n^q \cdot \binom{n}{p}$).
It follows that
\[ \sum_{\bsym{i}, \bsym{j}} \, 
(\partial'_{\bsym{j}} \til{\alpha}'_{\bsym{i}})(x)^2 =
\sum_{\bsym{i}, \bsym{j}} \, 
(\partial_{\bsym{j}} \til{\alpha}_{\bsym{i}})(x)^2 
 \]
for any $x \in X$. 

We see that $\norm{\alpha}_{\mrm{Sob}; x, q}$
is independent of coordinates. The assertion for 
$\norm{\alpha}_{\mrm{Sob}}$ is now clear.

\medskip  \noindent
(2) The supremum on $x$ can be restricted to a dense open set in each cell
$X_j$ of the triangulation $\{ X_j \}_{j \in J}$. Therefore we get the same
value by replacing the triangulation $\{ X_j \}_{j \in J}$ with a refinement. 

\medskip  \noindent
(3) Given $Z$, choose an orthonormal linear coordinate system on it, and extend
it to an orthonormal linear coordinate system on $X$. Now it is clear.
\end{proof}

\subsection{Orientation and Integration} \label{subsec:orient}
An orientation on a manifold means a choice of a volume form, up to
multiplication by a positive smooth function. However in the case of a
polyhedron we can normalize the volume form:

\begin{dfn}
Let $X$ be a polyhedron of dimension $n$.
An {\em orientation} on $X$ is a constant form 
$\opn{or}(X) \in \Omega_{\mrm{const}}^n(X)$, 
such that 
\[ \opn{or}(X) =  \d s_1 \wedge \cdots \wedge \d s_n \]
for some orthonormal linear coordinate system 
$(s_1, \ldots, s_n)$ on $X$. 
\end{dfn}

Note that if $(s'_1, \ldots, s'_n)$ is some other orthonormal linear coordinate
system, then 
\[ \opn{or}(X) = \pm \d s'_1 \wedge \cdots \wedge \d s'_n . \]
If the sign is $+$ then $(s'_1, \ldots, s'_n)$ is said to be {\em  positively
oriented}.

For the polyhedron $\mbf{I}^n$ there is a standard orientation, coming from
its embedding in $\mbf{A}^{n}$. It is 
\[ \opn{or}(\mbf{I}^n) := \d t_1 \wedge \cdots \wedge \d t_n , \]
where $(t_1, \ldots, t_n)$ is the standard coordinate system on $\mbf{A}^{n}$.
Likewise for the polyhedron $\bsym{\Delta}^n$:
we let
\[ \opn{or}(\bsym{\Delta}^n) := \d t_1 \wedge \cdots \wedge \d t_n , \]
where $(t_0, \ldots, t_n)$ is the barycentric coordinate system on 
$\mbf{A}^{n+1}$. 
For $n = 1, 2$ the orientations can be also described by arrows -- see Figure
\ref{fig:75}.

\begin{dfn} \label{dfn:24}
Let $X$ be an $n$-dimensional oriented polyhedron and let
$\alpha \in \Omega^n(X)$. 
The {\em coefficient} of $\alpha$ is the function 
$\til{\alpha} \in \mcal{O}(X)$ such that
$\alpha = \til{\alpha} \cdot \opn{or}(X)$.
\end{dfn}

In other words, $\til{\alpha}$ is the coefficient of $\alpha$ with respect to
any positively oriented orthonormal linear coordinate system, in the sense of 
Definition \ref{dfn:27}.

An orientation on $X$ induces an orientation on the $(n-1)$-dimensional 
polyhedra that make up the boundary $\partial X$ (by contracting the
orientation volume form of $X$ with 
a constant outer gradient to the face; cf.\  \cite[Section 4.8]{Wa}).

Let $X$ be an $n$-dimensional oriented polyhedron. Suppose 
$\alpha = \{ \alpha_j \}_{j \in J} \lb \in 
\Omega^n_{\mrm{pws}}(X; T)$,
where $T = \{ X_j \}_{j \in J}$ is some linear
triangulation of $X$. Any $n$-dimen\-sional simplex $X_j$ inherits an
orientation, and hence the integral 
$\int_{X_j} \alpha_j$ is  well-defined. We let 
\[ \int_{X} \alpha := \sum_{j \in J_n } \
\int_{X_j} \alpha_j \in \R \ .  \]
This integration is compatible with subdivisions, and thus we have a
well-defined function
\[ \int_{X} : \Omega^n_{\mrm{pws}}(X) \to \R . \]

\begin{thm}[Stokes Theorem] \label{thm:15}
Let $X$ be an oriented $n$-dimensional polyhedron, and let
$\alpha \in \Omega^{n-1}_{\mrm{pws}}(X)$.
Then
\[ \int_{X} \d \alpha = \int_{\partial X} \alpha \ . \]
\end{thm}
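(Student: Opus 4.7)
The plan is to reduce to the classical Stokes Theorem for smooth forms on an oriented simplex (or manifold with corners), using a smoothing triangulation of $\alpha$, and then handle the combinatorics of face cancellations.

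First, I would pick a linear triangulation $T = \{X_j\}_{j \in J}$ of $X$ that smooths $\alpha$; so $\alpha_j := \alpha|_{X_j} \in \Omega^{n-1}(X_j)$ for every $j$, and by compatibility of the differential with restriction, $(\d \alpha)|_{X_j} = \d(\alpha_j)$. By refining $T$ if necessary, I may assume $T$ is compatible with $\partial X$, in the sense that $\partial X$ is a union of $(n-1)$-cells of $T$. By definition of the integral of a piecewise smooth top-degree form,
\[
\int_X \d \alpha \; = \; \sum_{j \in J_n} \int_{X_j} \d(\alpha_j).
\]

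Next, for each $j \in J_n$ the simplex $X_j$ is a smooth oriented manifold with corners (inheriting its orientation from $X$), and $\alpha_j$ is a smooth $(n-1)$-form on it. Applying the classical Stokes Theorem on $X_j$ (see, e.g., \cite[Section 4.8]{Wa}),
\[
\int_{X_j} \d(\alpha_j) \; = \; \int_{\partial X_j} \alpha_j,
\]
and $\partial X_j$ is the union of its $(n-1)$-dimensional faces $Y \subset X_j$, each equipped with the induced orientation.

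The main step is the combinatorial cancellation. Every $(n-1)$-face $Y$ in the triangulation $T$ falls into one of two types: either $Y \subset \partial X$, in which case $Y$ occurs as a face of a unique $n$-simplex $X_j$, and its induced orientation matches the one it receives as a face of $\partial X$; or $Y$ lies in the interior of $X$, in which case $Y$ is a common face of exactly two $n$-simplices $X_j$ and $X_{j'}$, and the induced orientations on $Y$ from $X_j$ and $X_{j'}$ are opposite. In the second case, since $\alpha_j|_Y = \alpha_{j'}|_Y$ (this is the piecewise-smoothness compatibility condition), the two contributions $\int_Y \alpha_j|_Y$ and $\int_Y \alpha_{j'}|_Y$ cancel. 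Summing,
\[
\sum_{j \in J_n} \int_{\partial X_j} \alpha_j \; = \; \sum_{\substack{Y \in T_{n-1} \\ Y \subset \partial X}} \int_Y \alpha|_Y \; = \; \int_{\partial X} \alpha,
\]
where in the last equality I use the definition of $\int_{\partial X}$ for a piecewise smooth form via the induced triangulation of $\partial X$.

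The main obstacle is the orientation bookkeeping for the interior face cancellations: one has to be careful that the induced orientations on a shared face from the two adjoining top-dimensional simplices are genuinely opposite, which ultimately follows from the standard contraction-with-outer-normal description of the induced orientation recalled in the preceding subsection. Once this is verified, everything else is routine.
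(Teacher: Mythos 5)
Your proof is correct and follows essentially the same route as the paper's: choose a smoothing triangulation, apply the classical Stokes Theorem on each $n$-simplex, and cancel contributions from interior $(n-1)$-faces by the opposite-orientation argument. You have simply spelled out the orientation bookkeeping in more detail than the paper, which states the cancellation as a one-line assertion.
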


\begin{proof}
Choose a linear triangulation $T = \{ X_j \}_{j \in J}$ of $X$ that smooths
$\alpha$; so that 
$\alpha = \{ \alpha_i \}_{j \in J}$ with
$\alpha_j \in \Omega^{n-1}(X_j)$.
Then by definition of $\int_{\partial X} \alpha$, and by cancellation due to
opposite orientations of inner $(n-1)$-dimensional simplices, we get
\[ \int_{\partial X} \alpha = 
\sum_{j \in J_n} \ \int_{\partial X_j} \alpha_j . \]
And the usual Stokes Theorem tells us that
\[ \int_{\partial X_j} \alpha_j = \int_{X_j} \d \alpha_j \ . \]
\end{proof}

\subsection{Piecewise Continuous Forms} \label{subsec:pwc}
For a polyhedron $X$, let us denote by $\mcal{O}_{\mrm{cont}}(X)$ the set of
continuous $\R$-valued functions on $X$. This is a commutative $\R$-algebra
containing $\mcal{O}_{\mrm{pws}}(X)$.

\begin{dfn} \label{dfn:40}
Let $p$ be a natural number. By {\em piecewise continuous $p$-form} on $X$ we
mean an element 
$\gamma \in \Omega^p_{\mrm{pwc}}(X)$, where we define
\[ \Omega^p_{\mrm{pwc}}(X) :=
\mcal{O}_{\mrm{cont}}(X) \otimes_{\mcal{O}_{\mrm{pws}}(X)}
\Omega^p_{\mrm{pws}}(X) . \]
\end{dfn}

A piecewise continuous $p$-form $\gamma$ can be represented as follows:
there are continuous functions
$f_1, \ldots, f_m \in \mcal{O}_{\mrm{cont}}(X)$,
and piecewise smooth forms
$\alpha_1, \ldots, \alpha_m \in \Omega^p_{\mrm{pws}}(X)$,
such that 
\begin{equation} \label{eqn:220}
\gamma = \sum_{i = 1}^m \, f_i \cdot \alpha_i .
\end{equation}

For $p = 0$ we have 
$\Omega^p_{\mrm{pwc}}(X) = \mcal{O}_{\mrm{cont}}(X)$ of course. 
But for $p > 0$ these forms are much more complicated, as we shall see below.

Warning: the exterior derivative $\d (\gamma)$ is not defined for 
$\gamma \in \Omega^p_{\mrm{pwc}}(X)$; at least not as a piecewise continuous
form (it is a distribution). The problem of course is that the functions $f_i$ 
above cannot be derived.

However, given $\gamma \in \Omega^p_{\mrm{pwc}}(X)$ and a piecewise linear map
$g
: Y \to X$ between polyhedra, the pullback
\[ g^*(\gamma) \in \Omega^p_{\mrm{pwc}}(Y) \]
is well defined. In terms of the expansion (\ref{eqn:220}), the formula is
\[ g^*(\gamma) = \sum_{i = 1}^m \, g^*(f_i) \cdot g^*(\alpha_i) . \]
Here $g^*(f_i) \in \mcal{O}_{\mrm{cont}}(Y)$ and
$g^*(\alpha_i) \in \Omega^p_{\mrm{pws}}(Y)$. 
If $g$ is a closed embedding then we write
$\gamma|_Y := g^*(\gamma)$ as usual.

The presentation (\ref{eqn:220}) of 
$\gamma \in \Omega^p_{\mrm{pwc}}(X)$ can be expanded further. 
Choose a linear triangulation $\{ X_j \}_{j \in J}$ of $X$ that smooths all the
piecewise smooth forms $\alpha_1, \ldots, \alpha_m$. 
Also choose a linear coordinate system $\bsym{s} = (s_1, \ldots, s_n)$ on $X$.
For any $i \in \{ 1, \ldots, m \}$ and $j \in J$ let 
$e_{i, j, \bsym{k}} \in \mcal{O}(X_j)$ be the coefficients of the smooth form
$\alpha_i|_{X_j}$ with respect to $\bsym{s}$, as in Definition \ref{dfn:27}.
Then
\begin{equation} \label{eqn:234}
\gamma|_{X_j} = \sum_{i = 1}^m \ \sum_{\bsym{k}} \ 
f_i \cdot e_{i, j, \bsym{k}} \cdot 
\d s_{k_1} \wedge \cdots \wedge \d s_{k_p} 
\in \Omega^p_{\mrm{pwc}}(X_j) ,
\end{equation}
where as usual the multi-index 
$\bsym{k} = (k_1, \ldots, k_p)$ runs through all strictly increasing elements in
$\{ 1, \ldots, n \}^p$. 

For top degree forms one can say more. 
Let $\gamma \in \Omega^n_{\mrm{pwc}}(X)$. In this case the expansion 
(\ref{eqn:234}) can be simplified to 
\begin{equation} \label{eqn:235}
\gamma|_{X_j} = \sum_{i = 1}^m  \ 
f_i \cdot e_{i, j} \cdot \d s_1 \wedge \cdots \wedge \d s_n
\in \Omega^n_{\mrm{pwc}}(X_j) ,
\end{equation}
with $e_{i, j} \in \mcal{O}(X_j)$.

Now assume that the polyhedron $X$ is oriented. We can choose the coordinate
system $\bsym{s} = (s_1, \ldots, s_n)$ so that it is orthonormal and positively
oriented; and then 
\[ \opn{or}(X) = \d s_1 \wedge \cdots \wedge \d s_n . \]
For any $j \in J_n$ we can integrate the continuous function 
$\sum_{i = 1}^m \, f_i \cdot e_{i, j}$ on the oriented $n$-dimensional
simplex $X_j$, obtaining
\[ \int_{X_j} \gamma := \int_{X_j} \
\sum_{i = 1}^m \, f_i \cdot e_{i, j} \cdot 
\d s_1 \wedge \cdots \wedge \d s_n \in \R . \]
Using this formula we define
\begin{equation} \label{eqn:236}
\int_{X} \gamma := \sum_{j \in J_n}  \ \int_{X_j} \gamma
\in \R . 
\end{equation}

\begin{prop} \label{prop:26}
For an $n$-dimensional oriented polyhedron $X$ and a piecewise continuous form
$\gamma \in \Omega^n_{\mrm{pwc}}(X)$, the number
$\int_{X} \gamma$ is independent of the presentation 
\tup{(\ref{eqn:235})} of $\gamma$.
\end{prop}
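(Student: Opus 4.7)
\bigskip

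\textbf{Proof proposal.} The plan is to reduce the entire question to the claim that, on each top-dimensional cell $X_j$, the piecewise continuous $n$-form $\gamma|_{X_j}$ has a \emph{canonically} associated continuous coefficient function relative to $\opn{or}(X)$, independent of how $\gamma$ was presented. Three independent choices enter formula~(\ref{eqn:235}): the decomposition $\gamma = \sum_i f_i\cdot \alpha_i$ of (\ref{eqn:220}), the smoothing triangulation $\{X_j\}_{j\in J}$, and the orthonormal positively oriented coordinate system $\bsym{s}$. I will dispose of them in that order.

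The key observation is the following free-module statement. Fix $j\in J_n$. Since $X_j$ is an $n$-dimensional simplex and $\opn{or}(X)|_{X_j}$ is a nowhere-vanishing constant top form on $X_j$, the $\mcal{O}_{\mrm{pws}}(X_j)$-module $\Omega^n_{\mrm{pws}}(X_j)$ is free of rank one, generated by $\opn{or}(X)|_{X_j}$. By Definition~\ref{dfn:40}, tensoring with $\mcal{O}_{\mrm{cont}}(X_j)$ then shows that $\Omega^n_{\mrm{pwc}}(X_j)$ is free of rank one over $\mcal{O}_{\mrm{cont}}(X_j)$ with the same generator. Consequently there is a \emph{unique} continuous function $\til{\gamma}_j\in\mcal{O}_{\mrm{cont}}(X_j)$ such that
\[
\gamma|_{X_j} \;=\; \til{\gamma}_j \cdot \opn{or}(X)|_{X_j}.
\]
In any presentation of the shape~(\ref{eqn:235}) one has $\til{\gamma}_j = \sum_{i} f_i \cdot e_{i,j}$ on $X_j$, and the ordinary Lebesgue integral $\int_{X_j}\til{\gamma}_j\cdot\opn{or}(X)$ depends on $\gamma$ alone. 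This already takes care of the presentation $\gamma=\sum f_i\alpha_i$ and, because $\opn{or}(X)$ is by definition the same constant form independent of which positively oriented orthonormal coordinate system is used to write it (as recorded after the definition of orientation), also of the coordinate system $\bsym{s}$.

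It remains to show invariance under the choice of smoothing triangulation. Given two triangulations $T=\{X_j\}_{j\in J}$ and $T'=\{X'_{j'}\}_{j'\in J'}$ that both smooth every piecewise smooth form appearing in some presentation of $\gamma$, I would pass to a common refinement $T''$. For any top-dimensional cell $X_j$ of $T$, the cells of $T''$ lying inside $X_j$ subdivide $X_j$ into finitely many $n$-simplices, and on each of them the coefficient function furnished by the previous paragraph is simply the restriction of $\til{\gamma}_j$. Standard additivity of the Lebesgue integral of a continuous function over a polyhedron with respect to any finite subdivision then yields
\[
\int_{X_j}\til{\gamma}_j\cdot\opn{or}(X) \;=\; \sum_{\substack{k\in J''_n\\ X''_k\subset X_j}}\int_{X''_k}\til{\gamma}_j|_{X''_k}\cdot\opn{or}(X),
\]
so the sum~(\ref{eqn:236}) computed with $T$ agrees with the one computed with $T''$, and by symmetry with $T'$.

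The only mildly subtle step is the free-rank-one assertion in the second paragraph, because the tensor product defining $\Omega^n_{\mrm{pwc}}(X_j)$ is over the non-Noetherian ring $\mcal{O}_{\mrm{pws}}(X_j)$; however freeness of $\Omega^n_{\mrm{pws}}(X_j)$ over $\mcal{O}_{\mrm{pws}}(X_j)$ (because $X_j$ is $n$-dimensional and linear coordinates exist on it) makes this immediate, and this is where I expect the only real content of the proof to lie.
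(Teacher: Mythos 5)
The paper itself omits the proof (``We leave out the easy proof''), so there is nothing to compare against; judged on its own, your argument has a genuine gap: the free-rank-one lemma on which everything rests is false. In top degree the compatibility condition defining $\Omega^p_{\mrm{pws}}$ is vacuous, because the restriction of an $n$-form to a face of dimension $<n$ is zero; so, relative to a triangulation of the cell $X_j$, a piecewise smooth $n$-form is an arbitrary choice of smooth $n$-form on each $n$-dimensional simplex, and its coefficient may jump across interior walls, whereas elements of $\mcal{O}_{\mrm{pws}}(X_j)=\Omega^0_{\mrm{pws}}(X_j)$ are continuous. Concretely, on $\mbf{I}^1$ the form $\alpha:=\d\bigl(\max(t_1-\smfrac{1}{2},0)\bigr)$, equal to $0$ on $[0,\smfrac{1}{2}]$ and to $\d t_1$ on $[\smfrac{1}{2},1]$, lies in $\Omega^1_{\mrm{pws}}(\mbf{I}^1)$ but is not $h\cdot \d t_1$ for any $h\in\mcal{O}_{\mrm{pws}}(\mbf{I}^1)$. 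Hence $\Omega^n_{\mrm{pws}}(X_j)$ is strictly larger than $\mcal{O}_{\mrm{pws}}(X_j)\cdot\opn{or}(X)|_{X_j}$, and correspondingly $\Omega^n_{\mrm{pwc}}(X_j)$ is not freely generated by $\opn{or}(X)|_{X_j}$ over $\mcal{O}_{\mrm{cont}}(X_j)$ either: the class $1\otimes\alpha$ is not of the form $h\otimes\d t_1$ with $h$ continuous. Your closing justification (``linear coordinates exist on $X_j$'') proves freeness only for the smooth modules, i.e.\ (\ref{eqn:2}); it does not pass to the piecewise smooth setting, and this is exactly where the real content lies.

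The skeleton of your reduction can be salvaged, because surjectivity was never needed: any presentation (\ref{eqn:235}) does exhibit $\gamma|_{X_j}=\til{\gamma}_j\otimes\opn{or}(X)|_{X_j}$ with $\til{\gamma}_j=\sum_i f_i\, e_{i,j}$ continuous on $X_j$, since the smooth functions $e_{i,j}$ may be moved across the tensor sign. What is actually required is the injectivity of $h\mapsto h\otimes\opn{or}(X)|_{X_j}$ on $\mcal{O}_{\mrm{cont}}(X_j)$, i.e.\ uniqueness of a continuous coefficient when one exists, and this needs an argument (tensor products over $\mcal{O}_{\mrm{pws}}$ can collapse elements). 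One way: the pairing sending $(f,\beta)$ to the almost-everywhere defined measurable coefficient function $f\cdot\til{\beta}$ is $\mcal{O}_{\mrm{pws}}(X_j)$-balanced, hence factors through $\Omega^n_{\mrm{pwc}}(X_j)$, and a continuous function on $X_j$ is determined by its values off a null set; equivalently, given two presentations of $\gamma$ one takes a triangulation smoothing all forms occurring in either, checks that the two coefficient functions agree on the complement of a finite union of lower-dimensional polyhedra, and concludes equality on each closed top cell by continuity and density — after which your refinement argument for the choice of triangulation is fine. As written, however, the central step of your proof asserts a false statement, so the proof is incomplete.
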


We leave out the easy proof. 

More generally, suppose $X$ is an $n$-dimensional polyhedron (not necessarily
oriented), $Z$ is an oriented $p$-dimensional polyhedron, 
$\tau : Z \to X$ is a piecewise linear map, and 
$\gamma \in \Omega^p_{\mrm{pwc}}(X)$. Then 
$\tau^*(\gamma) \in \Omega^p_{\mrm{pwc}}(Z)$, and we can use 
(\ref{eqn:236}) to integrate $\gamma$ along $\tau$ as follows:
\begin{equation} \label{eqn:221}
\int_{\tau} \gamma := \int_Z \tau^*(\gamma) \in \R . 
\end{equation}

\cleardoublepage 
\section{Estimates for the Nonabelian Exponential Map}
\label{sec:expon}

\subsection{The Exponential Map}
\indent{Nonabelian exponential map}
We need some preliminary results on the exponential maps of Lie groups.
Let $G$ be a (finite dimensional) Lie group over $\R$, with Lie
algebra $\mfrak{g} = \opn{Lie}(G)$. The exponential map
\[ \opn{exp}_G : \mfrak{g} \to G  \]
is an analytic map, satisfying
$\opn{exp}_G(0) = 1$ and $\exp(-\alpha) = \exp(\alpha)^{-1}$. The
exponential map is a diffeomorphism near
$0 \in \mfrak{g}$. Namely there is an open neighborhood $U_0(\g)$ of $0$
in $\g$, such that $V_0(G) := \opn{exp}_G(U_0(\g))$ is open in $G$,
and $\opn{exp}_G : U_0(\g) \to V_0(G)$ is a diffeomorphism.
Let $\log_G : V_0(G) \to U_0(\g)$ be the inverse of $\exp_G$. We call such
$V_0(G)$ an {\em open neighborhood of $1$ in $G$ on which $\log_G$ is
defined}. See \cite[Sections 2.10]{Va} for details.

The exponential map is functorial. Namely given a map 
$\phi : G \to H$ of Lie groups, the diagram
\[ \UseTips \xymatrix @C=7ex @R=5ex {
\opn{Lie}(G)
\ar[r]^{\opn{\ \ exp}_G}
\ar[d]_{\opn{Lie}(\phi)}
& 
G
\ar[d]^{\phi}
\\
\opn{Lie}(H)
\ar[r]^{\opn{\ \ exp}_H}
& 
H
} \]
is commutative.

When there is no danger of confusion we write $\exp$ and $\log$ instead of 
$\opn{exp}_G$ and $\log_G$, respectively.

The product in $G$ is denoted by $\cdot$, and the Lie bracket in $\g$ is
denoted by $[-,-]$. Given a finite sequence
$(g_1, \ldots, g_m)$ of elements in the group $G$, we write
\begin{equation} \label{eqn:6}
\prod_{i = 1}^m \, g_i := g_1 \cdot g_2 \cdots g_m \in G \ .
\end{equation}

It is a basic fact that if 
 $\alpha_1, \ldots, \alpha_m \in \g$
are {\em commuting} elements, then 
\[ \prod_{i=1}^m\, \opn{exp}(\alpha_i) = 
\exp \Bigl( \bosum_{i=1}^m \, \alpha_i \Bigr) . \]
The next theorem lists several estimates for the discrepancy when the elements
do not necessarily commute. 

\begin{thm} \label{thm:6}
Let $G$ be a Lie group, with Lie algebra $\g$. Let $V_0(G)$ be an  open
neighborhood of $1$ in $G$ on which $\log$ is well-defined, and
let $\norm{-}$ be a euclidean norm on $\g$. 
Then there are real constants $\epsilon_0(G)$ and $c_0(G)$ with the following
properties:
\begin{enumerate}
\rmitem{i} $0 < \epsilon_0(G) \leq 1$ and $0 \leq c_0(G)$.
\rmitem{ii} Let $\alpha_1, \ldots, \alpha_m \in \g$ be such that
$\sum_{i = 1}^m\, \norm{\alpha_i} < \epsilon_0(G)$.
Then
\[ \prod_{i=1}^m\, \opn{exp}(\alpha_i) \in V_0(G) ,  \]
\[ \Norm{ \opn{log} 
\bigl( \prod\nolimits_{i=1}^m \opn{exp}(\alpha_i) \bigr) } 
\leq c_0(G) \cdot \sum\nolimits_{i=1}^m \norm{\alpha_i} \]
and
\[ \Norm{ \opn{log} 
\bigl( \prod\nolimits_{i=1}^m \opn{exp}(\alpha_i) \bigr) -
\sum\nolimits_{i=1}^m \alpha_i } 
\leq c_0(G) \cdot \bigl( \sum\nolimits_{i=1}^m \norm{\alpha_i} \bigr)^2 . \]
\rmitem{iii} Let $\alpha_1, \alpha_2 \in \g$ be such that
$\norm{\alpha_1} + \norm{\alpha_2} < \epsilon_0(G)$. 
Then
\[  \norm{ [\alpha_1, \alpha_2] } \leq
c_0(G) \cdot \norm{\alpha_1} \cdot \norm{\alpha_2} , \]
\[ \begin{aligned}
& \Norm{ \opn{log} \bigl( \opn{exp}(\alpha_1) \cdot 
\opn{exp}(\alpha_2) \bigr) -  
( \alpha_1 + \alpha_2 + \smfrac{1}{2}[\alpha_1, \alpha_2] ) } \\
& \qquad %\hspace{20ex}
\leq c_0(G) \cdot \norm{\alpha_1} \cdot \norm{\alpha_2} \cdot 
\bigl( \norm{\alpha_1} + \norm{\alpha_2} \bigr) .
\end{aligned} \]
and
\[ \begin{aligned}
& \Norm{ \opn{log} \bigl( \opn{exp}(\alpha_1) \cdot 
\opn{exp}(\alpha_2) \cdot \opn{exp}(\alpha_1)^{-1} \cdot
\opn{exp}(\alpha_2)^{-1} \bigr) -  
[\alpha_1, \alpha_2] } \\
& \qquad %\hspace{20ex}
\leq c_{0}(G) \cdot \norm{\alpha_1} \cdot \norm{\alpha_2} \cdot 
\bigl( \norm{\alpha_1} + \norm{\alpha_2} \bigr) .
\end{aligned} \]
\rmitem{iv} Let 
$\alpha_1, \ldots, \alpha_m, \beta_1, \ldots, \beta_m \in \g$ be
such that
\[ \sum_{i = 1}^m\, (\norm{\alpha_i} + \norm{\beta_i}) < \epsilon_0(G) \, . \]
Then
\[ \begin{aligned}
& \Norm{ \opn{log} 
\bigl( \prod\nolimits_{i=1}^m \opn{exp}(\alpha_i + \beta_i) \bigr) -
\opn{log} \bigl(   \prod\nolimits_{i=1}^m \opn{exp}(\alpha_i) \bigr) -
\sum\nolimits_{i=1}^m \beta_i } \\
& \qquad % \hspace{20ex} 
\leq c_0(G) \cdot \bigl( \sum\nolimits_{i=1}^m (\norm{\alpha_i}  +
\norm{\beta_i}) \bigr) \cdot 
\bigl( \sum\nolimits_{i=1}^m \norm{\beta_i} \bigr) .
\end{aligned} \]
and
\[ \begin{aligned}
& \Norm{ \opn{log} 
\bigl( \prod\nolimits_{i=1}^m \opn{exp}(\alpha_i + \beta_i) \bigr) -
\opn{log} 
\bigl( \prod\nolimits_{i=1}^m \opn{exp}(\alpha_i) \bigr) } \\
& \qquad 
\leq c_0(G) \cdot 
\bigl( \sum\nolimits_{i=1}^m \norm{\beta_i} \bigr) \, .
\end{aligned} \]
\rmitem{v} Let $\alpha_1, \ldots, \beta_m$ be as in property \tup{(iv)},
and assume moreover that $[\alpha_i, \alpha_j] \lb = 0$ for all $i, j$. 
Then
\[ \begin{aligned}
& \Norm{ \opn{log} 
\bigl( \prod\nolimits_{i=1}^m \opn{exp}(\alpha_i + \beta_i) \bigr) -
\sum\nolimits_{i=1}^m (\alpha_i + \beta_i) } \\
& \qquad %\hspace{20ex} 
\leq c_0(G) \cdot \bigl( \sum\nolimits_{i=1}^m (\norm{\alpha_i}  +
\norm{\beta_i}) \bigr) \cdot 
\bigl( \sum\nolimits_{i=1}^m \norm{\beta_i} \bigr) .
\end{aligned} \]
\end{enumerate}
\end{thm}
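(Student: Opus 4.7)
My plan is to derive everything from the Campbell--Baker--Hausdorff (CBH) series
\[ \opn{log}\bigl( \opn{exp}(\alpha) \cdot \opn{exp}(\beta) \bigr)
= \alpha + \beta + \smfrac{1}{2}[\alpha,\beta] + R_3(\alpha,\beta), \]
where $R_3$ collects all iterated brackets of order $\geq 3$. Choose once and for all a euclidean norm on $\g$; the bilinear bracket then satisfies $\norm{[\alpha,\beta]} \leq M \norm{\alpha}\norm{\beta}$ for some $M=M(G)>0$, which handles the first inequality in (iii). The CBH series converges absolutely on a ball $\{\norm{\alpha}+\norm{\beta}<r\}$, and on a possibly smaller ball the output lies in the normal neighborhood $U_0(\g)$ where $\opn{log}$ is defined. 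Shrinking to a ball of radius $\epsilon_0 = \epsilon_0(G)\leq 1$, the tail $R_3$ is bounded by a geometric series in $\norm{\alpha}+\norm{\beta}$, giving a uniform estimate $\norm{R_3(\alpha,\beta)} \leq C\,\norm{\alpha}\norm{\beta}(\norm{\alpha}+\norm{\beta})$ for a constant $C$ depending only on $G$ and the chosen norm. This furnishes the CBH-truncation estimate in~(iii). The commutator identity in~(iii) follows by a second application of CBH: expand $\opn{log}(e^{\alpha_1}e^{\alpha_2}e^{-\alpha_1}e^{-\alpha_2})$ in two steps, noting that the linear and quadratic terms other than $[\alpha_1,\alpha_2]$ cancel, and control the remainder by the same geometric-tail argument.

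Next I would establish (ii) by induction on $m$. Set $\gamma_k := \opn{log}\bigl(\prod_{i=1}^k \opn{exp}(\alpha_i)\bigr)$, with $\gamma_0 := 0$. The inductive hypothesis will carry both that $\gamma_k$ is well-defined (lies in $U_0(\g)$) and that $\norm{\gamma_k} \leq c\,\sum_{i\leq k}\norm{\alpha_i}$ for an appropriate $c$. The recursion
\[ \gamma_{k+1} = \opn{log}\bigl( \opn{exp}(\gamma_k)\cdot\opn{exp}(\alpha_{k+1}) \bigr) \]
is controlled by the CBH estimate proved above, provided $\norm{\gamma_k}+\norm{\alpha_{k+1}}<\epsilon_0$; the hypothesis $\sum\norm{\alpha_i}<\epsilon_0$ ensures this if we take $\epsilon_0$ small enough that the induction constant $c$ satisfies $c\cdot\epsilon_0 \leq \tfrac{1}{2}\epsilon_0$ (say). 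The quadratic estimate then follows by summing the single-step defects $\norm{\gamma_{k+1}-\gamma_k-\alpha_{k+1}} \leq C\,\norm{\gamma_k}\norm{\alpha_{k+1}}$ over $k$ and using the just-obtained linear bound on $\norm{\gamma_k}$.

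For (iv) I would use a telescoping comparison. Write
\[ \prod_{i=1}^m \opn{exp}(\alpha_i+\beta_i) = \prod_{i=1}^m \opn{exp}(\alpha_i)\cdot u_i, \]
where $u_i := \opn{exp}(\alpha_i)^{-1}\opn{exp}(\alpha_i+\beta_i)$, and verify via the two-variable CBH estimate that $\opn{log}(u_i) = \beta_i + (\text{error of size } \norm{\alpha_i}\norm{\beta_i})$. Then conjugate successively to bring all the $u_i$'s to the right, using (ii) to bound the accumulated logarithms and using (iii) to control the commutators that arise. Summing the errors yields both inequalities in~(iv); the second one is simply the specialization where one discards the linear term $\sum\beta_i$. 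Finally (v) is the commutative specialization: when all $[\alpha_i,\alpha_j]=0$, the CBH formula collapses and $\prod\opn{exp}(\alpha_i) = \opn{exp}(\sum\alpha_i)$, so the estimate follows by combining the first inequality of~(iv) with the trivial identity $\opn{log}\prod\opn{exp}(\alpha_i) = \sum\alpha_i$.

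The main obstacle is bookkeeping: one must choose $\epsilon_0(G)$ and $c_0(G)$ simultaneously so that the inductive constants do not blow up, the logarithm remains defined at every stage of each induction, and a single pair $(\epsilon_0,c_0)$ serves all five items. I would therefore prove the estimates in the order (iii), (ii), (iv), (v), at each stage shrinking $\epsilon_0$ and enlarging $c_0$ as needed, and at the end replace $(\epsilon_0,c_0)$ by the smallest/largest values accumulated.
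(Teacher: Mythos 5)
Your overall strategy is essentially the paper's: a quantitative CBH tail estimate $\Norm{R_3(\alpha,\beta)} \leq C\,\norm{\alpha}\norm{\beta}(\norm{\alpha}+\norm{\beta})$ (Lemmas \ref{lem:35} and \ref{lem11}), induction on $m$ carrying a linear bound on $\norm{\gamma_k}$ for property (ii) (Lemma \ref{lem17}), moving the $\beta$-factors across the $\alpha$-factors for (iv) (Lemmas \ref{lem18} and \ref{lem:36}; your conjugation bookkeeping is an equivalent reorganization of the same computation), and the commutative collapse for (v). One small slip: the condition ``$c \cdot \epsilon_0 \leq \smfrac{1}{2}\epsilon_0$'' is impossible once $c \geq 1$; what is actually needed (and what the paper does, setting $\epsilon_4 := \smfrac{1}{12} c_4^{-1}\epsilon_2$) is to shrink the final radius relative to the larger radius on which the two-variable estimates hold, so that $\norm{\gamma_k} + \norm{\alpha_{k+1}}$ stays inside that larger ball at every step.

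The genuine gap is the group-commutator estimate in (iii). Applying CBH twice to $\opn{log}(e^{\alpha_1} e^{\alpha_2} e^{-\alpha_1} e^{-\alpha_2})$, the inner logarithms are $X = F(\alpha_1,\alpha_2)$ and $Y = -F(\alpha_2,\alpha_1)$, and the outer expansion is $X + Y + \smfrac{1}{2}[X,Y] + R_3(X,Y)$. The pieces $X+Y$ and $[X,Y]$ do reduce to $[\alpha_1,\alpha_2]$ plus errors of the required mixed form $\norm{\alpha_1}\norm{\alpha_2}(\norm{\alpha_1}+\norm{\alpha_2})$, but the outer remainder $R_3(X,Y)$ is controlled by your geometric-tail bound only as $\norm{X}\norm{Y}(\norm{X}+\norm{Y}) \approx (\norm{\alpha_1}+\norm{\alpha_2})^3$, which is \emph{not} dominated by $\norm{\alpha_1}\norm{\alpha_2}(\norm{\alpha_1}+\norm{\alpha_2})$ (let $\alpha_2 \to 0$ with $\alpha_1$ fixed: the required bound tends to $0$ while the generic one does not). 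So ``the same geometric-tail argument'' does not close this case. The paper's Lemma \ref{lem:34} supplies the missing mechanism: write $e^{\alpha}e^{\beta} = \exp(\gamma^+ + \delta^+)$ and $e^{-\alpha}e^{-\beta} = \exp(\gamma^- + \delta^-)$ with $\gamma^{\pm} = \pm(\alpha+\beta) + \smfrac{1}{2}[\alpha,\beta]$ and $\norm{\delta^{\pm}}$ of mixed cubic size, strip off $\delta^{\pm}$ using the Lipschitz-type perturbation estimate of Lemma \ref{lem:33} (proved by Taylor-expanding $(\alpha,\beta) \mapsto \opn{log}(e^{\alpha}e^{\beta})$ --- a tool absent from your outline), and then apply the commuting-case estimate (v) to the exactly commuting pair $\pm(\alpha+\beta)$ perturbed by $\smfrac{1}{2}[\alpha,\beta]$; the cancellation $[\alpha+\beta, -(\alpha+\beta)] = 0$ is what forces the factor $\norm{[\alpha,\beta]} \lesssim \norm{\alpha}\norm{\beta}$ into the error. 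You would need to add this (or an equivalent refinement, e.g.\ a convergent commutator version of the Hausdorff series in which every term contains both variables) to complete (iii).
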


The theorem is proved at the end of Subsection \ref{subsec:touches}, after
some preparations. 

The constants $c_0(G)$ and $\epsilon_0(G)$ are called a {\em commutativity
constant} and a {\em convergence radius} for $G$ respectively. (If $G$ is
abelian one can take $c_0(G) = 0$.)

\subsection{The CBH Theorem}
\label{subsec:CBH}
\index{CBH Theorem}
There is an element $F(x, y)$ in the completed free Lie algebra over 
$\mbb{Q}$ in the variables $x, y$, called the {\em Hausdorff series}.  See
\cite[Sections II.6 and II.7]{Bo}, where the letter $H$ is used instead of $F$. 
For any $i,j \geq 0$ let us denote by $F_{i, j}(x, y)$ the homogeneous
component of $F(x, y)$ of degree $i$ in $x$ and degree $j$ in $y$. So 
\[ F(x, y) =  \sum_{i, j \geq 0} F_{i, j}(x, y) ץ \]

Now consider a Lie group $G$ as before, with Lie algebra $\g$. Choose a
euclidean norm $\norm{-}$ on the vector space $\g$. 
Given elements $\alpha, \beta \in \g$ we can evaluate
$F_{i, j}(x, y)$ on them, obtaining an element
$F_{i, j}(\alpha, \beta) \in \g$. 

The {\em Campbell-Baker-Hausdorff Theorem} says that 
there is an open neighborhood $U_1$ of $0$ in $\g$ 
(its size depending on the norm) such that the series
$F(\alpha, \beta)$ converges absolutely and uniformly for 
$\alpha, \beta \in U_1$, and moreover 
\begin{equation} \label{eqn:87}
\exp \bigl( F(\alpha, \beta) \bigr) = 
\exp(\alpha) \cdot \exp(\beta) .
\end{equation}

This assertion is not explicit in the book \cite{Bo}; it
requires combining various scattered results in Sections II.7 and III.4
of \cite{Bo}. An explicit statement of the CBH Theorem is 
\cite[Theorem 2.15.4]{Va}. However the treatment of the structure of the
Hausdorff series is not sufficiently detailed in \cite{Va}. 

For a positive number $r$ we denote by $U(r)$ the open ball of radius $r$ and
center $0$ in $\g$, and by $\ol{U}(r)$ its closure (the closed ball). 

\begin{lem} \label{lem:35}
There are constants $\epsilon_1$ and $c_1$, such that
$0 < \epsilon_1$,
$U(\epsilon_1 ) \subset U_1$, $0 \leq c_1$, and for every 
$\alpha, \beta \in U(\epsilon_1)$ the inequality
\[ \Norm{ F(\alpha, \beta) - 
(\alpha + \beta + \smfrac{1}{2} [x,y]) } 
\leq c_1 \cdot \norm{\alpha} \cdot 
\norm{\beta} \cdot (\norm{\alpha} + \norm{\beta})  \]
holds.
\end{lem}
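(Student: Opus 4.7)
\smallskip

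The plan is to exploit the explicit low-degree structure of the Hausdorff series and the absolute convergence guaranteed by the CBH Theorem. First, I recall the elementary identities
\[ F(x, 0) = \log(\exp(x)) = x \quad \text{and} \quad F(0, y) = y, \]
which force $F_{1,0}(x,y) = x$, $F_{0,1}(x,y) = y$, and $F_{i,0} = F_{0,j} = 0$ for all $i, j \geq 2$. Moreover, a direct computation (or consultation of the formula in \cite[Sections II.6--7]{Bo}) yields $F_{1,1}(x,y) = \tfrac{1}{2}[x,y]$. Consequently,
\[ F(\alpha,\beta) - \bigl( \alpha + \beta + \smfrac{1}{2}[\alpha,\beta] \bigr)
= \sum_{\substack{i, j \geq 1 \\ i + j \geq 3}} F_{i,j}(\alpha, \beta) \]
for all $\alpha, \beta \in U_1$, where the series converges absolutely.

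Next, since the Lie bracket $[-,-] : \g \times \g \to \g$ is continuous and bilinear, there is a constant $K > 0$ with $\norm{[u,v]} \leq K \norm{u} \norm{v}$ for all $u, v \in \g$. Each $F_{i,j}$ is a finite $\mbb{Q}$-linear combination of iterated brackets of length $i+j$, with $i$ instances of $x$ and $j$ instances of $y$. Applying the bracket estimate repeatedly gives a bound of the form
\[ \norm{ F_{i,j}(\alpha, \beta) } \leq C_{i,j} \cdot \norm{\alpha}^i \cdot \norm{\beta}^j, \]
where the constants $C_{i,j}$ depend on the $\mbb{Q}$-coefficients of $F_{i,j}$ and on powers of $K$. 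The absolute-convergence statement in the CBH Theorem (for, say, $\norm{\alpha}, \norm{\beta} < \rho$ with some $\rho > 0$ such that $U(\rho) \subset U_1$) forces the majorant series $\sum_{i,j} C_{i,j} r^i s^j$ to converge on some bidisc of positive radius.

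For the final step, I factor the desired bound out of each term. When $i, j \geq 1$ and $i + j \geq 3$,
\[ \norm{\alpha}^i \norm{\beta}^j
= \norm{\alpha} \cdot \norm{\beta} \cdot \norm{\alpha}^{i-1} \norm{\beta}^{j-1}
\leq \norm{\alpha} \cdot \norm{\beta} \cdot (\norm{\alpha} + \norm{\beta})^{i+j-2} . \]
Since $i + j - 2 \geq 1$, I may further extract one factor of $(\norm{\alpha} + \norm{\beta})$ and leave $(\norm{\alpha} + \norm{\beta})^{i+j-3}$. I then choose $\epsilon_1 > 0$ so small that $U(\epsilon_1) \subset U_1$ and $2\epsilon_1$ is within the domain of absolute convergence of the majorant series. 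For $\alpha, \beta \in U(\epsilon_1)$ we have $\norm{\alpha} + \norm{\beta} < 2 \epsilon_1$, so
\[ \sum_{\substack{i, j \geq 1 \\ i + j \geq 3}} \norm{F_{i,j}(\alpha, \beta)}
\leq \norm{\alpha} \cdot \norm{\beta} \cdot (\norm{\alpha} + \norm{\beta})
\cdot \underbrace{\sum_{\substack{i,j \geq 1 \\ i+j \geq 3}} C_{i,j} (2\epsilon_1)^{i+j-3}}_{=: c_1 < \infty} , \]
which is the required estimate.

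The only mildly delicate point is the second step: extracting term-by-term norm bounds $\norm{F_{i,j}(\alpha,\beta)} \leq C_{i,j} \norm{\alpha}^i \norm{\beta}^j$ whose bi-homogeneous sum converges. This is not quite the statement of CBH in \cite{Bo, Va}, but it follows from the same convergence estimates used to establish absolute convergence of the Hausdorff series on $U_1$; one may also invoke Dynkin's explicit formula for $F_{i,j}$, which gives a transparent bound of the form $C_{i,j} \leq K^{i+j-1} / (i+j)$ up to combinatorial factors controlled by the denominators appearing in Dynkin's expression.
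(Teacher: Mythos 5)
Your proof is correct and follows essentially the same route as the paper: isolate the bidegree $(1,0)$, $(0,1)$, $(1,1)$ components, bound each remaining bi-homogeneous term by $C_{i,j} \cdot \norm{\alpha}^i \cdot \norm{\beta}^j$, and factor out $\norm{\alpha} \cdot \norm{\beta} \cdot (\norm{\alpha} + \norm{\beta})$ from every term with $i, j \geq 1$ and $i + j \geq 3$, summing the convergent majorant. The one point you flag as delicate -- summability of the bi-homogeneous operator norms -- is precisely what the paper takes from \cite[Section II.7.2]{Bo} (convergence of $\sum_{i,j} f_{i,j} \epsilon_1^{i+j}$ with $f_{i,j}$ the norm of $F_{i,j}$), so your fallback via Dynkin's formula is an acceptable alternative but not needed.
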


\begin{proof}
Let us denote by 
$f_{i, j}$ the norm of the bi-homogeneous function 
\[ F_{i, j} : \g \times \g \to \g . \]
Namely
\[ f_{i, j} := \sup \, \{ \norm{F_{i, j}(\alpha, \beta)} \mid 
\norm{\alpha}, \norm{\beta} \leq 1 \} . \]
So for any $\alpha, \beta \in \g$ one has
\[ \norm{F_{i, j}(\alpha, \beta)} \leq 
f_{i, j} \cdot \norm{\alpha}^i \cdot \norm{\beta}^j . \]
In \cite[Section II.7.2]{Bo} it is shown that there is a positive
number $\epsilon_1$, such that the series 
$\sum_{i, j \geq 0} f_{i,j} \epsilon_1^{i+j}$
converges (to a finite sum). 
By shrinking $\epsilon_1$ if necessary, we may assume that
$\ol{U}(\epsilon_1 ) \subset U_1$.

Now in \cite[Section II.6.4]{Bo} it is proved that 
\[ F_{1,0}(x, y) = x, \ F_{0, 1}(x, y) = y, \
F_{1, 1}(x,y) = \smfrac{1}{2} [x,y] \]
and 
\[ F_{i, 0}(x,y) = F_{0, j}(x,y) = 0 \ \ \text{for} \ \ i, j \neq 1 . \]
Thus for $\alpha, \beta \in U(\epsilon_1)$ we have
\[  F(\alpha, \beta) - (\alpha + \beta + \smfrac{1}{2} [x,y]) = 
\sum\limits_{(i, j) \in I} F_{i, j}(\alpha, \beta) , \]
where
\[ I := \{ (i, j) \mid i, j \geq 1 \ \text{and} \ i + j \geq 3 \} . \]

Now if $(i, j) \in I$ and $0 \leq a, b \leq \epsilon_1$, then 
\[ a^i b^j \leq a b (a+b) \cdot \epsilon_1^{i + j -3} . \]
Therefore for $\alpha, \beta \in U(\epsilon_1)$, with
$a := \norm{\alpha}$ and $b := \norm{\beta}$, we have the estimate
\[ \begin{aligned}
& \Norm{ \sum\limits_{(i, j) \in I} F_{i, j}(\alpha, \beta) }
\leq \sum\limits_{(i, j) \in I} \Norm{ F_{i, j}(\alpha, \beta) } \\
& \qquad \leq \sum\limits_{(i, j) \in I} f_{i,j} \cdot a^i \cdot b^j
\leq a b (a+b) \cdot \epsilon_1^{-3} \cdot  
\sum\limits_{(i, j) \in I} f_{i,j} \cdot \epsilon_1^{i+j} \, .
\end{aligned} \]
Taking the number
\[ c_1 := \epsilon_1^{-3} \cdot  
\sum\limits_{(i, j) \in I} f_{i,j} \cdot \epsilon_1^{i+j} \]
does the trick.
\end{proof}

\subsection{Calculations with a Few Elements}
We continue with the chosen open set $V_0(G) \subset G$ and norm $\norm{-}$ on
the Lie algebra $\g$.
In this subsection we translate the estimate for the CBH series to estimates on
$\exp$ and $\log$.
Recall our notation: $U(r)$ is the open ball of positive radius $r$ in $\g$, and
$\ol{U}(r)$ is the closed ball.

\begin{lem} \label{lem11}
There exist constants $\epsilon_2, c_2 \in \R$ such that 
\[ 0< \epsilon_2 \leq 1 \ \ \text{and} \ \ 1 \leq c_2 , \]
and such the following formulas hold for every 
$\alpha, \beta \in U(\epsilon_2)$.
\begin{eqnarray} \label{eqn:82}
\opn{exp}(\alpha) \cdot \opn{exp}(\beta) \in V_0(G) , \\
\label{eqn:86}
\norm{\, [\alpha, \beta]\, } \leq
c_{2} \cdot \norm{\alpha} \cdot \norm{\beta} \, ,
\end{eqnarray}
and
\begin{equation} \label{eqn:83}
\begin{aligned}
& \Norm{ \opn{log} \bigl( \opn{exp}(\alpha) \cdot 
\opn{exp}(\beta) \bigr) -  
( \alpha + \beta + \smfrac{1}{2}[\alpha, \beta] ) } \\
& \qquad
\leq c_{2} \cdot \norm{\alpha} \cdot \norm{\beta} \cdot 
\bigl( \norm{\alpha} + \norm{\beta} \bigr) \, .
\end{aligned}
\end{equation}
\end{lem}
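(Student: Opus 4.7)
The plan is to assemble the three assertions of the lemma from two standard ingredients: the CBH estimate of Lemma \ref{lem:35} (which bounds $F(\alpha,\beta)-(\alpha+\beta+\tfrac12[\alpha,\beta])$), and basic continuity considerations for $\exp$ and for the bilinear bracket.

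First I would choose $\epsilon_2$ small enough so that four conditions hold simultaneously: (a) $\epsilon_2 \leq 1$; (b) $\ol{U}(\epsilon_2) \subset U_0(\g)$, so that $\log \circ \exp$ is the identity on our balls; (c) $\epsilon_2 \leq \epsilon_1$, so that the CBH series converges and Lemma \ref{lem:35} applies; and (d) for all $\alpha,\beta \in \ol{U}(\epsilon_2)$ we have $\exp(\alpha)\cdot\exp(\beta) \in V_0(G)$. Condition (d) is possible since the map $(\alpha,\beta)\mapsto \exp(\alpha)\cdot \exp(\beta)$ is continuous and sends $(0,0)$ to $1 \in V_0(G)$. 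This immediately gives formula (\ref{eqn:82}).

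Next, for the bracket bound (\ref{eqn:86}): the Lie bracket $[-,-] : \g \times \g \to \g$ is a continuous bilinear map on a finite-dimensional normed space, so its operator norm $c := \sup\{\norm{[\alpha,\beta]} : \norm{\alpha},\norm{\beta}\leq 1\}$ is finite, and bilinearity yields $\norm{[\alpha,\beta]} \leq c \cdot \norm{\alpha}\cdot\norm{\beta}$ for all $\alpha,\beta$. This holds without any smallness restriction; it contributes one candidate for the constant $c_2$.

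For the main estimate (\ref{eqn:83}), the point is to identify $\log(\exp(\alpha)\cdot\exp(\beta))$ with $F(\alpha,\beta)$. By the CBH Theorem (\ref{eqn:87}), we have $\exp(F(\alpha,\beta)) = \exp(\alpha)\cdot \exp(\beta)$ whenever $\alpha,\beta \in U_1$. To conclude that the left-hand side equals $\log$ of the right-hand side, I need $F(\alpha,\beta)$ itself to lie in $U_0(\g)$. Using the triangle inequality together with Lemma \ref{lem:35} and the bound on $\norm{[\alpha,\beta]}$, one has
\[
\norm{F(\alpha,\beta)} \leq \norm{\alpha}+\norm{\beta}+\smfrac{1}{2} c \norm{\alpha}\norm{\beta} + c_1 \norm{\alpha}\norm{\beta}(\norm{\alpha}+\norm{\beta}),
\]
which can be made less than any preassigned radius by shrinking $\epsilon_2$ further; in particular I arrange $F(\alpha,\beta) \in U_0(\g)$. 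Then by injectivity of $\exp$ on $U_0(\g)$ we deduce $\log(\exp(\alpha)\cdot \exp(\beta)) = F(\alpha,\beta)$, and Lemma \ref{lem:35} directly yields (\ref{eqn:83}) with constant $c_1$. Finally set $c_2 := \max(1, c, c_1)$ to obtain the normalization $c_2 \geq 1$ demanded by the statement.

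The only real subtlety is the bookkeeping of the shrinking procedure for $\epsilon_2$: one must pick it small enough that $F(\alpha,\beta)$ stays inside the domain $U_0(\g)$ on which $\log$ inverts $\exp$, so that the CBH identity can be read as a formula for $\log$ rather than merely for $\exp$. Once $\epsilon_2$ is chosen once and for all to satisfy (a)--(d) above together with this last requirement, the three estimates of the lemma follow at once.
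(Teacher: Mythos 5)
Your proposal is correct and follows essentially the same route as the paper: continuity for (\ref{eqn:82}), bilinearity of the bracket in finite dimensions for (\ref{eqn:86}), and Lemma \ref{lem:35} together with the CBH identity (\ref{eqn:87}) for (\ref{eqn:83}). In fact you spell out one point the paper leaves implicit, namely shrinking $\epsilon_2$ so that $F(\alpha,\beta)$ lands in $U_0(\g)$, which justifies reading the CBH identity as $\log(\exp(\alpha)\cdot\exp(\beta)) = F(\alpha,\beta)$.
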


\begin{proof}
The existence of a positive $\epsilon_2$ making formula (\ref{eqn:82}) true
is easy (due to continuity). We can assume 
$\epsilon_2 \leq \min (1, \epsilon_1)$.

The existence of a number $c_2$ making inequality (\ref{eqn:86}) valid is also
easy, because the function
$\norm{\, [\alpha, \beta]\, }$ is bilinear and $\g$ is finite dimensional. 
We can assume that $c_2 \geq \max(1, c_1)$. 

Finally, the validity of formula (\ref{eqn:83}) comes from Lemma \ref{lem:35}
and the CBH formula (\ref{eqn:87}).
\end{proof}

\begin{lem} \label{lem:33}
There is a real number $c_3$ such that $c_3 \geq c_2$, and for any 
$\alpha_1, \alpha_2, \beta_1, \beta_2 \in \g$ satisfying
$\norm{\alpha_i} + \norm{\beta_i} < \epsilon_2$ the inequality 
\begin{equation} \label{eqn:84}
\begin{aligned}
& \Norm{ \opn{log} \bigl( \opn{exp}(\alpha_1 + \beta_1) \cdot 
\opn{exp}(\alpha_2 + \beta_2) \bigr) -  
\opn{log} \bigl( \opn{exp}(\alpha_1) \cdot 
\opn{exp}(\alpha_2) \bigr) } \\
& \qquad \leq c_{3} \cdot (\norm{\beta_1} + \norm{\beta_2} ) .
\end{aligned} 
\end{equation}
holds.
\end{lem}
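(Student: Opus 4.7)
The plan is to recognize the quantity $\log(\exp(\gamma_1)\exp(\gamma_2))$ as an analytic function of $(\gamma_1, \gamma_2)$ on a neighborhood of the compact set $\ol{U}(\epsilon_2) \times \ol{U}(\epsilon_2)$ by invoking the CBH formula, and then to apply the mean value inequality to a straight-line path in the two variables $(\alpha_1, \alpha_2)$ joining it to $(\alpha_1 + \beta_1, \alpha_2 + \beta_2)$.

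Concretely, I would set
\[
\Phi(\gamma_1, \gamma_2) := \log\bigl(\exp(\gamma_1) \cdot \exp(\gamma_2)\bigr).
\]
By the CBH identity (\ref{eqn:87}), and the fact that the constant $\epsilon_2$ produced in the proof of Lemma~\ref{lem11} satisfies $\epsilon_2 \leq \epsilon_1$, the equality $\Phi = F$ holds on $U(\epsilon_1) \times U(\epsilon_1)$. The absolute convergence $\sum_{i,j} f_{i,j} \epsilon_1^{i+j} < \infty$ established inside the proof of Lemma~\ref{lem:35}, together with the standard fact that a convergent multivariate power series may be differentiated term by term on the strict interior of its region of convergence, shows that $F$ is of class $C^1$ on $U(\epsilon_1) \times U(\epsilon_1)$. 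Hence the operator norm $\norm{DF}$ is continuous on this open set, and attains a finite bound $M$ on the compact subset $\ol{U}(\epsilon_2) \times \ol{U}(\epsilon_2) \subset U(\epsilon_1) \times U(\epsilon_1)$.

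Now for any $\alpha_i, \beta_i \in \g$ with $\norm{\alpha_i} + \norm{\beta_i} < \epsilon_2$, the linear path
\[
t \mapsto (\alpha_1 + t \beta_1,\, \alpha_2 + t \beta_2),\quad t \in [0,1],
\]
stays inside $U(\epsilon_2) \times U(\epsilon_2)$, since $\norm{\alpha_i + t\beta_i} \leq \norm{\alpha_i} + \norm{\beta_i} < \epsilon_2$. Applying the mean value inequality to $F$ along this path will give
\[
\Norm{F(\alpha_1 + \beta_1,\, \alpha_2 + \beta_2) - F(\alpha_1, \alpha_2)} \leq M \cdot (\norm{\beta_1} + \norm{\beta_2}),
\]
which, rewritten via $\Phi = F$, is exactly the estimate (\ref{eqn:84}). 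Taking $c_3 := \max(c_2, M)$ then satisfies both requirements $c_3 \geq c_2$ and (\ref{eqn:84}).

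The only delicate point will be justifying that the differentiated Hausdorff series converges absolutely on any closed ball of radius $r < \epsilon_1$, so that $F$ really is $C^1$ with $DF$ bounded on $\ol{U}(\epsilon_2) \times \ol{U}(\epsilon_2)$. This is routine: for $(\gamma_1, \gamma_2) \in \ol{U}(r)^{\times 2}$ one has $\norm{D F_{i,j}(\gamma_1, \gamma_2)} \leq (i+j) f_{i,j} r^{i+j-1}$, and $\sum_{i,j}(i+j) f_{i,j} r^{i+j}$ converges for $r < \epsilon_1$ by the same geometric-factor argument used to control $\sum f_{i,j} \epsilon_1^{i+j}$ in Lemma~\ref{lem:35}. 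No new ideas beyond that lemma are needed.
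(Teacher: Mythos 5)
Your proposal is correct in substance and, at bottom, runs on the same mechanism as the paper's proof: bound the derivative of $(\gamma_1, \gamma_2) \mapsto \log\bigl(\exp(\gamma_1) \cdot \exp(\gamma_2)\bigr)$ uniformly on a compact neighbourhood and conclude a Lipschitz estimate. The difference is how the differentiability and the bound are obtained. The paper never differentiates the Hausdorff series here: it observes that $f_{\alpha}(\beta) = \log(\exp(\alpha)\cdot\exp(\beta))$ is smooth (indeed analytic) as a composition of analytic Lie-group operations, Taylor-expands to first order in each variable separately with uniformly bounded linear term and remainder, and adds the two one-variable estimates; no series manipulation is needed. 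You instead identify the map with $F$ on the CBH domain, differentiate the series term by term, and apply the mean value inequality along a straight line in both variables simultaneously. This works, but the step you dismiss as routine is exactly where care is needed: the bound $\norm{D F_{i,j}} \leq (i+j)\, f_{i,j}\, r^{i+j-1}$ is not automatic for a bi-homogeneous polynomial on an arbitrary normed space. It does hold here because $\norm{-}$ is a euclidean norm, so the symmetric multilinear form attached to a homogeneous polynomial has the same norm (the Kellogg--Banach polarization equality, applied in each variable separately); alternatively a Markov/Cauchy-type estimate gives a cruder factor like $(i+j)^2$, which is still summable against $f_{i,j} r^{i+j}$ for $r < \epsilon_1$ by your geometric-factor argument. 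Also, your compactness claim $\ol{U}(\epsilon_2) \times \ol{U}(\epsilon_2) \subset U(\epsilon_1) \times U(\epsilon_1)$ requires $\epsilon_2 < \epsilon_1$ strictly, whereas Lemma \ref{lem11} only guarantees $\epsilon_2 \leq \min(1, \epsilon_1)$; this is harmless, since $\epsilon_2$ may be shrunk at the outset. With these two small repairs your argument is complete, and it yields the Lipschitz constant in one shot rather than via the paper's variable-at-a-time Taylor expansions.
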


\begin{proof}
Given $\alpha \in \ol{U}(\epsilon_2)$ we have a smooth
(in fact analytic) function
$f_{\alpha} : U(\epsilon_2) \to \g$ defined by
\[ f_{\alpha}(\beta) := \log 
\bigl( \opn{exp}(\alpha) \cdot \opn{exp}(\beta) \bigr) . \]
Let $\g^*$ be the dual space of $\g$, and let 
$\d_{\beta} f_{\alpha} \in \g^*$
be the derivative of $f_{\alpha}$ at $\beta$. 
Then Taylor expansion of $f_{\alpha}$ around $\beta$ gives us
\begin{equation} \label{eqn15}
f_{\alpha}(\beta + \gamma) = f_{\alpha}(\beta) +
(\d_{\beta} f_{\alpha})(\gamma) +
(\opn{R}_{\beta}^{\geq 2} f_{\alpha})(\gamma)
\end{equation}
for any $\gamma \in \ol{U}(\epsilon_2)$. 
Here $\opn{R}_{\beta}^{\geq 2} f_{\alpha}$ is just the remainder, in other
words the higher order part of the Taylor expansion. 
The linear term 
$(\d_{\beta} f_{\alpha})(\gamma)$ 
can be estimated by
\[ \norm{ (\d_{\beta} f_{\alpha})(\gamma)  } \leq
a_1 \cdot \norm{\gamma}  \]
for a suitable positive number $a_1$. 
The remainder $(\opn{R}_{\beta}^{\geq 2} f_{\alpha})(\gamma)$ can be
estimated by
\[ \norm{ (\opn{R}_{\beta}^{\geq 2} f_{\alpha})(\gamma) } 
\leq a_2 \cdot \norm{\gamma}^2  \]
for some positive number $a_2$. 

These inequalities tell us that if 
$\alpha, \beta, \gamma \in U(\epsilon_2)$ then 
\begin{equation} \label{eqn17}
\begin{aligned}
& \Norm{ \opn{log} \bigl( \opn{exp}(\alpha) \cdot 
\opn{exp}(\beta + \gamma) \bigr) -  
\opn{log} \bigl( \opn{exp}(\alpha) \cdot 
\opn{exp}(\beta) \bigr) } \\
& \qquad
= \Norm{ \opn{log} \bigl( f_{\alpha}(\beta + \gamma) \bigr) -  
\opn{log} \bigl( f_{\alpha}(\beta) \bigr) } \\
& \qquad
\leq a_1 \cdot \norm{\gamma} + a_2 \cdot \norm{\gamma}^2
\leq (a_1 + a_2) \cdot \norm{\gamma} . 
\end{aligned} 
\end{equation}

In a similar way we look at the function 
$g_{\beta}(\alpha) := f_{\alpha}(\beta)$, 
and we obtain constants $b_1, b_2$ such that
\begin{equation}  \label{eqn18}
\Norm{ \opn{log} \bigl( \opn{exp}(\alpha + \gamma) \cdot 
\opn{exp}(\beta) \bigr) -  
\opn{log} \bigl( \opn{exp}(\alpha) \cdot 
\opn{exp}(\beta) \bigr) } 
\leq (b_1 + b_2) \cdot \norm{\gamma} .
\end{equation}

Therefore by taking
\[ c_3 := \max \{ c_2, a_1 + a_1, b_1 + b_2  \} \]
all conditions are satisfied.
\end{proof}

\begin{lem} \label{lem16}
There exists a constant $c_4 \geq c_3$ such that the following inequalities
hold for any $\alpha, \beta \in \g$ satisfying
$\norm{\alpha}, \norm{\beta} < \epsilon_2$.
\[ \Norm{ \opn{log} \bigl( \opn{exp}(\alpha) \cdot 
\opn{exp}(\beta) \bigr) -  ( \alpha + \beta ) } \leq
c_{4} \cdot \norm{\alpha} \cdot \norm{\beta} \]
and
\[ \Norm{ \opn{log} \bigl( \opn{exp}(\alpha) \cdot \opn{exp}(\beta) \bigr) }
\leq \norm{\alpha} + \norm{\beta} +
c_{4} \cdot \norm{\alpha} \cdot \norm{\beta} \, . \]
\end{lem}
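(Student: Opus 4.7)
The plan is to derive both inequalities directly from Lemma \ref{lem11}, using only the triangle inequality and the smallness assumptions $\norm{\alpha}, \norm{\beta} < \epsilon_2 \leq 1$.

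First I would apply inequality (\ref{eqn:83}) from Lemma \ref{lem11}, which gives
\[ \Norm{ \opn{log} \bigl( \opn{exp}(\alpha) \cdot \opn{exp}(\beta) \bigr) - ( \alpha + \beta + \smfrac{1}{2}[\alpha, \beta] ) } \leq c_{2} \cdot \norm{\alpha} \cdot \norm{\beta} \cdot ( \norm{\alpha} + \norm{\beta} ) . \]
Combining this with the triangle inequality and the bracket bound (\ref{eqn:86}), which yields $\norm{\smfrac{1}{2}[\alpha,\beta]} \leq \smfrac{1}{2} c_{2} \norm{\alpha} \norm{\beta}$, gives
\[ \Norm{ \opn{log} \bigl( \opn{exp}(\alpha) \cdot \opn{exp}(\beta) \bigr) - (\alpha + \beta) } \leq c_{2} \norm{\alpha} \norm{\beta} (\norm{\alpha} + \norm{\beta}) + \smfrac{1}{2} c_{2} \norm{\alpha} \norm{\beta} . \]
Since $\norm{\alpha} + \norm{\beta} < 2 \epsilon_2 \leq 2$, the right hand side is bounded by $\smfrac{5}{2} c_{2} \norm{\alpha} \norm{\beta}$. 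Setting $c_{4} := \max \{ c_{3},\, \smfrac{5}{2} c_{2} \}$ ensures both $c_{4} \geq c_{3}$ and the first asserted inequality.

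For the second inequality, I would simply invoke the triangle inequality once more:
\[ \Norm{ \opn{log} \bigl( \opn{exp}(\alpha) \cdot \opn{exp}(\beta) \bigr) } \leq \Norm{ \opn{log} \bigl( \opn{exp}(\alpha) \cdot \opn{exp}(\beta) \bigr) - (\alpha + \beta) } + \norm{\alpha + \beta} , \]
and then apply the first inequality together with $\norm{\alpha + \beta} \leq \norm{\alpha} + \norm{\beta}$ to conclude.

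There is no real obstacle here; the lemma is purely a bookkeeping corollary of (\ref{eqn:83}) and (\ref{eqn:86}). The only thing to watch is that the estimate in (\ref{eqn:83}) is cubic in the norms (via the factor $\norm{\alpha}+\norm{\beta}$), whereas the target estimate is only bilinear, so one must use $\epsilon_2 \leq 1$ to trade the extra factor for a constant.
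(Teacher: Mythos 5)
Your proposal is correct and follows essentially the same route as the paper: apply Lemma \ref{lem11} (estimate (\ref{eqn:83}) together with the bracket bound (\ref{eqn:86})), use the triangle inequality and $\norm{\alpha}+\norm{\beta} \leq 2\epsilon_2 \leq 2$ to absorb the cubic factor, and then get the second inequality as an immediate consequence. The paper takes $c_4 := \max(c_3, 3c_2)$ instead of your slightly tighter $\max(c_3, \smfrac{5}{2}c_2)$, which is an immaterial difference.
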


\begin{proof}
{}From Lemma \ref{lem11} and the triangle inequality we get
\[ \begin{aligned}
& \Norm{ \opn{log} \bigl( \opn{exp}(\alpha) \cdot 
\opn{exp}(\beta) \bigr) -  
( \alpha + \beta ) } \\
& \qquad \leq c_{2} \cdot \norm{\alpha} \cdot \norm{\beta} \cdot 
\bigl( \norm{\alpha} + \norm{\beta} \bigr)  +
\smfrac{1}{2} c_2 \cdot \norm{\alpha} \cdot \norm{\beta} \\
& \qquad \leq 3 c_2  \cdot \norm{\alpha} \cdot \norm{\beta} \, .  
\end{aligned} \]
This proves the first inequality, with
$c_4 := \max (c_3, 3 c_2)$.
The second inequality is an immediate consequence.
\end{proof}

\subsection{Calculations with Many Elements}
Let us define
\begin{equation} \label{eqn:101}
\epsilon_4 :=  \smfrac{1}{12} c_4^{-1} \cdot \epsilon_2 \, . 
\end{equation}

\begin{lem} \label{lem17}
Let $\alpha_1, \ldots, \alpha_m \in \g$ be such that
$\sum_{i = 1}^m\, \norm{\alpha_i} < \epsilon_4$.
Then the relation and inequalities below hold:
\begin{equation} \label{eqn:90}
\prod_{i=1}^m\, \opn{exp}(\alpha_i) \in V_0(G) ,
\end{equation}
\begin{equation} \label{eqn:91}
\Norm{ \opn{log} \bigl( \prod\nolimits_{i=1}^m
\opn{exp}(\alpha_i) \bigr) } \leq 
\smfrac{3}{2} \sum_{i = 1}^m\, \norm{\alpha_i} 
\end{equation}
and
\begin{equation} \label{eqn:92}
\Norm{ \opn{log} \bigl( \prod\nolimits_{i=1}^m
\opn{exp}(\alpha_i) \bigr) - \sum\nolimits_{i=1}^m \alpha_i } \leq
c_4 \cdot \bigl( \sum\nolimits_{i = 1}^m\, \norm{\alpha_i} \bigr)^2 \, .
\end{equation}
\end{lem}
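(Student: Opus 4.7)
The natural approach is induction on $m$, combining one new factor at a time using Lemma \ref{lem16}. Let me write $S_k := \sum_{i=1}^k \norm{\alpha_i}$, so the hypothesis is $S_m < \epsilon_4 = \smfrac{1}{12} c_4^{-1} \epsilon_2$.

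The base case $m = 1$ is trivial (and for $m = 2$ is an immediate application of Lemma \ref{lem16}, since each $\norm{\alpha_i} \leq S_2 < \epsilon_4 \leq \epsilon_2$). For the inductive step, assume the three assertions hold for $m-1$. Set
\[ \beta := \opn{log} \bigl( \sprod_{i=1}^{m-1} \opn{exp}(\alpha_i) \bigr) \in \g , \]
which is well-defined by the first inductive assertion. The plan is then to apply Lemma \ref{lem16} to the pair $(\beta, \alpha_m)$, obtaining the result for $m$ since
\[ \sprod_{i=1}^m \opn{exp}(\alpha_i) = \opn{exp}(\beta) \cdot \opn{exp}(\alpha_m) . \]

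To invoke Lemma \ref{lem16} I must verify that $\norm{\beta}, \norm{\alpha_m} < \epsilon_2$. Clearly $\norm{\alpha_m} \leq S_m < \epsilon_4 \leq \epsilon_2$; and by the second inductive assertion, $\norm{\beta} \leq \smfrac{3}{2} S_{m-1} < \smfrac{3}{2} \epsilon_4 < \epsilon_2$. Lemma \ref{lem16} then yields both $\opn{exp}(\beta) \cdot \opn{exp}(\alpha_m) \in V_0(G)$ (which gives (\ref{eqn:90})) and the two scalar estimates
\[ \norm{ \opn{log} (\opn{exp}(\beta) \opn{exp}(\alpha_m)) - (\beta + \alpha_m) } \leq c_4 \norm{\beta} \norm{\alpha_m} \]
and
\[ \norm{ \opn{log} (\opn{exp}(\beta) \opn{exp}(\alpha_m)) } \leq \norm{\beta} + \norm{\alpha_m} + c_4 \norm{\beta} \norm{\alpha_m} . \]

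For the bound (\ref{eqn:92}), I apply the triangle inequality combined with the third inductive assertion:
\[ \norm{ \opn{log}(\sprod \opn{exp}(\alpha_i)) - \sbosum_{i=1}^m \alpha_i } \leq c_4 \norm{\beta} \norm{\alpha_m} + c_4 S_{m-1}^2 \leq c_4 \bigl( \smfrac{3}{2} S_{m-1} \norm{\alpha_m} + S_{m-1}^2 \bigr) , \]
which is bounded by $c_4 (S_{m-1} + \norm{\alpha_m})^2 = c_4 S_m^2$ since $\smfrac{3}{2} \leq 2$. For the bound (\ref{eqn:91}), combining the Lemma \ref{lem16} estimate with $\norm{\beta} \leq \smfrac{3}{2} S_{m-1}$ gives
\[ \norm{\opn{log}(\sprod \opn{exp}(\alpha_i))} \leq \smfrac{3}{2} S_{m-1} + \norm{\alpha_m} (1 + \smfrac{3}{2} c_4 S_{m-1}) , \]
and $\smfrac{3}{2} c_4 S_{m-1} \leq \smfrac{3}{2} c_4 \cdot \epsilon_4 = \smfrac{1}{8} \epsilon_2 \leq \smfrac{1}{2}$ (using $\epsilon_2 \leq 1$), so the bracketed factor is at most $\smfrac{3}{2}$, yielding $\smfrac{3}{2} S_m$.

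The only real delicacy is choosing the inductive hypothesis with the factor $\smfrac{3}{2}$ (rather than $1$) so that the error term from Lemma \ref{lem16} can be absorbed; the definition $\epsilon_4 = \smfrac{1}{12} c_4^{-1} \epsilon_2$ is evidently tailored precisely to make the arithmetic $\smfrac{3}{2} c_4 S_{m-1} \leq \smfrac{1}{2}$ go through. Once this bookkeeping is in place, the proof is routine induction.
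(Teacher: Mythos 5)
Your proof is correct and follows essentially the same inductive strategy as the paper's own proof: define $\beta$ as the logarithm of the product of the first $m-1$ factors, verify via the inductive bound that $\norm{\beta} < \epsilon_2$, and then apply Lemma \ref{lem16} to the pair $(\beta, \alpha_m)$, with the definition of $\epsilon_4$ tuned so the error term is absorbed. The only minor differences are cosmetic: the paper indexes the induction from $m$ to $m+1$ rather than $m-1$ to $m$, and it attributes the $\in V_0(G)$ claim to Lemma \ref{lem11} rather than Lemma \ref{lem16} (which is a slightly cleaner attribution, since Lemma \ref{lem16} only records norm estimates), but these do not change the content.
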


\begin{proof}
Recall that $\epsilon_2 \leq 1$ and $c_4 \geq 1$.

For $m = 1$ all is clear. Take $m = 2$. Then since
$\norm{\alpha_i} < \epsilon_2$ we have 
\[ \opn{exp}(\alpha_1) \cdot \opn{exp}(\alpha_2) \in V_0(G) \]
by Lemma \ref{lem11}. Next, since
$\norm{\alpha_1} \leq \smfrac{1}{6} c_4^{-1}$, it follows that
\[ c_4 \cdot \norm{\alpha_1} \cdot \norm{\alpha_2} \leq 
\smfrac{1}{6}  \norm{\alpha_2} \, . \]
{}From this and Lemma \ref{lem16} we see that
\[ \Norm{ \opn{exp}(\alpha_1) \cdot \opn{exp}(\alpha_2) } \leq 
\norm{\alpha_1} + \norm{\alpha_2} + c_4 \cdot \norm{\alpha_1} \cdot
\norm{\alpha_2} 
\leq \smfrac{3}{2} \cdot (\norm{\alpha_1} + \norm{\alpha_2}) \, . \]
Hence (\ref{eqn:91}) holds for $m = 2$.
Again using Lemma \ref{lem16}  we have
\[ \begin{aligned}
& \Norm{ \opn{log} \bigl( \opn{exp}(\alpha_1) \cdot 
\opn{exp}(\alpha_2) \bigr) -  
( \alpha_1 + \alpha_2 ) } \\
& \qquad \leq
c_{4} \cdot \norm{\alpha_1} \cdot \norm{\alpha_2} 
\leq c_{4} \cdot (\norm{\alpha_1} + \norm{\alpha_2})^2 .
\end{aligned}  \]
This finishes the case $m = 2$.

Now assume the assertions are true for $m \geq 2$, and consider 
$\alpha_1, \ldots, \lb \alpha_{m+1} \in \g$ such that
$\sum_{i = 1}^{m+1}\, \norm{\alpha_i} < \epsilon_4$.
Define
\[ \beta := \opn{log} \bigl( \prod\nolimits_{i=1}^{m}
\opn{exp}(\alpha_i) \bigr)  , \]
so that
\[ \prod_{i=1}^{m+1} \opn{exp}(\alpha_i) =
\opn{exp}(\beta) \cdot \opn{exp}(\alpha_{m+1}) . \]
By the induction hypothesis, i.e.\ inequality (\ref{eqn:91}) for $m$,
we have
\begin{equation} \label{eqn:93}
\norm{\beta} \leq \smfrac{3}{2} \sum_{i = 1}^m\, \norm{\alpha_i}
\leq
\smfrac{3}{2} \sum_{i = 1}^{m+1}\, \norm{\alpha_i}
\leq \smfrac{3}{2} \cdot \smfrac{1}{6}  c_4^{-1} \cdot \epsilon_2 =
\smfrac{1}{4} c_4^{-1} \cdot \epsilon_2 \, .
\end{equation}
This implies 
$\norm{\beta} < \epsilon_2$. Since we also have 
$\norm{\alpha_{m+1}} < \epsilon_2$, Lemma  \ref{lem11} says that
\[ \opn{exp}(\beta) \cdot \opn{exp}(\alpha_{m+1}) \in V_0(G) . \]
This verifies (\ref{eqn:90}) for $m+1$.

Using Lemma \ref{lem11} again we see that 
\[ \Norm{ \opn{log} \bigl( \opn{exp}(\beta) \cdot
\opn{exp}(\alpha_{m+1})
\bigr) } \leq 
\norm{\beta} + \norm{\alpha_{m+1}} + 
c_4 \cdot \norm{\beta} \cdot \norm{\alpha_{m+1}} \, . \]
Because we also have the inequality
$c_4 \cdot \norm{\beta} \leq \smfrac{1}{2}$, 
we conclude that
\[ \norm{\beta} + \norm{\alpha_{m+1}} + 
c_4 \cdot \norm{\beta} \cdot \norm{\alpha_{m+1}} \leq
\smfrac{3}{2} \cdot \norm{\beta} + \smfrac{3}{2} \cdot \norm{\alpha_{m+1}} \leq
\smfrac{3}{2} \cdot \sum_{i = 1}^{m+1}\, \norm{\alpha_i} \, . \]
Thus inequality (\ref{eqn:91}) holds for $m+1$.

According to Lemma \ref{lem16} we have
\[ \Norm{ \opn{log} \bigl( \opn{exp}(\beta) \cdot
\opn{exp}(\alpha_{m+1}) 
\bigr) - (\beta + \alpha_{m+1}) } \leq 
c_4 \cdot \norm{\beta} \cdot \norm{\alpha_{m+1}} . \]
The induction assumption for inequality (\ref{eqn:92}) says that
\[ \Norm{ \beta - \sum\nolimits_{i=1}^m \alpha_i } \leq 
c_4 \cdot \bigl( \sum\nolimits_{i = 1}^m\, \norm{\alpha_i} \bigr)^2 \, . \]
Combining these with the inequality (\ref{eqn:93}) we obtain
\[ \begin{aligned}
& \Norm{ \opn{log} \bigl( \opn{exp}(\beta) \cdot
\opn{exp}(\alpha_{m+1}) 
\bigr) -  \sum\nolimits_{i=1}^{m+1} \alpha_i } \\
& \quad \leq 
\Norm{ \opn{log} \bigl( \opn{exp}(\beta) \cdot
\opn{exp}(\alpha_{m+1}) 
\bigr) - (\beta + \alpha_{m+1}) } \\
& \quad \quad +
\Norm{ (\beta + \alpha_{m+1}) - \sum\nolimits_{i=1}^{m+1} \alpha_i } \\
& \quad \leq c_4 \cdot \norm{\beta} \cdot \norm{\alpha_{m+1}} +
c_4 \cdot \bigl( \sum\nolimits_{i = 1}^m\, \norm{\alpha_i} \bigr)^2 \\
& \quad \leq c_4 \cdot \Bigl(
\bigl( \smfrac{3}{2} \sum\nolimits_{i = 1}^m\, \norm{\alpha_i} \bigr)
\cdot \norm{\alpha_{m+1}} + 
\bigl( \sum\nolimits_{i = 1}^m\, \norm{\alpha_i} \bigr)^2 \Bigr)  \\
& \quad \leq c_4 \cdot 
\bigl( \sum\nolimits_{i = 1}^{m+1}\, \norm{\alpha_i} \bigr)^2 \, .
\end{aligned} \]
This proves (\ref{eqn:92}).
\end{proof}

\begin{lem} \label{lem18}
Let 
$\alpha_1, \ldots, \alpha_m, \beta_1, \ldots, \beta_m \in \g$ be
such that
\[ \sum_{i = 1}^m\, \bigl( \norm{\alpha_i} + \norm{\beta_i} \bigr)
< \epsilon_4 \, . \]
Then
\[ \begin{aligned}
& \Norm{ \opn{log} \Bigl( \prod\nolimits_{i=1}^m
\bigl( \exp(\alpha_i) \cdot \exp(\beta_i) \bigr) \Bigr) \\
& \qquad \qquad -
\opn{log} \Bigl( \bigl( \prod\nolimits_{i=1}^m \opn{exp}(\alpha_i)
\bigr)
\cdot \bigl( \prod\nolimits_{i=1}^m \opn{exp}(\beta_i) \bigr) \Bigr) }
\\
& \qquad \leq 4 c_4^2 \cdot 
\bigl( \sum\nolimits_{i = 1}^m\, \norm{\alpha_i} \bigr) \cdot  
\bigl( \sum\nolimits_{i = 1}^m\, \norm{\beta_i} \bigr) 
\end{aligned} \]
and
\[ \begin{aligned}
& \Norm{ \opn{log} \bigl( \prod\nolimits_{i=1}^m
\opn{exp}(\alpha_i + \beta_i) \bigr) -
\opn{log} \Bigl( \prod\nolimits_{i=1}^m
\bigl( \exp(\alpha_i) \cdot \exp(\beta_i) \bigr) \Bigr) } \\
& \qquad \leq 2 c_4^2 \cdot \sum\nolimits_{i = 1}^m\, 
\norm{\alpha_i} \cdot \norm{\beta_i} .
\end{aligned} \]
\end{lem}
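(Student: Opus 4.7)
The plan is to prove both inequalities by first establishing an auxiliary Lipschitz-type bound for the logarithm of long products of exponentials under perturbations of the factors. Concretely, I claim there is a constant $C$ (expressible in terms of $c_3$ and $c_4$) such that for any $\gamma_1,\ldots,\gamma_m,\delta_1,\ldots,\delta_m \in \g$ with $\sum_{i=1}^m(\norm{\gamma_i}+\norm{\delta_i}) < \epsilon_4$, one has
\[
\Norm{ \opn{log}\bigl(\prod\nolimits_{i=1}^m \exp(\gamma_i+\delta_i)\bigr) - \opn{log}\bigl(\prod\nolimits_{i=1}^m \exp(\gamma_i)\bigr) } \leq C \cdot \sum\nolimits_{i=1}^m \norm{\delta_i}.
\]
This I prove by a telescoping argument: let $P_j := \opn{log}\bigl(\prod_{i \leq j}\exp(\gamma_i + \delta_i) \cdot \prod_{i>j}\exp(\gamma_i)\bigr)$, so $P_0$ and $P_m$ are the two logarithms being compared. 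Each step $P_j - P_{j-1}$ amounts to perturbing a single factor in an otherwise fixed product; by repackaging as a two-factor product and applying Lemma~\ref{lem:33} twice (first to isolate the perturbation inside a composite factor, then to localize it as $\delta_j$), one obtains $\norm{P_j - P_{j-1}} \leq c_3^2 \norm{\delta_j}$. The norm bounds from Lemma~\ref{lem17} ensure all intermediate logarithms lie safely within $U(\epsilon_2)$, so Lemma~\ref{lem:33} is applicable. Summing yields $C \leq c_3^2$.

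For the second inequality, Lemma~\ref{lem16} writes $\exp(\alpha_i)\exp(\beta_i) = \exp((\alpha_i+\beta_i)+\rho_i)$ with $\norm{\rho_i} \leq c_4 \norm{\alpha_i}\norm{\beta_i}$. Applying the auxiliary Lipschitz bound with $\gamma_i := \alpha_i+\beta_i$ and $\delta_i := \rho_i$ controls the left-hand side by $C c_4 \sum \norm{\alpha_i}\norm{\beta_i}$, and for $c_4$ chosen large enough (as is permitted by the redefinability of the constants) this is $\leq 2c_4^2 \sum\norm{\alpha_i}\norm{\beta_i}$. For the first inequality, introduce the telescope
\[
R_k := \Bigl(\prod\nolimits_{i=1}^{k}\exp(\alpha_i)\exp(\beta_i)\Bigr)\cdot\Bigl(\prod\nolimits_{i=k+1}^{m}\exp(\alpha_i)\Bigr)\cdot\Bigl(\prod\nolimits_{i=k+1}^{m}\exp(\beta_i)\Bigr),
\]
so $R_0$ equals the ``unshuffled'' product on the right side of the lemma and $R_m$ equals the ``interleaved'' product on the left. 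Setting $A_k := \prod_{i=k+1}^m \exp(\alpha_i)$, the conjugation identity $A_k \cdot \exp(\beta_k) = \exp(\opn{Ad}(A_k)(\beta_k))\cdot A_k$ shows that $R_{k-1}$ and $R_k$ differ precisely by replacing $\beta_k$ with $\opn{Ad}(A_k)(\beta_k)$ in one factor. Lemma~\ref{lem17} gives $\norm{\opn{log}(A_k)} \leq \tfrac{3}{2}\sum_{i>k}\norm{\alpha_i}$, and an adjoint estimate $\norm{\opn{Ad}(A_k)(\beta_k) - \beta_k} \leq c_4 \norm{\opn{log}(A_k)}\norm{\beta_k}$ (obtained from $\opn{Ad}(\exp(x))(y) = \opn{log}(\exp(x)\exp(y)\exp(-x))$ combined with Lemma~\ref{lem11}) lets me apply the auxiliary Lipschitz bound once per step; summation over $k$ telescopes to the asserted bound $4c_4^2 (\sum\norm{\alpha_i})(\sum\norm{\beta_i})$.

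The principal obstacle is the constant bookkeeping: the auxiliary Lipschitz bound must be established without compounding its constant across the induction, and the final constants $2c_4^2$ and $4c_4^2$ require choosing $c_4$ generously so that the iterated constants from the two-step applications of Lemma~\ref{lem:33} together with the adjoint estimate collapse into a multiple of $c_4^2$. A secondary technical point is the adjoint estimate $\norm{\opn{Ad}(\exp(x))(y) - y} \leq c_4 \norm{x}\norm{y}$, which is not explicitly among Lemmas~\ref{lem11}--\ref{lem17} but follows from Lemma~\ref{lem11} via the conjugation identity above.
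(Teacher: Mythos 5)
Your argument is sound, but it takes a route that differs from the paper's, most visibly for the first inequality. The paper's proof of the second inequality is essentially what you do: Lemma \ref{lem16} rewrites each $\exp(\alpha_i)\cdot\exp(\beta_i)$ as $\exp\bigl((\alpha_i+\beta_i)+\rho_i\bigr)$ with $\norm{\rho_i}\leq c_4\norm{\alpha_i}\cdot\norm{\beta_i}$, and the replacements are absorbed through Lemma \ref{lem:33}. For the first inequality, however, the paper argues by adjacent transpositions: each $\exp(\beta_j)$ is moved across each $\exp(\alpha_i)$ one swap at a time, the local cost $2c_2\norm{\alpha_i}\cdot\norm{\beta_j}$ coming from Lemma \ref{lem11}, and the $m^2$ swaps are summed. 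You instead front-load a multi-factor Lipschitz bound, proved by telescoping single-factor perturbations with two nested applications of Lemma \ref{lem:33} (and Lemma \ref{lem17} to control the grouped logarithms); note that this bound is essentially inequality (\ref{eqn:95}) of Lemma \ref{lem:36}, which the paper only obtains later as a consequence of the present lemma, so your independent derivation from Lemma \ref{lem:33} keeps the logic non-circular. You then commute each $\exp(\beta_k)$ past the entire tail $\prod_{i>k}\exp(\alpha_i)$ in a single block step via $\opn{Ad}$, which requires the extra estimate $\norm{\opn{Ad}(\exp x)(y)-y}\leq c\cdot\norm{x}\cdot\norm{y}$; this does follow from Lemma \ref{lem11} (applied twice to $\log(\exp(x)\exp(y)\exp(-x))$) as you indicate, though with a constant that is a priori only a polynomial in $c_2$, not literally $c_4$. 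The paper's scheme buys minimal prerequisites; yours buys fewer telescoping steps and a reusable Lipschitz lemma. The one caveat is constant bookkeeping, which you correctly flag: the nested use of Lemma \ref{lem:33} yields bounds of the form $c_3^2 c_4\sum\norm{\alpha_i}\cdot\norm{\beta_i}$ and $\smfrac{3}{2}c_3^2 c_4\bigl(\sum\norm{\alpha_i}\bigr)\bigl(\sum\norm{\beta_i}\bigr)$, and these are $\leq 2c_4^2$, resp.\ $\leq 4c_4^2$, only if $c_4$ is enlarged to dominate $c_3^2$ -- this is not implied by the paper's choice $c_4=\max(c_3,3c_2)$. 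Enlarging $c_4$ (and hence shrinking $\epsilon_4$) is harmless for everything downstream, and the paper's own proof is loose at exactly the same spot (its per-swap factor $2c_3$ likewise comes from iterating Lemma \ref{lem:33}); just state the enlarged $c_4$ explicitly, and also note that after conjugation the total of the factor norms can exceed $\epsilon_4$ by a factor at most $\smfrac{9}{8}$, which the margin $\epsilon_4=\smfrac{1}{12}c_4^{-1}\epsilon_2$ absorbs with room to spare.
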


\begin{proof}
For the first inequality we note that for any $i,j$ one has
\[ \begin{aligned}
& \Norm{ \log \bigl( \exp(\beta_j) \cdot \exp(\alpha_i) \bigr) - 
\log \bigl( \exp(\alpha_i) \cdot \exp(\beta_j) \bigr) } \\
& \qquad \leq 2 c_2 \cdot \norm{\alpha_i} \cdot \norm{\beta_j} \cdot
\bigl( \norm{\alpha_i} + \norm{\beta_j} \bigr) +
c_2 \cdot \norm{\alpha_i} \cdot \norm{\beta_j} \\
& \qquad \leq 2 c_2 \cdot \norm{\alpha_i} \cdot \norm{\beta_j} .
\end{aligned} \]
This is due to Lemma \ref{lem11}.
Now for our inequality we have to move all the $\exp(\beta_j)$ in the
product across all the $\exp(\alpha_i)$. According Lemma \ref{lem:33} the
``cost'' of each such move is at most
$2 c_3 \cdot 2 c_2 \cdot \norm{\alpha_i} \cdot \norm{\beta_j}$.
So the total ``cost'' is at most
\[ \sum\nolimits_{i, j = 1}^m\, 2 c_3 \cdot 
2 c_2 \cdot \norm{\alpha_i} \cdot \norm{\beta_j} =
4 c_2 c_3 \cdot 
\bigl( \sum\nolimits_{i = 1}^m\, \norm{\alpha_i} \bigr) \cdot  
\bigl( \sum\nolimits_{j = 1}^m\, \norm{\beta_j} \bigr) . \]
Since $c_4 \geq c_2, c_3$ this proves the first inequality.

For the second inequality we note that, due to Lemma \ref{lem16}, we
have
\[ \Norm{ \log \bigl( \exp(\alpha_i + \beta_i) \bigr) -
\log \bigl( \exp(\alpha_i) \cdot \exp(\beta_i) \bigr) } 
\leq c_4 \cdot \norm{\alpha_i} \cdot \norm{\beta_i}  . \]
Since we have to make such a change for every $i$, according to
Lemma \ref{lem:33} the total ``cost'' is at most
$2 c_3 c_4 \cdot \sum\nolimits_{i = 1}^m\, 
\norm{\alpha_i} \cdot \norm{\beta_i}$.
\end{proof}

\begin{lem} \label{lem:36}
There exists a constant $c_5$ such that 
$c_5 \geq \max( c_4, \smfrac{3}{2} )$, 
and for every   
$\alpha_1, \ldots, \alpha_m, \beta_1, \ldots, \beta_m \in \g$ 
satisfying
\[ \sum_{i = 1}^m\, \bigl( \norm{\alpha_i} + \norm{\beta_i} \bigr)
< \epsilon_4 \]
one has:
\begin{equation} \label{eqn:94}
\begin{aligned}
& \Norm{ \opn{log} 
\bigl( \prod\nolimits_{i=1}^m \opn{exp}(\alpha_i + \beta_i) \bigr) -
\opn{log} \bigl(   \prod\nolimits_{i=1}^m \opn{exp}(\alpha_i) \bigr) -
\sum\nolimits_{i=1}^m \beta_i } \\
& \qquad % \hspace{20ex} 
\leq c_5 \cdot \bigl( \sum\nolimits_{i=1}^m (\norm{\alpha_i}  +
\norm{\beta_i}) \bigr) \cdot 
\bigl( \sum\nolimits_{i=1}^m \norm{\beta_i} \bigr) 
\end{aligned} 
\end{equation}
and
\begin{equation} \label{eqn:95}
\begin{aligned}
& \Norm{ \opn{log} 
\bigl( \prod\nolimits_{i=1}^m \opn{exp}(\alpha_i + \beta_i) \bigr) -
\opn{log} 
\bigl( \prod\nolimits_{i=1}^m \opn{exp}(\alpha_i) \bigr) } \\
& \qquad 
\leq c_5 \cdot 
\bigl( \sum\nolimits_{i=1}^m \norm{\beta_i} \bigr) \, .
\end{aligned} 
\end{equation}
If moreover $[\alpha_i, \alpha_j] = 0$ for all $i, j$, then
\begin{equation} \label{eqn:96}
\begin{aligned}
& \Norm{ \opn{log} 
\bigl( \prod\nolimits_{i=1}^m \opn{exp}(\alpha_i + \beta_i) \bigr) -
\sum\nolimits_{i=1}^m (\alpha_i + \beta_i) } \\
& \qquad %\hspace{20ex} 
\leq c_5 \cdot \bigl( \sum\nolimits_{i=1}^m (\norm{\alpha_i}  +
\norm{\beta_i}) \bigr) \cdot 
\bigl( \sum\nolimits_{i=1}^m \norm{\beta_i} \bigr) .
\end{aligned} 
\end{equation}
\end{lem}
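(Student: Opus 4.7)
The plan is to prove the three inequalities in the order \textup{(\ref{eqn:94})}, \textup{(\ref{eqn:95})}, \textup{(\ref{eqn:96})}, since the first is the workhorse and the other two follow from it. For \textup{(\ref{eqn:94})}, the strategy is to introduce the elements
\[ A := \opn{log} \bigl( \prod\nolimits_{i=1}^m \opn{exp}(\alpha_i) \bigr),
\qquad B := \opn{log} \bigl( \prod\nolimits_{i=1}^m \opn{exp}(\beta_i) \bigr) \]
and connect $\opn{log}\bigl( \prod_{i=1}^m \opn{exp}(\alpha_i + \beta_i) \bigr)$ to $A + \sum \beta_i$ by a chain of four approximations, each controlled by a previous lemma.

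First, Lemma \ref{lem18} (second inequality) allows me to replace $\prod_{i=1}^m \exp(\alpha_i+\beta_i)$ by $\prod_{i=1}^m \bigl( \exp(\alpha_i) \exp(\beta_i) \bigr)$, at cost $2 c_4^2 \sum \norm{\alpha_i}\norm{\beta_i}$. Next, the first inequality of Lemma \ref{lem18} moves this to $\bigl( \prod_i \exp(\alpha_i) \bigr)\cdot\bigl( \prod_i \exp(\beta_i) \bigr) = \exp(A) \cdot \exp(B)$, at cost $4 c_4^2 (\sum \norm{\alpha_i})(\sum \norm{\beta_i})$. Then Lemma \ref{lem16} gives $\norm{\log(\exp(A)\exp(B)) - (A+B)} \leq c_4 \norm{A}\norm{B}$; and by Lemma \ref{lem17}, inequality \textup{(\ref{eqn:91})}, $\norm{A} \leq \frac{3}{2} \sum \norm{\alpha_i}$ and $\norm{B} \leq \frac{3}{2} \sum \norm{\beta_i}$, so this term is bounded by $\frac{9}{4} c_4 (\sum \norm{\alpha_i})(\sum \norm{\beta_i})$. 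Finally, Lemma \ref{lem17} inequality \textup{(\ref{eqn:92})} yields $\norm{B - \sum \beta_i} \leq c_4 (\sum \norm{\beta_i})^2$. I need to verify that the cumulative hypothesis $\sum \norm{\alpha_i} + \sum \norm{\beta_i} < \epsilon_4$ keeps all intermediate products inside $V_0(G)$, which it does by \textup{(\ref{eqn:90})} of Lemma \ref{lem17} applied to $(\alpha_1, \ldots, \alpha_m, \beta_1, \ldots, \beta_m)$. Adding the four bounds by the triangle inequality and using $\sum \norm{\alpha_i}\norm{\beta_i} \leq (\sum \norm{\alpha_i})(\sum \norm{\beta_i})$ and $(\sum \norm{\beta_i})^2 \leq (\sum (\norm{\alpha_i}+\norm{\beta_i}))(\sum \norm{\beta_i})$, yields \textup{(\ref{eqn:94})} with a constant of the form $c_5 \geq 4c_4^2 + 2c_4^2 + \tfrac{9}{4} c_4 + c_4$.

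For \textup{(\ref{eqn:95})}, I apply the triangle inequality to \textup{(\ref{eqn:94})}:
\[ \Norm{ \opn{log}(\textstyle\prod \exp(\alpha_i+\beta_i)) - \opn{log}(\prod \exp(\alpha_i)) } \leq c_5 \cdot \epsilon_4 \cdot \textstyle\sum\norm{\beta_i} + \sum\norm{\beta_i}, \]
using the hypothesis $\sum(\norm{\alpha_i} + \norm{\beta_i}) < \epsilon_4 \leq 1$. Since $\epsilon_4 \leq 1$, the right hand side is bounded by $(c_5 + 1) \sum \norm{\beta_i}$, so enlarging $c_5$ to $c_5 + 1$ gives \textup{(\ref{eqn:95})}. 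For \textup{(\ref{eqn:96})}, the commutativity $[\alpha_i,\alpha_j]=0$ implies $\prod_{i=1}^m \exp(\alpha_i) = \exp(\sum \alpha_i)$, i.e.\ $A = \sum \alpha_i$; substituting into \textup{(\ref{eqn:94})} gives \textup{(\ref{eqn:96})} immediately.

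The only real issue is bookkeeping: one must choose $c_5 \geq \max(c_4, 3/2)$ large enough to absorb all of the additive constants $4c_4^2$, $2c_4^2$, $\tfrac{9}{4}c_4$, $c_4$ and the extra $+1$ arising in \textup{(\ref{eqn:95})}. Since each application of the preceding lemmas is already in the required bilinear form $(\sum \norm{\alpha_i}+\norm{\beta_i})\cdot(\sum \norm{\beta_i})$, there is no analytic obstacle — the main care is just to verify, at each step, that the partial products land in $V_0(G)$ so that $\log$ is defined, and this follows uniformly from the bound $\epsilon_4 = \frac{1}{12}c_4^{-1}\epsilon_2$ chosen in \textup{(\ref{eqn:101})}.
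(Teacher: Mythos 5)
Your proposal is correct and follows essentially the same route as the paper: the same chain of approximations (Lemma \ref{lem18} to pass from $\prod\exp(\alpha_i+\beta_i)$ to $(\prod\exp(\alpha_i))(\prod\exp(\beta_i))$, then Lemma \ref{lem16} with the norm bounds of Lemma \ref{lem17} for the CBH step, then inequality (\ref{eqn:92}) to replace $\log(\prod\exp(\beta_i))$ by $\sum\beta_i$), with (\ref{eqn:95}) obtained by a triangle-inequality absorption of the bilinear term and (\ref{eqn:96}) by the commutativity substitution $\log(\prod\exp(\alpha_i))=\sum\alpha_i$. The only differences are cosmetic bookkeeping of the constant (your $c_5$ versus the paper's $c_5:=7c_4^2$), so nothing further is needed.
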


\begin{proof}
According to Lemma \ref{lem18} we have
\begin{equation} \label{eqn:97}
\begin{aligned}
& \Norm{ \opn{log} \bigl( \prod\nolimits_{i=1}^m
\opn{exp}(\alpha_i + \beta_i) \bigr) -
\opn{log} \Bigl( \bigl( \prod\nolimits_{i=1}^m \opn{exp}(\alpha_i)
\bigr)
\cdot \bigl( \prod\nolimits_{i=1}^m \opn{exp}(\beta_i) \bigr) \Bigr) }
\\
& \qquad
\leq 2 c_4^2 \cdot 
\sum_{i = 1}^m\, \norm{\alpha_i} \cdot \norm{\beta_i} \, . 
\end{aligned}
\end{equation}
By Lemma \ref{lem17} we know that 
\begin{equation} \label{eqn:98}
\Norm{ \opn{log} \bigl( \prod\nolimits_{i=1}^m
\opn{exp}(\beta_i) \bigr) - \sum\nolimits_{i=1}^m \beta_i } \leq
c_4 \cdot \bigl( \sum\nolimits_{i = 1}^m\, \norm{\beta_i} \bigr)^2 \, .
\end{equation}
Combining Lemmas \ref{lem16} and \ref{lem17} we get
\begin{equation} \label{eqn:99}
\begin{aligned}
& \Norm{ 
\opn{log} \Bigl( \bigl( \prod\nolimits_{i=1}^m \opn{exp}(\alpha_i)
\bigr)
\cdot \bigl( \prod\nolimits_{i=1}^m \opn{exp}(\beta_i) \bigr) \Bigr) \\
& \qquad \qquad  -
\Bigl( \opn{log} \bigl( \prod\nolimits_{i=1}^m \opn{exp}(\alpha_i) \bigr) +
\opn{log} \bigl( \prod\nolimits_{i=1}^m \opn{exp}(\beta_i) \bigr)
\Bigr) } \\
& \qquad \leq c_4 \cdot 
\Norm{ \opn{log} \bigl( \prod\nolimits_{i=1}^m \opn{exp}(\alpha_i) \bigr) }
\cdot 
\Norm{ \opn{log} \bigl( \prod\nolimits_{i=1}^m \opn{exp}(\beta_i) \bigr) } \\
& \qquad \leq c_4 \cdot 
\bigl( \smfrac{3}{2} \, \sum\nolimits_{i = 1}^m\, \norm{\alpha_i} \bigr)
\cdot 
\bigl( \smfrac{3}{2} \, \sum\nolimits_{i = 1}^m\, \norm{\beta_i} \bigr) \\
& \qquad = \smfrac{9}{4} c_4 \cdot 
\bigl( \sum\nolimits_{i = 1}^m\, \norm{\alpha_i} \bigr)
\cdot 
\bigl( \sum\nolimits_{i = 1}^m\, \norm{\beta_i} \bigr) \, . 
\end{aligned}
\end{equation}
Putting all these together we obtain the inequality
\begin{equation} \label{eqn:100}
\begin{aligned}
& \Norm{ \opn{log} 
\bigl( \prod\nolimits_{i=1}^m \opn{exp}(\alpha_i + \beta_i) \bigr) -
\opn{log} \bigl(   \prod\nolimits_{i=1}^m \opn{exp}(\alpha_i) \bigr) -
\sum\nolimits_{i=1}^m \beta_i } \\
& \qquad 
\leq 2 c_4^2 \cdot 
\bigl( \sum\nolimits_{i = 1}^m\, \norm{\alpha_i} \bigr)
\cdot 
\bigl( \sum\nolimits_{i = 1}^m\, \norm{\beta_i} \bigr) \\
& \qquad \qquad + 
 \smfrac{9}{4} c_4^2 \cdot 
\bigl( \sum\nolimits_{i = 1}^m\, \norm{\alpha_i} \bigr)
\cdot 
\bigl( \sum\nolimits_{i = 1}^m\, \norm{\beta_i} \bigr) \\
& \qquad \qquad +
c_4 \cdot 
\bigl( \sum\nolimits_{i = 1}^m\, \norm{\beta_i} \bigr)^2 \\
& \qquad 
\leq 6 c_4^2  
\cdot \bigl( \sum\nolimits_{i=1}^m (\norm{\alpha_i}  +
\norm{\beta_i}) \bigr) \cdot 
\bigl( \sum\nolimits_{i=1}^m \norm{\beta_i} \bigr) \, . 
\end{aligned} 
\end{equation}
This proves inequality (\ref{eqn:94}) with
$c_5 := 7 c_4^2$.

Inequality (\ref{eqn:95}) follows easily from (\ref{eqn:100}), once we notice
that
\[ 6 c_4^2  
\cdot \bigl( \sum\nolimits_{i=1}^m (\norm{\alpha_i}  +
\norm{\beta_i}) \bigr) \leq 
6 c_4^2 \cdot 2 \epsilon_3 \leq c_4 \, . \]
When $[\alpha_i, \alpha_j] = 0$ for all $i, j$ we have
\[ \opn{log} \bigl(   \prod\nolimits_{i=1}^m \opn{exp}(\alpha_i) \bigr) =
\sum\nolimits_{i = 1}^m\, \alpha_i . \]
So inequality (\ref{eqn:95}) holds.
\end{proof}

\subsection{Final Touches} \label{subsec:touches}

\begin{lem} \label{lem:34}
There is a real number $c_6$ such that $c_6 \geq c_5$, and for any 
$\alpha, \beta \in \g$ with
$\norm{\alpha}, \norm{\beta} < \epsilon_4$ the inequality
\begin{equation} \label{eqn:85}
\begin{aligned}
& \Norm{ \opn{log} \bigl( \opn{exp}(\alpha) \cdot 
\opn{exp}(\beta) \cdot \opn{exp}(\alpha)^{-1} \cdot
\opn{exp}(\beta)^{-1} \bigr) -  
[\alpha, \beta] } \\
& \hspace{10ex}
\leq c_{6} \cdot \norm{\alpha} \cdot \norm{\beta} \cdot 
\bigl( \norm{\alpha} + \norm{\beta} \bigr) .
\end{aligned} 
\end{equation}
holds.
\end{lem}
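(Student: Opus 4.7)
The approach mirrors the proof of Lemma~\ref{lem:35}, applied now to the four-factor product giving the group commutator. The plan is to express
\[ c(\alpha, \beta) := \log\bigl( \exp(\alpha) \cdot \exp(\beta) \cdot \exp(-\alpha) \cdot \exp(-\beta) \bigr) \]
as the value at $(\alpha, \beta)$ of a formal Lie series $c(x, y)$ in the completed free Lie algebra $\what{L}$ over $\mbb{Q}$ in two variables, and then to identify the low-degree terms of this series. Iterating the CBH identity \tup{(\ref{eqn:87})} three times, we have $c(x, y) = F(F(F(x, y), -x), -y)$, a well-defined formal Lie series whose evaluation on $(\alpha, \beta)$ equals the above logarithm for $\norm{\alpha}, \norm{\beta}$ sufficiently small (iterated convergence is guaranteed by three applications of the CBH theorem, which holds on the open set $U_1$ from Subsection~\ref{subsec:CBH}).

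Next, decompose $c(x, y) = \sum_{i, j \geq 0} C_{i, j}(x, y)$ into bi-homogeneous Lie polynomials $C_{i, j}$ of bi-degree $(i, j)$. Substituting $y = 0$ formally yields $c(x, 0) = F(F(x, -x), 0) = F(0, 0) = 0$, so $C_{i, 0} = 0$ for all $i$; symmetrically $C_{0, j} = 0$. A short direct computation modulo terms of total degree $\geq 3$, using $F(p, q) \equiv p + q + \smfrac{1}{2}[p, q]$ and expanding in two stages, gives first $F(F(x, y), -x) \equiv y + [x, y]$ and then $F(y + [x, y], -y) \equiv [x, y]$. Hence $C_{1, 1}(x, y) = [x, y]$, and
\[ c(x, y) - [x, y] = \sum_{i, j \geq 1,\ i + j \geq 3} C_{i, j}(x, y) . \]

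To conclude, let $f_{i, j}$ denote the operator norm of the bi-homogeneous map $C_{i, j} : \g \times \g \to \g$. By a three-fold iteration of the absolute-convergence estimate for the Hausdorff series from \cite[Section II.7.2]{Bo} (the same machinery that fed Lemma~\ref{lem:35}), there is a positive $\epsilon \leq \epsilon_4$ such that $\sum_{i, j} f_{i, j} \epsilon^{i + j}$ converges. For $\alpha, \beta$ with $\norm{\alpha} + \norm{\beta} \leq \epsilon$, apply termwise the elementary inequality
\[ \norm{\alpha}^i \norm{\beta}^j \leq \norm{\alpha} \norm{\beta} \bigl( \norm{\alpha} + \norm{\beta} \bigr) \cdot \epsilon^{i + j - 3} , \]
valid whenever $i, j \geq 1$ and $i + j \geq 3$, and sum. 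This yields the required bound with
\[ c_6 := \max \bigl( c_5, \, \epsilon^{-3} \sum\nolimits_{i, j \geq 1,\ i+j \geq 3} f_{i, j}\, \epsilon^{i+j} \bigr) , \]
possibly after shrinking $\epsilon_4$ so that $\norm{\alpha}, \norm{\beta} < \epsilon_4$ implies $\norm{\alpha} + \norm{\beta} \leq \epsilon$. The principal technical obstacle is propagating the CBH convergence estimate through three compositions with explicit control on the coefficients $f_{i, j}$; this is parallel to the one-step analysis underlying Lemma~\ref{lem:35} and follows the same template, but requires careful bookkeeping. A cleaner but less elementary alternative is to invoke real-analyticity of $c$ near $0 \in \g \oplus \g$ together with the vanishing of all Taylor coefficients of $c - [-,-]$ of bi-degree $(i, 0)$, $(0, j)$, or $(1, 1)$, and read off the estimate from the Taylor remainder.
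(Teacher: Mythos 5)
Your argument is correct but takes a genuinely different route from the paper. You analyze the commutator Hausdorff series $c(x,y) = F(F(F(x,y),-x),-y)$ directly, decompose it into bi-homogeneous Lie polynomials $C_{i,j}$, observe that $C_{i,0} = C_{0,j} = 0$ for all $i,j$ and $C_{1,1}(x,y) = [x,y]$, and then re-run the majorant argument of Lemma~\ref{lem:35} on the composed series. The paper does not re-run that argument: it writes $ab = \exp(\gamma^+ + \delta^+)$ and $a^{-1}b^{-1} = \exp(\gamma^- + \delta^-)$ with $\gamma^\pm := \pm(\alpha+\beta) + \smfrac{1}{2}[\alpha,\beta]$, drops the small $\delta^\pm$ by the Lipschitz estimate of Lemma~\ref{lem:33}, and then exploits the cancellation $\gamma^+ + \gamma^- = [\alpha,\beta]$ together with the commuting of the linear parts $\pm(\alpha+\beta)$ via formula~(\ref{eqn:96}) of Lemma~\ref{lem:36}. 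Your route gives a cleaner structural picture of the commutator Hausdorff series, but it transfers onto you the burden of showing that the bi-homogeneous operator norms $f_{i,j}$ of the triply composed series still satisfy $\sum f_{i,j}\,\epsilon^{i+j}<\infty$ for some $\epsilon>0$; that is a composition-of-majorants argument rather than a single citation of \cite[Section~II.7.2]{Bo}, and it is exactly the ``careful bookkeeping'' you flagged. The paper sidesteps this by reusing the already-calibrated constants $c_3$ and $c_5$. One point to record explicitly: your estimate closes only on the region $\norm{\alpha}+\norm{\beta}\leq\epsilon$ for the $\epsilon$ you extract, so you may indeed need to shrink $\epsilon_4$; that shrinking is harmless for Theorem~\ref{thm:6} (which only requires $\epsilon_0(G):=\epsilon_4>0$), but since $\epsilon_4$ is fixed by formula~(\ref{eqn:101}), it should be stated rather than left as ``possibly.''
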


\begin{proof}
Let's write
\[ a := \opn{exp}(\alpha), \quad b := \opn{exp}(\beta) , \]
\[ \gamma^+ := \alpha + \beta + \smfrac{1}{2} [\alpha, \beta], \quad
\gamma^- := -\alpha - \beta + \smfrac{1}{2} [\alpha, \beta], \]
\[ \delta^+ := \log(a \cdot b) - \gamma^+, \quad
\delta^- := \log(a^{-1} \cdot b^{-1})  - \gamma^- .  \]
According to Lemma \ref{lem11} we have
\[ \norm{\delta^+}, \norm{\delta^-} \leq 
c_{2} \cdot \norm{\alpha} \cdot \norm{\beta} \cdot 
\bigl( \norm{\alpha} + \norm{\beta} \bigr) \,  \]
and
\[ \norm{\gamma^+}, \norm{\gamma^-} \leq 
\norm{\alpha} + \norm{\beta} +
\smfrac{1}{2} c_{2} \cdot \norm{\alpha} \cdot \norm{\beta}
\leq (1 + c_2)(\norm{\alpha} + \norm{\beta}) \ . \]

Now
\[ a \cdot b = \exp (\gamma^+ + \delta^+) \]
and
\[ a^{-1} \cdot b^{-1} = \exp (\gamma^- + \delta^-) . \]
So by Lemma \ref{lem:33} we have
\[ \begin{aligned}
& \Norm{ \opn{log} \bigl( (a \cdot b) \cdot (a^{-1} \cdot b^{-1}) \bigr)
- \opn{log} \bigl( \exp (\gamma^+) \cdot \exp (\gamma^-) \bigr) } \\
& \qquad \leq c_{3} \cdot (\norm{\delta^+} + \norm{\delta^-} ) \\
& \qquad \leq 2 c_2 c_3 
\cdot \norm{\alpha} \cdot \norm{\beta}
\cdot \bigl( \norm{\alpha} + \norm{\beta} \bigr) \, .
\end{aligned}  \]

On the other hand, since $\alpha + \beta$ and $-\alpha - \beta$ commute,
by formula (\ref{eqn:96}) in Lemma \ref{lem:36} we have 
\[ \begin{aligned}
& \Norm{ \opn{log} \bigl( \exp (\gamma^+) \cdot \exp (\gamma^-) \bigr) -
[\alpha, \beta] } \\
& \qquad = 
\Norm{ \opn{log} 
\bigl( \exp((\alpha + \beta) + \smfrac{1}{2} [\alpha, \beta]) \cdot 
\exp((-\alpha - \beta) + \smfrac{1}{2} [\alpha, \beta]) \bigr) \\
& \qquad \qquad - \bigl( (\alpha + \beta) + \smfrac{1}{2} [\alpha, \beta] +
(-\alpha - \beta) + \smfrac{1}{2} [\alpha, \beta] \bigr) } \\
& \qquad \leq c_5 \cdot \bigl( 2 \cdot \norm{\alpha} + 
2 \cdot \norm{\beta} + \norm{ [\alpha, \beta] } \bigr) \cdot 
\bigl( \norm{ [\alpha, \beta] } \bigr) \\
& \qquad \leq c_5 \cdot (2 + c_2 \cdot \epsilon_4) \cdot c_2 
\cdot \norm{\alpha} \cdot \norm{\beta}
\cdot \bigl( \norm{\alpha} + \norm{\beta} \bigr) \, . 
\end{aligned} \]
In the last inequality we used
\[  \norm{ [\alpha, \beta] } \leq c_2 \cdot \norm{\alpha} \cdot \norm{\beta} 
\leq c_2 \cdot \epsilon_4 \cdot \norm{\beta} \, . \]
Therefore (\ref{eqn:85}) holds with 
\[ c_6 := 2 c_2 c_3 + c_5 \cdot (2 + c_2 \epsilon_4) \cdot c_2 \, . \]
\end{proof}

\begin{proof}[Proof of Theorem \tup{\ref{thm:6}}]
Take
$c_0(G) := c_6$ and $\epsilon_0(G) := \epsilon_4$.
Since $\epsilon_4 \leq \epsilon_2$ and
\[ c_6 \geq c_5 \geq \max( c_4, \smfrac{3}{2} ) 
\geq c_4 \geq c_3 \geq c_2 \, , \]
the assertions of the theorem 
are contained on Lemmas \ref{lem11}, \ref{lem17}, \ref{lem:36}
and \ref{lem:34}.  
\end{proof}

%% section %%
\cleardoublepage
\section{Multiplicative Integration in Dimension One}
\label{sec:dim1}
%\numberwithin{equation}{section}

Nonabelian multiplicative integration in dimension $1$ is classical, dating back
to work of Volterra (cf.\ \cite{DF}).
In modern differential geometry it is
usually viewed as the holonomy of a connection along a path. We prefer
to do everything from scratch, for several reasons: this allows us to introduce
notation; it serves as a warm-up for the much more difficult
$2$-dimensional integration; and also to cover the case of piecewise smooth
differential forms. 

\subsection{Binary Tessellations}
Recall that $\mbf{I}^1$ is the unit line segment in 
$\mbf{A}^{1} = \mbf{A}^{1}(\R)$, which we view as an oriented
polyhedron (cf.\ Section \ref{sec:pws}). The vertices
(endpoints) of $\mbf{I}^1$ are $v_0 = 0$ and $v_1 = 1$, and the coordinate
function is $t_1$. 

Let $X$ be a polyhedron. 
Consider a linear map $\sigma : \mbf{I}^{1} \to X$.
The length of the line segment $Z := \sigma(\mbf{I}^{1}) \subset X$
(possibly zero) is denoted
by $\opn{len}(\sigma)$. If  $\sigma$ is not constant then it is a conformal map,
and the scaling factor is precisely $\opn{len}(\sigma)$. In this case $\sigma$ 
determines an orientation on the $1$-dimensional polyhedron $Z$. We may choose
an orthonormal linear coordinate
function $f_1$ on $Z$, such that $\d f_1$ is the orientation of $Z$. Then 
$\sigma^*(\d f_1) = \opn{len}(\sigma) \cdot \d t_1$.

Next consider a piecewise linear map $\sigma : \mbf{I}^{1} \to X$.
By definition (cf.\ Subsection \ref{subsec:pws})
there is a linear triangulation $\{ Y_j \}_{j \in J}$ of $\mbf{I}^{1}$ such
that 
$\sigma|_{Y_j} : Y_j \to X$ 
is a linear map for every $j \in J$. We define
\[ \opn{len}(\sigma) := \sum_{j \in J_1} \opn{len}(\sigma_j) . \]

It is convenient to have a composition operation for piecewise linear maps.
Suppose $\sigma : \mbf{I}^1 \to X$ and
$\rho: \mbf{I}^1 \to \mbf{I}^1$ are piecewise linear maps.
Then the set-theoretical composition
$\sigma \circ \rho$ is also a piecewise linear map $\mbf{I}^1 \to X$.
Given finite sequences 
$\bsym{\sigma} = ( \sigma_i )_{i = 1, \ldots, m}$ and 
$\bsym{\rho} = ( \rho_j )_{j = 1, \ldots, n}$ of piecewise linear maps
$\sigma_i : \mbf{I}^{1} \to X$ and $\rho_j : \mbf{I}^{1} \to \mbf{I}^{1}$,
we define the sequence of piecewise linear maps
\[ \bsym{\sigma} \circ \bsym{\rho} := 
( \sigma_i \circ \rho_j )_{(i, j) \in \{ 1, \ldots, m \} \times
\{ 1, \ldots, n \} } \]
in lexicographical order, i.e.\ 
\begin{equation} \label{eqn:19}
\bsym{\sigma} \circ \bsym{\rho} = 
(\sigma_1 \circ \rho_1,\, \sigma_1 \circ \rho_2,\, \cdots, 
\sigma_2 \circ \rho_1,\, \sigma_2 \circ \rho_2,\, \cdots,
\sigma_m \circ \rho_n ) \ .
\end{equation}

\begin{dfn} \label{dfn:41}
For any $k \geq 0$, the {\em $k$-th binary tessellation} of $\mbf{I}^1$ is the
sequence
\[ \opn{tes}^k \mbf{I}^1 = ( \sigma^k_1, \ldots, \sigma^k_{2^k} ) \]
of linear maps in $\sigma^k_i : \mbf{I}^{1} \to \mbf{I}^1$ defined recursively
as follows.
\begin{itemize}
\item For $k = 0$ we define $\sigma^0_1$ to be the identity map of $\mbf{I}^1$.
\item For $k = 1$ we take the linear maps
$\sigma^1_1, \sigma^1_2$ defined on vertices by
\[ \sigma^1_1(v_0, v_1) := (v_0, \smfrac{1}{2}) , \]
\[ \sigma^1_2(v_0, v_1) := (\smfrac{1}{2}, v_1) . \]
\item For $k \geq 1$ we define
\[ \opn{tes}^{k+1} \mbf{I}^1 := 
(\opn{tes}^1 \mbf{I}^1) \circ (\opn{tes}^k \mbf{I}^1) , \]
using the convention (\ref{eqn:19}).
\end{itemize}
\end{dfn}

We call $\sigma^0_1$ the {\em basic map}.

See Figure \ref{fig:40} for an illustration of $\opn{tes}^2 \mbf{I}^1$.

\begin{figure}
\includegraphics[scale=0.35]{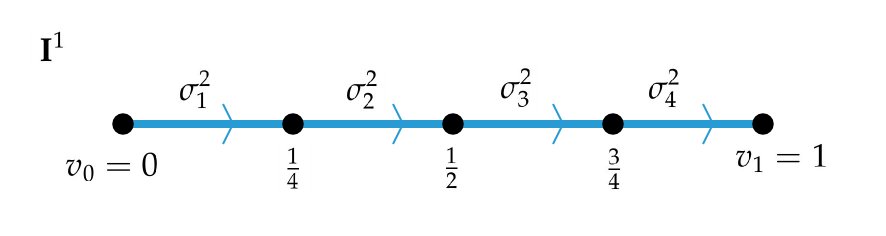}
\caption{The $2$-nd binary tessellation of $\mbf{I}^{1}$. The arrowheads 
indicate the orientations of the linear maps 
$\sigma^2_i : \mbf{I}^{1} \to \mbf{I}^{1}$.} 
\label{fig:40}
\end{figure}

\subsection{Riemann Products}
Let $\g$ be a finite dimensional Lie algebra over $\R$. 
For any $n$-dimensional polyhedron $X$ we then have the DG Lie algebra  
of piecewise smooth $\g$-valued differential forms
\[ \Omega_{\mrm{pws}}(X) \otimes \mfrak{g} = \bigoplus_{p=0}^n \,
\Omega^p_{\mrm{pws}}(X) \otimes \mfrak{g}  \]
(Subsection \ref{subsec:pws}). 
The operations are as follows: for 
$\alpha_i \in \Omega^{p_i}_{\mrm{pws}}(X)$ and
$\gamma_i \in \g$ one has
\[ \d(\alpha_1 \otimes \gamma_1) = \d(\alpha_1) \otimes \gamma_1 
\in \Omega_{\mrm{pws}}^{p_1+1}(X) \otimes \g \]
and
\[ [ \alpha_1 \otimes \gamma_1,\, \alpha_2 \otimes \gamma_2 ] =
(\alpha_1 \wedge \alpha_2) \otimes [\gamma_1, \gamma_2] \in 
\Omega_{\mrm{pws}}^{p_1+ p_2}(X) \otimes \g . \]
By definition, any particular piecewise smooth differential form $\alpha$
belongs to 
$\Omega_{\mrm{pws}}(X; T) \otimes \mfrak{g}$, 
for some linear triangulation $T$ of $X$. 
This construction is functorial in the following sense. Suppose
$f : X \to Y$ is a piecewise linear map of polyhedra, and $\phi : \g \to \h$ is
map of Lie algebras. Then there is a homomorphism of DG Lie algebras
\[ f^* \otimes \phi : \Omega_{\mrm{pws}}(Y) \otimes \mfrak{g} \to 
\Omega_{\mrm{pws}}(X) \otimes \mfrak{h} . \]
In case $\h = \g$ and $\phi$ is the identity map, we shall often write
$f^*$ instead of $f^* \otimes \phi$.

{}From now on in this section we consider a Lie group $G$, with Lie
algebra $\mfrak{g}$, and a polyhedron $X$.
We fix some euclidean norm $\norm{-}$ on the vector space $\g$. As in
Section \ref{sec:pws} we also fix an open neighborhood $V_0(G)$ of $1$ in
$G$ on which $\log_G$ is defined, a convergence radius
$\epsilon_0(G)$, and a commutativity constant $c_0(G)$.
The choices of $V_0(G)$, $\epsilon_0(G)$ and $c_0(G)$ are auxiliary only; they
are needed for the proofs, but do not effect the results.

\begin{dfn} \label{dfn:1}
Let
$\alpha \in \Omega^1_{\mrm{pws}}(\mbf{I}^1) \otimes \mfrak{g}$. 
The {\em basic Riemann product of $\alpha$ on $\mbf{I}^1$} 
is the element 
\[ \opn{RP}_0(\alpha \vert \mbf{I}^1) \in G \]
defined as follows. Let 
$w := \smfrac{1}{2}$, namely the midpoint of $\mbf{I}^{1}$. 
\begin{itemize}
\item Suppose $w$ is a smooth point of $\alpha$. Then there is
some $1$- \lb dimensional simplex $Y$ in $\mbf{I}^{1}$, such that 
$w \in \opn{Int} Y$ and $\alpha|_{Y}$ is smooth. 
Let $\til{\alpha} \in \mcal{O}(Y) \otimes \mfrak{g}$ be the coefficient of
$\alpha|_Y$, as in Definition \ref{dfn:24}; namely
\[ \alpha|_Y = \til{\alpha} \cdot \d t_1 . \]
We define 
\[ \opn{RP}_{0}(\alpha \vert \mbf{I}^{1}) :=
\opn{exp}_G \bigl(  \til{\alpha}(w) \bigr) . \]
\item If $w$ is a singular point of $\alpha$, then we let
$\opn{RP}_{0}(\alpha \vert \mbf{I}^{1}) := 1$.
\end{itemize}
\end{dfn}

Observe that the element $\til{\alpha}(w) \in \g$ in the definition above is
independent of
the simplex $Y$. 

\begin{dfn} \label{dfn:12}
Let 
$\alpha \in \Omega^1_{\mrm{pws}}(X) \otimes \mfrak{g}$,
and let $\sigma : \mbf{I}^{1} \to X$ be a piecewise linear map. Then
$\sigma^*(\alpha) \in \Omega^1_{\mrm{pws}}(\mbf{I}^1) \otimes \mfrak{g}$,
and we define the 
{\em basic Riemann product of $\alpha$ along $\sigma$} to be 
\[ \opn{RP}_{0}(\alpha \vert \sigma) :=
\opn{RP}_{0} \bigl( \sigma^*(\alpha) \vert \mbf{I}^{1} \bigr) \in G . \]
\end{dfn}

Note that if $\sigma$ is a constant map then 
$\sigma^*(\alpha) = 0$, so 
$\opn{RP}_{0}(\alpha \vert \bsym{\sigma}) = 1$.

\begin{dfn} \label{dfn:2}
Let 
$\alpha \in \Omega^1_{\mrm{pws}}(X) \otimes \mfrak{g}$,
and let $\sigma : \mbf{I}^{1} \to X$ be a piecewise linear map.
For $k \geq 0$ we define the {\em $k$-th refined Riemann product of $\alpha$
along $\sigma$} to be 
\[ \opn{RP}_{k}(\alpha \vert \sigma) :=
\prod_{i = 1}^{2^k} \,
\opn{RP}_{0} ( \alpha \vert \sigma \circ \sigma^k_i ) \in G , \]
using the $k$-th binary tessellation
$\opn{tes}^k \mbf{I}^1 = ( \sigma^k_1, \ldots, \sigma^k_{2^k})$
and the convention (\ref{eqn:6}).
\end{dfn}

\subsection{Convergence of Riemann Products}
As before we are given a form
$\alpha \in \Omega^1_{\mrm{pws}}(X) \otimes \g$. 
Recall $\norm{\alpha}_{\mrm{Sob}}$, the Sobolev norm to order $2$ of $\alpha$,
from Subsection \ref{subsec:sob}.

\begin{lem} \label{lem:1}
There are constants $c_1(\alpha)$ and $\epsilon_1(\alpha)$ with the
following properties.
\begin{enumerate}
\rmitem{i} These inequalities hold:
\begin{gather*}
0 < \epsilon_1(\alpha) \leq 1 \, , \\
c_1(\alpha) \geq 1 \, , \\
\epsilon_1(\alpha) \cdot c_1(\alpha) \leq \smfrac{1}{2} \cdot
\epsilon_0(G) \ .
\end{gather*}

\rmitem{ii} For any piecewise linear map 
$\sigma : \mbf{I}^1 \to X$ 
such that $\opn{len}(\sigma) < \epsilon_1(\alpha)$, and for any sufficiently
large $k$, one has
\[ \opn{RP}_{k}(\alpha \vert \sigma) \in V_0(G)  \]
and 
\[ \norm{\, \opn{log}_G \bigl( 
\opn{RP}_{k}(\alpha \vert \sigma) \bigr) \, }
\leq c_1(\alpha) \cdot \opn{len}(\sigma) . \]

\rmitem{iii} Moreover, if the map $\sigma$ in \tup{(ii)} is linear, then the
assertions there hold for any $k \geq 0$.
\end{enumerate} 
\end{lem}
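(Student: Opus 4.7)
The plan is to reduce to Theorem \ref{thm:6}(ii) by controlling the sum of norms of the logs of the basic Riemann products $\opn{log}_G \opn{RP}_{0}(\alpha \vert \sigma \circ \sigma^k_i)$ that make up $\opn{RP}_{k}(\alpha \vert \sigma)$. Set $L := \max(1, \norm{\alpha}_{\mrm{Sob}})$, $c_1(\alpha) := \max(1, 2 c_0(G) L)$, and $\epsilon_1(\alpha) := \min(1, \tfrac{1}{2}\epsilon_0(G)/c_1(\alpha))$. Condition (i) then holds by construction, and one also has $L \cdot \epsilon_1(\alpha) \leq \tfrac{1}{2}\epsilon_0(G)$.

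For part (iii), suppose $\sigma$ is linear of length $\ell < \epsilon_1(\alpha)$. In an orthonormal arc-length coordinate on $Z := \sigma(\mbf{I}^{1})$, the pullback $\sigma^*(\alpha)$ has coefficient bounded pointwise in norm by $L \ell$, since $\sigma$ is conformal with scaling factor $\ell$. Each map $\sigma \circ \sigma^k_i$ is linear of length $\ell/2^k$, so its basic Riemann product has log of norm at most $L \ell /2^k$. The sum of these $2^k$ norms is $L \ell < \tfrac{1}{2} \epsilon_0(G) < \epsilon_0(G)$, which places us in the regime of Theorem \ref{thm:6}(ii). That theorem then gives $\opn{RP}_k(\alpha \vert \sigma) \in V_0(G)$ and $\norm{\log_G \opn{RP}_k(\alpha \vert \sigma)} \leq c_0(G) L \ell \leq c_1(\alpha) \cdot \ell$, for every $k \geq 0$.

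For part (ii), fix a linear triangulation $T = \{Y_j\}_{j \in J}$ of $\mbf{I}^{1}$ on which $\sigma$ is linear, and set $M := \max_{j \in J_1} \opn{len}(\sigma|_{Y_j})/\opn{len}(Y_j)$. Once $k$ is large enough that $1/2^k$ is smaller than the mesh of $T$, each subinterval $\sigma^k_i(\mbf{I}^{1})$ either lies entirely inside a single $Y_{j(i)}$ (a \emph{good} piece) or straddles a unique interior vertex of $T$ (a \emph{bad} piece); the number of bad indices is bounded by $\abs{J_0} - 2$, independently of $k$. For good indices with fixed $j(i) = j$, the compositions $\sigma \circ \sigma^k_i$ are linear with lengths summing to at most $\opn{len}(\sigma|_{Y_j})$, so by the linear estimate from part (iii) the corresponding basic Riemann products have log norms summing to at most $L \cdot \opn{len}(\sigma|_{Y_j})$; summing further over $j$ bounds the good contribution by $L \cdot \opn{len}(\sigma)$. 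For a bad index, either the midpoint $w = \tfrac{1}{2}$ is a singular point of the pullback, in which case $\opn{RP}_0 = 1$ by Definition \ref{dfn:1} and contributes zero, or $w$ lies in one of the two linear parts of $\sigma \circ \sigma^k_i$, and then the log has norm at most $L M / 2^k$. Hence the total bad contribution is $O(1/2^k)$, which for $k$ sufficiently large can be made at most $L \cdot \opn{len}(\sigma)$.

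Choosing $k$ accordingly, the full sum is at most $2 L \cdot \opn{len}(\sigma) < 2 L \epsilon_1(\alpha) \leq \epsilon_0(G)$, and Theorem \ref{thm:6}(ii) yields $\opn{RP}_k(\alpha \vert \sigma) \in V_0(G)$ with $\norm{\log_G \opn{RP}_k(\alpha \vert \sigma)} \leq 2 c_0(G) L \cdot \opn{len}(\sigma) \leq c_1(\alpha) \cdot \opn{len}(\sigma)$. The main obstacle is the bad pieces: the crucial observation is that their number is bounded purely by the combinatorics of $T$, so that their individual $O(1/2^k)$ sizes drive the total bad contribution to zero. The potentially awkward case in which the midpoint of a bad piece coincides with a singularity of $\sigma^*(\alpha)$ is dispatched cleanly by the convention $\opn{RP}_0 = 1$ at singular midpoints specified in Definition \ref{dfn:1}.
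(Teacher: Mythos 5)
Your proof is correct and follows essentially the same route as the paper's: split the $2^k$ pieces of the binary tessellation into good ones (where $\sigma$ is linear) and bad ones (straddling a singular point/interior vertex), bound the good logarithms by $\norm{\alpha}_{\mrm{Sob}}\cdot\opn{len}(\sigma)$ and the bad ones by a $k$-independent count times $O((\smfrac{1}{2})^k)$, then apply Theorem \ref{thm:6}(ii), with the linear case (no bad pieces) giving (iii) for all $k\geq 0$. One small slip in the constants: your claim $L\cdot\epsilon_1(\alpha)\leq\smfrac{1}{2}\epsilon_0(G)$ needs $L\leq c_1(\alpha)=\max(1,2c_0(G)L)$, which can fail when $c_0(G)<\smfrac{1}{2}$ (the paper allows $c_0(G)=0$); this is harmless and fixed by taking, say, $c_1(\alpha):=\max(1,L,2c_0(G)L)$, which still yields the final bound $c_0(G)\cdot 2L\cdot\opn{len}(\sigma)\leq c_1(\alpha)\cdot\opn{len}(\sigma)$ and condition (i).
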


\begin{proof}
We are given a piecewise linear map $\sigma : \mbf{I}^1 \to X$.
Let us write $\epsilon := \opn{len}(\sigma)$.
Excluding the trivial case, we may assume that $\epsilon > 0$.

Take $k \geq 0$. For any index
$i \in \{ 1, \ldots, 2^k \}$ let
$W_i := \sigma^k_i(\mbf{I}^1)$ and 
$w_i := \sigma^k_i(\smfrac{1}{2})$; so
$w_i$ is the midpoint of the segment ($1$-dimensional polyhedron) $W_i$. 
Define
$\epsilon_i := \opn{len}(\sigma \circ \sigma^k_i)$,
$Z_i := \sigma(W_i)$ and $z_i := \sigma(w_i)$. 
Note that 
$\sum_{i} \epsilon_i = \epsilon$.

We will say that an index $i$ is good if the map
$\sigma|_{W_i}$ is linear and injective. 
In this case $Z_i$ is a segment of length $\epsilon_i$ and midpoint $z_i$. 
Otherwise we will call $i$ a bad index.
The sets of good and bad indices are denoted by $\opn{good}(k)$ and 
$\opn{bad}(k)$ respectively. Let $m$ be the number of singular points of the map
$\sigma$. Then $\abs{ \opn{bad}(k) } \leq m$. In particular, if $\sigma$ is
linear then $\opn{bad}(k) = \emptyset$.

Let
\[ \alpha' := \sigma^*(\alpha) \in \Omega^1_{\mrm{pws}}(\mbf{I}^1) \otimes \g .
\]
For an index $i$ we define an element
$\lambda_i \in \g$ as follows. If $w_i$ is a smooth point of $\alpha'$, then let
$\til{\alpha}'_i$ be the coefficient of $\alpha'$ near $w_i$, and let
\[ \lambda_i := (\smfrac{1}{2})^k \cdot \til{\alpha}'_i(w_i) . \]
Otherwise we let 
$\lambda_i := 0$.
In any case we have
\[ \exp_G(\lambda_i) = \opn{RP}_{0}(\alpha' \vert \sigma^k_i) = 
\opn{RP}_{0}(\alpha \vert \sigma \circ \sigma^k_i) . \]
Therefore
\begin{equation} \label{eqn:174}
\prod_{i = 1}^{2^k} \, \exp_G(\lambda_i) = 
\opn{RP}_{k}(\alpha \vert \sigma) .
\end{equation}
Furthermore, if $i$ is a good index and $w_i$ is a smooth point of $\alpha'$,
then $z_i$ is a smooth point $\alpha|_{Z_i}$, and then 
\begin{equation} \label{eqn:175}
\lambda_i = \epsilon_i \cdot \til{\alpha}_i(z_i) , 
\end{equation}
where $\til{\alpha}_i$ is the coefficient of $\alpha|_{Z_i}$ near $z_i$. 

Now here are the estimates. For any index $i$  we have 
\[ \norm{ \lambda_i } \leq (\smfrac{1}{2})^k \cdot \norm{ \alpha' }_{\mrm{Sob}}
\, . \]
If $i$ is a good index then by (\ref{eqn:175}) we have
\[ \norm{ \lambda_i } \leq \epsilon_i \cdot \norm{ \alpha }_{\mrm{Sob}} \, . \]
Hence
\begin{equation} \label{eqn:176}
\begin{aligned}
& \sum_{i = 1}^{2^k} \, \norm{ \lambda_i } = 
\sum_{i \in \opn{good}(k)} \, \norm{ \lambda_i } + 
\sum_{i \in \opn{bad}(k)} \, \norm{ \lambda_i } \\
& \qquad \leq 
\epsilon \cdot \norm{ \alpha }_{\mrm{Sob}} +
m \cdot (\smfrac{1}{2})^k \cdot \norm{ \alpha' }_{\mrm{Sob}} \, . 
\end{aligned}
\end{equation}

Let
\[ \epsilon_1(\alpha) := \smfrac{1}{2} \cdot 
(1 + \norm{ \alpha }_{\mrm{Sob}})^{-1} \cdot \epsilon_0(G) \]
and
\[ c_1(\alpha) := 2 \cdot \bigl( 1 + c_0(G) \cdot \norm{ \alpha }_{\mrm{Sob}}
\bigr) \, . \]
Choose $k_0$ large enough so that 
\[ m \cdot (\smfrac{1}{2})^{k_0} \cdot \norm{ \alpha' }_{\mrm{Sob}} < 
\min \bigl( \smfrac{1}{2} \cdot \epsilon_0(G), \,
\smfrac{1}{2} \cdot c_1(\alpha) \cdot \epsilon \cdot 
(1 + c_0(G))^{-1} \bigr) \, . \]
If $\sigma$ is linear then $m = 0$, and we may take $k_0 := 0$.

Now suppose that $\epsilon < \epsilon_1(\alpha)$ and $k \geq k_0$.
According to inequality (\ref{eqn:176}) we have
\[ \sum_{i = 1}^{2^k} \, \norm{ \lambda_i } < \epsilon_0(G) \, . \]
Therefore by property (ii) of Theorem \ref{thm:6} and by formula
(\ref{eqn:174}) we get
\[ \opn{RP}_{k}(\alpha \vert \sigma) \in V_0(G)  \]
and 
\[ \norm{\, \opn{log}_G \bigl( 
\opn{RP}_{k}(\alpha \vert \sigma) \bigr) \, }
\leq c_0(G) \cdot \bigl( \bosum_{i = 1}^{2^k} \, \norm{ \lambda_i } \bigr)
\leq c_1(\alpha) \cdot \epsilon \, . \]
\end{proof}

\begin{rem} \label{rem:3}
Heuristically we think of $\epsilon$ in the proof of the lemma above as a
``tiny'' size. In the ``tiny scale'' we can measure things (i.e.\ 
$\norm{\log_G(g)}$ is defined), and we can use Taylor series and CBH series. 
\end{rem}

\begin{lem} \label{lem:30}
There are constants $c_2(\alpha)$ and $\epsilon_2(\alpha)$ with the
following properties.
\begin{enumerate}
\rmitem{i} These inequalities hold:
\begin{gather*}
0 < \epsilon_2(\alpha) \leq \epsilon_1(\alpha)  \, , \\
c_2(\alpha) \geq c_1(\alpha) \,  .
\end{gather*}

\rmitem{ii} Suppose $\sigma : \mbf{I}^{1} \to X$ is a linear map
such that $\opn{len}(\sigma) < \epsilon_2(\alpha)$, and 
$\alpha|_{\sigma(\mbf{I}^{1})}$ is smooth.
Then for any $k \geq 0$ one has 
\[ \Norm{ 
\opn{log}_G \bigl( \opn{RP}_{k}(\alpha \vert \sigma) \bigr) -
\opn{log}_G \bigl( \opn{RP}_{0}(\alpha \vert \sigma) \bigr) }
\leq c_2(\alpha) \cdot \opn{len}(\sigma)^3 . \]
\end{enumerate}
\end{lem}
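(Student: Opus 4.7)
\medskip

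\noindent \textbf{Proof proposal for Lemma \ref{lem:30}.}
Because $\sigma$ is linear and $\alpha$ is smooth on the segment $Z := \sigma(\mbf{I}^{1})$, the pulled back form
\[ \alpha' := \sigma^*(\alpha) = \til{\alpha}' \cdot \d t_1 \in
\Omega^1(\mbf{I}^1) \otimes \g \]
is honestly smooth, with $\til{\alpha}'(t) = \opn{len}(\sigma)\cdot f(\sigma(t))$ for a smooth function $f$ coming from the coefficient of $\alpha$ on $Z$. Write $\epsilon := \opn{len}(\sigma)$ and $a_{0} := \til{\alpha}'(\tfrac{1}{2}) \in \g$. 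Using the formulas from the proof of Lemma \ref{lem:1}, the $k$-th refined product equals
$\opn{RP}_{k}(\alpha \vert \sigma) = \prod_{i=1}^{2^{k}} \exp_{G}(\lambda_{i})$
with $\lambda_{i} = 2^{-k} \cdot \til{\alpha}'(w_{i})$, where $w_{i}$ is the midpoint of the $i$-th sub-interval in $\opn{tes}^{k}\mbf{I}^{1}$. Also $\log_G \opn{RP}_0(\alpha \vert \sigma) = a_0$.

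The plan is to split the estimate into two purely classical pieces, then glue them with the commutative-case CBH estimate of Theorem \ref{thm:6}(v). First, I would decompose
\[ \lambda_{i} = \alpha_{i} + \beta_{i}, \qquad
\alpha_{i} := 2^{-k} \cdot a_{0}, \quad
\beta_{i} := 2^{-k} \bigl( \til{\alpha}'(w_{i}) - \til{\alpha}'(\tfrac{1}{2}) \bigr) . \]
The elements $\alpha_{i}$ all commute (they are colinear), and $\sum_{i}\alpha_{i} = a_{0}$ has norm bounded by $\epsilon \cdot \norm{\alpha}_{\mrm{Sob}}$, with the pullback bound $\norm{\alpha'}_{\mrm{Sob}} \leq \epsilon \cdot \norm{\alpha}_{\mrm{Sob}}$ from $\sigma$ being conformal of factor $\epsilon$.

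Secondly, the midpoint rule applied to the smooth scalar-valued function $t \mapsto \til{\alpha}'(t) - \til{\alpha}'(\tfrac{1}{2})$ on $\mbf{I}^{1}$, with $2^{k}$ equal subdivisions versus one subdivision, gives
\[ \Norm{ \sum_{i=1}^{2^{k}} \beta_{i} } =
\Norm{ 2^{-k} \sum_{i=1}^{2^{k}} \til{\alpha}'(w_{i}) - \til{\alpha}'(\tfrac{1}{2}) }
\leq C \cdot \norm{\alpha'}_{\mrm{Sob}} \leq C \cdot \epsilon^{3} \cdot \norm{\alpha}_{\mrm{Sob}} , \]
via Taylor expansion of $\til{\alpha}'$ around $\tfrac{1}{2}$ to order two (the linear term cancels on each sub-interval by symmetry, leaving only the second-order remainder, which carries the cubic $\epsilon^{3}$ factor from the chain-rule scaling of $\sigma$). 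A crude termwise bound also yields $\sum_{i} \norm{\beta_{i}} \leq C' \epsilon^{2} \cdot \norm{\alpha}_{\mrm{Sob}}$.

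Finally, I would choose $\epsilon_{2}(\alpha) \leq \epsilon_{1}(\alpha)$ small enough so that the total norm $\sum_{i}(\norm{\alpha_{i}}+\norm{\beta_{i}})$ stays below $\epsilon_{0}(G)$, and then apply the commutative CBH estimate (Theorem \ref{thm:6}(v)) to the $\alpha_{i}$ and $\beta_{i}$:
\[ \Norm{ \log_{G} \prod_{i=1}^{2^{k}} \exp_{G}(\lambda_{i}) -
\sum_{i=1}^{2^{k}} (\alpha_{i} + \beta_{i}) }
\leq c_{0}(G) \cdot \bigl( \sum_{i}(\norm{\alpha_{i}}+\norm{\beta_{i}}) \bigr) \cdot \bigl( \sum_{i} \norm{\beta_{i}} \bigr)
\leq C'' \epsilon^{3} . \]
Combined with $\sum_{i}(\alpha_{i}+\beta_{i}) - a_{0} = \sum_{i}\beta_{i}$, whose norm is $O(\epsilon^{3})$ by the midpoint rule estimate above, the triangle inequality yields the desired cubic bound, with $c_{2}(\alpha)$ the sum of the constants produced along the way (taken $\geq c_{1}(\alpha)$). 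The \emph{main obstacle} is precisely the point the decomposition is designed to bypass: the naive estimate of Theorem \ref{thm:6}(ii), applied directly to $\lambda_{i}$, would only give $O(\epsilon^{2})$, coming from $(\sum\norm{\lambda_{i}})^{2} \leq (\epsilon \norm{\alpha}_{\mrm{Sob}})^{2}$. The key is to extract the leading commuting part of $\lambda_{i}$, for which all nonabelian corrections in CBH vanish, and quarantine the non-commuting behaviour inside the smaller pieces $\beta_{i}$.
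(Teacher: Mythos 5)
Your argument is correct and is essentially the paper's own proof: both extract the common zeroth Taylor coefficient $2^{-k}a_0$ as the commuting part $\alpha_i$ and use Theorem~\ref{thm:6}(v) to reduce the CBH discrepancy to $O(\sum\|\beta_i\|) = O(\epsilon^2)$ times the total, while the sum $\sum_i\beta_i$ itself is $O(\epsilon^3)$ by symmetry cancellation of the linear Taylor terms plus the quadratic remainder carrying the $\epsilon^3$ scaling from $\sigma$. One small notational slip: the chain $\norm{\sum_i\beta_i}\le C\norm{\alpha'}_{\mrm{Sob}}\le C\epsilon^3\norm{\alpha}_{\mrm{Sob}}$ is not literally true, since $\norm{\alpha'}_{\mrm{Sob}}$ is only $\le\epsilon\norm{\alpha}_{\mrm{Sob}}$ (the $q=0$ term scales as $\epsilon$); what you actually use, and what your parenthetical correctly describes, is that the relevant second-derivative bound scales as $\epsilon^3$.
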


\begin{proof}\item
Take
\[ \epsilon_2(\alpha) := \min \bigl( \epsilon_1(\alpha), \,
\smfrac{1}{4} \epsilon_0(G) \cdot (1 + \norm{\alpha}_{\mrm{Sob}})^{-1} 
\bigr) \, . \]

Let $\sigma$ be a linear map with $\epsilon := \opn{len}(\alpha)$ satisfying
$0 < \epsilon < \epsilon_2(\alpha)$.
Let $Z := \sigma(\mbf{I}^{1})$, which is a $1$-dimensional oriented polyhedron,
and $z := \sigma(\smfrac{1}{2})$, the midpoint of $Z$.
Choose a positively oriented orthonormal linear coordinate function $s_1$
on $Z$, such that $s_1(z) = 0$.

Let $\til{\alpha} \in \mcal{O}(Z) \otimes \g$ be the coefficient of
$\alpha|_Z$, i.e.\ 
$\alpha|_Z = \til{\alpha} \cdot \d t_1$. 
Consider the Taylor expansion to second order of the smooth function
$\til{\alpha} : Z \to \g$, around the point $z$:
\begin{equation} \label{eqn:67}
\til{\alpha}(x) =  a_0 + s_1(x) \cdot a_1 + s_1(x)^2 \cdot g(x) 
\end{equation}
for $x \in Z$, where
$a_0 := \til{\alpha}(z) \in \g$,
$a_1 := (\smfrac{\partial}{\partial s_1} \til{\alpha})(z) \in \g$,
and $g : Z \to \g$ is a continuous function.
We know that
\begin{equation*} 
\norm{a_0} , \,
\norm{a_1} , \,
\norm{g(x)} \leq \norm{\alpha}_{\mrm{Sob}}  \, .
\end{equation*}
And, as we have seen before, 
\begin{equation} \label{eqn:71}
 \opn{RP}_{0}(\alpha \vert \sigma) = 
\opn{exp}_G \bigl(  \epsilon a_0 \bigr) .
\end{equation}

Take $k \geq 0$. For $i \in \{ 1, \ldots, 2^k \}$ let
$\sigma_i := \sigma \circ \sigma^k_i$ and
$z_i := \sigma_i(\smfrac{1}{2})$. 
Since 
$\epsilon < \epsilon_2(\alpha)$ it follows that 
\begin{equation} \label{eqn:66}
\sum_{i = 1}^{2^k} \,
\norm{ (\smfrac{1}{2})^k \epsilon \cdot a_0 } 
\leq \epsilon \cdot \norm{\alpha}_{\mrm{Sob}} \leq \smfrac{1}{4} \epsilon_0(G)
\, .
\end{equation}
Because  
$\abs{ s_1(z_i) } \leq  \smfrac{1}{2} \epsilon$ we also have
\begin{equation} \label{eqn:10}
\sum_{i = 1}^{2^k} \,
\norm{ (\smfrac{1}{2})^k \epsilon \cdot s_1(z_i) \cdot a_1 } 
\leq \smfrac{1}{2} \epsilon^2 \cdot \norm{\alpha}_{\mrm{Sob}}
\leq \smfrac{1}{4} \epsilon_0(G) 
\end{equation}
and 
\begin{equation} \label{eqn:8}
\sum_{i = 1}^{2^k} \,
\norm{ (\smfrac{1}{2})^k \epsilon \cdot s_1(z_i)^2 \cdot g(z_i) } 
\leq \smfrac{1}{4} \epsilon^3  \cdot \norm{\alpha}_{\mrm{Sob}}
\leq \smfrac{1}{4} \epsilon_0(G) \, .
\end{equation}

Define
\[ \lambda_i := (\smfrac{1}{2})^k \epsilon \cdot \til{\alpha}(z_i) \in \g . \]
By the Taylor expansion (\ref{eqn:67}) we have
\begin{equation} \label{eqn:69}
\lambda_i =  (\smfrac{1}{2})^k \epsilon \cdot a_0 + 
(\smfrac{1}{2})^k \epsilon \cdot s_1(z_i) \cdot a_1 + 
(\smfrac{1}{2})^k \epsilon \cdot s_1(z_i)^2 \cdot g(z_i)  .
\end{equation}
Since $\sigma^*(\d s_1) = \epsilon \cdot \d t_1$, we see that 
\[ \opn{RP}_{0}(\alpha \vert \sigma_i) = 
\opn{exp}_G(\lambda_i) , \]
and hence 
\begin{equation} \label{eqn:70}
\opn{RP}_{k}(\alpha \vert \sigma) = 
\prod_{i = 1}^{2^k} \ 
\opn{exp}_G (\lambda_i) . 
\end{equation}

Because the constant terms in the Taylor expansions (\ref{eqn:69}) of the
$\lambda_i$ are all equal to 
$(\smfrac{1}{2})^k \epsilon \cdot  a_0$, 
we can use property (v) of Theorem \ref{thm:6}, together with the estimates 
(\ref{eqn:66}), (\ref{eqn:10}), (\ref{eqn:8}), to deduce
\begin{equation}  \label{eqn:15}
\begin{aligned}
& \Norm{ \log_G \Bigl( \boprod_{i = 1}^{2^k}  \, \exp_G (\lambda_i) \Bigr)
 - \bosum_{i = 1}^{2^k} \, \lambda_i } \\
& \qquad \leq c_0(G) \cdot \bigl( 
\epsilon \cdot \norm{\alpha}_{\mrm{Sob}} \cdot 
(1 + \smfrac{1}{2} \epsilon + \smfrac{1}{4} \epsilon^2 ) \bigr)
\cdot \bigl( \epsilon \cdot \norm{\alpha}_{\mrm{Sob}} \cdot 
(\smfrac{1}{2} \epsilon + \smfrac{1}{4} \epsilon^2 ) \bigr) \\
& \qquad \leq \epsilon^3 \cdot c_0(G) \cdot 
\norm{\alpha}_{\mrm{Sob}}^2 \cdot 2 \, . 
\end{aligned} 
\end{equation}
Trivially the sum of the constant terms of the Taylor expansions of the
$\lambda_i$ is
\[ \sum_{i=1}^{2^k}\,  (\smfrac{1}{2})^k \epsilon \cdot a_0 = 
\epsilon \cdot a_0 . \]
The linear terms satisfy
\[ s_1(z_i) = - s_1(z_{2^{k} - i}) \]
because of symmetry; and therefore they cancel out:
\[ \sum_{i=1}^{2^k}\,  (\smfrac{1}{2})^k \epsilon \cdot
s_1(z_i) \cdot a_1 = 0 . \]
Therefore, using the estimate (\ref{eqn:8}) to eliminate the quadratic terms of
the Taylor expansions, we conclude that
\begin{equation} \label{eqn:13}
\Norm{ \bosum_{i = 1}^{2^k} \,  \lambda_i - \epsilon a_0 } \leq 
\smfrac{1}{4} \epsilon^3  \cdot \norm{\alpha}_{\mrm{Sob}} . 
\end{equation}

Finally we define
\[ c_2(\alpha) := \max \bigl( c_1(\alpha), \, 
2 c_0(G) \cdot \norm{\alpha}_{\mrm{Sob}}^2 + 
\smfrac{1}{4} \cdot \norm{\alpha}_{\mrm{Sob}} \bigr) \, . \]
Combining equations (\ref{eqn:71}) and (\ref{eqn:70}), plus the estimates
(\ref{eqn:13}) and (\ref{eqn:15}), we obtain 
\[ \Norm{ \log_G \bigl( \opn{RP}_0 (\alpha \vert \sigma) \bigl) - 
\log_G \bigl( \opn{RP}_{k} (\alpha \vert \sigma) \bigl) } 
\leq c_2(\alpha) \cdot \epsilon^3  \, . \]
\end{proof}

\begin{dfn} \label{dfn:13}
Let us fix a constant $\epsilon_2(\alpha)$ as in Lemma
\ref{lem:30}. A piecewise linear map $\sigma : \mbf{I}^1 \to X$ with 
$\opn{len}(\sigma) < \epsilon_2(\alpha)$
will be called an {\em $\alpha$-tiny piecewise linear map} (in this section).
\end{dfn}

\begin{rem}
We shall use the term ``tiny'' several times in the paper, each time with a new
meaning, depending on context. The notion ``tiny'' should be considered as
``local to each section''. 
\end{rem}

\begin{lem} \label{lem:31}
Let $\sigma : \mbf{I}^{1} \to X$ be an $\alpha$-tiny linear map. Then
there is a constant $c_3(\alpha, \sigma) \geq 0$, 
such that for any integers $k' \geq k \geq 0$ one has
\[ \Norm{ \log_G \bigl( \opn{RP}_{k'} (\alpha \vert \sigma) \bigl) - 
\log_G \bigl( \opn{RP}_{k} (\alpha \vert \sigma) \bigl) }
\leq c_3(\alpha, \sigma) \cdot \opn{len}(\sigma) \cdot (\smfrac{1}{2})^k \, .
\]
\end{lem}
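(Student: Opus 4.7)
The plan is to prove the estimate by reducing to the ``tiny scale'' via the recursive structure of binary tessellations, then invoking Lemma \ref{lem:30} on each piece and Theorem \ref{thm:6}(iv) to combine. The key observation is that the composition of sequences defined in \tup{(\ref{eqn:19})} is associative, and consequently $\opn{tes}^{k'} \mbf{I}^1 = \opn{tes}^{k'-k} \mbf{I}^1 \circ \opn{tes}^k \mbf{I}^1$ for any $k' \geq k$. Regrouping the product that defines $\opn{RP}_{k'}$ in the lexicographic order $(i, j)$, this gives the identity
\[ \opn{RP}_{k'}(\alpha \vert \sigma) = \prod_{i=1}^{2^k} \, \opn{RP}_{k'-k}(\alpha \vert \sigma_i), \quad \text{where } \sigma_i := \sigma \circ \sigma^k_i, \]
while by definition $\opn{RP}_{k}(\alpha \vert \sigma) = \prod_{i=1}^{2^k} \opn{RP}_{0}(\alpha \vert \sigma_i)$. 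Each $\sigma_i$ is a linear map of length $(\tfrac{1}{2})^k \opn{len}(\sigma)$, which is bounded by $\opn{len}(\sigma) < \epsilon_2(\alpha)$, so $\sigma_i$ is itself $\alpha$-tiny.

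Set $\lambda_i := \log_G \bigl( \opn{RP}_{0}(\alpha \vert \sigma_i) \bigr)$ and $\mu_i := \log_G \bigl( \opn{RP}_{k'-k}(\alpha \vert \sigma_i) \bigr)$, and $\beta_i := \mu_i - \lambda_i$. Applying Lemma \ref{lem:30} to the linear map $\sigma_i$ gives
\[ \norm{\beta_i} \leq c_2(\alpha) \cdot \opn{len}(\sigma_i)^3 = c_2(\alpha) \cdot (\smfrac{1}{2})^{3k} \cdot \opn{len}(\sigma)^3, \]
and summing over $i$ yields
\[ \bosum_{i=1}^{2^k} \norm{\beta_i} \leq c_2(\alpha) \cdot (\smfrac{1}{2})^{2k} \cdot \opn{len}(\sigma)^3 . \]
On the other hand Lemma \ref{lem:1} gives $\sum_i \norm{\lambda_i} \leq c_1(\alpha) \opn{len}(\sigma)$, so after possibly shrinking $\epsilon_2(\alpha)$ once and for all, the hypothesis $\sum_i (\norm{\lambda_i} + \norm{\beta_i}) < \epsilon_0(G)$ of Theorem \ref{thm:6}(iv) holds uniformly in $k, k'$.

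The second inequality in Theorem \ref{thm:6}(iv) applied to the sequences $(\lambda_i)$ and $(\lambda_i + \beta_i)$ gives
\[ \Norm{ \log_G \bigl( \opn{RP}_{k'}(\alpha \vert \sigma) \bigr) - \log_G \bigl( \opn{RP}_{k}(\alpha \vert \sigma) \bigr) }
\leq c_0(G) \cdot \bosum_i \norm{\beta_i}
\leq c_0(G) \, c_2(\alpha) \cdot (\smfrac{1}{2})^{2k} \cdot \opn{len}(\sigma)^3 . \]
Since $(\tfrac{1}{2})^{2k} \leq (\tfrac{1}{2})^k$, the bound is majorized by $c_3(\alpha, \sigma) \cdot \opn{len}(\sigma) \cdot (\tfrac{1}{2})^k$ with the choice $c_3(\alpha, \sigma) := c_0(G) \cdot c_2(\alpha) \cdot \opn{len}(\sigma)^2$.

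The main obstacle I anticipate is bookkeeping the recursive decomposition: the identity $\opn{tes}^{k'} = \opn{tes}^{k'-k} \circ \opn{tes}^{k}$ requires a short induction using the associativity of sequence composition \tup{(\ref{eqn:19})}, and then one must regroup the product $\prod_{l = 1}^{2^{k'}} \opn{RP}_0(\alpha \vert \sigma \circ \sigma^{k'}_l)$ according to the outer index $i$, recognizing each inner block as $\opn{RP}_{k'-k}(\alpha \vert \sigma_i)$. Beyond this combinatorial verification, the analytic content is entirely supplied by Lemma \ref{lem:30} and property (iv) of Theorem \ref{thm:6}, with no further estimates needed.
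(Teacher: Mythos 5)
Your overall strategy is the same as the paper's: regroup $\opn{RP}_{k'}$ via the recursive structure of the binary tessellations as $\prod_{i=1}^{2^k} \opn{RP}_{k'-k}(\alpha \vert \sigma_i)$ with $\sigma_i := \sigma \circ \sigma^k_i$, estimate each factor against $\opn{RP}_0(\alpha \vert \sigma_i)$, and combine with property (iv) of Theorem \ref{thm:6}. However, there is a genuine gap in the middle step: you apply Lemma \ref{lem:30} to every index $i$, but Lemma \ref{lem:30}(ii) requires not only that $\sigma_i$ be linear and short, but also that $\alpha|_{\sigma_i(\mbf{I}^1)}$ be \emph{smooth}. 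Since $\alpha$ is merely piecewise smooth, the segment $Z := \sigma(\mbf{I}^1)$ may contain finitely many singular points of $\alpha$, and for the subsegments $Z_i$ whose interiors contain such a point the cubic bound $c_2(\alpha) \cdot \opn{len}(\sigma_i)^3$ is simply not available; the Taylor-expansion argument behind Lemma \ref{lem:30} breaks down there.

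The paper repairs exactly this: it splits the indices into good ones (where $\alpha|_{Z_i}$ is smooth, and Lemma \ref{lem:30} gives the $(\smfrac{1}{2})^{3k}$-order bound) and bad ones (where only the crude Lemma \ref{lem:1} bound of order $c_1(\alpha)\cdot \opn{len}(\sigma)\cdot (\smfrac{1}{2})^{k}$ holds), and observes that the number of bad indices is at most $m$, the number of singular points of $\alpha|_Z$, uniformly in $k$. Feeding both contributions into Theorem \ref{thm:6}(iv) yields the stated rate $(\smfrac{1}{2})^k$ with $c_3(\alpha, \sigma) := c_0(G)\cdot(2m\cdot c_1(\alpha) + c_2(\alpha))$. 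Two symptoms in your write-up flag the omission: your constant $c_0(G)\, c_2(\alpha)\, \opn{len}(\sigma)^2$ does not depend on $\sigma$ beyond its length (whereas the statement deliberately allows dependence on $\sigma$, through $m$), and you obtain the stronger rate $(\smfrac{1}{2})^{2k}$, which is precisely what the bad indices prevent in general. Also, you should not "shrink $\epsilon_2(\alpha)$" mid-argument — that constant was fixed in Definition \ref{dfn:13}; the smallness hypothesis of Theorem \ref{thm:6}(iv) should instead be checked from the bounds already built into Lemmas \ref{lem:1} and \ref{lem:30}. With the good/bad splitting added, your argument becomes the paper's proof.
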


\begin{proof}
We may assume that $\opn{len}(\sigma) > 0$. 
Let $Z := \sigma(\mbf{I}^{1}) \subset X$, and let $m$ be the number of singular
points of the differential form $\alpha|_Z$. 

Take $k \geq 0$. For an index $i \in \{ 1, \ldots, 2^k \}$ define
$\sigma_i := \sigma \circ \sigma^k_i$
and $Z_i := \sigma_i(\mbf{I}^{1})$. We say that $i$ is good if 
$\alpha|_{Z_i}$ is smooth, and otherwise $i$ is bad. The set of good and
bad indices are denoted by $\opn{good}(k)$ and $\opn{bad}(k)$ respectively.
Since for any singular point $x$ of $\alpha|_Z$ there is at most one index $i$
such that $x \in \opn{Int} Z_i$, 
it follows that the cardinality of $\opn{bad}(k)$ is at most $m$. 

Next take $k' \geq k \geq 0$, and let $l := k' - k$. 
By the recursive definition of the binary tessellations we have
\[ \opn{RP}_{k}(\alpha \vert \sigma) =
\prod_{i = 1}^{2^k} \,  \opn{RP}_{0}(\alpha \vert \sigma_i)  \]
and
\[ \opn{RP}_{k'}(\alpha \vert \sigma) =
\prod_{i = 1}^{2^k} \,  \opn{RP}_{l}(\alpha \vert \sigma_i) . \]
If $i \in \opn{good}(k)$ then by Lemma \ref{lem:30} we know that 
\[ \Norm{ 
\opn{log}_G \bigl( \opn{RP}_{l}(\alpha \vert \sigma_i) \bigr) -
\opn{log}_G \bigl( \opn{RP}_{0}(\alpha \vert \sigma)_i \bigr) }
\leq c_2(\alpha) \cdot \opn{len}(\sigma)^3 \cdot (\smfrac{1}{2})^{3 k} . \]
On the other hand, $i \in \opn{bad}(k)$, then by Lemma \ref{lem:1} we know
that 
\[ \Norm{ 
\opn{log}_G \bigl( \opn{RP}_{l}(\alpha \vert \sigma_i) \bigr) -
\opn{log}_G \bigl( \opn{RP}_{0}(\alpha \vert \sigma)_i \bigr) }
\leq 
2 \cdot c_1(\alpha) \cdot \opn{len}(\sigma) \cdot (\smfrac{1}{2})^{k} . \]
Therefore by property (iv) of Theorem \ref{thm:6} we have
\[ \begin{aligned}
& \Norm{ 
\opn{log}_G \bigl( \opn{RP}_{k'}(\alpha \vert \sigma) \bigr) -
\opn{log}_G \bigl( \opn{RP}_{k}(\alpha \vert \sigma) \bigr) } \\
& \qquad = \Norm{ \opn{log}_G \Bigl( \boprod_{i = 1}^{2^k} \,
\opn{RP}_{l}(\alpha \vert \sigma_i) \Bigr) -
\opn{log}_G \bigl( \opn{RP}_{0}(\alpha \vert \sigma_i) \bigr) } \\
& \qquad \leq
c_0(G) \cdot \bigl( 
\abs{ \opn{bad}(k) } \cdot 2 \cdot c_1(\alpha) \cdot \opn{len}(\sigma) 
 \cdot (\smfrac{1}{2})^{k} \\
& \qquad \quad +
\abs{ \opn{good}(k) } \cdot
c_2(\alpha) \cdot \opn{len}(\sigma)^3 \cdot (\smfrac{1}{2})^{3 k}
\bigr) . 
\end{aligned} \]
Thus we may take
\[ c_3(\alpha, \sigma) := c_0(G) \cdot \bigl( 2 m \cdot c_1(\alpha) +
c_2(\alpha) \bigr) \, . \]
\end{proof}

\begin{thm} \label{thm:13}
Let $X$ be a polyhedron, let 
$\alpha \in \Omega^1_{\mrm{pws}}(X) \otimes \g$,
and let $\sigma : \mbf{I}^{1} \to X$ be a piecewise linear map. Then the limit
$\lim_{k \to \infty} \opn{RP}_{k}(\alpha \vert \sigma)$
exists.
\end{thm}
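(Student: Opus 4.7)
The plan is to show the sequence of Riemann products is Cauchy in $G$ (via their logarithms, once they lie inside $V_0(G)$), and then reduce the general piecewise linear case to the $\alpha$-tiny case via the recursive structure of the binary tessellations.

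First I would extend Lemma \ref{lem:31} from $\alpha$-tiny linear maps to $\alpha$-tiny \emph{piecewise} linear maps. Choose a single linear triangulation $T = \{Y_j\}_{j \in J}$ of $\mbf{I}^1$ that simultaneously witnesses the piecewise linear structure of $\sigma$ and smooths $\sigma^*(\alpha)$, and let $N$ denote its number of interior vertices. At level $k$, call an index $i \in \{1, \ldots, 2^k\}$ \emph{good} if the subsegment $\sigma^k_i(\mbf{I}^1) \subset \mbf{I}^1$ lies in a single simplex of $T$ — so that $\sigma \circ \sigma^k_i$ is linear and $\alpha$ is smooth on its image — and \emph{bad} otherwise. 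The number of bad indices at any level is at most $N$, independent of $k$. On good pieces Lemma \ref{lem:30} furnishes an error bound of order $\opn{len}(\sigma)^3 \cdot (\tfrac{1}{2})^{3k}$, while on bad pieces the crude estimate of Lemma \ref{lem:1} gives $O(\opn{len}(\sigma) \cdot (\tfrac{1}{2})^k)$. Summing these $2^k$ contributions and applying property (iv) of Theorem \ref{thm:6} to the product $\opn{RP}_{k+l}(\alpha|\sigma) = \prod_i \opn{RP}_l(\alpha | \sigma \circ \sigma^k_i)$ yields a bound of the shape
\[
\bigl\| \log_G \opn{RP}_{k+l}(\alpha|\sigma) - \log_G \opn{RP}_k(\alpha|\sigma) \bigr\| \leq c'(\alpha,\sigma) \cdot (\tfrac{1}{2})^k,
\]
which is the Cauchy property; since $\log_G$ is a local diffeomorphism, this upgrades to convergence of $\opn{RP}_k(\alpha|\sigma)$ in $G$.

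For arbitrary (not necessarily $\alpha$-tiny) piecewise linear $\sigma$, pick $k_0$ so large that every piece $\sigma \circ \sigma^{k_0}_i$ has length below $\epsilon_2(\alpha)$; this is possible because the maximum linear speed of $\sigma$ on its finitely many linear pieces is finite, forcing $\opn{len}(\sigma \circ \sigma^{k_0}_i) = O(2^{-k_0})$. Each such piece is then $\alpha$-tiny and piecewise linear, and by the recursion in Definition \ref{dfn:41} we have, for every $k \geq k_0$,
\[
\opn{RP}_k(\alpha|\sigma) = \prod_{i=1}^{2^{k_0}} \opn{RP}_{k - k_0}(\alpha \vert \sigma \circ \sigma^{k_0}_i).
\]
Each factor converges in $G$ by the previous step, and continuity of multiplication in $G$ yields convergence of the product.

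The main obstacle is the bookkeeping in the first step: one must verify that combining singularities of $\alpha$ with breakpoints of $\sigma$ still produces at most $O(1)$ bad indices per level, so that the uniform geometric factor $(\tfrac{1}{2})^k$ survives the summation over $2^k$ terms. Taking a common refinement triangulation $T$ at the outset makes this transparent, after which the argument mirrors the proof of Lemma \ref{lem:31} essentially verbatim.
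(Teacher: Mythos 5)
Your argument is correct, but it routes around the reduction the paper actually uses. The paper's proof first invokes the identity $\opn{RP}_{k}(\alpha \vert \sigma) = \opn{RP}_{k}(\sigma^*(\alpha) \vert \sigma^0_1)$ to replace $(X, \alpha, \sigma)$ by $(\mbf{I}^1, \sigma^*(\alpha), \mrm{id})$; after this pullback the tessellation pieces $\sigma^k_i$ are honest \emph{linear} maps, the breakpoints of $\sigma$ have been absorbed into the singular locus of the piecewise smooth form $\sigma^*(\alpha)$, and Lemma \ref{lem:31} (whose proof already does the good/bad index bookkeeping for singular points of the form) applies verbatim to each tiny piece, so no new Cauchy estimate is needed. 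You instead keep $\sigma$ as a map into $X$ and re-prove Lemma \ref{lem:31} for $\alpha$-tiny \emph{piecewise} linear maps, counting breakpoints of $\sigma$ together with singularities of $\alpha$ as the $O(1)$ bad indices per level; this is a legitimate alternative, and your step 2 (choosing $k_0$ so that all pieces $\sigma \circ \sigma^{k_0}_i$ are $\alpha$-tiny, then using the recursion and continuity of multiplication) matches the paper's final step. Two small points of hygiene in your version: on good pieces the length of $\sigma \circ \sigma^k_i$ is controlled by the maximal speed of $\sigma$ on its linear pieces times $(\smfrac{1}{2})^k$, not by $\opn{len}(\sigma) \cdot (\smfrac{1}{2})^k$, so your cubic error term should carry a constant depending on $\sigma$ (harmless, since only some $c'(\alpha,\sigma)$ is needed); and on bad pieces the bound $\Norm{\log_G \opn{RP}_{l}} \leq c_1(\alpha) \cdot \opn{len}$ from Lemma \ref{lem:1} is only guaranteed for sufficiently large $l$ when the piece is merely piecewise linear, which is still enough for the Cauchy property but should be acknowledged. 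The paper's pullback trick buys exactly the avoidance of these two caveats.
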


\begin{proof}
For any $k$ we have
\[ \begin{aligned}
& \opn{RP}_{k}(\alpha \vert \sigma) = 
\prod_{i = 1}^{2^k} \, \opn{RP}_{0}(\alpha \vert \sigma \circ \sigma^k_i) =
\prod_{i = 1}^{2^k} \, \opn{RP}_{0} \bigl( (\sigma \circ \sigma^k_i)^*(\alpha)
\vert \mbf{I}^1 \bigr) \\
& \qquad = 
\prod_{i = 1}^{2^k} \, \opn{RP}_{0} \bigl( \sigma^*(\alpha)
\vert \sigma^k_i \bigr) = 
\opn{RP}_{k} \bigl( \sigma^*(\alpha) \vert \sigma^0_1 \bigr) 
\end{aligned} \]
by definition. 
So after replacing $\alpha$ with $\sigma^*(\alpha)$, we can assume that 
$X = \mbf{I}^1$, and we have to prove that the limit 
$\lim_{k \to \infty} \opn{RP}_{k}(\alpha \vert \sigma^0_1)$ exists. 

Take $k$ large enough such that for each $i \in \{ 1, \ldots, 2^k \}$
the linear map $\sigma^k_i$ is $\alpha$-tiny. 
For any $k' \geq 0$ we have
\[ \opn{RP}_{k + k'}(\alpha \vert \sigma^0_1) = 
\prod_{i = 1}^{2^k}  \, \opn{RP}_{k'}(\alpha \vert \sigma^k_i) . \]
Thus it suffices to prove that for any $i$ the limit
$\lim_{k' \to \infty} \opn{RP}_{k'}(\alpha \vert \sigma^k_i)$
exists.

We have now reduced our problem to showing that for any $\alpha$-tiny
linear map $\sigma : \mbf{I}^1 \to X$ the limit
$\lim_{k \to \infty} \opn{RP}_{k}(\alpha \vert \sigma)$
exists. But this follows immediately from Lemma \ref{lem:31}.
\end{proof}

\begin{dfn} \label{dfn:28}
In the situation of Theorem \ref{thm:13}, the 
{\em multiplicative integral of $\alpha$ on $\sigma$} is 
\[ \opn{MI}(\alpha \vert \sigma) := 
\lim_{k \to \infty} \opn{RP}_{k}(\alpha \vert \sigma) \in G . \] 

If $X = \mbf{I}^{1}$ and $\sigma = \sigma^0_1$, the basic map, then we write
$\opn{MI}(\alpha \vert \mbf{I}^{1}) := \opn{MI}(\alpha \vert \sigma^0_1)$.
\end{dfn}

\begin{rem}
If the group $G$ is {\em abelian}, then 
$\opn{RP}_k(\alpha \vert \sigma)$
is the exponential of a Riemann sum, and therefore in the limit we get
\[ \opn{MI}(\alpha \vert \sigma) = 
\exp_G \bigl( \int_{\sigma} \alpha \bigr) . \]
\end{rem}

\begin{prop} \label{prop:5}
Let $\sigma : \mbf{I}^{1} \to X$ be a piecewise linear map.
\begin{enumerate}
\item If $\sigma$ is $\alpha$-tiny \tup{(}see Definition \tup{\ref{dfn:13})},
then 
$\opn{MI}(\alpha \vert \sigma) \in V_0(G)$, and 
\[ \Norm{ \opn{log}_G \bigl( \opn{MI}(\alpha \vert \sigma) \bigr) }
\leq c_1(\alpha) \cdot  \opn{len}(\sigma) \, . \]

\item If $\sigma$ is linear and $\alpha$-tiny, and if 
$\alpha|_{\sigma(\mbf{I}^{1})}$ is smooth, then for any $k \geq 0$ one has
\[ \Norm{ \opn{log}_G \bigl( \opn{MI}(\alpha \vert \sigma) \bigr) -
\opn{log}_G \bigl( \opn{RP}_0(\alpha \vert \sigma) \bigr) }
\leq c_2(\alpha) \cdot \opn{len}(\sigma)^3 \, . \]

\item For any $k \geq 0$ one has
\[ \opn{MI}(\alpha \vert \sigma)  =
\prod_{i = 1}^{2^k}  \, \opn{MI}(\alpha \vert \sigma \circ \sigma^k_i ) . \]
\end{enumerate}
\end{prop}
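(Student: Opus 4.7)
All three parts are obtained by passing to the limit $k \to \infty$ in results already established; essentially no new estimate is needed. The strategy combines Theorem \ref{thm:13} (existence of $\opn{MI}$), Lemmas \ref{lem:1} and \ref{lem:30} (uniform bounds on $\opn{RP}_k$), and the self-similarity of the binary tessellations.

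Part (3) is the cleanest. From the recursive Definition \ref{dfn:41} a straightforward induction gives an identity of the form
\[
\opn{RP}_{k+k'}(\alpha \vert \sigma) = \prod_{i = 1}^{2^k} \opn{RP}_{k'}(\alpha \vert \sigma \circ \sigma^k_i)
\]
for all $k, k' \geq 0$. Fixing $k$ and letting $k' \to \infty$, Theorem \ref{thm:13} makes each factor on the right converge to $\opn{MI}(\alpha \vert \sigma \circ \sigma^k_i)$, while the left side converges to $\opn{MI}(\alpha \vert \sigma)$. Continuity of multiplication in $G$ transfers the equality to the limit.

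For part (1), Lemma \ref{lem:1}(ii) says that $\opn{RP}_k(\alpha \vert \sigma) \in V_0(G)$ with $\Norm{\log_G(\opn{RP}_k(\alpha \vert \sigma))} \leq c_1(\alpha) \cdot \opn{len}(\sigma)$ for every sufficiently large $k$. Since $\sigma$ is $\alpha$-tiny we have the strict inequality
\[
c_1(\alpha) \cdot \opn{len}(\sigma) < c_1(\alpha) \cdot \epsilon_1(\alpha) \leq \smfrac{1}{2} \epsilon_0(G),
\]
so the logarithms all lie in a single closed ball in $\g$ whose image under $\exp_G$ is a compact subset of $V_0(G)$. Theorem \ref{thm:13} then forces $\opn{MI}(\alpha \vert \sigma)$ into this compact set; continuity of $\log_G$ propagates the norm bound. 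Part (2) is identical in spirit: the inequality of Lemma \ref{lem:30}(ii) is valid for every $k \geq 0$, and once part (1) guarantees that all terms stay in the domain of $\log_G$, passing to the limit $k \to \infty$ and invoking continuity of $\log_G$ yields the desired estimate.

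The only mildly subtle point — and what I would flag as the main obstacle — is the verification in part (1) that $\opn{MI}(\alpha \vert \sigma)$ does not drift out of $V_0(G)$ in the limit. This is precisely what the strict inequality $c_1(\alpha) \cdot \epsilon_1(\alpha) \leq \smfrac{1}{2} \epsilon_0(G)$ built into the conventions of Lemma \ref{lem:1}(i) was designed to handle; once this is noted, all three parts follow without further work.
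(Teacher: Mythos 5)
Your proposal is correct and follows essentially the same route as the paper: part (3) by taking $k' \to \infty$ in the identity $\opn{RP}_{k+k'}(\alpha \vert \sigma) = \prod_{i=1}^{2^k} \opn{RP}_{k'}(\alpha \vert \sigma \circ \sigma^k_i)$, part (1) by trapping the Riemann products in the compact set $\exp_G$ of the closed ball of radius $\smfrac{1}{2}\epsilon_0(G)$ (using Lemma \ref{lem:1}) and appealing to continuity of $\log_G$, and part (2) as an immediate limit of Lemma \ref{lem:30}. The only cosmetic slip is invoking $\epsilon_1(\alpha)$ where the tininess hypothesis gives $\opn{len}(\sigma) < \epsilon_2(\alpha) \leq \epsilon_1(\alpha)$, which changes nothing.
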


\begin{proof}
(1) By Lemma \ref{lem:1}, for sufficiently large $k$ we have
$\opn{RP}_k(\alpha \vert \sigma) \in V_0(G)$ and 
\[ \Norm{ \opn{log}_G \bigl( \opn{RP}_k(\alpha \vert \sigma) \bigr) } 
\leq c_1(\alpha) \cdot \opn{len}(\sigma) \leq 
\smfrac{1}{2} \cdot \epsilon_0(G) \ . \]
Let $B$ be the closed ball of radius $\smfrac{1}{2} \epsilon_0(G)$ in
$\g$, and let $Z := \opn{exp}_G(B)$, which is a compact subset of $G$.
Since for every $k$ one has
$\opn{RP}_k(\alpha \vert \sigma) \in Z$, it follows that in the limit
$\opn{MI}(\alpha \vert \sigma) \in Z \subset V_0(G)$.
The bound on 
$\Norm{ \opn{log}_G \bigl( \opn{MI}(\alpha \vert \sigma) \bigr) }$
is then obvious.

\medskip \noindent
(2) This is immediate from Lemma \ref{lem:30}.

\medskip \noindent
(3) For every $k' \geq 0$ we have by definition
\[ \opn{RP}_{k + k'}(\alpha \vert \sigma) = 
\prod_{i = 1}^{2^k}  \, \opn{RP}_{k'}(\alpha \vert \sigma \circ \sigma^k_i ) .
\]
Now pass to the limit $k' \to  \infty$.
\end{proof}

\subsection{Functoriality of the MI}
The next results are on the functoriality of the multiplicative integral with
respect to $G$ and $X$. 

\begin{prop}  \label{prop:6}
Let $\Phi : G \to H$ be a map of Lie groups, with induced Lie algebra map
$\phi := \opn{Lie}(\Phi) : \g \to \h$. Let $f : Y \to X$ be a piecewise linear
map between polyhedra, and let
$\alpha \in \Omega^1_{\mrm{pws}}(X) \otimes \g$.
Then for any piecewise linear map $\sigma : \mbf{I}^{1} \to Y$ one has
\[ \Phi \bigl( \opn{MI}(\alpha \vert f \circ \sigma) \bigr) = 
\opn{MI} \bigl( (f^* \otimes \phi)(\alpha) \vert \sigma \bigr) \]
in $H$.
\end{prop}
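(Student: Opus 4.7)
The plan is to prove the identity at the level of Riemann products and then pass to the limit, exploiting the continuity of $\Phi$ together with the fundamental compatibility $\Phi \circ \exp_G = \exp_H \circ \phi$ guaranteed by functoriality of the exponential map.

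First I would reduce to the case of the basic Riemann product $\opn{RP}_0$. Observe that $(f \circ \sigma)^*(\alpha) = \sigma^*(f^*(\alpha))$ as piecewise smooth $\g$-valued $1$-forms on $\mbf{I}^1$, and that pulling back commutes with the coefficient scalar $\phi$ on the Lie algebra factor; concretely $(\sigma^*((f^* \otimes \phi)(\alpha))) = (\opn{id} \otimes \phi)(\sigma^*(f^*(\alpha)))$. So if $w = \tfrac{1}{2}$ is a smooth point of $(f \circ \sigma)^*(\alpha)$ with coefficient $\widetilde{\alpha}(w) \in \g$ on the simplex $Y \ni w$, then $w$ is also a smooth point of $\sigma^*((f^*\otimes \phi)(\alpha))$ with coefficient $\phi(\widetilde{\alpha}(w)) \in \h$. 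Applying the functoriality square for $\exp$ then yields
\[ \Phi\bigl(\opn{RP}_0(\alpha \vert f \circ \sigma)\bigr) = \Phi\bigl(\exp_G(\widetilde{\alpha}(w))\bigr) = \exp_H\bigl(\phi(\widetilde{\alpha}(w))\bigr) = \opn{RP}_0\bigl((f^* \otimes \phi)(\alpha) \vert \sigma\bigr). \]
In the singular case both sides are $1$ by Definition \ref{dfn:1}, since the smooth locus of $\sigma^*(f^*(\alpha))$ and of $\sigma^*((f^* \otimes \phi)(\alpha))$ coincide (the tensor factor $\phi$ does not affect where the form is smooth).

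Next I would lift this to the $k$-th refined Riemann product. Since $\Phi$ is a group homomorphism, for any $k \geq 0$,
\[ \Phi\bigl(\opn{RP}_k(\alpha \vert f \circ \sigma)\bigr) = \prod_{i=1}^{2^k} \Phi\bigl(\opn{RP}_0(\alpha \vert (f \circ \sigma) \circ \sigma^k_i)\bigr) = \prod_{i=1}^{2^k} \opn{RP}_0\bigl((f^*\otimes \phi)(\alpha) \vert \sigma \circ \sigma^k_i\bigr), \]
which is precisely $\opn{RP}_k((f^*\otimes \phi)(\alpha) \vert \sigma)$. Here I used the previous paragraph applied to the piecewise linear map $(f \circ \sigma) \circ \sigma^k_i = f \circ (\sigma \circ \sigma^k_i)$.

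Finally, passing to the limit $k \to \infty$, Theorem \ref{thm:13} ensures that both sides of the desired identity exist as limits of Riemann products, and continuity of $\Phi : G \to H$ (being a smooth map of Lie groups) allows the limit to be interchanged with $\Phi$:
\[ \Phi\bigl(\opn{MI}(\alpha \vert f \circ \sigma)\bigr) = \lim_{k \to \infty} \Phi\bigl(\opn{RP}_k(\alpha \vert f \circ \sigma)\bigr) = \lim_{k\to\infty} \opn{RP}_k\bigl((f^*\otimes\phi)(\alpha) \vert \sigma\bigr) = \opn{MI}\bigl((f^*\otimes\phi)(\alpha) \vert \sigma\bigr). \]
There is no real obstacle here: the only subtle point is verifying that the singular loci of $\sigma^*(f^*(\alpha))$ and $\sigma^*((f^*\otimes\phi)(\alpha))$ agree, but this is immediate from Definition \ref{dfn:40}-style bookkeeping since the pullback $\sigma^* f^*$ is applied to the differential form factor alone while $\phi$ acts only on the Lie algebra factor.
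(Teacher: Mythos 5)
Your argument goes through without trouble for the pullback part ($f$ alone), and matches the paper's Case 1; the gap is in the claim that the smooth locus of $\sigma^*(f^*(\alpha))$ and of $\sigma^*((f^*\otimes\phi)(\alpha))$ coincide because ``$\phi$ acts only on the Lie algebra factor.'' That is false when $\phi=\opn{Lie}(\Phi)$ is not injective: writing $\alpha=\sum_j \alpha_j\otimes\gamma_j$ in a basis of $\g$, the map $\opn{id}\otimes\phi$ can kill exactly those components $\alpha_j$ that are singular at the midpoint $w_i$, so $(\sigma^*\otimes\phi)(\alpha)$ may be smooth at a point where $\sigma^*(\alpha)$ is singular (one implication does hold, namely smooth for $\alpha$ implies smooth for $\phi(\alpha)$, but not the converse). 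At such a point Definition \ref{dfn:1} forces $\opn{RP}_0(\alpha\vert\cdot)=1$, hence $\Phi(\opn{RP}_0(\alpha\vert\cdot))=1$, while $\opn{RP}_0(\phi(\alpha)\vert\cdot)=\exp_H$ of a generally nonzero coefficient. Consequently your finite-$k$ identity $\Phi(\opn{RP}_k(\alpha\vert f\circ\sigma))=\opn{RP}_k((f^*\otimes\phi)(\alpha)\vert\sigma)$ fails in general; the two sides agree only in the limit, so the ``pass to the limit by continuity of $\Phi$'' step has nothing exact to pass to.

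This is exactly the issue the paper's proof is built around. After separating the two cases and reducing (via $\sigma^*$, Proposition \ref{prop:5}(3), and a choice of euclidean norm on $\h$ with $\norm{\phi}\leq 1$, so that $\alpha$-tiny implies $\phi(\alpha)$-tiny) to a tiny linear $\sigma$, one observes that the basic Riemann products do agree at every midpoint except the finitely many ($\leq m$, the number of singular points of $\sigma^*(\alpha)$) where $\phi(\alpha)$ is smooth but $\alpha$ is singular; at those points Lemma \ref{lem:1} bounds the discrepancy of the logarithms by $2\, c_1(\alpha)\cdot\opn{len}(\sigma)\cdot(\smfrac{1}{2})^k$, and property (iv) of Theorem \ref{thm:6} then gives
\[ \Norm{ \log_H \bigl( \Phi ( \opn{RP}_{k} (\alpha \vert \sigma) ) \bigr) -
\log_H \bigl( \opn{RP}_{k} ( \phi(\alpha) \vert \sigma ) \bigr) }
\leq 2 m \cdot c_1(\alpha) \cdot \opn{len}(\sigma) \cdot (\smfrac{1}{2})^k \, , \]
which tends to $0$ as $k\to\infty$, yielding the equality of the multiplicative integrals. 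Your proposal is missing this estimate-based handling of the mismatched singular loci, which is the essential content of the proposition beyond the formal functoriality of $\exp$.
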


\begin{proof}
It suffices to consider $f$ and $\Phi$ separately; so we look at two cases.

\medskip \noindent
Case 1. $H = G$ and $\Phi$ is the identity map. Here for every $k \geq 0$
and $i \in \{ 1, \ldots, 2^k \}$ we have
\[ \opn{RP}_{0}(\alpha \vert f \circ \sigma \circ \sigma^k_i) 
\bigr) =
\opn{RP}_{0} \bigl( (f \circ \sigma)^*(\alpha) \vert \sigma^k_i \bigr) =
\opn{RP}_{0} \bigl( f^*(\alpha) \vert \sigma \circ \sigma^k_i \bigr) . \]
Hence
\[ \opn{RP}_{k}(\alpha \vert f \circ \sigma) =
\opn{RP}_{k} \bigl( (f \circ \sigma)^*(\alpha)  \vert \mbf{I}^{1} \bigr) =
\opn{RP}_{k} \bigl( f^*(\alpha) \vert \sigma \bigr) . \]
Going to the limit in $k$ we see that 
\[ \opn{MI}(\alpha \vert f \circ \sigma) =
\opn{MI} \bigl( (f \circ \sigma)^*(\alpha)  \vert \mbf{I}^{1} \bigr) =
\opn{MI} \bigl( f^*(\alpha) \vert \sigma \bigr) . \]

\medskip \noindent
Case 2. 
Here we assume that $Y = X$ and $f$ is the identity map. 
Since 
\[ \opn{MI}(\alpha \vert \sigma) = \opn{MI}(\sigma^*(\alpha) \vert \sigma^0_1)
\]
and 
\[ \opn{MI}(\phi(\alpha) \vert \sigma) = 
\opn{MI}((\sigma^* \otimes \phi)(\alpha) \vert \sigma^0_1) =
\opn{MI}(\phi(\sigma^*(\alpha)) \vert \sigma^0_1) , \]
we can replace $\sigma$ with $\sigma^0_1$ and $\alpha$ with
$\sigma^*(\alpha)$. Therefore we can assume that $\sigma$ is a linear map. 

Put a euclidean norm on $\h$ such that $\phi : \g \to \h$ has operator norm 
$\norm{\phi} \leq 1$. 
This implies that 
$\norm{ \phi(\alpha) }_{\mrm{Sob}} \leq \norm{\alpha}_{\mrm{Sob}}$.
So we can assume that
$\epsilon_1(\phi(\alpha)) \geq \epsilon_1(\alpha)$
and
$c_1(\phi(\alpha)) \leq c_1(\alpha)$
(cf.\ proof of Lemma \ref{lem:1}).

Take $k$ large enough such that 
$\sigma \circ \sigma^k_i$ is $\alpha$-tiny for every 
$i \in \{ 1, \ldots, 2^k \}$.
Then $\sigma \circ \sigma^k_i$ is also $\phi(\alpha)$-tiny.

By part (3) of Proposition \ref{prop:5} we have
\[ \opn{MI}(\alpha \vert \sigma )  =
\prod_{i = 1}^{2^k}  \, 
\opn{MI}(\alpha \vert \sigma \circ \sigma^k_i )  \]
and
\[ \opn{MI} \bigl( \phi(\alpha) \vert \sigma \bigr) =
\prod_{i = 1}^{2^k}  \, 
\opn{MI} \bigl( \phi(\alpha) \vert \sigma \circ \sigma^k_i \bigr) . \]
So it suffices to prove that 
\[ \Phi \bigl( \opn{MI}(\alpha \vert \sigma \circ \sigma^k_i) \bigr) = 
\opn{MI} \bigl( \phi(\alpha) \vert \sigma \circ \sigma^k_i \bigr) \]
for every $i$. 

By this reduction we can assume that 
$\sigma$ is $\alpha$-tiny and also $\phi(\alpha)$-tiny.
Take any $k \geq 0$, and for every $i \in \{ 1, \ldots, 2^k \}$ let 
$w_i := \sigma^k_i(\smfrac{1}{2}) \in \mbf{I}^{1}$. 
If $w_i$ is a smooth point of 
\[ \sigma^*(\alpha) \in \Omega^1_{\mrm{pws}}(\mbf{I}^{1}) \otimes 
\g , \]
then it is also a smooth point of 
\[ (\sigma^* \otimes \phi)(\alpha)  
\in \Omega^1_{\mrm{pws}}(\mbf{I}^{1}) \otimes \h .  \]
In this case we have
\begin{equation} \label{eqn:72}
\Phi \bigl( \opn{RP}_{0}(\alpha \vert \sigma \circ \sigma^k_i) 
\bigr) =
\opn{RP}_0 \bigl( \phi(\alpha) \vert \sigma \circ \sigma^k_i \bigr) .
\end{equation}

In case $w_i$ is a singular point of
$(\sigma^* \otimes \phi)(\alpha)$,
then it is also a singular point of 
$\sigma^*(\alpha)$.
Hence (\ref{eqn:72}) also holds (both sides are $1$). 

The only problem is when $w_i$ is a smooth point of
$(\sigma^* \otimes \phi)(\alpha)$
but a singular point of $\sigma^*(\alpha)$.

Now we use the estimate provided by Lemma \ref{lem:1}
(noting that $\phi \circ \log_G = \log_H \circ\, \Phi$):
\[ \begin{aligned}
& \Norm{ \log_H \bigl( \Phi \bigl( \opn{RP}_{0} 
(\alpha \vert \sigma \circ \sigma^k_i ) \bigl) \bigr) - 
\log_H \bigl( \opn{RP}_{0} \bigl( \phi(\alpha) \vert 
\sigma \circ \sigma^k_i \bigr) \bigr) } \\
& \qquad 
\leq 2 \cdot c_1(\alpha) \cdot \opn{len}(\sigma) \cdot (\smfrac{1}{2})^k \, .
\end{aligned} \]

Let $m$ be the number of singular points of $\sigma^*(\alpha)$. 
Then using property (iv) of Theorem \ref{thm:6} we get
\[ \begin{aligned}
& \Norm{ \log_H \bigl( \Phi \bigl( \opn{RP}_{k} 
(\alpha \vert \sigma) \bigl) \bigr) - 
\log_H \bigl( \opn{RP}_{k} \bigl( \phi(\alpha) \vert 
\sigma \bigr) \bigr) } \\
& \qquad 
\leq 2 m \cdot c_1(\alpha) \cdot \opn{len}(\sigma) \cdot (\smfrac{1}{2})^k \, .
\end{aligned} \]
Since $m$ is independent of $k$, in the limit $k \to \infty$ we get
\[ \Phi \bigl( \opn{MI} (\alpha \vert \sigma) \bigl) = 
\opn{MI} \bigl( \phi(\alpha) \vert \sigma \bigr) . \]
\end{proof}

A particular case of the above is when we are given a representation of $G$,
namely a map of Lie groups 
$\Phi : G \to \mrm{GL}_m(\R)$ for some $m$. 
The Lie algebra of $\mrm{GL}_m(\R)$ is
$\mfrak{gl}_m(\R) = \mrm{M}_m(\R)$, the algebra of $m \times m$ matrices.
For a matrix $a \in \mrm{M}_m(\R)$ let us denote by $\norm{a}$ its operator
norm, as linear operator $a : \R^m \to \R^m$, 
where $\R^m$ has the standard euclidean inner product.

\begin{prop} \label{prop:4}
Let $\Phi : G \to \mrm{GL}_m(\R)$ be a representation, and let
$\alpha \in \Omega^1_{\mrm{pws}}(X) \otimes \g$.
Then there is a constant $c_4(\alpha, \Phi) \geq 0$ such that the following
holds:
\begin{enumerate}
\item[($*$)] Given a piecewise linear map
$\sigma : \mbf{I}^{1} \to X$, let 
$g := \opn{MI}(\alpha \vert \sigma) \in G$.
Then
\[ \norm{ \Phi(g) } \leq \exp \bigl( c_4(\alpha, \Phi) \cdot
\opn{len}(\sigma)
\bigr) \, , \]
where $\exp$ the usual real exponential function.
\end{enumerate}
\end{prop}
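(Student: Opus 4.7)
The plan is to reduce, via Proposition~\ref{prop:6}, to a matrix calculation and then invoke the elementary inequality $\norm{\opn{exp}(a)} \leq \opn{exp}(\norm{a})$ for the matrix operator norm. Set $\phi := \opn{Lie}(\Phi) : \g \to \mfrak{gl}_m(\R)$ and $\beta := \phi(\alpha) \in \Omega^1_{\mrm{pws}}(X) \otimes \mfrak{gl}_m(\R)$. Applying Proposition~\ref{prop:6} with $f = \opn{id}_X$ gives
\[ \Phi(g) = \Phi \bigl( \opn{MI}(\alpha \vert \sigma) \bigr) =
\opn{MI}(\beta \vert \sigma) \]
in $\mrm{GL}_m(\R)$, so the task reduces to bounding $\norm{\opn{MI}(\beta \vert \sigma)}$ by $\opn{exp}(c \cdot \opn{len}(\sigma))$ for some constant $c$ depending only on $\alpha$ and $\Phi$.

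I would then unfold the definition $\opn{MI}(\beta \vert \sigma) = \lim_{k \to \infty} \opn{RP}_k(\beta \vert \sigma)$, together with the factorisation $\opn{RP}_k(\beta \vert \sigma) = \prod_{i=1}^{2^k} \opn{exp}_{\mrm{GL}_m}(\lambda_i^{(k)})$ for the elements $\lambda_i^{(k)} \in \mfrak{gl}_m(\R)$ exhibited in the proof of Lemma~\ref{lem:1}. Let $\norm{-}_{\mrm{euc}}$ denote the euclidean norm on $\mfrak{gl}_m(\R)$ used to define $\norm{\beta}_{\mrm{Sob}}$. Inequality (\ref{eqn:176}) from that proof gives
\[ \bosum_{i=1}^{2^k} \norm{ \lambda_i^{(k)} }_{\mrm{euc}} \leq
\opn{len}(\sigma) \cdot \norm{\beta}_{\mrm{Sob}} +
m_\sigma \cdot (\smfrac{1}{2})^k \cdot \norm{\sigma^*(\beta)}_{\mrm{Sob}} , \]
where $m_\sigma$ is the (finite, $k$-independent) number of singular points of $\sigma^*(\beta)$ on $\mbf{I}^1$.

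By equivalence of norms on the finite-dimensional space $\mfrak{gl}_m(\R)$, there is a constant $c' \geq 0$ such that $\norm{a} \leq c' \cdot \norm{a}_{\mrm{euc}}$ for every $a \in \mfrak{gl}_m(\R)$. Submultiplicativity of the matrix operator norm together with $\norm{\opn{exp}(a)} \leq \opn{exp}(\norm{a})$, which is immediate from the power series of the matrix exponential, then yields
\[ \norm{ \opn{RP}_k(\beta \vert \sigma) } \leq
\boprod_{i=1}^{2^k} \norm{ \opn{exp}_{\mrm{GL}_m}( \lambda_i^{(k)} ) } \leq
\opn{exp} \Bigl( c' \cdot \bosum_{i=1}^{2^k} \norm{ \lambda_i^{(k)} }_{\mrm{euc}} \Bigr) . \]
Passing to the limit $k \to \infty$, using continuity of $\norm{-}$ and the vanishing of the $(\smfrac{1}{2})^k$ term, would produce
\[ \norm{ \Phi(g) } \leq \opn{exp} \bigl( c' \cdot \norm{ \phi(\alpha) }_{\mrm{Sob}} \cdot \opn{len}(\sigma) \bigr) , \]
so $c_4(\alpha, \Phi) := c' \cdot \norm{ \phi(\alpha) }_{\mrm{Sob}}$ would do the job. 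I do not anticipate a genuine obstacle; the only small subtlety is the mismatch between the euclidean norm on $\mfrak{gl}_m(\R)$ used in the Sobolev setup and the operator norm appearing in the statement, which is resolved cleanly by the finite-dimensional equivalence of norms and absorbed into $c_4$.
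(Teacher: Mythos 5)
Your argument is correct, and it takes a somewhat different route from the paper's. The paper first chooses $k$ large enough that every piece $\sigma \circ \sigma^k_i$ is $\alpha'$-tiny, applies Lemma \ref{lem:1} (via Proposition \ref{prop:5}(1)) to each limit element $h_i = \opn{MI}(\alpha' \vert \sigma \circ \sigma^k_i)$ to get $\norm{\log_H h_i} \leq c_1(\alpha') \cdot d_i$, and then multiplies using Proposition \ref{prop:5}(3), taking the operator norm as the auxiliary norm so that $c_4 := c_1(\alpha')$. You instead bound the finite Riemann products $\opn{RP}_k(\beta \vert \sigma) = \prod_i \exp(\lambda_i^{(k)})$ directly, using the intermediate estimate (\ref{eqn:176}) from the proof of Lemma \ref{lem:1} together with submultiplicativity and $\norm{\exp(a)} \leq \exp(\norm{a})$, and only then pass to the limit. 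This buys you something: no tininess hypothesis or choice of large $k$ is needed, you never take $\log_H$ of a limit element (so $V_0(H)$ plays no role), and you avoid the paper's slightly awkward step of re-choosing the auxiliary norm on $\mfrak{gl}_m(\R)$ to be the operator norm (which is not euclidean, strictly speaking), handling the mismatch instead by a norm-equivalence constant $c'$. The paper's proof, in exchange, is shorter given the machinery already in place and produces the ready-made constant $c_1(\alpha')$. Two minor remarks: you cite a displayed inequality internal to the proof of Lemma \ref{lem:1} rather than a stated result, which is fine here because (\ref{eqn:176}) is derived with no smallness assumption on $\opn{len}(\sigma)$ (and could be re-derived in two lines from the definition of $\norm{-}_{\mrm{Sob}}$ and the good/bad index dichotomy); and note that the good/bad split really is needed, since the crude bound $\norm{\lambda_i^{(k)}} \leq (\smfrac{1}{2})^k \norm{\sigma^*(\beta)}_{\mrm{Sob}}$ alone would give a constant depending on $\sigma$ — your use of (\ref{eqn:176}), whose $\sigma$-dependent term carries the factor $(\smfrac{1}{2})^k$ and dies in the limit, handles this correctly.
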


\begin{proof}
Let's write $H := \mrm{GL}_m(\R)$,
$\h := \mfrak{gl}_m(\R)$ and
$\alpha' := \opn{Lie}(\Phi)(\alpha) \in$ \lb 
$\Omega^1_{\mrm{pws}}(X) \otimes \h$. 
We use the operator norm on $\h$ to determine the constants 
$\epsilon_1(\alpha')$ and $c_1(\alpha')$ in
Definition \ref{dfn:13}.

Take $k$ large enough so that $\sigma \circ \sigma^k_i$ is $\alpha'$-tiny
for every $i \in \{ 1, \ldots, 2^k \}$. 
Let $d := \opn{len}(\sigma)$ and $d_i := \opn{len}(\sigma \circ \sigma_i)$,
so $d = \sum_{i = 1}^{2^k}  \, d_i$.
Define $h_i := \opn{MI}(\alpha' \vert \sigma \circ \sigma^k_i) \in H$. 
Then $h_i \in V_0(H)$, and we can define 
$\gamma_i := \log_{H} (h_i) \in \h$.

Since the matrix $\gamma_i$ satisfies
\[ \norm{ \gamma_i } \leq c_1(\alpha') \cdot d_i , \]
it follows that its exponential $h_i$ satisfies
\[ \norm{ h_i } \leq \exp \bigl( c_1(\alpha') \cdot d_i \bigr) \, . \]
Now 
\[ \Phi(g) = \opn{MI}(\alpha' \vert \sigma) = \prod_{i = 1}^{2^k}  \, h_i , \]
so we obtain
\[ \norm{\Phi(g)} \leq \exp \bigl( c_1(\alpha') \cdot d \bigr) \, . \]
Therefore we can take 
$c_4(\alpha, \Phi) := c_1(\alpha')$.
\end{proof}

\subsection{Strings}
\begin{dfn} \label{dfn:3}
Let $X$ be a polyhedron. A {\em string} in $X$ is a sequence 
$\sigma = (\sigma_1, \ldots, \sigma_m)$ of piecewise
linear maps $\sigma_i : \mbf{I}^1 \to X$, such that 
$\sigma_i(v_1) = \sigma_{i+1}(v_0)$ for all $i$. 
The maps $\sigma_i : \mbf{I}^1 \to X$ are called the {\em pieces} of $\sigma$.  
We write $\sigma(v_0) := \sigma_1(v_0)$ and
$\sigma(v_1) := \sigma_m(v_1)$, and call these points the {\em initial} and
{\em terminal} points of $\sigma$, respectively. 
The {\em length} of $\sigma$ is
$\opn{len}(\sigma) := \sum_{i = 1}^m \opn{len}(\sigma_i)$.
\end{dfn}

Suppose $\alpha \in \Omega^1_{\mrm{pws}}(X) \otimes \g$.
We say that $\sigma$ is an {\em $\alpha$-tiny string} if
$\opn{len}(\sigma) < \epsilon_1(\alpha)$; cf.\ Definition \ref{dfn:13}.

Here are a few operations on strings. 
Suppose $\sigma = (\sigma_1, \ldots, \sigma_m)$ and
$\tau = (\tau_1, \ldots, \tau_l)$ are two strings in
$X$, with $\tau(v_0) = \sigma(v_1)$. 
Then we define the {\em concatenated string}
\begin{equation} \label{eqn12}
\sigma * \tau := (\sigma_1, \ldots, \sigma_m, \tau_1, \ldots, \tau_l) .
\end{equation}
See Figure \ref{fig:41} for an illustration.

\begin{figure}
\includegraphics[scale=0.27]{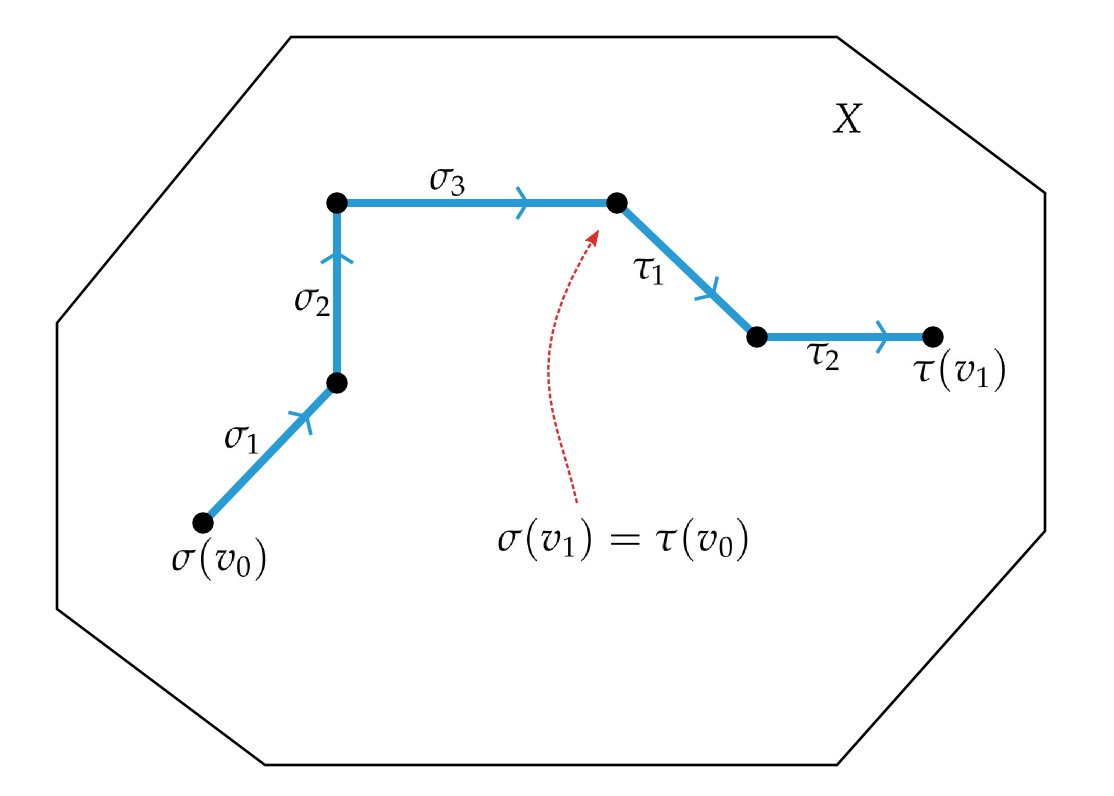}
\caption{The strings $\sigma = (\sigma_1, \sigma_2, \sigma_3)$,
$\tau = (\tau_1, \tau_2)$ and
$\sigma * \tau$ in the polyhedron $X$.} 
\label{fig:41}
\end{figure}

The {\em flip} of $\mbf{I}^{1}$ is the linear bijection
$\opn{flip} : \mbf{I}^{1} \to \mbf{I}^{1}$
defined on vertices by
\[ \opn{flip}(v_0, v_1) := (v_1, v_0) . \]
Given a piecewise linear map $\sigma : \mbf{I}^{1} \to X$ we let
\begin{equation}
\sigma^{-1} := \sigma \circ  \opn{flip} : \mbf{I}^{1} \to X .
\end{equation}
For a string $\sigma = (\sigma_1, \ldots, \sigma_m)$ in $X$ we define the
{\em inverse string}
\begin{equation} \label{eqn13}
\sigma^{-1} := (\sigma_m^{-1}, \ldots, \sigma_1^{-1}) .
\end{equation}

The empty string is the unique string of length $0$, and we denote it by
$\emptyset$. For any string $\sigma$ we let
$\sigma * \emptyset := \sigma$ and 
$\emptyset * \sigma := \sigma$.

Let $f : X \to Y$ be a piecewise linear map between polyhedra, and let
$\sigma = (\sigma_1, \ldots, \sigma_m)$ be a string in $X$.
We define the string $f \circ \sigma$ in
$Y$ to be
\begin{equation} \label{eqn14}
f \circ \sigma := (f \circ \sigma_1, \ldots, f \circ \sigma_m) .
\end{equation}

\begin{rem}
The reason for working with strings (rather than with paths, as is the custom
in algebraic topology) is that composition of strings, as defined above, is
associative, whereas composition of paths is only associative up to homotopy.
\end{rem}

It will be convenient to integrate along a string. 
As before $G$ is a Lie group with Lie algebra $\g$.

\begin{dfn} \label{dfn:10}
Suppose $\alpha \in \Omega^1_{\mrm{pws}}(X) \otimes \mfrak{g}$
and 
$\sigma = (\sigma_1, \ldots, \sigma_m)$
is a string in $X$. The multiplicative integral 
\index{Multiplicative integral on strings}
$\opn{MI}(\alpha \vert \sigma)$ of $\alpha$ on $\sigma$ is 
\[ \opn{MI}(\alpha \vert \sigma) :=
\prod_{i=1}^m\ \opn{MI}(\alpha \vert \sigma_i) \in G . \]
\end{dfn}

\begin{prop} \label{prop:1}
Let 
$\alpha \in \Omega_{\mrm{pws}}^1(X) \otimes \mfrak{g}$.
\begin{enumerate}
\item Given strings $\sigma$ and $\tau$ in $X$ such that
$\tau(v_0) = \sigma(v_1)$, one has
\[ \opn{MI}(\alpha \vert \sigma * \tau) = 
\opn{MI}(\alpha \vert \sigma) \cdot \opn{MI}(\alpha \vert \tau) . \]

\item Given a string $\sigma$ in $X$ , one has
\[ \opn{MI}(\alpha \vert \sigma^{-1}) = 
\opn{MI}(\alpha \vert \sigma)^{-1} . \]

\item If $\sigma$ is an $\alpha$-tiny string in $X$ 
\tup{(}i.e\ 
$\opn{len}(\sigma) < \epsilon_1(\alpha)$\tup{)}, then 
$\opn{MI}(\alpha \vert \sigma) \in V_0(G)$, and 
\[ \Norm{ \opn{log}_G \bigl( \opn{MI}(\alpha \vert \sigma) \bigr) }
\leq c_1(\alpha) \cdot \opn{len}(\sigma) \, . \]
\end{enumerate}
\end{prop}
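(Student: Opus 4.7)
Part (1) is immediate from Definition \ref{dfn:10}: by formula (\ref{eqn12}) the pieces of $\sigma * \tau$ are $(\sigma_1,\ldots,\sigma_m,\tau_1,\ldots,\tau_l)$, so the product defining $\opn{MI}(\alpha \vert \sigma*\tau)$ factors in order as $\bigl(\prod_i \opn{MI}(\alpha \vert \sigma_i)\bigr) \cdot \bigl(\prod_j \opn{MI}(\alpha \vert \tau_j)\bigr) = \opn{MI}(\alpha \vert \sigma) \cdot \opn{MI}(\alpha \vert \tau)$.

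For part (2), I will first treat a single piecewise linear map $\sigma : \mbf{I}^1 \to X$ and establish $\opn{MI}(\alpha \vert \sigma^{-1}) = \opn{MI}(\alpha \vert \sigma)^{-1}$, then extend. The geometric fact I need is the symmetry identity
\[ \opn{flip} \circ \sigma^k_i \;=\; \sigma^k_{2^k - i + 1} \circ \opn{flip} \]
as linear maps $\mbf{I}^1 \to \mbf{I}^1$, which one verifies by evaluating both sides on the ordered pair of vertices $(v_0, v_1)$. Next, for any linear map $\rho : \mbf{I}^1 \to X$, one has $\opn{RP}_0(\alpha \vert \rho \circ \opn{flip}) = \opn{RP}_0(\alpha \vert \rho)^{-1}$, because $\opn{flip}$ preserves the midpoint while $\opn{flip}^*(\d t_1) = -\d t_1$ negates the coefficient $\tilde\alpha$ appearing in Definition \ref{dfn:1}. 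Combining these two facts, the product defining $\opn{RP}_k(\alpha \vert \sigma^{-1})$ is precisely the product defining $\opn{RP}_k(\alpha \vert \sigma)$ but with each factor inverted and the order reversed, which in the group $G$ is $\opn{RP}_k(\alpha \vert \sigma)^{-1}$. Passing to the limit $k \to \infty$ yields the single-piece case. For a general string, formula (\ref{eqn13}) gives $\sigma^{-1} = (\sigma_m^{-1},\ldots,\sigma_1^{-1})$, so Definition \ref{dfn:10} together with the identity $(g_1 \cdots g_m)^{-1} = g_m^{-1} \cdots g_1^{-1}$ completes the argument.

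For part (3), the strategy is to reprove the estimate of Lemma \ref{lem:1} directly on the concatenated Riemann products, rather than apply it separately to each piece (which would cost an additional factor of $c_0(G)$). Writing $\sigma = (\sigma_1,\ldots,\sigma_m)$, for each pair $(i,j) \in \{1,\ldots,m\} \times \{1,\ldots,2^k\}$ define $\lambda_{i,j} \in \g$ by $\exp_G(\lambda_{i,j}) = \opn{RP}_0(\alpha \vert \sigma_i \circ \sigma^k_j)$, so that
\[ \opn{RP}_k(\alpha \vert \sigma) \;=\; \prod_{i=1}^m \prod_{j=1}^{2^k} \exp_G(\lambda_{i,j}) . \]
Repeating the bookkeeping from the proof of Lemma \ref{lem:1} on each piece (good indices contribute via the Sobolev norm, bad indices vanish as $k \to \infty$) gives $\sum_{i,j} \norm{\lambda_{i,j}} \leq \opn{len}(\sigma) \cdot \norm{\alpha}_{\mrm{Sob}} + o(1)$ as $k \to \infty$. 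The hypothesis $\opn{len}(\sigma) < \epsilon_1(\alpha)$, together with Lemma \ref{lem:1}(i), forces this sum to lie below $\epsilon_0(G)$ for all sufficiently large $k$, so Theorem \ref{thm:6}(ii) applies and yields $\opn{RP}_k(\alpha \vert \sigma) \in V_0(G)$ with $\norm{\log_G \opn{RP}_k(\alpha \vert \sigma)} \leq c_1(\alpha) \cdot \opn{len}(\sigma)$. Since the closed ball of radius $c_1(\alpha) \cdot \opn{len}(\sigma) \leq \tfrac12 \epsilon_0(G)$ in $\g$ maps to a compact subset of $V_0(G)$ under $\exp_G$, the limit $\opn{MI}(\alpha \vert \sigma) = \lim_k \opn{RP}_k(\alpha \vert \sigma)$ remains in $V_0(G)$ and satisfies the same bound. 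The main technical point requiring care is precisely this identification of the constant: one must combine all pieces into a single application of Theorem \ref{thm:6}(ii), rather than multiplying the estimates piece by piece.
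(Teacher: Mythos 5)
Your parts (1) and (2) match the paper's argument: (1) is immediate from Definition \ref{dfn:10}, and for (2) the paper likewise reduces to a single piecewise linear map, notes that the flip inverts each basic Riemann product, and appeals to the symmetry of the binary tessellations (which you usefully make explicit as $\opn{flip} \circ \sigma^k_i = \sigma^k_{2^k-i+1} \circ \opn{flip}$) to get $\opn{RP}_k(\alpha \vert \sigma^{-1}) = \opn{RP}_k(\alpha \vert \sigma)^{-1}$ before passing to the limit.

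For part (3) you take a genuinely different, and correct, route. The paper does not redo any estimates: it repackages the string $\sigma = (\sigma_1,\ldots,\sigma_m)$ into a single piecewise linear map $\sigma' : \mbf{I}^1 \to X$ with $\sigma' \circ \sigma^k_i = \sigma_i$ for $i \leq m$ and constant for $i > m$ (taking $2^k \geq m$), observes $\opn{len}(\sigma') = \opn{len}(\sigma)$ and $\opn{MI}(\alpha \vert \sigma') = \opn{MI}(\alpha \vert \sigma)$ via Definition \ref{dfn:10} and Proposition \ref{prop:5}(3), and then quotes the single-map bound of Proposition \ref{prop:5}(1). You instead rerun the bookkeeping of Lemma \ref{lem:1} on the full concatenated Riemann product $\prod_{i,j} \exp_G(\lambda_{i,j})$ and apply Theorem \ref{thm:6}(ii) once, followed by the same compactness-of-$\exp_G(\ol{U}(\smfrac{1}{2}\epsilon_0(G)))$ argument used in the proof of Proposition \ref{prop:5}(1). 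Both yield the same constants $\epsilon_1(\alpha), c_1(\alpha)$; your observation that a naive piece-by-piece application of the single-map estimate followed by another CBH step would degrade the constant by a factor of $c_0(G)$ is exactly the point the paper's repackaging trick is designed to sidestep. The paper's route is shorter and reuses existing statements; yours avoids the auxiliary map $\sigma'$ at the cost of essentially re-proving Lemma \ref{lem:1} in the string setting (only the trivial case $\opn{len}(\sigma)=0$, where $\opn{MI}=1$, needs to be set aside when choosing $k$ large).
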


\begin{proof}
(1) This is trivial.

\medskip \noindent
(2) By part (1) it suffices to consider a piecewise linear map 
$\sigma : \mbf{I}^{1} \to X$.
Since the flip reverses orientation on $\mbf{I}^1$, it follows that 
\[ \opn{RP}_0(\alpha \vert \sigma^{-1}) = 
\opn{RP}_0(\alpha \vert \sigma)^{-1} . \]
{}From this, and the symmetry of the binary tessellations, it follows that for
every $k \geq 0$ one has
\[ \opn{RP}_{k}(\alpha \vert \sigma^{-1}) = 
\opn{RP}_{k}(\alpha \vert \sigma)^{-1} . \]
In the limit we get
$\opn{MI}(\alpha \vert \sigma^{-1}) = 
\opn{MI}(\alpha \vert \sigma)^{-1}$.

\medskip \noindent
(3) Say $\sigma  = (\sigma_1, \ldots, \sigma_m)$.
Take $k$ large enough so that $2^k \geq m$. Let
$\sigma' : \mbf{I}^1 \to X$ be the unique piecewise linear map satisfying
$\sigma' \circ \sigma^k_i = \sigma_i$ for $i \leq m$, and
$\sigma' \circ \sigma^k_i$ is the constant map $\sigma(v_1)$ for $i > m$.
Note that $\opn{len}(\sigma') = \opn{len}(\sigma)$.
By Definition \ref{dfn:10} and Proposition \ref{prop:5}(3) we have
$\opn{MI}(\alpha \vert \sigma') = 
\opn{MI}(\alpha \vert \sigma)$.
Now use Proposition \ref{prop:5}(1).
\end{proof}

\begin{prop} \label{prop:12}
Let $\Phi : G \to H$ be a map of Lie groups, let $f : Y \to X$ be a piecewise
linear map between polyhedra, and let
$\alpha \in \Omega^1_{\mrm{pws}}(X) \otimes \g$.
Then for any string $\sigma$ in $Y$ one has
\[ \Phi \bigl( \opn{MI}(\alpha \vert f \circ \sigma) \bigr) = 
\opn{MI} \bigl( (f^* \otimes \opn{Lie}(\Phi))(\alpha) \vert \sigma \bigr) 
\in H . \]
\end{prop}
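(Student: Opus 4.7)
The plan is to reduce Proposition \ref{prop:12} to Proposition \ref{prop:6} by unfolding the definition of the multiplicative integral on a string as an ordered product over its pieces. Since everything on both sides of the desired equality factors through this product, and $\Phi$ is a group homomorphism, the reduction is essentially mechanical.

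Concretely, write $\sigma = (\sigma_1, \ldots, \sigma_m)$ with each $\sigma_i : \mbf{I}^1 \to Y$ a piecewise linear map satisfying the string compatibility $\sigma_i(v_1) = \sigma_{i+1}(v_0)$. Then $f \circ \sigma = (f \circ \sigma_1, \ldots, f \circ \sigma_m)$ is a string in $X$ (the compatibility is preserved by the piecewise linear map $f$), so by Definition \ref{dfn:10},
\[
\opn{MI}(\alpha \vert f \circ \sigma) = \prod_{i=1}^m \opn{MI}(\alpha \vert f \circ \sigma_i) \in G.
\]
Applying $\Phi$ and using that it is a group homomorphism gives
\[
\Phi\bigl( \opn{MI}(\alpha \vert f \circ \sigma) \bigr) = \prod_{i=1}^m \Phi\bigl( \opn{MI}(\alpha \vert f \circ \sigma_i) \bigr).
\]

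Now apply Proposition \ref{prop:6} to each piece $\sigma_i$, with $\phi := \opn{Lie}(\Phi)$: this yields
\[
\Phi\bigl( \opn{MI}(\alpha \vert f \circ \sigma_i) \bigr) = \opn{MI}\bigl( (f^* \otimes \phi)(\alpha) \vert \sigma_i \bigr)
\]
in $H$ for each $i$. Multiplying these equalities in left-to-right order and invoking Definition \ref{dfn:10} once more (now applied to the string $\sigma$ in $Y$ with the pulled-back form $(f^* \otimes \phi)(\alpha) \in \Omega^1_{\mrm{pws}}(Y) \otimes \h$) gives the desired identity.

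There is essentially no obstacle: the only thing to verify is that the definition of MI on a string commutes with the homomorphism $\Phi$ and with the pullback, and both are immediate from the definitions. The substantive content — convergence of Riemann products and compatibility with $\Phi$ and $f^*$ in the single-piece case — has already been established in Theorem \ref{thm:13} and Proposition \ref{prop:6}.
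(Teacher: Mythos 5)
Your proof is correct and follows essentially the same route as the paper, which likewise reduces the statement to the single-piece case (Proposition \ref{prop:6}) via the multiplicativity of $\opn{MI}$ over the pieces of a string (the paper quotes Proposition \ref{prop:1}(1), which is just Definition \ref{dfn:10} applied to concatenation). Unfolding the definition directly, as you do, is the same argument.
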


\begin{proof}
This is an immediate consequence of Proposition \ref{prop:1}(1) and \lb
Proposition \ref{prop:6}.
\end{proof}

\begin{prop} \label{prop:13}
Let $\Phi : G \to \opn{GL}_m(\R)$ be a representation, and let
$\alpha \in \Omega_{\mrm{pws}}^1(X) \otimes \mfrak{g}$.
Given a string $\sigma$ in $X$ let $g := \opn{MI}(\alpha \vert \sigma)$.
Then the norm of the operator $\Phi(g)$ on $\R^m$ satisfies
\[ \norm{ \Phi(g) } \leq  \exp \bigl( c_4(\alpha, \Phi) \cdot
\opn{len}(\sigma) \bigr) \, , \]
where $c_4(\alpha, \Phi)$ is the constant from Proposition \tup{\ref{prop:4}}.
\end{prop}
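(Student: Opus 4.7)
The plan is to reduce the string case to the piecewise linear case handled in Proposition \ref{prop:4}, using only the multiplicativity of $\Phi$ and the submultiplicativity of the operator norm.

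Write $\sigma = (\sigma_1, \ldots, \sigma_n)$ as a sequence of piecewise linear maps $\sigma_i : \mbf{I}^1 \to X$. By Definition \ref{dfn:10},
\[ g = \opn{MI}(\alpha \vert \sigma) = \prod_{i=1}^n \opn{MI}(\alpha \vert \sigma_i) \in G. \]
Since $\Phi$ is a Lie group homomorphism, it is multiplicative, so
\[ \Phi(g) = \prod_{i=1}^n \Phi\bigl( \opn{MI}(\alpha \vert \sigma_i) \bigr) \]
in $\opn{GL}_m(\R)$.

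The operator norm on $\mrm{M}_m(\R)$ is submultiplicative: $\norm{AB} \leq \norm{A} \cdot \norm{B}$. Apply this inductively together with Proposition \ref{prop:4} applied to each piecewise linear piece $\sigma_i$:
\[ \norm{ \Phi(g) } \leq \prod_{i=1}^n \Norm{ \Phi\bigl( \opn{MI}(\alpha \vert \sigma_i) \bigr) }
\leq \prod_{i=1}^n \exp\bigl( c_4(\alpha, \Phi) \cdot \opn{len}(\sigma_i) \bigr). \]

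Since the real exponential turns sums into products, and by Definition \ref{dfn:3} we have $\opn{len}(\sigma) = \sum_{i=1}^n \opn{len}(\sigma_i)$, this last expression equals
\[ \exp\Bigl( c_4(\alpha, \Phi) \cdot \bosum_{i=1}^n \opn{len}(\sigma_i) \Bigr)
= \exp\bigl( c_4(\alpha, \Phi) \cdot \opn{len}(\sigma) \bigr), \]
which yields the claim. There is no real obstacle here; the statement is essentially a formal corollary of Proposition \ref{prop:4} combined with the two multiplicativity properties, and the constant $c_4(\alpha, \Phi)$ from Proposition \ref{prop:4} is precisely the one needed.
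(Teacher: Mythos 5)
Your proposal is correct and follows essentially the same route as the paper: the paper's proof likewise reduces to a single piecewise linear piece by noting that both sides behave multiplicatively under concatenation (via Proposition \ref{prop:1}(1), equivalently Definition \ref{dfn:10}, together with submultiplicativity of the operator norm and additivity of length), and then invokes Proposition \ref{prop:4}. Your write-up just makes this reduction explicit.
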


\begin{proof}
By part (1) of Proposition \ref{prop:1} 
the left side of this inequality is multiplicative with respect
to the operation $*$. And clearly the right side is also multiplicative with
respect to $*$. So it suffices to consider a piecewise linear map 
$\sigma : \mbf{I}^{1} \to X$. Now we can use Proposition \ref{prop:4}.
\end{proof}

For a string $\sigma = (\sigma_1, \ldots, \sigma_m)$ 
and a form 
$\alpha \in \Omega_{\mrm{pws}}^1(X) \otimes \mfrak{g}$
we write
\[ \int_{\sigma} \alpha := \sum_{i = 1}^m \int_{\sigma_i} \alpha \in \g , \]
where 
\[ \int_{\sigma_i} \alpha = \int_{\mbf{I}^1} \sigma_i^*(\alpha) \] 
is the usual integral of this $\g$-valued piecewise smooth differential form. 

\begin{prop} \label{prop:16}
Let $\alpha \in \Omega^1_{\mrm{pws}}(X) \otimes \g$, and let $\sigma$ be an
$\alpha$-tiny string in $X$. Then 
\[ \Norm{ \log_G \bigl( \opn{MI}(\alpha \vert \sigma) \bigr) - 
\int_{\sigma} \alpha } \leq 
c_0(G) \cdot c_1(\alpha)^2 \cdot \opn{len}(\sigma)^2 \ . \]
\end{prop}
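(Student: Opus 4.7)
The plan is to apply Theorem~\ref{thm:6}(ii) directly to the refined Riemann product $\opn{RP}_k(\alpha \vert \sigma)$ for large $k$, and then pass to the limit $k \to \infty$.

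Write $\sigma = (\sigma_1, \ldots, \sigma_m)$. For each pair $(j, i)$ with $j \in \{1, \ldots, m\}$ and $i \in \{1, \ldots, 2^k\}$, I define $\lambda_{j, i} \in \g$ exactly as in the proof of Lemma~\ref{lem:1}: setting $\alpha_j' := \sigma_j^*(\alpha)$ and $w_{j, i} := \sigma_i^k(\smfrac{1}{2})$, let $\lambda_{j, i} := (\smfrac{1}{2})^k \cdot \til{\alpha}'_{j, i}(w_{j, i})$ if $w_{j, i}$ is a smooth point of $\alpha_j'$, and $\lambda_{j, i} := 0$ otherwise. Then $\exp_G(\lambda_{j, i}) = \opn{RP}_0(\alpha \vert \sigma_j \circ \sigma_i^k)$, and so by Definitions~\ref{dfn:2} and~\ref{dfn:10},
\[ \opn{RP}_k(\alpha \vert \sigma) = \prod_{j, i} \exp_G(\lambda_{j, i}) . \]
Summing estimate~\eqref{eqn:176} over the pieces of $\sigma$ and using that $\sigma$ is $\alpha$-tiny shows that $\sum_{j, i} \norm{\lambda_{j, i}} < \epsilon_0(G)$ for all sufficiently large $k$, so Theorem~\ref{thm:6}(ii) applies and yields
\[ \Norm{ \log_G \opn{RP}_k(\alpha \vert \sigma) - \bosum_{j, i} \lambda_{j, i} } \leq c_0(G) \cdot \bigl( \bosum_{j, i} \norm{\lambda_{j, i}} \bigr)^2 . \]

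Now I let $k \to \infty$. By Definition~\ref{dfn:28} and Proposition~\ref{prop:1}(1), $\opn{RP}_k(\alpha \vert \sigma) \to \opn{MI}(\alpha \vert \sigma)$ inside the neighborhood $V_0(G)$ on which $\log_G$ is continuous (cf.\ the proof of Proposition~\ref{prop:5}(1)); and for each $j$ the sum $\sum_i \lambda_{j, i}$ is a midpoint Riemann sum for $\int_{\mbf{I}^1} \alpha_j' = \int_{\sigma_j} \alpha$ on the uniform binary partition, the finitely many ``bad'' indices having total weight $O(m \cdot 2^{-k})$, so $\sum_{j, i} \lambda_{j, i} \to \int_\sigma \alpha$. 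The right-hand side is controlled in the limit, again by summing~\eqref{eqn:176}, giving $\limsup_k \sum_{j, i} \norm{\lambda_{j, i}} \leq \opn{len}(\sigma) \cdot \norm{\alpha}_{\mrm{Sob}}$. Therefore
\[ \Norm{ \log_G \opn{MI}(\alpha \vert \sigma) - \int_\sigma \alpha } \leq c_0(G) \cdot \opn{len}(\sigma)^2 \cdot \norm{\alpha}_{\mrm{Sob}}^2 . \]

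Finally, in the proof of Lemma~\ref{lem:1} the constant was taken to be $c_1(\alpha) = 2 \bigl( 1 + c_0(G) \cdot \norm{\alpha}_{\mrm{Sob}} \bigr)$; because Theorem~\ref{thm:6} is proved with $c_0(G) = c_6 \geq 1$, one has $\norm{\alpha}_{\mrm{Sob}} \leq c_1(\alpha)/2 \leq c_1(\alpha)$, which upgrades the inequality to the claimed $c_0(G) \cdot c_1(\alpha)^2 \cdot \opn{len}(\sigma)^2$. The only nontrivial step is Riemann-sum convergence in the presence of singular points of $\alpha$; this is dispatched by noting that each $\sigma_j^*(\alpha)$ has only finitely many singular points on $\mbf{I}^1$, whose combined contribution to the midpoint sums decays like $2^{-k}$ in norm.
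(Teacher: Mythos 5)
Your argument is correct, and its engine is the same as the paper's: apply Theorem \ref{thm:6}(ii) to the exponential factors of the refined Riemann products, observe that the sum of the $\lambda$'s is (up to the finitely many singular midpoints) a midpoint Riemann sum converging to $\int_\sigma \alpha$, and pass to the limit inside $V_0(G)$. Where you differ is in the organization: the paper first performs a geometric reduction (its Step 1), gluing the pieces of $\sigma$ into an auxiliary oriented segment $Z$ with a piecewise linear map $f : Z \to X$ so that the whole string becomes a single $\alpha$-tiny \emph{linear} map, and only then runs the CBH/Riemann-sum estimate on that one piece; you skip this reduction entirely and run the estimate globally over the double index $(j,i)$, absorbing both the multiple pieces and their singular midpoints into the summed version of (\ref{eqn:176}). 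Your route is shorter and avoids the construction of $Z$ and $f$, at the price of slightly heavier bookkeeping and of obtaining the bound first in terms of $\norm{\alpha}_{\mrm{Sob}}$ rather than $c_1(\alpha)$; the paper's reduction lets it quote Lemma \ref{lem:1} per factor and land directly on $c_0(G) \cdot c_1(\alpha)^2 \cdot \opn{len}(\sigma)^2$. Your final conversion $\norm{\alpha}_{\mrm{Sob}} \leq c_1(\alpha)$ is legitimate for the constants the paper actually fixes (its proof of Theorem \ref{thm:6} takes $c_0(G) = c_6 \geq \smfrac{3}{2}$), and in the degenerate normalization $c_0(G) = 0$ both sides of the asserted inequality vanish, so nothing is lost there. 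One cosmetic remark: $\opn{RP}_k(\alpha \vert \sigma)$ for a string is not defined in the paper; you are implicitly setting it to be $\prod_j \opn{RP}_k(\alpha \vert \sigma_j)$, which is harmless since each factor converges by Theorem \ref{thm:13} and Definition \ref{dfn:10} then identifies the limit with $\opn{MI}(\alpha \vert \sigma)$.
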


What this result says, is that in the tiny scale the nonabelian 
integral is very close to the abelian integral.

\begin{proof}
Step 1. 
We begin the proof with a reduction to the case $m = 1$, and
$\sigma = \sigma_1$ is a single linear map $\mbf{I}^1 \to X$. 
First we append a few empty strings at the
end of $\sigma$, so that the number of linear pieces becomes 
$m = 2^k$ for some $k$. This does not change $\opn{len}(\sigma)$ nor
$\opn{MI}(\alpha \vert \sigma)$.
Let $Z$ be an oriented $1$-dimensional polyhedron (a line segment) of length 
$\opn{len}(\sigma)$, partitioned into segments
$Z_1, \ldots, Z_m$, with $\opn{len}(Z_i) = \opn{len}(\sigma_i)$.
Let $\sigma' : \mbf{I}^1 \to Z$ be the unique oriented linear bijection.
There is a unique piecewise linear map $f : Z \to X$, such that 
$\sigma_i = f \circ \sigma' \circ \sigma^k_i$ as piecewise linear maps
$\mbf{I}^1 \to X$ for every $i$.
Let $\alpha' := f^*(\alpha) \in \Omega^1_{\mrm{pws}}(Z) \otimes \g$.
According to Propositions \ref{prop:5}(3), \ref{dfn:10} and \ref{prop:12} we
have
\[ \opn{MI}(\alpha \vert \sigma) = \opn{MI}(\alpha' \vert \sigma') . \]
And clearly 
\[ \int_{\sigma} \alpha = \int_{\sigma'} \alpha' . \]
Because the piecewise linear map $f : Z \to X$ is a linear metric
embedding on each of its linear pieces, it follows that 
$\norm{\alpha'}_{\mrm{Sob}} \leq \norm{\alpha}_{\mrm{Sob}}$; and hence we can
choose
$\epsilon_1(\alpha') \geq \epsilon_1(\alpha)$ and 
$c_1(\alpha') \leq c_1(\alpha)$.

Note that 
\[ \opn{len}(\sigma') = \opn{len}(Z) = \opn{len}(\sigma) < \epsilon_1(\alpha)
\, . \]
So we can replace $X$ with $Z$, $\sigma$ with $\sigma'$ and $\alpha$ with
$\alpha'$. Doing so, we can now assume that $m = 1$ and $\sigma$ is a single
linear map.

\medskip \noindent
Step 2. Here we assume that $\sigma$ is an $\alpha$-tiny linear map, and we let
$\epsilon := \opn{len}(\sigma)$.
Take any $k \geq 0$. We know that
\[ \opn{RP}_{k}(\alpha \vert \sigma) = 
\prod_{i = 1}^{2^k} \,  \opn{RP}_{0}(\alpha \vert \sigma \circ \sigma^k_i) . \]
Also (by Lemma \ref{lem:1}) we have
\[ \opn{RP}_{k}(\alpha \vert \sigma), 
\opn{RP}_{0}(\alpha \vert \sigma \circ \sigma^k_i) \in V_0(G) . \]
For any $i$ let
\[ \lambda_i := \log_G \bigl( \opn{RP}_{0}(\alpha \vert \sigma \circ \sigma^k_i)
\bigr) \in \g , \]
so 
\[ \opn{RP}_{k}(\alpha \vert \sigma) = 
\prod_{i = 1}^{2^k} \, \exp_G(\lambda_i) . \]
Let us write
\[ \opn{RS}_k(\alpha \vert \sigma) := \sum_{i = 1}^{2^k} \,  \lambda_i . \]
This is a Riemann sum for the usual integral.

By Lemma \ref{lem:1} we have
\[ \norm{ \lambda_i } \leq 
c_1(\alpha) \cdot (\smfrac{1}{2})^k \cdot \epsilon  \]
for every $i$. Using property (ii) of Theorem \ref{thm:6} we see that
\[ \begin{aligned}
& \Norm{ 
\log_G \bigl( \opn{RP}_{k}(\alpha \vert \sigma) \bigr) -
\opn{RS}_{k}(\alpha \vert \sigma) } \\
& \qquad \leq 
c_0(G) \cdot \bigl( \sum\nolimits_{i = 1}^{2^k} \,  \norm{\lambda_i} \bigr)^2 
\leq c_0(G) \cdot c_1(\alpha)^2 \cdot \epsilon^2 \ . 
\end{aligned} \]

In the limit $k \to \infty$ we have
\[ \lim_{k \to \infty} \opn{RS}_{k}(\alpha \vert \sigma) = 
\int_{\sigma} \alpha \]
and 
\[ \lim_{k \to \infty} \opn{RP}_{k}(\alpha \vert \sigma) = 
\opn{MI}_{}(\alpha \vert \sigma) , \]
so the proof is done.
\end{proof}

\begin{cor} \label{cor:7}
Let $\alpha \in \Omega^1_{\mrm{pws}}(X) \otimes \g$, and let $\sigma$ be an
$\alpha$-tiny closed string in $X$ which bounds a polygon $Z$. Then
\[ \Norm{ \log_G \bigl( \opn{MI}(\alpha \vert \sigma) \bigr) } \leq 
c_0(G) \cdot c_1(\alpha)^2 \cdot \opn{len}(\sigma)^2 +
\opn{area}(Z) \cdot \norm{\alpha}_{\mrm{Sob}} \, . \]
\end{cor}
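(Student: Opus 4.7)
The plan is a one-line application of Proposition \ref{prop:16} combined with Stokes' Theorem. Write, by the triangle inequality,
\[ \Norm{ \log_G \bigl( \opn{MI}(\alpha \vert \sigma) \bigr) } \leq
\Norm{ \log_G \bigl( \opn{MI}(\alpha \vert \sigma) \bigr) - \int_{\sigma} \alpha }
+ \Norm{ \int_{\sigma} \alpha } . \]
The first term is bounded by $c_0(G) \cdot c_1(\alpha)^2 \cdot \opn{len}(\sigma)^2$ directly from Proposition \ref{prop:16}, since $\sigma$ is assumed $\alpha$-tiny. For the second term, interpret $\sigma$ as the oriented boundary $\partial Z$ of the polygon $Z$, and apply Theorem \ref{thm:15} component-wise in the Lie algebra $\g$ to the piecewise smooth $1$-form $\alpha$:
\[ \int_{\sigma} \alpha = \int_{\partial Z} \alpha = \int_Z \d \alpha . \]

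It remains to bound $\Norm{ \int_Z \d \alpha }$ by $\opn{area}(Z) \cdot \norm{\alpha}_{\mrm{Sob}}$. Choose an orthonormal linear coordinate system $\bsym{s} = (s_1, s_2)$ on the ambient plane of $Z$ (so $\opn{or}(Z) = \d s_1 \wedge \d s_2$), and a smoothing triangulation $\{Z_j\}$ for $\alpha$. On each $2$-cell $Z_j$ the form $\d \alpha$ is smooth, and if we write $\alpha|_{Z_j} = \til\alpha_1 \, \d s_1 + \til\alpha_2 \, \d s_2$ with $\til\alpha_i \in \mcal{O}(Z_j) \otimes \g$, then $\d \alpha|_{Z_j} = (\partial_{s_1} \til\alpha_2 - \partial_{s_2} \til\alpha_1) \, \d s_1 \wedge \d s_2$. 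The definition of the Sobolev norm (to order $2$) bounds the pointwise $\g$-norm of each $\partial_{s_i} \til\alpha_j$ by $\norm{\alpha}_{\mrm{Sob}}$, hence the coefficient of $\d\alpha$ has pointwise $\g$-norm $\leq \norm{\alpha}_{\mrm{Sob}}$ on the smooth locus. Integrating over $Z$ gives $\Norm{\int_Z \d\alpha} \leq \opn{area}(Z) \cdot \norm{\alpha}_{\mrm{Sob}}$, and combining the two bounds yields the claim.

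There is essentially no obstacle here: the hard technical work is already encapsulated in Proposition \ref{prop:16} (which compared the nonabelian MI to the abelian Riemann sum) and in Stokes' Theorem for piecewise smooth forms (Theorem \ref{thm:15}). The only minor care needed is the pointwise estimate on $\d\alpha$ in terms of $\norm{\alpha}_{\mrm{Sob}}$, which follows from the definition of the Sobolev norm since that norm includes partial derivatives of the coefficients up to order $2$, in particular order $1$ which suffices here. One should also verify the sign/orientation convention implicit in the phrase ``bounds a polygon $Z$'', but this amounts to the standard induced orientation on $\partial Z$ used in Subsection \ref{subsec:orient}.
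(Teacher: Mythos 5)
Your proposal is correct and is essentially the paper's own argument: the paper likewise combines Proposition \ref{prop:16} with the abelian Stokes Theorem (Theorem \ref{thm:15}) and the bound $\Norm{\int_Z \d\alpha} \leq \opn{area}(Z) \cdot \norm{\alpha}_{\mrm{Sob}}$. Your extra paragraph spelling out the coordinate expression of $\d\alpha$ and the orientation convention just makes explicit what the paper leaves implicit.
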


\begin{proof}
Choose an orientation on $Z$.
By the abelian Stokes Theorem (Theorem \ref{thm:15}) we have
\[ \Norm{ \int_{\sigma} \alpha } = \Norm{ \int_Z \d(\alpha) }
\leq \opn{area}(Z) \cdot \norm{\alpha}_{\mrm{Sob}} \, . \]
Now combine this estimate with the Proposition above.
\end{proof}

\begin{prop} \label{prop:21}
Let $\Phi : G \to \opn{GL}_m(\R)$ be a representation,
and let $\alpha \in \Omega^1_{\mrm{pws}}(X) \otimes \g$.
Then there are constants $\epsilon_5(\alpha, \Phi)$ and
$c_5(\alpha, \Phi)$ such that 
\[ 0 < \epsilon_5(\alpha, \Phi) \leq 1 \quad \text{and} \quad
c_5(\alpha, \Phi) \geq 1 \, , \]
and such that conditions \tup{(i)-(iii)} below  hold for every 
string $\sigma$ in $X$ satisfying 
$\opn{len}(\sigma) < \epsilon_5(\alpha, \Phi)$.
Let us write
\[ \alpha' := \opn{Lie}(\Phi)(\alpha) \in 
\Omega^1_{\mrm{pws}}(X) \otimes \mfrak{gl}_n(\R) , \]
and
\[ g' := \opn{MI}(\alpha' \vert \sigma) = \Phi 
\bigl( \opn{MI}(\alpha \vert \sigma) \bigr) \in \opn{GL}_m(\R) . \]
Let $\bsym{1}$ be the identity operator on $\R^m$, and let $\norm{-}$ denote
the operator norm on $\R^m$. 
The conditions are:
\begin{itemize}
\rmitem{i} 
\[ \norm{ g' - \bsym{1} } \leq  c_5(\alpha, \Phi) \cdot \opn{len}(\sigma) \, .
\]
\rmitem{ii}
\[ \Norm{ g' - \bigl( \bsym{1} + \int_{\sigma} \alpha' \bigr) } 
\leq c_5(\alpha, \Phi) \cdot \opn{len}(\sigma)^2 \, . \]
\rmitem{iii} 
Assume $\alpha'$ is smooth. Let $x_0$ be the initial point of $\sigma$, and let
$\alpha'(x_0)$ be the constant form defined in Definition 
\tup{\ref{dfn:26}}. Then
\[ \Norm{ g' - \bigl( \bsym{1} + \int_{\sigma} \alpha'(x_0) \bigr) } 
\leq c_5(\alpha, \Phi) \cdot \opn{len}(\sigma)^2 \, . \]
\end{itemize}
\end{prop}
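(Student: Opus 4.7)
The plan is to reduce everything to $H := \opn{GL}_m(\R)$ with Lie algebra $\h = \mfrak{gl}_m(\R)$ equipped with the operator norm, and then to bootstrap the estimates already proved in Proposition \ref{prop:1}(3), Proposition \ref{prop:16} and the power series of $\exp_H$. Since $\Phi \bigl( \opn{MI}(\alpha \vert \sigma) \bigr) = \opn{MI}(\alpha' \vert \sigma)$ by Proposition \ref{prop:12}, it suffices to prove all three estimates for $H$, $\alpha'$ and $g'$. I will choose $\epsilon_5(\alpha, \Phi) \leq \epsilon_1(\alpha')$ (small enough so that all subsequent inequalities make sense), so that any $\sigma$ with $\opn{len}(\sigma) < \epsilon_5(\alpha, \Phi)$ is $\alpha'$-tiny. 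Then by Proposition \ref{prop:1}(3) we have $g' \in V_0(H)$, and the element $A := \log_H(g') \in \h$ satisfies $\norm{A} \leq c_1(\alpha') \cdot \opn{len}(\sigma)$.

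For (i): by the elementary matrix estimate $\norm{\exp_H(A) - \bsym{1}} \leq \norm{A} \cdot \exp(\norm{A})$ (from the power series of $\exp_H$), combined with the bound on $\norm{A}$ above and the fact that $\norm{A} \leq c_1(\alpha')$, we obtain $\norm{g' - \bsym{1}} \leq c_1(\alpha') \cdot \exp(c_1(\alpha')) \cdot \opn{len}(\sigma)$, which takes care of (i).

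For (ii): set $B := \int_\sigma \alpha' \in \h$. From the power series we get $\norm{\exp_H(A) - \bsym{1} - A} \leq \tfrac{1}{2} \norm{A}^2 \exp(\norm{A}) \leq c' \cdot \opn{len}(\sigma)^2$ for a suitable $c'$. On the other hand, Proposition \ref{prop:16} gives $\norm{A - B} \leq c_0(H) \cdot c_1(\alpha')^2 \cdot \opn{len}(\sigma)^2$. The triangle inequality
\[ \norm{g' - (\bsym{1} + B)} \leq \norm{\exp_H(A) - \bsym{1} - A} + \norm{A - B} \]
then yields (ii) with an appropriate constant.

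For (iii): since $\alpha'$ is now assumed smooth, its Sobolev bound controls $\norm{\alpha'(x) - \alpha'(x_0)} \leq \norm{\alpha'}_{\mrm{Sob}} \cdot \opn{dist}(x, x_0)$ for every $x$ on any piece of $\sigma$; and since $\sigma$ is a string emanating from $x_0$ of total length $\opn{len}(\sigma)$, every such $x$ satisfies $\opn{dist}(x, x_0) \leq \opn{len}(\sigma)$. Integrating along $\sigma$ gives
\[ \Norm{ \int_\sigma \alpha' - \int_\sigma \alpha'(x_0) } \leq \norm{\alpha'}_{\mrm{Sob}} \cdot \opn{len}(\sigma)^2 , \]
and combining with (ii) by the triangle inequality yields (iii). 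The only mildly delicate point is bookkeeping of constants in a single $c_5(\alpha, \Phi)$ (taking the maximum of the three constants produced above), but there is no real obstacle; the main content is just assembling Proposition \ref{prop:16}, Proposition \ref{prop:1}(3), Proposition \ref{prop:12} and the two-term Taylor estimate for $\exp_H$.
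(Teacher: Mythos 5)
Your proposal is correct and follows essentially the same route as the paper: reduce to $H = \opn{GL}_m(\R)$, take $\epsilon_5 \leq \epsilon_1(\alpha')$ so that Proposition \ref{prop:1}(3) bounds $\lambda = \log_H(g')$, use the two-term Taylor estimate for $\exp_H$ for (i)--(ii), Proposition \ref{prop:16} to compare $\lambda$ with $\int_\sigma \alpha'$, and a Sobolev/Taylor estimate along the string for (iii). The only cosmetic difference is that you make the exponential estimates explicit via the power series, where the paper invokes a generic constant from uniform convergence on a compact ball; the constant bookkeeping ($\epsilon_5 \leq 1$, $c_5 \geq 1$) is routine as you note.
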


\begin{proof}
Let us write $H := \opn{GL}_m(\R)$ and
$\h := \mfrak{gl}_n(\R)$. 
Define $\epsilon_5(\alpha, \Phi) := \epsilon_1(\alpha')$, in the sense of
Definition \ref{dfn:13}, and let
$d := c_1(\alpha') \cdot \epsilon_1(\alpha')$.

By reasons of convergence of analytic functions on compact domains,
there is a constant $c$ such that 
for every matrix $\lambda \in \h$ with 
$\norm{\lambda} \leq d$ the inequalities
\begin{equation} \label{eqn:144}
\Norm{ \exp_H(\lambda) - \bsym{1} } \leq c \cdot \norm{\lambda}
\end{equation}
and
\begin{equation} \label{eqn:145}
\Norm{ \exp_H(\lambda) - (\bsym{1} + \lambda) } \leq c \cdot \norm{\lambda}^2 
\end{equation}
hold. 

Take a string $\sigma$ with 
$\epsilon := \opn{len}(\sigma) < \epsilon_5(\alpha, \Phi)$.
By Proposition \ref{prop:1}(3) we have
$g' \in V_0(H)$ and 
\[ \norm{\lambda} \leq c_1(\alpha') \cdot \epsilon \leq  d \, , \]
for the elements
$g' := \opn{MI}(\alpha' \vert \sigma)$ and
$\lambda := \log_H(g') \in \h$.
Inequality (\ref{eqn:144}) gives
\begin{equation} \label{eqn:146}
\norm{ g' - \bsym{1} } \leq c \cdot  c_1(\alpha') \cdot \epsilon 
\, . 
\end{equation}
Next,  using Proposition \ref{prop:16} and
inequality (\ref{eqn:145}) we have 
\begin{equation} \label{eqn:147}
\begin{aligned}
& \Norm{ g' - \bigl( \bsym{1} + \int_{\sigma} \alpha' \bigr) } \leq 
\Norm{ g' - (\bsym{1} + \lambda) } +
\norm{ \lambda - \int_{\sigma} \alpha' } \\
& \qquad \leq c \cdot c_1(\alpha')^2  \cdot \epsilon^2
+  c_1(\alpha')^2 \cdot \epsilon^2 \, . 
\end{aligned}
\end{equation}

Finally, assume that $\alpha'$ is smooth. By Taylor expansion of the
coefficients of $\alpha'$ (cf.\ (\ref{eqn:67})) we have the estimate
\[ \Norm{ \alpha'(x) - \alpha'(x_0) } \leq \norm{ \alpha' }_{\mrm{Sob}} \cdot 
\epsilon \]
for every point $x$ in the image of the string $\sigma$. 
Therefore
\begin{equation} \label{eqn:148}
\Norm{ \int_{\sigma} \alpha' - \int_{\sigma} \alpha'(x_0) } \leq
\norm{ \alpha' }_{\mrm{Sob}} \cdot \epsilon^2 \, .
\end{equation}

{}From the inequalities (\ref{eqn:146}), (\ref{eqn:147}) and (\ref{eqn:148})
we can now easily extract a constant $c_5(\alpha, \Phi)$ for which all three
conditions hold.
\end{proof}

%% section %%
\cleardoublepage
\section{Multiplicative Integration in Dimension Two}
\label{sec.MI}

We pass up to dimension $2$. Here it turns out that things are
really much more complicated, for geometrical reasons.

A rudimentary multiplicative integration on surfaces was already
introduced by Schlesinger in 1928, in his
nonabelian $2$-dimensional Stokes Theorem
(see \cite[Appendix A.II,9]{DF}).
However we need stronger results, for which a more complicated
multiplicative integration procedure is required.

It turns out (this was already in Schlesinger's work) that the correct
multiplicative integral is twisted: there is a $2$-form, say $\beta$,
that is integrated, but this integration is twisted by a $1$-form
$\alpha$. The geometric cycles on which integration is performed are
the kites, to be defined now.

\subsection{Kites} \label{subsec:kites}
By a {\em pointed polyhedron} $(X, x_0)$ we mean a polyhedron $X$
(see Section \ref{sec:pws}), together with a base point $x_0 \in X$.
As base point for $\mbf{I}^2$ we always take the vertex
$v_0$; cf.\ (\ref{eqn:1}). 

\begin{dfn}
Let $(X, x_0)$ be a pointed polyhedron.
\index{Quadrangular kite}
\begin{enumerate}
\item A {\em quadrangular kite} in $(X, x_0)$ is a pair $(\sigma, \tau)$, where
$\sigma$ is a string in $X$ (see Definition \ref{dfn:3}), and 
$\tau : \mbf{I}^2 \to X$
is a linear map. The conditions are that
$\sigma(v_0) = x_0$ and
$\sigma(v_1) = \tau(v_0)$.

\item If $\tau(\mbf{I}^{2})$ is $2$-dimensional then we call $(\sigma, \tau)$
a {\em nondegenerate kite}. 

\item If $\tau(\mbf{I}^{2})$ is a square in $X$ (of positive size), then we
call $(\sigma, \tau)$ a {\em square kite}.
\end{enumerate} 
\end{dfn}

See Figure \ref{fig:1} for illustration. 

Until Section \ref{sec:simpl}, where triangular kites are introduced, we shall
only encounter quadrangular kites. Hence is Sections
\ref{sec.MI}-\ref{sec:stokes-3} a kite shall always mean a quadrangular kite.

Consider a kite $(\sigma, \tau)$. The image $\tau(\mbf{I}^{2})$ 
is a parallelogram in $X$. We denote the area of 
$\tau(\mbf{I}^{2})$ by $\opn{area}(\tau)$. 
If $\tau(\mbf{I}^{2})$ is a square, then we denote the side of this square by
$\opn{side}(\tau)$. 

We view $(\mbf{I}^2, v_0)$ as an oriented pointed polyhedron.
Suppose $(\sigma, \tau)$ is a nondegenerate kite in $(\mbf{I}^2, v_0)$.
If the orientation of $\tau$ is positive, then we say that the kite
$(\sigma, \tau)$ is positively oriented.

We shall need the following composition operation on kites.
Suppose $(\sigma_1, \tau_1)$ is a kite in $(X, x_0)$, and 
$(\sigma_2, \tau_2)$ is  a kite in $(\mbf{I}^2, v_0)$. 
Then 
$\tau_1 \circ \tau_2 : \mbf{I}^{2} \to X$ 
is a linear map, and 
$\sigma_1 * (\tau_1 \circ \sigma_2)$
is a string in $X$. (See (\ref{eqn12}) for the concatenation
operation $\ast$.) We define
\begin{equation} \label{eqn:18}
(\sigma_1, \tau_1) \circ (\sigma_2, \tau_2) :=
\bigl( \sigma_1 \ast (\tau_1 \circ \sigma_2), \tau_1 \circ \tau_2 \bigr) ,
\end{equation}
which is also a kite in $(X, x_0)$.  
Note that this composition operation is associative.
For an illustration see Figures \ref{fig:1} and \ref{fig:2}.

\begin{figure} 
\includegraphics[scale=0.25]{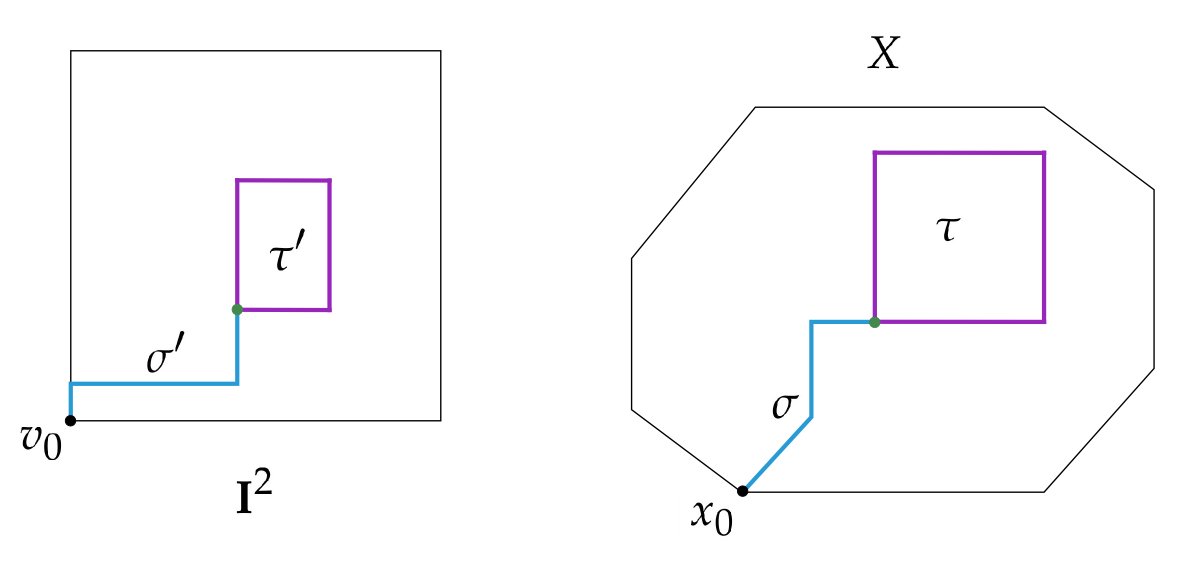}
\caption{A linear quadrangular kite $(\sigma, \tau)$ in the pointed polyhedron
$(X, x_0)$, and a linear quadrangular kite $(\sigma', \tau')$ in the pointed
polyhedron $(\mbf{I}^{2}, v_0)$.} 
\label{fig:1}
\end{figure}

\begin{figure} 
\includegraphics[scale=0.23]{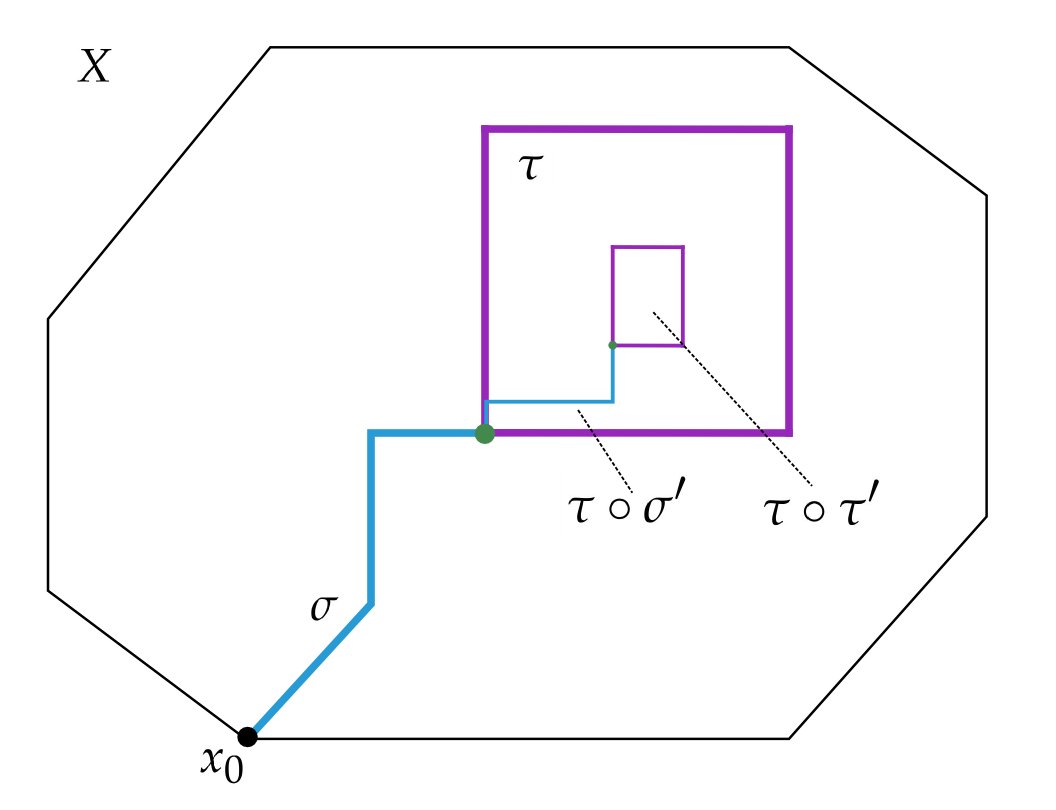}
\caption{Continued from Figure \ref{fig:1}: the linear quadrangular kite 
$(\sigma, \tau) \circ (\sigma', \tau')$ 
in the pointed polyhedron $(X, x_0)$.} 
\label{fig:2}
\end{figure}

Let $(\sigma, \tau)$ be a kite in $(X, x_0)$, and let 
$f : (X, x_0) \to (Y, y_0)$ be a piecewise linear map 
between pointed polyhedra. Assume that the restriction of $f$ to the 
subpolyhedron $\tau(\mbf{I}^2) \subset X$ is linear. 
As in formula (\ref{eqn14}) we have a string 
$f \circ \sigma$ in $Y$. We define 
\begin{equation} \label{eqn:116}
f \circ (\sigma, \tau) := (f \circ \sigma, f \circ \tau) ,
\end{equation}
which is a kite in $(Y, y_0)$.

Given a kite $(\sigma, \tau)$ in $(X, x_0)$, its {\em boundary} 
is the closed string $\partial (\sigma, \tau)$ defined as follows.
First we define
\begin{equation} \label{eqn:126}
\partial \mbf{I}^2 := 
(v_0, v_1) \ast (v_1, (1, 1)) \ast 
((1, 1), v_2) \ast (v_2, v_0) \, . 
\end{equation}
This is a closed string in $\mbf{I}^2$, based at $v_0$. Next we let
$\partial \tau := \tau \circ (\partial \mbf{I}^2)$,
where composition is in the sense of (\ref{eqn14}). 
So $\partial \tau$ is a closed string in $X$. Finally  we define
\begin{equation}
\partial (\sigma, \tau) :=
\sigma \ast (\partial \tau) \ast \sigma^{-1} , 
\end{equation}
where $\sigma^{-1}$ is the inverse string from (\ref{eqn13}).
See Figure \ref{fig:23}.

\begin{figure}
\includegraphics[scale=0.23]{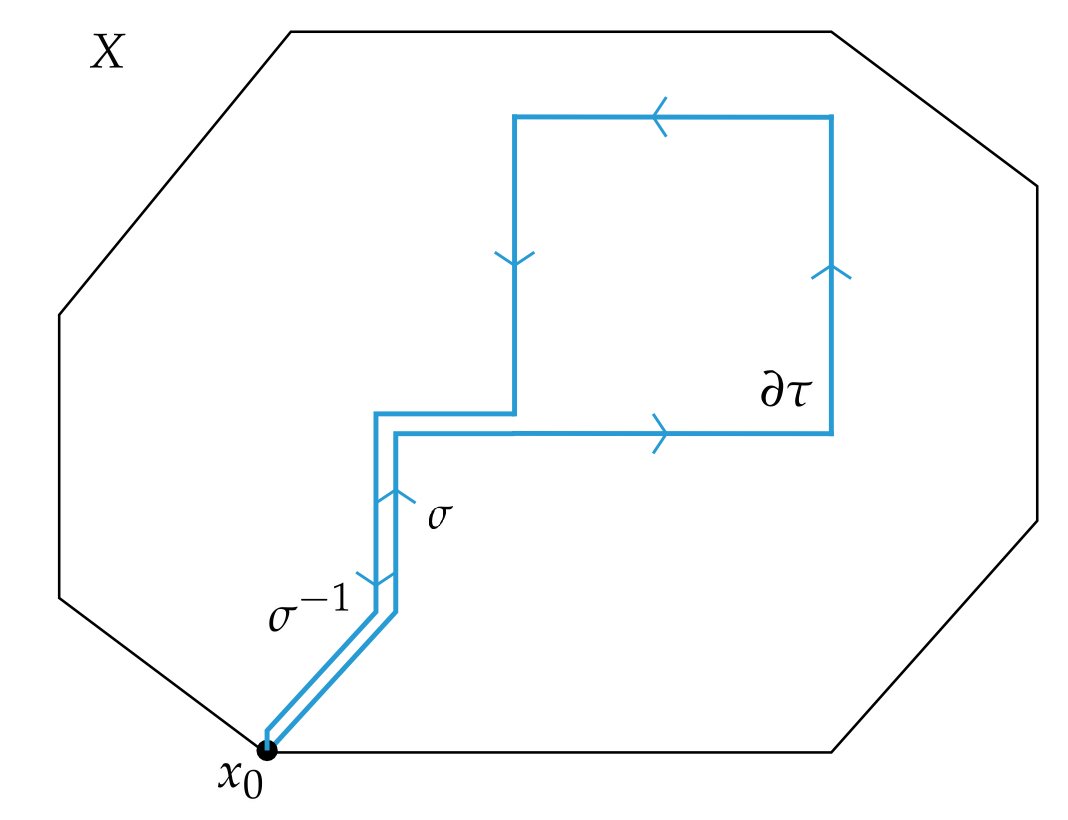}
\caption{The boundary 
$\partial (\sigma, \tau) = \sigma * \partial \tau * \sigma^{-1}$ 
of the kite
$(\sigma, \tau)$ from Figure \ref{fig:1}.} 
\label{fig:23}
\end{figure}

Here is a useful fact about the geometry of kites.

\begin{prop} \label{prop:14}
Let $(\sigma, \tau)$ be a kite in $(X, x_0)$. Then there is a square kite
$(\sigma', \tau')$ in $(\mbf{I}^{2}, v_0)$, and a piecewise linear map of
pointed polyhedra
$f : (\mbf{I}^{2}, v_0) \to (X, x_0)$, 
such that $\opn{len}(\sigma') \leq 1$, 
$f$ is linear on $\tau'(\mbf{I}^{2})$, and 
\[ (\sigma, \tau) = f \circ (\sigma', \tau') \]
as kites in $(X, x_0)$.
\end{prop}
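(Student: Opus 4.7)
\medskip

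The plan is to construct $(\sigma', \tau')$ and $f$ explicitly inside $\mbf{I}^2$ by drawing a scaled-down ``picture'' of the kite $(\sigma, \tau)$, and then extending the resulting partial map to all of $\mbf{I}^2$ via an auxiliary triangulation.

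First I would fix a small $\epsilon > 0$ and place the square $\tau'(\mbf{I}^2)$ in the upper-left corner, by defining $\tau' : \mbf{I}^2 \to \mbf{I}^2$ as the affine map with $\tau'(v_0) = (0, 1-\epsilon)$, $\tau'(v_1) = (\epsilon, 1-\epsilon)$, $\tau'(v_2) = (0, 1)$; its image is the square $[0,\epsilon] \times [1-\epsilon, 1]$, which is of positive side $\epsilon \leq 1$. Writing $\sigma = (\sigma_1, \ldots, \sigma_m)$, I would then divide the vertical segment from $v_0 = (0,0)$ to $(0, 1-\epsilon)$ into $m$ equal linear pieces, obtaining linear maps $\sigma'_i : \mbf{I}^1 \to \mbf{I}^2$ with $\sigma'_i(v_1) = \sigma'_{i+1}(v_0)$ and $\sigma'_m(v_1) = (0, 1-\epsilon) = \tau'(v_0)$. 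This defines a square kite $(\sigma', \tau')$ in $(\mbf{I}^2, v_0)$, with $\opn{len}(\sigma') = 1 - \epsilon \leq 1$.

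Next I would define $f$ on the subset $Y := \sigma'(\mbf{I}^1) \cup \tau'(\mbf{I}^2)$ by setting $f := \sigma_i \circ (\sigma'_i)^{-1}$ on each segment $\sigma'_i(\mbf{I}^1)$, and $f := \tau \circ (\tau')^{-1}$ on $\tau'(\mbf{I}^2)$. Since each $\sigma'_i$ is linear and bijective onto its image, $(\sigma'_i)^{-1}$ is linear, so $f$ is piecewise linear on $\sigma'_i(\mbf{I}^1)$ and linear on $\tau'(\mbf{I}^2)$. The only overlap of the two pieces of $Y$ is at the point $\tau'(v_0)$, where both definitions give $\sigma_m(v_1) = \tau(v_0)$, so $f$ is well defined and continuous on $Y$. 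By construction $f(v_0) = \sigma_1(v_0) = x_0$, and the relations $f \circ \sigma'_i = \sigma_i$ and $f \circ \tau' = \tau$ hold, which gives $f \circ (\sigma', \tau') = (\sigma, \tau)$.

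Finally I would extend $f$ to all of $\mbf{I}^2$. Choose a linear triangulation $T$ of $\mbf{I}^2$ whose $1$-skeleton contains all the line segments $\sigma'_i(\mbf{I}^1)$, the boundary of $\tau'(\mbf{I}^2)$, and all the subdivision points at which each of the $f|_{\sigma'_i(\mbf{I}^1)}$ fails to be linear (these are finite in number because each $\sigma_i$ is piecewise linear); such a $T$ exists by standard piecewise linear geometry (Subsection \ref{subsec:pws}). On each vertex of $T$ not already in $Y$, assign an arbitrary value in $X$ (say $x_0$), and extend affinely over each $2$-simplex of $T$. The resulting map $f : \mbf{I}^2 \to X$ is piecewise linear and agrees with the given formulas on $Y$; in particular $f$ is linear on $\tau'(\mbf{I}^2)$ as required. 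The main care needed is just in verifying that $T$ can be chosen compatibly with both the broken segment $\sigma'(\mbf{I}^1)$ and the square $\tau'(\mbf{I}^2)$, which is routine because the two sets meet only at the single vertex $\tau'(v_0)$.
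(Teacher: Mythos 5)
Your construction is correct in spirit and in the main case, and is essentially the same strategy as the paper's: draw a scaled copy of the kite inside $\mbf{I}^2$ (a little square plus a segment from $v_0$ to a corner of the square), match $f$ on that subset, and then extend to all of $\mbf{I}^2$. There are two differences worth noting.

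First, a small genuine gap: you implicitly assume $m \geq 1$. If $\sigma$ is the empty string ($m = 0$), then $\sigma'$ is also empty and its terminal point is $v_0 = (0,0)$, not $(0, 1-\epsilon)$, so the condition $\sigma'(v_1) = \tau'(v_0)$ fails; your square in the upper-left corner doesn't touch $v_0$. The paper dispatches this case separately by taking $\sigma'$ empty, $\tau' = \opn{id}_{\mbf{I}^2}$, and $f := \tau$. You should add this.

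Second, the extension step is genuinely different. The paper defines $f$ on $Y \cup Z$ (square union segment) and then composes with a piecewise linear \emph{retraction} $g : \mbf{I}^2 \to Y \cup Z$, so that $f := f|_{Y \cup Z} \circ g$ is automatically piecewise linear and agrees on $Y \cup Z$ — no triangulation or arbitrary choices needed. You instead pick a linear triangulation $T$ of $\mbf{I}^2$ containing your segments, the boundary of the square, and the break points of each $f|_{\sigma'_i(\mbf{I}^1)}$ as cells, assign $x_0$ at free vertices, and extend affinely on each $2$-simplex. This works — convexity of $X$ guarantees the affine extensions land in $X$, and since $T$ respects the square boundary and the break points, $f$ remains linear on $\tau'(\mbf{I}^2)$ and agrees with the prescribed values on $Y$. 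The retraction approach avoids the bookkeeping about compatible triangulations and arbitrary vertex values, while your approach is perhaps more familiar from combinatorial topology; both are valid, and neither is essentially shorter.
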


\begin{proof}
Say $\sigma = (\sigma_1, \ldots, \sigma_m)$ is the decomposition of $\sigma$
into pieces. 
If $m = 0$ (i.e.\ $\sigma$ is the empty string), then
we let $\sigma'$ also be the empty string, and we take
$\tau' := \opn{id}_{\mbf{I}^{2}}$ and $f := \tau$.

Otherwise, if $m > 0$, then we define the square kite 
$(\sigma', \tau')$ in $(\mbf{I}^{2}, v_0)$ as follows. 
The square map $\tau' : \mbf{I}^{2} \to \mbf{I}^{2}$ is defined on vertices by
the formula
\[ \tau'(v_0, v_1, v_2) := \bigl( (\smfrac{1}{2}, \smfrac{1}{2}), 
(1, \smfrac{1}{2}), (\smfrac{1}{2}, 1) \bigr) . \]
And we let
$Y := \tau'(\mbf{I}^{2})$. 

Next consider the oriented line segment $Z$ going from $v_0$ to 
$(\smfrac{1}{2}, \smfrac{1}{2})$. We divide $Z$ into $m$ equal pieces,
labeled $Z_1, \ldots, Z_m$. We let $\sigma'_i : \mbf{I}^{1} \to Z$
be the positively oriented linear map with image $Z_i$. 
And we let $\sigma'$ be the string 
$\sigma' := (\sigma'_1, \ldots, \sigma'_m)$.

The map $f|_{Y}$ is defined to be the unique linear map $Y \to X$ such that 
$f \circ \tau' = \tau$. And for every $i$ the map $f|_{Z_i}$ defined to be the
unique piecewise linear map $Z_i \to X$ such that 
$f \circ \sigma'_i = \sigma_i$. 
We thus have a map 
$f|_{Y \cup Z} : Y \cup Z \to X$; see Figure \ref{fig:67} for an illustration.

Finally let $g : \mbf{I}^{2} \to Y \cup Z$ be any piecewise linear retraction;
for instance as suggested by Figure \ref{fig:43}.
We define 
$f := f|_{Y \cup Z} \circ g : \mbf{I}^{2} \to X$. 
See Figure \ref{fig:42}.
\end{proof}

\begin{figure}
\includegraphics[scale=0.35]{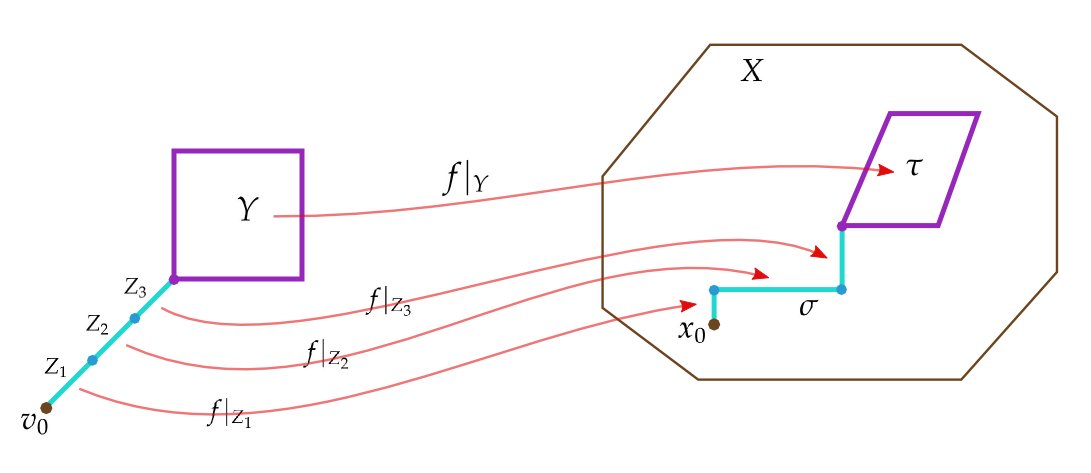}
\caption{The map
$f|_{Y \cup Z} : Y \cup Z \to X$, where
$Z = Z_1 \cup Z_2 \cup Z_3$.} 
\label{fig:67}
\end{figure}

\begin{figure}
\includegraphics[scale=0.34]{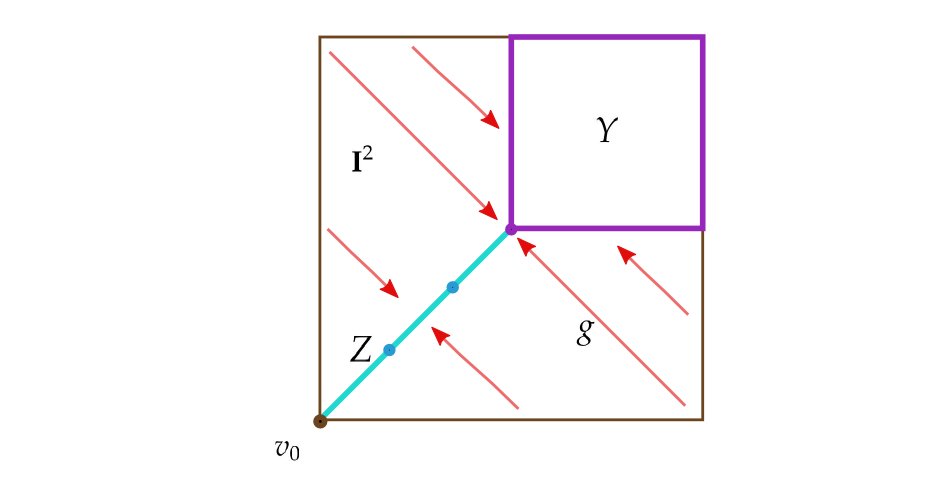}
\caption{A piecewise linear retraction
$g : \mbf{I}^{2} \to Y \cup Z$.} 
\label{fig:43}
\end{figure}

\begin{figure}
\includegraphics[scale=0.35]{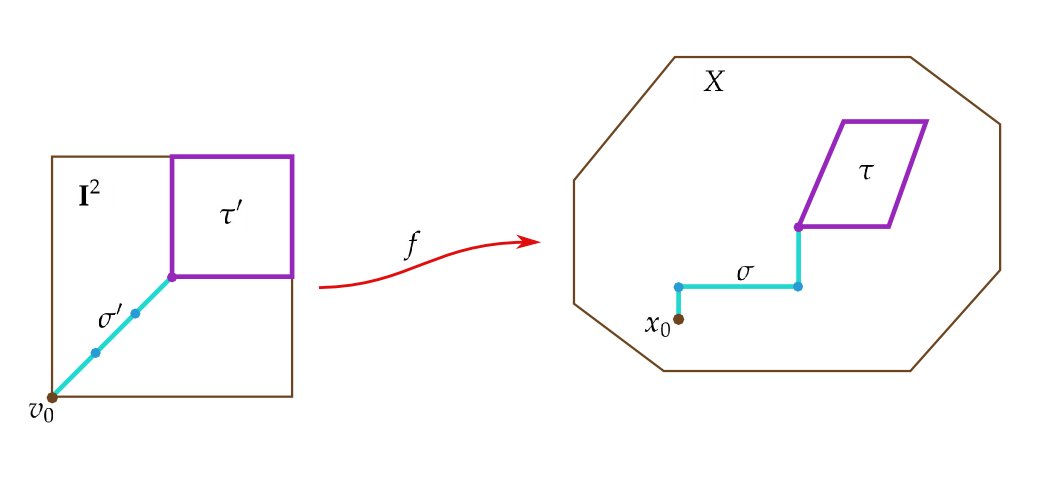}
\caption{The piecewise linear map
$f := f|_{Y \cup Z} \circ g$, and the kite $(\sigma', \tau')$ such that 
$f \circ (\sigma', \tau') = (\sigma, \tau)$.} 
\label{fig:42}
\end{figure}

\newpage
\subsection{Binary Tessellations of $\mbf{I}^2$} \label{subsec:bintes}
For $k \geq 0$ the {\em $k$-th binary subdivision of $\mbf{I}^2$} is the
cellular subdivision $\opn{sd}^k \mbf{I}^2$ of $\mbf{I}^2$ into $4^k$ squares,
each of side $(\smfrac{1}{2})^k$.
The $1$-skeleton of $\opn{sd}^k \mbf{I}^2$ is the set 
$\opn{sk}_1 \opn{sd}^k \mbf{I}^2$ 
consisting of the union of all edges (i.e.\
$1$-cells) in $\opn{sd}^k \mbf{I}^2$. Thus $\opn{sk}_1 \opn{sd}^k \mbf{I}^2$
is a grid.
See Figure \ref{fig:22}. 

\begin{figure}
\includegraphics[scale=0.22]{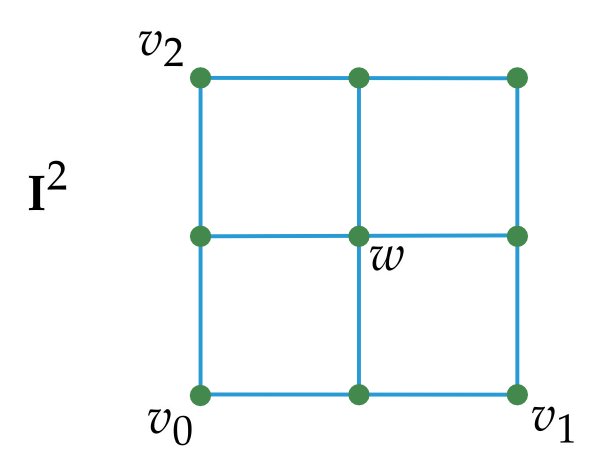}
\caption{The $1$-st binary subdivision $\opn{sd}^1 \mbf{I}^2$ of
$\mbf{I}^2$. Here $w := (\smfrac{1}{2} , \smfrac{1}{2})$.}
\label{fig:22}
\end{figure}

\begin{dfn}
Let $k$ be a natural number.
\begin{enumerate}
\item Let $p = 0, 1, 2$. 
A linear map $\sigma : \mbf{I}^{p} \to \mbf{I}^{2}$
is said to be {\em patterned on $\opn{sd}^k \mbf{I}^2$}
if the image $\sigma(\mbf{I}^{p})$ is  a $p$-cell of  
$\opn{sd}^k \mbf{I}^2$.

\item  A string $\sigma = (\sigma_1, \ldots, \sigma_m)$ in $\mbf{I}^{2}$
is said to be patterned on $\opn{sd}^k \mbf{I}^2$
if each piece $\sigma_i : \mbf{I}^{1} \to \mbf{I}^{2}$ is a linear map patterned
on $\opn{sd}^k \mbf{I}^2$.

\item A square kite $(\sigma, \tau)$ in $(\mbf{I}^{2}, v_0)$ is said to be 
patterned on $\opn{sd}^k \mbf{I}^2$ if both the linear map $\tau$ and the
string $\sigma$ are patterned on $\opn{sd}^k \mbf{I}^2$.
\end{enumerate}
\end{dfn}

The fundamental group 
of the topological space $\opn{sk}_1 \opn{sd}^k \mbf{I}^2$, based at
$v_0$, is denoted by
$\bsym{\pi}_1 (\opn{sk}_1 \opn{sd}^k \mbf{I}^2)$.
It is a free group on $4^k$ generators.
Given a closed string $\sigma$ patterned on 
$\opn{sd}^k \mbf{I}^2$ and based at
$v_0$, we denote by $[\sigma]$ the corresponding element of
$\bsym{\pi}_1 (\opn{sk}_1 \opn{sd}^k \mbf{I}^2)$.
In particular, if $(\sigma, \tau)$ is a kite patterned on 
$\opn{sd}^k \mbf{I}^2$, then the
boundary $\partial (\sigma, \tau)$ represents an element 
\[ [\partial (\sigma, \tau)] \in 
\bsym{\pi}_1 (\opn{sk}_1 \opn{sd}^k \mbf{I}^2) . \]

Recall the boundary $\partial \mbf{I}^2$ from equation (\ref{eqn:126}).

\begin{dfn} \label{dfn:4}
Let $k$ be a natural number.
A {\em tessellation of $\mbf{I}^{2}$ patterned on $\opn{sd}^k \mbf{I}^2$} is a
sequence
\[ \bsym{\rho} = \bigl( (\sigma_1, \tau_1), \ldots, 
(\sigma_{4^k}, \tau_{4^k}) \bigr) \]
of kites in $(\mbf{I}^{2}, v_0)$, satisfying these two
conditions:
\begin{enumerate}
\rmitem{i} Each kite  $(\sigma_i, \tau_i)$ is patterned on 
$\opn{sd}^k \mbf{I}^2$.
\rmitem{ii} One has
\[ \prod_{i = 1}^{4^k} \ [\partial (\sigma_i, \tau_i)] = 
[\partial \mbf{I}^{2}] \]
in the group $\bsym{\pi}_1 (\opn{sk}_1 \opn{sd}^k \mbf{I}^2)$.
The product is according to the convention (\ref{eqn:6}). 
\end{enumerate}
\end{dfn}

\begin{rem}
Suppose $\bsym{\rho}$ is a tessellation of $\mbf{I}^{2}$ patterned on 
$\opn{sd}^k \mbf{I}^2$. Then, in the notation of the definition, each kite 
$(\sigma_i, \tau_i)$ is positively oriented, and 
each $2$-cell of $\opn{sd}^k \mbf{I}^2$ occurs as
$\tau_i(\mbf{I}^{2})$ for exactly one index $i$.

This assertion (that we will not use in the paper) can be proved directly, by a
topological argument. But it also follows from
Corollary \ref{cor:2}, by taking the abelian Lie groups $G := 1$ and $H :=
\mrm{GL}_1(\R)$, and the differential forms
$\alpha := 0$ and 
$\beta := f \cdot \d t_1 \wedge \d t_1$, where 
$f : \mbf{I}^{2} \to \R$ is a smooth nonnegative bump
function supported in the interior of a given $2$-cell of 
$\opn{sd}^k \mbf{I}^2$.
\end{rem}

The {\em probe} is the string 
\begin{equation} \label{eqn:237}
\sigma_{\mrm{pr}} := \bigl( v_0, (0, \smfrac{1}{2}) \bigr) * 
\bigl( (0, \smfrac{1}{2}), (\smfrac{1}{2}, \smfrac{1}{2}) \bigr) 
\end{equation}
(with two pieces) in $\mbf{I}^2$. 
See Figure \ref{fig:73} for an illustration and Remark \ref{rem:4} for an
explanation.

\begin{figure}
\includegraphics[scale=0.20]{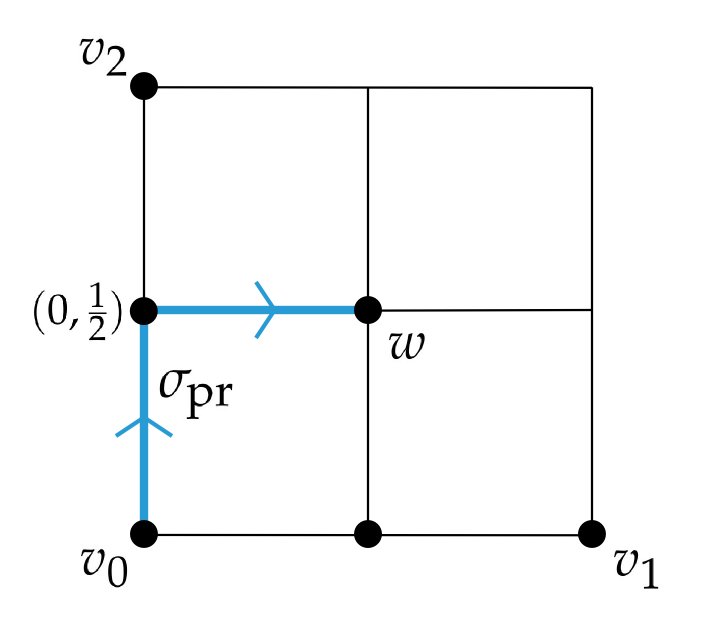}
\caption{The probe $\sigma_{\mrm{pr}}$ in $\mbf{I}^2$. Here 
$w := (\smfrac{1}{2}, \smfrac{1}{2})$.} 
\label{fig:73}
\end{figure}

\begin{dfn} \label{dfn:14}
\index{Binary tessellation of the square}
Let $k$ be a natural number. The {\em $k$-th binary tessellation of
$\mbf{I}^{2}$} is the sequence
\[ \opn{tes}^k \mbf{I}^2 = 
\bigl( \opn{tes}^k_1 \mbf{I}^2, \ldots, \opn{tes}^k_{4^k} \mbf{I}^2  \bigr)
= \bigl( (\sigma^k_1, \tau^k_1), \ldots, (\sigma^k_{4^k}, \tau^k_{4^k}) \bigr)
\] 
of kites in $(\mbf{I}^{2}, v_0)$, patterned on $\opn{sd}^k \mbf{I}^2$, that is
 defined recursively as follows. 
\begin{enumerate}
\item For $k = 0$ let $\sigma^0_1$ be the empty string, and let
$\tau^0_1$ be the identity map of $\mbf{I}^{2}$.

\item For $k = 1$ all four strings $\sigma^1_i$ are the same; they are
$\sigma^1_i := \sigma_{\mrm{pr}}$. The four linear maps 
$\tau^1_i : \mbf{I}^{2} \to \mbf{I}^{2}$ 
are patterned on $\opn{sd}^1 \mbf{I}^2$, positively oriented, and have
$\tau^1_i(v_0) = (\smfrac{1}{2}, \smfrac{1}{2})$. 
It remains to specify the points $\tau^1_i(v_1)$:
\[ \tau^1_1(v_1) = (\smfrac{1}{2}, 0) , \
\tau^1_2(v_1) = (1, \smfrac{1}{2}) , \
\tau^1_3(v_1) = (\smfrac{1}{2}, 1) , \
\tau^1_4(v_1) = (0, \smfrac{1}{2}) . \]

\item For $k \geq 2$ we define
\[ \opn{tes}^k \mbf{I}^2 :=
(\opn{tes}^1 \mbf{I}^2) \circ (\opn{tes}^{k-1} \mbf{I}^2) . \]
Here composition of sequences of kites is using the
operations (\ref{eqn:18}) and (\ref{eqn:19}).
\end{enumerate}
\end{dfn}

See Figures \ref{fig:24} and \ref{fig:25} for an illustration.

\begin{figure}
\includegraphics[scale=0.36]{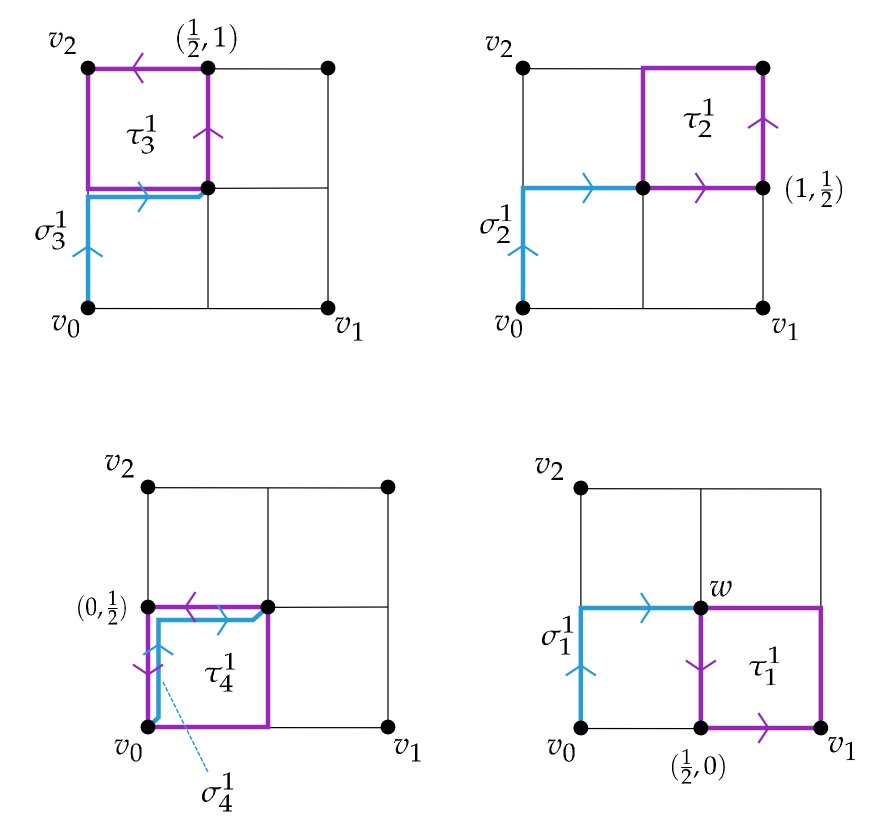}
\caption{The $1$-st binary tessellation $\opn{tes}^{1} \mbf{I}^{2}$.
The arrowheads indicate the orientation of the linear maps.} 
\label{fig:24}
\end{figure}

\begin{figure}
\includegraphics[scale=0.22]{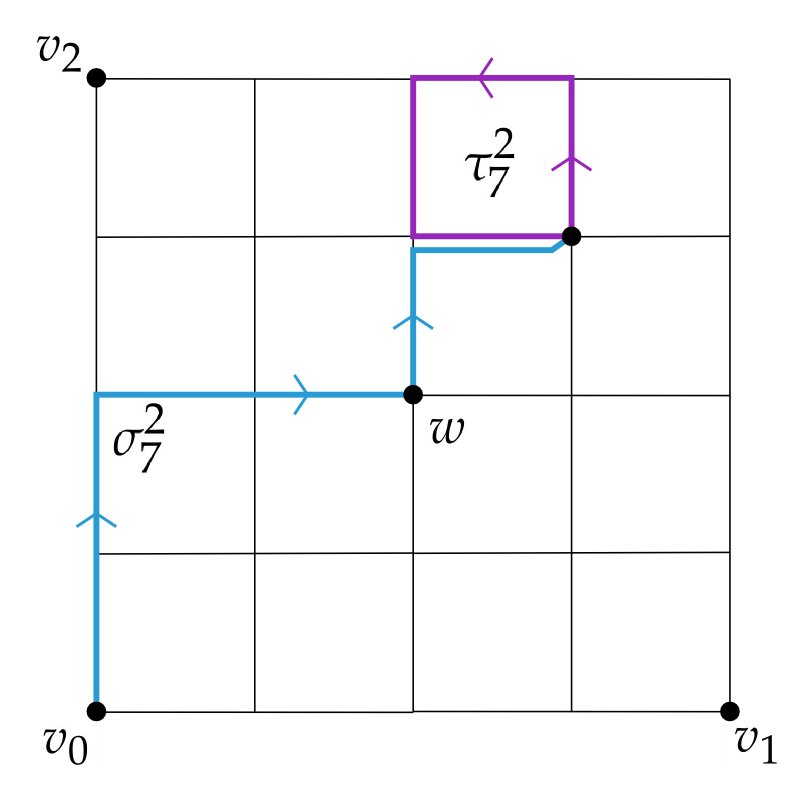}
\caption{The kite $(\sigma^{2}_7, \tau^{2}_7)$ in 
$\opn{tes}^{2} \mbf{I}^{2}$.} 
\label{fig:25}
\end{figure}

It is clear that sequence $\opn{tes}^k \mbf{I}^2$ is a tessellation of
$\mbf{I}^{2}$ patterned on $\opn{sd}^k \mbf{I}^2$, in the sense of Definition 
\tup{\ref{dfn:4}}.
We call $(\sigma^0_1, \tau^0_1)$ the {\em basic kite}.

Hopefully there will be no confusion between the linear map 
$\sigma^k_i$ belonging $\opn{tes}^k \mbf{I}^1$, and the string 
$\sigma^k_i$ belonging $\opn{tes}^k \mbf{I}^2$; these are distinct objects that
share the same notation. 

An easy calculation shows that
\begin{equation} \label{eqn:127}
\opn{len}(\sigma^k_i) \leq 2
\end{equation}
for all $k$ and $i$.

\begin{dfn} \label{dfn:44}
Let $(\sigma, \tau)$ be a kite in $(X, x_0)$. For $k \in \N$ and 
$i \in \{ 1, \ldots, 4^k \}$ let
\[ \opn{tes}^k_i (\sigma, \tau) := (\sigma, \tau) \circ \opn{tes}^k_i \mbf{I}^2
= (\sigma, \tau) \circ (\sigma^k_i, \tau^k_i) , \]
which is also a kite in $(X, x_0)$.
The sequence of kites
\[ \opn{tes}^k (\sigma, \tau) := 
\bigl( \opn{tes}^k_1 (\sigma, \tau), \ldots, 
\opn{tes}^k_{4^k} (\sigma, \tau) \bigr) \]
is called the {\em $k$-th binary tessellation} of $(\sigma, \tau)$.
\end{dfn}

\begin{rem}
The choice of strings for the kites in the binary tessellations (and
the ordering of the kites) is clearly artificial, and also very
asymmetrical. As we shall see later, in favorable situations this
will not matter at all -- any other tessellation works! See
Corollary \ref{cor:2}.
\end{rem}

\subsection{Additive Twisting and Riemann Products} \label{subsec:rp}
Let $\h$ be a finite dimensional vector space (over $\R$). We denote by
$\opn{GL}(\h)$ the group of linear automorphisms of $\h$, which is a Lie group
(noncanonically isomorphic to $\opn{GL}_d(\R)$ for $d := \opn{dim} \h$).

\begin{dfn} \label{dfn:5}
\index{Twisting setup}
A {\em twisting setup} is the data 
$\mbf{C} = (G, H, \Psi_{\h})$, consisting of:
\begin{enumerate}
\item Lie groups $G$ and $H$, with Lie algebras $\g$ and $\h$ respectively.
\item A map of Lie groups
$\Psi_{\h} : G \to \opn{GL}(\h)$, called an {\em additive twisting}.
\end{enumerate}
\end{dfn}

Warning: we do not assume that the map
$\Psi_{\h}(g) : \h \to \h$, for $g \in G$,
is an automorphism of Lie algebras!

\begin{exa}
Let $G$ be any Lie group. Take $H := G$ and
$\Psi_{\h} := \opn{Ad}_{\h}$, the adjoint action of $H = G$ on its Lie algebra.
Then $(G, H, \Psi_{\h})$ is a twisting setup. Here $\Psi_{\h}(g)$ is in
fact an automorphism of Lie groups.
\end{exa}

Let us fix, for the rest of this section, a twisting setup
$\mbf{C} = (G, H, \Psi_{\h})$. 
The Lie algebras of $G$ and $H$ are $\g$ and $\h$ respectively. 

We choose some euclidean norm $\norm{-}_{\g}$ on the vector space $\g$.
As in Section \ref{sec:expon} we also choose an open neighborhood $V_0(G)$ of
$1$ in $G$ on which $\log_G$ is well-defined, a convergence radius
$\epsilon_0(G)$, and a commutativity constant $c_0(G)$. Likewise we choose
$\norm{-}_{\h}$, $V_0(H)$, $\epsilon_0(H)$ and $c_0(H)$.
Given $g \in G$, the linear operator $\Psi_{\h}(g) \in \opn{End}(\h)$ is given
the operator norm $\norm{\Psi_{\h}(g)}$.
It should be noted that these choices are auxiliary only, and do not effect
the definition of the multiplicative integration.

Piecewise smooth differential forms were discussed in Subsection
\ref{subsec:pws}. The string $\sigma_{\mrm{pr}}$ (the probe) was introduced in 
(\ref{eqn:237}).

\begin{dfn}[Basic Riemann Product] \label{dfn:11}
Let $(X, x_0)$ be a pointed polyhedron, let
\[ \alpha \in \Omega^1_{\mrm{pws}}(X) \otimes \g , \] 
let
\[ \beta \in \Omega^2_{\mrm{pws}}(X) \otimes \h ,  \]
and let $(\sigma, \tau)$ be a kite in $(X, x_0)$.
We define an element
\[ \opn{RP}_0(\alpha, \beta \vert \sigma, \tau) \in H , \]
called the {\em basic Riemann product of $(\alpha, \beta)$ on 
$(\sigma, \tau)$}, as follows. Write
$Z := \tau(\mbf{I}^2)$ and $z := \tau(\smfrac{1}{2}, \smfrac{1}{2}) \in Z$. 
There are two cases to consider:
\begin{enumerate}
\item Assume $\opn{dim} Z = 2$ and $z$ is a smooth point of the form 
$\beta|_Z$. Put on $Z$ the orientation compatible with $\tau$. 
Choose a triangle $Y$ in $Z$ such that $z \in \opn{Int} Y$ and 
$\beta|_Y$ is smooth, and let 
$\til{\beta} \in \mcal{O}(Y) \otimes \h$
be the coefficient of $\beta|_Y$ with respect to the orientation form of $Y$
(see Definition \ref{dfn:24}). Also let 
\[ g := \opn{MI}(\alpha \vert \sigma * (\tau \circ \sigma_{\mrm{pr}})) \in G .
\]
We define
\[ \opn{RP}_0(\alpha, \beta \vert \sigma, \tau) := 
\exp_H \bigl( \opn{area}(Z) \cdot \Psi_{\h}(g)(\til{\beta}(z)) \bigr)  . \]

\item If $\opn{dim} Z < 2$, or $\opn{dim} Z = 2$ and $z$ is a singular point of
$\beta|_Z$, we define
\[ \opn{RP}_0(\alpha, \beta \vert \sigma, \tau) := 1   . \]
\end{enumerate}
\end{dfn}

It is obvious that case (1) of the definition is independent of the triangle
$Y$. See Figure \ref{fig:74} for an illustration. 

\begin{figure}
\includegraphics[scale=0.31]{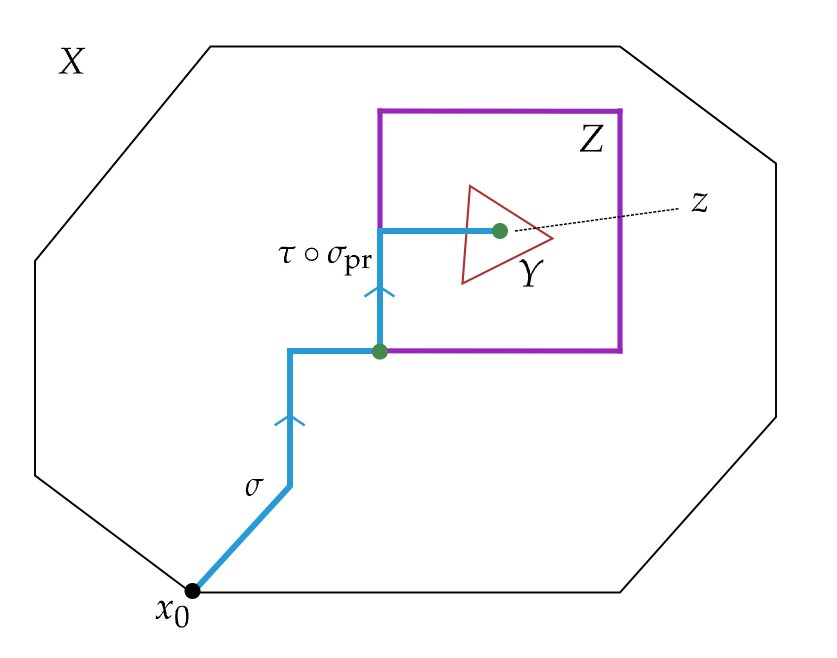}
\caption{Calculating $\opn{RP}_0(\alpha, \beta \vert \sigma, \tau)$ in the
smooth case. Here $Z = \tau(\mbf{I}^2)$, 
$z = \tau(\smfrac{1}{2}, \smfrac{1}{2})$, and $Y$ is a triangle
in $Z$ such that $\beta|_Y$ is smooth.} 
\label{fig:74}
\end{figure}

\begin{rem} \label{rem:4}
We call the string $\sigma_{\mrm{pr}}$ ``the probe'' because it reaches into
the middle of the square $\mbf{I}^2$. The ``reading'' it gives, namely the
formula for $\opn{RP}_0(\alpha, \beta \vert \sigma, \tau)$ in part (1) of the
definition above, is better than (\ref{eqn:238}), because it converges to the
limit faster: order of $\opn{side}(\tau)^4$ versus $\opn{side}(\tau)^3$.
Cf.\ Lemma \ref{lem:5} below.
\end{rem}

Suppose we are given a finite sequence of kites
\[ \bsym{\rho} =  
\bigl( (\sigma_1, \tau_1), \ldots, (\sigma_m, \tau_m) \bigr) \]
in $(X, x_0)$. We write
\begin{equation} \label{eqn:128}
\opn{RP}_{0} (\alpha, \beta \vert \bsym{\rho}) :=
\prod_{i = 1}^{m} \,
\opn{RP}_{0} \bigl( \alpha , \beta \vert \sigma_i, \tau_i \bigr) . 
\end{equation}

Recall the binary tessellation $\opn{tes}^k (\sigma, \tau)$ from Definition
\ref{dfn:44}.

\begin{dfn}[Refined Riemann Product]
\index{Refined Riemann product}
Let $(X, x_0)$ be a pointed polyhedron, let
$\alpha \in \Omega^1_{\mrm{pws}}(X) \otimes \g$, let
$\beta \in \Omega^2_{\mrm{pws}}(X) \otimes \h$, and let
$(\sigma, \tau)$ be a kite in $(X, x_0)$.
For $k \geq 0$ we define 
\[ \opn{RP}_{k}(\alpha, \beta \vert \sigma, \tau) :=
\opn{RP}_{0} \bigl( \alpha, \beta \vert \opn{tes}^k (\sigma , \tau) \bigr) = 
\prod_{i = 1}^{4^k} \,
\opn{RP}_{0} \bigl( \alpha, \beta \vert (\sigma, \tau) \circ 
(\sigma^k_i, \tau^k_i) \bigr) . \]
\end{dfn}

\begin{lem} \label{lem:38}
Let $f : (X', x'_0) \to (X, x_0)$ be a piecewise linear map of pointed
polyhedra,
and let $(\sigma', \tau')$ be a kite in $(X', x'_0)$. Assume that $f$ is linear
on $\tau'(\mbf{I}^2)$, and let
\[ (\sigma, \tau) := f \circ (\sigma', \tau') , \] 
which is a kite in $(X, x_0)$.
Let $\alpha' := f^*(\alpha)$ and 
$\beta' := f^*(\beta)$. Then
\[ \opn{RP}_k(\alpha, \beta \vert \sigma, \tau) = 
\opn{RP}_k(\alpha', \beta' \vert \sigma', \tau') \]
for any $k \geq 0$.
\end{lem}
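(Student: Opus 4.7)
The plan is to reduce to the case $k = 0$ and then unwind the definition of the basic Riemann product, comparing both sides term by term.

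First I would verify that the binary tessellation is compatible with $f$. Since $f$ is linear on $\tau'(\mbf{I}^2)$, it is also linear on each subpolyhedron $(\tau' \circ \tau^k_i)(\mbf{I}^2) \subset \tau'(\mbf{I}^2)$. A direct calculation with the definition (\ref{eqn:18}) of composition of kites shows
\[ f \circ \bigl( (\sigma', \tau') \circ (\sigma^k_i, \tau^k_i) \bigr) = (f \circ (\sigma', \tau')) \circ (\sigma^k_i, \tau^k_i) = (\sigma, \tau) \circ (\sigma^k_i, \tau^k_i) . \]
Hence, applying the refined Riemann product as a product over $\opn{tes}^k \mbf{I}^2$ (Definition just before the lemma), it suffices to establish the $k=0$ equality for each piece of the tessellation separately. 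This reduces the lemma to the case $k = 0$ applied to the composed kites.

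Next I would prove the $k = 0$ case by a direct comparison of the formulas in Definition \ref{dfn:11}. There are two subcases. In the nondegenerate subcase, $\opn{dim} Z = 2$ where $Z = \tau(\mbf{I}^2)$, and since $f$ is linear on $Z' := \tau'(\mbf{I}^2)$, the restriction $f|_{Z'} : Z' \to Z$ is a linear isomorphism of oriented $2$-dimensional polyhedra. The relevant identities are:
\begin{itemize}
\item The string $f \circ (\sigma' * (\tau' \circ \sigma_{\mrm{pr}})) = \sigma * (\tau \circ \sigma_{\mrm{pr}})$, together with Proposition \ref{prop:12} (applied to $\Phi = \opn{id}_G$ and the piecewise linear map $f$), gives $g = g'$ where these are the corresponding $1$-dimensional multiplicative integrals.
\item If $a > 0$ is the area scaling factor of $f|_{Z'}$, then $\opn{area}(Z) = a \cdot \opn{area}(Z')$, and pulling back the orientation form gives $f^*(\opn{or}(Z)) = a \cdot \opn{or}(Z')$, so the coefficients satisfy $\til{\beta'}(z') = a \cdot \til{\beta}(z)$. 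Consequently $\opn{area}(Z') \cdot \til{\beta'}(z') = \opn{area}(Z) \cdot \til{\beta}(z)$ in $\h$.
\item Smoothness is preserved: $z$ is a smooth point of $\beta|_Z$ if and only if $z'$ is a smooth point of $\beta'|_{Z'}$, because $f|_{Z'}$ is a linear isomorphism carrying smoothing triangulations of one to smoothing triangulations of the other.
\end{itemize}
Applying $\exp_H \circ\, \Psi_{\h}(g)(-)$ to both sides of the second identity then shows the two Riemann products agree.

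For the degenerate subcase, either $\opn{dim} Z' < 2$ (in which case $\opn{dim} Z < 2$ as well, so both sides are $1$ by part (2) of Definition \ref{dfn:11}), or $\opn{dim} Z' = 2$ but $f|_{Z'}$ is not injective. In the latter situation $f|_{Z'}$ factors through a polyhedron of dimension $< 2$, so the pullback of any $2$-form vanishes; in particular $\beta'|_{Z'} = 0$, whence $\til{\beta'}(z') = 0$ and the left side is $\exp_H(0) = 1$, matching the right side. Finally, if the midpoint $z$ happens to be a singular point of $\beta|_Z$ while $\opn{dim} Z = 2$, then the smoothness preservation noted above forces $z'$ to be a singular point of $\beta'|_{Z'}$, and both Riemann products are $1$ by fiat.

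The main obstacle is the bookkeeping in Step 2 between the area scaling factor $a$ of the linear map $f|_{Z'}$ and the scaling of the coefficient $\til{\beta'}(z') = a \cdot \til{\beta}(z)$; once these two scalings are identified and shown to cancel, the remainder is a routine application of the $1$-dimensional functoriality (Proposition \ref{prop:12}) and the definitions.
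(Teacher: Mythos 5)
Your proof is correct and takes essentially the same route as the paper's: reduce to comparing the basic Riemann products on the tessellated kites, use Proposition \ref{prop:12} to match the twistings, and observe that the area scaling of $f|_{Z'}$ and the scaling of the coefficient $\til{\beta}$ cancel. One minor slip: in the degenerate subcase where $\opn{dim} Z' = 2$ but $f|_{Z'}$ is not injective, you say ``the left side is $\exp_H(0) = 1$, matching the right side,'' but the side computed from $\beta'$ is the right-hand side of the lemma's equation, while the $\beta$ side is the left-hand side (which equals $1$ by part (2) of Definition \ref{dfn:11} since $\opn{dim} Z < 2$); the substance is right, only the labels are swapped.
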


\begin{proof}
Take $k \geq 0$ and $i \in \{ 1, \ldots, 4^k \}$.
We will prove that 
\begin{equation} \label{eqn:149}
 \opn{RP}_0(\alpha, \beta \vert (\sigma, \tau) \circ 
(\sigma^k_i, \tau^k_i)) = 
\opn{RP}_0(\alpha', \beta' \vert (\sigma', \tau') \circ 
(\sigma^k_i, \tau^k_i)) .
\end{equation}

Let $Z := \tau(\mbf{I}^2) \subset X$
and $Z' := \tau'(\mbf{I}^2) \subset X'$.
If $\opn{dim} Z < 2$ then 
$\beta|_{Z} = 0$ and $\beta'|_{Z'} = 0$, and hence both sides of
(\ref{eqn:149}) equal $1$. 

If $\opn{dim} Z = 2$ then the linear map 
$f|_{Z'} : Z' \to Z$ is bijective. Let
$w := (\smfrac{1}{2}, \smfrac{1}{2}) \in \mbf{I}^2$,
$z_i := (\tau \circ \tau^k_i)(w) \in Z$ and 
$z'_i := (\tau' \circ \tau^k_i)(w) \in Z'$.
Then $z_i$ is a smooth point of $\beta|_Z$ if and only if 
$z'_i$ is a smooth point of $\beta'|_{Z'}$. In the singular case again 
both sides of (\ref{eqn:149}) equal $1$. 

In the smooth case we know (by Proposition \ref{prop:12}) that
\[ \opn{MI}(\alpha \vert \sigma * (\tau \circ \sigma^k_i) *  
(\tau \circ \tau^k_i \circ \sigma_{\mrm{pr}})) = 
\opn{MI}(\alpha' \vert \sigma' * (\tau' \circ \sigma^k_i) *  
(\tau' \circ \tau^k_i \circ \sigma_{\mrm{pr}})) . \]
This says the twistings are the same. 
Let $\til{\beta}_i$ be the coefficient of $\beta$ near $z_i$, as in
case (1) of Definition \ref{dfn:11}; and let $\til{\beta}'_i$ be 
the coefficient of $\beta'$ near $z'_i$. Then
\[ \opn{area}(Z_i) \cdot \til{\beta}_i(z_i) =  
\opn{area}(Z'_i) \cdot \til{\beta}'_i(z'_i) . \]
So in this case we also get equality in (\ref{eqn:149}).
\end{proof}

\subsection{Convergence of Riemann Products}
We continue with the setup of Subsection \ref{subsec:rp}.
Fix differential forms
$\alpha \in \Omega^1_{\mrm{pws}}(X) \otimes \g$ and
$\beta \in \Omega^2_{\mrm{pws}}(X) \otimes \h$.

\begin{lem} \label{lem:4}
There are constants $c_1(\alpha, \beta)$ and
$\epsilon_1(\alpha, \beta)$ with the following properties.
\begin{enumerate}
\rmitem{i} The inequalities below hold:
\begin{gather*}
1 \leq c_1(\alpha, \beta) \\
0 < \epsilon_1(\alpha, \beta) \leq 1 \\
\epsilon_1(\alpha, \beta) \cdot c_1(\alpha, \beta)
\leq \smfrac{1}{4} \epsilon_0(H) \, .
\end{gather*}

\rmitem{ii} Suppose $(\sigma, \tau)$ is a square kite in $(X, x_0)$
such that 
$\opn{side}(\tau) < \epsilon_1(\alpha, \beta)$
and
$\opn{len}(\sigma) \leq 4 \cdot \opn{diam}(X)$.
Then for any $k \geq 0$ one has  
\[ \opn{RP}_{k}(\alpha, \beta \vert \sigma, \tau) \in V_0(H) \]
and
\[ \Norm{ \opn{log}_H \bigl( \opn{RP}_{k}(\alpha, \beta \vert \sigma, \tau)
\bigr) } \leq 
c_1(\alpha, \beta) \cdot \opn{side}(\tau)^2 \, . \]
\end{enumerate}
\end{lem}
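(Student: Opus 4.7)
The plan is to bound each factor in the product
\[
\opn{RP}_{k}(\alpha, \beta \vert \sigma, \tau) =
\prod_{i=1}^{4^{k}} \opn{RP}_{0}\bigl(\alpha,\beta\, \vert\, (\sigma,\tau)\circ(\sigma^{k}_{i},\tau^{k}_{i})\bigr)
\]
by $\exp_{H}(\lambda_{i})$ for a small $\lambda_{i}\in\h$, then apply Theorem~\ref{thm:6}(ii) to the full product. The key uniform bound will come from Proposition~\ref{prop:13} applied to the representation $\Psi_{\h}:G\to\opn{GL}(\h)$.

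First I would fix notation for the $i$-th sub-kite $(\sigma_{i},\tau_{i}):=(\sigma,\tau)\circ(\sigma^{k}_{i},\tau^{k}_{i})$. Its image $\tau_{i}(\mbf{I}^{2})$ is a parallelogram of area $(\tfrac{1}{4})^{k}\cdot\opn{side}(\tau)^{2}$, and its midpoint $z_{i}$ lies at distance at most $\tfrac{1}{\sqrt 2}\opn{side}(\tau)$ from $\tau_{i}(v_{0})$. For the indices $i$ where $z_{i}$ is a smooth point of $\beta$, Definition~\ref{dfn:11} gives
\[
\opn{RP}_{0}(\alpha,\beta\vert\sigma_{i},\tau_{i}) = \exp_{H}(\lambda_{i}),\qquad
\lambda_{i} := \opn{area}(\tau_{i})\cdot\Psi_{\h}(g_{i})\bigl(\til\beta(z_{i})\bigr),
\]
where $g_{i}=\opn{MI}(\alpha\vert \sigma\ast(\tau\circ\sigma^{k}_{i})\ast(\tau\circ\tau^{k}_{i}\circ\sigma_{\mrm{pr}}))\in G$ and $\til\beta(z_{i})\in\h$ is the coefficient of $\beta$ at $z_{i}$. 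At singular $z_{i}$ the factor is $1$, so I simply set $\lambda_{i}:=0$ there.

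Next I would produce the uniform bound on the twisting operator. The string used to compute $g_{i}$ has length at most
\[
\opn{len}(\sigma) + \opn{len}(\tau\circ\sigma^{k}_{i}) + \opn{len}(\tau\circ\tau^{k}_{i}\circ\sigma_{\mrm{pr}})
\leq 4\cdot\opn{diam}(X) + 2 + 1,
\]
where I have used $\opn{len}(\sigma^{k}_{i})\leq 2$ by~(\ref{eqn:127}), $\opn{side}(\tau)\leq 1$, and $\opn{len}(\sigma_{\mrm{pr}})=1$. Write $\alpha':=\opn{Lie}(\Psi_{\h})(\alpha)\in\Omega^{1}_{\mrm{pws}}(X)\otimes\mfrak{gl}(\h)$. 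Since $\Psi_{\h}(g_{i})=\opn{MI}(\alpha'\vert\cdot)$, Proposition~\ref{prop:13} yields
\[
\norm{\Psi_{\h}(g_{i})} \leq \exp\bigl(c_{4}(\alpha,\Psi_{\h})\cdot(4\opn{diam}(X)+3)\bigr) =: M,
\]
a constant depending only on $\alpha$, $\Psi_{\h}$ and $X$. Combined with $\norm{\til\beta(z_{i})}_{\h}\leq\norm{\beta}_{\mrm{Sob}}$, this gives
\[
\norm{\lambda_{i}}_{\h}\leq (\tfrac{1}{4})^{k}\cdot\opn{side}(\tau)^{2}\cdot M\cdot\norm{\beta}_{\mrm{Sob}}
\]
for every $i$ (including the singular ones trivially). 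Summing over $i$ the $4^{k}$ cancels and
\[
\sum_{i=1}^{4^{k}}\norm{\lambda_{i}}_{\h}\leq M\cdot\norm{\beta}_{\mrm{Sob}}\cdot\opn{side}(\tau)^{2}.
\]

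To close, set $c_{1}(\alpha,\beta):=\max\bigl(1,\ c_{0}(H)\cdot M\cdot\norm{\beta}_{\mrm{Sob}}\bigr)$ and choose $\epsilon_{1}(\alpha,\beta)\in(0,1]$ small enough that
\[
\epsilon_{1}(\alpha,\beta)^{2}\cdot M\cdot\norm{\beta}_{\mrm{Sob}}<\epsilon_{0}(H)
\qquad\text{and}\qquad \epsilon_{1}(\alpha,\beta)\cdot c_{1}(\alpha,\beta)\leq\tfrac{1}{4}\epsilon_{0}(H).
\]
For $\opn{side}(\tau)<\epsilon_{1}(\alpha,\beta)$ the sum of norms stays below $\epsilon_{0}(H)$, so Theorem~\ref{thm:6}(ii) produces both the containment in $V_{0}(H)$ and the estimate
\[
\Norm{\opn{log}_{H}\bigl(\opn{RP}_{k}(\alpha,\beta\vert\sigma,\tau)\bigr)} \leq c_{0}(H)\cdot\sum_{i}\norm{\lambda_{i}}_{\h}\leq c_{1}(\alpha,\beta)\cdot\opn{side}(\tau)^{2}.
\]
The only real obstacle is obtaining the uniform bound $M$ on the twisting factors; this is exactly where the hypothesis $\opn{len}(\sigma)\leq 4\opn{diam}(X)$ and the uniform bound on $\opn{len}(\sigma^{k}_{i})$ are used, and Proposition~\ref{prop:13} does the rest.
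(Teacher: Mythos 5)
Your proposal is correct and follows essentially the same route as the paper's proof: decompose into the $4^k$ sub-kites, use the hypothesis on $\opn{len}(\sigma)$ together with $\opn{len}(\sigma^k_i)\leq 2$ to get a uniform length bound on the twisting strings, apply Proposition \ref{prop:13} (the paper cites Proposition \ref{prop:4}) to bound $\norm{\Psi_{\h}(g_i)}$, bound each $\norm{\lambda_i}$ by $(\tfrac14)^k\opn{side}(\tau)^2$ times a constant, and conclude with Theorem \ref{thm:6}(ii). The only differences are cosmetic choices of constants (the paper takes $\epsilon_1:=\epsilon_0(H)^{1/2}c^{-1/2}$ and $c_1:=c(c_0(H)+1)$, and bounds the string length by $7\opn{diam}(X)$ using $\epsilon\leq\opn{diam}(X)$ rather than your $4\opn{diam}(X)+3$ using $\epsilon\leq 1$), which do not affect the argument.
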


\begin{proof}
Let $w := (\smfrac{1}{2}, \smfrac{1}{2}) \in \mbf{I}^2$.
Given a square kite $(\sigma, \tau)$, let 
$Z := \tau(\mbf{I}^2)$, $z := \tau(w)$ and $\epsilon := \opn{side}(\tau)$.

For $i \in \{ 1 ,\ldots, 4^k \}$ define 
\begin{equation} \label{eqn:150}
(\sigma_i, \tau_i) := (\sigma, \tau) \circ (\sigma^k_i, \tau^k_i) . 
\end{equation}
This is a square kite in $(X, x_0)$ satisfying
$\opn{side}(\tau_i) = (\smfrac{1}{2})^k  \epsilon$
and
$\opn{area}(\tau_i) = (\smfrac{1}{4})^k \epsilon^2$.
Let 
$Z_i := \tau_i(\mbf{I}^2)$ and $z_i := \tau_i(w)$.  
see Figures \ref{fig:60} and \ref{fig:61} for illustration. 
Since $\opn{len}(\sigma^k_i) \leq 2$ and 
$\epsilon \leq \opn{diam}(X)$, we have 
\begin{equation} \label{eqn:151}
\opn{len}(\sigma_i * (\tau_i \circ \sigma_{\mrm{pr}})) = 
\opn{len}(\sigma) + \epsilon \cdot \opn{len}(\sigma^k_i) 
+ (\smfrac{1}{2})^k  \epsilon 
\leq 7 \cdot \opn{diam}(X) .
\end{equation}

\begin{figure}
\includegraphics[scale=0.22]{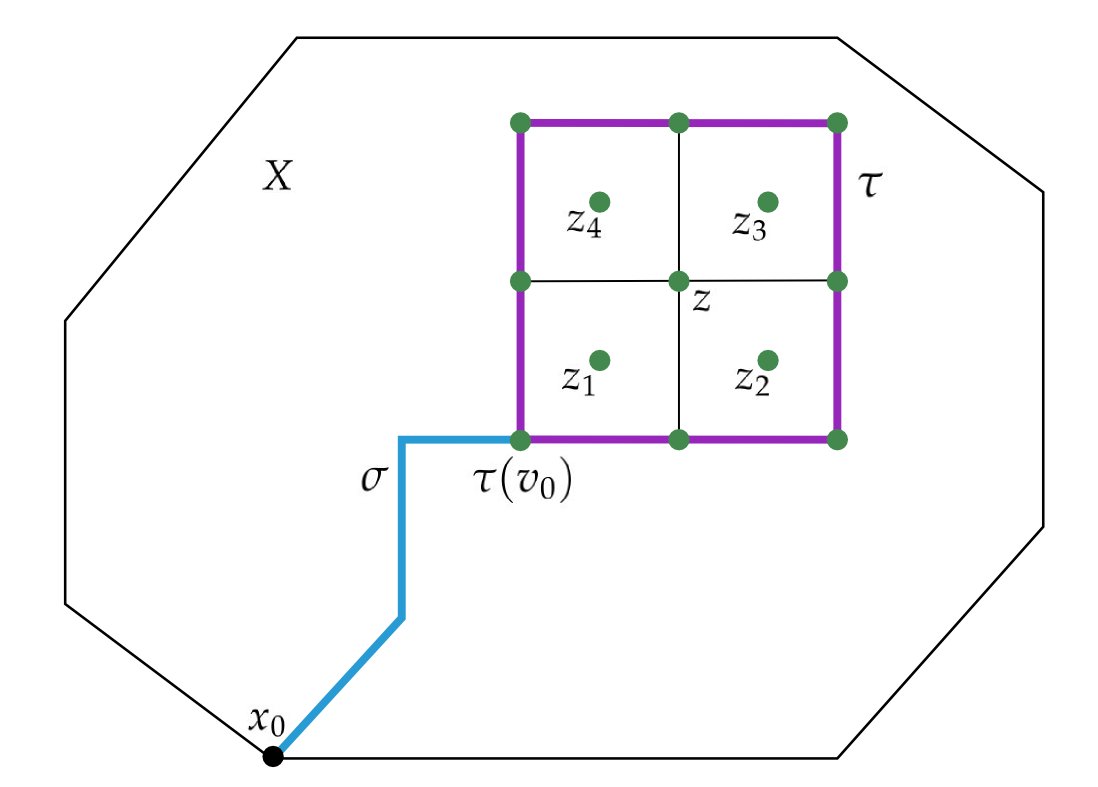}
\caption{The kite $(\sigma, \tau)$ in $(X, x_0)$ and the 
 points $z_1, \ldots, z_{4^k}$ in $\tau(\mbf{I}^{2})$. Here 
$k = 1$.} 
\label{fig:60}
\end{figure}

\begin{figure}
\includegraphics[scale=0.22]{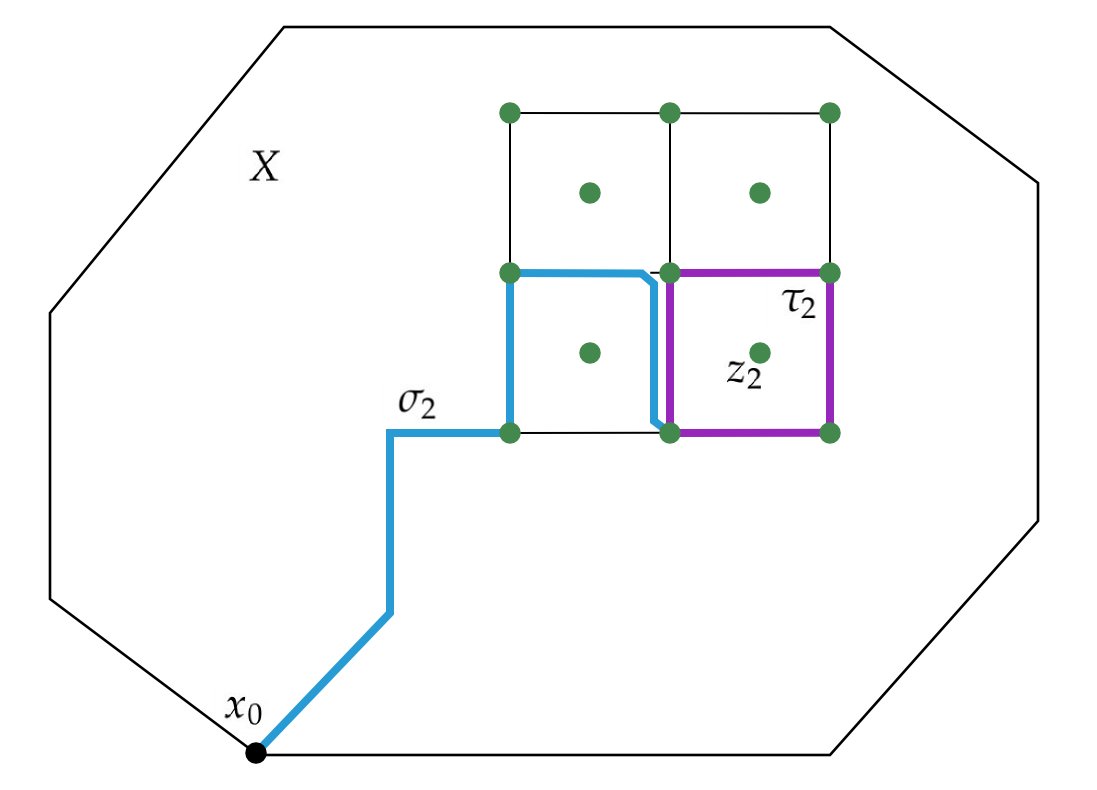}
\caption{The kite $(\sigma_2, \tau_2)$ in $(X, x_0)$.} 
\label{fig:61}
\end{figure}

Consider the group elements
\[ g_i := \opn{MI}(\alpha \vert \sigma_i * (\tau_i \circ \sigma_{\mrm{pr}})) \in
G \]
and
\[ h_i := \opn{RP}_0(\alpha, \beta \vert \sigma_i, \tau_i) \in H . \]
By definition of the Riemann product we have
\[ \opn{RP}_k(\alpha, \beta \vert \sigma, \tau) = 
\prod_{i = 1}^{4^k} \, h_i . \]

Take $i \in \{ 1 ,\ldots, 4^k \}$. If $z_i$ is a smooth point of 
$\beta|_{Z}$ (this is the good case), then let $\til{\beta}_i$ be the
coefficient of $\beta|_Z$ near $z_i$, and let
\[ \lambda_i := (\smfrac{1}{4})^k \cdot \epsilon^2 \cdot
\Psi_{\h}(g_i)(\til{\beta}_i(z_i)) \in \h . \]
Otherwise, if  $z_i$ is a singular point of $\beta|_{Z}$ (this is the bad
case), then we let $\lambda_i := 0$. In any case, by definition we have
$h_i = \exp_H(\lambda_i)$.

According to Proposition \ref{prop:4} and the inequality (\ref{eqn:151})
we have 
\[ \norm{\Psi_{\h}(g_i)} \leq 
\exp \, \bigl( c_4(\alpha, \Psi_{\h}) \cdot
7 \cdot \opn{diam}(X) \bigr)  \]
for some constant $c_4(\alpha, \Psi_{\h})$. Note that this bound is
independent of $k$ and $(\sigma, \tau)$.

Let
\[ c := \exp \, \bigl( c_4(\alpha, \Psi_{\h}) \cdot
7 \cdot \opn{diam}(X) \bigr) \cdot  \norm{\beta}_{\mrm{Sob}} + 1 , \]
\[ \epsilon_1(\alpha, \beta) := \epsilon_0(H)^{\smfrac{1}{2}}  
\cdot c^{-\smfrac{1}{2}}  \]
and
\[ c_1(\alpha, \beta) := c \cdot \bigl( c_0(H) + 1 \bigr) . \]
Now assume our kite satisfies $\epsilon < \epsilon_1(\alpha, \beta)$. 
Because 
$\norm{\til{\beta}_i(z_i)} \leq \norm{\beta}_{\mrm{Sob}}$
in the good case, and $\lambda_i = 0$ in the bad case, we see that 
\begin{equation*} \label{eqn:104}
\norm{\lambda_i} \leq (\smfrac{1}{4})^k \cdot \epsilon^2 \cdot c
< (\smfrac{1}{4})^k \cdot \epsilon_0(H) .
\end{equation*}
Hence 
$\sum_{i = 1}^{4^k} \norm{\lambda_i} < \epsilon_0(H)$, 
and by property (ii) of Theorem \ref{thm:6} we can deduce that
\[ \prod\nolimits_{i = 1}^{4^k} \, h_i \in V_0(H) \]
and
\[ \Norm{ \log_H \bigl( \prod\nolimits_{i = 1}^{4^k} \, h_i \bigr) }
\leq c_0(H) \cdot \bigl( \sum\nolimits_{i = 1}^{4^k} \, \norm{\lambda_i} \bigr)
\leq c_0(H) \cdot \epsilon^2 \cdot c 
\leq c_1(\alpha, \beta) \cdot \epsilon^2 \, . \]
\end{proof}

\begin{lem} \label{lem:5}
There are constant $\epsilon_2(\alpha, \beta)$ and
$c_2(\alpha, \beta)$ with the following properties:
\begin{itemize}
\rmitem{i} $c_2(\alpha, \beta) \geq c_1(\alpha, \beta)$ and
$0 < \epsilon_2(\alpha, \beta) \leq \epsilon_1(\alpha, \beta)$.
\rmitem{ii} Suppose $(\sigma, \tau)$ is a square kite in $(X, x_0)$ such that 
$\beta|_{\tau(\mbf{I}^{2})}$ is smooth,
$\opn{side}(\tau) < \epsilon_2(\alpha, \beta)$ and
$\opn{len}(\sigma) \leq 4 \cdot \opn{diam}(X)$. Then 
\[ \Norm{ \opn{log}_H \bigl( \opn{RP}_{k}(\alpha, \beta \vert \sigma, \tau)
\bigr) -
\opn{log}_H \bigl( \opn{RP}_{0}(\alpha, \beta \vert \sigma, \tau)
\bigr) } \leq 
c_2(\alpha, \beta) \cdot \opn{side}(\tau)^4  \]
for every $k \geq 0$.
\end{itemize}
\end{lem}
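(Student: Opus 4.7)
The plan is to mimic the strategy of Lemma \ref{lem:30}: write both $\opn{RP}_0$ and $\opn{RP}_k$ as products of exponentials, perform a Taylor expansion of the relevant $\h$-valued function around the center $z := \tau(\smfrac{1}{2},\smfrac{1}{2})$, exploit the symmetry of the binary subdivision to kill the linear terms, and then invoke Theorem \ref{thm:6}(v). Let $\epsilon := \opn{side}(\tau)$, $Z := \tau(\mbf{I}^2)$, and set
\[ g := \opn{MI}\bigl(\alpha \vert \sigma * (\tau \circ \sigma_{\mrm{pr}})\bigr), \qquad
a := \Psi_{\h}(g)\bigl(\til{\beta}(z)\bigr) \in \h, \]
so that $\opn{RP}_0(\alpha,\beta \vert \sigma,\tau) = \exp_H(\epsilon^2 \cdot a)$. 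For $i \in \{1,\ldots,4^k\}$ let $(\sigma_i,\tau_i) := (\sigma,\tau) \circ (\sigma^k_i,\tau^k_i)$, $z_i := \tau_i(\smfrac{1}{2},\smfrac{1}{2})$, $g_i := \opn{MI}(\alpha \vert \sigma_i * (\tau_i \circ \sigma_{\mrm{pr}}))$, and $a_i := \Psi_{\h}(g_i)(\til{\beta}(z_i))$. Then $\opn{RP}_k = \prod_i \exp_H(\lambda_i)$ with $\lambda_i := (\smfrac{1}{4})^k \epsilon^2 \cdot a_i$.

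The core of the argument is a first-order Taylor analysis of $a_i - a$. Since $\alpha(z)$ is a constant (closed) $\g$-valued $1$-form on the convex set $Z$, its integral along \emph{any} piecewise linear path inside $Z$ from $z$ to $z_i$ equals $\alpha(z)(z_i - z)$. Combined with Proposition \ref{prop:21}(iii), this shows that $\log_G(g^{-1} g_i) = \alpha(z)(z_i - z) + O(\epsilon^2)$, with constant depending only on $\alpha$ and $\Psi_{\h}$. Using the smoothness of $\beta|_Z$ (Taylor expanding $\til{\beta}$ at $z$) and smoothness of $\Psi_{\h}$ at $g$, I will obtain an expansion
\[ a_i = a + L(z_i - z) + r_i, \qquad \norm{r_i}_{\h} \leq c \cdot \epsilon^2, \]
where $L : \R^2 \to \h$ is an $\R$-linear map independent of $i$, and $c$ depends only on $\alpha$, $\beta$, $\Psi_{\h}$, and $\opn{diam}(X)$.

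Now invoke the symmetry of $\opn{sd}^k \mbf{I}^2$: the centers $\tau^k_i(\smfrac{1}{2},\smfrac{1}{2})$ form a grid symmetric about $(\smfrac{1}{2},\smfrac{1}{2})$, so $\sum_i(z_i - z) = 0$ and therefore $\sum_i L(z_i - z) = 0$. Consequently
\[ \Norm{ \sum\nolimits_{i = 1}^{4^k} \lambda_i - \epsilon^2 \cdot a }_{\h}
= (\smfrac{1}{4})^k \epsilon^2 \cdot \Norm{ \sum\nolimits_i r_i }_{\h}
\leq c \cdot \epsilon^4. \]
Next I apply Theorem \ref{thm:6}(v) with the splitting $\lambda_i = A_i + B_i$ where $A_i := (\smfrac{1}{4})^k \epsilon^2 \cdot a$ (all equal, hence pairwise commuting) and $B_i := (\smfrac{1}{4})^k \epsilon^2 \cdot (L(z_i - z) + r_i)$. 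One computes $\sum_i \norm{A_i} + \norm{B_i} = O(\epsilon^2)$ and $\sum_i \norm{B_i} = O(\epsilon^3)$, so the hypothesis of Theorem \ref{thm:6}(v) holds once $\epsilon$ is small enough (this determines $\epsilon_2(\alpha,\beta)$), and we obtain
\[ \Norm{ \opn{log}_H \bigl( \opn{RP}_k \bigr) - \sum\nolimits_i \lambda_i }_{\h}
\leq c_0(H) \cdot O(\epsilon^2) \cdot O(\epsilon^3) = O(\epsilon^5). \]
Combining with the previous display and extracting the constant $c_2(\alpha,\beta)$ finishes the proof.

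The main obstacle will be the bookkeeping in step two: verifying that the Taylor constant $c$ can indeed be chosen uniformly over all square kites $(\sigma,\tau)$ satisfying the hypothesis, because $g$ and hence $\Psi_{\h}(g)$ vary with $(\sigma,\tau)$. The bound $\opn{len}(\sigma) \leq 4\cdot\opn{diam}(X)$ enters precisely here, via Proposition \ref{prop:13}, to give a uniform bound on $\norm{\Psi_{\h}(g)}$ and hence on the derivatives of the map $(p, g') \mapsto \Psi_{\h}(g')(\til{\beta}(p))$ in a fixed compact neighborhood of the relevant $(z, g)$.
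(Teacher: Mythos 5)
Your proposal is correct and follows essentially the same route as the paper's proof: twist through the probe, expand the twisted density $\Psi_{\h}(g_i)(\til{\beta}(z_i))$ to first order around the center, cancel the linear terms using the symmetry of the binary subdivision, and use a CBH estimate from Theorem \ref{thm:6} to compare $\log_H(\opn{RP}_k)$ with $\sum_i \lambda_i$. The only cosmetic differences are that the paper keeps the linear terms as integrals of constant forms along the connecting strings $\pi_i$ and cancels them by pairing reflected strings (equivalent to your $\sum_i (z_i - z) = 0$, since constant $1$-forms see only endpoints), and it invokes Theorem \ref{thm:6}(ii) rather than (v), which already yields the required $O(\opn{side}(\tau)^4)$ bound.
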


The exponent $4$ in ``$\opn{side}(\tau)^4$'' in the inequality above will be of
utmost importance later on. On the other hand, the factor $4$ appearing in
``$4 \cdot \opn{diam}(X)$'' is quite arbitrary (any number bigger than $2$ would
probably do just as well). 

\begin{proof}
Let $w := (\smfrac{1}{2}, \smfrac{1}{2}) \in \mbf{I}^2$.
Suppose $(\sigma, \tau)$ is some square kite in $(X, x_0)$. Write
$Z:= \tau(\mbf{I}^2)$, $z_0 := \tau(w) \in Z$ and 
$\epsilon := \opn{side}(Z)$. Assume that $\epsilon > 0$ and 
$\beta|_Z$ is smooth (otherwise there is nothing to prove). 
Put on $Z$ the orientation compatible with $\tau$. 

Let $\til{\beta} \in \mcal{O}(Z) \otimes \h$ be the coefficient of $\beta|_Z$,
as in Definition \ref{dfn:24}. Let
\[ g_0 := \opn{MI}(\alpha \vert \sigma * (\tau \circ \sigma_{\mrm{pr}})) \in G ,
\]
and 
\begin{equation*} \label{eqn:157}
\lambda_0 := \epsilon^2 \cdot \Psi_{\h}(g_0)(\til{\beta}(z_0)) \in \h .
\end{equation*}
So by definition
\[ \exp_H(\lambda_0) = \opn{RP}_0(\alpha, \beta \vert \sigma, \tau) . \]

Suppose $\pi$ is some string in $Z$, with initial point $z_0$. Let
$z$ be the terminal point of $\pi$.  
See Figure \ref{fig:76}.

\begin{figure}
\includegraphics[scale=0.18]{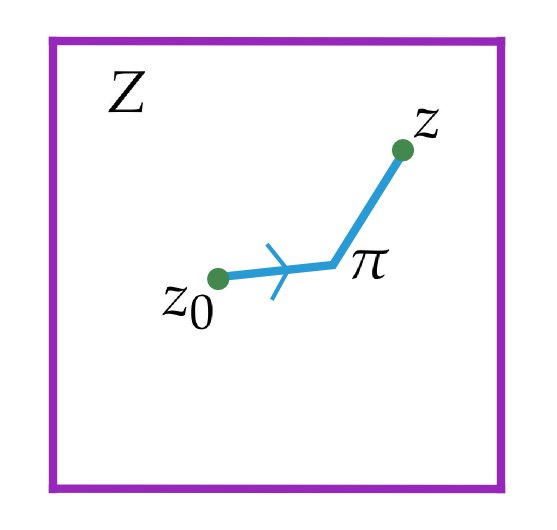}
\caption{A string $\pi$ in $Z = \tau(\mbf{I}^2)$, with initial point 
$z_0$ and terminal point $z$.} 
\label{fig:76}
\end{figure}

We shall need the following variant of
the Taylor expansion of $\til{\beta}(z)$ to second order around $z_0$:
\begin{equation*} \label{eqn:152}
\til{\beta}(z) = \til{\beta}(z_0) +  
\int_{\pi} (\d \til{\beta})(z_0) + \opn{R}^2(\til{\beta}, \pi) .
\end{equation*}
Here 
$\d \til{\beta} \in \Omega^1(Z) \otimes \h$, 
and 
$(\d \til{\beta})(z_0) \in \Omega^1_{\mrm{const}}(Z) \otimes \h$
is the associated constant form (see Definition \ref{dfn:26}). Thus 
$\int_{\pi} (\d \til{\beta})(z_0)$ is the linear term in the expansion -- it
depends linearly on $\pi$. And the quadratic remainder term 
$\opn{R}^2(\til{\beta}, \pi) \in \h$ has this bound:
\[ \norm{ \opn{R}^2(\til{\beta}, \pi) } \leq
\norm{ \beta }_{\mrm{Sob}} \cdot \opn{len}(\pi)^2 \, . \]
Therefore we get the estimate
\begin{equation} \label{eqn:154}
\Norm{ \til{\beta}(z) - \bigl( \til{\beta}(z_0) +  
\int_{\pi} (\d \til{\beta})(z_0) \bigr) } \leq
\norm{ \beta }_{\mrm{Sob}} \cdot \opn{len}(\pi)^2 \, . 
\end{equation}

Next let
\[ g := \opn{MI}(\alpha \vert \pi) \in G  \]
and 
\[ \alpha' := \opn{Lie}(\Psi_{\h})(\alpha) \in 
\Omega^1_{\mrm{pws}}(X) \otimes \opn{End}(\h) . \]
Assume that
$\opn{len}(\pi) < \epsilon_5(\alpha, \Psi_{\h})$. 
According to Proposition \ref{prop:21} we have this estimate for the operator
$\Psi_{\h}(g) \in \opn{End}(\h)$:
\begin{equation} \label{eqn:153}
\Norm{ \Psi_{\h}(g) - \bigl( \bsym{1} + \int_{\pi} \alpha'(z_0) \bigr) }
\leq c_5(\alpha, \Psi_{\h}) \cdot \opn{len}(\pi)^2 \, . 
\end{equation}
And by Proposition \ref{prop:13} we have the bound
\begin{equation} \label{eqn:155}
\Norm{ \Psi_{\h}(g) } \leq \exp( c_4(\alpha, \Psi_{\h}) \cdot 
\epsilon_5(\alpha, \Psi_{\h})) \, . 
\end{equation}

By combining inequalities (\ref{eqn:154}), (\ref{eqn:153}) and (\ref{eqn:155}),
we see that there exists a constant 
$c(\alpha, \beta)$ such that 
\begin{equation} \label{eqn:156}
\begin{aligned}
& \Norm{ \Psi_{\h}(g)(\til{\beta}(z)) - 
\Bigl( \til{\beta}(z_0) + \bigl( \int_{\pi} \alpha'(z_0) \bigl)
(\til{\beta}(z_0)) +  
\int_{\pi} (\d \til{\beta})(z_0) \Bigr) } \\
& \qquad \leq 
c(\alpha, \beta) \cdot \opn{len}(\pi)^2 \, . 
\end{aligned} 
\end{equation}
This holds for every string $\pi$ in $Z$ with 
$\opn{len}(\pi) < \epsilon_5(\alpha, \Psi_{\h})$,
$\pi(v_0) = z_0$ and $\pi(v_1) = z$.

Now take $k \geq 0$. For any index 
$i \in \{ 1, \ldots, 4^k \}$ 
let $\pi_i$ be the unique string in $Z$ such that
\[ (\tau \circ \sigma_{\mrm{pr}}) * \pi_i = 
(\tau \circ \sigma^k_i) * (\tau \circ \tau^k_i \circ \sigma_{\mrm{pr}}) \]
as strings. This is a string with initial point $z_0$, and with 
terminal point
\[ z_i := (\tau \circ \tau^k_i)(w) . \]
Notice that for $i \leq 2^k$ the strings $\pi_i$ and $\pi_{2^k + i}$
are reflections of each other relative to the point $z_0$. 
See Figure \ref{fig:77}.

\begin{figure}
\includegraphics[scale=0.28]{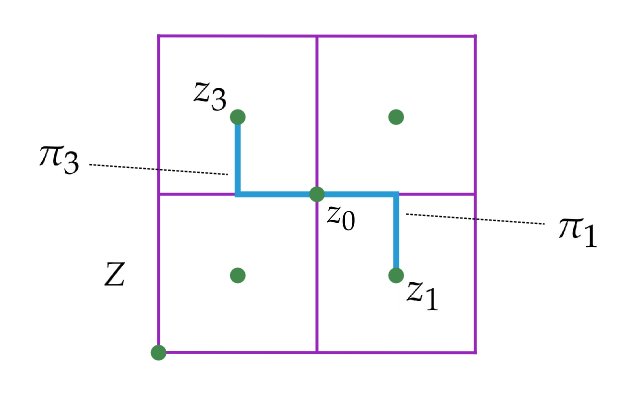}
\caption{The strings $\pi_1$ and $\pi_3$ in $Z$. Here $k = 1$.} 
\label{fig:77}
\end{figure}

Define
\[  g_i := \opn{MI}(\alpha \vert \pi_i) \in G  \]
and
\[ \lambda_i := (\smfrac{1}{4})^k \cdot \epsilon^2 \cdot
\Psi_{\h}(g_0 \cdot g_i)(\til{\beta}(z_i)) \in \h . \]
So we have
\[ \exp_H(\lambda_i) = 
\opn{RP}_0(\alpha, \beta \vert (\sigma, \tau) \circ (\sigma^k_i, \tau^k_i) ) .
\]
And the bound for $\lambda_i$ is 
\begin{equation} \label{eqn:159}
\norm{ \lambda_i } \leq (\smfrac{1}{4})^k \cdot \epsilon^2 \cdot 
c'(\alpha, \beta) \, ,
\end{equation}
where we write
\[ c'(\alpha, \beta) :=  \exp( c_4(\alpha, \Psi_{\h}) \cdot 
7 \cdot \opn{diam}(X) ) \cdot \norm{ \beta }_{\mrm{Sob}} + 1  \, . \]
Using the abbreviation
\[ \gamma := \epsilon^2 \cdot \Psi_{\h}(g_0)(\d \til{\beta}) \in 
\Omega^1_{\mrm{pws}}(Z) \otimes \h , \]
formula (\ref{eqn:156}), and the inequality
$\opn{len}(\pi_i) \leq \epsilon$, we obtain
\begin{equation} \label{eqn:158}
\begin{aligned}
& \Norm{ \lambda_i - (\smfrac{1}{4})^k \cdot 
\Bigl( \lambda_0 + \bigl( \int_{\pi_i} \alpha'(z_0) \bigl)
(\lambda_0) +  \int_{\pi_i} \gamma(z_0) \Bigr) } \\
& \qquad \leq 
c(\alpha, \beta) \cdot (\smfrac{1}{4})^k \cdot \epsilon^4 \, . 
\end{aligned} 
\end{equation}
For this to be true we should assume that 
$\epsilon < \epsilon_5(\alpha, \Psi_{\h})$.

Let us set
\[ \epsilon_2(\alpha, \beta) := 
\min \bigl( \epsilon_1(\alpha, \beta), \, 
c'(\alpha, \beta)^{- 1/2} \cdot \epsilon_0(H)^{1/2} , \,
\epsilon_5(\alpha, \Psi_{\h}) \bigl) \, , \]
We now assume furthermore that 
$\epsilon < \epsilon_2(\alpha, \beta)$.
In particular, from (\ref{eqn:159}) we obtain 
\[ \sum_{i = 1}^{4^k} \, \norm{ \lambda_i } \leq \epsilon_0(H) \, . \]
We know that
\[ \prod_{i = 1}^{4^k} \, \exp_H(\lambda_0) = 
\opn{RP}_k(\alpha, \beta \vert \sigma, \tau) . \]
Therefore we can use property (ii) of Theorem \ref{thm:6} to deduce that
\[ \Norm{ \log_H \bigl( \opn{RP}_k(\alpha, \beta \vert \sigma, \tau) \bigr) 
- \sum_{i = 1}^{4^k} \, \lambda_i } \leq
c_0(H) \cdot \epsilon^4 \cdot c'(\alpha, \beta)^2 \, . \]
The geometric symmetry of the sequence of strings $\pi_1. \ldots, \pi_{4^k}$
implies that
\[ \bigl( \int_{\pi_i} \alpha'(z_0) \bigl)(\lambda_0) = 
- \bigl( \int_{\pi_{2^k + i}} \alpha'(z_0) \bigl)(\lambda_0) \]
for $i \leq 2^k$; and therefore 
\[ \sum_{i = 1}^{4^k} \, \bigl( \int_{\pi_i} \alpha'(z_0) \bigl)(\lambda_0) 
= 0 . \]
Similarly 
\[ \sum_{i = 1}^{4^k} \, \int_{\pi_i} \gamma(z_0) = 0 . \]
Plugging in the estimate (\ref{eqn:158}) we obtain 
\[  \Norm{ \sum_{i = 1}^{4^k} \, \lambda_i - \lambda_0 } \leq 
c(\alpha, \beta) \cdot \epsilon^4 \, . \]
We see that the constant 
\[ c_2(\alpha, \beta) := c(\alpha, \beta) + 
c_0(H) \cdot c'(\alpha, \beta)^2 + c_1(\alpha, \beta) \, . \]
works.
\end{proof}

\begin{dfn}
Let us fix constants $\epsilon_2(\alpha, \beta)$ and
$c_2(\alpha, \beta)$ as in Lemma \ref{lem:5}.
A square kite $(\sigma, \tau)$ in $(X, x_0)$
will be called {\em $(\alpha, \beta)$-tiny} in this section if 
\[ \opn{side}(\tau) < \epsilon_2(\alpha, \beta) \]
and 
\[ \opn{len}(\sigma) \leq 4 \cdot \opn{diam}(X) \,  . \]
\end{dfn}

\begin{dfn} \label{dfn:45}
Let $(\sigma, \tau)$ be a nondegenerate kite in $(X, x_0)$. 
For $k \in \N$ and $i \in \{ 1, \ldots, 4^k \}$ let 
$Z_i := (\tau \circ \tau^k_i)(\mbf{I}^2) \subset X$. 
An index $i$ is called {\em good} if the forms
$\alpha|_{Z_i}$ and $\beta|_{Z_i}$ are smooth. Otherwise $i$ is called {\em
bad}. The sets of good and bad indices are denoted by 
$\opn{good}(\tau, k)$ and $\opn{bad}(\tau, k)$ respectively.
\end{dfn}

\begin{lem} \label{lem:51}
Let $Z$ be a $2$-dimensional subpolyhedron of $X$. There exist constants
$a_0(\alpha, \beta, Z)$ and $a_1(\alpha, \beta, Z)$ such that for any 
nondegenerate kite $(\sigma, \tau)$ in $(X, x_0)$ with 
$\tau(\mbf{I}^2) \subset Z$, one has
\[ \abs{ \opn{bad}(\tau, k) } \leq 
a_0(\alpha, \beta, Z) + a_1(\alpha, \beta, Z) \cdot 2^k \, . \]
\end{lem}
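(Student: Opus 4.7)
The plan is to identify the singular set of $(\alpha, \beta)$ on $Z$ as a fixed $1$-dimensional subpolyhedron $S \subset Z$, and then count how many of the $4^k$ sub-parallelograms $Z_i$ of $\tau(\mbf{I}^2)$ can meet $S$, uniformly in the kite $(\sigma, \tau)$.

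First I would choose a single linear triangulation $T$ of $Z$ that simultaneously smooths $\alpha|_Z$ and $\beta|_Z$; such a common smoothing triangulation exists because any two linear triangulations of $Z$ admit a common refinement, as noted in the discussion preceding Definition \ref{dfn:25}. Let $N$ be the number of closed $1$-simplices of $T$, and set $S := \opn{sk}_1 T = \bigcup_{X_j \in T_1} X_j$, which is a $1$-dimensional subpolyhedron of $Z$. If $Z_i \cap S = \emptyset$, then the connected set $Z_i$ lies in a single connected component of $Z \setminus S$, namely the open interior of one $2$-simplex $X_j \in T$; on $X_j$ both $\alpha|_{X_j}$ and $\beta|_{X_j}$ are smooth, so $\alpha|_{Z_i}$ and $\beta|_{Z_i}$ are smooth too, and $i$ is good. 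Consequently, $|\opn{bad}(\tau, k)| \leq \#\{i : Z_i \cap S \neq \emptyset\}$.

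Next I would transfer the counting problem to $\mbf{I}^2$. Since $(\sigma, \tau)$ is nondegenerate, $\tau$ is an affine bijection onto the parallelogram $\tau(\mbf{I}^2) \subset Z$, so setting $S' := \tau^{-1}(S \cap \tau(\mbf{I}^2))$ reduces the problem to counting sub-squares $\tau^k_i(\mbf{I}^2)$ of $\opn{sd}^k \mbf{I}^2$ that meet $S'$. Since each $1$-simplex of $S$ is a straight segment and $\tau(\mbf{I}^2)$ is convex, the intersection of each such simplex with $\tau(\mbf{I}^2)$ is a segment, and its image under $\tau^{-1}$ is again a segment in $\mbf{I}^2$. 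Hence $S'$ is a union of at most $N$ line segments in $\mbf{I}^2$. Crucially, $N$ depends only on $T$, i.e.\ only on $\alpha, \beta, Z$, and not on $\tau$; this uniformity is the whole purpose of pulling the count back to the fixed space $\mbf{I}^2$ rather than performing it on the variable parallelogram $\tau(\mbf{I}^2)$.

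The final ingredient is the elementary grid estimate: a single straight line segment in $\mbf{I}^2$ meets at most $2 \cdot 2^k - 1$ sub-squares of $\opn{sd}^k \mbf{I}^2$, because advancing along the segment enters a new sub-square only by crossing one of the $2(2^k - 1)$ internal grid lines. Summing over the at most $N$ segments of $S'$ gives
\[ |\opn{bad}(\tau, k)| \leq N \cdot (2 \cdot 2^k - 1) \leq 2 N \cdot 2^k , \]
which has the required form with $a_0(\alpha, \beta, Z) := 0$ and $a_1(\alpha, \beta, Z) := 2 N$. The only delicate step is establishing the uniformity of $N$ in $\tau$, which is handled by working on $\mbf{I}^2$ after pullback; everything else is elementary combinatorics.
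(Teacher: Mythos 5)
Your proposal is essentially the paper's own argument: the paper's proof consists of covering the singular loci of $\alpha$ and $\beta$ by finitely many line segments and taking constants proportional to their number, leaving the grid count implicit; you have simply made that count explicit (common smoothing triangulation, pullback of the segments to $\mbf{I}^2$ through the affine bijection $\tau$ to get uniformity in the kite, crossing count against $\opn{sd}^k \mbf{I}^2$), which is a useful filling-in rather than a different route. One small inaccuracy: the estimate ``a segment meets at most $2 \cdot 2^k - 1$ sub-squares'' is the count of cells whose \emph{interior} the segment crosses, whereas your $Z_i$ are closed cells; a segment lying along an interior grid line, or along the main diagonal through the lattice points, meets on the order of $3 \cdot 2^k$ closed sub-squares. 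This is harmless: moving along the segment, the set of closed cells containing the current point (at most four at any moment) can change only at the at most $2(2^k+1)$ points where the segment crosses a grid line not containing it, so the number of closed cells met is still bounded by an absolute constant times $2^k$, and the lemma follows after enlarging $a_1(\alpha,\beta,Z)$ accordingly.
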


\begin{proof} 
Let $Z_1, \ldots, Z_m$ be line segments in $Z$ such that 
the singular locus of $\alpha$ and the singular locus of $\beta$ are
contained in $\bigcup_{j = 1}^m Z_j$. Take
$a_0(\alpha, \beta, Z) := 2 m$ and 
$a_1(\alpha, \beta, Z) := 2 \cdot \sum_{j = 1}^m \, \opn{len} (Z_j)$.
\end{proof}

\begin{lem} \label{lem:6}
Let $Z$ be a $2$-dimensional subpolyhedron of $X$.  Then there is a constant
$c_3(\alpha, \beta, Z) \geq 0$, such that for any 
$(\alpha, \beta)$-tiny kite $(\sigma, \tau)$ satisfying
$\tau(\mbf{I}^2) \subset Z$, and any $k' \geq k \geq 0$, one has
\[ \begin{aligned}
& \Norm{ \opn{log}_H \bigl( \opn{RP}_{k'}(\alpha, \beta \vert 
\sigma, \tau) \bigr) -
\opn{log}_H \bigl( \opn{RP}_{k}(\alpha, \beta \vert \sigma, \tau) \bigr) } \\
& \qquad \leq (\smfrac{1}{4})^k \cdot c_3(\alpha, \beta, Z) \cdot 
\opn{side}(\tau)^2  \, .
\end{aligned}  \]
\end{lem}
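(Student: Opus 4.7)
\medskip

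\noindent\textbf{Proof plan for Lemma \ref{lem:6}.} The approach mirrors the one--dimensional argument in Lemma \ref{lem:31}, exploiting the recursive nature of the binary tessellations together with the two a priori estimates already available: Lemma \ref{lem:4} (coarse bound) and Lemma \ref{lem:5} (refined bound in the smooth case). The plan is to split the telescoping sum into a ``good'' part on which the refined bound $O(\opn{side}^4)$ applies, and a ``bad'' part near the singular locus on which we can only afford the coarse bound $O(\opn{side}^2)$.

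First, I would write out the recursion. For each index $i \in \{1, \ldots, 4^k\}$, set
\[
(\sigma_i, \tau_i) := (\sigma, \tau) \circ (\sigma^k_i, \tau^k_i),
\]
so that $\opn{side}(\tau_i) = (\tfrac{1}{2})^k \cdot \opn{side}(\tau)$, $\tau_i(\mbf{I}^2) \subset Z$, and $\opn{len}(\sigma_i)$ is controlled by $\opn{diam}(X)$ (using $\opn{len}(\sigma^k_i) \leq 2$ from~(\ref{eqn:127})). By the recursive construction of $\opn{tes}^{k+l}\mbf{I}^2 = \opn{tes}^k\mbf{I}^2 \circ \opn{tes}^l \mbf{I}^2$ and Lemma \ref{lem:38},
\[
\opn{RP}_k(\alpha, \beta \vert \sigma, \tau) = \prod_{i=1}^{4^k} \opn{RP}_0(\alpha, \beta \vert \sigma_i, \tau_i),
\qquad
\opn{RP}_{k'}(\alpha, \beta \vert \sigma, \tau) = \prod_{i=1}^{4^k} \opn{RP}_{k'-k}(\alpha, \beta \vert \sigma_i, \tau_i),
\]
so it suffices, via property (iv) of Theorem \ref{thm:6}, to bound
\[
\Norm{ \log_H\bigl( \opn{RP}_{k'-k}(\alpha, \beta \vert \sigma_i, \tau_i) \bigr) - \log_H\bigl( \opn{RP}_0(\alpha, \beta \vert \sigma_i, \tau_i) \bigr) }
\]
for each $i$, then sum.

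Next, I would split according to the good/bad dichotomy of Definition \ref{dfn:45}. For $i \in \opn{good}(\tau, k)$, the forms $\alpha$ and $\beta$ are smooth on $\tau_i(\mbf{I}^2)$, so Lemma \ref{lem:5} applies with the sub-kite $(\sigma_i, \tau_i)$ in place of $(\sigma, \tau)$ and yields a bound of $c_2(\alpha, \beta) \cdot \opn{side}(\tau_i)^4$. For $i \in \opn{bad}(\tau, k)$, I would apply Lemma \ref{lem:4} separately to $\opn{RP}_{k'-k}$ and $\opn{RP}_0$, and use the triangle inequality to obtain the coarser bound $2 c_1(\alpha, \beta) \cdot \opn{side}(\tau_i)^2$. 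Summing via Theorem \ref{thm:6}(iv) and using $|\opn{bad}(\tau, k)| \leq a_0(\alpha, \beta, Z) + a_1(\alpha, \beta, Z) \cdot 2^k$ from Lemma \ref{lem:51}, the good contribution totals
\[
4^k \cdot c_2(\alpha, \beta) \cdot (\tfrac{1}{2})^{4k} \cdot \opn{side}(\tau)^4 = (\tfrac{1}{4})^k \cdot c_2(\alpha, \beta) \cdot \opn{side}(\tau)^4,
\]
while the bad contribution totals at most $(a_0 + a_1 \cdot 2^k) \cdot 2 c_1 \cdot (\tfrac{1}{4})^k \cdot \opn{side}(\tau)^2$. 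After absorbing the extra factor of $\opn{side}(\tau)^2 \leq \epsilon_2(\alpha, \beta)^2$ in the good term, the final constant $c_3(\alpha, \beta, Z)$ can be extracted as a multiple of $c_0(H) \cdot (c_2(\alpha, \beta) \cdot \epsilon_2(\alpha, \beta)^2 + c_1(\alpha, \beta) \cdot (a_0 + a_1))$.

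The main obstacle is the bad locus: the number of cells of $\opn{sd}^k \mbf{I}^2$ that meet a fixed line segment scales as $2^k$, and each such cell contributes $(\tfrac{1}{4})^k \cdot \opn{side}(\tau)^2$ under the coarse bound, giving an overall bad contribution of order $(\tfrac{1}{2})^k$ rather than $(\tfrac{1}{4})^k$. Obtaining the stated $(\tfrac{1}{4})^k$ scaling therefore requires either (a) absorbing one factor of $(\tfrac{1}{2})^k$ into $c_3(\alpha, \beta, Z)$ by treating $(\tfrac{1}{2})^k \leq 1$ and accepting the weaker power of $\opn{side}(\tau)$ in the bad term (which works because $\opn{side}(\tau) \leq \epsilon_2(\alpha,\beta)$ and hence $\opn{side}(\tau)^2 \cdot (\tfrac{1}{2})^k \leq \opn{side}(\tau)^2 \cdot (\tfrac{1}{4})^k \cdot 2^k$ does \emph{not} hold, so one genuinely needs the finer estimate) or (b) refining the counting: since the singular locus has area zero, a careful partitioning separates bad cells adjacent to each segment into a one--dimensional ``thin strip'' whose total contribution benefits from the already established $\opn{side}(\tau_i)^2$ factor. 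I would therefore sharpen step 4 by grouping the bad indices along each singular segment and applying the one--dimensional telescoping estimate of Lemma \ref{lem:31} transversally, which is where the additional factor of $(\tfrac{1}{2})^k$ is recovered.
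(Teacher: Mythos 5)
Your decomposition into $\opn{good}(\tau,k)$ and $\opn{bad}(\tau,k)$, together with Lemmas \ref{lem:5} and \ref{lem:4} respectively, followed by Theorem \ref{thm:6}(iv) and the counting from Lemma \ref{lem:51}, is exactly the paper's argument. Your tally is also correct: the good contribution is $O((\tfrac{1}{4})^k\epsilon^4)$, the bad contribution is $O((\tfrac{1}{2})^k\epsilon^2)$, so the combined estimate decays like $(\tfrac{1}{2})^k$, not $(\tfrac{1}{4})^k$.

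The ``obstacle'' you identify is real, but you have misdiagnosed where it lies. The lemma as printed states $(\tfrac{1}{4})^k$, yet the paper's own proof ends with the line ``$\le c_3(\alpha,\beta,Z)\cdot (\tfrac{1}{2})^k\cdot \epsilon^2$'' --- a decay of $(\tfrac{1}{2})^k$ --- and this is precisely the exponent restated and used in Proposition \ref{prop:8}(3). So the $(\tfrac{1}{4})^k$ in the lemma's statement is a misprint, not a gap you must bridge. The $(\tfrac{1}{2})^k$ bound is all that is needed: for Theorem \ref{thm:12} one only requires the sequence $\opn{RP}_{k}$ to be Cauchy, which any summable decay supplies, and nothing downstream invokes the sharper exponent.

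Consequently, your proposed repair (b) --- grouping bad cells into strips and running a transversal one-dimensional telescoping --- is both unnecessary and, I think, unavailing. A line segment genuinely crosses $\Theta(2^k)$ cells of $\opn{sd}^k\mbf{I}^2$, and on a cell where $\beta|_{Z_i}$ is not smooth you have no refinement of the coarse $O(\opn{side}(\tau_i)^2)$ estimate from Lemma \ref{lem:4}; the one-dimensional Lemma \ref{lem:31} concerns convergence along a path, not an improvement of the two-dimensional basic Riemann product, so there is no additional factor of $(\tfrac{1}{2})^k$ to be recovered this way. The correct fix is simply to read the lemma with $(\tfrac{1}{2})^k$, as the proof and Proposition \ref{prop:8}(3) in fact do.
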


\begin{proof}
Let $(\sigma, \tau)$ an $(\alpha, \beta)$-tiny kite such that 
$\tau(\mbf{I}^2) \subset Z$. Write $\epsilon := \opn{side}(\tau)$. For 
$i \in \{ 1 ,\ldots, 4^k \}$ let $(\sigma_i, \tau_i)$ be as in equation
(\ref{eqn:150}), and let $Z_i := \tau_i(\mbf{I}^2) \subset Z$. 
Note that 
$\opn{side}(Z_i) = (\smfrac{1}{2})^k \cdot \epsilon$.

Let $l := k' - k$. If $i \in \opn{good}(\tau, k)$, then by Lemma 
\ref{lem:5} we know that
\[ \Norm{ \opn{log}_H \bigl( \opn{RP}_{l}(\alpha, \beta \vert 
\sigma_i, \tau_i) \bigr) -
\opn{log}_H \bigl( \opn{RP}_{0}(\alpha, \beta \vert \sigma_i, \tau_i)
\bigr) } \leq 
c_2(\alpha, \beta) \cdot ((\smfrac{1}{2})^k \cdot \epsilon)^4 \, . \]
If $i \in \opn{bad}(\tau, k)$, then by Lemma \ref{lem:4} we know that
\[ \Norm{ \opn{log}_H \bigl( \opn{RP}_{l}(\alpha, \beta \vert 
\sigma_i, \tau_i) \bigr) -
\opn{log}_H \bigl( \opn{RP}_{0}(\alpha, \beta \vert \sigma_i, \tau_i)
\bigr) } \leq 
2 c_1(\alpha, \beta) \cdot ((\smfrac{1}{2})^k \cdot \epsilon)^2 \, . \]
Therefore by property (iv) of Theorem \ref{thm:6}
and Lemma \ref{lem:51}  we have
\[ \begin{aligned}
& \Norm{ \opn{log}_H \bigl( \opn{RP}_{k'}(\alpha, \beta \vert 
\sigma, \tau) \bigr) -
\opn{log}_H \bigl( \opn{RP}_{k}(\alpha, \beta \vert \sigma, \tau) \bigr) } \\
& \qquad =
\Norm{ \opn{log}_H \bigl( 
\prod\nolimits_{i = 1}^{4^k} \, 
\opn{RP}_{l}(\alpha, \beta \vert \sigma_i, \tau_i \bigr) -
\opn{log}_H \bigl( 
\prod\nolimits_{i = 1}^{4^k} \, 
\opn{RP}_{0}(\alpha, \beta \vert \sigma_i, \tau_i \bigr) } \\
& \qquad \leq c_0(H) \cdot \bigl(
\abs{ \opn{good}(\tau, k) } \cdot c_2(\alpha, \beta) \cdot 
(\smfrac{1}{2})^{4 k} \cdot \epsilon^4 \\
& \qquad \qquad +
\abs{ \opn{bad}(\tau, k) } \cdot 2 c_1(\alpha, \beta) \cdot 
(\smfrac{1}{2})^{2 k} \cdot \epsilon^2 \bigr) \\
& \qquad \leq c_3(\alpha, \beta, Z) \cdot (\smfrac{1}{2})^k \cdot \epsilon^2 
\end{aligned} \]

%\pagebreak \noindent
where we take (very generously)
\[ c_3(\alpha, \beta, Z) := c_0(H) \cdot \bigl( c_2(\alpha, \beta) +
2 (a_0(\alpha, \beta, Z) + a_1(\alpha, \beta, Z)) \cdot c_1(\alpha, \beta)
\bigr) . \]
\end{proof}

\begin{thm} \label{thm:12}
Let $(X, x_0)$ be a pointed polyhedron, let
$\alpha \in \Omega^1_{\mrm{pws}}(X) \otimes \g$, let
$\beta \in \Omega^2_{\mrm{pws}}(X) \otimes \h$, and let
$(\sigma, \tau)$ be a kite in $(X, x_0)$. 
Then the limit
\[ \lim_{k \to \infty} \, \opn{RP}_{k}(\alpha, \beta \vert  \sigma, \tau)
\]
exists in $H$.
\end{thm}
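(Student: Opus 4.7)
The plan is the reduction-to-tiny-kites strategy sketched in Subsection \ref{subsec:binary}. First, I would invoke Proposition \ref{prop:14} to write $(\sigma, \tau) = f \circ (\sigma', \tau')$, where $f : (\mbf{I}^2, v_0) \to (X, x_0)$ is a piecewise linear map of pointed polyhedra, linear on $\tau'(\mbf{I}^2)$, and $(\sigma', \tau')$ is a square kite in $(\mbf{I}^2, v_0)$ with $\opn{len}(\sigma') \leq 1$. Setting $\alpha' := f^*(\alpha)$ and $\beta' := f^*(\beta)$, Lemma \ref{lem:38} gives
\[
\opn{RP}_k(\alpha, \beta \vert \sigma, \tau) = \opn{RP}_k(\alpha', \beta' \vert \sigma', \tau')
\]
for every $k \geq 0$. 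This reduces the theorem to the case $X = \mbf{I}^2$, where $\opn{diam}(X) = \sqrt{2}$, the string satisfies $\opn{len}(\sigma) \leq 1$, and $\tau$ is a square map.

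Second, I would exploit the self-similarity of the binary tessellations. From Definition \ref{dfn:14} and the associativity of kite composition, one proves by induction on $k_0$ that $\opn{tes}^{k_0+k'} \mbf{I}^2 = (\opn{tes}^{k_0} \mbf{I}^2) \circ (\opn{tes}^{k'} \mbf{I}^2)$ for all $k_0, k' \geq 0$. Combined with Definition \ref{dfn:44} and formula \tup{(\ref{eqn:128})}, this yields the factorization
\[
\opn{RP}_{k_0 + k'}(\alpha, \beta \vert \sigma, \tau) =
\prod_{i = 1}^{4^{k_0}} \opn{RP}_{k'}(\alpha, \beta \vert \opn{tes}^{k_0}_i(\sigma, \tau)) .
\]
Now choose $k_0$ so large that each sub-kite $(\sigma_i, \tau_i) := \opn{tes}^{k_0}_i(\sigma, \tau)$ is $(\alpha, \beta)$-tiny: the condition $\opn{side}(\tau_i) = (\smfrac{1}{2})^{k_0} \cdot \opn{side}(\tau) < \epsilon_2(\alpha, \beta)$ holds for $k_0$ sufficiently large, while by \tup{(\ref{eqn:127})} the string length satisfies $\opn{len}(\sigma_i) \leq \opn{len}(\sigma) + \opn{side}(\tau) \cdot \opn{len}(\sigma^{k_0}_i) \leq 3 \leq 4 \cdot \opn{diam}(\mbf{I}^2)$.

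Third, fix such a $k_0$ and set $Z := \tau(\mbf{I}^2) \subset \mbf{I}^2$. For each $i$, Lemma \ref{lem:6} applied to $(\sigma_i, \tau_i)$ gives the Cauchy estimate
\[
\Norm{ \log_H \bigl( \opn{RP}_{k''}(\alpha, \beta \vert \sigma_i, \tau_i) \bigr) - \log_H \bigl( \opn{RP}_{k'}(\alpha, \beta \vert \sigma_i, \tau_i) \bigr) }
\leq (\smfrac{1}{4})^{k'} \cdot c_3(\alpha, \beta, Z) \cdot \opn{side}(\tau_i)^2
\]
for all $k'' \geq k' \geq 0$. Lemma \ref{lem:4} moreover confines every $\opn{RP}_{k'}(\alpha, \beta \vert \sigma_i, \tau_i)$ to $V_0(H)$ with logarithm of norm at most $c_1(\alpha, \beta) \cdot \opn{side}(\tau_i)^2$, hence inside a fixed closed ball of $\h$; as $\h$ is a finite-dimensional Banach space this ball is complete, so the Cauchy sequence $\{ \log_H \opn{RP}_{k'}(\alpha, \beta \vert \sigma_i, \tau_i) \}_{k'}$ converges in $\h$. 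By continuity of $\exp_H$, the sequence $\opn{RP}_{k'}(\alpha, \beta \vert \sigma_i, \tau_i)$ converges in $H$ to some $h_i$. Passing to the limit $k' \to \infty$ in the displayed factorization, and using continuity of multiplication in $H$, we conclude that $\opn{RP}_k(\alpha, \beta \vert \sigma, \tau) \to \prod_{i = 1}^{4^{k_0}} h_i$.

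The main technical difficulty has already been overcome in Lemmas \ref{lem:5} and \ref{lem:6}, which together deliver the $(1/4)^{k'}$-decay of the Cauchy increments on tiny kites; the present theorem is essentially a bookkeeping assembly of that decay with the recursive identity for binary tessellations, with Proposition \ref{prop:14} serving only to produce the uniform bound $\opn{len}(\sigma') \leq 1$ needed for the tininess condition on the string length.
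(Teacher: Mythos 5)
Your proof is correct and follows essentially the same route as the paper: reduce to a square kite in $(\mbf{I}^2, v_0)$ with $\opn{len}(\sigma) \leq 1$ via Proposition \ref{prop:14} and Lemma \ref{lem:38}, use the recursive factorization of binary tessellations to pass to $(\alpha,\beta)$-tiny sub-kites, and conclude convergence from the Cauchy estimate of Lemma \ref{lem:6}. Your added details (explicit verification of the tininess bounds and the completeness argument via $\log_H$ and Lemma \ref{lem:4}) merely flesh out what the paper leaves implicit.
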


\begin{proof}
According to Proposition \ref{prop:14} and Lemma \ref{lem:38} we can assume
that $(X, x_0) = (\mbf{I}^2, v_0)$ and 
$\opn{len}(\sigma) \leq 1$. 

Take $k$ large enough such that for each $i \in \{ 1, \ldots, 4^k \}$
the kite $(\sigma_i, \tau_i)$, in the notation of (\ref{eqn:150}), 
is $(\alpha, \beta)$-tiny. For any $k' \geq 0$ we have
\[ \opn{RP}_{k + k'} (\alpha, \beta \vert  \sigma, \tau) = 
\prod_{i = 1}^{4^k}  \, \opn{RP}_{k'}
(\alpha, \beta \vert  \sigma_i, \tau_i) . \]
Thus it suffices to prove that for any $i$ the limit
\[ \lim_{k' \to \infty} \opn{RP}_{k'}(\alpha, \beta \vert  \sigma_i, \tau_i) \]
exists. Now Lemma \ref{lem:6} says that the sequence 
\[ \bigl( \opn{RP}_{k'}(\alpha, \beta \vert  \sigma_i, \tau_i) 
\bigr)_{k' \geq 0} \]
is a Cauchy sequence in $H$; and therefore it converges.
\end{proof}

\begin{dfn}[Multiplicative Integral] \label{dfn:16}
\index{Multiplicative integral on kites}
Let $(G, H, \Psi_{\h})$ be a twisting setup, let
$(X, x_0)$ be a pointed polyhedron, let
$\alpha \in \Omega^1_{\mrm{pws}}(X) \otimes \g$, let
$\beta \in \Omega^2_{\mrm{pws}}(X) \otimes \h$, and let
$(\sigma, \tau)$ be a kite in $(X, x_0)$. We define the
{\em multiplicative integral of $\beta$ twisted by
$\alpha$ on $(\sigma, \tau)$} to be
\[ \opn{MI}(\alpha, \beta \vert \sigma, \tau) :=
 \lim_{k \to \infty} \, \opn{RP}_{k}(\alpha, \beta \vert  \sigma, \tau)
\in H . \]

If $(X, x_0) = (\mbf{I}^{2}, v_0)$ then we write
\[  \opn{MI}(\alpha, \beta \vert \mbf{I}^{2}) :=
 \opn{MI}(\alpha, \beta \vert \sigma^0_1, \tau^0_1) , \]
where $(\sigma^0_1, \tau^0_1)$ is the basic kite.
\end{dfn}

\subsection{Some Properties of $\opn{MI}$}
We continue with the setup of the previous subsections.

\begin{prop} \label{prop:3}
In the situation of Definition \tup{\ref{dfn:16}}, for any $k \geq 0$ one
has
\[ \opn{MI}(\alpha, \beta \vert \sigma, \tau) =
\prod_{i = 1}^{4^k} \,
\opn{MI} \bigl( \alpha, \beta \vert (\sigma, \tau) \circ 
(\sigma^k_i, \tau^k_i) \bigr) . \]
\end{prop}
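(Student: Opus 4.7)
The plan is to reduce Proposition \ref{prop:3} to the recursive property of the binary tessellations, and then to invoke the existence of the multiplicative integral (Theorem \ref{thm:12}) together with continuity of multiplication in the Lie group $H$.

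First, I would establish the combinatorial identity $\opn{tes}^{k+l} \mbf{I}^2 = \opn{tes}^{k} \mbf{I}^2 \circ \opn{tes}^{l} \mbf{I}^2$ for all $k, l \geq 0$. This follows by induction on $l$ from the recursive definition \ref{dfn:14}(3), using the associativity of the kite composition operation (\ref{eqn:18}). Unwinding the lexicographic ordering (\ref{eqn:19}), for each $j \in \{1, \ldots, 4^{k+l}\}$ there is a unique pair $(a, b) \in \{1, \ldots, 4^k\} \times \{1, \ldots, 4^l\}$ such that $(\sigma^{k+l}_j, \tau^{k+l}_j) = (\sigma^k_a, \tau^k_a) \circ (\sigma^l_b, \tau^l_b)$. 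Composing with $(\sigma, \tau)$ on the left and applying associativity, I get
\[ \opn{tes}^{k+l}_j(\sigma, \tau) = \bigl( \opn{tes}^k_a(\sigma, \tau) \bigr) \circ (\sigma^l_b, \tau^l_b) . \]

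Next I apply $\opn{RP}_0(\alpha, \beta \vert -)$ to each of these $4^{k+l}$ kites and multiply in order. Grouping the product according to the outer index $a$ and then the inner index $b$, and using the definition of the refined Riemann product, this yields
\[ \opn{RP}_{k+l}(\alpha, \beta \vert \sigma, \tau) = \prod_{a=1}^{4^k} \opn{RP}_{l}\bigl( \alpha, \beta \vert \opn{tes}^k_a(\sigma, \tau) \bigr) = \prod_{a=1}^{4^k} \opn{RP}_{l}\bigl( \alpha, \beta \vert (\sigma, \tau) \circ (\sigma^k_a, \tau^k_a) \bigr) . \]
This identity is precisely the one tacitly used in the proof of Theorem \ref{thm:12}, so I would simply record it explicitly at this point.

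Finally, I would fix $k$ and let $l \to \infty$. By Theorem \ref{thm:12}, the left-hand side converges in $H$ to $\opn{MI}(\alpha, \beta \vert \sigma, \tau)$, and for each fixed $a$ the factor on the right converges to $\opn{MI}(\alpha, \beta \vert (\sigma, \tau) \circ (\sigma^k_a, \tau^k_a))$. Because multiplication in the Lie group $H$ is continuous and the number of factors $4^k$ is fixed (independent of $l$), the product of the limits equals the limit of the products, giving the desired formula. There is no real obstacle here: the content of the proposition is essentially the self-similarity built into Definition \ref{dfn:14}, and the only analytic input is the already-established convergence in Theorem \ref{thm:12}.
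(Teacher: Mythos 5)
Your proposal is correct and follows essentially the same route as the paper: the paper's proof consists of exactly the identity $\opn{RP}_{k+k'}(\alpha, \beta \vert \sigma, \tau) = \prod_{i=1}^{4^k} \opn{RP}_{k'}\bigl(\alpha, \beta \vert (\sigma,\tau) \circ (\sigma^k_i, \tau^k_i)\bigr)$ followed by letting $k' \to \infty$, which is what you do, with the extra (harmless) step of spelling out the recursive self-similarity of $\opn{tes}^k \mbf{I}^2$ behind that identity. No gap.
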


\begin{proof}
For any $k' \geq 0$ we have
\[ \opn{RP}_{k + k'}(\alpha, \beta \vert \sigma, \tau) =
\prod_{i = 1}^{4^k} \,
\opn{RP}_{k'} \bigl( \alpha, \beta \vert (\sigma, \tau) \circ 
(\sigma^k_i, \tau^k_i) \bigr) . \]
Now take the limit $\lim_{k' \to \infty}$.
\end{proof}

\begin{prop} \label{prop:8}
Consider the situation of Definition \tup{\ref{dfn:16}}, and assume that 
$(\sigma, \tau)$ is an $(\alpha, \beta)$-tiny kite.
\begin{enumerate}
\item One has
\[ \opn{MI}(\alpha, \beta \vert \sigma, \tau) \in V_0(H) , \]
and
\[ \Norm{ \opn{log}_H \bigl( \opn{MI}(\alpha, \beta \vert \sigma, \tau)
\bigr) } \leq 
c_1(\alpha, \beta) \cdot \opn{side}(\tau)^2 \, . \]

\item If $\beta|_{\tau(\mbf{I}^{2})}$ is smooth then 
\[ \Norm{ \opn{log}_H \bigl( \opn{MI}(\alpha, \beta \vert 
\sigma, \tau) \bigr) -
\opn{log}_H \bigl( \opn{RP}_{0}(\alpha, \beta \vert \sigma, \tau) \bigr) } 
\leq c_2(\alpha, \beta) \cdot \opn{side}(\tau)^4  \, .
\]

\item Let $Z$ be a $2$-dimensional subpolyhedron of $X$ containing 
$\tau(\mbf{I}^2)$. For any $k \geq 0$ one has
\[ \begin{aligned}
& \Norm{ \opn{log}_H \bigl( \opn{MI}(\alpha, \beta \vert 
\sigma, \tau) \bigr) -
\opn{log}_H \bigl( \opn{RP}_{k}(\alpha, \beta \vert \sigma, \tau) \bigr) } \\
& \quad \leq (\smfrac{1}{2})^k \cdot c_3(\alpha, \beta, Z) \cdot 
\opn{side}(\tau)^2  \, .
\end{aligned} \]
\end{enumerate}
\end{prop}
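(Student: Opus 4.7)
The three parts of Proposition \ref{prop:8} are all direct passages to the limit from Lemmas \ref{lem:4}, \ref{lem:5} and \ref{lem:6} respectively, so my plan is to show that each bound stated for the finite Riemann products $\opn{RP}_k$ survives the limit $k \to \infty$ that defines $\opn{MI}(\alpha, \beta \vert \sigma, \tau)$ in Definition \ref{dfn:16}.

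For part (1), the plan is as follows. Since $(\sigma, \tau)$ is $(\alpha, \beta)$-tiny, Lemma \ref{lem:4}(ii) says that for every $k \geq 0$ we have $\opn{RP}_k(\alpha, \beta \vert \sigma, \tau) \in V_0(H)$ and $\norm{\log_H \opn{RP}_k(\alpha, \beta \vert \sigma, \tau)} \leq c_1(\alpha, \beta) \cdot \opn{side}(\tau)^2$. Set $r := c_1(\alpha, \beta) \cdot \opn{side}(\tau)^2$; by the tiny condition and the choice of $\epsilon_1(\alpha,\beta)$ in Lemma \ref{lem:4}(i), we have $r \leq \smfrac{1}{4} \epsilon_0(H)$, so the closed ball $\ol{U}(r) \subset \h$ lies well inside the domain where $\exp_H$ is a diffeomorphism. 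Hence $K := \exp_H(\ol{U}(r))$ is a compact (in particular closed) subset of $V_0(H)$. Every $\opn{RP}_k(\alpha, \beta \vert \sigma, \tau)$ lies in $K$, and therefore so does the limit $\opn{MI}(\alpha, \beta \vert \sigma, \tau)$. Applying $\log_H$, which is continuous on $V_0(H)$, gives the stated bound.

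For part (2), given that $\beta|_{\tau(\mbf{I}^2)}$ is smooth, Lemma \ref{lem:5}(ii) gives the inequality
\[ \Norm{ \log_H \opn{RP}_k(\alpha, \beta \vert \sigma, \tau) -
        \log_H \opn{RP}_0(\alpha, \beta \vert \sigma, \tau) }
\leq c_2(\alpha, \beta) \cdot \opn{side}(\tau)^4 \]
uniformly in $k$. Since $\opn{RP}_k \to \opn{MI}$ in $H$, and since part (1) already places all terms inside the compact set $K \subset V_0(H)$ on which $\log_H$ is continuous, we may pass to the limit $k \to \infty$ inside the norm, yielding the desired estimate for $\log_H \opn{MI}(\alpha, \beta \vert \sigma, \tau)$. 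Part (3) is handled in the same way: for fixed $k$ and any $k' \geq k$, Lemma \ref{lem:6} asserts
\[ \Norm{ \log_H \opn{RP}_{k'}(\alpha, \beta \vert \sigma, \tau) -
        \log_H \opn{RP}_k(\alpha, \beta \vert \sigma, \tau) }
\leq (\smfrac{1}{2})^k \cdot c_3(\alpha, \beta, Z) \cdot \opn{side}(\tau)^2 , \]
and letting $k' \to \infty$ with $k$ fixed, using continuity of $\log_H$ on $K$, gives the bound in (3).

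I expect no real obstacle here: everything has been set up in the preceding lemmas so that the bounds are uniform in $k$ and the sequence $\{\opn{RP}_k\}$ stays in a compact neighborhood of $1 \in H$ where $\log_H$ is continuous. The only point to be careful about is to verify, before invoking continuity, that the closed ball $\ol{U}(r)$ used in part (1) is indeed contained in the domain of $\log_H$; this is ensured by the inequality $\epsilon_1(\alpha,\beta) \cdot c_1(\alpha,\beta) \leq \smfrac{1}{4}\epsilon_0(H)$ listed in Lemma \ref{lem:4}(i).
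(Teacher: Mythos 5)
Your proposal is correct and follows the paper's own route: the paper simply declares the proposition "immediate from Lemmas \ref{lem:4}, \ref{lem:5} and \ref{lem:6}", and your limiting argument (uniform bounds from those lemmas, the sequence $\opn{RP}_k$ trapped in the compact set $\exp_H$ of a closed ball inside $V_0(H)$, continuity of $\log_H$ there) is exactly the routine justification, mirroring what the paper itself did in Proposition \ref{prop:5}(1). The only trivial discrepancy is that you quote Lemma \ref{lem:6} with factor $(\smfrac{1}{2})^k$ where its statement has $(\smfrac{1}{4})^k$; since $(\smfrac{1}{4})^k \leq (\smfrac{1}{2})^k$ this only strengthens your estimate and changes nothing.
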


\begin{proof}
Immediate from Lemmas \ref{lem:4}, \ref{lem:5} and \ref {lem:6}.
\end{proof}

\begin{prop}[Functoriality in $X$] \label{prop:10}
Let $f : (Y, y_0) \to (X, x_0)$ be a piecewise linear map between pointed
polyhedra, let
$\alpha \in \Omega^1_{\mrm{pws}}(X) \otimes \g$, let
$\beta \in \Omega^2_{\mrm{pws}}(X) \otimes \h$, 
and let $(\sigma, \tau)$ be a kite in $(Y, y_0)$.
Assume that $f$ is linear on $\tau(\mbf{I}^2)$. Then
\[ \opn{MI}(\alpha, \beta \vert f \circ \sigma, f \circ \tau) = 
\opn{MI} \bigl( f^*(\alpha), f^*(\beta) \vert  \sigma, \tau \bigl) . \]
\end{prop}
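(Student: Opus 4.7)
The plan is essentially to observe that this proposition is an immediate limiting consequence of Lemma \ref{lem:38}, which already establishes the corresponding functoriality statement at the level of the refined Riemann products $\opn{RP}_k$. What remains is to justify passing to the limit $k \to \infty$ on both sides.

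First I would note that the hypothesis ``$f$ is linear on $\tau(\mbf{I}^2)$'' is exactly what Lemma \ref{lem:38} demands of the kite $(\sigma, \tau)$ in $(Y, y_0)$, when playing the role of $(\sigma', \tau')$ there. Setting $\alpha' := f^*(\alpha)$ and $\beta' := f^*(\beta)$, Lemma \ref{lem:38} then gives, for every $k \geq 0$, the equality
\[
\opn{RP}_k(\alpha, \beta \vert f \circ \sigma, f \circ \tau) \;=\;
\opn{RP}_k(\alpha', \beta' \vert \sigma, \tau)
\]
in $H$. (The lemma does not require anything of $f$ away from $\tau(\mbf{I}^2)$; the strings are handled by the $1$-dimensional functoriality built into $\opn{MI}(\alpha \vert \pi)$ via Proposition \ref{prop:12}, and the $2$-dimensional factors only see the restriction of $f$ to $\tau(\mbf{I}^2)$.)

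Next I would take the limit $k \to \infty$ on both sides. The right-hand side converges to $\opn{MI}(f^*(\alpha), f^*(\beta) \vert \sigma, \tau)$ by Theorem \ref{thm:12} and Definition \ref{dfn:16} applied in $(Y, y_0)$. The left-hand side converges to $\opn{MI}(\alpha, \beta \vert f \circ \sigma, f \circ \tau)$ by the same theorem and definition applied in $(X, x_0)$, once one notes that $(f \circ \sigma, f \circ \tau)$ is indeed a kite in $(X, x_0)$ by (\ref{eqn:116}) and the assumption that $f$ is linear on $\tau(\mbf{I}^2)$. Equality of the limits gives the proposition.

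There is really no ``hard part'' here — all the analytic work (convergence of Riemann products, smooth/singular case analysis, behavior of the twisting under pullback) was already absorbed into Lemma \ref{lem:38} and Theorem \ref{thm:12}. The only minor bookkeeping point is that $f$ need not be globally linear, only linear on the image of $\tau$; but this suffices because each Riemann-product piece at level $k$ involves only points and sub-strings lying in $\tau(\mbf{I}^2)$ (through the composition $(\sigma, \tau) \circ (\sigma^k_i, \tau^k_i)$ of Definition \ref{dfn:44}), so piecewise linearity of $f$ on all of $Y$ together with linearity on $\tau(\mbf{I}^2)$ is enough to invoke Lemma \ref{lem:38} verbatim.
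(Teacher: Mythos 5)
Your proposal is correct and is essentially the paper's own proof: the paper deduces Proposition \ref{prop:10} directly from Lemma \ref{lem:38} (which gives the equality of the refined Riemann products for every $k$), with the passage to the limit via Theorem \ref{thm:12} and Definition \ref{dfn:16} left implicit. Your write-up simply makes that limiting step and the role of Proposition \ref{prop:12} for the string factors explicit.
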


\begin{proof}
Immediate from Lemma \ref{lem:38}.
\end{proof}

The next proposition says that ``in the tiny scale the $2$-dimensional MI is
abelian''. 

\begin{prop} \label{prop:22}
There are constant $\epsilon_{2'}(\alpha, \beta)$ and
$c_{2'}(\alpha, \beta)$ with the following properties:
\begin{itemize}
\rmitem{i} $c_{2'}(\alpha, \beta) \geq c_2(\alpha, \beta)$ and
$0 < \epsilon_{2'}(\alpha, \beta) \leq \epsilon_2(\alpha, \beta)$.
\rmitem{ii} Suppose $(\sigma, \tau)$ is a square kite in $(X, x_0)$ such that 
$\opn{side}(\tau) < \epsilon_{2'}(\alpha, \beta)$
and
$\opn{len}(\sigma) \leq 4 \cdot \opn{diam}(X)$. 
Let
$g := \opn{MI}(\alpha \vert \sigma) \in G$.
Then 
\[ \Norm{ \opn{log}_H \bigl( \opn{MI}(\alpha, \beta \vert \sigma, \tau)
\bigr) -
\Psi_{\h}(g)\bigl( \int_{\tau} \beta \bigr)  } \leq 
c_{2'}(\alpha, \beta) \cdot \opn{side}(\tau)^3 \, .  \]
\end{itemize}
\end{prop}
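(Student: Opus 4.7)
The plan is to compare $\opn{MI}(\alpha, \beta \vert \sigma, \tau)$ to $\Psi_{\h}(g)\bigl(\int_{\tau} \beta\bigr)$ by routing through the basic Riemann product and then applying a symmetric Taylor expansion. I will first handle the case where $\beta|_{\tau(\mbf{I}^2)}$ is smooth, which is the heart of the matter; the non-smooth case will be reduced to the smooth case by a binary subdivision together with the tame bound of Proposition \ref{prop:8}(1).

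In the smooth case, let $\epsilon := \opn{side}(\tau)$, $z := \tau(\smfrac{1}{2}, \smfrac{1}{2})$, write $\til{\beta}$ for the coefficient of $\beta|_{\tau(\mbf{I}^2)}$, and set $g' := \opn{MI}(\alpha \vert \sigma * (\tau \circ \sigma_{\mrm{pr}}))$. By Proposition \ref{prop:8}(2) and the explicit formula for $\opn{RP}_0$ in Definition \ref{dfn:11}, one has
\[ \Norm{ \opn{log}_H \bigl( \opn{MI}(\alpha, \beta \vert \sigma, \tau) \bigr) - \opn{area}(\tau) \cdot \Psi_{\h}(g')(\til{\beta}(z)) } \leq c_2(\alpha, \beta) \cdot \epsilon^4 . \]
The probe string $\tau \circ \sigma_{\mrm{pr}}$ satisfies $\opn{len}(\tau \circ \sigma_{\mrm{pr}}) \leq \epsilon$, so $g^{-1} g' = \opn{MI}(\alpha \vert \tau \circ \sigma_{\mrm{pr}})$; applying Proposition \ref{prop:21}(i) to the representation $\Psi_{\h} : G \to \opn{GL}(\h)$ yields $\norm{\Psi_{\h}(g^{-1} g') - \bsym{1}} \leq O(\epsilon)$, hence $\norm{\Psi_{\h}(g') - \Psi_{\h}(g)} \leq O(\epsilon)$ as operators on $\h$. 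Since $\norm{\til{\beta}(z)} \leq \norm{\beta}_{\mrm{Sob}}$ and $\opn{area}(\tau) = \epsilon^2$, this replaces $\Psi_{\h}(g')$ by $\Psi_{\h}(g)$ at the cost of an error $O(\epsilon^3)$.

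To close the comparison, Taylor expand $\til{\beta}$ around the center $z$: because $\tau(\mbf{I}^2)$ is centrally symmetric about $z$, the linear term integrates to zero, giving $\int_{\tau} \beta = \opn{area}(\tau) \cdot \til{\beta}(z) + R$ with $\norm{R} \leq \norm{\beta}_{\mrm{Sob}} \cdot \epsilon^4$. Applying the bounded operator $\Psi_{\h}(g)$ (its norm being controlled uniformly by Proposition \ref{prop:13}, since $\opn{len}(\sigma) \leq 4 \opn{diam}(X)$) and combining with the two steps above gives the $O(\epsilon^3)$ bound in the smooth case.

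For the non-smooth case, fix a $2$-dimensional sub-polyhedron $Z \supset \tau(\mbf{I}^2)$ and let $k := 1$ or any fixed $k$; apply Proposition \ref{prop:3} to write $\opn{MI}(\alpha, \beta \vert \sigma, \tau) = \prod_i \opn{MI}(\alpha, \beta \vert \sigma_i, \tau_i)$ over the binary sub-tessellation. Using Lemma \ref{lem:51}, after subdividing further one reduces to small sub-kites whose midpoints miss the singular locus of $\beta$ (handled by the smooth case) plus $O(2^k)$ ``bad'' sub-kites, each of side $\epsilon/2^k$. On the bad sub-kites Proposition \ref{prop:8}(1) gives a $\opn{side}^2$ bound, while the integrals $\int_{\tau_i} \beta$ over bad kites contribute at most $O(\norm{\beta}_{\mrm{Sob}} \cdot \opn{area})$; with the CBH aggregation bounds of Theorem \ref{thm:6}(ii) and (iv), the bad contribution is $O(\epsilon^2 / 2^k)$ and the good contribution already sums to $O(\epsilon^3)$ as in the smooth analysis. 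The main obstacle, and where care is needed, is choosing $k$ (dependent on $\alpha, \beta$ but large enough) so that the combined bad/good error is genuinely $O(\epsilon^3)$ with a uniform constant, and then extracting $c_{2'}(\alpha, \beta)$ from the resulting estimates while keeping $\epsilon_{2'}(\alpha, \beta) \leq \epsilon_2(\alpha, \beta)$.
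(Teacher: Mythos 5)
Your argument is correct in substance and uses the same toolkit as the paper, but it is organized differently. The paper's proof never splits off a smooth case and never invokes Proposition \ref{prop:8}(2): it works directly with the refined products $\opn{RP}_k(\alpha,\beta\vert\sigma,\tau)$ for all $k$, writes $\lambda_i := \log_H \opn{RP}_0$ on the $i$-th sub-kite, compares $\sum_i \lambda_i$ to the Riemann sum $\opn{RS}_k := \sum_i \mu_i$ with $\mu_i := (\smfrac{1}{4})^k \epsilon^2\, \Psi_{\h}(g)(\til{\beta}_i(z_i))$ (the twisting discrepancy being controlled by $\opn{Ad}(\Psi_{\h}(g))$ applied to $\Psi_{\h}(g'_0 \cdot g'^{-1}_i) - \bsym{1}$, which is $O(\epsilon)$ by Proposition \ref{prop:21}), and then lets $k \to \infty$: the good-index errors sum to $O(\epsilon^3)$, the bad-index errors are $O((a_0 + a_1 2^k)(\smfrac{1}{4})^k \epsilon^2) \to 0$, and $\opn{RS}_k \to \Psi_{\h}(g)\bigl(\int_{\tau}\beta\bigr)$ by ordinary Riemann integration. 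Your route instead proves a clean local statement at $k=0$ (basic Riemann product plus Proposition \ref{prop:8}(2) plus the symmetric Taylor expansion of $\int_{\tau}\beta$ about the center) and then subdivides; this is a perfectly good alternative, and the worry you flag about choosing $k$ dissolves because the quantity being bounded does not depend on $k$, so you may simply let $k \to \infty$ exactly as the paper does.

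A few points need tightening. First, ``good'' must mean that $\alpha$ and $\beta$ are smooth on the entire closed subsquare $Z_i$ (Definition \ref{dfn:45}, which is what Lemma \ref{lem:51} counts); the midpoint missing the singular locus is not enough for Proposition \ref{prop:8}(2) or for the Taylor expansion. Second, on a good sub-kite your smooth-case estimate compares $\log_H \opn{MI}$ to $\Psi_{\h}(g_i)\bigl(\int_{\tau_i}\beta\bigr)$ with $g_i = g \cdot \opn{MI}(\alpha \vert \tau\circ\sigma^k_i) \neq g$; you need one more application of Proposition \ref{prop:21} to the connector string $\tau\circ\sigma^k_i$ (length $\leq 2\epsilon$) to replace $\Psi_{\h}(g_i)$ by $\Psi_{\h}(g)$, at cost $O(\epsilon)\cdot\norm{\beta}_{\mrm{Sob}}\,\epsilon^2/4^k$ per sub-kite, which sums harmlessly to $O(\epsilon^3)$. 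Third, every use of Proposition \ref{prop:21} requires the relevant string to be shorter than $\epsilon_5(\alpha,\Psi_{\h})$, so you must take $\epsilon_{2'}(\alpha,\beta) \leq \min\bigl(\epsilon_2(\alpha,\beta), \epsilon_5(\alpha,\Psi_{\h})\bigr)$, as the paper does. Finally, the sub-kites' strings have length up to $4\cdot\opn{diam}(X) + 2\epsilon$, which strictly violates the hypothesis of Proposition \ref{prop:8}(2) as stated; this is cosmetic (the paper notes the factor $4$ is arbitrary, and its internal constants are built with slack such as $7\cdot\opn{diam}(X)$), but your write-up should set up the smooth-case constants with that slightly larger slack before subdividing.
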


Actually with more effort we can get a better estimate (order 
$\opn{side}(\tau)^4$) in property (ii) above. 

\begin{proof}
This is very similar to Proposition \ref{prop:16}.
Take 
\[ \epsilon_{2'}(\alpha, \beta) := \min \bigl( 
\epsilon_{2}(\alpha, \beta), \, \epsilon_{5}(\alpha, \Psi_{\h}) \bigr) \, , \]
where $\epsilon_{5}(\alpha, \Psi_{\h})$ is the constant from Proposition
\ref{prop:21}. 
Let's write $\epsilon := \opn{side}(\tau)$
and $Z := \tau(\mbf{I}^2)$. Assume that 
$\epsilon < \epsilon_{2'}(\alpha, \beta)$. 

Take $k \geq 0$. For $i \in \{ 1, \ldots, 4^k \}$ let
\[ (\sigma_i, \tau_i) := 
(\sigma, \tau) \circ (\sigma^k_i, \tau^k_i) = 
\opn{tes}^k_i (\sigma, \tau) . \]
Write
$Z_i := \tau_i(\mbf{I}^2)$ and $z_i := \tau_i(w)$, where
$w := (\smfrac{1}{2}, \smfrac{1}{2}) \in \mbf{I}^2$ as usual. 
The sets $\opn{good}(\tau, k)$ and $\opn{bad}(\tau, k)$ were defined in
Definition \ref{dfn:45}. 

Define 
\[ g'_0 := \opn{MI}(\alpha \vert \tau \circ \sigma_{\mrm{pr}}) \in G , \]
\[ g'_i := \opn{MI}(\alpha \vert 
(\tau \circ \sigma^k_i) * (\tau \circ \tau^k_i \circ \sigma_{\mrm{pr}})) \]
and 
\[ \lambda_i :=  \opn{log}_H \bigl( 
\opn{RP}_0(\alpha, \beta \vert \sigma_i, \tau_i) \bigr) \in \h  \]
for $i \in \{ 1, \ldots, 4^k \}$.
{}From Definition \ref{dfn:11} and Proposition \ref{prop:13} we know that
\begin{equation} \label{eqn:195}
\norm{ \lambda_i } \leq (\smfrac{1}{4})^k \cdot \epsilon^2 \cdot 
\norm{\Psi_{\h}(g \cdot g'_i)} \cdot \norm{\beta}_{\mrm{Sob}} 
\leq (\smfrac{1}{4})^k \cdot \epsilon^2 \cdot c  \, ,
\end{equation}
where we let
\[ c := \exp \bigl( c_4(\alpha, \phi) \cdot 6 \opn{diam}(X) \bigl) \cdot 
\norm{\beta}_{\mrm{Sob}} \, . \]
According to property (ii) of Theorem \ref{thm:6} we have
\begin{equation} \label{eqn:193}
\begin{aligned}
& \Norm{ \log_H \bigl( \opn{RP}_k(\alpha, \beta \vert \sigma, \tau) \bigr)
- \bosum_{i = 1}^{4^k} \, \lambda_i } \\
& \qquad \leq c_0(H) \cdot \bigl( \bosum_{i = 1}^{4^k} \norm{ \lambda_i }
\bigr)^2 \leq 
c_0(H) \cdot (\smfrac{1}{4})^{2 k} \cdot \epsilon^4 \cdot c^2  \, .
\end{aligned}
\end{equation}

For $i \in \opn{good}(\tau, k)$ let $\til{\beta}_i$ be the coefficient of 
$\beta|_{Z_i}$. Define 
\[ \mu_i := 
\begin{cases}
(\smfrac{1}{4})^k \cdot \epsilon^2 \cdot 
\Psi_{\h}(g) \bigl( \til{\beta}_i(z_i) \bigr) &
\text{ if } i \text{ is good }, \\
0 & \text{ otherwise} 
\end{cases} \]
and
\[ \opn{RS}_k(\alpha, \beta \vert \sigma, \tau) := 
\sum_{i = 1}^{4^k}\, \mu_i . \]

Now let us compare $\mu_i$ to $\lambda_i$. 
If $i$ is a bad index, then
\[ \norm{ \mu_i - \lambda_i } = \norm{\lambda_i} \leq 
(\smfrac{1}{4})^k \cdot \epsilon^2 \cdot c  \]
by (\ref{eqn:195}). On the other hand, if $i$ is a good index then 
\[ \mu_i = \Psi_{\h}(g \cdot g'_0 \cdot g'^{-1}_i \cdot g^{-1}) (\lambda_i) . \]
By Proposition \ref{prop:13} the operators $\Psi_{\h}(g)$ and
$\Psi_{\h}(g^{-1})$ have known \lb bounds (since the length of the string 
$\sigma$ is bounded by $4 \cdot \opn{diam}(X)$).
Hence there is a bound for the conjugation operator
$\opn{Ad}(\Psi_{\h}(g))$ on $\opn{End}(\h)$. 
And by Proposition \ref{prop:21} there is an estimate for the norm of the
operator 
$\Psi_{\h}(g'_0 \cdot g'^{-1}_i) - \bsym{1}$,
where $\bsym{1}$ denotes the identity operator of $\h$.
Since $\opn{Ad}(\Psi_{\h}(g))$ fixes $\bsym{1}$, we can conclude that 
there is a constant $c'$
(independent of of $(\sigma, \tau)$ or $k$) such that
\[ \begin{aligned}
& \Norm{ \Psi_{\h}(g \cdot g'_0 \cdot g'^{-1}_i \cdot g^{-1}) - \bsym{1} } \\
& \qquad = \Norm{ \opn{Ad}(\Psi_{\h}(g)) \bigl( 
\Psi_{\h}(g'_0 \cdot g'^{-1}_i) - \bsym{1} \bigr) } 
\leq c' \cdot \epsilon \, . 
\end{aligned} \]
Hence using (\ref{eqn:195}) we get
\[ \norm{ \mu_i - \lambda_i } \leq c' \cdot c \cdot
(\smfrac{1}{4})^k \cdot \epsilon^3 \, . \]
Summing over all $i$ we see that
\[ \begin{aligned}
& \Norm{ \bosum_{i = 1}^{4^k} \, \lambda_i - 
\bosum_{i = 1}^{4^k} \, \mu_i } 
\leq \sum_{i = 1}^{4^k} \, \norm{ \mu_i - \lambda_i } \\
& \qquad \leq \abs{\opn{good}(\tau, k)} \cdot 
c' \cdot c \cdot (\smfrac{1}{4})^k \cdot \epsilon^3 +
\abs{\opn{bad}(\tau, k)} \cdot (\smfrac{1}{4})^k \cdot \epsilon^2 \cdot c \\
& \qquad \leq 4^k \cdot c' \cdot c \cdot (\smfrac{1}{4})^k \cdot \epsilon^3 +
(a_0 + a_1 \cdot 2^k)  \cdot
(\smfrac{1}{4})^k \cdot \epsilon^2 \cdot c \, .  
\end{aligned} \]
Here $a_0 := a_0(\alpha, \beta, Z)$ and $a_1 := a_1(\alpha, \beta, Z)$ are the
constants from Lemma \ref{lem:51}. 
Combining this estimate with (\ref{eqn:193}) we obtain
\begin{equation} \label{eqn:196}
\begin{aligned}
& \Norm{ \log_H \bigl( \opn{RP}_k(\alpha, \beta \vert \sigma, \tau) \bigr) - 
\opn{RS}_k(\alpha, \beta \vert \sigma, \tau) } \\
& \qquad \leq 
c_0(H) \cdot (\smfrac{1}{4})^{2 k} \cdot \epsilon^4 \cdot c^2 +
4^k \cdot c' \cdot c \cdot (\smfrac{1}{4})^k \cdot \epsilon^3 \\
& \qquad \qquad +
(a_0 + a_1 \cdot 2^k)  \cdot (\smfrac{1}{4})^k \cdot \epsilon^2 \cdot c \, . 
\end{aligned}
\end{equation}

Finally, by properties of the usual Riemann integration we have
\begin{equation*} \label{eqn:194}
\lim_{k \to  \infty} \opn{RS}_k(\alpha, \beta \vert \sigma, \tau) = 
\Psi_{\h}(g)\bigl( \int_{\tau} \beta \bigr) .
\end{equation*}
Hence in the limit $k \to \infty$ we get
\[ \Norm{ \opn{log}_H \bigl( \opn{MI}(\alpha, \beta \vert \sigma, \tau)
\bigr) -
\Psi_{\h}(g)\bigl( \int_{\tau} \beta \bigr)  } \leq 
c' \cdot c \cdot  \epsilon^3 \, , \]
and we can take
$c_{2'}(\alpha, \beta) := c' \cdot c$.
\end{proof}

\subsection{Transfer of Twisting Setups}

Suppose $H'$ is another Lie group, with Lie algebra $\h'$. 
The vector space of $\R$-linear maps $\h \to \h'$ is denoted by
$\opn{Hom}(\h, \h')$. 
Consider the $\mcal{O}_{\mrm{pws}}(X)$-module 
$\mcal{O}_{\mrm{pws}}(X) \otimes \opn{Hom}(\h, \h')$.
An element 
\[ \phi \in \mcal{O}_{\mrm{pws}}(X) \otimes \opn{Hom}_{\R}(\h, \h') \]
is called a {\em piecewise smooth family of linear maps} from $\h$ to $\h'$.
Indeed, we may view $\phi$ as a piecewise smooth map
\[ \phi : X \to \opn{Hom}(\h, \h') . \]
For any point $x \in X$ there is a linear map
$\phi(x) : \h \to \h'$.

\begin{dfn} \label{dfn:7}
Suppose $\mbf{C} = (G, H, \Psi_{\h})$ and 
$\mbf{C}' = (G', H', \Psi'_{\h'})$ are two twisting
setups. A {\em transfer of twisting setups} from $\mbf{C}$
to $\mbf{C}'$, parametrized by $(X, x_0)$, is the data
\[ \bsym{\Theta}_X = (\Theta_G, \Theta_H, \Theta_{\h, X}) , \] 
consisting of:
\begin{enumerate}
\item Maps of Lie groups
$\Theta_G : G \to G'$ and 
$\Theta_H : H \to H'$.

\item An element
$\Theta_{\h, X} \in \mcal{O}_{\mrm{pws}}(X) \otimes \opn{Hom}(\h, \h')$.
\end{enumerate}

The following condition is required: 
\begin{itemize}
\rmitem{$*$} The equality 
\[ \opn{Lie}(\Theta_H) = \Theta_{\h, X}(x_0) \]
holds in $\opn{Hom}(\h, \h')$.
\end{itemize}

We denote this transfer by
$\bsym{\Theta}_X : \mbf{C} \to \mbf{C}'$.
\end{dfn}

Note that for $x \neq x_0$ the linear map
$\phi(x) : \h \to \h'$ might fail to be a Lie algebra homomorphism.

\begin{dfn} \label{dfn:17}
Let $\bsym{\Theta}_X : \mbf{C} \to \mbf{C}'$ be a transfer of twisting
setups as in Definition \ref{dfn:7}, and let 
$\alpha \in \Omega^1_{\mrm{pws}}(X) \otimes \g$. 
We say that $\alpha$ is a {\em connection compatible with 
$\bsym{\Theta}_X$} if the following condition, called the
{\em holonomy condition}, holds.
\begin{itemize}
\rmitem{$\Diamond$} Let $\sigma$ be a string in $X$, with
$x_0 = \sigma(v_0)$ and $x_1 := \sigma(v_1)$. Define
\[ g := \opn{MI}(\alpha \vert \sigma) \in G  \]
and $g' := \Theta_G(g) \in G'$.
Then the diagram 
\[ \UseTips \xymatrix @C=9ex @R=5ex {
\h
\ar[d]_{\Theta_{\h, X}(x_0)}
& \h
\ar[d]^{\Theta_{\h, X}(x_1)} 
\ar[l]_{\Psi_{\h}(g)}
\\
\h'
& \h'
\ar[l]^{\Psi'_{\h'}(g')}
} \]
is commutative.
\end{itemize}
\end{dfn}

\begin{rem}
The holonomy condition for $\alpha$ can be stated as a differential equation.
We shall not need this equation in our treatment. 
\end{rem}

Consider a transfer of twisting setups $\bsym{\Theta}_X$ as above. 
The family of linear maps
$\Theta_{\h, X}$ induces, by tensoring, a homomorphism of graded
$\Omega_{\mrm{pws}}(X)$-modules
\begin{equation} \label{eqn:140}
\Theta_{\h, X} : \Omega_{\mrm{pws}}(X) \otimes \h \to 
\Omega_{\mrm{pws}}(X) \otimes \h' .
\end{equation}
Warning: usually $\Theta_{\h, X}$ does not commute with the de Rham operator
$\d$.

\begin{prop}[Functoriality in $\mbf{C}$] \label{prop:2}
Let $\bsym{\Theta}_X : \mbf{C} \to \mbf{C}'$ be a transfer of twisting
setups, let 
$\alpha \in \Omega^1_{\mrm{pws}}(X) \otimes \g$,
and let 
$\beta \in \Omega^2_{\mrm{pws}}(X) \otimes \h$.
Assume that $\alpha$ is a connection compatible with $\bsym{\Theta}_X$. 
We write
\[ \alpha' := \opn{Lie}(\Theta_G)(\alpha) \in  
\Omega^1_{\mrm{pws}}(X) \otimes \g' \]
and 
\[ \beta' := \Theta_{\h, X}(\beta)  \in  
\Omega^2_{\mrm{pws}}(X) \otimes \h' . \]
Then for every kite $(\sigma, \tau)$ in $(X, x_0)$ one has
\[ \Theta_H \bigl( \opn{MI}(\alpha, \beta \vert \sigma, \tau) \bigr) = 
\opn{MI} \bigl( \alpha', \beta' \vert \sigma, \tau \bigr)  \]
in $H'$.
\end{prop}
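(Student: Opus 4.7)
Since $\Theta_H$ is continuous and $\opn{MI}$ is by definition a limit of Riemann products, it suffices to prove
\[
\lim_{k \to \infty} \Theta_H \bigl( \opn{RP}_k(\alpha, \beta \vert \sigma, \tau) \bigr)
= \opn{MI}(\alpha', \beta' \vert \sigma, \tau) .
\]
Using Proposition \ref{prop:14} together with the functoriality Proposition \ref{prop:10} (and pulling back the transfer $\bsym{\Theta}_X$ along the piecewise linear map produced there), I would first reduce to the case $(X, x_0) = (\mbf{I}^2, v_0)$ with $\opn{len}(\sigma) \leq 1$. Fix a common smoothing triangulation $T$ for $\alpha$, $\beta$ and $\Theta_{\h, X}$; such $T$ automatically smooths $\alpha' = \opn{Lie}(\Theta_G)(\alpha)$ and $\beta' = \Theta_{\h, X}(\beta)$, so the bad-index set $\opn{bad}(\tau, k)$ of Definition \ref{dfn:45} coincides for the two integrands $(\alpha, \beta)$ and $(\alpha', \beta')$.

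Write $(\sigma_i, \tau_i) := (\sigma, \tau) \circ (\sigma^k_i, \tau^k_i)$, $z_i := (\tau \circ \tau^k_i)(\tfrac{1}{2}, \tfrac{1}{2})$, $g_i := \opn{MI}(\alpha \vert \sigma_i * (\tau_i \circ \sigma_{\mrm{pr}}))$ and $g'_i := \Theta_G(g_i)$; by Proposition \ref{prop:12}, $g'_i = \opn{MI}(\alpha' \vert \sigma_i * (\tau_i \circ \sigma_{\mrm{pr}}))$. The heart of the argument is the \emph{exact} identity
\[
\Theta_H \bigl( \opn{RP}_0(\alpha, \beta \vert \sigma_i, \tau_i) \bigr)
= \opn{RP}_0(\alpha', \beta' \vert \sigma_i, \tau_i)
\]
for every $i \in \opn{good}(\tau, k)$. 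Using $\Theta_H \circ \exp_H = \exp_{H'} \circ \opn{Lie}(\Theta_H)$ and Definition \ref{dfn:11}, this reduces to verifying
\[
\opn{Lie}(\Theta_H) \bigl( \Psi_{\h}(g_i)(\til{\beta}_i(z_i)) \bigr)
= \Psi'_{\h'}(g'_i) \bigl( \til{\beta}'_i(z_i) \bigr) ,
\]
which is assembled from three inputs: Definition \ref{dfn:7} gives $\opn{Lie}(\Theta_H) = \Theta_{\h, X}(x_0)$; the holonomy condition of Definition \ref{dfn:17}, applied to the string $\sigma_i * (\tau_i \circ \sigma_{\mrm{pr}})$ joining $x_0$ to $z_i$, commutes $\Theta_{\h, X}(x_0)$ past $\Psi_{\h}(g_i)$ into $\Psi'_{\h'}(g'_i) \circ \Theta_{\h, X}(z_i)$; and the pointwise description of (\ref{eqn:140}) gives $\Theta_{\h, X}(z_i)(\til{\beta}_i(z_i)) = \til{\beta}'_i(z_i)$.

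For $i \in \opn{bad}(\tau, k)$, and for $k$ large enough that all sub-kites are simultaneously $(\alpha, \beta)$-tiny and $(\alpha', \beta')$-tiny, Lemma \ref{lem:4} bounds the log-norm of each bad factor (on either side) by $O((\tfrac{1}{4})^k)$, while Lemma \ref{lem:51} bounds $|\opn{bad}(\tau, k)|$ by $O(2^k)$. Aggregating via Theorem \ref{thm:6}(iv) applied to the two products $\prod_i \Theta_H(\opn{RP}_0(\alpha, \beta \vert \sigma_i, \tau_i))$ and $\prod_i \opn{RP}_0(\alpha', \beta' \vert \sigma_i, \tau_i)$ in $H'$, which differ only in their bad factors, the resulting discrepancy between $\Theta_H(\opn{RP}_k(\alpha, \beta \vert \sigma, \tau))$ and $\opn{RP}_k(\alpha', \beta' \vert \sigma, \tau)$ is $O(2^k \cdot 4^{-k}) = O(2^{-k})$, which vanishes as $k \to \infty$. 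Combined with Theorem \ref{thm:12} this gives the claim.

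The main obstacle is technical bookkeeping: verifying that all the factor log-norms sum to something inside the convergence radius $\epsilon_0(H')$ so that Theorem \ref{thm:6}(iv) genuinely applies, and checking that the holonomy condition — formulated in Definition \ref{dfn:17} for a single string — does transfer cleanly to every sub-kite in the binary tessellation. Conceptually, Definition \ref{dfn:17} was engineered precisely so that the probe $\sigma_{\mrm{pr}}$ appearing in Definition \ref{dfn:11} lands at the point $z_i$ where the $\h$- and $\h'$-twistings can be interchanged, which is exactly what makes the good-index matching exact rather than merely asymptotic.
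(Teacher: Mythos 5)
Your proposal is correct and follows essentially the same route as the paper's proof: reduce to $(\mbf{I}^2, v_0)$ via Propositions \ref{prop:14} and \ref{prop:10}, obtain the exact identity $\Theta_H(\opn{RP}_0(\alpha,\beta \vert \sigma_i,\tau_i)) = \opn{RP}_0(\alpha',\beta' \vert \sigma_i,\tau_i)$ on good indices from condition ($*$) of Definition \ref{dfn:7}, the holonomy condition ($\Diamond$) of Definition \ref{dfn:17}, and $\exp_{H'} \circ \opn{Lie}(\Theta_H) = \Theta_H \circ \exp_H$, then control the bad indices by the Lemma \ref{lem:51}-type count and tiny-kite estimates, aggregate with Theorem \ref{thm:6}(iv), and let $k \to \infty$. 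The only cosmetic difference is that the paper first reduces to an $(\alpha,\beta)$- and $(\alpha',\beta')$-tiny kite via Proposition \ref{prop:3}, whereas you take $k$ large enough that all sub-kites are tiny, which amounts to the same thing.
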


\begin{proof}
Consider a kite $(\sigma', \tau')$ in $(\mbf{I}^2, v_0)$ and a piecewise linear
map $f$ like in Proposition \ref{prop:14}. By Proposition \ref{prop:10} we have
\[ \opn{MI}(\alpha, \beta \vert \sigma, \tau) =
\opn{MI} \bigl( f^*(\alpha) , f^*(\beta) \vert \sigma', \tau' \bigr) . \]
On the other hand
$\bigl( \Theta_G, \Theta_H, f^*(\Theta_{\h, X}) \bigr)$ 
is a transfer of twisting setups \lb parametrized by 
$(\mbf{I}^2, v_0)$, and by Proposition \ref{prop:10} we have
\[ \begin{aligned}
& \opn{MI} \bigl( \alpha', \beta' \vert \sigma, \tau \bigr) =
\opn{MI} \bigl( f^*(\alpha'), f^*(\beta') \vert 
\sigma', \tau' \bigr) \\
& \qquad =
\opn{MI} \bigl( \opn{Lie}(\Theta^1)(f^*(\alpha)), f^*(\Theta_{\h,
X})(f^*(\beta)) \vert \sigma', \tau' \bigr) .
\end{aligned} \]
Therefore we can assume that 
$(X, x_0) = (\mbf{I}^2, v_0)$ and $\opn{len}(\sigma) \leq 1$.
Using Proposition \ref{prop:3} we can further assume that 
$(\sigma, \tau)$ is $(\alpha, \beta)$-tiny and $(\alpha', \beta')$-tiny.

Fix $k \geq 0$. 
For $i \in \{ 1, \ldots, 4^k \}$ let
\[ (\sigma_i, \tau_i) := 
(\sigma, \tau) \circ (\sigma^k_i, \tau^k_i) . \]
Define 
\[ g_i := \opn{MI}(\alpha \vert \sigma_i * (\tau_i \circ \sigma_{\mrm{pr}})) \in
G \]
and
\[ g'_i := \opn{MI}(\alpha' \vert \sigma_i * (\tau_i \circ \sigma_{\mrm{pr}}))
\in G' . \]
By Proposition \ref{prop:12} we have
$g'_i = \Theta_G(g_i)$. 
Also define $Z_i := \tau_i(\mbf{I}^2)$ and 
$z_i := \tau_i(\smfrac{1}{2}, \smfrac{1}{2})$.

Suppose that $i \in \opn{good}(\tau, k)$, with notation as in 
Definition \ref{dfn:45}. Then there is a function 
$\til{\beta}_i \in \mcal{O}(Z_i) \otimes \h$,
called the coefficient of $\beta|_{Z_i}$, satisfying
\[ \beta|_{Z_i} = \til{\beta}_i \cdot \d t_1 \wedge \d t_2 . \]
The function 
\[ \til{\beta}'_i := \Theta_{\h, X}(\til{\beta}_i) \in \mcal{O}(Z_i) \otimes
\h' , \]
is then the coefficient of $\beta'|_{Z_i}$.
Note that 
\[  \til{\beta}'_i(z_i) = \Theta_{\h, X}(z_i) \bigl( \til{\beta}(z_i) \bigr) \]
in $\h'$. By condition ($*$) of Definition \ref{dfn:7} and condition
($\Diamond$) of Definition \ref{dfn:17} we have
\[ \begin{aligned}
& \Psi'_{\h'}(g'_i) \bigl( \til{\beta}'_i(z_i) \bigr) = 
\Psi'_{\h'}(g'_i) \bigl(  \Theta_{\h, X}(z_i) \bigl( \til{\beta}_i(z_i) \bigr)
\bigr) \\
& \qquad = \Theta_{\h, X}(x_0) \bigl( \Psi_{\h}(g_i)
\bigl( \til{\beta}_i(z_i) \bigr) \bigr) \\
& \qquad = \opn{Lie}(\Theta_H)
\bigl( \Psi_{\h}(g_i) \bigl( \til{\beta}_i(z_i) \bigr) \bigr) . 
\end{aligned} \]
By definition we have
\[ \opn{RP}_0(\alpha, \beta \vert \sigma_i, \tau_i) = 
\exp_H \bigl( (\smfrac{1}{4})^k \cdot \opn{area}(\tau) \cdot 
\Psi_{\h}(g_i)(\til{\beta}_i(z_i)) \bigr) \]
and
\[ \opn{RP}_0(\alpha', \beta' \vert \sigma_i, \tau_i) = 
\exp_{H'} \bigl( (\smfrac{1}{4})^k \cdot \opn{area}(\tau) \cdot 
\Psi_{\h}(g'_i)(\til{\beta}'_i(z_i)) \bigr) . \]
Since
\[ \exp_{H'} \circ \opn{Lie}(\Theta_H) = \Theta_H \circ \exp_H \]
we conclude that 
\begin{equation} \label{eqn:132}
\opn{RP}_0(\alpha', \beta' \vert \sigma_i, \tau_i) =
\Theta_H \bigl( \opn{RP}_0(\alpha, \beta \vert \sigma_i, \tau_i) \bigr) . 
\end{equation}

Like in Lemma \ref{lem:51} we can find a bound for
$\abs{ \opn{bad}(\tau, k) }$, and like in the proof of Lemma \ref{lem:6}
we can estimate 
\[ \Norm{ \log_{H'} \bigl( \Theta_H \bigl(  
\opn{RP}_0 ( \alpha, \beta \vert \sigma_i, \tau_i)  \bigr) \bigr) -
\log_{H'} \bigl( 
\opn{RP}_0 \bigl( \alpha', \beta' \vert \sigma_i, \tau_i) \bigr) }  \]
when $i \in \opn{bad}(\tau, k)$. 
From these estimates and from (\ref{eqn:132}) 
we conclude that there is a constant $c$, independent of
$k$, such that
\[  \Norm{ \log_{H'} \bigl( \Theta_H \bigl(
\opn{RP}_k ( \alpha, \beta \vert \sigma, \tau)  \bigr) \bigr) -
\log_{H'} \bigl( 
\opn{RP}_k \bigl( \alpha', \beta' \vert \sigma, \tau) \bigr) } 
\leq c \cdot (\smfrac{1}{2})^k \, . \]
In the limit $k \to \infty$ we see that
\[ \Theta_H \bigl( \opn{MI} (\alpha, \beta \vert \sigma, \tau) \bigr) =
\opn{MI} (\alpha', \beta' \vert \sigma, \tau). \]
\end{proof}

%% section %%
\cleardoublepage
\section{Quasi Crossed Modules and Additive Feedback}
\label{sec:LQC}

The full strength of multiplicative integration requires a more
elaborate setup than the twisting setup of Definition \ref{dfn:5}.

\subsection{Quasi Crossed Modules}
Let $(Y, y_0)$ be a pointed analytic manifold. By {\em automorphism of pointed
analytic manifolds} we mean an analytic
diffeomorphism $f : Y \to Y$ such that $f(y_0) = y_0$.
We denote by $\opn{Aut}(Y, y_0)$ the group of all such automorphisms.

Let $G$ be a Lie group. An analytic action of $G$ on $(Y, y_0)$ by
automorphisms of pointed manifolds is an analytic map
$\Psi : G \times Y \to Y$ having the following properties.
First, for any $g \in G$ the map
$\Psi(g) : Y \to Y$, $\Psi(g)(y) := \Psi(g,y)$, is an automorphism of
pointed analytic manifolds. Second, the function
$\Psi : G \to \opn{Aut}(Y, y_0)$,
$g \mapsto \Psi(g)$, is a group homomorphism.

Given an analytic action of $G$ on $(Y, y_0)$, and an element $g \in G$,
the differential
\[ \d_{y_0} \bigl( \Psi(g) \bigr) : \mrm{T}_{y_0} Y \to \mrm{T}_{y_0} Y \]
is an $\R$-linear automorphism of the tangent space 
$\mrm{T}_{y_0} Y$. In this way we get a map of Lie groups
$G \to \mrm{GL}(\mrm{T}_{y_0} Y)$, which we call the {\em linear action
induced by $\Psi$}.

Let $H$ be a Lie group, with unit element $1$. We view it as a
pointed analytic manifold $(H, 1)$.

\begin{dfn} \label{dfn:22}
A {\em Lie quasi crossed module} \index{Lie quasi crossed module}
is the data 
\[ \mbf{C} = (G, H, \Psi, \Phi_0) \]
consisting of:
\begin{enumerate}
\item Lie groups $G$ and $H$.
\item An analytic action $\Psi$ of $G$ on $H$ by automorphisms of
pointed manifolds, called the {\em multiplicative twisting}.
\item A map of Lie groups $\Phi_0 : H \to G$, called the {\em
multiplicative feedback}.
\end{enumerate}
The condition is:
\begin{enumerate}
\rmitem{$*$} Consider $\Psi$ as a group homomorphism
$\Psi : G \to \opn{Aut}(H, 1)$. Then there is equality
\[ \Psi \circ\, \Phi_0 = \opn{Ad}_H \]
as group homomorphisms $H \to \opn{Aut}(H, 1)$.
\end{enumerate}
\end{dfn}

\begin{rem} \label{rem:1}
Let $(G, H, \Psi, \Phi_0)$ be a Lie quasi crossed module.
Suppose $G_0$ is a closed Lie subgroup of $G$ such that the following hold:
$\Phi_0(H) \subset G_0$; $\Psi(g)$ is a group automorphism of $H$ for any
$g \in G_0$; and $\Phi_0 : H \to G_0$ is $G_0$-equivariant 
(relative to $\Psi$ and $\opn{Ad}_{G_0}$). 
Then $(G_0, H, \Psi, \Phi_0)$ is called a {\em Lie crossed module}.
See \cite{BM, BS}.
In this situation condition ($*$) is called the {\em Pfeiffer condition} in
the literature. 

Note that we can always find such a subgroup $G_0$: just take
$G_0$ to be the closure of $\Phi_0(H)$ in $G$. 
An easy calculation shows that this subgroup has the required properties.
\end{rem}

Here are three of examples of Lie quasi crossed modules. 

\begin{exa} \label{exa:4}
Suppose 
\[ 1 \to N \to H \xar{\Phi_0} G \to 1 \]
is a central extension of Lie groups. Since $\opn{Ad}_H(h)$ is trivial for
$h \in N$, the action $\opn{Ad}_H$ induces an action of $G$ on $H$, which we
denote by $\Psi$. We  get a Lie crossed module $(G, H, \Psi, \Phi_0)$.
\end{exa}

\begin{exa} \label{exa:2}
A very special case of Example \ref{exa:4} is when $H = G$ and 
$\Phi_0 = \opn{id}_G$. Namely
\[ (G, H, \Psi, \Phi_0) = (G, G, \opn{Ad}_G, \opn{id}_G) . \]
This is the situation dealt with in the classical work of Schlesinger. 
\end{exa}

\begin{exa} \label{exa:5}
Let $H$ be a {\em unipotent} Lie group, namely $H$ is nilpotent and simply
connected, and let $\h := \opn{Lie}(H)$. The map $\exp_H : \h \to H$ is then an
analytic diffeomorphism. 
Take $G := \opn{GL}(\h)$. The canonical action of $G$ on $\h$ becomes, via
$\exp_H$, and action of $G$ on $H$ by automorphisms of pointed manifolds, which
we denote by $\Psi$. The adjoint action $\opn{Ad}_{\h}$ of $H$ on $\h$ 
is a map of Lie groups $\Phi_0 : H \to G$. Then 
$(G, H, \Psi, \Phi_0)$ is a Lie quasi crossed module. 

Next let $G_0 \subset G$ be the group of Lie algebra automorphisms of $\h$. 
Then $(G_0, H, \Psi, \Phi_0)$ is a Lie crossed module. 
\end{exa}

\subsection{Additive Feedback and Compatible Connections}
Let $(G, H, \Psi, \Phi_0)$ be a Lie quasi crossed module. 
We write $\g := \opn{Lie}(G)$ and $\h := \opn{Lie}(H)$.
Recall that the Lie algebra $\h$ is the tangent space to $H$ at the element
$1$. Hence the multiplicative twisting $\Psi$ induces a linear action 
\[ \Psi_{\h} : G \to \opn{GL}(\h) . \]
We see that from the Lie quasi crossed module 
$(G, H, \Psi, \Phi_0)$
we obtain a twisting setup 
$(G, H, \Psi_{\h})$. As in Section \ref{sec.MI}, we call 
$\Psi_{\h}$ the {\em additive twisting}. 

Recall that $\opn{Hom}(\h, \g)$ is the space of $\R$-linear maps
$\h \to \g$, and an element 
\[ \phi \in \mcal{O}_{\mrm{pws}}(X) \otimes \opn{Hom}_{\R}(\h, \g) \]
is called a piecewise smooth family of linear maps from $\h$ to $\g$.

\begin{dfn} \label{dfn:21}
Let $(G, H, \Psi, \Phi_0)$ be a Lie quasi crossed module, and let
$(X, x_0)$ be a pointed polyhedron. 
An {\em additive feedback  for $(G, H, \Psi, \Phi_0)$ over $(X, x_0)$}
\index{Additive feedback}
is an element
\[ \Phi_X \in \mcal{O}_{\mrm{pws}}(X) \otimes \opn{Hom}(\h, \g)  \]
satisfying this condition:
\begin{enumerate}
\rmitem{$**$} There is equality 
\[ \opn{Lie}(\Phi_0) = \Phi_X(x_0) \]
in $\opn{Hom}(\h, \g)$.
\end{enumerate}
\end{dfn}

\begin{dfn} \label{dfn:6}
Let $(X, x_0)$ be a pointed polyhedron. 
A {\em Lie quasi crossed module with additive feedback over $(X, x_0)$}
is the data 
\[  \mbf{C} / X = (G, H, \Psi, \Phi_0, \Phi_X) \]
consisting of:
\begin{itemize}
\item A Lie quasi crossed module 
$\mbf{C} = (G, H, \Psi, \Phi_0)$. 
\item An additive feedback $\Phi_X$ for $\mbf{C}$ over $(X, x_0)$.
\end{itemize}
\end{dfn}

When we talk about a Lie quasi crossed module with additive feedback 
$\mbf{C} / X$, by default we use the notation of Definitions \ref{dfn:21} and
\ref{dfn:6}, and we write
$\g := \opn{Lie}(G)$ and $\h := \opn{Lie}(H)$.

Let $\mbf{C} / X$ be a Lie quasi crossed
module with additive feedback
over $(X, x_0)$. Given a piecewise linear map
$f : (Y, y_0) \to (X, x_0)$ between pointed polyhedra, consider 
\[ f^*(\Phi_X) \in \mcal{O}_{\mrm{pws}}(Y) \otimes \opn{Hom}_{\R}(\h, \g) . \]
Then 
\begin{equation} \label{eqn:240}
f^* (\mbf{C} / X) := (G, H, \Psi, \Phi_0, f^*(\Phi_X)) 
\end{equation}
is a Lie quasi crossed module  with additive feedback
over $(Y, y_0)$.

\begin{dfn} \label{dfn:8}
Let $\mbf{C} / X$ be a Lie quasi crossed module with additive feedback over 
$(X, x_0)$. A {\em connection compatible with $\mbf{C} / X$} 
\index{Compatible connection}
is a differential form
\[ \alpha \in \Omega^1_{\mrm{pws}}(X) \otimes \g \]
satisfying the {\em holonomy condition}:
\begin{itemize}
\rmitem{$\Diamond$} Let $\sigma$ be a string in $X$, with
$x_0 = \sigma(v_0)$ and $x_1 := \sigma(v_1)$. Define
\[ g := \opn{MI}(\alpha \vert \sigma) \in G . \]
Then the diagram 
\[ \UseTips \xymatrix @C=9ex @R=5ex {
\h
\ar[d]_{\Phi_X(x_0)}
& \h
\ar[l]_{\Psi_{\h}(g)}
\ar[d]^{\Phi_X(x_1)} \\
\g
& \g
\ar[l]_{\opn{Ad}_{\g}(g)}
} \]
is commutative.
\end{itemize}
\end{dfn}

It could happen that $\mbf{C} / X$ does not admit any compatible connection. 

\begin{exa}  \label{exa:1}
Suppose $(G, H, \Psi, \Phi_0)$ is a Lie crossed module (see Remark \ref{rem:1})
and $(X, x_0)$ is a pointed polyhedron. Define 
$\Phi_X := \opn{Lie}(\Phi_0)$;
this is a $G$-equi\-variant Lie algebra map $\h \to \g$, which we view as a
constant element of 
$\mcal{O}_{\mrm{pws}}(X) \otimes \opn{Hom}(\h, \g)$.
In this way we obtain a 
Lie quasi crossed module with additive feedback 
$\mbf{C} / X := (G, H, \Psi, \Phi_0, \Phi_X)$ over $(X, x_0)$.
Since $\Phi_X$ is $G$-equivariant, it follows that any 
$\alpha \in \Omega^1_{\mrm{pws}}(X) \otimes \g$
is a connection compatible with $\mbf{C} / X$.
\end{exa}

An example of an additive feedback, and of a compatible connection, is given in
Subsection \ref{subsec:QTDGLie}.

To a Lie quasi crossed module with additive feedback $\mbf{C} / X$
there are two naturally associated twisting setups, namely
$(G, H, \Psi_{\h})$ and \lb $(G, G, \opn{Ad}_{\g})$.

\begin{prop} \label{prop:9}
Let  
\[ \mbf{C} / X = (G, H, \Psi, \Phi_0, \Phi_X) \]
be a Lie quasi crossed module with additive feedback
over $(X, x_0)$. Then:
\begin{enumerate}
\item The data $\bsym{\Theta}_X := (\opn{id}_G, \Phi_0, \Phi_X)$ is a
transfer of twisting setups
\[  (G, H, \Psi_{\h}) \to (G, G, \opn{Ad}_{\g}) \]
parametrized by $(X, x_0)$, in the sense of Definition 
\tup{\ref{dfn:7}}.

\item A form
$\alpha \in \Omega_{\mrm{pws}}^1(X) \otimes \g$
is a connection compatible with $\mbf{C} / X$ if and only if it is a 
connection compatible with $\bsym{\Theta}_X$, in the sense of 
Definition \tup{\ref{dfn:17}}.
\end{enumerate}
\end{prop}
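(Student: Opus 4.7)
The proposition is essentially a bookkeeping statement, and the plan is to prove it by unwinding the relevant definitions and matching them symbol for symbol.

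For part (1), I will verify the single condition $(*)$ in Definition \ref{dfn:7} for the proposed transfer $\bsym{\Theta}_X = (\opn{id}_G, \Phi_0, \Phi_X)$ from $(G, H, \Psi_{\h})$ to $(G, G, \opn{Ad}_{\g})$. The data is of the correct type: $\opn{id}_G$ and $\Phi_0$ are maps of Lie groups $G \to G$ and $H \to G$ respectively, and $\Phi_X \in \mcal{O}_{\mrm{pws}}(X) \otimes \opn{Hom}(\h, \g)$ is a piecewise smooth family of linear maps between the Lie algebras of the relevant groups. Condition $(*)$ asks for the equality $\opn{Lie}(\Phi_0) = \Phi_X(x_0)$ in $\opn{Hom}(\h, \g)$, and this is precisely condition $(**)$ from Definition \ref{dfn:21} built into the datum of an additive feedback. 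Nothing else has to be checked.

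For part (2), I will compare the two holonomy conditions side by side. The holonomy condition of Definition \ref{dfn:17} for $\bsym{\Theta}_X$ applied to the target setup $(G, G, \opn{Ad}_{\g})$ produces, for any string $\sigma$ from $x_0$ to $x_1$ with $g := \opn{MI}(\alpha \vert \sigma) \in G$ and $g' := \Theta_G(g) = g$, the commutativity of the square
\[ \UseTips \xymatrix @C=9ex @R=5ex {
\h \ar[d]_{\Phi_X(x_0)} & \h \ar[l]_{\Psi_{\h}(g)} \ar[d]^{\Phi_X(x_1)} \\
\g & \g \ar[l]^{\opn{Ad}_{\g}(g)}
} \]
But this is verbatim the diagram in condition $(\Diamond)$ of Definition \ref{dfn:8}. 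Hence the two conditions are literally the same, so $\alpha$ satisfies one iff it satisfies the other.

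Since everything reduces to a direct comparison of definitions, there is no real obstacle; the only thing to be a little careful about is matching the roles of $\Theta_G, \Theta_H, \Theta_{\h,X}$ and $\Psi, \Psi'_{\h'}$ in the target twisting setup $(G, G, \opn{Ad}_{\g})$, where the additive twisting is by construction $\opn{Ad}_{\g}$ and where $\Theta_G = \opn{id}_G$ collapses $g'$ back to $g$. Once that identification is recorded, both parts of the proposition follow immediately.
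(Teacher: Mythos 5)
Your proposal is correct, and it is exactly the paper's argument: the paper's proof reads ``Immediate from the definitions,'' and your write-up simply makes explicit that condition $(*)$ of Definition \ref{dfn:7} is condition $(**)$ of Definition \ref{dfn:21}, and that with $\Theta_G = \opn{id}_G$ the diagram of Definition \ref{dfn:17} specializes verbatim to the holonomy diagram of Definition \ref{dfn:8}. Nothing further is needed.
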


\begin{proof}
Immediate from the definitions.
\end{proof}

\begin{prop} \label{prop:15}
Let $\mbf{C} / X$ be a Lie quasi crossed module with additive feedback over
$(X, x_0)$, and let $\alpha$ be a compatible connection for $\mbf{C} / X$. 
Suppose $f : (Y, y_0) \to (X, x_0)$ is a piecewise linear map between pointed
polyhedra. Then $f^*(\alpha)$ is a compatible connection for
$f^* (\mbf{C} / X)$.
\end{prop}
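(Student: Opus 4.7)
The plan is to verify the holonomy condition (Definition \ref{dfn:8}, $\Diamond$) for $f^*(\alpha)$ relative to the pulled-back data $f^*(\mbf{C}/X) = (G, H, \Psi, \Phi_0, f^*(\Phi_X))$ by reducing it directly to the holonomy condition already known for $\alpha$ relative to $\mbf{C}/X$.

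First I would unpack what needs to be checked. Let $\sigma$ be a string in $Y$ with $\sigma(v_0) = y_0$ and $\sigma(v_1) =: y_1$. Set $g := \opn{MI}(f^*(\alpha) \vert \sigma) \in G$. We must show that the square with vertical arrows $f^*(\Phi_X)(y_0)$ and $f^*(\Phi_X)(y_1)$, top arrow $\Psi_{\h}(g)$, and bottom arrow $\opn{Ad}_{\g}(g)$ commutes.

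The two key inputs are: (a) functoriality of the $1$-dimensional multiplicative integral under piecewise linear maps, applied with the identity map of Lie groups $\Phi = \opn{id}_G$. By Proposition \ref{prop:12} we have
\[
g = \opn{MI}(f^*(\alpha) \vert \sigma) = \opn{MI}(\alpha \vert f \circ \sigma) \in G,
\]
and (b) the definition of the pulled-back family of linear maps, which gives $f^*(\Phi_X)(y) = \Phi_X(f(y))$ for every $y \in Y$. Note that $f \circ \sigma$ is a string in $X$ from $f(y_0) = x_0$ to $f(y_1) =: x_1$, so the holonomy condition for $\alpha$ (which holds by hypothesis) applied to $f \circ \sigma$ yields commutativity of the square
\[
\UseTips \xymatrix @C=9ex @R=5ex {
\h \ar[d]_{\Phi_X(x_0)} & \h \ar[l]_{\Psi_{\h}(g)} \ar[d]^{\Phi_X(x_1)} \\
\g & \g \ar[l]^{\opn{Ad}_{\g}(g)}
}
\]
with the same $g$.

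Since $\Phi_X(x_0) = \Phi_X(f(y_0)) = f^*(\Phi_X)(y_0)$ and $\Phi_X(x_1) = \Phi_X(f(y_1)) = f^*(\Phi_X)(y_1)$, this diagram is literally the diagram required by condition ($\Diamond$) for $f^*(\alpha)$ and $f^*(\mbf{C}/X)$. Hence $f^*(\alpha)$ is a compatible connection. There is no real obstacle here: the proposition is essentially the assertion that both sides of the holonomy condition are natural in pullback, with the left side controlled by Proposition \ref{prop:12} and the right side by the pointwise definition of $f^*(\Phi_X)$.
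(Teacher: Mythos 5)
Your proof is correct and follows essentially the same route as the paper's own argument: it applies Proposition \ref{prop:12} to identify $\opn{MI}(f^*(\alpha)\vert\sigma)$ with $\opn{MI}(\alpha\vert f\circ\sigma)$, and then uses the pointwise identity $f^*(\Phi_X)(y)=\Phi_X(f(y))$ to transport the holonomy condition from $\mbf{C}/X$ to $f^*(\mbf{C}/X)$. Nothing is missing.
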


\begin{proof}
Take a string $\sigma$ in $Y$ with $\sigma(v_0) = y_0$, and let
$y_1 := \sigma(v_1)$. Then $f \circ \sigma$ is a string in $X$. According to
Proposition \ref{prop:12} we have 
\[ \opn{MI}(\alpha \vert f \circ \sigma) = 
\opn{MI}( f^*(\alpha) \vert \sigma)  . \]
Let's call this element $g$. Let $x_1 := f(y_1)$. Then 
\[ \Phi_X(x_i) = f^*(\Phi_X)(y_i) \] 
as homomorphisms $\h \to \g$, for $i = 0, 1$. 
We see that the holonomy condition is satisfied for 
$f^*(\alpha)$, relative to $f^* (\mbf{C} / X)$.
\end{proof}

\begin{dfn} \label{dfn:18}
Suppose 
\[ \mbf{C} / X = (G, H, \Psi, \Phi_0, \Phi_X) \] 
and 
\[ \mbf{C}' / X = (G', H', \Psi', \Phi_0', \Phi'_X) \]
are two Lie quasi crossed
modules with additive feedbacks over $(X, x_0)$. 
A {\em transfer} between them is a transfer of twisting setups
\[ \bsym{\Theta}_X = (\Theta_G, \Theta_H, \Theta_{\h, X}) :
(G, H, \Psi_{\h}) \to (G', H', \Psi'_{\h'})  \]
parametrized by $(X, x_0)$,
in the sense of Definition \ref{dfn:7}, satisfying this condition:
\[ \Phi'_X \circ \Theta_{\h, X} = \opn{Lie}(\Theta_G) \circ \Phi_X \]
in 
$\mcal{O}_{\mrm{pws}}(X) \otimes \opn{Hom}(\h, \g')$.
\end{dfn}

\subsection{Connection-Curvature Pairs} \label{subsec:ccpair}

\begin{dfn} \label{dfn:19}
Let $\mbf{C} / X = (G, H, \Psi, \Phi_0, \Phi_X)$ be a 
Lie quasi crossed module with additive feedback over a pointed 
polyhedron $(X, x_0)$; see Definition \ref{dfn:6}. A 
{\em connection-curvature pair} 
\index{Connection-curvature pair}
for $\mbf{C} / X$ is a pair $(\alpha, \beta)$, consisting of Lie
algebra valued differential forms
\[ \alpha \in \Omega_{\mrm{pws}}^1(X) \otimes \g \]
and
\[ \beta \in \Omega_{\mrm{pws}}^2(X) \otimes \h , \]
satisfying the conditions below.
\begin{itemize}
\rmitem{i} $\alpha$ is a connection compatible with $\mbf{C} / X$
(Definition \ref{dfn:8}).

\rmitem{ii} The equation
\[ \Phi_X(\beta) = \d (\alpha) + \smfrac{1}{2} [\alpha, \alpha] \]
holds in $\Omega_{\mrm{pws}}^2(X) \otimes \g$.
\end{itemize}
\end{dfn}

\begin{rem}
Condition (ii) above is often referred to as 
{\em vanishing of the fake curvature}. See \cite{BM, BS}.
\end{rem}

\begin{prop} \label{prop:11}
Let $\mbf{C} / X$ be a Lie quasi crossed module with additive feedback over
$(X, x_0)$, and let $(\alpha, \beta)$ be a connection-curvature pair for
$\mbf{C} / X$. 
\begin{enumerate}
\item Let $f : (Y, y_0) \to (X, x_0)$ be a piecewise linear map between pointed
polyhedra. Then $\bigl( f^*(\alpha), f^*(\beta) \bigr)$ 
is a connection-curvature pair in \lb
$f^* (\mbf{C} / X)$.
\item Let $\mbf{C}' / X$ be another Lie quasi crossed module with additive
feedback over $(X, x_0)$, and let
\[ \bsym{\Theta}_X = (\Theta_G, \Theta_H, \Theta_{\h, X}) : \mbf{C} / X 
\to \mbf{C}' / X \]
be a morphism of Lie quasi crossed module with additive
feedback. Assume that $\alpha$ is
compatible with $\Theta_{\h, X}$. See Definitions \tup{\ref{dfn:18}} and
\tup{\ref{dfn:17}}. Then 
\[ \bigl( \opn{Lie}(\Theta_G)(\alpha), \Theta_{\h, X}(\beta) \bigr) \]
is a connection-curvature pair for $\mbf{C}' / X$.
\end{enumerate}
\end{prop}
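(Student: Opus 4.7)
My plan is to verify both defining properties of a connection-curvature pair separately in each part of the proposition: compatibility of the connection (Definition \ref{dfn:8}) and the vanishing fake-curvature equation $\Phi_X(\beta) = d\alpha + \smfrac{1}{2}[\alpha,\alpha]$.

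For part (1), compatibility of $f^*(\alpha)$ with the pulled-back Lie quasi crossed module $f^*(\mbf{C}/X)$ is exactly the content of Proposition \ref{prop:15}, so no new argument is needed there. For the fake-curvature equation, I would apply the DG Lie algebra homomorphism $f^* : \Omega_{\mrm{pws}}(X) \otimes \g \to \Omega_{\mrm{pws}}(Y) \otimes \g$ to both sides of the equation for $(\alpha,\beta)$. Since $f^*$ commutes with $\d$ and with the Lie bracket, the right-hand side becomes $\d(f^*(\alpha)) + \smfrac{1}{2}[f^*(\alpha),f^*(\alpha)]$; and since $\Phi_X \in \mcal{O}_{\mrm{pws}}(X) \otimes \opn{Hom}(\h,\g)$ is an $\mcal{O}_{\mrm{pws}}$-linear family of maps, pullback commutes with the application of $\Phi_X$ in the evident sense $f^*(\Phi_X(\beta)) = f^*(\Phi_X)(f^*(\beta))$. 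This is precisely the fake-curvature equation for the data $(f^*(\alpha), f^*(\beta))$ and the pulled-back feedback $f^*(\Phi_X)$, as required by the construction $f^*(\mbf{C}/X)$ in \tup{(\ref{eqn:240})}.

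For part (2), the fake-curvature equation is handled analogously: I would apply the DG Lie algebra morphism $\opn{Lie}(\Theta_G) : \Omega_{\mrm{pws}}(X) \otimes \g \to \Omega_{\mrm{pws}}(X) \otimes \g'$ to both sides. Writing $\alpha' := \opn{Lie}(\Theta_G)(\alpha)$ and $\beta' := \Theta_{\h,X}(\beta)$, the right-hand side becomes $\d\alpha' + \smfrac{1}{2}[\alpha',\alpha']$, while the left-hand side transforms, by the feedback-transfer identity $\opn{Lie}(\Theta_G) \circ \Phi_X = \Phi'_X \circ \Theta_{\h,X}$ of Definition \ref{dfn:18}, into $\Phi'_X(\beta')$. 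The compatibility of $\alpha'$ with $\mbf{C}'/X$ is the main nontrivial step. Given a string $\sigma$ from $x_0$ to $x_1$, I set $g := \opn{MI}(\alpha \vert \sigma)$, so by Proposition \ref{prop:12} $g' := \opn{MI}(\alpha' \vert \sigma) = \Theta_G(g)$. Three commuting squares are then available: the holonomy condition for $\alpha$ with respect to $\mbf{C}/X$, the compatibility of $\alpha$ with $\Theta_{\h,X}$ provided by the hypothesis (in the sense of Definition \ref{dfn:17}), and the feedback-transfer identity evaluated at $x_0$ and at $x_1$. Stacking these diagrams together with the naturality relation $\opn{Lie}(\Theta_G) \circ \opn{Ad}_{\g}(g) = \opn{Ad}_{\g'}(g') \circ \opn{Lie}(\Theta_G)$ of the adjoint representation yields $\Phi'_X(x_0) \circ \Psi'_{\h'}(g') = \opn{Ad}_{\g'}(g') \circ \Phi'_X(x_1)$, which, via Proposition \ref{prop:9}(2), is exactly the holonomy condition for $\alpha'$ with respect to $\mbf{C}'/X$.

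The main obstacle I anticipate is purely bookkeeping inside the diagram chase of (2): keeping straight which maps are evaluated at $x_0$ versus $x_1$, and in which direction the compatibility squares are being traversed, since the same letter $g$ is used for the holonomy of both $\alpha$ (landing in $G$) and, via $\Theta_G$, for that of $\alpha'$ (landing in $G'$). Once these are properly tracked, the verification is a mechanical consequence of the functoriality of the multiplicative integral established in Section \ref{sec:dim1}, the $\mcal{O}_{\mrm{pws}}(X)$-linearity of the feedback $\Phi_X$, and the DG Lie algebra structure on $\Omega_{\mrm{pws}}(-) \otimes \g$.
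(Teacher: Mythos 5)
Your proposal follows the paper's own proof almost verbatim. Part (1) is handled exactly as in the paper: compatibility of $f^*(\alpha)$ is quoted from Proposition \ref{prop:15}, and the fake-curvature equation is transported by using that $f^*$ is a DG Lie algebra homomorphism together with the $\mcal{O}_{\mrm{pws}}$-linearity of the feedback, i.e.\ $f^*(\Phi_X(\beta)) = f^*(\Phi_X)(f^*(\beta))$. Likewise the curvature computation in part (2) is the paper's computation, resting on the transfer identity $\Phi'_X \circ \Theta_{\h, X} = \opn{Lie}(\Theta_G) \circ \Phi_X$ of Definition \ref{dfn:18} and the fact that $\opn{Lie}(\Theta_G)$ is a morphism of DG Lie algebras.

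The one place where you go beyond the paper is the compatibility of $\alpha' := \opn{Lie}(\Theta_G)(\alpha)$ with $\mbf{C}' / X$ in part (2): the paper merely asserts that this follows ``directly'' from the hypotheses, whereas you spell out a diagram chase (using $g' = \Theta_G(g)$ from Proposition \ref{prop:12}). A caveat: stacking the square of Definition \ref{dfn:17}, the transfer identity at $x_0$ and at $x_1$, the holonomy square for $\alpha$, and the naturality of $\opn{Ad}$ yields
$\Phi'_X(x_0) \circ \Psi'_{\h'}(g') \circ \Theta_{\h, X}(x_1) = \opn{Ad}_{\g'}(g') \circ \Phi'_X(x_1) \circ \Theta_{\h, X}(x_1)$,
that is, the holonomy condition of Definition \ref{dfn:8} for $\alpha'$ only after precomposition with $\Theta_{\h, X}(x_1) : \h \to \h'$, whose image need not be all of $\h'$. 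So your final sentence claims slightly more than the chase delivers; to remove the caveat one needs an extra input, e.g.\ surjectivity of $\Theta_{\h, X}(x_1)$, or the observation that in the paper's actual use of this proposition (the transfer of Proposition \ref{prop:9} inside the proof of Theorem \ref{thm:1}) the target feedback $\Phi'_X$ is constant and equivariant, so every connection is compatible with $\mbf{C}' / X$. Since the paper's one-line assertion glosses over exactly the same point, your argument is at the same level of rigor as the paper's own proof.
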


\begin{proof}
(1) By Proposition \ref{prop:15} the form $\alpha' := f^*(\alpha)$ is a
connection compatible with $f^* (\mbf{C} / X)$.
Next let us write $\beta' := f^*(\beta)$ and 
$\Phi_X' := f^*(\Phi_X)$. . Since  
\[ f^* : \Omega^1_{\mrm{pws}}(X) \otimes \g \to 
\Omega^1_{\mrm{pws}}(Y) \otimes \g \]
is a DG Lie algebra homomorphism, we have
\[ \Phi_X'(\beta') = f^*(\Phi_X(\beta)) 
= f^* \bigl( \d (\alpha) + \smfrac{1}{2} [\alpha, \alpha] \bigr) 
= \d (\alpha') + \smfrac{1}{2} [\alpha', \alpha'] . \]

\medskip \noindent
(2) The fact that $\alpha$ is compatible with $\Theta_{\h, X}$, 
and that $\bsym{\Theta}_X$ satisfies the condition ($\Diamond$) in Definition
\ref{dfn:17}, imply directly that $\alpha' := \opn{Lie}(\Theta_G)(\alpha)$ is a
connection compatible with $\mbf{C}' / X$.

Let $\beta' := \Theta_{\h, X}(\beta)$. Then
\[ \Phi'_X(\beta') = \opn{Lie}(\Theta_G)(\Phi_X(\beta)) = 
\opn{Lie}(\Theta_G) \bigl( \d (\alpha) + \smfrac{1}{2} [\alpha, \alpha] \bigr)
= \d (\alpha') + \smfrac{1}{2} [\alpha', \alpha'] . \]
We see that condition (ii) of Definition \ref{dfn:19} holds.
\end{proof}

\begin{dfn}[Tame Connection] \label{dfn:46}
Let $\mbf{C} / X$ be a Lie quasi crossed module with additive feedback over
$(X, x_0)$. A form
$\alpha \in \Omega^1_{\mrm{pws}}(X) \otimes \g$
is called a {\em tame connection} \index{Tame connection}
for $\mbf{C} / X$ if there exists a form
$\beta \in \Omega^2_{\mrm{pws}}(X) \otimes \h$ 
such that $(\alpha, \beta)$ is a connection-curvature pair 
(as in Definition \ref{dfn:19}). 
\end{dfn}

In other words, $\alpha$ is a tame connection if it is a compatible connection
(Definition \ref{dfn:8}), and its curvature
$\d (\alpha) + \smfrac{1}{2} [\alpha, \alpha]$
comes from $\Omega^2_{\mrm{pws}}(X) \otimes \h$.

\begin{cor} \label{cor:8}
In the situation of Proposition \tup{\ref{prop:11}}, the forms 
$f^*(\alpha)$ and $\opn{Lie}(\Theta_G)(\alpha)$ are tame connections.
\end{cor}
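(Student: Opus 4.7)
The plan is to observe that this corollary is essentially an immediate unwinding of Definition~\ref{dfn:46} combined with Proposition~\ref{prop:11}, so the proof should be very short and require no new ideas.

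By hypothesis, $\alpha$ is a tame connection for $\mbf{C} / X$, which by Definition~\ref{dfn:46} means there exists some $\beta \in \Omega^2_{\mrm{pws}}(X) \otimes \h$ such that $(\alpha, \beta)$ is a connection-curvature pair for $\mbf{C} / X$. I would simply use this witness $\beta$ to produce the required witnesses in the two target setups.

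For the first assertion, I apply part~(1) of Proposition~\ref{prop:11} to the pair $(\alpha, \beta)$, which directly yields that $\bigl( f^*(\alpha), f^*(\beta) \bigr)$ is a connection-curvature pair in $f^*(\mbf{C} / X)$. By Definition~\ref{dfn:46}, this exhibits $f^*(\alpha)$ as a tame connection for $f^*(\mbf{C} / X)$, with $f^*(\beta)$ as the witnessing $2$-form. For the second assertion, I similarly apply part~(2) of Proposition~\ref{prop:11} (whose hypothesis that $\alpha$ be compatible with $\Theta_{\h, X}$ is part of the ambient situation of that proposition) to conclude that $\bigl( \opn{Lie}(\Theta_G)(\alpha),\, \Theta_{\h, X}(\beta) \bigr)$ is a connection-curvature pair for $\mbf{C}' / X$, so $\opn{Lie}(\Theta_G)(\alpha)$ is tame with witness $\Theta_{\h, X}(\beta)$.

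There is no real obstacle here: all the analytic content (holonomy compatibility, vanishing of fake curvature, functoriality of $f^*$ as a DG Lie algebra map, and the commutation relation $\Phi'_X \circ \Theta_{\h, X} = \opn{Lie}(\Theta_G) \circ \Phi_X$) has already been absorbed into Proposition~\ref{prop:11}. The only step is to note that the definition of tameness is precisely existence of such a $\beta$, and the two parts of Proposition~\ref{prop:11} transport $\beta$ along $f^*$ and along $\Theta_{\h, X}$ respectively.
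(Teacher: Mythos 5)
Your proposal is correct and is exactly the intended argument: the paper itself only remarks ``The proof is trivial,'' and the triviality consists precisely in the unwinding you give, namely using the witness $\beta$ from Definition \ref{dfn:46} and transporting it via parts (1) and (2) of Proposition \ref{prop:11}.
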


The proof is trivial.

\subsection{Moving the Base Point}
\label{subsec:moving}
In this subsection we consider the following setup: 
\[ \mbf{C} / X = (G, H, \Psi, \Phi_0, \Phi_X) \]
is a Lie quasi crossed module with additive feedback over a pointed 
polyhedron $(X, x_0)$. We are given a form
$\alpha \in \Omega^1_{\mrm{pws}}(X) \otimes \g$, which is a connection
compatible with $\mbf{C} / X$. And we are given a string $\rho$ in $X$, with
initial point $\rho(v_0) = x_0$ and terminal point $x_1 := \rho(v_1)$.
Let 
\[ g := \opn{MI}(\alpha \vert \rho) \in G . \]

Recall that $\Psi(g)$ is an automorphism of the pointed analytic manifold
$(H, 1)$. We define a new multiplication on the manifold $H$, by the formula
\[ h_1 \cdot^g h_2 := \Psi(g)^{-1} \bigl( \Psi(g)(h_1) \cdot 
\Psi(g)(h_2) \bigr) \]
for $h_1, h_2 \in H$. In this way we obtain a new Lie group, that is denoted by
$H^g$, and a Lie group isomorphism
\[ \Psi(g) : H^g \to H . \]
See Figure \ref{fig:70}.

\begin{figure} 
\includegraphics[scale=0.18]{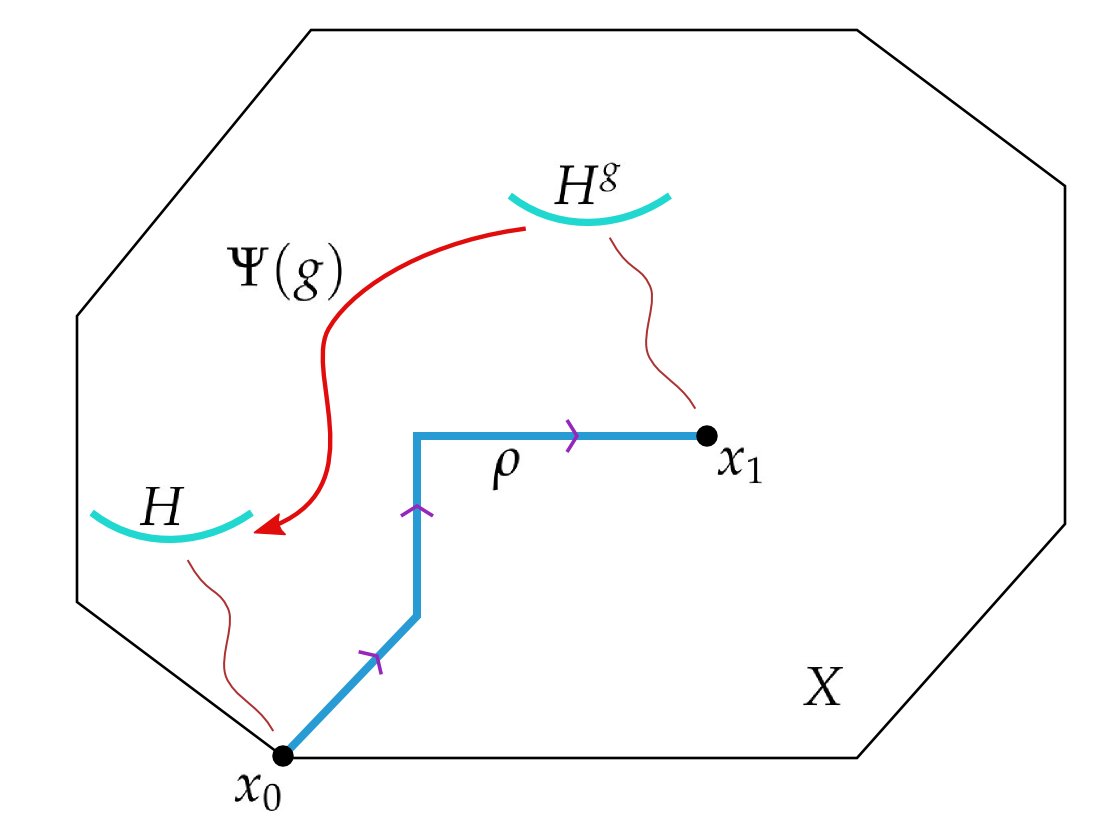}
\caption{The string $\rho$ from $x_0$ to $x_1$, the group element
$g := \opn{MI}(\alpha \vert \rho) \in G$, and the group isomorphism
$\Psi(g) : H^g \to H$.} 
\label{fig:70}
\end{figure}

The Lie algebra of $H^g$ is $\h^g$. So 
$\h^g = \h$ as vector spaces (this is the tangent space to $H$ at $1$), and 
\[ \Psi_{\h}(g) : \h^g \to \h \]
is a Lie algebra isomorphism. There is a commutative diagram of maps
\begin{equation} \label{eqn:133}
\UseTips \xymatrix @C=7ex @R=5ex {
\h
\ar[d]_{\exp_H}
& \h^g
\ar[l]_{\Psi_{\h}(g)}
\ar[d]^{\exp_{H^g}} \\
H
& H^g
\ar[l]_{\Psi(g)}
} 
\end{equation}
Note that if $\Psi(g)$ is not a group automorphism of $H$, then $H^g$ is not
equal to $H$ as groups. In this case the maps of manifolds
\[ \exp_H, \exp_{H^g} : \h \to H \]
could be distinct. 

The data $(G, H^g, \Psi_{\h})$ is a twisting setup (as in Definition
\ref{dfn:5}), which in general is distinct from the twisting setup
$(G, H, \Psi_{\h})$, because of the possibly distinct exponential maps. Given
a form $\beta \in \Omega^2_{\mrm{pws}}(X) \otimes \h$ and a kite
$(\sigma, \tau)$ in  $(X, x_1)$, let us denote by 
\begin{equation} \label{eqn:123}
\opn{MI}^g(\alpha, \beta \vert \sigma, \tau) \in H^g
\end{equation}
the multiplicative integral with respect to the twisting setup
$(G, H^g, \Psi_{\h})$. 

We define a map of Lie groups
$\Phi_0^g : H^g \to G$ by the commutative diagram
\[ \UseTips \xymatrix @C=7ex @R=5ex {
H
\ar[d]_{\Phi_0}
& H^g
\ar[l]_{\Psi(g)}
\ar[d]^{\Phi_0^g} \\
G
& G 
\ar[l]_{\opn{Ad}_G(g)}
} \]

\begin{prop}
The data
\[ \mbf{C}^g := (G, H^g, \Psi, \Phi_0^g) \]
is a Lie quasi crossed module.
\end{prop}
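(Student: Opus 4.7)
The plan is to verify in turn the four pieces of Definition \ref{dfn:22} for $\mbf{C}^g$. First, I would check that $H^g$ really is a Lie group: since $\Psi(g)$ is an analytic diffeomorphism of the manifold $H$ fixing $1$, transporting the multiplication, inversion, and unit of $H$ through $\Psi(g)$ yields analytic operations on the manifold $H$, with the same unit $1$. Tautologically, $\Psi(g) : H^g \to H$ then becomes an isomorphism of Lie groups, and diagram (\ref{eqn:133}) commutes by the naturality of $\exp$. Second, I would note that since $H^g$ has $(H,1)$ as its underlying pointed manifold, the original analytic action $\Psi : G \times H \to H$ is automatically an analytic action of $G$ on the pointed manifold $(H^g, 1)$ by automorphisms of pointed manifolds.

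Next I would verify that $\Phi_0^g$ is a Lie group homomorphism. Chasing the defining square gives $\Phi_0^g = \opn{Ad}_G(g)^{-1} \circ \Phi_0 \circ \Psi(g)$, a composition of Lie group homomorphisms (using that $\Psi(g) : H^g \to H$ is a Lie group isomorphism by the previous step, and that $\opn{Ad}_G(g)^{-1}$ and $\Phi_0$ are Lie group maps).

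The main content is the Pfeiffer-type condition ($*$), namely $\Psi \circ \Phi_0^g = \opn{Ad}_{H^g}$ as homomorphisms $H^g \to \opn{Aut}(H^g, 1)$. For $h \in H^g$, using the formula for $\Phi_0^g$ and the fact that $\Psi$ is a group homomorphism, I compute
\[ \Psi(\Phi_0^g(h)) = \Psi(g)^{-1} \circ \Psi(\Phi_0(\Psi(g)(h))) \circ \Psi(g)
= \Psi(g)^{-1} \circ \opn{Ad}_H(\Psi(g)(h)) \circ \Psi(g) , \]
where the second equality uses condition ($*$) for the original quasi crossed module $\mbf{C}$. On the other hand, since $\Psi(g) : H^g \to H$ is a group isomorphism, conjugation by $h$ in $H^g$ is intertwined with conjugation by $\Psi(g)(h)$ in $H$, giving
\[ \opn{Ad}_{H^g}(h) = \Psi(g)^{-1} \circ \opn{Ad}_H(\Psi(g)(h)) \circ \Psi(g) \]
as automorphisms of the pointed manifold $(H^g,1) = (H,1)$. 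The two expressions agree, establishing ($*$).

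There is no real obstacle here: everything is a diagram chase, the only mild subtlety being the distinction between group-theoretic and manifold-theoretic structures on $H^g$ versus $H$. Since ($*$) is an equality inside $\opn{Aut}(H^g, 1)$, which depends only on the underlying pointed manifold (shared by $H$ and $H^g$), the transport argument through $\Psi(g)$ goes through without any issue. I would write the proof as the three bookkeeping steps (Lie group structure on $H^g$, Lie group map $\Phi_0^g$, verification of the action) followed by the one-line Pfeiffer computation above.
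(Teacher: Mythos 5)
Your proposal is correct and follows essentially the same route as the paper: the paper's proof is the same diagram chase, organized as three commutative squares (the defining square for $\Phi_0^g$, the intertwining $\Psi\circ\opn{Ad}_G(g)=\opn{Ad}(\Psi(g))\circ\Psi$, and the general identity $\opn{Ad}_H\circ\Psi(g)=\opn{Ad}(\Psi(g))\circ\opn{Ad}_{H^g}$), which combine with the original Pfeiffer condition exactly as in your pointwise computation. Your extra bookkeeping (transport of the Lie group structure to $H^g$, and that $\Phi_0^g$ is a Lie group map) is harmless and matches the construction the paper sets up just before the proposition.
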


\begin{proof}
Let's write
$H' := H^g$, $G' := G$, $u := \opn{Ad}_G(g) : G' \to G$,
$v := \Psi(g) : H' \to H$,
$\Psi' := \Psi$ and $\Phi_0' := \Phi_0^g$. 
There are commutative diagrams of group homomorphisms
\begin{equation} \label{eqn:120}
\UseTips \xymatrix @C=7ex @R=5ex {
H
\ar[d]_{\Phi_0}
& H'
\ar[l]_{v}
\ar[d]^{\Phi_0'} \\
G
& G'
\ar[l]_{u}
} 
\qquad 
\UseTips \xymatrix @C=7ex @R=5ex {
G
\ar[d]_{\Psi}
& G'
\ar[l]_{u}
\ar[d]^{\Psi'} \\
\opn{Aut}(H)
& \opn{Aut}(H') 
\ar[l]_(0.5){\opn{Ad}(v)}
}
\end{equation}
Here $\opn{Aut}(H) = \opn{Aut}(H')$ is the group of automorphisms of the pointed
manifold $H = H'$, and $v$ is seen as an element of this group. 
The first diagram is just the definition of $\Phi_0'$.
The second diagram is commutative since for every $g' \in G'$ we have 
\[ \begin{aligned}
& \Psi(u(g')) = \Psi(g \cdot g' \cdot g^{-1}) = 
\Psi(g) \cdot \Psi(g') \cdot \Psi(g^{-1}) \\
& \qquad = v \cdot \Psi(g') \cdot v^{-1} = 
(\opn{Ad}(v) \circ \Psi')(g') . 
\end{aligned} \]
And by general group theory we have a commutative diagram
\begin{equation} \label{eqn:122}
\UseTips \xymatrix @C=7ex @R=5ex {
H
\ar[d]_{\opn{Ad}_{H}}
& H'
\ar[l]_{v}
\ar[d]^{\opn{Ad}_{H'}} 
\\
\opn{Aut}(H)
& \opn{Aut}(H')
\ar[l]^(0.5){\opn{Ad}(v)}
}
\end{equation}

We are given that  
$\Psi \circ \Phi_0 = \opn{Ad}_H$ 
(this is condition ($*$) of Definition \ref{dfn:22} for the Lie quasi crossed
module $\mbf{C}$). Therefore by combining the three commutative diagrams we
see that
$\Psi' \circ \Phi'_0 = \opn{Ad}_{H'}$ 
\end{proof}

\begin{prop}
The element $\Phi_X$ is an additive feedback for the Lie quasi crossed module
$\mbf{C}^g$ over the pointed polyhedron $(X, x_1)$. Thus
\[ \mbf{C}^g / X := (G, H^g, \Psi, \Phi_0^g, \Phi_X) \]
is a Lie quasi crossed module with additive feedback over 
$(X, x_1)$.
\end{prop}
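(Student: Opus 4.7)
The plan is to verify condition $(**)$ of Definition \ref{dfn:21} for the proposed data $\mbf{C}^g / X = (G, H^g, \Psi, \Phi_0^g, \Phi_X)$ relative to the new base point $x_1$. Since $\Phi_X$ is already an element of $\mcal{O}_{\mrm{pws}}(X) \otimes \opn{Hom}(\h, \g)$, and since $\h^g = \h$ as vector spaces (both are $T_1 H$), the only thing to check is the equality
\[ \opn{Lie}(\Phi_0^g) = \Phi_X(x_1) \]
in $\opn{Hom}(\h^g, \g) = \opn{Hom}(\h, \g)$.

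First I will compute $\opn{Lie}(\Phi_0^g)$ directly from the defining commutative square for $\Phi_0^g$, which reads $\Phi_0^g = \opn{Ad}_G(g^{-1}) \circ \Phi_0 \circ \Psi(g)$ as Lie group homomorphisms $H^g \to G$. Applying the Lie functor, and using that the derivative of $\Psi(g) : H^g \to H$ at $1$ is precisely the additive twisting $\Psi_{\h}(g) : \h^g \to \h$ (cf.\ the diagram \tup{(\ref{eqn:133})}) and that the derivative of $\opn{Ad}_G(g^{-1})$ is $\opn{Ad}_{\g}(g^{-1})$, I obtain
\[ \opn{Lie}(\Phi_0^g) = \opn{Ad}_{\g}(g^{-1}) \circ \opn{Lie}(\Phi_0) \circ \Psi_{\h}(g) . \]

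Now I invoke condition $(**)$ of Definition \ref{dfn:21} for the original Lie quasi crossed module with additive feedback $\mbf{C} / X$ at its base point $x_0$: namely $\opn{Lie}(\Phi_0) = \Phi_X(x_0)$. Substituting this in gives
\[ \opn{Lie}(\Phi_0^g) = \opn{Ad}_{\g}(g^{-1}) \circ \Phi_X(x_0) \circ \Psi_{\h}(g) . \]
Finally I apply the holonomy condition $(\Diamond)$ of Definition \ref{dfn:8}, which is satisfied because $\alpha$ is by hypothesis a connection compatible with $\mbf{C} / X$ and $g = \opn{MI}(\alpha \vert \rho)$ is the multiplicative integral along a string $\rho$ from $x_0$ to $x_1$. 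The commuting square in $(\Diamond)$ is exactly the identity
\[ \Phi_X(x_0) \circ \Psi_{\h}(g) = \opn{Ad}_{\g}(g) \circ \Phi_X(x_1) , \]
and conjugating through by $\opn{Ad}_{\g}(g^{-1})$ yields $\opn{Lie}(\Phi_0^g) = \Phi_X(x_1)$, as required.

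There is no real obstacle here; the entire content of the statement is precisely that the holonomy condition built into the definition of a compatible connection is exactly what makes the pointwise fibre $\Phi_X(x_1)$ match the infinitesimal version of the transported feedback $\Phi_0^g$. In other words, condition $(\Diamond)$ for $\alpha$ is \emph{designed} so that parallel transport of the base point from $x_0$ to $x_1$ via $\rho$ carries the additive feedback datum at $x_0$ to the additive feedback datum at $x_1$, and the proof is simply making this unwinding explicit.
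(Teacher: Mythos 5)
Your proof is correct and is essentially the paper's own argument: both reduce to checking condition $(**)$ at $x_1$, compute $\opn{Lie}(\Phi_0^g)$ by differentiating the defining square $\Phi_0 \circ \Psi(g) = \opn{Ad}_G(g) \circ \Phi_0^g$, and then combine $\opn{Lie}(\Phi_0) = \Phi_X(x_0)$ with the holonomy condition $(\Diamond)$ for $\alpha$ along $\rho$ to identify $\opn{Lie}(\Phi_0^g)$ with $\Phi_X(x_1)$. The only cosmetic difference is that you solve for $\Phi_0^g$ with an explicit $\opn{Ad}_{\g}(g^{-1})$ while the paper cancels $\opn{Ad}_{\g}(g)$ from the left.
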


\begin{proof}
In the notation used in the proof of the previous proposition, and
with $\Phi'_X := \Phi_X$, we have to show that
$\Phi'_X(x_1) = \opn{Lie}(\Phi'_0)$, as linear maps
$\h' \to \g'$. 
We have commutative diagrams of linear maps
\[ \UseTips \xymatrix @C=7ex @R=5ex {
\h
\ar[d]_{\opn{Lie}(\Phi_0)}
& \h'
\ar[l]_{\opn{Lie}(v)}
\ar[d]^{\opn{Lie}(\Phi_0')} \\
\g
& \g' 
\ar[l]_{\opn{Lie}(u)}
} 
\qquad 
\UseTips \xymatrix @C=8ex @R=5ex {
\h
\ar[d]_{\Phi_X(x_0)}
& \h
\ar[l]_{\Psi_{\h}(g)}
\ar[d]^{\Phi_X(x_1)} \\
\g
& \g
\ar[l]_(0.5){\opn{Ad}_{\g}(g)}
} \]
The first diagram is the differential of the first diagram in (\ref{eqn:120}),
and the second diagram is the holonomy condition for $\alpha$ relative to 
$\mbf{C} / X$. Since $\Phi_X$ is an additive feedback for $\mbf{C}$
over $(X, x_0)$ we have
$\Phi_X(x_0) = \opn{Lie}(\Phi_0)$.
We know that
$\Psi_{\h}(g) = \opn{Lie}(v)$ and
$\opn{Ad}_{\g}(g) = \opn{Lie}(u)$.
It follows that
$\Phi'_X(x_1) = \Phi_X(x_1) = \opn{Lie}(\Phi'_0)$.
\end{proof}

\begin{prop}
The form $\alpha$ is a connection compatible with $\mbf{C}^g / X$.
\end{prop}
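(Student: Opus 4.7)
The plan is to verify the holonomy condition of Definition \ref{dfn:8} for the form $\alpha$ with respect to the new Lie quasi crossed module with additive feedback $\mbf{C}^g / X$, now based at $x_1$. Note that the additive feedback in $\mbf{C}^g / X$ is the same form $\Phi_X$ (only re-evaluated at the new base point), and the twisting $\Psi$ is the same as for $\mbf{C}$, so the infinitesimal twisting $\Psi_{\h}$ is unchanged. The tool to transfer the already-known compatibility at $x_0$ to the new base point $x_1$ is concatenation of strings together with the multiplicativity of the $1$-dimensional MI (Proposition \ref{prop:1}(1)) and the fact that $\Psi_{\h}$ and $\opn{Ad}_{\g}$ are group homomorphisms $G \to \opn{GL}(\h)$ and $G \to \opn{GL}(\g)$ respectively.

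The main step is as follows. Let $\sigma$ be an arbitrary string in $X$ with $\sigma(v_0) = x_1$ and $x_2 := \sigma(v_1)$, and set $g' := \opn{MI}(\alpha \vert \sigma) \in G$. The concatenated string $\rho * \sigma$ goes from $x_0$ to $x_2$, and Proposition \ref{prop:1}(1) gives $\opn{MI}(\alpha \vert \rho * \sigma) = g \cdot g'$. Applying the holonomy condition for $\alpha$ relative to $\mbf{C} / X$ to the strings $\rho$ and $\rho * \sigma$, we obtain the two identities
\[ \Phi_X(x_0) \circ \Psi_{\h}(g) = \opn{Ad}_{\g}(g) \circ \Phi_X(x_1) \]
and
\[ \Phi_X(x_0) \circ \Psi_{\h}(g \cdot g') = \opn{Ad}_{\g}(g \cdot g') \circ \Phi_X(x_2) . \]
Using the multiplicativity $\Psi_{\h}(g \cdot g') = \Psi_{\h}(g) \circ \Psi_{\h}(g')$ and $\opn{Ad}_{\g}(g \cdot g') = \opn{Ad}_{\g}(g) \circ \opn{Ad}_{\g}(g')$, and substituting the first identity into the left-hand side of the second, we get
\[ \opn{Ad}_{\g}(g) \circ \Phi_X(x_1) \circ \Psi_{\h}(g') = \opn{Ad}_{\g}(g) \circ \opn{Ad}_{\g}(g') \circ \Phi_X(x_2) . \]
Since $\opn{Ad}_{\g}(g)$ is an invertible operator on $\g$, canceling it on both sides yields exactly the diagram required in Definition \ref{dfn:8} with base point $x_1$, namely
\[ \Phi_X(x_1) \circ \Psi_{\h}(g') = \opn{Ad}_{\g}(g') \circ \Phi_X(x_2) . \]

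There is no real obstacle here: the holonomy condition is naturally ``multiplicative along strings'', so moving the base point along $\rho$ is essentially a cancellation argument. One small bookkeeping point is that the multiplicative twisting and the infinitesimal twisting in $\mbf{C}^g$ are the \emph{same} maps $\Psi$ and $\Psi_{\h}$ as in $\mbf{C}$ (only the group structure on $H^g$ has changed), and likewise $\Phi_X$ is unchanged as a family of linear maps; this has already been verified in the two preceding propositions, so the holonomy diagram for $\mbf{C}^g / X$ literally is the one displayed above.
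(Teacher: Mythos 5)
Your proof is correct and follows essentially the same route as the paper: apply the holonomy condition for $\mbf{C}/X$ to the strings $\rho$ and $\rho*\sigma$, use multiplicativity of $\opn{MI}$ along concatenation together with the homomorphism property of $\Psi_{\h}$ and $\opn{Ad}_{\g}$, and cancel the invertible operator attached to $g$ to obtain the holonomy diagram at the new base point $x_1$. The paper phrases this as pasting two commutative squares and composing with $\Psi_{\h}(g)^{-1}$, $\opn{Ad}_{\g}(g)^{-1}$, which is the same cancellation argument.
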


\begin{proof}
We continue with the notation of the previous proofs. 
Let $\sigma$ be a string in $X$ initial point $\sigma(v_0) = x_1$ and terminal
point $x_2 := \sigma(v_1)$. Define
$g' := \opn{MI}(\alpha \vert \sigma) \in G' = G$. 
Because $\alpha$ is a connection compatible with $\mbf{C} / X$, and 
because
$g \cdot g' = \opn{MI}(\alpha \vert \rho * \sigma)$,
we have a commutative diagram
\[ \UseTips \xymatrix @C=10ex @R=6ex {
\h
\ar[r]^{\Psi_{\h}(g)}
\ar[d]_{\Phi_X(x_1)}
& \h
\ar[d]^{\Phi_X(x_0)} 
&
\h
\ar[d]^{\Phi_X(x_2)} 
\ar[l]_{\Psi_{\h}(g \cdot g')}
\\
\g
\ar[r]_(0.5){\opn{Ad}_{\g}(g)}
& \g
& \g
\ar[l]^(0.5){\opn{Ad}_{\g}(g \cdot g')}
} \]
Since
\[ \Psi_{\h}(g)^{-1} \circ \Psi_{\h}(g \cdot g') = \Psi_{\h}(g') \]
and
\[ \opn{Ad}_{\g}(g)^{-1} \circ \opn{Ad}_{\g}(g \cdot g') = 
\opn{Ad}_{\g}(g') \]
we see that the diagram 
\[ \UseTips \xymatrix @C=8ex @R=5ex {
\h'
\ar[d]_{\Phi'_X(x_1)}
& \h'
\ar[l]_{\Psi'_{\h'}(g')}
\ar[d]^{\Phi'_X(x_2)} \\
\g'
& \g'
\ar[l]_(0.5){\opn{Ad}_{\g'}(g')}
} \]
is commutative.
\end{proof}

\begin{thm} \label{thm:14}
Let 
\[ \mbf{C} / X = (G, H, \Psi, \Phi_0, \Phi_X) \]
be a Lie quasi crossed module with additive feedback over a pointed 
polyhedron $(X, x_0)$, let 
$\alpha \in \Omega^1_{\mrm{pws}}(X) \otimes \g$
be a connection compatible with $\mbf{C} / X$, and let
$\rho$ be a string in $X$, with initial point $\rho(v_0) = x_0$.
Define
$g := \opn{MI}(\alpha \vert \rho)$, 
and let
\[ \mbf{C}^g / X = (G, H^g, \Psi, \Phi_0^g, \Phi_X) \]
be the Lie quasi crossed module with additive feedback over $(X, x_1)$
constructed above. 

Given a form 
$\beta \in \Omega^2_{\mrm{pws}}(X) \otimes \h$ and a kite
$(\sigma, \tau)$ in  $(X, x_1)$, consider the element
\[ \opn{MI}^g(\alpha, \beta \vert \sigma, \tau) \in H^g  \]
from \tup{(\ref{eqn:123})}. Then 
\[ \Psi(g) \bigl( \opn{MI}^g(\alpha, \beta \vert \sigma, \tau) \bigr)  =
\opn{MI}(\alpha, \beta \vert \rho * \sigma, \tau)  \]
in $H$. 
\end{thm}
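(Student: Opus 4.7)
The plan is to reduce the identity to the level of basic Riemann products, propagate it through the $k$-th binary tessellation using multiplicativity of the group isomorphism $\Psi(g) : H^g \to H$, and then pass to the limit using continuity. Note that Definition \ref{dfn:11} only requires a twisting setup to define the basic Riemann product, so for every kite $(\sigma', \tau')$ in $(X, x_1)$ there is a well-defined basic Riemann product $\opn{RP}^g_0(\alpha, \beta \vert \sigma', \tau') \in H^g$ associated to $(G, H^g, \Psi_{\h})$, and likewise a refined Riemann product $\opn{RP}^g_k$ for every $k$; the element $\opn{MI}^g(\alpha,\beta\vert\sigma,\tau)$ of (\ref{eqn:123}) is then their limit as $k \to \infty$.

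The key step is the basic identity
$$
\Psi(g) \bigl( \opn{RP}^g_0(\alpha, \beta \vert \sigma', \tau') \bigr) = \opn{RP}_0(\alpha, \beta \vert \rho * \sigma', \tau')
$$
for every kite $(\sigma', \tau')$ in $(X, x_1)$. In the degenerate case (where $\tau'(\mbf{I}^2)$ is not $2$-dimensional, or $\tau'(\smfrac{1}{2},\smfrac{1}{2})$ is a singular point of $\beta$) both sides equal $1$ and $\Psi(g)(1)=1$. In the smooth case, let $z := \tau'(\smfrac{1}{2}, \smfrac{1}{2})$, let $\til{\beta}$ be the coefficient of $\beta$ near $z$, and set $g' := \opn{MI}(\alpha \vert \sigma' * (\tau' \circ \sigma_{\mrm{pr}})) \in G$. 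By multiplicativity of $\opn{MI}(\alpha \vert -)$ on strings (Proposition \ref{prop:1}(1)),
$$
\opn{MI}\bigl(\alpha \,\bigl\vert\, (\rho * \sigma') * (\tau' \circ \sigma_{\mrm{pr}})\bigr) = g \cdot g',
$$
and since $\Psi_{\h} : G \to \opn{GL}(\h)$ is a group homomorphism, $\Psi_{\h}(g \cdot g') = \Psi_{\h}(g) \circ \Psi_{\h}(g')$. Writing $\lambda := \opn{area}(\tau') \cdot \Psi_{\h}(g')(\til{\beta}(z)) \in \h$, Definition \ref{dfn:11} yields
$$
\opn{RP}^g_0(\alpha, \beta \vert \sigma', \tau') = \exp_{H^g}(\lambda)
\quad \text{and} \quad
\opn{RP}_0(\alpha, \beta \vert \rho * \sigma', \tau') = \exp_H\bigl(\Psi_{\h}(g)(\lambda)\bigr),
$$
so the desired equality follows from the commutative diagram (\ref{eqn:133}).

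To extend to refined Riemann products, observe that by the composition rule (\ref{eqn:18}), concatenation with $\rho$ on the left affects only the string component: for any $k \geq 0$ and $i \in \{ 1, \ldots, 4^k \}$, if $(\sigma'_i, \tau'_i) := (\sigma, \tau) \circ (\sigma^k_i, \tau^k_i)$, then the $i$-th kite of $\opn{tes}^k(\rho * \sigma, \tau)$ is exactly $(\rho * \sigma'_i, \tau'_i)$. Since $\Psi(g) : H^g \to H$ is a Lie group isomorphism (and hence preserves ordered products), applying the basic identity termwise gives
$$
\Psi(g) \bigl( \opn{RP}^g_k(\alpha, \beta \vert \sigma, \tau) \bigr) = \opn{RP}_k(\alpha, \beta \vert \rho * \sigma, \tau)
$$
for every $k \geq 0$. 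Because $\Psi(g)$ is continuous and each Riemann product sequence converges (Theorem \ref{thm:12}), letting $k \to \infty$ proves the theorem. No serious obstacle arises: the Lie group $H^g$ was engineered precisely so that $\Psi(g)$ intertwines the two twisting setups at the level of exponentials, and everything beyond the basic identity is routine recursion through the binary tessellation together with continuity.
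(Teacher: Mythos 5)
Your proposal is correct and follows essentially the same route as the paper's proof: verify the identity at the level of basic Riemann products via the commutative diagram (\ref{eqn:133}), note that composing with $\rho$ on the left commutes with the binary tessellation, propagate termwise using that $\Psi(g) : H^g \to H$ is a group isomorphism, and pass to the limit $k \to \infty$. The only cosmetic difference is that you abstract the termwise step into a stand-alone ``basic identity'' for arbitrary kites in $(X, x_1)$, which the paper states in-line for the tessellated kites; the content is identical.
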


See Figure \ref{fig:71}.

\begin{figure} 
\includegraphics[scale=0.18]{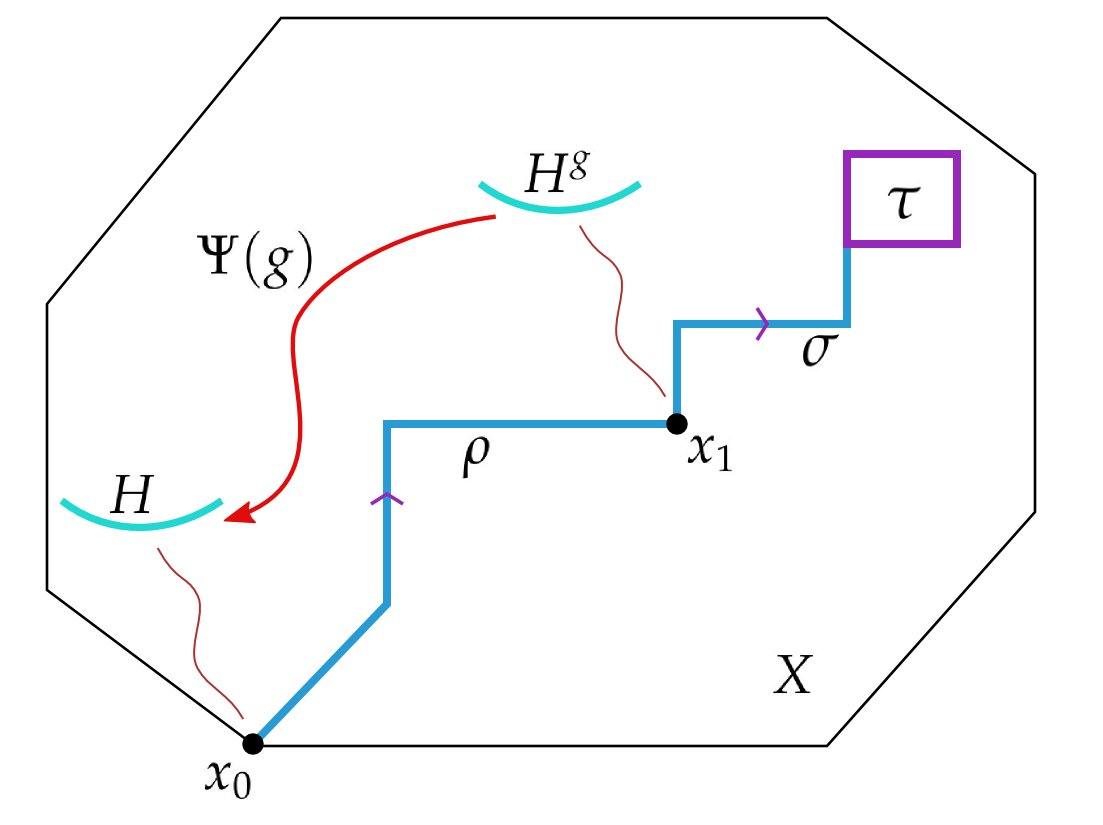}
\caption{The string $\rho$ from $x_0$ to $x_1$, and the kite
$(\sigma, \tau)$ in $(X, x_1)$.} 
\label{fig:71}
\end{figure}

\begin{proof}
We can assume that the kite $(\sigma, \tau)$ is nondegenerate. Define
$Z := \tau(\mbf{I}^2)$ and $g' := \opn{MI}^g(\alpha \vert \sigma)$.
Choose a positively oriented orthonormal linear coordinate system 
$(s_1, s_2)$ on $Z$.

Take $k \geq 0$ and $i \in \{ 1, \ldots, 4^k \}$, and 
define
$z_i := (\tau \circ \tau^k_i)(\smfrac{1}{2}, \smfrac{1}{2}) \in Z$
and
\[ g'_i := \opn{MI} \bigl( (\alpha \vert (\tau \circ \sigma^k_i) * 
(\tau \circ \tau^k_i \circ \sigma_{\opn{pr}}) \bigr) . \]
So
\[ \opn{MI} \bigl( \alpha \vert
\rho * \sigma * (\tau \circ \sigma^k_i) * 
(\tau \circ \tau^k_i \circ \sigma_{\opn{pr}})  \bigr) = 
g \cdot g' \cdot g'_i . \]

Assume that $z_i$ is a smooth point of $\beta|_Z$. Let $Y_i$ be a triangle
in $Z$, and let $\til{\beta}_i \in \mcal{O}(Y_i) \otimes \h$, such that
$z \in \opn{Int} Y_i$ and 
$\beta|_{Y_i} = \til{\beta}_i \cdot \d s_1 \wedge \d s_2$. According to 
Definition \ref{dfn:11}  and the commutative diagram (\ref{eqn:133}) we have
\[ \begin{aligned}
& \opn{RP_0} \bigl( \alpha, \beta \vert (\rho * \sigma, \tau) \circ 
(\sigma^k_i, \tau^k_i) \bigr) \\
& \qquad = 
\exp_H \bigl( (\smfrac{1}{4})^k \cdot \opn{area}(Z) \cdot 
\Psi_{\h}(g \cdot g' \cdot g'_i) (\til{\beta}_i(z_i)) \bigr) \\
& \qquad =
(\Psi(g) \circ \exp_{H^g}) \bigl( (\smfrac{1}{4})^k \cdot \opn{area}(Z) \cdot 
\Psi_{\h}(g' \cdot g'_i) (\til{\beta}_i(z_i)) \bigr) \\
& \qquad =
\Psi(g) \bigl( \opn{RP}_0^g \bigl( \alpha, \beta \vert 
(\sigma, \tau) \circ (\sigma^k_i, \tau^k_i) \bigr) \bigr) .
\end{aligned} \]

On the other hand, if $z_i$ is a singular point of $\beta|_Z$ then 
\[ \opn{RP}_0 \bigl( \alpha, \beta \vert (\rho * \sigma, \tau) \circ 
(\sigma^k_i, \tau^k_i) \bigr) = 1 =
\Psi(g) \bigl( \opn{RP}_0^g \bigl( \alpha, \beta \vert 
(\sigma, \tau) \circ (\sigma^k_i, \tau^k_i) \bigr) \bigr) . \]

We see that
\[ \opn{RP}_k (\alpha, \beta \vert \rho * \sigma, \tau) =
\Psi(g) \bigl( \opn{RP}_k^g ( \alpha, \beta \vert \sigma, \tau) \bigr) \]
for every $k$.
Passing to the limit $k \to  \infty$ finishes the proof.
\end{proof}

\subsection{Partial Differential Equations} \label{subsec:PDEs}
Let $\alpha \in \Omega^1(\mbf{I}^1) \otimes \g$. 
For $x \in \mbf{I}^1$ let $\sigma_x : \mbf{I}^1 \to \mbf{I}^1$
be the linear map defined on vertices by 
$\sigma_x(v_0, v_1) := (v_0, x)$.
Let
\begin{equation} \label{eqn:252}
g(x) := \opn{MI}(\alpha \vert \sigma_x) \in G .
\end{equation}
It is well known  that the function 
$g : \mbf{I}^1 \to G$ defined in this way is smooth, and moreover it satisfies
the differential equation
\begin{equation} \label{eqn:225}
\opn{dlog}(g) = \alpha
\end{equation}
with initial condition $g(0) = 1$.
See \cite{DF} for the case $G = \opn{GL}_n(\R)$, in which 
\[ \opn{dlog}(g) = g^{-1} \cdot \d g \]
as matrices. This ODE determines the function $g$. 

Now consider a Lie quasi-crossed module $\mbf{C}$, and a smooth
connection-cur\-vature
pair $(\alpha, \beta)$ in $\mbf{C} / \mbf{I}^2$. 
For a point $x \in \mbf{I}^2$ let 
$\tau_{x} : \mbf{I}^2 \to \mbf{I}^2$ be the linear map
defined on vertices by 
\[ \tau_x(v_0, v_1, v_1) := \bigl( v_0, (t_1(x), 0), (0, t_2(x)) \bigr) . \]
And let $\sigma$ be the empty string, so that 
$(\sigma, \tau_x)$ is a kite in $(\mbf{I}^2, v_0)$. Define
\[ h(x) := \opn{MI}(\alpha, \beta \vert \sigma, \tau_x) \in H . \]
Presumably the function 
$h : \mbf{I}^2 \to H$
is smooth, and it satisfies a partial differential equation generalizing 
(\ref{eqn:225}). We did not check this assertion.

In the very recent paper \cite{BGNT} the authors consider a special case:
the groups $G$ and $H$ are unipotent, $\mbf{C}$ is a crossed module, and the
pair $(\alpha, \beta)$ is algebraic. They write down a partial
differential equation, whose unique solution is declared to be the
multiplicative integral. Presumably this multiplicative integral coincides with
ours. For more in this direction see Subsection \ref{subsec:ration}.

\subsection{Quantum Type DG Lie Algebras} \label{subsec:QTDGLie}
Here we explain how Lie quasi crossed modules arise
from deformation theory. This is continued in the following subsection.

A {\em differential graded} (DG) Lie algebra is a graded $\R$-module
$\f = \boplus_{i \in \Z} \f^i$ \lb
equipped with a graded Lie bracket $[-,-]$ and a differential $\d$ (of degree
$1$) that satisfy the graded Leibniz rule. 
For instance, if $X$ is a manifold and $\g$ is a Lie algebra, then 
$\f := \Omega(X) \otimes \g$ is a DG Lie algebra. 
We say that $\f$ is a {\em quantum type} DG Lie algebra if
$\f^i = 0$ for all $i < -1$; i.e.\
$\f = \boplus_{i \geq -1} \f^i$.

Let us fix a quantum type DG Lie algebra $\f$, and assume that $\f$ is
nilpotent, and finite dimensional in each degree. 
(The assumptions of finiteness and nilpotence are for the sake of presentation;
the construction works even when $\f$ is infinite
dimensional  and pronilpotent.)

The Lie bracket $[-,-]$ of $\f$ makes the vector space $\g := \f^0$ into a
nilpotent Lie algebra. We denote by $G = \exp(\g)$ the corresponding unipotent
group. In order to be concrete, we take $G$ to be the analytic manifold $\g$, 
made into a Lie group by the CBH formula (cf.\ Section \ref{sec:expon}); in
particular $1_G := 0_{\g}$. The exponential map $\exp_G : \g \to G$ is just the
identity map here.

The Lie algebra $\g$ acts on the vector space $\f^i$ (any $i$) by the adjoint
action $\opn{ad}$, namely
\[ \opn{ad}(\alpha)(\beta) := [\alpha, \beta] \]
for $\alpha \in \g$ and $\beta \in \f^i$. Clearly $\opn{ad}(\alpha)$ is
$\R$-linear. 
There is a second action of $\g$ on $\f^1$, which we call the {\em affine
action}, and it is
\[ \opn{af}(\alpha)(\beta) := \d(\alpha) - [\alpha, \beta] \]
for $\beta \in \f^1$. The action $\opn{af}$ is usually not linear -- it is an
action by affine transformations. Both these actions integrate (or rather
exponentiate) to actions $\opn{Ad}$ and $\opn{Af}$ of the group $G$ on the
vector spaces $\f^i$ and $\f^1$ respectively. 

An element $\omega \in \f^1$ is called an {\em MC element} if it satisfies the
Maurer-Cartan equation
\[ \d(\omega)+ \smfrac{1}{2}[\omega, \omega] = 0 . \]
We denote by $\opn{MC}(\f)$ the set of all MC elements. 
It turns out that the action $\opn{Af}$ of $G$ on $\f^1$ preserves the subset 
$\opn{MC}(\f)$. Namely if $\omega \in \opn{MC}(\f)$ and $g \in G$, then 
$\omega' := \opn{Af}(g)(\omega)$ is also in $\opn{MC}(\f)$.
The quotient set by this action is denoted by $\ol{\opn{MC}}(\f)$.

Let us write $\h := \f^{-1}$. 
Say an MC element $\omega$ is given. Define an $\R$-linear map
$\d_{\omega} : \h \to \g$ by the formula
\begin{equation} \label{eqn:134}
\d_{\omega}(\beta) := \d(\beta) + [\omega, \beta]
\end{equation}
for $\beta \in \h$. And define an $\R$-bilinear operation 
$[-,-]_{\omega}$ on $\h$ by the formula
\begin{equation} \label{eqn:124}
[\beta_1, \beta_2]_{\omega} := [\d_{\omega}(\beta_1), \beta_2] . 
\end{equation}
An elementary calculation shows that the vector space $\h$ 
is a nilpotent Lie algebra with
respect to the bracket $[-,-]_{\omega}$, and we denote this Lie algebra by
$\h_{\omega}$. A similar calculation shows that 
$\d_{\omega} : \h_{\omega} \to \g$ is a Lie algebra map. 
Let us define
$H_{\omega} := \exp(\h_{\omega})$, the corresponding unipotent group, with
multiplication 
$h_1 \cdot_{\omega} h_2$ and inverse
$h_1^{-1_{\omega}}$, for $h_i \in H_{\omega}$.
So as analytic manifolds we have 
$H_{\omega} = \h_{\omega} = \h$, and 
$\exp_{H_{\omega}} : \h_{\omega} \to H_{\omega}$ is the identity map. 
There is a Lie group map
$\exp(\d_{\omega}) : H_{\omega} \to G$. 
It is easy to see from (\ref{eqn:124}) that for elements $h_1, h_2 \in
H_{\omega}$ one has
\[ \opn{Ad} \bigl( \exp(\d_{\omega})(h_1) \bigr)(h_2) = 
\opn{Ad}_{H_{\omega}}(h_1)(h_2) = 
h_1 \cdot_{\omega} h_2 \cdot_{\omega} h_1^{-1_{\omega}} .  \]

Suppose we are  given
$\omega \in \opn{MC}(\f)$ and $g \in G$. Define 
$\omega' := \opn{Af}(g)(\omega)$. A more difficult calculation shows that 
there is a commutative diagram of Lie algebra maps
\[ \UseTips \xymatrix @C=7ex @R=5ex {
\h_{\omega}
\ar[d]_{\d_{\omega}}
\ar[r]^{\opn{Ad}(g)}
& \h_{\omega'}
\ar[d]^{\d_{\omega'}}
\\
\g
\ar[r]^{\opn{Ad}(g)}
& \g
}  \]
See \cite[Section 1]{Ye5}. 
Hence there is also a commutative diagram of Lie group maps
\[ \UseTips \xymatrix @C=10ex @R=6ex {
H_{\omega}
\ar[d]_{\exp(\d_{\omega})}
\ar[r]^{\exp(\opn{Ad}(g))}
& H_{\omega'}
\ar[d]^{\exp(\d_{\omega'})}
\\
G
\ar[r]^{\exp(\opn{Ad}(g))}
& G
} \]
Observe that if 
$[-,-]_{\omega} \neq [-,-]_{\omega'}$ as Lie brackets, then 
$\h_{\omega} \neq \h_{\omega'}$ as Lie algebras, 
$H_{\omega} \neq H_{\omega'}$ as Lie groups, and
$\Psi(g) := \exp(\opn{Ad}(g))$ is not a Lie group automorphism of $H_{\omega}$;
it is only an automorphism of pointed analytic manifolds. 
At least we obtain in this way an action $\Psi$ of $G$ on $H_{\omega}$ by
automorphisms of pointed analytic manifolds. 

Take an MC element $\omega$, and define the map of Lie groups
\[ \Phi_{\omega} := \exp(\d_{\omega}) : H_{\omega} \to G . \]
What we have said so far implies that the data
\begin{equation} \label{eqn:125}
\mbf{C}_{\omega} := \bigl( G, H_{\omega}, \Psi, \Phi_{\omega} \bigr)
\end{equation}
is a Lie quasi crossed module. 

\subsection{Cosimplicial DG Lie Algebras} \label{subsec:CosDGLie}
Here we show how deformation quantization gives rise to Lie quasi \lb crossed
modules with additive feedback and con\-nection-curvature pairs. 

Suppose we are given a {\em cosimplicial nilpotent 
quantum type DG Lie algebra} $\f$. So 
$\f = \{ \f^{p, \bdot} \}_{p \in \N}$,
where each $\f^{p, \bdot} = \boplus_{i \geq -1} \f^{p,i}$
is a nilpotent quantum type DG Lie algebra.

{}From the cosimplicial object $\f$ one constructs its
Thom-Sullivan normalization $\til{\mrm{N}}(\f)$, which is a nilpotent 
quantum type DG Lie algebra, involving algebraic differential forms on all the
simplices $\bsym{\Delta}^p$, $p \geq 0$. For details see \cite[Section 2]{Ye2}.

Suppose $\til{\omega}$ is an MC element of
$\til{\mrm{N}}(\f)$. By definition 
$\til{\omega} = \{ \til{\omega}^{p, q} \}$, where
\[ \til{\omega}^{p, q} \in \Omega^q(\bsym{\Delta}^p) 
\otimes \f^{p, 1 - q} , \quad p \in \N,\ q \in \{ 0, 1, 2 \} .  \]
Let us fix a number $p$. We look at the components of $\til{\omega}$ with index
$p$:
\[ \omega := \til{\omega}^{p, 0} \in 
\mcal{O}(\bsym{\Delta}^p) \otimes \f^{p, 1} , \]
\[ \alpha := \til{\omega}^{p, 1} \in 
\Omega^1(\bsym{\Delta}^p) \otimes \f^{p, 0}  \]
and
\[ \beta := \til{\omega}^{p, 2} \in 
\Omega^2(\bsym{\Delta}^p) \otimes \f^{p, -1} . \]

Now let us write
$\g := \f^{p, 0}$, $\h := \f^{p, -1}$ and $X := \bsym{\Delta}^p$.
With this notation 
$\omega \in \mcal{O}(X) \otimes \f^{p, 1}$,
$\alpha \in \Omega^1(X) \otimes \g$
and
$\beta \in \Omega^2(X) \otimes \h$.

Choose some point $x_0 \in X$. The element 
$\omega(x_0) \in \f^{p, 1}$ turns out to be an MC element of the DG
Lie algebra $\f^{p, \bdot}$.
As in (\ref{eqn:125}), we have a Lie quasi crossed module 
\[  \mbf{C}_{\omega(x_0)}  =  
\bigl( G, H_{\omega(x_0)}, \Psi, \Phi_{\omega(x_0)} \bigr)  . \]
Next we define
\[ \Phi_{X} := \d_{\omega} \in 
\mcal{O}(X) \otimes \opn{Hom}(\h, \g) ; \]
see (\ref{eqn:134}). This is an additive feedback for $\mbf{C}_{\omega(x_0)}$
over the pointed polyhedron $(X, x_0)$, so we obtain a 
Lie quasi crossed module with additive feedback
\begin{equation} \label{eqn:522}
 \mbf{C}_{\omega(x_0)} / X  =  
\bigl( G, H_{\omega(x_0)}, \Psi, \Phi_{\omega(x_0)}, \Phi_{X} \bigr)  .
\end{equation}
Furthermore, it can be shown that the pair $(\alpha, \beta)$ is a
connection-curvature pair in 
$\mbf{C}_{\omega(x_0)} / X$. 
Taking any kite $(\sigma, \tau)$ in $(X, x_0)$ we thus obtain a group element 
\[ \opn{MI}(\alpha, \beta \vert \sigma, \tau) \in 
H_{\omega(x_0)} . \]

In Subsection \ref{subsec:conj.desc} we continue with this setup, and state
Conjecture \ref{conj:521}.

%% section %%
\cleardoublepage
\section{Stokes Theorem in Dimension Two}
\label{sec:stokes2}

The purpose of this section is to prove Theorem \ref{thm:1}.
When $H = G$ (see Example \ref{exa:3}) this is just a fancy
version of Schlesinger's theorem. 

Let  $(X, x_0)$ be a pointed polyhedron.
Recall the notions of Lie quasi crossed module with additive feedback
\begin{equation}
\mbf{C} / X = (G, H, \Psi, \Phi_0, \Phi_X)
\end{equation}
over $(X, x_0)$, connection-curvature pair $(\alpha, \beta)$, and 
multiplicative integral \lb
$\opn{MI}(\alpha, \beta \vert \sigma, \tau)$;
see Definitions \ref{dfn:6}, \ref{dfn:19} and \ref{dfn:16}
respectively.

\subsection{Some Estimates}
In this subsection we assume that $(X, x_0) = (\mbf{I}^2, v_0)$.
We fix a Lie quasi crossed module with additive feedback
$\mbf{C} / {X}$ over $(\mbf{I}^2, v_0)$, in which $G = H$, as in Example
\ref{exa:2}. We also fix a connection-curvature pair $(\alpha, \beta)$ for
$\mbf{C} / {X}$. Note that the equality
\begin{equation} \label{eqn:75}
\beta = \d (\alpha) + \smfrac{1}{2} [\alpha, \alpha] 
\end{equation}
holds in $\Omega^2_{\mrm{pws}}(\mbf{I}^2)  \otimes \mfrak{g}$.
As in Section \ref{sec:pws} we choose a euclidean norm $\norm{-}_{\g}$
on the vector space $\g$; an open neighborhood $V_0(G)$ of $1$
in $G$ on which $\log_G$ is well-defined; a convergence radius
$\epsilon_0(G)$; and a commutativity constant $c_0(G)$.

In Proposition \ref{prop:22} we established certain constants 
$c_{2'}(\alpha, \beta)$ and \lb $\epsilon_{2'}(\alpha, \beta)$.

\begin{lem} \label{lem:9}
There are constants $c_3(\alpha, \beta)$ and
$\epsilon_3(\alpha, \beta)$ with these properties:
\begin{enumerate}
\item $c_3(\alpha, \beta) \geq c_{2'}(\alpha, \beta)$ and
$0 < \epsilon_3(\alpha, \beta) \leq \epsilon_{2'}(\alpha, \beta)$.
\item Suppose $(\sigma, \tau)$ is a square kite in $(\mbf{I}^2, v_0)$
such that 
$\opn{side}(\tau) < \epsilon_3(\alpha, \beta)$ and
$\opn{len}(\sigma) \leq 5$. Then
\[ \opn{MI} (\alpha, \beta \vert \sigma, \tau) , \,
\opn{MI} (\alpha \vert \partial(\sigma, \tau)) \in V_0(G)  \]
and
\[ \Norm{ \log_G \bigl( \opn{MI} (\alpha, \beta \vert \sigma, \tau) \bigr) }
\, , \, 
\Norm{ \log_G \bigl( \opn{MI} (\alpha \vert \partial(\sigma, \tau)) \bigr) }
\leq c_3(\alpha, \beta) \cdot \opn{side}(\tau)^2  \, . \]
\end{enumerate}
\end{lem}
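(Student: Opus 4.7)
The plan is to handle the two quantities separately. The first bound, on $\opn{MI}(\alpha, \beta \vert \sigma, \tau)$, follows directly from Proposition \ref{prop:8}(1) once we check that $(\sigma, \tau)$ is $(\alpha, \beta)$-tiny. Since $\opn{diam}(\mbf{I}^2) = \sqrt{2}$ we have $5 < 4 \sqrt{2}$, so the hypothesis $\opn{len}(\sigma) \leq 5$ already gives $\opn{len}(\sigma) \leq 4 \opn{diam}(\mbf{I}^2)$; choosing $\epsilon_3(\alpha, \beta) \leq \epsilon_2(\alpha, \beta)$ makes $(\sigma, \tau)$ tiny, and Proposition \ref{prop:8}(1) yields
\[ \Norm{ \log_G \bigl( \opn{MI}(\alpha, \beta \vert \sigma, \tau) \bigr) }
\leq c_1(\alpha, \beta) \cdot \opn{side}(\tau)^2 . \]

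For the holonomy $\opn{MI}(\alpha \vert \partial(\sigma, \tau))$, the idea is to unwind the conjugation built into the boundary and then invoke the abelian Stokes-type estimate. By definition $\partial(\sigma, \tau) = \sigma \ast \partial\tau \ast \sigma^{-1}$, and Proposition \ref{prop:1} gives
\[ \opn{MI}(\alpha \vert \partial(\sigma, \tau)) = g \cdot h \cdot g^{-1}, \]
where $g := \opn{MI}(\alpha \vert \sigma)$ and $h := \opn{MI}(\alpha \vert \partial\tau)$. Writing $\epsilon := \opn{side}(\tau)$, the closed string $\partial\tau$ has length $4 \epsilon$ and bounds the parallelogram $Z := \tau(\mbf{I}^2)$ of area $\epsilon^2$. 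So long as $\epsilon_3(\alpha, \beta) \leq \tfrac{1}{4} \epsilon_1(\alpha)$, the string $\partial\tau$ is $\alpha$-tiny, and Corollary \ref{cor:7} produces a constant $K(\alpha)$ with
\[ \Norm{ \log_G h } \leq K(\alpha) \cdot \epsilon^2 . \]

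Conjugation is a Lie group automorphism fixing $1$, with derivative $\opn{Ad}_\g(g)$ at $1$; hence $\log_G(g h g^{-1}) = \opn{Ad}_\g(g) (\log_G h)$ whenever $\opn{Ad}_\g(g)(\log_G h)$ lies in the domain $U_0(\g)$ on which $\exp_G$ is a diffeomorphism. Applying Proposition \ref{prop:13} to the adjoint representation $\opn{Ad}_\g : G \to \opn{GL}(\g)$ and to the string $\sigma$ of length $\leq 5$ gives a uniform bound
\[ \norm{ \opn{Ad}_\g(g) } \leq \exp \bigl( 5 \cdot c_4(\alpha, \opn{Ad}_\g) \bigr) =: M. \]
Combining, $\Norm{\log_G \opn{MI}(\alpha \vert \partial(\sigma, \tau))} \leq M \cdot K(\alpha) \cdot \epsilon^2$. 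Shrinking $\epsilon_3(\alpha, \beta)$ further so that $M \cdot K(\alpha) \cdot \epsilon_3(\alpha, \beta)^2$ is less than the injectivity radius of $\exp_G$ on $U_0(\g)$ validates the use of $\log_G$ and ensures $\opn{MI}(\alpha \vert \partial(\sigma, \tau)) \in V_0(G)$. Taking $c_3(\alpha, \beta) := \max(c_{2'}(\alpha, \beta),\, M \cdot K(\alpha))$ and $\epsilon_3(\alpha, \beta)$ the minimum of the above constraints (and of $\epsilon_{2'}(\alpha, \beta)$) completes the proof. The only nonroutine point is bookkeeping the injectivity radius to justify $\log_G(g h g^{-1}) = \opn{Ad}_\g(g)(\log_G h)$, but this is automatic once $\epsilon$ is made sufficiently small.
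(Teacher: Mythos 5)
Your proof is correct and follows essentially the same route as the paper: the first bound via Proposition \ref{prop:8}(1), and the boundary bound by writing $\opn{MI}(\alpha \vert \partial(\sigma,\tau)) = g\cdot h\cdot g^{-1}$, estimating $\norm{\log_G h}$ by a Stokes-type area bound and $\norm{\opn{Ad}_{\g}(g)}$ by Proposition \ref{prop:13}. The only cosmetic differences are that you invoke Corollary \ref{cor:7} where the paper re-derives the same estimate from Proposition \ref{prop:16} and the abelian Stokes Theorem, and you are a bit more explicit about shrinking $\epsilon_3$ to justify $\log_G(ghg^{-1}) = \opn{Ad}_{\g}(g)(\log_G h)$ and membership in $V_0(G)$.
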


\begin{proof}
For $\opn{MI} (\alpha, \beta \vert \sigma, \tau)$ we can use the estimate from
Proposition \ref{prop:8}.

For the boundary we have to do some work. Let's write
$\epsilon := \opn{side}(\tau)$, and suppose that 
$\epsilon < \smfrac{1}{4} \epsilon_1(\alpha)$, where 
$\epsilon_1(\alpha)$ is the constant from Definition \ref{dfn:13}.
Consider the closed string $\partial \tau$ of length $4 \epsilon$. 
Write $g_1 := \opn{MI}(\alpha \vert \partial \tau) \in G$. 
According to Proposition \ref{prop:16} we know that 
$g_1 \in V_0(G)$, and 
\[ \Norm{ \log_G (g_1) - \int_{\partial \tau} \alpha } \leq 
c_0(G) \cdot c_1(\alpha)^2 \cdot (4 \epsilon)^2 \ . \]
By the abelian Stokes Theorem (Theorem \ref{thm:15}) we have
\[ \int_{\partial \tau} \alpha =  \int_{\tau} \d(\alpha) . \]
Now
\[ \Norm{ \int_{\tau} \d(\alpha) } \leq 
\opn{area}(\tau) \cdot \norm{\alpha}_{\mrm{Sob}} 
= \epsilon^2 \cdot  \norm{\alpha}_{\mrm{Sob}} \ . \]
We conclude that
\[ \norm{ \log_G (g_1) } \leq 
\bigl( c_0(G) \cdot c_1(\alpha)^2 \cdot 16 + \norm{\alpha}_{\mrm{Sob}} \bigr)
\cdot \epsilon^2 \ . \]

Next let 
$g_2 := \opn{MI}(\alpha \vert \sigma) \in G$.
Consider the representation 
$\opn{Ad}_{\g} : G \to \opn{GL}(\g)$. 
By Proposition \ref{prop:13} the norm of the operator 
$\opn{Ad}_{\g}(g_2)$ satisfies 
\[ \norm{ \opn{Ad}_{\g}(g_2) } \leq 
\exp( c_4(\alpha, \opn{Ad}_{\g}) \cdot 5) \ . \]

Finally we look at 
$g := \opn{MI} (\alpha \vert \partial(\sigma, \tau))$. 
By definition 
\[ g = g_2 \cdot g_1 \cdot g_2^{-1} = \opn{Ad}_{G}(g_2)(g_1) . \]
The logarithm is 
\[ \log_G(g) = \opn{Ad}_{\g}(g_2)(\log_G(g_1)) . \]
By combining the estimates above we get
\[ \norm{ \log_G(g) } \leq 
\exp( c_4(\alpha, \opn{Ad}_{\g}) \cdot 5) \cdot 
\bigl( c_0(G) \cdot c_1(\alpha)^2 \cdot 16 + \norm{\alpha}_{\mrm{Sob}} \bigr)
\cdot \epsilon^2 \ . \]
We see that we can take the following constants:
\[ \epsilon_3(\alpha, \beta) := \min \bigl( \epsilon_2(\alpha, \beta), 
\smfrac{1}{4} \epsilon_1(\alpha) \bigr) \] 
and 
\[ \begin{aligned}
& c_4(\alpha, \beta) := \\
& \quad \max \bigl( c_2(\alpha, \beta),
\exp( c_4(\alpha, \opn{Ad}_{\g}) \cdot 5) \cdot 
( c_0(G) \cdot c_1(\alpha)^2 \cdot 16 + \norm{\alpha}_{\mrm{Sob}} )
\bigr) \ . 
\end{aligned} \]
\end{proof}

\begin{lem} \label{lem:10}
There are constants $c_4(\alpha, \beta)$ and
$\epsilon_4(\alpha, \beta)$ with these properties:
\begin{enumerate}
\item $c_3(\alpha, \beta) \leq c_4(\alpha, \beta)$ and
$0 < \epsilon_4(\alpha, \beta) \leq \epsilon_3(\alpha, \beta)$.
\item Suppose $(\sigma, \tau)$ is a square kite in $(\mbf{I}^2, v_0)$
such that 
$\opn{side}(\tau) < \epsilon_4(\alpha, \beta)$, \linebreak
$\opn{len}(\sigma) \leq 5$, and $\alpha|_{\tau(\mbf{I}^2)}$ is smooth.
Then
\[ \Norm{ \log_G \bigl( \opn{MI} (\alpha, \beta \vert \sigma, \tau) \bigr) -
\log_G \bigl( \opn{MI} (\alpha \vert \partial(\sigma, \tau)) \bigr) }
\leq c_4(\alpha, \beta) \cdot \opn{side}(\tau)^3  \, . \]
\end{enumerate}
\end{lem}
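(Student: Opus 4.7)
\medskip

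\noindent\textbf{Proof plan.} Write $\epsilon := \opn{side}(\tau)$, $Z := \tau(\mbf{I}^2)$, and $g := \opn{MI}(\alpha \vert \sigma) \in G$. The strategy is to show that both quantities in question equal $\opn{Ad}_{\g}(g)\bigl(\int_\tau \beta\bigr)$ modulo error $O(\epsilon^3)$, so that they agree up to $O(\epsilon^3)$. On one side, Proposition \ref{prop:22} (applied to the twisting setup $(G, G, \opn{Ad}_{\g})$ attached to the crossed module of Example \ref{exa:2}) gives
\[
\Norm{ \log_G \opn{MI}(\alpha, \beta \vert \sigma, \tau) - \opn{Ad}_{\g}(g)\bigl( \textstyle\int_\tau \beta \bigr) } \leq c_{2'}(\alpha, \beta) \cdot \epsilon^3 .
\]
On the other side, by definition of the boundary,
$\opn{MI}(\alpha \vert \partial(\sigma, \tau)) = g \cdot \opn{MI}(\alpha \vert \partial \tau) \cdot g^{-1}$,
so its log equals $\opn{Ad}_{\g}(g)\bigl( \log_G \opn{MI}(\alpha \vert \partial \tau) \bigr)$. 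Since Proposition \ref{prop:13}, applied to the adjoint representation, gives a uniform bound on $\norm{\opn{Ad}_{\g}(g)}$ (using $\opn{len}(\sigma) \leq 5$), it suffices to prove
\[
\Norm{ \log_G \opn{MI}(\alpha \vert \partial \tau) - \textstyle\int_\tau \beta } \leq c' \cdot \epsilon^3
\]
for some constant $c'$ depending only on $(\alpha, \beta)$.

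By the abelian Stokes theorem (Theorem \ref{thm:15}) and the vanishing fake curvature equation \tup{(\ref{eqn:75})},
\[
\textstyle\int_\tau \beta \;=\; \int_{\partial \tau} \alpha + \frac{1}{2}\int_\tau [\alpha, \alpha] .
\]
Decompose $\partial \tau = \pi_1 \ast \pi_2 \ast \pi_3 \ast \pi_4$ into its four oriented sides and set $\gamma_i := \log_G \opn{MI}(\alpha \vert \pi_i) \in \g$; by Proposition \ref{prop:1}(3) these are well defined with $\norm{\gamma_i} \leq c_1(\alpha) \cdot \epsilon$. Since each $\pi_i$ is a straight segment of length $\epsilon$ along which $\alpha$ is smooth, Proposition \ref{prop:5}(2) combined with a second-order Taylor expansion of the coefficient of $\alpha$ about the midpoint of $\pi_i$ (where the odd moment vanishes by symmetry) yields
$\norm{\gamma_i - \int_{\pi_i}\alpha} \leq c \cdot \epsilon^3$, and hence $\sum_i \gamma_i = \int_{\partial \tau}\alpha + O(\epsilon^3)$.

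Now iterate property \tup{(iii)} of Theorem \ref{thm:6} three times on the product $\opn{MI}(\alpha \vert \partial \tau) = \exp_G(\gamma_1) \cdot \exp_G(\gamma_2) \cdot \exp_G(\gamma_3) \cdot \exp_G(\gamma_4)$; each application produces a quadratic correction plus an error cubic in the relevant norms, and the iterated higher-order corrections (triple brackets and $[R, \gamma_k]$ terms) are all $O(\epsilon^3)$. This gives the second-order CBH expansion
\[
\log_G \opn{MI}(\alpha \vert \partial \tau) \;=\; \sum_i \gamma_i + \tfrac{1}{2} \sum_{i < j} [\gamma_i, \gamma_j] + O(\epsilon^3) .
\]
Combining with the previous paragraph, matters are reduced to the identity
\[
\tfrac{1}{2} \sum_{i < j} [\gamma_i, \gamma_j] \;=\; \tfrac{1}{2} \int_\tau [\alpha, \alpha] + O(\epsilon^3) .
\]
Because the correction $\gamma_i - \int_{\pi_i}\alpha$ is $O(\epsilon^3)$ while each $\gamma_j$ is $O(\epsilon)$, replacing $\gamma_i$ by $\int_{\pi_i}\alpha$ in the commutators costs only $O(\epsilon^4)$, so this identity is equivalent to the same one with $\gamma_i$ replaced by $\int_{\pi_i}\alpha$.

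To finish, choose positively oriented orthonormal linear coordinates $(s_1, s_2)$ on $Z$ centered at $\tau\bigl(\smfrac{1}{2},\smfrac{1}{2}\bigr)$, write $\alpha|_Z = A_1 \, \d s_1 + A_2 \, \d s_2$, and Taylor expand $A_j$ to first order about the center, so that each $\int_{\pi_i}\alpha$ is computed explicitly modulo $O(\epsilon^3)$. A direct calculation shows that the antisymmetric first-order terms combine via the bracket into exactly $2 \epsilon^2 [A_1(0), A_2(0)]$, while $\int_\tau [\alpha, \alpha] = 2 \epsilon^2 [A_1(0), A_2(0)] + O(\epsilon^3)$; these agree to the required order. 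The main obstacle is precisely this last bookkeeping, ensuring that all residual terms above cubic order in $\epsilon$ cancel via antisymmetry of the bracket and the geometric symmetry of the four sides. Once it is carried out, collecting the constants from Propositions \ref{prop:13}, \ref{prop:22}, \ref{prop:5}(2) and Theorem \ref{thm:6}(iii) yields an explicit constant $c_4(\alpha, \beta) \geq c_3(\alpha, \beta)$ and a radius $\epsilon_4(\alpha, \beta) \leq \epsilon_3(\alpha, \beta)$ for which the required inequality holds.
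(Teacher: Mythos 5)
Your proof is correct and reaches the bound by the same fundamental mechanism as the paper — second-order CBH expansion of the four-edge product $\exp(\gamma_1)\cdots\exp(\gamma_4)$, Taylor expansion of the coefficients of $\alpha$, and the fake-curvature equation — but the organization is genuinely different. The paper works directly with the Riemann products: it computes $\opn{RP}_0(\alpha\vert\partial\tau)$ via CBH, splits each $\lambda_i$ into constant terms $\mu_i$ (which telescope to the commutator $[\mu_1,\mu_2]$ since $\mu_3 = -\mu_1$, $\mu_4 = -\mu_2$) plus linear terms $\mu'_i$ (which sum to the curl), and recognizes the total as $\epsilon^2\til\beta(z)$ pointwise via the identity $\til\beta = [\til\alpha_1,\til\alpha_2] - \partial_2\til\alpha_1 + \partial_1\til\alpha_2$; then it matches this against $\log_G\opn{RP}_0(\alpha,\beta\vert\sigma,\tau)$ using Proposition \ref{prop:8}(2). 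You instead route both sides through the common intermediate quantity $\opn{Ad}_{\g}(g)\bigl(\int_\tau\beta\bigr)$: Proposition \ref{prop:22} handles the surface integral side, and on the boundary side you invoke the abelian Stokes theorem to split $\int_\tau\beta$ into $\int_{\partial\tau}\alpha$ (matched against $\sum\gamma_i$) plus $\tfrac12\int_\tau[\alpha,\alpha]$ (matched against the quadratic CBH term $\tfrac12\sum_{i<j}[\gamma_i,\gamma_j]$). What your route buys: the abelian Stokes step cleanly separates the ``curl $\leftrightarrow$ linear sum'' bookkeeping from the ``bracket $\leftrightarrow$ commutator'' bookkeeping, and going through $\int_\tau\beta$ rather than $\epsilon^2\til\beta(z)$ sidesteps a small imprecision in the paper concerning which group element ($\opn{MI}(\alpha\vert\sigma)$ vs.\ $\opn{MI}(\alpha\vert\sigma*(\tau\circ\sigma_{\mrm{pr}}))$) appears in the formula for $\opn{RP}_0(\alpha,\beta\vert\sigma,\tau)$. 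What the paper's more direct route buys: a self-contained computation that does not rely on the fairly elaborate Proposition \ref{prop:22}. One tiny wording note: the commutator $2\epsilon^2[A_1(0),A_2(0)]$ arises from the constant (zeroth-order) parts of the $\int_{\pi_i}\alpha$, not the ``first-order terms''; the genuinely first-order Taylor corrections contribute only at $O(\epsilon^3)$ in the bracket, which is why they can be dropped.
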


\begin{proof}
Let $\epsilon := \opn{side}(\tau)$, $Z := \tau(\mbf{I}^{2})$, and
$z := \tau(\smfrac{1}{2}, \smfrac{1}{2})$, which is the midpoint of the square
$Z$. Denote by 
$(\rho_1, \ldots, \rho_4)$ the closed string $\partial \tau$. For any 
$i \in \{ 1, \ldots, 4 \}$ let
$z_i$ be the midpoint of the edge $\rho_i(\mbf{I}^{1})$. 
See Figure \ref{fig:26} for an illustration.
Let $(s_1, s_2)$ be the positively oriented orthonormal linear
coordinate system on $Z$, such that 
\[ \tau^*(s_i) = \epsilon \cdot (t_i - \smfrac{1}{2}) . \]
So in particular $s_i(z) = 0$. 

\begin{figure}
\includegraphics[scale=0.28]{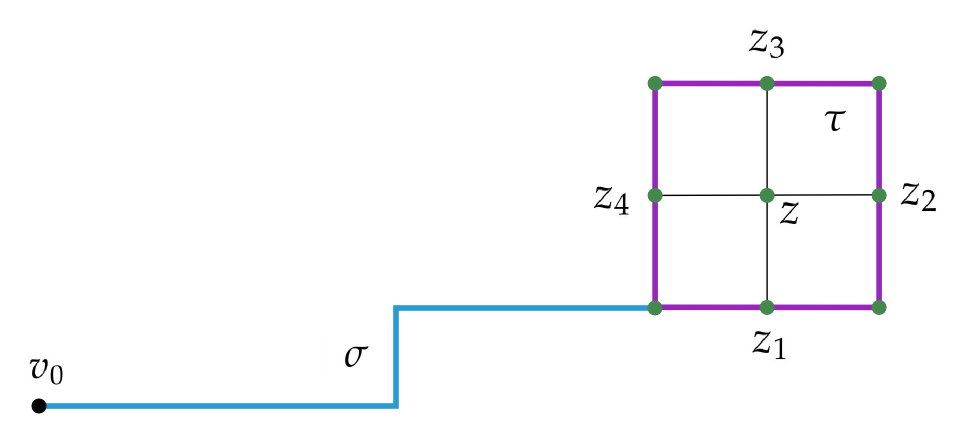}
\caption{Computing $\opn{RP}_{0}(\alpha \vert \partial \tau)$
for a tiny square kite $(\sigma, \tau)$ in  $(\mbf{I}^{2}, v_0)$.} 
\label{fig:26}
\end{figure}

Since $\alpha|_Z$ is smooth, there are functions
$\til{\alpha}_1, \til{\alpha}_2 \in \mcal{O}(Z) \otimes \g$ such that
\[ \alpha|_{Z} = \til{\alpha}_1 \cdot \d s_1 + \til{\alpha}_2 \cdot \d s_2 . \]
The Taylor expansion of $\til{\alpha}_j$ around $z$ to second order looks like
this:
\begin{equation} \label{eqn:241}
\til{\alpha}_j(x) = \til{\alpha}_j(z) + 
\sum_{1 \leq k \leq 2} 
(\smfrac{\partial}{\partial s_k} \til{\alpha}_j)(z) \cdot s_k(x) +
\sum_{1 \leq k, l \leq 2} 
g_{j, k, l}(x) \cdot s_k(x) \cdot s_l(x) 
\end{equation}
for $x \in Z$. Here 
$g_{j, k, l} : Z \to \g$ are continuous functions satisfying
\begin{equation} \label{eqn:242}
\norm{ g_{j, k, l} } \leq \norm{\alpha}_{\mrm{Sob}} \, . 
\end{equation}

We define elements $\lambda_i \in \g$ as follows:
\begin{equation}
\begin{aligned}
& \lambda_1 :=  \epsilon \cdot \til{\alpha}_1(z_1) , \\
& \lambda_2 :=  \epsilon \cdot \til{\alpha}_2(z_2) , \\
& \lambda_3 := -  \epsilon \cdot \til{\alpha}_1(z_3) , \\
& \lambda_4 := -  \epsilon \cdot \til{\alpha}_2(z_4) .
\end{aligned}
\end{equation}
Then, almost by definition,
\[ \opn{RP}_{0}(\alpha \vert \rho_i) = \exp_G(\lambda_i)  \]
and 
\begin{equation} \label{eqn:245}
\opn{RP}_{0}(\alpha \vert \partial \tau) =  
\prod_{i = 1}^{4} \, \exp_G(\lambda_i) .
\end{equation}

Now 
\[ s_k(z_i) \in \{ 0, \smfrac{1}{2} \epsilon, 
-  \smfrac{1}{2} \epsilon \} , \]
and the value $0$ occurs half the time. So the Taylor expansion 
(\ref{eqn:241}) for the point $z_i$ has only one summand of order $1$ in 
$\epsilon$ (instead of two). Let us define
\[ \mu_1 := \epsilon \cdot \til{\alpha}_1(z) , \quad
\mu'_1 := - \smfrac{1}{2} \epsilon^2 \cdot
(\smfrac{\partial}{\partial s_2} \til{\alpha}_1)(z)  ,  \]
\[ \mu_2 := \epsilon \cdot \til{\alpha}_2(z) , \quad
\mu'_2 := \smfrac{1}{2} \epsilon^2 \cdot 
(\smfrac{\partial}{\partial s_1} \til{\alpha}_2) (z) , \]
\[ \mu_3 := - \epsilon \cdot \til{\alpha}_1(z) , \quad
\mu'_3 := - \smfrac{1}{2} \epsilon^2 \cdot 
(\smfrac{\partial}{\partial s_2} \til{\alpha}_1)(z) , \]
\[ \mu_4 := - \epsilon \cdot \til{\alpha}_2(z) , \quad
\mu'_4 := \smfrac{1}{2} \epsilon^2 \cdot 
(\smfrac{\partial}{\partial s_1} \til{\alpha}_2)(z) . \]
Then using the estimate (\ref{eqn:242}) for the quadratic terms 
in (\ref{eqn:241}) we obtain
\begin{equation} \label{eqn:243}
\Norm{ \lambda_i - (\mu_i + \mu'_i) } \leq \epsilon^3 \cdot
\norm{\alpha}_{\mrm{Sob}} \, . 
\end{equation}
We also have these bounds:
\begin{equation} \label{eqn:246}
\norm{ \lambda_i } \leq \epsilon \cdot \norm{\alpha}_{\mrm{Sob}} \,
\end{equation}
and
\begin{equation} \label{eqn:247}
\norm{ \mu_i } \leq \epsilon \cdot \norm{\alpha}_{\mrm{Sob}} \ , \
\norm{ \mu'_i } \leq  \smfrac{1}{2} \epsilon^2 \cdot \norm{\alpha}_{\mrm{Sob}}
\, .
\end{equation}

According to property (iv) of Theorem \ref{thm:6}, the estimates
(\ref{eqn:243}) and (\ref{eqn:246}) yield 
\begin{equation} \label{eqn:22}
\begin{aligned}
& \Norm{ \log_G \Bigl( \boprod_{i = 1}^{4} \, \exp_G(\lambda_i) \Bigr)
- \log_G \Bigl( \boprod_{i = 1}^{4} \, \exp_G(\mu_i + \mu'_i) \Bigr) } \\
& \quad \quad \leq 4 \epsilon^3 \cdot c_0(G) \cdot \norm{\alpha}_{\mrm{Sob}} \,
. 
\end{aligned}
\end{equation}
for sufficiently small $\epsilon$. Similarly the estimates (\ref{eqn:247}) give
us
\begin{equation} \label{eqn:244}
\begin{aligned}
& \Norm{ \log_G \Bigl( \boprod_{i = 1}^{4} \, \exp_G(\mu_i + \mu'_i) \Bigr)
- \log_G \Bigl( \boprod_{i = 1}^{4} \, \exp_G(\mu_i) \Bigr)
- \bosum_{i = 1}^{4} \, \mu'_i } \\
& \quad \quad \leq 2 \epsilon^3 \cdot c_0(G) \cdot \norm{\alpha}_{\mrm{Sob}}^2
\, . 
\end{aligned}
\end{equation}

Now
$\mu_3 = -\mu_1$ and $\mu_4 = - \mu_2$, and hence
\[ \prod_{i = 1}^{4} \, \exp_G(\mu_i)  = 
\exp_G(\mu_1) \cdot \exp_G(\mu_2) \cdot \exp_G(- \mu_1) \cdot 
\exp_G(- \mu_2) . \]
According to property (iii) of Theorem \ref{thm:6} we see that
\begin{equation} \label{eqn:23}
\Norm{ \log_G \Bigl( \boprod_{i = 1}^{4} \, \exp_G(\mu_i) \Bigr)
- [\mu_1, \mu_2] }
\leq 2 \epsilon^3 \cdot c_0(G) \cdot \norm{\alpha}_{\mrm{Sob}}^3 \, .
\end{equation}
For the terms $\mu'_i$ we have
\begin{equation} \label{eqn:24}
\sum_{i = 1}^{4} \, \mu'_i = 
- \epsilon^2 \cdot (\smfrac{\partial}{\partial s_2} \til{\alpha}_1)(z) +
\epsilon^2 \cdot (\smfrac{\partial}{\partial s_1} \til{\alpha}_2) (z) \, . 
\end{equation}

Putting together equations (\ref{eqn:245}), (\ref{eqn:22}),
(\ref{eqn:244}), (\ref{eqn:23}) and (\ref{eqn:24})  
we conclude that for some 
$c \geq 1$ (depending on $(\alpha, \beta)$) the estimate
\[ \begin{aligned}
& \Norm{ \log_G \bigl( \opn{RP}_{0}(\alpha \vert \partial \tau) \bigr)
- \epsilon^2 \cdot \bigl(  [\til{\alpha}_1(z), \til{\alpha}_2(z)]  
- (\smfrac{\partial}{\partial s_2} \til{\alpha}_1)(z) +
(\smfrac{\partial}{\partial s_1} \til{\alpha}_2)(z) \bigr) } \\
& \qquad \leq \epsilon^3 \cdot c  
\end{aligned} \]
holds for sufficiently small $\epsilon$. 
Using Proposition \ref{prop:5}(2) we get
\begin{equation} \label{eqn:26}
\begin{aligned}
& \Norm{ \log_G \bigl( \opn{MI}(\alpha \vert \partial \tau) \bigr)
- \epsilon^2 \cdot \bigl(  [\til{\alpha}_1(z), \til{\alpha}_2(z)]  
- (\smfrac{\partial}{\partial s_2} \til{\alpha}_1)(z) +
(\smfrac{\partial}{\partial s_1} \til{\alpha}_2)(z) \bigr)  } \\
& \qquad \leq \epsilon^3 \cdot c' 
\end{aligned}
\end{equation}
for a suitable constant $c'$. 

Since equation (\ref{eqn:75}) holds, we know that $\beta$ is smooth on
$Z$. Let $\til{\beta} \in \mcal{O}(Z) \otimes \g$ be such that
$\beta|_{Z} = \til{\beta} \cdot \d s_1 \wedge \d s_2$.
Then (\ref{eqn:75}) becomes
\[ \til{\beta} = [\til{\alpha}_1, \til{\alpha}_2] - 
(\smfrac{\partial}{\partial s_2} \til{\alpha}_1) +
(\smfrac{\partial}{\partial s_1} \til{\alpha}_2) , \]
as smooth functions $Z \to \g$.
Thus we can rewrite (\ref{eqn:26}) as
\begin{equation} \label{eqn:27}
\Norm{ \log_G \bigl( \opn{MI}(\alpha \vert \partial \tau) \bigr)
- \epsilon^2 \cdot \til{\beta}(z) \bigr) } \leq \epsilon^3 \cdot c' \, .
\end{equation}
Letting
$g := \opn{MI}(\alpha \vert \sigma)$,
we have (by Proposition \ref{prop:1}):
\[ \opn{MI} \bigl( \alpha \vert \partial (\sigma, \tau) \bigr) =
g \cdot \opn{MI}(\alpha \vert \partial \tau) \cdot g^{-1} 
= \opn{Ad}_G(g) \bigl( \opn{MI}(\alpha \vert \partial \tau) \bigr) . \]
The map $\log_G$ sends $\opn{Ad}_G(g)$ to $\opn{Ad}_{\g}(g)$, and therefore
\[ \log_G \Bigl( \opn{MI} \bigl( \alpha \vert \partial (\sigma, \tau) \bigr) 
\Bigr) = \opn{Ad}_{\g}(g) \Bigl(
\log_G \bigl( \opn{MI}(\alpha \vert \partial \tau) \bigr) \Bigr) . \]
Recall that the length of $\sigma$ is bounded by $5$, so by 
Proposition \ref{prop:4} the norm of the operator 
$\opn{Ad}_{\g}(g)$ on $\g$ is also bounded. 
Plugging in the estimate (\ref{eqn:27})
we now arrive at
\begin{equation} \label{eqn:28}
\Norm{ \log_G \bigl( 
\opn{MI} \bigl( \alpha \vert \partial (\sigma, \tau) \bigr) \bigr)
- \epsilon^2 \cdot \opn{Ad}_{\g}(g) \bigl( \til{\beta}(z) \bigr) } \leq
\epsilon^3
\cdot c''' 
\end{equation}
for a suitable bound $c'''$, again depending on $(\alpha, \beta)$.

Finally, by definition we have
\[ \log_G \bigl( \opn{RP}_{0}(\alpha, \beta \vert \sigma, \tau) \bigr) = 
\epsilon^2 \cdot \opn{Ad}_{\g}(g) \bigl( \til{\beta}(z) \bigr) . \]
According to Proposition \ref{prop:8}(2) we know that
\[ \Norm{ \log_G \bigl(
\opn{RP}_{0}(\alpha, \beta \vert \sigma, \tau) \bigr)
- \log_G \bigl( \opn{MI}(\alpha, \beta \vert \sigma, \tau) \bigr) }
\leq c_2(\alpha, \beta) \cdot \epsilon^4 \, . \]
Combining this with (\ref{eqn:28}) we get
\[ \Norm{ \log_G \bigl( \opn{MI} (\alpha, \beta \vert \sigma, \tau) \bigr) -
\log_G \bigl( \opn{MI} (\alpha \vert \partial(\sigma, \tau)) \bigr) }
\leq c_4(\alpha, \beta) \cdot \epsilon^3   \]
for a sufficiently large constant $c_4(\alpha, \beta)$
and for all sufficiently small $\epsilon$. This gives us a value for
$\epsilon_4(\alpha, \beta)$.
\end{proof}

\begin{dfn} \label{dfn:34}
Let us fix constants $c_4(\alpha, \beta)$ and $\epsilon_4(\alpha, \beta)$ as in
Lemma \ref{lem:10}. 
A square kite $(\sigma, \tau)$ in $(\mbf{I}^2, v_0)$ will be
called {\em $(\alpha, \beta)$-tiny} in this section if 
$\opn{side}(\tau) < \epsilon_4(\alpha, \beta)$ and
$\opn{len}(\sigma) \leq 5$.
\end{dfn}

\begin{lem} \label{lem:7}
Let $(\sigma, \tau)$ be a kite in $(\mbf{I}^2, v_0)$. 
Take some $k \geq 0$. For $i \in \{ 1, \ldots, 4^k \}$ let 
\[ (\sigma_i, \tau_i) := (\sigma, \tau) \circ (\sigma^k_i, \tau^k_i)
= \opn{tes}^k_i (\sigma, \tau) . \]
Then
\[ \prod_{i = 1}^{4^k} \, 
\opn{MI} \bigl( \alpha \vert \partial(\sigma_i, \tau_i) \bigr) =
\opn{MI} \bigl( \alpha \vert \partial  (\sigma, \tau) \bigr) . \]
\end{lem}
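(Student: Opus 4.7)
The plan is to reduce the asserted identity to the defining tessellation property of $\opn{tes}^k\mbf{I}^2$ (condition (ii) of Definition \ref{dfn:4}), exploiting the fact that the multiplicative integral in dimension $1$ descends to a function on the free fundamental group of a grid.

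First, using multiplicativity of $\opn{MI}(\alpha\vert -)$ under concatenation (Proposition \ref{prop:1}(1)), the left-hand side equals $\opn{MI}(\alpha \vert \rho)$ where $\rho := \prod_{i=1}^{4^k} \partial(\sigma_i,\tau_i)$ is the concatenated closed string. The composition formula (\ref{eqn:18}) together with the definition of $\partial$ gives
\[ \partial(\sigma_i,\tau_i) \;=\; \sigma \,\ast\, \bigl(\tau \circ \partial(\sigma^k_i,\tau^k_i)\bigr) \,\ast\, \sigma^{-1}, \]
since $\sigma_i=\sigma\ast(\tau\circ\sigma^k_i)$, $\tau_i=\tau\circ\tau^k_i$, and the two occurrences of $\tau\circ\sigma^k_i$ are grouped with the middle piece. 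In the concatenation over $i$, each trailing $\sigma^{-1}$ cancels with the following leading $\sigma$; since $\opn{MI}(\alpha\vert\sigma^{-1}\ast\sigma)=1$ by Proposition \ref{prop:1}(2), this cancellation is invisible to $\opn{MI}(\alpha\vert-)$. Consequently
\[ \opn{MI}(\alpha \vert \rho) \;=\; \opn{MI}\bigl(\alpha \,\big\vert\, \sigma \ast \tau\circ \rho_0 \ast \sigma^{-1}\bigr), \]
where $\rho_0 := \prod_{i=1}^{4^k} \partial(\sigma^k_i,\tau^k_i)$ is a closed string in $\opn{sk}_1\opn{sd}^k\mbf{I}^2$ based at $v_0$.

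Next I invoke the following principle, which is the main technical observation: if two closed strings $\rho_0,\rho_0'$ patterned on $\opn{sd}^k\mbf{I}^2$ based at $v_0$ represent the same element of the free group $\bsym{\pi}_1(\opn{sk}_1\opn{sd}^k\mbf{I}^2)$, then for every piecewise linear $\tau\colon\mbf{I}^2\to X$ and every $\alpha\in\Omega^1_{\mrm{pws}}(X)\otimes\g$ we have $\opn{MI}(\alpha\vert\tau\circ\rho_0)=\opn{MI}(\alpha\vert\tau\circ\rho_0')$. This holds because any two such representatives differ by a finite sequence of free reductions of the form $\cdots e \ast e^{-1}\cdots \leftrightarrow \cdots$, where $e$ is a linear piece on an edge of $\opn{sd}^k\mbf{I}^2$; each such move changes neither $\opn{MI}(\alpha\vert\tau\circ -)$ by Proposition \ref{prop:1}(1)--(2). (Equivalently, by functoriality in $X$, Proposition \ref{prop:12}, we may pull back via $\tau$ and work with $\tau^*(\alpha)$, reducing to the analogous statement for strings in $\mbf{I}^2$.)

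By the tessellation property, $[\rho_0]=[\partial\mbf{I}^2]$ in $\bsym{\pi}_1(\opn{sk}_1\opn{sd}^k\mbf{I}^2)$, so the principle above yields $\opn{MI}(\alpha\vert\tau\circ\rho_0) = \opn{MI}(\alpha\vert\tau\circ\partial\mbf{I}^2)= \opn{MI}(\alpha\vert\partial\tau)$. Combining with Proposition \ref{prop:1}(1)--(2) once more we conclude
\[ \prod_{i=1}^{4^k}\opn{MI}\bigl(\alpha \,\big\vert\, \partial(\sigma_i,\tau_i)\bigr) \;=\; \opn{MI}(\alpha\vert\sigma)\cdot\opn{MI}(\alpha\vert\partial\tau)\cdot\opn{MI}(\alpha\vert\sigma)^{-1} \;=\; \opn{MI}\bigl(\alpha \,\big\vert\, \partial(\sigma,\tau)\bigr), \]
as desired. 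The only non-routine point is the free-group invariance principle; however, since strings on a $1$-dimensional polyhedral complex freely reduce to their reduced words, and each elementary reduction is MI-trivial, this is essentially immediate once stated carefully.
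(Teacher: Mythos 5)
Your proof is correct, but it takes a more abstract route than the paper's own. The paper's proof is a one-liner --- ``By Proposition~\ref{prop:1} we get cancellation of the contribution of all inner edges'' --- which amounts to observing that the concatenated string $\prod_i \partial(\sigma_i,\tau_i)$ reduces, by explicit adjacent cancellations tied to the specific recursive structure of $\opn{tes}^k\mbf{I}^2$, to the string $\partial(\sigma,\tau)$. You instead argue at the level of the free fundamental group: you telescope the conjugating $\sigma$'s, pull back by $\tau$, and reduce the claim to condition~(ii) of Definition~\ref{dfn:4} (that $[\rho_0]=[\partial\mbf{I}^2]$ in $\bsym{\pi}_1(\opn{sk}_1\opn{sd}^k\mbf{I}^2)$) together with a homotopy-invariance principle for $\opn{MI}$ on a grid. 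That principle is exactly Lemma~\ref{lem:11}(1) (and its supporting Lemma~\ref{lem:15}) from Section~\ref{sec:puzzles}, so you are effectively importing machinery that the paper develops only \emph{after} the present lemma. There is no circularity --- Lemma~\ref{lem:15} is purely graph-theoretic and uses nothing from Section~\ref{sec:stokes2} --- but it is worth flagging that you are invoking it as a forward reference, and your inline justification (``strings on a $1$-dimensional complex freely reduce to their reduced words'') glosses over the point that proving two homotopic edge-paths are cancellation-equivalent does require an argument (the paper's proof passes through the universal covering tree). The upside of your approach is that it is manifestly independent of the specific combinatorics of $\opn{tes}^k\mbf{I}^2$: it would prove the analogous statement for \emph{any} sequence of kites satisfying the tessellation condition of Definition~\ref{dfn:4}, and in that sense is closer in spirit to Corollary~\ref{cor:2} than to the paper's direct check. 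The paper's route is more elementary but is tailored to the particular binary tessellation.
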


\begin{proof}
By Proposition \ref{prop:1} we get cancellation of the contribution of all
inner edges. 
\end{proof}

\subsection{Stokes Theorem}

\begin{thm}[Nonabelian Stokes Theorem in Dimension $2$] \label{thm:1}
\index{Nonabelian Stokes Theorem for squares}
Let $(X, x_0)$ be a pointed polyhedron, let 	
$\mbf{C} / X$ be a Lie quasi crossed module with additive feedback over
$(X, x_0)$, and let $(\alpha, \beta)$ be a connection-curvature pair for
$\mbf{C} / X$.
Then for any kite $(\sigma, \tau)$ in $(X, x_0)$ one has
\[ \Phi_0 \bigl( \opn{MI} (\alpha, \beta \vert \sigma, \tau) \bigr) =
\opn{MI} \bigl( \alpha \vert \partial (\sigma, \tau) \bigr)  \]
in $G$.
\end{thm}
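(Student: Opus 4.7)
The plan is to proceed in three reductions, each folding the general statement onto a successively more concrete special case that is already controlled by Subsection 6.1.

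\textbf{Step 1 (Reduction to the Schlesinger case $G = H$, $\Phi_0 = \opn{id}_G$).} By Proposition \ref{prop:9}, $\bsym{\Theta}_X = (\opn{id}_G, \Phi_0, \Phi_X)$ is a transfer of twisting setups from $(G, H, \Psi_{\h})$ to $(G, G, \opn{Ad}_{\g})$, and $\alpha$ is a compatible connection for this transfer. Applying Proposition \ref{prop:2} yields
\[ \Phi_0 \bigl( \opn{MI}(\alpha, \beta \vert \sigma, \tau) \bigr) =
\opn{MI} \bigl( \alpha, \Phi_X(\beta) \vert \sigma, \tau \bigr) , \]
where the right-hand side is computed in the twisting setup $(G, G, \opn{Ad}_{\g})$. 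By condition (ii) of Definition \ref{dfn:19} we have $\Phi_X(\beta) = \d(\alpha) + \smfrac{1}{2}[\alpha, \alpha]$, so it remains to prove, in this Schlesinger setup, that
\[ \opn{MI} \bigl( \alpha, \d(\alpha) + \smfrac{1}{2}[\alpha, \alpha]
\vert \sigma, \tau \bigr) = \opn{MI} \bigl( \alpha \vert \partial(\sigma, \tau) \bigr) . \]
Observe that $(\alpha, \d(\alpha) + \smfrac{1}{2}[\alpha, \alpha])$ is exactly the form of connection-curvature pair assumed throughout Subsection 6.1, where equation (\ref{eqn:75}) holds.

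\textbf{Step 2 (Reduction to $(X, x_0) = (\mbf{I}^2, v_0)$ with $\opn{len}(\sigma) \leq 1$).} By Proposition \ref{prop:14}, write $(\sigma, \tau) = f \circ (\sigma', \tau')$ for a square kite $(\sigma', \tau')$ in $(\mbf{I}^2, v_0)$ with $\opn{len}(\sigma') \leq 1$ and a piecewise linear $f : (\mbf{I}^2, v_0) \to (X, x_0)$ which is linear on $\tau'(\mbf{I}^2)$. Setting $\alpha' := f^*(\alpha)$, Propositions \ref{prop:10} and \ref{prop:12} give
\[ \opn{MI}(\alpha, \beta'' \vert \sigma, \tau) = \opn{MI}(\alpha', f^*(\beta'') \vert \sigma', \tau') \quad \text{and} \quad
\opn{MI}(\alpha \vert \partial(\sigma, \tau)) = \opn{MI}(\alpha' \vert \partial(\sigma', \tau')) , \]
where $\beta'' = \d(\alpha) + \smfrac{1}{2}[\alpha, \alpha]$. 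Since $f^*$ is a DG Lie algebra homomorphism, $f^*(\beta'') = \d(\alpha') + \smfrac{1}{2}[\alpha', \alpha']$. This brings us to exactly the situation of Subsection 6.1.

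\textbf{Step 3 (Passage to tiny kites via binary tessellation).} For each $k$, set $(\sigma_i, \tau_i) := (\sigma, \tau) \circ (\sigma^k_i, \tau^k_i) = \opn{tes}^k_i(\sigma, \tau)$. Proposition \ref{prop:3} and Lemma \ref{lem:7} give
\[ \opn{MI}(\alpha, \beta'' \vert \sigma, \tau) =
\prod_{i=1}^{4^k} \opn{MI}(\alpha, \beta'' \vert \sigma_i, \tau_i)
\quad \text{and} \quad
\opn{MI}(\alpha \vert \partial(\sigma, \tau)) =
\prod_{i=1}^{4^k} \opn{MI}(\alpha \vert \partial(\sigma_i, \tau_i)) . \]
Choose $k$ so large that every $(\sigma_i, \tau_i)$ is $(\alpha, \beta'')$-tiny in the sense of Definition \ref{dfn:34}; note $\opn{side}(\tau_i) = (\smfrac{1}{2})^k \opn{side}(\tau)$. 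For good indices $i$ (where $\alpha$ is smooth on $\tau_i(\mbf{I}^2)$) Lemma \ref{lem:10} gives a cubic estimate of order $(\smfrac{1}{2})^{3k}$ per factor, while for bad indices Lemma \ref{lem:9} gives a quadratic estimate of order $(\smfrac{1}{2})^{2k}$. Lemma \ref{lem:51} bounds the number of bad indices by $a_0 + a_1 \cdot 2^k$. Combining these bounds via property (iv) of Theorem \ref{thm:6}, the discrepancy between the two products is bounded by
\[ c_0(G) \cdot \bigl( 4^k \cdot c_4(\alpha, \beta'') \cdot (\smfrac{1}{2})^{3k}
+ (a_0 + a_1 2^k) \cdot 2 c_3(\alpha, \beta'') \cdot (\smfrac{1}{2})^{2k} \bigr)
\cdot \opn{side}(\tau)^{\bullet} , \]
which tends to $0$ as $k \to \infty$. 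Taking the limit yields the desired equality.

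\textbf{Expected main obstacle.} The only real subtlety is bookkeeping the singular loci of the piecewise smooth $\alpha$: one must verify that the ``bad'' contribution is tame enough to vanish in the limit, and that the two independent applications of property (iv) of Theorem \ref{thm:6} (on each side of the equality) remain within the CBH convergence radius uniformly in $k$. The latter is exactly the content of the uniform bounds in Lemma \ref{lem:9}, which were formulated with precisely this application in mind.
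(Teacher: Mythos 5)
Your Steps 1 and 2 are exactly the paper's reductions (via Propositions \ref{prop:9}, \ref{prop:2}, \ref{prop:11}, \ref{prop:14}, \ref{prop:10}, \ref{prop:12}), so they are fine. The gap is in Step 3: you apply the estimates of Lemmas \ref{lem:9} and \ref{lem:10}, glued by property (iv) of Theorem \ref{thm:6}, directly to the $k$-th tessellation of the full reduced kite $(\sigma,\tau)$, whose side can be of order $1$. Property (iv) is only available when the total sum of the norms of the logarithms involved is below the convergence radius $\epsilon_0(G)$. Here each factor is bounded only by $c_3(\alpha,\beta)\cdot(\smfrac{1}{2})^{2k}\cdot\opn{side}(\tau)^2$, so over $4^k$ factors the sum is of order $c_3(\alpha,\beta)\cdot\opn{side}(\tau)^2$, which does not shrink as $k$ grows; since $c_3(\alpha,\beta)\geq 1\geq \epsilon_0(G)$, for $\opn{side}(\tau)$ near $1$ the hypothesis of Theorem \ref{thm:6}(iv) simply fails. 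Worse, for a non-tiny kite there is no guarantee that $\opn{MI}(\alpha,\beta\vert\sigma,\tau)$ or $\opn{MI}(\alpha\vert\partial(\sigma,\tau))$ lies in $V_0(G)$ at all, so the quantity you propose to bound --- the difference of their logarithms --- need not be defined. Your closing remark that ``the uniform bounds in Lemma \ref{lem:9}'' keep things within the CBH radius does not repair this: those bounds are per factor and scale like $\opn{side}(\tau_i)^2$, which exactly cancels the factor count $4^k$ instead of beating it.

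The missing move --- and what the paper actually does --- is an intermediate localization. Use Proposition \ref{prop:3} together with Lemma \ref{lem:7} not merely to rewrite both sides as products, but to reduce the asserted \emph{equality} to each $(\alpha,\beta)$-tiny kite $\opn{tes}^k_i(\sigma,\tau)$ separately: if $\Phi_0(\opn{MI}(\alpha,\beta\vert\sigma_i,\tau_i))=\opn{MI}(\alpha\vert\partial(\sigma_i,\tau_i))$ for every $i$, then the two products agree, hence so do the two sides for $(\sigma,\tau)$; no estimate is needed at this stage. Only then, for a single $(\alpha,\beta)$-tiny kite with $\epsilon:=\opn{side}(\tau)<\epsilon_4(\alpha,\beta)$, run your limiting argument in a new parameter $k$: now all sums of norms are of order $c\cdot\epsilon^2$ with $\epsilon$ below the threshold constants, the relevant logarithms exist by Lemma \ref{lem:9}, Theorem \ref{thm:6}(iv) applies, and the bound $4^k c_4(\alpha,\beta)(\smfrac{1}{2})^{3k}\epsilon^3+(a_0+a_1 2^k)\,c_3(\alpha,\beta)(\smfrac{1}{2})^{2k}\epsilon^2\to 0$ forces the two fixed group elements to coincide (this two-stage structure also keeps the string lengths of the sub-kites within the bound $\opn{len}\leq 5$ demanded by Lemmas \ref{lem:9} and \ref{lem:10}). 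With that insertion, the rest of your argument is precisely the paper's proof.
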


\begin{proof}
According to Proposition \ref{prop:9}(1) there is a transfer of
twisting setups
\[ (\opn{id}_G, \Phi_0, \Phi_X) :
(G, H, \Psi_{\h}) \to (G, G, \opn{Ad}_{\g}) \]
parametrized by $(X, x_0)$. Hence by Propositions \ref{prop:2} and
\ref{prop:11}(2) we can assume that $G = H$. 
Next, using Propositions \ref{prop:14}, \ref{prop:10} and \ref{prop:11}(1)
we can further assume that 
$(X, x_0) = (\mbf{I}^2, v_0)$, $(\sigma, \tau)$ is a square kite, and
$\opn{len}(\sigma) \leq 1$. We need to prove that 
\begin{equation}
\opn{MI} (\alpha, \beta \vert \sigma, \tau) = 
\opn{MI} (\alpha \vert \partial (\sigma, \tau))  
\end{equation}
in $G$.

Take $k$ large enough such that all the kites 
\begin{equation} \label{eqn:248}
(\sigma_i, \tau_i) := \opn{tes}^k_i (\sigma, \tau)
\end{equation}
in the $k$-th binary tessellation of $(\sigma, \tau)$ are 
$(\alpha, \beta)$-tiny. By Proposition \ref{prop:3} we have
\[ \opn{MI} (\alpha, \beta \vert \sigma, \tau) = 
\prod_{i = 1}^{4^k} \, 
\opn{MI} \bigl( \alpha, \beta \vert \sigma_i, \tau_i) . \]
Using Lemma \ref{lem:7} we see that it suffices to prove that 
\[ \opn{MI} \bigl( \alpha, \beta \vert \sigma_i, \tau_i) =
\opn{MI} \bigl( \alpha \vert \partial(\sigma_i, \tau_i) \bigr) \]
for every $i$. In this way we have reduced the problem to proving that 
\begin{equation} \label{eqn:29}
\opn{MI} (\alpha, \beta \vert \sigma, \tau) =
\opn{MI} \bigl( \alpha \vert \partial(\sigma, \tau) \bigr) 
\end{equation}
for any $(\alpha, \beta)$-tiny kite $(\sigma, \tau)$ in 
$(\mbf{I}^2, v_0)$.

So assume $(\sigma, \tau)$ is $(\alpha, \beta)$-tiny, with
$\epsilon := \opn{side}(\tau)$. Take some $k \geq 0$,
and let $(\sigma_i, \tau_i)$ be like in (\ref{eqn:248}). Then 
$\opn{side}(\tau_i) = (\smfrac{1}{2})^k \cdot \epsilon$.
We know that
\begin{equation} \label{eqn:30} 
\opn{MI} (\alpha, \beta \vert \sigma, \tau) =
\prod_{i = 1}^{4^k} \, 
\opn{MI} ( \alpha, \beta \vert \sigma_i, \tau_i)  
\end{equation}
and
\begin{equation} \label{eqn:31} 
\opn{MI} \bigl( \alpha \vert \partial  (\sigma, \tau) \bigr) = 
\prod_{i = 1}^{4^k} \, 
\opn{MI} \bigl( \alpha \vert \partial(\sigma_i, \tau_i) \bigr) . 
\end{equation}

Let us do some estimates now. If $i$ is a bad index (in the sense of 
Definition \ref{dfn:45}), then by Lemma \ref{lem:9} we know that
\[ \Norm{ \log_G \bigl( \opn{MI} (\alpha, \beta \vert \sigma_i, \tau_i) \bigr) 
} \leq c_3(\alpha, \beta) \cdot (\smfrac{1}{2})^{2 k} \epsilon^{2} \]
and
\[ \Norm{ \log_G \bigl( \opn{MI} (\alpha \vert \partial(\sigma_i, \tau_i))
\bigr) } \leq c_3(\alpha, \beta) \cdot (\smfrac{1}{2})^{2 k} \epsilon^{2} \, .
\]
On the other hand, if $i$ is a index kite, then Lemma
\ref{lem:10} says that 
\[ \Norm{ \log_G \bigl( \opn{MI} (\alpha, \beta \vert \sigma_i, \tau_i) \bigr) -
\log_G \bigl( \opn{MI} (\alpha \vert \partial(\sigma_i, \tau_i)) \bigr) }
\leq c_4(\alpha, \beta) \cdot  (\smfrac{1}{2})^{3 k} \epsilon^{3} \, . \]
Using these estimates, equations (\ref{eqn:30})-(\ref{eqn:31}),
Lemma \ref{lem:51} and property (iv) of Theorem \ref{thm:6}, we arrive at
\[ \begin{aligned}
& \Norm{ \log_G \bigl( \opn{MI} (\alpha, \beta \vert \sigma, \tau) \bigr)
- \log_G \bigl( \opn{MI} (\alpha \vert \partial(\sigma, \tau))
\bigr) } \\
& \qquad \leq 
\underset{\tup{good indices}}{\underbrace{4^k \cdot c_4(\alpha, \beta) \cdot 
(\smfrac{1}{2})^{3 k} \epsilon^{3}}} +
\underset{\tup{bad indices}}{\underbrace{
\bigl( a_0 + a_1 \cdot 2^k \bigr) \cdot 
c_3(\alpha, \beta) \cdot (\smfrac{1}{2})^{2 k} \epsilon^{2}}} \, .
\end{aligned} \]
Here 
$a_i := a_i(\alpha, \beta, \mbf{I}^2)$ are the constants from Lemma 
\ref{lem:51}. 
Since the right hand side of this inequality tends to $0$ as $k \to \infty$, 
we conclude that (\ref{eqn:29}) holds.
\end{proof}

\begin{exa} \label{exa:3}
A special case of the corollary is the situation of Example \ref{exa:2}. Take
any differential form 
$\alpha \in \Omega^1_{\mrm{pws}}(X) \otimes \g$, 
and let 
$\beta := \d (\alpha) + \smfrac{1}{2} [\alpha, \alpha]$. 
Then $(\alpha, \beta)$ is a connection-curvature pair, 
and the corollary says that
\[ \opn{MI} (\alpha, \beta \vert \sigma, \tau) =
\opn{MI} \bigl( \alpha \vert \partial (\sigma, \tau) \bigr) . \]
This is just Schlesinger's theorem \cite{DF}. 
\end{exa}

\subsection{The Fundamental Relation}

Here is an important consequence of Theorem \ref{thm:1}.

\begin{thm}[The Fundamental Relation] \label{thm:16}
\index{Fundamental Relation Theorem}
Let $(X, x_0)$ be a pointed polyhedron, let 	
$\mbf{C} / X$ be a Lie quasi crossed module with additive feedback over
$(X, x_0)$, let $(\alpha, \beta)$ be a connection-curvature pair for
$\mbf{C} / X$, and let $(\sigma, \tau)$ be a kite  in $(X, x_0)$.
Let us write 
\[ g := \opn{MI} \bigl( \alpha \vert \partial (\sigma, \tau) \bigr) \in G \]
and
\[ h :=  \opn{MI} (\alpha, \beta \vert \sigma, \tau) \in H . \]
Then
\[ \Psi(g) = \opn{Ad}_H(h) \]
as automorphisms of the pointed manifold $(H, 1)$. 
In particular, $\Psi(g)$ is a group automorphism of $H$.
\end{thm}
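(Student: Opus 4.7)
The plan is to deduce this relation almost immediately from the $2$-dimensional nonabelian Stokes Theorem (Theorem \ref{thm:1}) combined with the defining axiom ($*$) of a Lie quasi crossed module (Definition \ref{dfn:22}). All the real analytic work has already been done in the preceding section; what remains is a purely formal manipulation.

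First, I would apply Theorem \ref{thm:1} to the connection-curvature pair $(\alpha, \beta)$ and the kite $(\sigma, \tau)$. This yields
\[ \Phi_0(h) = \Phi_0 \bigl( \opn{MI}(\alpha, \beta \vert \sigma, \tau) \bigr) = \opn{MI}\bigl(\alpha \vert \partial(\sigma, \tau)\bigr) = g \]
in $G$. Next, I would invoke condition ($*$) from Definition \ref{dfn:22}, which asserts that $\Psi \circ \Phi_0 = \opn{Ad}_H$ as group homomorphisms $H \to \opn{Aut}(H, 1)$. Applying $\Psi$ to the identity $\Phi_0(h) = g$ gives
\[ \Psi(g) = \Psi\bigl(\Phi_0(h)\bigr) = \opn{Ad}_H(h), \]
which is the desired equality of automorphisms of the pointed manifold $(H, 1)$.

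For the final ``in particular'' assertion, I would observe that $\opn{Ad}_H(h) : H \to H$ is conjugation by $h$ in the Lie group $H$, and is therefore automatically a Lie group automorphism of $H$. Consequently, the equality $\Psi(g) = \opn{Ad}_H(h)$ forces $\Psi(g)$ to be a group automorphism of $H$ as well, even though a priori $\Psi(g')$ for a general $g' \in G$ is only an automorphism of the pointed analytic manifold $(H, 1)$.

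There is no genuine obstacle here: the theorem is a bookkeeping consequence of Theorem \ref{thm:1}, and the only thing to check is that the two sides of ($*$) are being applied to the correct element. The substantive content of the Fundamental Relation lies entirely in Theorem \ref{thm:1}, whose proof already exploited the Stokes-type cancellation and the tiny-scale estimates from Lemmas \ref{lem:9} and \ref{lem:10}.
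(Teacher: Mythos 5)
Your proposal is correct and is essentially identical to the paper's own proof: both apply Theorem \ref{thm:1} to get $g = \Phi_0(h)$ and then invoke condition ($*$) of Definition \ref{dfn:22} to conclude $\Psi(g) = \Psi(\Phi_0(h)) = \opn{Ad}_H(h)$. The observation that $\opn{Ad}_H(h)$ is a group automorphism, settling the ``in particular'' clause, is also exactly what the paper intends.
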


\begin{proof}
According to Theorem \ref{thm:1} we have
$g = \Phi_0(h)$, and by condition ($*$) of Definition \ref{dfn:22} we know that
\[ \Psi(g) = \Psi(\Phi_0(h)) = \opn{Ad}_{H}(h) . \]
\end{proof}

\begin{cor} \label{cor:6}
In the situation of Theorem \tup{\ref{thm:16}}, suppose 
$(\sigma', \tau')$ is
another kite in $(X, x_0)$. We get a closed string 
$\partial (\sigma', \tau')$ based at $x_0$, and a kite
$\bigl( \partial (\sigma', \tau') * \sigma, \tau \bigr)$. 

Then, writing
\[ h' :=  \opn{MI} (\alpha, \beta \vert \sigma', \tau') \in H , \]
one has
\[ \opn{MI} \bigl( \alpha, \beta \vert \partial (\sigma', \tau') * \sigma, \tau
\bigr) =
\opn{Ad}_H(h') (h)  \]
in $H$.
\end{cor}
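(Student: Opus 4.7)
The plan is to combine the Fundamental Relation (Theorem \ref{thm:16}) with the base-point-moving Theorem \ref{thm:14}. Set $\rho := \partial(\sigma',\tau')$, which is a closed string at $x_0$, and put $g' := \opn{MI}(\alpha \vert \rho) \in G$. By Theorem \ref{thm:16} applied to the kite $(\sigma',\tau')$, we have $\Psi(g') = \opn{Ad}_H(h')$ as automorphisms of the pointed manifold $(H,1)$; in particular $\Psi(g')$ is a Lie group automorphism of $H$.

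Next I would invoke Theorem \ref{thm:14} with the string $\rho$. Because $\rho$ starts and ends at $x_0$, the ``moved'' base point $x_1$ coincides with $x_0$, and the kite $(\sigma,\tau)$ may be regarded as a kite in $(X,x_1)$. Theorem \ref{thm:14} then yields the identity
\[
\Psi(g')\bigl(\opn{MI}^{g'}(\alpha,\beta \vert \sigma,\tau)\bigr)
= \opn{MI}(\alpha,\beta \vert \rho * \sigma,\tau)
\]
in $H$, where $\opn{MI}^{g'}$ denotes the multiplicative integral computed in the twisted setup $(G, H^{g'}, \Psi_{\h})$.

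The key observation at this point is that $H^{g'} = H$ as Lie groups. Indeed, the group law on $H^{g'}$ is given by $h_1 \cdot^{g'} h_2 := \Psi(g')^{-1}\bigl(\Psi(g')(h_1)\cdot \Psi(g')(h_2)\bigr)$, and since $\Psi(g')$ is a group automorphism of $H$ (by step one), this reduces to the original multiplication. Consequently the diagram (\ref{eqn:133}) collapses and $\exp_{H^{g'}} = \exp_H$, so the Riemann products defining $\opn{MI}^{g'}$ and $\opn{MI}$ coincide term by term. Hence $\opn{MI}^{g'}(\alpha,\beta \vert \sigma,\tau) = \opn{MI}(\alpha,\beta \vert \sigma,\tau) = h$. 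Substituting this and $\Psi(g') = \opn{Ad}_H(h')$ into the displayed equation above gives exactly
\[
\opn{MI}\bigl(\alpha,\beta \vert \partial(\sigma',\tau') * \sigma,\tau\bigr)
= \opn{Ad}_H(h')(h),
\]
as required.

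There is no real obstacle; the only point that deserves care is verifying that when $\Psi(g')$ is a genuine group automorphism the auxiliary twisting setup in Theorem \ref{thm:14} degenerates back to the original one, so that $\opn{MI}^{g'}$ equals $\opn{MI}$. Once this identification is in place, the corollary is a direct two-line consequence of Theorems \ref{thm:16} and \ref{thm:14}.
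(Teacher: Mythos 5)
Your proposal is correct and follows essentially the same route as the paper's own proof: apply Theorem \ref{thm:16} to get $\Psi(g') = \opn{Ad}_H(h')$, observe that since this is a group automorphism the twisted group $H^{g'}$ coincides with $H$ (so $\opn{MI}^{g'} = \opn{MI}$), and then conclude via Theorem \ref{thm:14}. The extra care you take in justifying $\opn{MI}^{g'} = \opn{MI}$ via the exponential maps is exactly the point the paper leaves implicit.
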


See Figure \ref{fig:72}.

\begin{figure}
\includegraphics[scale=0.27]{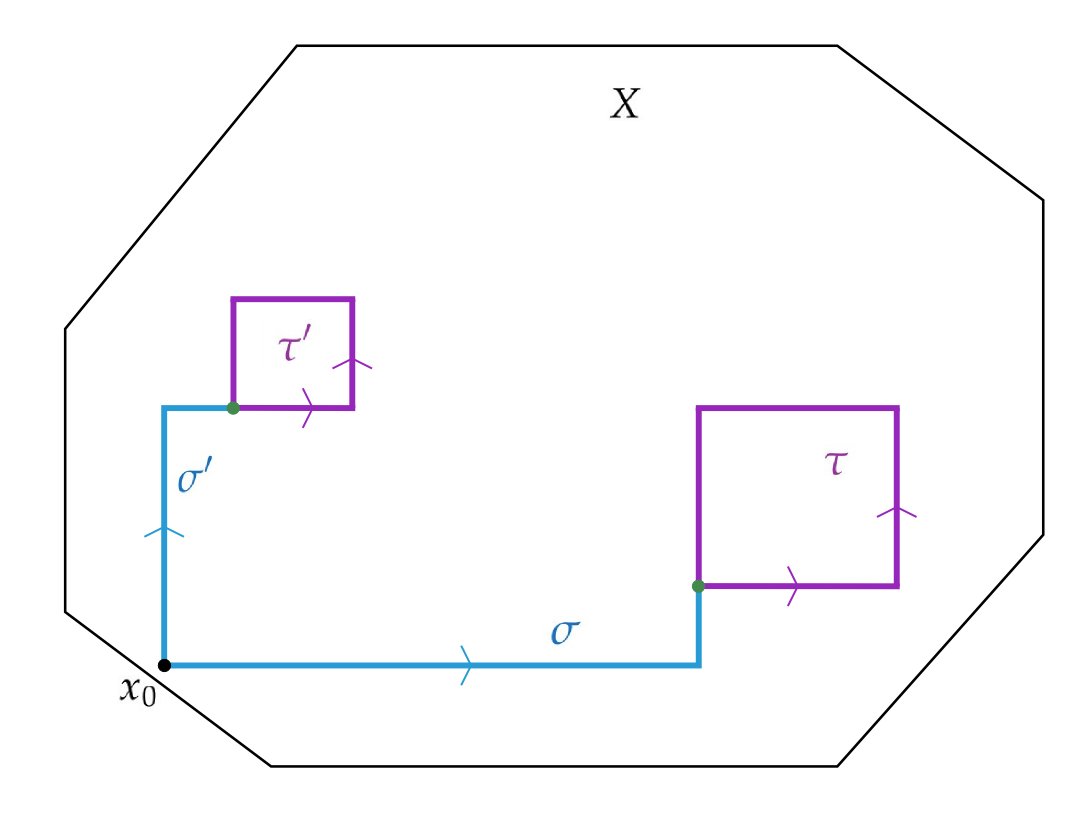}
\caption{Illustration for Corollary \ref{cor:6}.} 
\label{fig:72}
\end{figure}

\begin{proof}
Let $\rho := \partial (\sigma', \tau')$, which is a closed string  based at
$x_0$, and let $g' :=  \opn{MI}(\alpha \vert \rho) \in G$.
By Theorem \ref{thm:16} we have $\Psi(g') = \opn{Ad}_H(h')$, so this is a group
automorphism of $H$. Consider the ``moving of the base point'' corresponding to
$\rho$. Since $\Psi(g')$ is a group
automorphism of $H$, it follows that $H^{g'} = H$, and 
\[ \opn{MI}^{g'}(\alpha, \beta \vert \sigma, \tau)  =
\opn{MI}(\alpha, \beta \vert \sigma, \tau) = h . \]
But from Theorem \ref{thm:14} we get
\[ \Psi(g') \bigl( \opn{MI}^{g'}(\alpha, \beta \vert \sigma, \tau) \bigr)  =
\opn{MI}(\alpha, \beta \vert \rho * \sigma, \tau)  . \]
\end{proof}

%% section %%
\cleardoublepage
\section{Square Puzzles} \label{sec:puzzles}
\numberwithin{equation}{subsection}

In this section we work with  the pointed polyhedron
$(X, x_0) := (\mbf{I}^2, v_0)$. Let us fix some Lie quasi crossed module
with additive feedback 
\[ \mbf{C} / \mbf{I}^2 = (G, H, \Psi, \Phi_0, \Phi_{X}) \]
over $(\mbf{I}^2, v_0)$, and a piecewise smooth connection-curvature pair 
$(\alpha, \beta)$ in $\mbf{C} / \mbf{I}^2$.
See Definitions \ref{dfn:6} and \ref{dfn:19}.
We will show that under certain homotopical restrictions,
moving little square kites around inside $\mbf{I}^2$
doesn't alter the multiplicative surface integral.

\subsection{The Free Monoid with Involution} \label{subec:freemon}
It will be helpful for us to have some terminology for abstract words and
cancellation. 

Recall that a {\em monoid} is a unital semigroup. Suppose $M, N$ are monoids.
By homomorphism of monoids we mean a function
$\phi : M \to N$ that preserves the multiplications and the units. 

Let $S$ be some set, possibly infinite, whose elements we consider as symbols.
A {\em word} in $S$ is by definition a finite sequence $w = (s_1, \ldots, s_n)$
of elements of $S$. Thus $w$ is a function
\[ w : \{ 1, \ldots, n \} \to S . \] 
The natural number $n$ is the {\em length} of $w$.
We denote the set of all these words by 
$\opn{Wrd}(S)$. This is a monoid under the operation of
concatenation, which we denote by $*$, and the unit is the empty word 
$1 := ()$. We consider $S$ as a subset of $\opn{Wrd}(S)$, namely the words of
length $1$.
In fact $\opn{Wrd}(S)$ a free monoid: any function $S \to M$, where $M$
is a monoid, extends uniquely to a homomorphism of monoids
$\opn{Wrd}(S) \to M$. 

Next suppose $\bsym{s} = (s_1, \ldots, s_n)$ is a sequence of distinct elements
of $S$. We refer to such a sequence of distinct elements as an {\em alphabet}.
We denote by 
\[ \opn{Wrd}(\bsym{s}) = \opn{Wrd}(s_1, \ldots, s_n) \]
the subset of $\opn{Wrd}(S)$ consisting of the words in the alphabet $\bsym{s}$
only. More precisely, a word $w$ of length $m$ belongs to $\opn{Wrd}(\bsym{s})$
if and only if the function
$w : \{ 1, \ldots, m \} \to S$ factors as
$w = \bsym{s} \circ \til{w}$, for a function (necessarily unique)
\[ \til{w} : \{ 1, \ldots, m \} \to \{ 1, \ldots, n \} . \]
The elements of $\opn{Wrd}(\bsym{s})$ are denoted by $w(\bsym{s})$, and are
called {\em words in $\bsym{s}$}. Clearly $\opn{Wrd}(\bsym{s})$ is a sub-monoid
of $\opn{Wrd}(S)$; and
it is also a free monoid, on the $n$ symbols $s_1, \ldots, s_n$.
Given a homomorphism of monoids 
$\phi :  \opn{Wrd}(S) \to M$, and a word $w(\bsym{s}) \in \opn{Wrd}(\bsym{s})$, 
we shall use the following ``substitution notation'':
\[ w(\bsym{\gamma}) = w(\gamma_1, \ldots, \gamma_n) := \phi(w(\bsym{s})) \in M 
, \]
where $\bsym{\gamma}$ is the sequence
\[ \bsym{\gamma} = (\gamma_1, \ldots, \gamma_n) := 
\bigl( \phi(s_1), \ldots, \phi(s_n) \bigr) \]
in $M$. 

Let $M$ be a monoid. By an anti-automorphism of $M$ we mean a bijection 
$\psi : M \to  M$ that reverses the order of multiplication, and preserves the
unit. An {\em involution} of the monoid $M$ is an anti-automorphism $\psi$ such
that $\psi \circ \psi = \opn{id}$. 
The pair $(M, \psi)$ is then called a {\em monoid with involution}.

Let $S$ be a set. Let $S^{-1}$ be a new copy of $S$; i.e.\ $S^{-1}$
is a set disjoint from $S$, equipped with a bijection
$\psi_S : S \iso S^{-1}$. This bijection extends to an involution 
$\psi_S$ of the set $S \cup S^{-1}$. 
We define 
\[ \opn{Wrd}^{\pm 1}(S) := \opn{Wrd}(S \cup S^{-1}) . \]
This monoid has a canonical involution $\psi_S$, extending the involution on
the set $S \cup S^{-1}$. We sometimes write 
$w^{-1} := \psi_S(w)$ for a word $w \in \opn{Wrd}^{\pm 1}(S)$.
Note however that the product $w * w^{-1}$ is not $1$
(unless $w = 1$, the empty word). Indeed, the length of 
$w * w^{-1}$ is twice the length of $w$ (as words in $S \cup S^{-1}$).

The monoid $\opn{Wrd}^{\pm 1}(S)$ is a {\em free monoid with involution}. 
Here is what this means: let $(M, \psi)$ be any monoid with involution, and let
$f : S \to M$ be a function. Then there is a unique homomorphism of monoids
$\phi : \opn{Wrd}^{\pm 1}(S) \to M$ that commutes with the involutions and
extends
$f$. 

Given a sequence $\bsym{s} = (s_1, \ldots, s_n)$ of distinct elements of $S$,
we write 
\[ \begin{aligned}
&\opn{Wrd}^{\pm 1}(\bsym{s}) = \opn{Wrd}^{\pm 1}(s_1, \ldots, s_n) \\
& \qquad := \opn{Wrd}(s_1, \ldots, s_n, s_1^{-1}, \ldots, s_n^{-1}) \subset 
\opn{Wrd}^{\pm 1}(S) .
\end{aligned} \]
This is also a free monoid with involution.

Suppose the words 
$w$ and $w'$ in $\opn{Wrd}^{\pm 1}(S)$ satisfy this condition: 
\[ w = v_1 * u * v_2 \quad \text{and} \quad w' = v_1  * v_2 , \]
where $v_1$ and $v_2$ are some elements of $\opn{Wrd}^{\pm 1}(S)$, and
$u$ is either $s * s^{-1}$ or $s^{-1} * s$ for some $s \in S$. 
Then we say that $w'$ is {\em gotten from $w$ by cancellation}.
The equivalence relation generated by this condition is called {\em cancellation
equivalence}, and is denoted by $\sim_{\mrm{can}}$. Thus words 
$w, w' \in \opn{Wrd}(S)$ satisfy $w \sim_{\mrm{can}} w'$ 
if and only if there are words
\[ w = w_0, w_1, \ldots, w_r = w' \]
such that for each $i$ either $w_{i+1}$ is gotten from $w_i$
by cancellation, or vice versa. Note that the set of equivalence classes
$\opn{Wrd}^{\pm 1}(S) / \sim_{\mrm{can}}$ is a free group, with basis the image
of $S$. 

When we talk about $\opn{Wrd}^{\pm 1}(s_1, \ldots, s_n)$, we always
mean implicitly that it is the free monoid with involution on some sequence
$(s_1, \ldots, s_n)$ of distinct elements in 
some set $S$ (possibly $S = \{ s_1, \ldots, s_n \}$). 

If $\Gamma$ is a group, then by default we put on it the involution
$\psi(\gamma) := \gamma^{-1}$. In this way $\Gamma$ becomes a monoid with
involution. 

Let $(M, \psi)$ be a monoid with involution. Given an element
$m \in M$, define the operation
$\opn{Ad}(m) : M \to M$ by
\begin{equation} \label{eqn:249}
\opn{Ad}(m)(m') := m * m' * \psi(m) .
\end{equation}
Warning: $\opn{Ad}(m)$ is not a homomorphism of monoids (unless $M$ is a
group). 

Let $Y$ be a finite graph (i.e.\ a finite $1$-dimensional cellular complex),
with base point $y_0$ that's a vertex, and $n$ edges $\tau_1, \ldots, \tau_n$.
Choose an orientation on each $\tau_i$ (i.e.\ a homeomorphism 
$\mbf{I}^{1} \cong \tau_i$). 
The reversely oriented cell is denoted by $\tau_i^{-1}$. 
Take an alphabet $\bsym{s} = (s_1, \ldots, s_n)$.
Any word $w(\bsym{s}) \in \opn{Wrd}^{\pm 1}(\bsym{s})$ gives rise, by the
evaluation
$s_i \mapsto \tau_i$ and $s_i^{-1} \mapsto \tau_i^{-1}$, a sequence of oriented
cells $w(\tau_1, \ldots, \tau_n)$, which might or might
not be a path in $Y$. 

\begin{lem} \label{lem:15}
In the situation described above, suppose $w(\tau_1, \ldots, \tau_n)$ is a
closed path in $Y$ based at $y_0$, such that 
$[w(\tau_1, \ldots, \tau_n)] = 1$ in the fundamental group
$\bsym{\pi}_1(Y, y_0)$. Then $w(\bsym{s}) \sim_{\mrm{can}} 1$ 
in $\opn{Wrd}^{\pm 1}(\bsym{s})$. 
\end{lem}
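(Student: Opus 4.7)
The plan is to lift the situation to the universal cover of $Y$ and exploit the fact that this cover is a tree. Since $Y$ is a connected finite graph, its universal cover $p \colon \tilde Y \to Y$ is a simply connected $1$-dimensional CW complex, hence a tree. I would fix a lift $\tilde y_0 \in p^{-1}(y_0)$. By path lifting, the closed edge-path $w(\tau_1, \ldots, \tau_n)$ lifts uniquely to an edge-path $\tilde w$ in $\tilde Y$ starting at $\tilde y_0$. The hypothesis that $[w(\tau_1, \ldots, \tau_n)] = 1$ in $\bsym{\pi}_1(Y, y_0)$ is precisely the statement that $\tilde w$ ends at $\tilde y_0$, so $\tilde w$ is itself a closed walk in $\tilde Y$.

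Then I would induct on the length $m$ of the word $w(\bsym s)$. The case $m = 0$ is trivial, as then $w(\bsym s) = 1$. For $m > 0$ the crucial observation is that every closed edge-walk of positive length in a tree admits a \emph{backtrack}: two consecutive edges $\tilde e_j$, $\tilde e_{j+1}$ of $\tilde w$ with $\tilde e_{j+1} = \tilde e_j^{-1}$. Indeed, a walk without backtracks is a reduced edge-path, and the only reduced closed edge-path in a tree is the constant one, contradicting $m>0$. Projecting this backtrack down to $Y$ via $p$, the two letters of $w(\bsym s)$ at positions $j$ and $j+1$ form a pair of the form $s_i \ast s_i^{-1}$ or $s_i^{-1} \ast s_i$ for some $i$, since $p$ sends mutually inverse oriented edges in $\tilde Y$ to mutually inverse oriented edges in $Y$. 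Cancelling these two letters produces a strictly shorter word $w'(\bsym s)$ with $w(\bsym s) \sim_{\mrm{can}} w'(\bsym s)$. The associated edge-path $w'(\tau_1, \ldots, \tau_n)$ is obtained from $w(\tau_1, \ldots, \tau_n)$ by deleting this pair of opposite edges; it is still closed at $y_0$, still null-homotopic (the deletion is an elementary homotopy), and lifts to the shortened closed walk in $\tilde Y$. The induction hypothesis then yields $w'(\bsym s) \sim_{\mrm{can}} 1$, and hence $w(\bsym s) \sim_{\mrm{can}} 1$.

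The only nontrivial ingredient is the existence of a backtrack in a closed walk in a tree, which is a standard elementary fact and the natural place where the tree property of $\tilde Y$ (equivalently, freeness of $\bsym{\pi}_1$ of a graph) enters. Everything else is bookkeeping translating the cancellation of an inverse pair of edges in $\tilde Y$ into the corresponding cancellation in the alphabet $\bsym s$; this translation is automatic because each edge of $\tilde Y$ lying over $\tau_i$ is labelled by $s_i$, and its inverse orientation is labelled by $s_i^{-1}$.
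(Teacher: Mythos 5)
Your proposal is correct and takes essentially the same route as the paper: lift the null-homotopic closed walk to the universal cover, which is a tree, and argue by induction on length that a closed walk of positive length in a tree contains a backtrack that can be cancelled. The only minor difference is how the backtrack is located — you invoke the general fact that the unique reduced closed walk in a tree is the constant one, whereas the paper finds the backtrack explicitly at a vertex of maximal distance from the base vertex; both are standard and equivalent.
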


\begin{proof}
We learned this proof from Y. Glasner.
First consider a pointed tree $(\til{Y}, \til{y}_0)$, and a sequence 
$(\til{\rho}_1, \ldots, \til{\rho}_l)$ of edges in $\til{Y}$ which is a path
starting at $\til{y}_0$.
If this path is closed, then it is cancellation equivalent to
a point. This can be seen by induction on $l$. 
Indeed, let $i$ be an index such that the endpoint of the path
$(\til{\rho}_1, \ldots, \til{\rho}_i)$ is at maximal distance from the
base vertex $\til{y}_0$. Then we must have
$\til{\rho}_{i+1} = \til{\rho}_i^{-1}$; so we can cancel these two edges,
yielding a shorter closed path. 

Getting back to our problem, write
$(\rho_1, \ldots, \rho_l) := [w(\tau_1, \ldots, \tau_n)]$, where 
$\rho_i \in \{ \tau_1^{\pm 1}, \ldots, \tau_n^{\pm 1} \}$.
Then $w(\bsym{s}) \sim_{\mrm{can}} 1$ if and only if 
$(\rho_1, \ldots, \rho_l)$ is cancellation equivalent to a point. 
Let $p : (\til{Y}, \til{y}_0) \to (Y, y_o)$ be the universal covering map; so 
$(\til{Y}, \til{y}_0)$ is a pointed tree.
Since the closed path 
$(\rho_1, \ldots, \rho_l)$ is trivial in the fundamental group of $Y$, it lifts
to a closed path $(\til{\rho}_1, \ldots, \til{\rho}_l)$
based at $\til{y}_0$ in the tree $\til{Y}$. By the first paragraph the 
path $(\til{\rho}_1, \ldots, \til{\rho}_l)$ is cancellation equivalent to a
point. Hence so is $(\rho_1, \ldots, \rho_l)$.
\end{proof}

\subsection{Generating Sequences}
Recall that for $k \geq 0$, the $k$-th binary subdivision 
$\opn{sd}^k \mbf{I}^2$ of $\mbf{I}^2$ is the cellular decomposition of 
$\mbf{I}^2$ into $4^k$ squares, each of side $(\smfrac{1}{2})^k$. 
The $1$-skeleton of $\opn{sd}^k \mbf{I}^2$ is the topological space
$\opn{sk}_1 \opn{sd}^k \mbf{I}^2$, and its fundamental group, based at $v_0$, is
$\bsym{\pi}_1(\opn{sk}_1 \opn{sd}^k \mbf{I}^2)$.
For a closed string $\sigma$ based at $v_0$ and patterned on 
$\opn{sd}^k \mbf{I}^2$ we denote by 
$[\sigma]$ the corresponding element of 
$\bsym{\pi}_1(\opn{sk}_1 \opn{sd}^k \mbf{I}^2)$.

\begin{dfn} \label{dfn:33}
Let $k \in \N$, and let
\[ \bsym{\rho}^{\natural} = \bigl( (\sigma^{\natural}_1, \tau^{\natural}_1),
\ldots, (\sigma^{\natural}_{4^k}, \tau^{\natural}_{4^k}) \bigr) \]
be a sequence of kites in $(\mbf{I}^2, v_0)$, all patterned on 
$\opn{sd}^k \mbf{I}^2$.
For any $i$ let
\[ a^{\natural}_i := [\partial (\sigma^{\natural}_i, \tau^{\natural}_i)]
\in \bsym{\pi}_1(\opn{sk}_1 \opn{sd}^k \mbf{I}^2) . \]
\begin{enumerate}
\item If the sequence $(a^{\natural}_1, \ldots, a^{\natural}_{4^k})$
is a basis of the group 
$\bsym{\pi}_1(\opn{sk}_1 \opn{sd}^k \mbf{I}^2)$,
then we say that $\bsym{\rho}^{\natural}$ is a {\em generating sequence for
$\opn{sd}^k \mbf{I}^2$}.

\item Let $(\sigma, \tau)$ be a kite in $(\mbf{I}^2, v_0)$ patterned on 
$\opn{sd}^k \mbf{I}^2$. If $\tau = \tau^{\natural}_i$ for some $i$ then 
we say that $(\sigma, \tau)$ is {\em aligned with $\bsym{\rho}^{\natural}$}.
\end{enumerate}
\end{dfn}

The $k$-th binary tessellation $\opn{tes}^k \mbf{I}^2$ 
(see Definition \ref{dfn:14}) is an example of a 
generating sequence for $\opn{sd}^k \mbf{I}^2$.

\begin{lem} \label{lem:17}
Let $\bsym{\rho}^{\natural}$ be a generating sequence for 
$\opn{sd}^k \mbf{I}^2$, and let 
$a^{\natural}_i = \lb [\partial (\sigma^{\natural}_i, \tau^{\natural}_i)]$
as in the definition above.
Let 
$w(\bsym{s}) \in \opn{Wrd}^{\pm 1}(\bsym{s}) = \opn{Wrd}^{\pm 1}(s_1, \ldots,
s_{4^k})$ 
be a word such that 
\[ w(a^{\natural}_1, \ldots, a^{\natural}_{4^k}) = 1  \]
in the group $\bsym{\pi}_1(\opn{sk}_1 \opn{sd}^k \mbf{I}^2)$.
Then 
\[ w(\bsym{s}) \sim_{\mrm{can}} 1 \]
in $\opn{Wrd}^{\pm 1}(\bsym{s})$.
\end{lem}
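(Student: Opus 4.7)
The plan is to extract this lemma directly from the abstract nonsense of free monoids with involution and free groups, using the hypothesis that the classes $a_i^\natural$ form a basis. No geometry beyond the identification of $\bsym{\pi}_1(\opn{sk}_1 \opn{sd}^k \mbf{I}^2)$ as a free group (already noted in the paper) will be needed; in particular, Lemma \ref{lem:15} is a separate, genuinely topological ingredient and will not be invoked here.

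First, I would use the universal property of $\opn{Wrd}^{\pm 1}(\bsym{s})$ recalled in Subsection \ref{subec:freemon}: regarding $\bsym{\pi}_1(\opn{sk}_1 \opn{sd}^k \mbf{I}^2)$ as a monoid with involution via $\gamma \mapsto \gamma^{-1}$, the assignment $s_i \mapsto a_i^\natural$ extends uniquely to an involution-preserving monoid homomorphism
\[
\phi : \opn{Wrd}^{\pm 1}(\bsym{s}) \longrightarrow \bsym{\pi}_1(\opn{sk}_1 \opn{sd}^k \mbf{I}^2),
\]
and by construction $\phi(w(\bsym{s})) = w(a_1^\natural,\ldots,a_{4^k}^\natural)$.

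Next, recall from Subsection \ref{subec:freemon} that the quotient map
\[
\opn{Wrd}^{\pm 1}(\bsym{s}) \twoheadrightarrow \opn{Wrd}^{\pm 1}(\bsym{s})/{\sim_{\mrm{can}}} =: F(\bsym{s})
\]
identifies the target with the free group on the symbols $s_1,\ldots,s_{4^k}$. Since $\phi$ preserves the involution and $\bsym{\pi}_1$ is a group, $\phi$ kills every subword of the form $s*s^{-1}$ or $s^{-1}*s$, hence descends to a group homomorphism $\bar{\phi} : F(\bsym{s}) \to \bsym{\pi}_1(\opn{sk}_1 \opn{sd}^k \mbf{I}^2)$. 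The hypothesis that $(a_1^\natural,\ldots,a_{4^k}^\natural)$ is a basis of $\bsym{\pi}_1(\opn{sk}_1 \opn{sd}^k \mbf{I}^2)$ is precisely the statement that this $\bar\phi$ is an isomorphism of free groups.

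Finally, suppose $w(a_1^\natural,\ldots,a_{4^k}^\natural) = 1$. Then $\bar\phi$ sends the class of $w(\bsym{s})$ in $F(\bsym{s})$ to $1$; since $\bar\phi$ is injective, the class of $w(\bsym{s})$ in $F(\bsym{s})$ is trivial, which is exactly the assertion $w(\bsym{s}) \sim_{\mrm{can}} 1$. There is no real obstacle: once the universal properties of $\opn{Wrd}^{\pm 1}(\bsym{s})$ and $F(\bsym{s})$ are in hand, the lemma is formal, and the only content lies in the word ``basis'' in Definition \ref{dfn:33}, which supplies the injectivity of $\bar\phi$ for free.
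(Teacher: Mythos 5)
Your proof is correct and takes exactly the same route as the paper's: the paper's entire proof is the one sentence that cites the basis property of $(a^{\natural}_1, \ldots, a^{\natural}_{4^k})$, and your argument is simply the careful unpacking of that sentence via the universal properties of $\opn{Wrd}^{\pm 1}(\bsym{s})$ and of the free group $\opn{Wrd}^{\pm 1}(\bsym{s})/\sim_{\mrm{can}}$.
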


\begin{proof}
This is because the sequence 
$(a^{\natural}_1, \ldots, a^{\natural}_{4^k})$
is a basis of the free group \linebreak
$\bsym{\pi}_1(\opn{sk}_1 \opn{sd}^k \mbf{I}^2)$.
\end{proof}

\begin{lem} \label{lem:11}
Let $\sigma_1, \sigma_2$ be linear strings in $\mbf{I}^2$, and let
$\tau : \mbf{I}^2 \to \mbf{I}^2$ be a linear map, all patterned on 
$\opn{sd}^k \mbf{I}^2$ for some $k \geq 0$. 
\begin{enumerate}
\item Suppose $\sigma_1, \sigma_2$ are closed strings based at $v_0$, and
$[\sigma_1] = [\sigma_2]$ in \lb 
$\bsym{\pi}_1(\opn{sk}_1 \opn{sd}^k \mbf{I}^2)$. 
Then 
\[ \opn{MI} (\alpha \vert \sigma_1) = \opn{MI} (\alpha \vert \sigma_1) \]
in $G$.
\item Suppose that $\sigma_i(v_0) = v_0$ and 
$\sigma_i(v_1) = \tau(v_0)$ for $i = 1, 2$; so that 
$(\sigma_i, \tau)$ are kites in $\mbf{I}^2$ and 
$\sigma_2 * \sigma_1^{-1}$ is a closed string based at $v_0$. 
If 
$[\sigma_2 * \sigma_1^{-1}] = 1$ in 
$\bsym{\pi}_1(\opn{sk}_1 \opn{sd}^k \mbf{I}^2)$, then 
\[ \opn{MI} (\alpha, \beta \vert \sigma_1, \tau) = 
\opn{MI} (\alpha, \beta \vert \sigma_2, \tau) \]
in $H$. 
\end{enumerate}
\end{lem}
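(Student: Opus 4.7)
The plan is to derive both parts directly from Lemma \ref{lem:15}, using the multiplicative properties of the one-dimensional $\opn{MI}$ established in Proposition \ref{prop:1}.

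For part (1), I would set things up as follows. Fix an orientation on each $1$-cell of $\opn{sd}^k \mbf{I}^2$ and let $\bsym{e} = (e_1, \ldots, e_n)$ enumerate these oriented edges, with positively oriented linear parameterization $\phi_j : \mbf{I}^1 \to e_j$; put $\gamma_j := \opn{MI}(\alpha \vert \phi_j) \in G$. Any linear string $\sigma$ patterned on $\opn{sd}^k \mbf{I}^2$ decomposes uniquely into pieces each of the form $\phi_j$ or $\phi_j^{-1}$, and hence corresponds to a word $w_\sigma(\bsym{s}) \in \opn{Wrd}^{\pm 1}(\bsym{s})$ for a formal alphabet $\bsym{s} = (s_1, \ldots, s_n)$. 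By Proposition \ref{prop:1}(1,2), the substitution $s_j \mapsto \gamma_j$ extends to a homomorphism of monoids with involution $\opn{Wrd}^{\pm 1}(\bsym{s}) \to G$ under which
\[ \opn{MI}(\alpha \vert \sigma) = w_\sigma(\gamma_1, \ldots, \gamma_n). \]
The hypothesis $[\sigma_1] = [\sigma_2]$ makes the word corresponding to the closed string $\sigma_1 * \sigma_2^{-1}$ represent the trivial element of $\bsym{\pi}_1(\opn{sk}_1 \opn{sd}^k \mbf{I}^2)$. Lemma \ref{lem:15} then yields $w_{\sigma_1 * \sigma_2^{-1}}(\bsym{s}) \sim_{\mrm{can}} 1$, and evaluation gives $\opn{MI}(\alpha \vert \sigma_1) \cdot \opn{MI}(\alpha \vert \sigma_2)^{-1} = 1$ in $G$.

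For part (2), my plan is to reduce it to part (1) by observing that $\opn{MI}(\alpha, \beta \vert \sigma_i, \tau)$ depends on $\sigma_i$ only through $\opn{MI}(\alpha \vert \sigma_i) \in G$. Since $\tau$ is patterned on $\opn{sd}^k \mbf{I}^2$, its image $\tau(\mbf{I}^2)$ is a full $2$-cell, so we are in the nondegenerate branch of Definition \ref{dfn:11}. By Definitions \ref{dfn:11} and \ref{dfn:44}, the $k'$-th refined Riemann product is a product over $j \in \{1, \ldots, 4^{k'}\}$ of basic Riemann products on the sub-kites $(\sigma_i, \tau) \circ (\sigma^{k'}_j, \tau^{k'}_j)$, and each such basic Riemann product is determined by $\tau, \alpha, \beta$ together with the twisting element
\[ g_{i,j} := \opn{MI}(\alpha \vert \sigma_i) \cdot \opn{MI}(\alpha \vert \tau \circ \sigma^{k'}_j) \cdot \opn{MI}\bigl( \alpha \vert (\tau \circ \tau^{k'}_j) \circ \sigma_{\mrm{pr}} \bigr), \]
the factorization coming from Proposition \ref{prop:1}(1). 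Applying part (1) to $\sigma_2 * \sigma_1^{-1}$ gives $\opn{MI}(\alpha \vert \sigma_1) = \opn{MI}(\alpha \vert \sigma_2)$, so $g_{1,j} = g_{2,j}$ for every $j$ and $k'$; consequently the refined Riemann products agree at every level, and so do their limits.

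No step appears to pose a real obstacle. The only subtlety I anticipate is the formal bookkeeping in part (2): one must verify that no other ingredient of Definition \ref{dfn:11} — the center $z = (\tau \circ \tau^{k'}_j)(\smfrac{1}{2}, \smfrac{1}{2})$, its smooth-or-singular status for $\beta$, the coefficient $\til{\beta}(z)$, and the area factor — depends on $\sigma_i$ at all, so that the entire $\sigma_i$-dependence of $\opn{RP}_0$ is channeled through $\Psi_{\h}(g_{i,j})$ and hence through $\opn{MI}(\alpha \vert \sigma_i)$. This is a direct inspection against Definition \ref{dfn:11}.
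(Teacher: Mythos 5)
Your proof is correct. Part (1) is essentially the paper's own argument: decompose the strings into the oriented $1$-cells of $\opn{sd}^k \mbf{I}^2$, pass to words in $\opn{Wrd}^{\pm 1}(\bsym{s})$, invoke Lemma \ref{lem:15} to get cancellation equivalence, and evaluate through the multiplicativity and inversion properties of the $1$-dimensional $\opn{MI}$ (Proposition \ref{prop:1}). For part (2), however, you take a genuinely different route. The paper first gets $g := \opn{MI}(\alpha \vert \sigma_1) = \opn{MI}(\alpha \vert \sigma_2)$ exactly as you do, but then concludes by applying Theorem \ref{thm:14} (moving the base point along $\sigma_i$, with the empty string as the new tail), so that $\opn{MI}(\alpha, \beta \vert \sigma_i, \tau) = \Psi(g)\bigl(\opn{MI}^g(\alpha, \beta \vert \sigma_0, \tau)\bigr)$ visibly depends only on $g$. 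You instead unwind Definition \ref{dfn:11}: in every refined Riemann product the only $\sigma_i$-dependence sits in the twisting element $g_{i,j}$, which by Proposition \ref{prop:1}(1) factors as $\opn{MI}(\alpha \vert \sigma_i)$ times data depending only on $\tau$, while the area, midpoint, smooth-or-singular status and coefficient $\til{\beta}$ are all $\sigma_i$-independent; equality of the Riemann products at every level then passes to the limit. Your argument is more elementary and self-contained -- it needs only the twisting setup and not the compatibility of $\alpha$ with $\mbf{C}/X$ that Theorem \ref{thm:14} presupposes (though that hypothesis is available in the section where the lemma is used) -- whereas the paper's version is shorter because the base-point-moving machinery has already been established and encapsulates precisely the fact you verify by hand.
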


\begin{proof}
(1) Take 
$n := 2^{k+1} \cdot (2^k + 1)$, the number of $1$-cells in 
$\opn{sd}^k \mbf{I}^2$. Let us denote these $1$-cells by
$\tau_1, \ldots, \tau_n$, and let's choose an orientation for these cells, as
in Lemma \ref{lem:15}. There are unique words 
\[ w_1(\bsym{s}), w_2(\bsym{s}) \in \opn{Wrd}^{\pm 1}(\bsym{s}) = 
\opn{Wrd}^{\pm 1}(s_1, \ldots, s_n) \]
such that 
$w_i(\tau_1, \ldots, \tau_n) = \sigma_i$ as strings. 
Lemma \ref{lem:15} implies that \lb
$w_1(\bsym{s}) \sim_{\tup{can}} w_2(\bsym{s})$ in 
$\opn{Wrd}^{\pm 1}(\bsym{s})$. Now Definition \ref{dfn:10} and Proposition
\ref{prop:1}
tell us that 
\[ \opn{MI} \bigl( \alpha \vert w_1(\tau_1, \ldots, \tau_n) \bigr) = 
\opn{MI} \bigl( \alpha \vert w_2(\tau_1, \ldots, \tau_n) \bigr) . \]

\medskip \noindent
(2) Let
$g_i := \opn{MI} (\alpha \vert \sigma_i) \in G$.
By part (1) we know that 
\[ g_2 \cdot g_1^{-1} = \opn{MI} (\alpha \vert \sigma_2) \cdot
\opn{MI} (\alpha \vert \sigma_1^{-1}) = 
\opn{MI} (\alpha \vert \sigma_2 * \sigma_1^{-1}) = 1 ; \]
so that $g_1 = g_2$. Let $g := g_1 = g_2$.

Let $\sigma_0$ denote the empty string. So
$\sigma_i * \sigma_0 = \sigma_i$ for $i = 1, 2$.
Now according to Theorem \ref{thm:14} we have 
\[ \opn{MI}(\alpha, \beta \vert \sigma_i, \tau) =
\Psi(g) \bigl( \opn{MI}^g (\alpha, \beta \vert \sigma_0, \tau) \bigr) . \]
for $i = 1, 2$. But the right hand side is independent of $i$.
\end{proof}

\begin{lem} \label{lem:12}
Let $\bsym{\rho}^{\natural}$ be a generating sequence for 
$\opn{sd}^k \mbf{I}^2$, let 
$a^{\natural}_i$ be as in definition \tup{\ref{dfn:33}}, and let
\[ h^{\natural}_i := \opn{MI} (\alpha, \beta \vert 
\sigma^{\natural}_i, \tau^{\natural}_i) \in H . \]
Suppose $(\sigma, \tau)$ is a kite in $(\mbf{I}^2, v_0)$ patterned on
$\opn{sd}^k \mbf{I}^2$ and aligned with $\bsym{\rho}^{\natural}$. Then there
is a word 
\[ w(\bsym{s}) \in \opn{Wrd}^{\pm 1}(\bsym{s}) = 
\opn{Wrd}^{\pm 1}(s_1, \ldots, s_{4^k}) \]
such that
\[ [\partial (\sigma, \tau)] = 
w(a^{\natural}_1, \ldots, a^{\natural}_{4^k} ) \]
in $\bsym{\pi}_1(\opn{sk}_1 \opn{sd}^k \mbf{I}^2)$, and 
\[ \opn{MI} (\alpha, \beta \vert \sigma, \tau) = 
w(h^{\natural}_1, \ldots, h^{\natural}_{4^k}) \]
in $H$. 
\end{lem}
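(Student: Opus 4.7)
The plan is to exhibit the word $w$ explicitly as a conjugation $w(\bsym{s}) := v(\bsym{s}) * s_{i_0} * v(\bsym{s})^{-1}$, where $i_0$ is the index with $\tau = \tau^{\natural}_{i_0}$ (guaranteed by alignment) and $v(\bsym{s})$ records the ``residual'' closed string $\rho := \sigma * (\sigma^{\natural}_{i_0})^{-1}$ in the free basis $(a^{\natural}_1, \ldots, a^{\natural}_{4^k})$ of $\bsym{\pi}_1(\opn{sk}_1 \opn{sd}^k \mbf{I}^2)$. Concretely, pick any $v(\bsym{s}) \in \opn{Wrd}^{\pm 1}(\bsym{s})$ with $[\rho] = v(a^{\natural}_1, \ldots, a^{\natural}_{4^k})$; such a $v$ exists since $\rho$ is a closed string at $v_0$ patterned on $\opn{sd}^k \mbf{I}^2$ and the $a^{\natural}_i$ form a basis. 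Directly unwinding the definition of the boundary of a kite gives $\partial(\sigma, \tau) = \rho * \partial(\sigma^{\natural}_{i_0}, \tau^{\natural}_{i_0}) * \rho^{-1}$ as strings, and hence $[\partial(\sigma, \tau)] = [\rho] \cdot a^{\natural}_{i_0} \cdot [\rho]^{-1} = w(a^{\natural}_1, \ldots, a^{\natural}_{4^k})$, which is the first identity.

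For the multiplicative identity, I would first geometrically realise the word $v$. Writing $v(\bsym{s}) = s_{j_1}^{\epsilon_1} * \cdots * s_{j_m}^{\epsilon_m}$, form the literal closed string $\rho_0 := \partial(\sigma^{\natural}_{j_1}, \tau^{\natural}_{j_1})^{\epsilon_1} * \cdots * \partial(\sigma^{\natural}_{j_m}, \tau^{\natural}_{j_m})^{\epsilon_m}$ based at $v_0$, which is patterned on $\opn{sd}^k \mbf{I}^2$ and satisfies $[\rho_0] = [\rho]$. The two ``tail'' strings $\sigma$ and $\rho_0 * \sigma^{\natural}_{i_0}$ share endpoints $v_0$ and $\tau^{\natural}_{i_0}(v_0)$, and their ratio is null-homotopic in $\opn{sk}_1 \opn{sd}^k \mbf{I}^2$; so Lemma \ref{lem:11}(2) gives $\opn{MI}(\alpha, \beta \vert \sigma, \tau) = \opn{MI}(\alpha, \beta \vert \rho_0 * \sigma^{\natural}_{i_0}, \tau^{\natural}_{i_0})$.

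Next I would peel the factors of $\rho_0$ off one at a time by induction on $m$. For each $s$, the nonabelian Stokes Theorem (Theorem \ref{thm:1}) gives $\opn{MI}(\alpha \vert \partial(\sigma^{\natural}_{j_s}, \tau^{\natural}_{j_s})^{\epsilon_s}) = \Phi_0((h^{\natural}_{j_s})^{\epsilon_s})$, and by the Pfeiffer condition its $\Psi$-image is $\opn{Ad}_H((h^{\natural}_{j_s})^{\epsilon_s})$, which is a genuine group automorphism of $H$. The moving-base-point theorem (Theorem \ref{thm:14}) therefore collapses to plain conjugation of the residual MI by $(h^{\natural}_{j_s})^{\epsilon_s}$; for $\epsilon_s = +1$ this is exactly Corollary \ref{cor:6}, and for $\epsilon_s = -1$ it is the analogous one-line argument. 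Iterating $m$ times starting from the base $\opn{MI}(\alpha, \beta \vert \sigma^{\natural}_{i_0}, \tau^{\natural}_{i_0}) = h^{\natural}_{i_0}$ yields $\opn{MI}(\alpha, \beta \vert \rho_0 * \sigma^{\natural}_{i_0}, \tau^{\natural}_{i_0}) = (h^{\natural}_{j_1})^{\epsilon_1} \cdots (h^{\natural}_{j_m})^{\epsilon_m} \cdot h^{\natural}_{i_0} \cdot (h^{\natural}_{j_m})^{-\epsilon_m} \cdots (h^{\natural}_{j_1})^{-\epsilon_1} = w(h^{\natural}_1, \ldots, h^{\natural}_{4^k})$.

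The only genuine subtlety is the book-keeping for the inverse factors. One must invoke the Fundamental Relation not merely to produce a conjugation, but to certify that the relevant $G$-elements lie in the image of $\Phi_0$, so that their $\Psi$-images are group automorphisms of $H$ rather than merely automorphisms of the pointed manifold $(H,1)$. Only then does the general formula of Theorem \ref{thm:14}, involving the twisted group $H^g$ with its possibly distinct exponential, reduce to the clean conjugation identity used in the iteration. Everything else is a direct symbolic translation between the word $v$ and its evaluations in $\bsym{\pi}_1(\opn{sk}_1 \opn{sd}^k \mbf{I}^2)$ and in $H$.
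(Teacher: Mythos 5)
Your proposal is correct and follows essentially the same route as the paper: the same conjugation word $w(\bsym{s}) = v(\bsym{s}) * s_{i_0} * v(\bsym{s})^{-1}$, the same use of Lemma \ref{lem:11}(2) to replace $\sigma$ by the geometric realisation of $v$ concatenated with $\sigma^{\natural}_{i_0}$, and the same recursive application of Corollary \ref{cor:6} (resting on the Fundamental Relation and Theorem \ref{thm:14}) to peel off the boundary factors as conjugations. The only difference is cosmetic — you apply Lemma \ref{lem:11}(2) at the start rather than at the end, and you spell out the inverse-exponent case that the paper's ``recursively'' leaves implicit.
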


\begin{proof}
Let $i$ be an index such that 
$\tau = \tau^{\natural}_i$. Then 
$\sigma *  (\sigma^{\natural}_i)^{-1}$ is a closed string based at $v_0$ and
patterned on $\opn{sd}^k \mbf{I}^2$. 
(See Figure \ref{fig:33} for an illustration where $k = 1$
$\bsym{\rho}^{\natural} = \opn{tes}^1 \mbf{I}^2$.)
There is a  word
$u(\bsym{s}) \in \opn{Wrd}^{\pm 1}(\bsym{s})$ such that
\[ [ \sigma *  (\sigma^{\natural}_i)^{-1} ] = 
u(a^{\natural}_1, \ldots, a^{\natural}_{4^k}) \]
in the group $\bsym{\pi}_1(\opn{sk}_1 \opn{sd}^k \mbf{I}^2)$.

\begin{figure}
\includegraphics[scale=0.36]{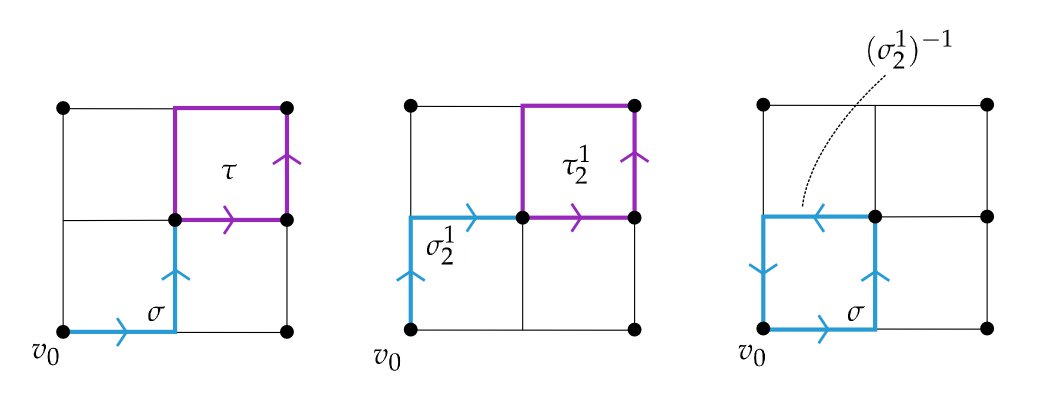}
\caption{A kite $(\sigma, \tau)$ patterned on 
$\opn{sd}^1 \mbf{I}^2$ and aligned with $\opn{tes}^1 \mbf{I}^2$, the
corresponding kite $(\sigma^{1}_i, \tau^{1}_i)$, and the closed string 
$\sigma * (\sigma^{1}_i)^{-1}$. Here $i = 2$.} 
\label{fig:33}
\end{figure}

We can also consider the monoid $M$ of finite sequences of closed strings
patterned on $\opn{sd}^k \mbf{I}^2$
and based at $v_0$, where composition is concatenation of sequences, and the
involution is given by reversal of order and (\ref{eqn13}). 
Consider the sequence
\[ \partial \bsym{\rho}^{\natural} :=
\bigl( \partial (\sigma^{\natural}_1, \tau^{\natural}_1),  \ldots,
\partial (\sigma^{\natural}_{4^k}, \tau^{\natural}_{4^k}) \bigr) . \]
The evaluation 
$u(\partial \bsym{\rho}^{\natural})$ of the word $u(\bsym{s})$
on the sequence
$\partial \bsym{\rho}^{\natural}$ is an element of $M$. And we have
\begin{equation} \label{eqn:35}
\bigl[ u(\partial \bsym{\rho}^{\natural}) \bigr] = 
u(a^{\natural}_1, \ldots, a^{\natural}_{4^k})
\end{equation}
in the group $\bsym{\pi}_1(\opn{sk}_1 \opn{sd}^k \mbf{I}^2)$.

Take
\[ w(\bsym{s}) := u(\bsym{s}) * s_i * u(\bsym{s})^{-1} \in \opn{Wrd}^{\pm
1}(\bsym{s}) .
\]
It is easy to check that 
\[ [\partial (\sigma, \tau)] = w(a_1^{\natural}, \ldots, a^{\natural}_{4^k}) \]
holds in $\bsym{\pi}_1(\opn{sk}_1 \opn{sd}^k \mbf{I}^2)$. Next, using Corollary
\ref{cor:6} recursively, equation (\ref{eqn:35}) 
and Lemma \ref{lem:11}(2), we obtain
\[ \begin{aligned}
w(h^{\natural}_1, \ldots, h^{\natural}_{4^k}) & =
u(h^{\natural}_1, \ldots, h^{\natural}_{4^k}) \cdot h^{\natural}_i \cdot 
u(h^{\natural}_1, \ldots, h^{\natural}_{4^k})^{-1}  \\
& =
\opn{MI} \bigl( \alpha, \beta \vert 
u(\partial \bsym{\rho}^{\natural}) * \sigma^{\natural}_i, \tau^{\natural}_i
\bigr)  \\
& =
\opn{MI} \bigl( \alpha, \beta \vert \sigma , \tau \bigr) .
\end{aligned} \]
\end{proof}

\begin{lem} \label{lem:13}
Let $\bsym{\rho}^{\natural}$ be a generating sequence for 
$\opn{sd}^k \mbf{I}^2$, and let 
\[ \bsym{\rho} = \bigl( (\sigma_1, \tau_1), \, \ldots, \, (\sigma_m, \tau_m) 
\bigr) \]
be some sequence of kites in $(\mbf{I}^2, v_0)$ patterned on 
$\opn{sd}^k \mbf{I}^2$ and aligned with $\bsym{\rho}^{\natural}$.
Write
\[ a_i := [\partial (\sigma_i, \tau_i)]  \in
\bsym{\pi}_1(\opn{sk}_1 \opn{sd}^k \mbf{I}^2) \]
and
\[ h_i := \opn{MI} (\alpha, \beta \vert \sigma_i, \tau_i)\in H . \]
Let 
\[ u(\bsym{s}) \in \opn{Wrd}^{\pm 1}(\bsym{s}) = 
\opn{Wrd}^{\pm 1}(s_1, \ldots, s_{m}) \]
be a word such that
\[ u(a_1, \ldots, a_m) = 1 . \]
Then 
\[ u(h_1, \ldots, h_m) = 1 . \]
\end{lem}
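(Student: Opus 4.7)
The plan is to reduce the statement to Lemma \ref{lem:17} by rewriting every $s_i$ in terms of the distinguished alphabet coming from the generating sequence $\bsym{\rho}^{\natural}$.

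First, I would apply Lemma \ref{lem:12} to each individual kite $(\sigma_i, \tau_i)$, which is permitted since these kites are patterned on $\opn{sd}^k \mbf{I}^2$ and aligned with $\bsym{\rho}^{\natural}$. This produces, for each $i \in \{1, \dots, m\}$, a word $w_i(\bsym{s}^{\natural}) \in \opn{Wrd}^{\pm 1}(s^{\natural}_1, \dots, s^{\natural}_{4^k})$ satisfying both
\[ a_i = w_i(a^{\natural}_1, \dots, a^{\natural}_{4^k}) \quad \text{in } \bsym{\pi}_1(\opn{sk}_1 \opn{sd}^k \mbf{I}^2) \]
and
\[ h_i = w_i(h^{\natural}_1, \dots, h^{\natural}_{4^k}) \quad \text{in } H. \]

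Next, I would use freeness of $\opn{Wrd}^{\pm 1}(s_1, \dots, s_m)$ as a monoid with involution: the assignment $s_i \mapsto w_i(\bsym{s}^{\natural})$ extends uniquely to an involution-preserving monoid homomorphism
\[ \opn{Wrd}^{\pm 1}(s_1, \dots, s_m) \longrightarrow \opn{Wrd}^{\pm 1}(s^{\natural}_1, \dots, s^{\natural}_{4^k}), \]
and I would let $u'(\bsym{s}^{\natural})$ denote the image of $u(\bsym{s})$ under this homomorphism. The compatibility of evaluations (both the fundamental group and $H$ are groups, hence monoids with involution) then yields
\[ u'(a^{\natural}_1, \dots, a^{\natural}_{4^k}) = u(a_1, \dots, a_m) = 1 \]
in $\bsym{\pi}_1(\opn{sk}_1 \opn{sd}^k \mbf{I}^2)$, and similarly $u'(h^{\natural}_1, \dots, h^{\natural}_{4^k}) = u(h_1, \dots, h_m)$ in $H$.

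At this point Lemma \ref{lem:17}, applied to the generating sequence $\bsym{\rho}^{\natural}$ and the word $u'(\bsym{s}^{\natural})$, gives $u'(\bsym{s}^{\natural}) \sim_{\mrm{can}} 1$ in $\opn{Wrd}^{\pm 1}(s^{\natural}_1, \dots, s^{\natural}_{4^k})$. Since cancellation is defined by insertion/removal of factors of the form $s * s^{-1}$ or $s^{-1} * s$, and such factors evaluate to $1$ in any group, the evaluation homomorphism $s^{\natural}_j \mapsto h^{\natural}_j$ sends every cancellation-equivalent pair to the same element of $H$. Hence $u'(h^{\natural}_1, \dots, h^{\natural}_{4^k}) = 1$ in $H$, which combined with the previous paragraph yields $u(h_1, \dots, h_m) = 1$. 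There is no serious obstacle here; the whole argument is a matter of bookkeeping the substitution of words, the real work having been done in Lemmas \ref{lem:17} and \ref{lem:12} (and upstream in Corollary \ref{cor:6} and Theorem \ref{thm:14}).
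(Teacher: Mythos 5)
Your proposal is correct and follows exactly the same route as the paper's own proof: apply Lemma \ref{lem:12} to each $(\sigma_i, \tau_i)$ to get the words $w_i$, substitute to form the word $u'$ (the paper calls it $w(\bsym{t}) = u(w_1(\bsym{t}),\dots,w_m(\bsym{t}))$) in the alphabet of the generating sequence, invoke Lemma \ref{lem:17} to obtain cancellation equivalence to the empty word, and evaluate in $H$. Your slightly more explicit justification that cancellation-equivalent words evaluate equally in any group is a small elaboration of what the paper compresses into ``by cancellation in the group $H$.''
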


\begin{proof}
Using Lemma \ref{lem:12}, for each $i \in \{ 1, \ldots, m \}$ we can find
a word 
\[ w_i(\bsym{t}) \in \opn{Wrd}^{\pm 1}(\bsym{t}) = 
\opn{Wrd}^{\pm 1}(t_1, \ldots, t_{4^k}) \]
such that
\[ a_i = w_i(a^{\natural}_1, \ldots, a^{\natural}_{4^k}) \]
and
\[ h_i = w_i(h^{\natural}_1, \ldots, h^{\natural}_{4^k})  \]
(with notation as in the previous lemmas). Define
\[ w(\bsym{t}) := u \bigl( w_1(\bsym{t}), \ldots, w_m(\bsym{t}) \bigr) \in
\opn{Wrd}^{\pm 1}(\bsym{t}) . \]
Then
\[ w(a^{\natural}_1, \ldots, a^{\natural}_{4^k}) = 1 \] 
in 
$\bsym{\pi}_1(\opn{sk}_1 \opn{sd}^k \mbf{I}^2)$, 
and 
\[ w(h^{\natural}_1, \ldots, h^{\natural}_{4^k}) = u(h_1, \ldots, h_m)  \]
in $H$.

Now according to Lemma \ref{lem:17} we have
$w(\bsym{t}) \sim_{\mrm{can}} 1$ in $\opn{Wrd}^{\pm 1}(\bsym{t})$. 
Hence by cancellation in the group $H$ we get
$w(h^{\natural}_1, \ldots, h^{\natural}_{4^k}) = 1$.
\end{proof}

\subsection{The Flip}
The flip of $\mbf{I}^2$ is the linear automorphism whose action on vertices is
\[ \opn{flip}(v_0, v_1, v_2) := (v_0, v_2, v_1) . \]
Given a kite $(\sigma, \tau)$ in $(\mbf{I}^2, v_0)$, its flip is the kite 
\[ \opn{flip}(\sigma, \tau) := (\sigma, \tau \circ \opn{flip} ) . \]
See Figure \ref{fig:34} for an illustration.

\begin{figure}
\includegraphics[scale=0.36]{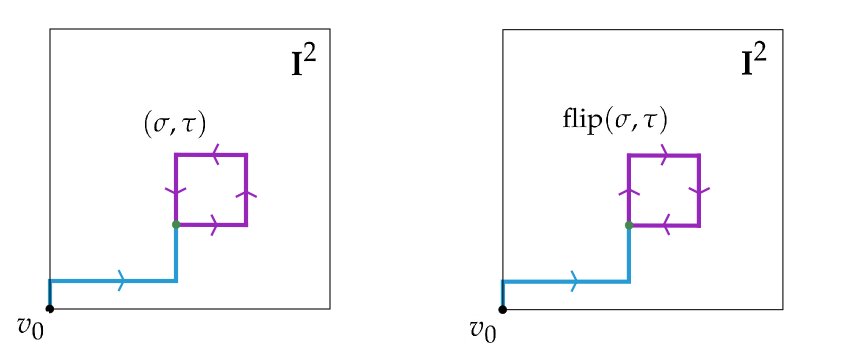}
\caption{A kite $(\sigma, \tau)$ in $(\mbf{I}^2, v_0)$, and the kite
$\opn{flip} (\sigma, \tau)$.} 
\label{fig:34}
\end{figure}

Note that 
\[ \opn{flip} (\sigma, \tau) 
= (\sigma, \tau) \circ \opn{flip} (\sigma^{0}_1, \tau^{0}_1) , \] 
where $(\sigma^{0}_1, \tau^{0}_1)$ is the basic kite. 
By associativity of kite composition it follows that given any two kites
$(\sigma_1, \tau_1)$ and $(\sigma_2, \tau_3)$ in $\mbf{I}^{2}$, one has
\begin{equation} \label{eqn:55}
\opn{flip} \bigl( (\sigma_1, \tau_1) \circ (\sigma_2, \tau_2) \bigr)
= (\sigma_1, \tau_1) \circ \opn{flip} (\sigma_2, \tau_2)
\end{equation}

If  $(\sigma, \tau)$ is a kite patterned on $\opn{sd}^k \mbf{I}^2$ for
some $k$, then the effect of flipping is:
\begin{equation} \label{eqn:177}
[ \partial \opn{flip}(\sigma, \tau) ] =  [ \partial (\sigma, \tau) ]^{-1}
\end{equation}
in $\bsym{\pi}_1(\opn{sk}_1 \opn{sd}^k \mbf{I}^2)$.

\begin{lem} \label{lem:41}
Let $(\sigma, \tau)$ be a kite in $(\mbf{I}^2, v_0)$, and let 
$k \geq 0$. Then 
\begin{equation} \label{eqn:178}
\opn{MI} \bigl( \alpha, \beta \vert \opn{flip}(\sigma, \tau) \bigr) = 
\prod_{i = 4^k}^1 \, 
\opn{MI} \bigl( \alpha, \beta \vert \opn{flip}( (\sigma, \tau) \circ 
(\sigma^k_i, \tau^k_i) ) \bigr) .
\end{equation}
\end{lem}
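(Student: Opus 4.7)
The strategy is to lift an identity from a fundamental group to $H$ via the puzzle machinery of this section. Applying Proposition \ref{prop:3} to the flipped kite $\opn{flip}(\sigma, \tau)$, the left side of (\ref{eqn:178}) already equals
\[ \prod_{i=1}^{4^k} \opn{MI}\bigl( \alpha, \beta \vert \opn{flip}(\sigma, \tau) \circ (\sigma^k_i, \tau^k_i) \bigr) . \]
So it suffices to show that this ``forward-ordered'' product of sub-kites of $\opn{flip}(\sigma, \tau)$ equals the ``reverse-ordered'' product of flipped sub-kites of $(\sigma, \tau)$; both sides are indexed by the $4^k$ sub-parallelograms $(\tau \circ \tau^k_i)(\mbf{I}^2)$ of $\tau(\mbf{I}^2)$, but the individual factors differ, with different connecting paths and possibly different orientations on the $\tau$-component.

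To set up the puzzle, I would take as a generating sequence
\[ \bsym{\rho}^{\natural} := \bigl( (\sigma, \tau) \circ (\sigma^k_i, \tau^k_i) \bigr)_{i = 1}^{4^k} , \]
whose boundary loops freely generate $\bsym{\pi}_1$ of the relevant $1$-skeleton (namely, the $1$-skeleton of the induced subdivision of $\tau(\mbf{I}^2)$, with the path $\sigma$ prepended from $v_0$ to $\tau(v_0)$). Every kite appearing on either side of (\ref{eqn:178}) has its $\tau$-image equal to one of the $(\tau \circ \tau^k_i)(\mbf{I}^2)$, and so is aligned with $\bsym{\rho}^{\natural}$ in the sense of Definition \ref{dfn:33}. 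By Lemma \ref{lem:12} each such MI can be written as a word in the group elements $h^{\natural}_i := \opn{MI}(\alpha, \beta \vert \bsym{\rho}^{\natural}_i)$, with the same word as the corresponding expression in $\bsym{\pi}_1$ in the boundary classes $a^{\natural}_i := [\partial \bsym{\rho}^{\natural}_i]$.

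It then remains to verify the purely topological identity
\[ \prod_{i=1}^{4^k} \bigl[ \partial \bigl( \opn{flip}(\sigma, \tau) \circ (\sigma^k_i, \tau^k_i) \bigr) \bigr] = \prod_{i=4^k}^{1} \bigl[ \partial \opn{flip}\bigl( (\sigma, \tau) \circ (\sigma^k_i, \tau^k_i) \bigr) \bigr] \]
in $\bsym{\pi}_1$: both sides equal $[\partial \opn{flip}(\sigma, \tau)]$ by the generating-sequence relation for $\opn{tes}^k \mbf{I}^2$ combined with (\ref{eqn:55}) and (\ref{eqn:177}) (flipping inverts every boundary loop and reverses the order of composition). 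Lemma \ref{lem:13} then transports this identity from $\bsym{\pi}_1$ up to $H$, giving (\ref{eqn:178}). The main obstacle is the bookkeeping with base points and orientations — in particular, checking that both families of kites correspond to the same fundamental-group word in the $a^{\natural}_i$ up to the reorderings handled by Corollary \ref{cor:6} applied iteratively via conjugation. Once this is done, the puzzle lemma does all the real work.
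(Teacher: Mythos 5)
Your overall strategy---express both sides as words in the generators of a generating sequence, verify the corresponding identity in the fundamental group, and lift it to $H$ via Lemma \ref{lem:13}---is the same as the paper's, but two of your steps do not work as written. First, you skip the reduction to the basic kite. The puzzle machinery of this section (Definition \ref{dfn:33}, Lemmas \ref{lem:12} and \ref{lem:13}) is formulated only for kites patterned on $\opn{sd}^k \mbf{I}^2$ and for generating sequences for $\opn{sd}^k \mbf{I}^2$, whose boundary classes form a basis of the free group $\bsym{\pi}_1(\opn{sk}_1 \opn{sd}^k \mbf{I}^2)$. For a general kite $(\sigma, \tau)$ in $(\mbf{I}^2, v_0)$ the composed kites $(\sigma, \tau) \circ (\sigma^k_i, \tau^k_i)$ are not patterned on $\opn{sd}^k \mbf{I}^2$ (their images subdivide the parallelogram $\tau(\mbf{I}^2)$), so your proposed $\bsym{\rho}^{\natural}$ is not a generating sequence in the sense of Definition \ref{dfn:33}, and Lemmas \ref{lem:12}--\ref{lem:13} cannot be invoked; working instead with the ``$1$-skeleton of the induced subdivision of $\tau(\mbf{I}^2)$'' would mean redoing that machinery, and it degenerates entirely when $\tau(\mbf{I}^2)$ is not $2$-dimensional. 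The paper removes this obstacle first: it moves the base point along $\sigma$ (Theorem \ref{thm:14}) and then uses functoriality (Proposition \ref{prop:10}) for the map $\tau : (\mbf{I}^2, v_0) \to (\mbf{I}^2, \tau(v_0))$, reducing the statement to the basic kite $(\sigma^0_1, \tau^0_1)$, where everything is honestly patterned on $\opn{sd}^k \mbf{I}^2$.

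Second, and more seriously, your alignment claim is false: Definition \ref{dfn:33}(2) requires equality of maps, $\tau = \tau^{\natural}_i$, not merely equality of images. The factors on the two sides of (\ref{eqn:178}) are flipped kites, with $\tau$-components $\tau \circ \opn{flip} \circ \tau^k_i$ and $\tau \circ \tau^k_i \circ \opn{flip}$ respectively; these never equal $\tau \circ \tau^k_j$ (they have the opposite orientation), so none of them is aligned with your unflipped generating sequence, and Lemma \ref{lem:12} gives you nothing for them. Repairing this by saying that flipping a kite simply inverts its $\opn{MI}$ would invoke Lemma \ref{lem:14}, which is proved later \emph{from} the present lemma, so that route is circular. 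The paper's fix is to take the generating sequence to be the flipped tessellation, $(\sigma^{\natural}_i, \tau^{\natural}_i) := \opn{flip}(\sigma^0_1, \tau^0_1) \circ (\sigma^k_i, \tau^k_i)$: then the left-hand side is exactly $\prod_{i=1}^{4^k} h^{\natural}_i$ by Proposition \ref{prop:3}, the right-hand factors $\opn{flip}(\sigma^k_i, \tau^k_i)$ are aligned with it (checked by induction on $k$), both products of boundary classes equal $[\partial \mbf{I}^2]^{-1}$, and Lemma \ref{lem:13} finishes the argument. Your final $\bsym{\pi}_1$-identity is the right target, but with the missing reduction and the wrong choice of generating sequence the lemmas you cite do not reach it.
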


Note that the order of the product is reversed!

\begin{proof}
By moving the base point from $v_0$ to $x_0 := \tau(v_0)$ along the string
$\sigma$, and using Theorem \ref{thm:14}, we now have to prove that 
(\ref{eqn:178}) holds for a kite $(\sigma, \tau)$ in the pointed polyhedron
$(\mbf{I}^2, x_0)$, and moreover $\sigma$ is the empty string. 
Next we use Proposition \ref{prop:10} for the map of pointed polyhedra
$\tau : (\mbf{I}^2, v_0) \to (\mbf{I}^2, x_0)$ to reduce to proving 
(\ref{eqn:178}) for $(\sigma, \tau) = (\sigma^0_1, \tau^0_1)$,
the basic kite  in $(\mbf{I}^2, v_0)$.

So we have to prove that 
\begin{equation} \label{eqn:181}
\opn{MI} \bigl( \alpha, \beta \vert \opn{flip}(\sigma^0_1, \tau^0_1) \bigr) = 
\prod_{i = 4^k}^1 \, 
\opn{MI} \bigl( \alpha, \beta \vert \opn{flip}(\sigma^k_i, \tau^k_i) \bigr) .
\end{equation}
holds. 

For any $i \in \{ 1, \ldots, 4^k \}$ define
\[ (\sigma^{\natural}_i, \tau^{\natural}_i) :=
\opn{flip} (\sigma^0_1, \tau^0_1) \circ (\sigma^k_i, \tau^k_i) . \]
Then 
\[ \bsym{\rho}^{\natural} := \bigl( (\sigma^{\natural}_1, \tau^{\natural}_1),
\ldots, (\sigma^{\natural}_{4^k}, \tau^{\natural}_{4^k}) \bigr) \]
is a generating sequence for $\opn{sd}^k \mbf{I}^2$. By Proposition
\ref{prop:3} we know that
\begin{equation} \label{eqn:179}
\opn{MI} \bigl( \alpha, \beta \vert \opn{flip}(\sigma^0_1, \tau^0_1) \bigr) = 
\prod_{i = 1}^{4^k} \, h^{\natural}_i ,
\end{equation}
where $h^{\natural}_i$ are as in Lemma \ref{lem:12}.
And it is clear from Definition \ref{dfn:4}(2) and formula (\ref{eqn:177}) that 
\begin{equation} \label{eqn:180}
\prod_{i = 1}^{4^k} \, a^{\natural}_i = [ \partial \mbf{I}^2 ] ^{-1} ,
\end{equation}
where $a^{\natural}_i$ are as in Definition \ref{dfn:33}.

Consider another sequence of kites
\[ \bsym{\rho} := \bigl( (\sigma^{}_1, \tau^{}_1),
\ldots, (\sigma^{}_{4^k}, \tau^{}_{4^k}) \bigr) , \]
where we define
\[ (\sigma_i, \tau_i) := \opn{flip}(\sigma^k_i, \tau^k_i) . \]
It is not hard to show, by induction on $k$, that these kites are aligned with
the generating sequence $\bsym{\rho}^{\natural}$.
Let $a_i$ and $h_i$ be as in Lemma \ref{lem:13}, where we take 
$m := 4^k$ of course. Again by induction on $k$ one shows that 
\[ \prod_{i = 4^k}^1 \, a_i = [ \partial \mbf{I}^2 ] ^{-1} . \]
Combining this with (\ref{eqn:180}) we deduce
\[ a_{4^k} \cdots a_2 \cdot a_1 \cdot 
(a^{\natural}_{4^k})^{-1} \cdots (a^{\natural}_2)^{-1} \cdot
(a^{\natural}_1)^{-1} = 1  \]
in $\bsym{\pi}_1(\opn{sk}_1 \opn{sd}^k \mbf{I}^2)$.
Now Lemma \ref{lem:13} says that 
\[  h_{4^k} \cdots h_2 \cdot h_1 \cdot 
(h^{\natural}_{4^k})^{-1} \cdots (h^{\natural}_2)^{-1} \cdot
(h^{\natural}_1)^{-1} = 1 \]
in $H$. Using (\ref{eqn:179}) we get
\[ \prod_{i = 4^k}^1 \, h_i =
\opn{MI} \bigl( \alpha, \beta \vert \opn{flip}(\sigma^0_1, \tau^0_1) \bigr) . \]
This is precisely (\ref{eqn:181}).
\end{proof}

\begin{dfn}
In this section, a kite $(\sigma, \tau)$ in $(\mbf{I}^2, v_0)$ will be called 
{\em $(\alpha, \beta)$-tiny} if it is a square kite, 
$\opn{len}(\sigma) \leq 5$,
and
\[ \opn{side}(\tau) < \min \bigl( \epsilon_4(\alpha, \beta), \,
\epsilon_5(\alpha, \Psi_{\h}) \bigr) \, . \]
See Lemma \ref{lem:10} and Proposition \ref{prop:21} regarding the constants 
$\epsilon_4(\alpha, \beta)$ and $\epsilon_5(\alpha, \Psi_{\h})$.
\end{dfn}

\begin{lem} \label{lem:14}
Let $(\sigma, \tau)$ be a kite in $(\mbf{I}^2, v_0)$. Then
\begin{equation} \label{eqn:184}
\opn{MI} \bigl( \alpha, \beta \vert \opn{flip}(\sigma, \tau) \bigr) =
\opn{MI} (\alpha, \beta \vert \sigma, \tau)^{-1} .
\end{equation}
\end{lem}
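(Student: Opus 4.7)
The plan is to reduce to a tiny-scale estimate by simultaneously tessellating $(\sigma,\tau)$ and its flip, and then to invoke the cubic-order abelian approximation of Proposition~\ref{prop:22}. First, using Proposition~\ref{prop:14} together with Propositions~\ref{prop:10} and~\ref{prop:11}(1), I may assume without loss of generality that $(\sigma,\tau)$ is a square kite in $(\mbf{I}^2,v_0)$ with $\opn{len}(\sigma)\le 1$; this is legitimate because the flip commutes with piecewise linear pullback, namely $\opn{flip}(f\circ(\sigma',\tau'))=f\circ\opn{flip}(\sigma',\tau')$.

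Next, fix $k$ large enough that every sub-kite $(\sigma,\tau)\circ(\sigma^k_i,\tau^k_i)$ is $(\alpha,\beta)$-tiny in the sense of this section. By Proposition~\ref{prop:3} and Lemma~\ref{lem:41} respectively,
\[ \opn{MI}(\alpha,\beta\vert\sigma,\tau)=\prod_{i=1}^{4^k}h_i,\qquad
\opn{MI}(\alpha,\beta\vert\opn{flip}(\sigma,\tau))=\prod_{i=4^k}^{1}h_i', \]
with $h_i:=\opn{MI}(\alpha,\beta\vert(\sigma,\tau)\circ(\sigma^k_i,\tau^k_i))$ and $h_i':=\opn{MI}(\alpha,\beta\vert\opn{flip}((\sigma,\tau)\circ(\sigma^k_i,\tau^k_i)))$. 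The key observation is that by identity~(\ref{eqn:55}), the kite underlying $h_i'$ is $(\sigma*(\tau\circ\sigma^k_i),\,\tau\circ\tau^k_i\circ\opn{flip})$, so its string coincides with the string of $h_i$ and only the orientation of the square map is reversed.

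The central step is the cubic estimate
\[ \Norm{\log_H h_i+\log_H h_i'}\le 2\,c_{2'}(\alpha,\beta)\cdot\bigl((\tfrac{1}{2})^k\opn{side}(\tau)\bigr)^3. \]
To prove it, I apply Proposition~\ref{prop:22} to each of $h_i$ and $h_i'$ using the common twisting element $g_i:=\opn{MI}(\alpha\vert\sigma*(\tau\circ\sigma^k_i))$; since orientation reversal of $\tau^k_i$ negates $\int_{\tau\circ\tau^k_i}\beta$, the two linear approximations $\Psi_{\h}(g_i)\bigl(\int\beta\bigr)$ and $-\Psi_{\h}(g_i)\bigl(\int\beta\bigr)$ are exact negatives of each other, and the triangle inequality gives the bound. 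Once this common twisting is identified, the step is essentially abelian.

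Finally, rewriting $\prod_{i=4^k}^{1}h_i^{-1}=\bigl(\prod_{i=1}^{4^k}h_i\bigr)^{-1}$ and applying property~(iv) of Theorem~\ref{thm:6} (together with the norm bounds of Proposition~\ref{prop:8}(1) to stay within the convergence radius), the discrepancy between $\prod_{i=4^k}^{1}h_i'$ and $\prod_{i=4^k}^{1}h_i^{-1}$ is bounded by $c_0(H)\sum_i\Norm{\log_H h_i+\log_H h_i'}$, which is of order $4^k\cdot(\tfrac{1}{2})^{3k}\cdot\opn{side}(\tau)^3=(\tfrac{1}{2})^k\cdot\opn{side}(\tau)^3$. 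Letting $k\to\infty$ yields the desired equality. The main obstacle is the cubic estimate: it is crucial that Proposition~\ref{prop:22} delivers an $\opn{side}^3$ remainder rather than $\opn{side}^2$, because only then do the $4^k$ accumulated errors collapse to a bound that vanishes in the limit; the matching of twisting factors via~(\ref{eqn:55}) is exactly what makes the two leading terms cancel so that this cubic control becomes available.
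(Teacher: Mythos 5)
Your overall strategy is sound and close to the paper's: reduce to a square kite, decompose via Proposition \ref{prop:3} and Lemma \ref{lem:41} (the reversed-order identity for the flip is indeed the essential combinatorial input), compare term by term at the tiny scale, and let $k \to \infty$. Your term-wise comparison is actually a pleasant variant of the paper's: the paper splits into good and bad indices, uses the exact identity $\opn{RP}_0(\alpha,\beta \vert \opn{flip}(\sigma_i,\tau_i)) = \opn{RP}_0(\alpha,\beta \vert \sigma_i,\tau_i)^{-1}$ together with the quartic bound of Proposition \ref{prop:8}(2) on good indices and only the quadratic bound of Proposition \ref{prop:8}(1) on bad ones, then invokes Lemma \ref{lem:51} to count the bad indices; your uniform cubic bound $\Norm{\log_H h_i + \log_H h_i'} \leq 2 c_{2'}(\alpha,\beta) \cdot ((\smfrac{1}{2})^k \opn{side}(\tau))^3$ via Proposition \ref{prop:22} (valid without smoothness, with the same twisting string and $\int_{\tau\circ\tau^k_i\circ\opn{flip}}\beta = -\int_{\tau\circ\tau^k_i}\beta$) avoids the case split entirely and still sums to $(\smfrac{1}{2})^k \opn{side}(\tau)^3 \to 0$. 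That part checks out.

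The gap is in the last step, where you apply property (iv) of Theorem \ref{thm:6} directly at the scale of the full kite. That property requires the \emph{total} sum of norms to lie below the convergence radius, i.e.\ $\sum_i \bigl( \norm{\log_H h_i} + \norm{\log_H h_i + \log_H h_i'} \bigr) < \epsilon_0(H)$; Proposition \ref{prop:8}(1) only gives $\norm{\log_H h_i} \leq c_1(\alpha,\beta) (\smfrac{1}{2})^{2k} \opn{side}(\tau)^2$, so the sum over the $4^k$ indices is about $c_1(\alpha,\beta) \cdot \opn{side}(\tau)^2$ --- independent of $k$. Since $c_1(\alpha,\beta) \geq 1$ and $\epsilon_0(H) \leq 1$, this can exceed $\epsilon_0(H)$ whenever the parent kite is not itself tiny (e.g.\ the kite of side $\smfrac{1}{2}$ produced by the reduction of Proposition \ref{prop:14}), and taking $k$ larger does not help. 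The fix is the two-stage structure the paper uses: first fix one $k_0$ making all sub-kites tiny and use Lemma \ref{lem:41}, Proposition \ref{prop:3} and the exact group identity $\prod_{i=4^{k_0}}^{1} h_i^{-1} = \bigl( \prod_{i=1}^{4^{k_0}} h_i \bigr)^{-1}$ --- no estimates --- to reduce the claim to each tiny sub-kite separately; then run your estimate on a \emph{tiny} kite with further subdivisions, where the relation $\epsilon_1(\alpha,\beta) \cdot c_1(\alpha,\beta) \leq \smfrac{1}{4} \epsilon_0(H)$ from Lemma \ref{lem:4}(i) guarantees $c_1(\alpha,\beta) \cdot \opn{side}^2 < \epsilon_0(H)$, so Theorem \ref{thm:6}(iv) applies and your $(\smfrac{1}{2})^k$-decay argument closes the proof.
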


\begin{proof}
Step 1. Assume $(\sigma, \tau)$ is $(\alpha, \beta)$-tiny and 
$\beta|_{\tau(\mbf{I}^2)}$ is smooth. 
We may assume that $\epsilon := \opn{side}(\tau)$ is positive
(since the case $\epsilon = 0$ is trivial).

Since the flip reverses the orientation, it follows that
\[ \opn{RP}_0  \bigl( \alpha, \beta \vert \opn{flip}(\sigma, \tau) \bigr) =
\opn{RP}_0 ( \alpha, \beta \vert \sigma, \tau)^{-1} ; \]
cf.\ Definition \ref{dfn:11}.
Next, by applying Proposition \ref{prop:8} to both $(\sigma, \tau)$ and \lb
$\opn{flip}(\sigma, \tau)$, we have 
\begin{equation} \label{eqn:182}
\Norm{ \log_H \bigl( 
\opn{MI} \bigl( \alpha, \beta \vert \opn{flip}(\sigma, \tau) \bigr) \bigr)
- \log_H \bigl( 
\opn{MI} (\alpha, \beta \vert \sigma, \tau)^{-1} \bigr) }
\leq 2 \cdot c_2(\alpha, \beta) \cdot \epsilon^4 \, . 
\end{equation}

\medskip \noindent
Step 2. Again we assume that $(\sigma, \tau)$ is $(\alpha, \beta)$-tiny
and $\epsilon := \opn{side}(\tau)$ is positive; but we do not assume smoothness.

Take $k \geq 0$. For each index $i \in \{ 1, \ldots, 4^k \}$ we let
\[ (\sigma_i, \tau_i) := (\sigma, \tau) \circ (\sigma^k_i, \tau^k_i)
= \opn{tes}^k_i (\sigma, \tau) . \]
Note that 
$\opn{side}(\tau_i) = (\smfrac{1}{2})^k \cdot \epsilon$. 
The subsets $\opn{good}(\tau, k)$ and $\opn{bad}(\tau, k)$ of 
$\{ 1, \ldots, 4^k \}$ were defined in Definition \ref{dfn:45}.
According to Lemma \ref{lem:51}, there are constants 
\[ a_j := a_j(\alpha, \beta, \tau(\mbf{I}^2)) \]
such that 
\[ \abs{ \opn{bad}(\tau, k) } \leq a_0 + a_1 \cdot 2^k \, . \]

If $i$ is a good index, then by step 1 we know that 
\[ \begin{aligned}
& \Norm{ \log_H \bigl( 
\opn{MI} ( \alpha, \beta \vert \opn{flip}(\sigma_i, \tau_i) ) \bigr)
- \log_H \bigl( 
\opn{MI} ( \alpha, \beta \vert \sigma_i, \tau_i)^{-1} \bigr) } \\
& \qquad \leq 2 \cdot  c_2(\alpha, \beta) \cdot (\smfrac{1}{2})^{4 k} \cdot 
\epsilon^4 \, .
\end{aligned} \]
And if $i$ is a bad index, then by Proposition \ref{prop:8}(1) we have
\begin{equation} \label{eqn:192}
\begin{aligned}
& \Norm{ \log_H \bigl( 
\opn{MI} ( \alpha, \beta \vert \opn{flip}(\sigma_i, \tau_i) ) \bigr)
- \log_H \bigl( 
\opn{MI} ( \alpha, \beta \vert \sigma_i, \tau_i)^{-1} \bigr) } \\
& \qquad \leq 2 \cdot  c_1(\alpha, \beta) \cdot (\smfrac{1}{2})^{2 k} \cdot 
\epsilon^2 \, .
\end{aligned}
\end{equation}
The conditions in property (ii) of Theorem \ref{thm:6} are satisfied. Hence,
using Lemma \ref{lem:41} and Proposition \ref{prop:3}, we obtain these
estimates:
\[ \begin{aligned}
& \Norm{ \log_H \bigl( 
\opn{MI} ( \alpha, \beta \vert \opn{flip}(\sigma, \tau) ) \bigr) -
\log_H \bigl( 
\opn{MI} ( \alpha, \beta \vert \sigma, \tau)^{-1} \bigr) } \\
& \quad = \Norm{ \log_H \Bigl( \,
\prod_{i = 4^k}^1 \, 
\opn{MI} ( \alpha, \beta \vert \opn{flip}(\sigma_i, \tau_i) ) \Bigr) \\
& \qquad \qquad - \log_H \Bigl( \,
\prod_{i = 4^k}^1 \, 
\opn{MI} ( \alpha, \beta \vert \sigma_i, \tau_i)^{-1} \Bigr) } \\
& \quad \leq 
\abs{ \opn{good}(\tau, k) } \cdot 2 \cdot  c_2(\alpha, \beta) \cdot
(\smfrac{1}{2})^{4 k} \cdot  \epsilon^4 \\
& \qquad \qquad +
\abs{ \opn{bad}(\tau, k) } \cdot 2 \cdot  c_1(\alpha, \beta) \cdot 
(\smfrac{1}{2})^{2 k} \cdot  \epsilon^2 \\
& \quad \leq 
4^k \cdot 2 \cdot  c_2(\alpha, \beta) \cdot
(\smfrac{1}{2})^{4 k} \cdot  \epsilon^4 +
(a_0 + a_1 \cdot 2^k) \cdot 2 \cdot  c_1(\alpha, \beta) \cdot 
(\smfrac{1}{2})^{2 k} \cdot  \epsilon^2 \, . 
\end{aligned} \]
As $k \to \infty$ the last term goes to $0$. Hence (\ref{eqn:184}) holds in
this case.

\medskip \noindent
Step 3. Now $(\sigma, \tau)$ is an arbitrary kite in $(\mbf{I}^2, v_0)$. 
We may assume that $\tau(\mbf{I}^2)$ is $2$-dimensional, for otherwise things
are trivial. As done in the beginning of the proof of Lemma \ref{lem:41}, we can
modify the setup so that $(\sigma, \tau)$ is a square kite and 
$\opn{len}(\sigma) \leq 1$. Take $k$ large enough so that all the kites
\[ (\sigma_i, \tau_i) := (\sigma, \tau) \circ (\sigma^k_i, \tau^k_i) , \]
$i \in \{ 1, \ldots, 4^k \}$, are $(\alpha, \beta)$-tiny. 
By Step 2 we know that 
\[ \opn{MI} \bigl( \alpha, \beta \vert \opn{flip}(\sigma_i, \tau_i) \bigr) =
\opn{MI} (\alpha, \beta \vert \sigma_i, \tau_i)^{-1} \]
holds for all $i$. Hence, using Lemma \ref{lem:41} and Proposition
\ref{prop:3}, we conclude that
\[ \begin{aligned}
& \opn{MI} ( \alpha, \beta \vert \opn{flip}(\sigma, \tau) ) = 
\prod_{i = 4^k}^1 \, 
\opn{MI} ( \alpha, \beta \vert \opn{flip}(\sigma_i, \tau_i) ) \\
& \qquad = 
\prod_{i = 4^k}^1 \, 
\opn{MI} ( \alpha, \beta \vert \sigma_i, \tau_i)^{-1} 
 = \Bigl( \prod_{i = 1}^{4^k} \, 
\opn{MI} ( \alpha, \beta \vert \sigma_i, \tau_i) \Bigr)^{-1} \\
& \qquad  = \opn{MI} ( \alpha, \beta \vert \sigma, \tau)^{-1} . 
\end{aligned} \]
\end{proof}

\subsection{The Turn}
The (counterclockwise) turn of $\mbf{I}^2$ is the linear automorphism
$\opn{turn} : \mbf{I}^2 \to \mbf{I}^2$ defined on vertices by
\[ \opn{turn}(v_0, v_1, v_2) := (v_1, (1, 1), v_0) . \]
The turn of the basic kite $(\sigma^0_1, \tau^0_1)$ is
\[ \opn{turn}(\sigma^0_1, \tau^0_1) 
:= \bigl( (v_0, v_1), \opn{turn} \Bigr) . \]
Given any kite $(\sigma, \tau)$ in $(\mbf{I}^2, v_0)$, its turn is the kite
\[ \opn{turn}(\sigma, \tau) := 
(\sigma, \tau) \circ \opn{turn}(\sigma^0_1, \tau^0_1) . \]
See figure \ref{fig:36}.

\begin{figure}
\includegraphics[scale=0.36]{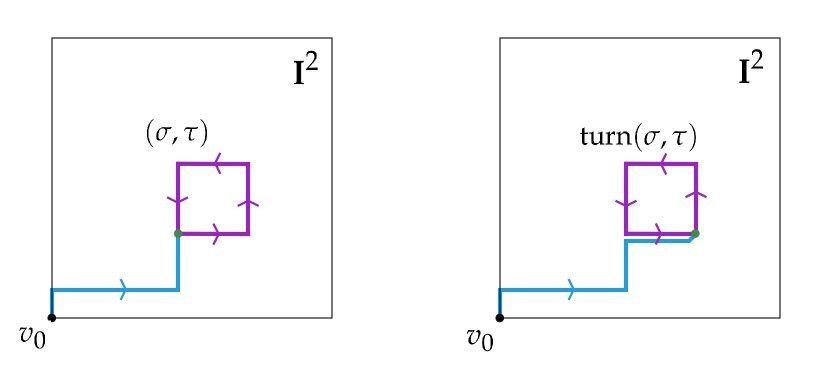}
\caption{A kite $(\sigma, \tau)$ in $(\mbf{I}^2, v_0)$, and the kite
$\opn{turn} (\sigma, \tau)$.} 
\label{fig:36}
\end{figure}

Observe that for kites $(\sigma_1, \tau_1)$ and $(\sigma_2, \tau_2)$ in 
$(\mbf{I}^2, v_0)$, one has
\begin{equation} \label{eqn:56}
\opn{turn} \bigl( (\sigma_1, \tau_1) \circ (\sigma_2, \tau_2) \bigr)
= (\sigma_1, \tau_1) \circ \opn{turn} (\sigma_2, \tau_2) . 
\end{equation}
If $(\sigma, \tau)$ is patterned on $\opn{sd}^k \mbf{I}^2$, then the boundaries
satisfy
\begin{equation} \label{eqn:185}
[ \partial \opn{turn}(\sigma, \tau) ] =  [ (\sigma, \tau) ]
\end{equation}
in $\bsym{\pi}_1(\opn{sk}_1 \opn{sd}^k \mbf{I}^2)$.

\begin{lem} \label{lem:42}
Let $(\sigma, \tau)$ be a kite in $(\mbf{I}^2, v_0)$, and let $k \geq 0$. Then 
\begin{equation*} \label{eqn:186}
\opn{MI} \bigl( \alpha, \beta \vert \opn{turn}(\sigma, \tau) \bigr) = 
\prod_{i = 1}^{4^k} \, 
\opn{MI} \bigl( \alpha, \beta \vert \opn{turn}( (\sigma, \tau) \circ 
(\sigma^k_i, \tau^k_i) ) \bigr) .
\end{equation*}
\end{lem}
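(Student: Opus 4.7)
The plan is to mimic almost verbatim the proof of Lemma \ref{lem:41}, but accounting for the fact that $\opn{turn}$ preserves orientation (whereas $\opn{flip}$ reverses it). Because of this, the product on the right-hand side will appear in the natural order $1,\ldots,4^k$, rather than in the reversed order as in the flip case.

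First I would reduce to the basic kite. By moving the base point from $v_0$ to $x_0 := \tau(v_0)$ along $\sigma$ and invoking Theorem \ref{thm:14}, one can assume $\sigma$ is the empty string and $(\sigma,\tau)$ lives in $(\mbf{I}^2, x_0)$. Then applying Proposition \ref{prop:10} to the map of pointed polyhedra $\tau : (\mbf{I}^2, v_0) \to (\mbf{I}^2, x_0)$ reduces the claim to proving
\[
\opn{MI}\bigl(\alpha,\beta \vert \opn{turn}(\sigma^0_1, \tau^0_1)\bigr) = \prod_{i=1}^{4^k} \, \opn{MI}\bigl(\alpha,\beta \vert \opn{turn}(\sigma^k_i, \tau^k_i)\bigr).
\]

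Next I would set up a generating sequence. Define
\[
(\sigma^{\natural}_i, \tau^{\natural}_i) := \opn{turn}(\sigma^0_1, \tau^0_1) \circ (\sigma^k_i, \tau^k_i),
\]
and check, inductively in $k$, that $\bsym{\rho}^{\natural} = \bigl((\sigma^{\natural}_1, \tau^{\natural}_1), \ldots, (\sigma^{\natural}_{4^k}, \tau^{\natural}_{4^k})\bigr)$ is a generating sequence for $\opn{sd}^k \mbf{I}^2$, and that with $a^{\natural}_i := [\partial(\sigma^{\natural}_i, \tau^{\natural}_i)]$ one has $\prod_{i=1}^{4^k} a^{\natural}_i = [\partial \mbf{I}^2]$ (because $\opn{turn}$ preserves the orientation, cf.\ (\ref{eqn:185})). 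Setting $h^{\natural}_i := \opn{MI}(\alpha,\beta \vert \sigma^{\natural}_i, \tau^{\natural}_i)$, Proposition \ref{prop:3} applied to $\opn{turn}(\sigma^0_1, \tau^0_1)$ yields
\[
\opn{MI}\bigl(\alpha,\beta \vert \opn{turn}(\sigma^0_1, \tau^0_1)\bigr) = \prod_{i=1}^{4^k} h^{\natural}_i.
\]

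Then I would consider the ``turned tessellation'' $\bsym{\rho}$ with entries $(\sigma_i, \tau_i) := \opn{turn}(\sigma^k_i, \tau^k_i)$, and check, again by induction on $k$ using (\ref{eqn:56}) and (\ref{eqn:185}), that each $\tau_i$ equals some $\tau^{\natural}_j$ (so $\bsym{\rho}$ is aligned with $\bsym{\rho}^{\natural}$) and that $\prod_{i=1}^{4^k} a_i = [\partial \mbf{I}^2]$ in $\bsym{\pi}_1(\opn{sk}_1 \opn{sd}^k \mbf{I}^2)$, where $a_i := [\partial(\sigma_i, \tau_i)]$. Combining these two identities gives
\[
a_1 \cdot a_2 \cdots a_{4^k} \cdot (a^{\natural}_{4^k})^{-1} \cdots (a^{\natural}_1)^{-1} = 1.
\]
Lemma \ref{lem:13} then lifts this relation from the fundamental group to $H$:
\[
h_1 \cdot h_2 \cdots h_{4^k} \cdot (h^{\natural}_{4^k})^{-1} \cdots (h^{\natural}_1)^{-1} = 1,
\]
so $\prod_{i=1}^{4^k} h_i = \prod_{i=1}^{4^k} h^{\natural}_i$, which is exactly the desired identity.

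The main obstacle I anticipate is the inductive bookkeeping in Step 3: verifying alignment of $\bsym{\rho}$ with $\bsym{\rho}^{\natural}$ and establishing $\prod_{i=1}^{4^k} a_i = [\partial \mbf{I}^2]$ in the right order. Unlike the flip case, where the turn of the basic kite is $(\emptyset, \opn{flip})$ (so it has no extra probe string), here $\opn{turn}(\sigma^0_1, \tau^0_1)$ carries the non-trivial string $(v_0, v_1)$, which is inserted before each sub-tessellation piece. Once one tracks carefully how this extra string interacts with the recursive self-similarity of $\opn{tes}^k \mbf{I}^2$, however, the identities follow, and the substantive work is already encapsulated in Lemma \ref{lem:13}.
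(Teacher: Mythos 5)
Your proof is correct and follows essentially the same route as the paper's: reduce to the basic kite via Theorem \ref{thm:14} and Proposition \ref{prop:10}, build the generating sequence $\bsym{\rho}^{\natural}$ from $\opn{turn}(\sigma^0_1, \tau^0_1)$, use Proposition \ref{prop:3} and the two identities $\prod a^{\natural}_i = \prod a_i = [\partial \mbf{I}^2]$, and close with Lemma \ref{lem:13}. The only cosmetic difference is that you spell out the combined word-relation explicitly before invoking Lemma \ref{lem:13}, whereas the paper leaves it implicit.
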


\begin{proof}
The proof is very similar to that of Lemma \ref{lem:41}. As we showed there, it 
suffices to consider the case
$(\sigma, \tau) = (\sigma^0_1, \tau^0_1)$.
So we have to prove
\begin{equation} \label{eqn:187} 
\opn{MI} \bigl( \alpha, \beta \vert \opn{turn}(\sigma^0_1, \tau^0_1) \bigr) = 
\prod_{i = 1}^{4^k} \, 
\opn{MI} \bigl( \alpha, \beta \vert \opn{turn}(\sigma^k_i, \tau^k_i) \bigr) .
\end{equation}

For any $i \in \{ 1, \ldots, 4^k \}$ let
\[ (\sigma^{\natural}_i, \tau^{\natural}_i) :=
\opn{turn}(\sigma^0_1, \tau^0_1) \circ  (\sigma^k_i, \tau^k_i) . \]
The sequence
\[ \bsym{\rho}^{\natural} := \bigl( (\sigma^{\natural}_1, \tau^{\natural}_1),
\ldots, (\sigma^{\natural}_{4^k}, \tau^{\natural}_{4^k}) \bigr) \]
is a generating sequence of kites for $\opn{sd}^k \mbf{I}^2$.
Define elements $a^{\natural}_i$ and $h^{\natural}_i$ as 
in Definition \ref{dfn:33} and Lemma \ref{lem:12}, with respect to
the this new generating sequence $\bsym{\rho}^{\natural}$. 
By Proposition \ref{prop:3} we know that
\begin{equation} \label{eqn:188} 
\opn{MI} \bigl( \alpha, \beta \vert \opn{turn}(\sigma^0_1, \tau^0_1) \bigr) = 
\prod_{i = 1}^{4^k} \, h^{\natural}_i  \, .
\end{equation}
And by induction on $k$ it is not hard to see that 
\begin{equation} \label{eqn:189} 
\prod_{i = 1}^{4^k} \, a^{\natural}_i  = [ \partial \mbf{I}^2 ] . 
\end{equation}

Next consider the sequence of kites
\[ \bsym{\rho} := \bigl( (\sigma_1, \tau^{}_1),
\ldots, (\sigma^{}_{4^k}, \tau^{}_{4^k}) \bigr) , \]
where
\[ (\sigma_i, \tau_i) := \opn{turn}(\sigma^k_i, \tau^k_i) . \]
These kites are all aligned with $\bsym{\rho}^{\natural}$.
Take $a_i$ and $h_i$ defined as in Lemma \ref{lem:13}. We then have
\[ \prod_{i = 1}^{4^k} \, a_i = [ \partial \mbf{I}^2 ] . \]
Applying Lemma \ref{lem:13} we conclude that 
\[ \prod_{i = 1}^{4^k} \, h_i = \prod_{i = 1}^{4^k} \, h^{\natural}_i . \]
Combining this equation with (\ref{eqn:188})
we can deduce that equation (\ref{eqn:187}) is true.
\end{proof}

\begin{lem} \label{lem:16}
Let $(\sigma, \tau)$ be a kite in $(\mbf{I}^2, v_0)$. 
Then
\begin{equation} \label{eqn:190}
\opn{MI} \bigl( \alpha, \beta \vert \opn{turn}(\sigma, \tau) \bigr) =
\opn{MI} (\alpha, \beta \vert \sigma, \tau) .
\end{equation}
\end{lem}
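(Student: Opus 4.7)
The proof will mirror the three-step architecture of Lemma~\ref{lem:14}, with Lemma~\ref{lem:42} playing the role that Lemma~\ref{lem:41} played for the flip. Step~1 will establish the identity in the smooth, $(\alpha,\beta)$-tiny case with an error of order $\opn{side}(\tau)^4$ in the logarithm. Step~2 will upgrade this to exact equality for all $(\alpha,\beta)$-tiny kites via a binary tessellation argument, and Step~3 will reduce the general case to the tiny case by a further tessellation.

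The crux is Step~1. Set $\epsilon:=\opn{side}(\tau)$, $Z:=\tau(\mbf{I}^2)$ and $z:=\tau(\smfrac{1}{2},\smfrac{1}{2})$. Since $\opn{turn}$ preserves orientation and fixes the center of $\mbf{I}^2$, the kite $\opn{turn}(\sigma,\tau)=\bigl(\sigma*(\tau\circ(v_0,v_1)),\,\tau\circ\opn{turn}\bigr)$ has the same image $Z$, midpoint $z$, area $\epsilon^2$, and local coefficient $\til{\beta}(z)$ as $(\sigma,\tau)$. According to Definition~\ref{dfn:11}, the only difference between the two basic Riemann products lies in the group elements
\[
g := \opn{MI}\bigl(\alpha\,\bigr|\,\sigma*(\tau\circ\sigma_{\mrm{pr}})\bigr), \qquad
g' := \opn{MI}\bigl(\alpha\,\bigr|\,\sigma*(\tau\circ(v_0,v_1))*((\tau\circ\opn{turn})\circ\sigma_{\mrm{pr}})\bigr).
\]
The key observation is that $g^{-1}g'$ is the multiplicative integral of $\alpha$ along a closed string inside $Z$, based at $z$, of total length $O(\epsilon)$ and bounding a region of area $O(\epsilon^2)$. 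Corollary~\ref{cor:7} then gives $\norm{\log_G(g^{-1}g')}=O(\epsilon^2)$, sharpening the naive $O(\epsilon)$ estimate that Proposition~\ref{prop:1}(3) would yield. Because $\Psi_{\h}$ is a smooth map of Lie groups and $\norm{\Psi_{\h}(g)}$ is uniformly bounded under the tameness constraint $\opn{len}(\sigma)\leq 5$ (via Proposition~\ref{prop:13}), one deduces $\Psi_{\h}(g')\til{\beta}(z)-\Psi_{\h}(g)\til{\beta}(z)=O(\epsilon^2)$. Multiplying by $\opn{area}(Z)=\epsilon^2$ produces an $O(\epsilon^4)$ gap between the logarithms of the two basic Riemann products, and combining this with Proposition~\ref{prop:8}(2) and Theorem~\ref{thm:6}(iv) gives
\[
\Norm{\log_H\bigl(\opn{MI}(\alpha,\beta\,|\,\opn{turn}(\sigma,\tau))\cdot\opn{MI}(\alpha,\beta\,|\,\sigma,\tau)^{-1}\bigr)}\leq c\cdot\epsilon^4
\]
for a constant $c$ depending only on $(\alpha,\beta)$.

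For Step~2, I would fix an $(\alpha,\beta)$-tiny kite $(\sigma,\tau)$ and $k\geq 0$. Lemma~\ref{lem:42} and Proposition~\ref{prop:3} together express the $k$-independent group element $\opn{MI}(\alpha,\beta\,|\,\opn{turn}(\sigma,\tau))\cdot\opn{MI}(\alpha,\beta\,|\,\sigma,\tau)^{-1}$ as a product over $i\in\{1,\ldots,4^k\}$ of corresponding factors at scale $(\smfrac{1}{2})^k\epsilon$. Partitioning the indices into $\opn{good}(\tau,k)$ and $\opn{bad}(\tau,k)$ per Definition~\ref{dfn:45}, Step~1 bounds the log of each good factor by $O((\smfrac{1}{2})^{4k}\epsilon^4)$, while Proposition~\ref{prop:8}(1) bounds each bad factor's log by $O((\smfrac{1}{2})^{2k}\epsilon^2)$; Lemma~\ref{lem:51} controls the number of bad indices by $O(2^k)$. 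By Theorem~\ref{thm:6}(ii) the aggregate contribution is
\[
O\bigl((\smfrac{1}{4})^k\epsilon^4\bigr) + O\bigl((\smfrac{1}{2})^k\epsilon^2\bigr),
\]
which tends to $0$ as $k\to\infty$. Since the group element on the left is independent of $k$, it must equal $1$.

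Step~3 reduces the general case by the same device as in the proof of Lemma~\ref{lem:14}: Propositions~\ref{prop:14}, \ref{prop:10}, and \ref{prop:11}(1) allow one to assume $(X,x_0)=(\mbf{I}^2,v_0)$ with $\opn{len}(\sigma)\leq 1$, and then for $k$ sufficiently large each kite $(\sigma,\tau)\circ(\sigma^k_i,\tau^k_i)$ is $(\alpha,\beta)$-tiny, so Step~2 applies. The hard part is the sharper $O(\epsilon^4)$ estimate in Step~1: a direct application of Proposition~\ref{prop:22} yields only $O(\epsilon^3)$, which would be insufficient to overcome the $4^k$-fold accumulation from good indices in the tessellation. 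The improvement comes from noticing that $g^{-1}g'$ is the holonomy of $\alpha$ around a loop enclosing a region of small area, so the abelian Stokes theorem (packaged in Corollary~\ref{cor:7}) supplies the extra factor of $\epsilon$.
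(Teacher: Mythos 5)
Your proposal is correct and follows essentially the same three-step architecture as the paper's proof: reduction to an $(\alpha,\beta)$-tiny smooth kite, aggregation over $\opn{tes}^k(\sigma,\tau)$ using Lemma \ref{lem:42} together with the good/bad index count of Lemma \ref{lem:51}, and reduction of the general case via Propositions \ref{prop:14} and \ref{prop:10}. The one genuine difference is your Step 1. The paper does not sharpen to $O(\epsilon^4)$: writing $g' = g \cdot g''$ with $g'' = \opn{MI}(\alpha \vert \tau \circ \rho'')$, $\rho'' = \sigma_{\mrm{pr}}^{-1} * \rho'$ a closed string of length $3\epsilon$, it simply applies Proposition \ref{prop:21} to get $\Norm{\Psi_{\h}(g'') - \bsym{1}} \leq 3\, c_5(\alpha,\Psi_{\h}) \cdot \epsilon$, hence a per-kite error of order $\epsilon^3$ (plus the $O(\epsilon^4)$ from Proposition \ref{prop:8}(2)). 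Your small-area sharpening is valid, but note that Corollary \ref{cor:7} bounds $\norm{\log_G(g'')}$ in $\g$, not the operator $\Psi_{\h}(g'')$; to get $\Norm{\Psi_{\h}(g'')-\bsym{1}} = O(\epsilon^2)$ you should either invoke smoothness of $\Psi_{\h}\circ \exp_G$ near $0$, or (closer to the paper's toolkit) apply Proposition \ref{prop:21}(ii) to the closed string $\tau\circ\rho''$ and then the abelian Stokes Theorem to $\int_{\tau\circ\rho''}\opn{Lie}(\Psi_{\h})(\alpha)$. More importantly, your stated reason for needing the sharper bound is mistaken: an $O(s^3)$ per-kite error at scale $s = (\smfrac{1}{2})^{k}\epsilon$ accumulates over the at most $4^k$ good indices to $O((\smfrac{1}{2})^{k}\epsilon^3)$, which already tends to $0$ as $k \to \infty$ -- this is exactly how the paper closes the argument, and the exponent $4$ is genuinely indispensable only in the $\opn{RP}_0$-versus-$\opn{RP}_k$ comparison of Lemma \ref{lem:5}, not here; so your extra work buys nothing in this lemma (and the estimate the paper avoids is the one from Proposition \ref{prop:21}, not Proposition \ref{prop:22}). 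Two small points in Step 2: the difference of the two multiplicative integrals is not literally ``a product of corresponding factors'' -- one compares the products $\prod_i \opn{MI}(\alpha,\beta \vert \opn{turn}(\sigma_i,\tau_i))$ and $\prod_i \opn{MI}(\alpha,\beta \vert \sigma_i,\tau_i)$ factor by factor, which is the estimate of Theorem \ref{thm:6}(iv) rather than (ii); and, unlike the flip, the turn preserves the order of the factors (Lemma \ref{lem:42}), which your argument implicitly and correctly uses.
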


\begin{proof}
The proof is organized like that of Lemma \ref{lem:14}, so we allow ourselves
to be less detailed.

\medskip \noindent
Step 1. Here we assume that $(\sigma, \tau)$ is an $(\alpha, \beta)$-tiny kite,
the number $\epsilon := \opn{side}(\tau)$ is positive, and 
$\beta|_{\tau(\mbf{I}^2)}$ is smooth. Going back to Definition \ref{dfn:11},
and using its notation mixed with the present notation, we have
\[ \opn{RP}_0(\alpha, \beta \vert \sigma, \tau) = 
\exp_H \bigl( \epsilon^2 \cdot \Psi_{\h}(g)(\til{\beta}(z)) \bigr) . \]
Recall that 
\[ g = \opn{MI} \bigl( \alpha \vert \sigma * (\tau \circ \sigma_{\mrm{pr}})
\bigr) . \]

The turn does not change the area of the square 
$Z = \tau(\mbf{I}^2)$, nor its midpoint $z$; it only changes the string that
leads from $v_0$ to $z$. Indeed, the string that is related to 
$\opn{turn}(\sigma, \tau)$ is $\sigma * (\tau \circ \rho')$, 
where $\rho'$ is the string
\[ \rho' := (v_0, v_1) * (v_1, y) * (y, w) , \]
in $\mbf{I}^2$ with  
$y := (\smfrac{1}{2}, 1)$ and $w := (\smfrac{1}{2}, \smfrac{1}{2})$.
The formula for the Riemann product is
\[ \opn{RP}_0(\alpha, \beta \vert \opn{turn}(\sigma, \tau)) = 
\exp_H \bigl( \epsilon^2 \cdot \Psi_{\h}(g')(\til{\beta}(z)) \bigr) , \]
with 
\[ g' := \opn{MI} \bigl( \alpha \vert \sigma * (\tau \circ \rho') \bigr) . \]

Let
$\rho'' := \sigma_{\mrm{pr}}^{-1} * \rho'$, 
which is a closed string based at $w$. The group element
\[ g'' := \opn{MI} ( \alpha \vert \tau \circ \rho'' ) \]
satisfies $g' = g \cdot g''$. Since  
$\opn{len}(\tau \circ \rho'') = 3 \epsilon$,
we know by Proposition \ref{prop:21} that 
\[ \Norm{ \Psi_{\h}(g'') - \bsym{1} } \leq 
c_5(\alpha, \Psi_{\h}) \cdot 3 \epsilon \, . \]
But by Proposition \ref{prop:13} we have
\[ \norm{ \Psi_{\h}(g) } \leq  \exp \bigl( c_4(\alpha, \Psi_{\h}) \cdot 5
\bigr) \, . \]
Therefore
\[ \begin{aligned}
& \Norm{ \log_H \bigl(  \opn{RP}_0(\alpha, \beta \vert \opn{turn}(\sigma, \tau))
\bigr) -
\log_H \bigl(  \opn{RP}_0(\alpha, \beta \vert \sigma, \tau) \bigr) } \\
& \qquad = \epsilon^2 \cdot \bigl( \Psi_{\h}(g) \circ 
( \Psi_{\h}(g'') - \bsym{1} ) \bigr) (\til{\beta}(z)) \\
& \qquad \leq
\epsilon^3 \cdot c \, , 
\end{aligned} \]
where we write
\[ c := 
\exp \bigl( c_4(\alpha, \Psi_{\h}) \cdot 5 \bigr) \cdot 
3 \cdot c_5(\alpha, \Psi_{\h}) \cdot \norm{\beta}_{\mrm{Sob}} \, . \]
Combining this estimate with Proposition \ref{prop:8}(2), we obtain
\begin{equation} \label{eqn:191}
\begin{aligned}
& \Norm{ \log_H \bigl(  \opn{MI}(\alpha, \beta \vert \opn{turn}(\sigma, \tau))
\bigr) -
\log_H \bigl(  \opn{MI}(\alpha, \beta \vert \sigma, \tau) \bigr) } \\
& \qquad \leq  2 \cdot c_2(\alpha, \beta) \cdot \epsilon^4 +
c \cdot \epsilon^3 \, .
\end{aligned} 
\end{equation}

\medskip \noindent
Step 2. In this step we assume that $(\sigma, \tau)$ is an
$(\alpha, \beta)$-tiny kite and $\epsilon := \opn{side}(\tau)$ is
positive; but no smoothness is assumed. 

Take $k \geq 0$, and define kites $(\sigma_i, \tau_i)$ and  sets
$\opn{good}(\tau, k)$
and $\opn{bad}(\tau, k)$ like in step 2 of the proof of Lemma \ref{lem:14}.
If $i \in \opn{good}(\tau, k)$ then by equation (\ref{eqn:191}) in step 1 we
know
that 
\[ \begin{aligned}
& \Norm{ \log_H \bigl(  \opn{MI}(\alpha, \beta \vert 
\opn{turn}(\sigma_i, \tau_i)) \bigr) -
\log_H \bigl(  \opn{MI}(\alpha, \beta \vert \sigma_i, \tau_i) \bigr) } \\
& \qquad \leq  2 \cdot c_2(\alpha, \beta) \cdot (\smfrac{1}{2})^{4 k} \cdot
\epsilon^4 + 
c \cdot (\smfrac{1}{2})^{3 k} \cdot \epsilon^3 \, .
\end{aligned} \]
For $i \in \opn{bad}(\tau, k)$ we use the estimate (\ref{eqn:192}). 
As in the proof of Lemma \ref{lem:14}, but using Lemma \ref{lem:42} instead of
Lemma \ref{lem:41}, we arrive at the estimate
\[ \begin{aligned}
& \Norm{ \log_H \bigl( 
\opn{MI} ( \alpha, \beta \vert \opn{turn}(\sigma, \tau) ) \bigr) -
\log_H \bigl( 
\opn{MI} ( \alpha, \beta \vert \sigma, \tau) \bigr) } \\
& \qquad = \Norm{ \log_H \Bigl( \,
\prod_{i = 1}^{4^k} \, 
\opn{MI} ( \alpha, \beta \vert \opn{turn}(\sigma_i, \tau_i) ) \Bigr) -
\log_H \Bigl( \,
\prod_{i = 1}^{4^k} \, 
\opn{MI} ( \alpha, \beta \vert \sigma_i, \tau_i) \Bigr) } \\
& \qquad \leq 
\abs{ \opn{good}(\tau, k) } \cdot 
\bigl( 2 \cdot c_2(\alpha, \beta) \cdot (\smfrac{1}{2})^{4 k} \cdot
\epsilon^4 + c \cdot (\smfrac{1}{2})^{3 k} \cdot \epsilon^3 \bigr) \\
& \qquad \quad +
\abs{ \opn{bad}(\tau, k) } \cdot \bigl( 2 \cdot  c_1(\alpha, \beta) \cdot
(\smfrac{1}{2})^{2 k} \cdot \epsilon^2 \bigr) \\
& \qquad \leq 
4^k \cdot \bigl( 2 \cdot c_2(\alpha, \beta) \cdot (\smfrac{1}{2})^{4 k} \cdot
\epsilon^4 + c \cdot (\smfrac{1}{2})^{3 k} \cdot \epsilon^3 \bigr) \\
& \qquad \quad + 
(a_0 + a_1 \cdot 2^k) \cdot 
\bigl( 2 \cdot  c_1(\alpha, \beta) \cdot
(\smfrac{1}{2})^{2 k} \cdot \epsilon^2 \bigr) \, . 
\end{aligned} \]
As $k \to \infty$ the last term goes to $0$. Hence (\ref{eqn:190}) holds in
this case.

\medskip \noindent
Step 3. For an arbitrary kite $(\sigma, \tau)$ in $(\mbf{I}^2, v_0)$ we prove
that (\ref{eqn:190}) holds using step 2, as was done in step 3 of the proof of  
Lemma \ref{lem:14}, but using Lemma \ref{lem:42} instead of Lemma \ref{lem:41}.
\end{proof}

\subsection{Putting it all Together}

\begin{thm} \label{thm:7}
Let 
\[ \mbf{C} / \mbf{I}^2  = (G, H, \Psi, \Phi_0, \Phi_{X}) \]
be a Lie quasi crossed module over $(X, x_0) := (\mbf{I}^2, v_0)$,  let
$(\alpha, \beta)$ be a connection-curvature pair  in $\mbf{C} / \mbf{I}^2$,
and let 
\[ \bsym{\rho} = \bigl( (\sigma_1, \tau_1), \, \ldots, \, (\sigma_m, \tau_m) 
\bigr) \]
be a sequence of kites in $(\mbf{I}^2, v_0)$ patterned on 
$\opn{sd}^k \mbf{I}^2$, for some $k \in \N$.
For $i \in \{ 1, \ldots, m \}$ define
\[ a_i := [\partial (\sigma_i, \tau_i)] \in 
\bsym{\pi}_1(\opn{sk}_1 \opn{sd}^k \mbf{I}^2) \]
and
\[ h_i := \opn{MI} ( \alpha, \beta \vert \sigma_i, \tau_i) \in H . \]

Suppose 
\[  w(\bsym{s}) \in \opn{Wrd}^{\pm 1}(\bsym{s}) = \opn{Wrd}^{\pm 1}(s_1, \ldots,
s_m) \]
is a word such that 
\[ w(a_1, \ldots, a_m) =  1 \]
in $\bsym{\pi}_1(\opn{sk}_1 \opn{sd}^k \mbf{I}^2)$.
Then 
\[ w(h_1, \ldots, h_m) = 1 \]
in $H$.
\end{thm}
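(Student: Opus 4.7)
The plan is to reduce the theorem to Lemma \ref{lem:17} applied to a fixed generating sequence. Take $\bsym{\rho}^{\natural} := \opn{tes}^k \mbf{I}^2$, and write $\bsym{a}^{\natural}, \bsym{h}^{\natural}$ for its boundary elements in $\bsym{\pi}_1(\opn{sk}_1 \opn{sd}^k \mbf{I}^2)$ and its MIs in $H$. It will suffice to construct, for each $i \in \{ 1, \ldots, m \}$, a word
\[ u_i(\bsym{t}) \in \opn{Wrd}^{\pm 1}(t_1, \ldots, t_{4^k}) \]
such that $a_i = u_i(\bsym{a}^{\natural})$ and $h_i = u_i(\bsym{h}^{\natural})$. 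Granting this, the composed word $w'(\bsym{t}) := w(u_1(\bsym{t}), \ldots, u_m(\bsym{t}))$ satisfies $w'(\bsym{a}^{\natural}) = w(a_1, \ldots, a_m) = 1$ in the free group; Lemma \ref{lem:17} then yields $w'(\bsym{t}) \sim_{\mrm{can}} 1$, and cancellation in $H$ gives $w(h_1, \ldots, h_m) = w'(\bsym{h}^{\natural}) = 1$.

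The work is in constructing the $u_i$. When $(\sigma_i, \tau_i)$ is aligned with $\bsym{\rho}^{\natural}$, Lemma \ref{lem:12} supplies $u_i$ directly. For a non-aligned kite one has $\tau_i = \tau^{\natural}_j \circ \phi$ for some non-identity element $\phi$ of the dihedral group $D_4 = \langle \opn{flip}, \opn{turn} \rangle$ of symmetries of the unit square. I will peel off the letters of $\phi$ one at a time. A $\opn{flip}$ factor is absorbed at once via Lemma \ref{lem:14} and equation (\ref{eqn:177}), which invert the MI and the boundary element in matched fashion. A $\opn{turn}$ factor requires first adjusting the string: using Lemma \ref{lem:11}(2), I insert a cancelling pair $\rho^{-1} * \rho$ at the tail of $\sigma_i$, where $\rho$ is the appropriate edge of $\opn{sd}^k \mbf{I}^2$ lying in the target square. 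This modification does not change the MI of the kite, yet presents it in the form $\opn{turn}(\sigma_i'', \tau')$ for explicit $\sigma_i''$ and $\tau'$. Lemma \ref{lem:16} together with equation (\ref{eqn:185}) then removes the outermost turn while preserving the MI--boundary match. At most three turn-reductions followed by at most one flip-absorption reduce $\phi$ to the identity; the resulting aligned kite is handled by Lemma \ref{lem:12}, and the chain of reductions determines the final $u_i$.

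The main technical point will be the bookkeeping: at every step the MI and the fundamental-group class must transform by exactly the same formal operation, which is precisely the content of the paired statements (Lemma \ref{lem:14} with equation (\ref{eqn:177}), Lemma \ref{lem:16} with equation (\ref{eqn:185})). It is also essential that the cancelling pairs $\rho^{-1} * \rho$ inserted before each turn-reduction are homotopically trivial in the $1$-skeleton, so that Lemma \ref{lem:11}(2) applies and no spurious contribution from the $1$-dimensional integral along the modified string creeps into the MI.
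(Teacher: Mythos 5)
Your proposal is correct and follows essentially the same route as the paper's proof: reduce each kite to one aligned with $\opn{tes}^k \mbf{I}^2$ by matched flip/turn invariances of the MI and the boundary class (Lemmas \ref{lem:14} and \ref{lem:16}, with equations (\ref{eqn:177}) and (\ref{eqn:185})), and then conclude by the free-group cancellation machinery for aligned kites (Lemmas \ref{lem:12} and \ref{lem:17}, which the paper packages as Lemma \ref{lem:13}). The only difference is cosmetic: the paper applies $\opn{turn}^{j_i}$ and $\opn{flip}^{l_i}$ \emph{forward} to $(\sigma_i,\tau_i)$, so the turn operation extends the string automatically and no $\rho^{-1}*\rho$ insertion via Lemma \ref{lem:11}(2) is needed; it then records $a_i^{e_i}=a_i'$ and $h_i^{e_i}=h_i'$, substitutes $s_i^{e_i}$ into $w$, and cites Lemma \ref{lem:13} instead of inlining the word construction as you do.
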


\begin{proof}
We can find $l_i \in \{ 0, 1 \}$ and 
$j_i \in \{ 0, 1, 2, 3 \}$, such that the kites
\[ (\sigma'_i, \tau'_i) := \opn{flip}^{l_i} \bigl( 
\opn{turn}^{j_i} (\sigma_i, \tau_i) \bigr) \]
are aligned with the generating sequence 
$\opn{tes}^k \mbf{I}^2$.  Here the exponents
$l_i$ and $j_i$ refer to iteration of the corresponding operation. Let
$e_i := (-1)^{l_i}$, 
\[ a'_i := [\partial (\sigma'_i, \tau'_i)] \in 
\bsym{\pi}_1(\opn{sk}_1 \opn{sd}^k \mbf{I}^2) \]
and 
\[ h'_i := \opn{MI} (\alpha, \beta \vert \sigma'_i, \tau'_i) \in H . \]
Then 
$a_i^{e_i} = a'_i$, 
and by Lemmas \ref{lem:14} and \ref{lem:16} we also have
$h_i^{e_i} = h'_i$.

Consider the word
\[ u(\bsym{s}) := w \bigl( s_1^{e_1}, \ldots, s_m^{e_m} \bigr)
\in \opn{Wrd}^{\pm 1}(\bsym{s}) . \]
Then
\[ w(a_1, \ldots, a_m) = u(a'_1, \ldots, a'_m) \]
and
\[ w(h_1, \ldots, h_m) =u(h'_1, \ldots, h'_m) . \]
Now we can finish the proof with the use of Lemma \ref{lem:13}.
\end{proof}

\begin{cor} \label{cor:2}
Let 
\[ \mbf{C} / \mbf{I}^2  = (G, H, \Psi, \Phi_0, \Phi_{X}) \]
be a Lie quasi crossed module over $(X, x_0) := (\mbf{I}^2, v_0)$,  let
$(\alpha, \beta)$ be a connection-curvature pair  in $\mbf{C} / \mbf{I}^2$,
and let 
\[ \bsym{\rho} = \bigl( (\sigma_1, \tau_1), \, \ldots, \, 
(\sigma_{4^k}, \tau_{4^k}) \bigr) \]
be a tessellation of $\mbf{I}^2$ patterned on 
$\opn{sd}^k \mbf{I}^2$ \tup{(}cf.\ Definition \tup{\ref{dfn:4}}\tup{)}.
Then 
\[ \prod_{i = 1}^{4^k} \, 
\opn{MI} ( \alpha, \beta \vert \sigma_i, \tau_i)  = 
\opn{MI} (\alpha, \beta \vert \mbf{I}^2) . \]
\end{cor}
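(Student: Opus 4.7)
The right-hand side should be read as $\opn{MI}(\alpha,\beta \vert \mbf{I}^2)$ (i.e.\ the multiplicative integral on the basic kite $(\sigma^0_1,\tau^0_1)$), so what we must prove is that
\[
\prod_{i=1}^{4^k} \opn{MI}(\alpha,\beta \vert \sigma_i,\tau_i) \;=\; \opn{MI}(\alpha,\beta \vert \mbf{I}^2).
\]
The plan is to compare the given tessellation $\bsym{\rho}$ with the $k$-th binary tessellation $\opn{tes}^k \mbf{I}^2 = \bigl((\sigma^k_1,\tau^k_1),\ldots,(\sigma^k_{4^k},\tau^k_{4^k})\bigr)$, for which Proposition \ref{prop:3} already delivers a product formula, and then to invoke the ``free monoid with involution'' machinery of Theorem \ref{thm:7} to match the two products.

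First I would concatenate the two sequences into a single sequence of $2 \cdot 4^k$ kites
\[
\bsym{\rho}'' \;:=\; \bigl((\sigma_1,\tau_1),\ldots,(\sigma_{4^k},\tau_{4^k}),(\sigma^k_1,\tau^k_1),\ldots,(\sigma^k_{4^k},\tau^k_{4^k})\bigr),
\]
all patterned on $\opn{sd}^k \mbf{I}^2$. Set $a_i := [\partial(\sigma_i,\tau_i)]$, $a^\natural_i := [\partial(\sigma^k_i,\tau^k_i)]$, and correspondingly $h_i := \opn{MI}(\alpha,\beta \vert \sigma_i,\tau_i)$, $h^\natural_i := \opn{MI}(\alpha,\beta \vert \sigma^k_i,\tau^k_i)$. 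Consider the word
\[
w(\bsym{s}) \;:=\; s_1 * s_2 * \cdots * s_{4^k} * s_{2\cdot 4^k}^{-1} * s_{2\cdot 4^k-1}^{-1} * \cdots * s_{4^k+1}^{-1} \;\in\; \opn{Wrd}^{\pm 1}(s_1,\ldots,s_{2\cdot 4^k}).
\]
Since both $\bsym{\rho}$ and $\opn{tes}^k \mbf{I}^2$ are tessellations of $\mbf{I}^2$ patterned on $\opn{sd}^k \mbf{I}^2$, condition (ii) of Definition \ref{dfn:4} gives
\[
\prod_{i=1}^{4^k} a_i \;=\; [\partial \mbf{I}^2] \;=\; \prod_{i=1}^{4^k} a^\natural_i
\]
in $\bsym{\pi}_1(\opn{sk}_1 \opn{sd}^k \mbf{I}^2)$, and so evaluating $w$ gives $w(a_1,\ldots,a_{4^k},a^\natural_1,\ldots,a^\natural_{4^k}) = [\partial \mbf{I}^2] \cdot [\partial \mbf{I}^2]^{-1} = 1$.

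At this point Theorem \ref{thm:7}, applied to the sequence $\bsym{\rho}''$ and the word $w(\bsym{s})$, yields
\[
\Bigl(\prod_{i=1}^{4^k} h_i\Bigr) \cdot \Bigl(\prod_{i=4^k}^{1} (h^\natural_i)^{-1}\Bigr) \;=\; 1 \quad \text{in } H,
\]
i.e.\ $\prod_{i=1}^{4^k} h_i = \prod_{i=1}^{4^k} h^\natural_i$. Finally, Proposition \ref{prop:3} applied to the basic kite $(\sigma^0_1,\tau^0_1)$ identifies $\prod_{i=1}^{4^k} h^\natural_i$ with $\opn{MI}(\alpha,\beta \vert \mbf{I}^2)$, which completes the proof. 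There is no genuine obstacle: the real work has already been carried out in Theorem \ref{thm:7} (and its supporting lemmas on the flip and the turn); the only thing to check carefully is the bookkeeping for the reversed index in the second half of the word, which is exactly what makes $w(a_1,\ldots,a^\natural_{4^k}) = 1$ rather than merely a conjugate of the identity.
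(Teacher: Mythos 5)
Your proof is correct and follows the paper's route: the paper also deduces the corollary from Theorem \ref{thm:7} (together with Proposition \ref{prop:3} identifying $\prod_i \opn{MI}(\alpha,\beta \vert \sigma^k_i,\tau^k_i)$ with $\opn{MI}(\alpha,\beta \vert \mbf{I}^2)$), quoting only the word $w(\bsym{s}) = \prod_{i=1}^{4^k} s_i$. Your explicit step of adjoining the kites of $\opn{tes}^k \mbf{I}^2$ and using the combined word $s_1 \cdots s_{4^k} \, s_{2\cdot 4^k}^{-1} \cdots s_{4^k+1}^{-1}$ is exactly the bookkeeping needed to meet the hypothesis $w(a_1,\ldots,a_m)=1$ of the theorem (the product word alone evaluates to $[\partial \mbf{I}^2]$, not $1$), so you have in fact made the paper's one-line argument precise rather than deviated from it.
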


\begin{proof}
This is an immediate consequence of the theorem, taking
$m := 4^k + 1$,  
\[ w(\bsym{s}) := \Bigl( \prod\nolimits_{i = 1}^{m - 1} s_i \Bigr) 
\cdot s_m^{-1}  \]
and $(\sigma_m, \tau_m)$ the basic kite. 
\end{proof}

%% section %%
\cleardoublepage
\section{Stokes Theorem in Dimension Three}
\label{sec:stokes-3}

The goal of this section is to prove Theorem \ref{thm:10}, which is the first
version of the main result of the paper. (The second version, dealing with the
triangular case, is Theorem \ref{thm:22}.)

\subsection{Balloons and their Boundaries} \label{subsec:ballbound}

\begin{dfn}
Let $(X, x_0)$ be a pointed polyhedron. A {\em linear quadrangular balloon} 
\index{Quadrangular balloon}
in $(X, x_0)$ is a pair $(\sigma, \tau)$, where
$\sigma$ is a string in $X$ (see Definition \ref{dfn:3}), and
$\tau : \mbf{I}^3 \to X$ is a linear map. The conditions are that
$\sigma(v_0) = x_0$ and $\sigma(v_1) = \tau(v_0)$.
\end{dfn}

In other words, a linear quadrangular balloon is the $3$-dimensional version of
a linear quadrangular kite. See Figure \ref{fig:51} for an illustration. 

All balloons in this section are linear quadrangular ones; so we shall
simply call them balloons. (This will change in Section \ref{sec:simpl}.)
If the image of $\tau$ is a cube in $X$, then we call 
$(\sigma, \tau)$ a {\em cubical balloon}. The length of the side of $\tau$ is
denoted by $\opn{side}(\tau)$. If $\opn{side}(\tau) > 0$ then 
$(\sigma, \tau)$ is said to be {\em nondegenerate}.

If $(\sigma, \tau)$ is a balloon in $(X, x_0)$, and
$(\sigma', \tau')$ is a balloon (resp.\ a kite) in $(\mbf{I}^{3}, v_0)$, then 
the composition $(\sigma, \tau) \circ (\sigma', \tau')$
(defined like (\ref{eqn:18})) is a balloon  (resp.\ a kite) in $(X, x_0)$.

The $k$-th binary subdivision of $\mbf{I}^{3}$ is its cellular decomposition 
into $8^k$ little cubes, each of side $(\smfrac{1}{2})^k$. We denote this
decomposition by $\opn{sd}^k \mbf{I}^3$.

A map $\sigma : \mbf{I}^p \to \mbf{I}^3$
is said to be {\em patterned on $\opn{sd}^k \mbf{I}^3$} if it is linear, and
its image is a $p$-cell in $\opn{sd}^k \mbf{I}^3$.
A string $\sigma = (\sigma_1, \ldots, \sigma_n)$ in $(\mbf{I}^{3}, v_0)$ is
said to be patterned on $\opn{sd}^k \mbf{I}^3$ if for every $i$ the map
$\sigma_i : \mbf{I}^1 \to \mbf{I}^3$ is patterned on $\opn{sd}^k \mbf{I}^3$.
A kite or balloon $(\sigma, \tau)$ in $(\mbf{I}^{3}, v_0)$ is said to be
patterned on $\opn{sd}^k \mbf{I}^3$ if the string $\sigma$
and the map $\tau : \mbf{I}^p \to \mbf{I}^3$ ($p = 2, 3$) are patterned on 
$\opn{sd}^k \mbf{I}^3$.

\begin{figure}
\includegraphics[scale=0.27]{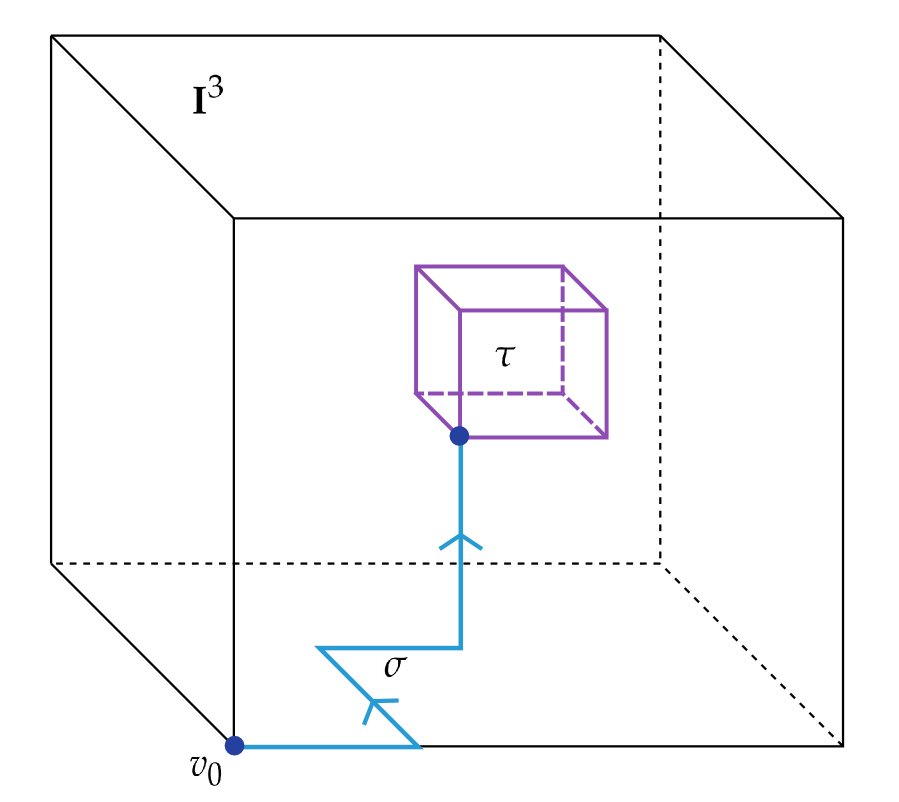}
\caption{A cubical balloon $(\sigma, \tau)$ in the pointed polyhedron
$(\mbf{I}^3, v_0)$.} 
\label{fig:51}
\end{figure}

The $k$-th binary tessellation of $\mbf{I}^{3}$ is the sequence 
\begin{equation} \label{eqn:141}
\opn{tes}^k \mbf{I}^3 = 
\bigl( \opn{tes}^k_1 \mbf{I}^3, \ldots, \opn{tes}^k_{8^k} \mbf{I}^3 \bigr)
= \bigl( (\sigma^{k}_1, \tau^{k}_1) , \ldots, 
(\sigma^{k}_{8^k}, \tau^{k}_{8^k}) \bigr)
\end{equation}
of balloons patterned on $\opn{sd}^k \mbf{I}^3$, defined as follows.
For $k = 0$ we have the basic balloon
$(\sigma^{0}_1, \tau^{0}_1)$,
where $\sigma^{0}_1$ is the empty string, and 
$\tau^{0}_1 : \mbf{I}^3 \to \mbf{I}^3$
is the identity map. 

For $k = 1$ we choose, once and for all, a sequence 
\[ \opn{tes}^1 \mbf{I}^3 = 
\bigl( (\sigma^{1}_1, \tau^{1}_1) , \ldots, 
(\sigma^{1}_{8}, \tau^{1}_{8}) \bigr) \]
balloons patterned on $\opn{sd}^1 \mbf{I}^3$, satisfying these conditions:
\begin{enumerate}
\rmitem{a}  Each of the
$3$-cells of $\opn{sd}^1 \mbf{I}^3$ occurs exactly once as 
$\tau^{1}_{i}(\mbf{I}^3)$ for some $i$.
\rmitem{b} The maps $\tau^{1}_{i}$ are positively oriented.
\rmitem{c} The length of each string $\sigma^{1}_i$ is at most $3$. 
\end{enumerate}
This can be done of course.

For $k \geq 1$ we use the recursive definition
\[ \opn{tes}^{k+1} \mbf{I}^3 :=  
(\opn{tes}^1 \mbf{I}^3) \circ (\opn{tes}^{k} \mbf{I}^3) . \]
Here we use the convention (\ref{eqn:19}) for composition of sequences.

Given a balloon $(\sigma, \tau)$ in a pointed polyhedron $(X, x_0)$, 
an numbers $k \in \N$, $i \in \{ 1, \ldots, 8^k \}$, let
\begin{equation} \label{eqn:250}
\opn{tes}^{k}_i (\sigma, \tau) := 
(\sigma, \tau) \circ \opn{tes}^{k}_i \mbf{I}^3 = 
(\sigma, \tau) \circ (\sigma^{k}_i, \tau^{k}_i) . 
\end{equation}
The $k$-th binary tessellation of $(\sigma, \tau)$ is the sequence (of length
$8^k$) of balloons 
\begin{equation} \label{eqn:251}
\opn{tes}^{k} (\sigma, \tau) := (\sigma, \tau) \circ (\opn{tes}^{k} \mbf{I}^3)
= \bigl( \opn{tes}^{k}_1 (\sigma, \tau), \ldots, 
\opn{tes}^{k}_{8^k} (\sigma, \tau) \bigr) . 
\end{equation}

\newpage
\begin{dfn}   \label{dfn:36} \mbox{} 
\begin{enumerate}
\item For $i \in \{ 1, \ldots, 6 \}$ let
$\partial_i \mbf{I}^{3} := (\sigma^{\flat}_i, \tau^{\flat}_i)$ 
be the kites depicted in Figure \ref{fig:46}.
The {\em boundary of $\mbf{I}^{3}$} is the sequence of kites
\[ \partial \mbf{I}^3 = 
\bigl( \partial_1 \mbf{I}^{3}, \ldots, \partial_6 \mbf{I}^{3} \bigr) = 
\bigl( (\sigma^{\flat}_1, \tau^{\flat}_1) , \ldots, 
(\sigma^{\flat}_{6}, \tau^{\flat}_{6}) \bigr) . \]

\item Given a balloon $(\sigma, \tau)$ in a pointed polyhedron $(X, x_0)$, 
let
\[ \partial_i (\sigma, \tau) := (\sigma, \tau) \circ \partial_i \mbf{I}^{3}
= (\sigma, \tau) \circ (\sigma^{\flat}_i, \tau^{\flat}_i) . \]
The {\em boundary} of $(\sigma, \tau)$ is the sequence of kites
\[ \partial (\sigma, \tau) := 
\bigl( \partial_1 (\sigma, \tau), \ldots, 
\partial_6 (\sigma, \tau) \bigr)  \]
in  $(X, x_0)$.
\end{enumerate}
\end{dfn}

\begin{figure}
\includegraphics[scale=0.50]{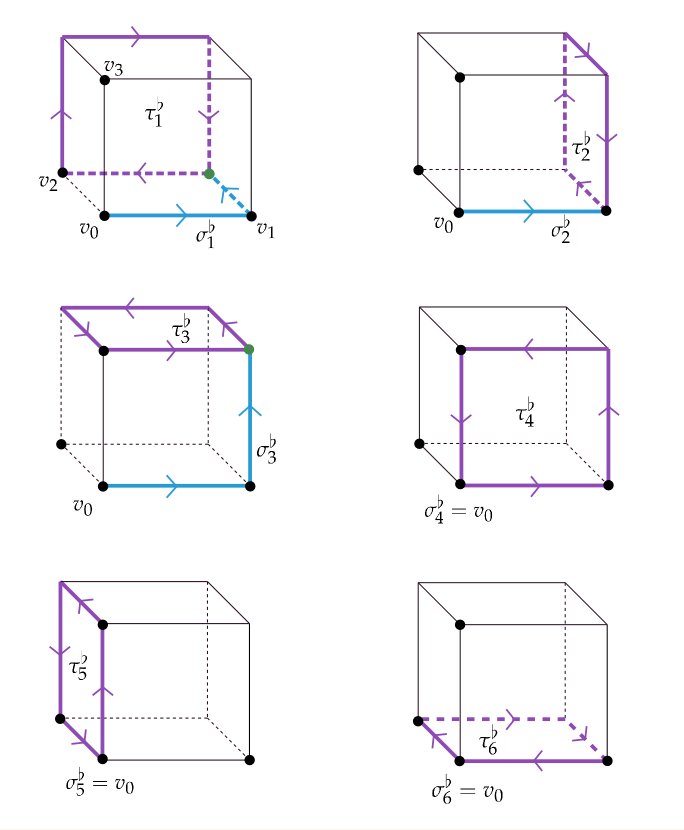}
\caption{The kites $(\sigma^{\flat}_i, \tau^{\flat}_i)$,
$i = 1, \ldots, 6$, that make up the boundary of $\mbf{I}^3$.} 
\label{fig:46}
\end{figure}

If $(\sigma, \tau)$ is a balloon in $(\mbf{I}^3, v_0)$
patterned on $\opn{sd}^k \mbf{I}^3$, then 
$\partial (\sigma, \tau)$ is a sequence (of length $6$) of kites  patterned on 
$\opn{sd}^k \mbf{I}^3$, and 
$\partial \partial (\sigma, \tau)$ is a sequence of closed strings patterned on 
$\opn{sd}^k \mbf{I}^3$.

\begin{dfn} \label{dfn:35}
Let $(X, x_0)$ be a pointed polyhedron, let
$\mbf{C} / X$ be a Lie quasi crossed module with additive feedback
over $(X, x_0)$, and
let $(\alpha, \beta)$ be a piecewise smooth connection-curvature pair in 
$\mbf{C} / X$. Given a balloon $(\sigma, \tau)$ in $(X, x_0)$, we define
\[ \opn{MI}(\alpha, \beta \vert \partial(\sigma, \tau)) :=
\prod_{1 = 1}^6 \, \opn{MI}(\alpha, \beta \vert \partial_i(\sigma, \tau))
\in H . \]
\end{dfn}

\subsection{Some Algebraic Topology}
Let us denote by $\bsym{\pi}_1(\opn{sk}_1 \opn{sd}^k \mbf{I}^3)$
the fundamental group of the topological space 
$\opn{sk}_1 \opn{sd}^k \mbf{I}^3$, based at $v_0$. 
If $\sigma$ is a closed string patterned on $\opn{sd}^k \mbf{I}^3$ and based at
$v_0$, then we denote its class in 
$\bsym{\pi}_1(\opn{sk}_1 \opn{sd}^k \mbf{I}^3)$
by $[\sigma]$.

We now look at homology groups. Fix $k \geq 0$. On each $p$-cell $\tau$ in 
$\opn{sd}^k \mbf{I}^3$, $p \in \{0, 1, 2\}$, let us choose an orientation
(there are two options for $p > 0$).  
We denote by $\mrm{C}_p(\opn{sk}_2 \opn{sd}^k \mbf{I}^3)$
the free abelian group based on these oriented cells, called the group of
$p$-chains. 
The direct sum on all $p$ is a complex (with the usual boundary operator), and
the $p$-th homology is the singular homology
$\mrm{H}_p(\opn{sk}_2 \opn{sd}^k \mbf{I}^3)$ of the topological space
$\opn{sk}_2 \opn{sd}^k \mbf{I}^3$. 
The homology class of a cycle $\sum_i n_i \tau_i$ is denoted by
$[\sum_i n_i \tau_i]$.
Observe that a string $\sigma = (\sigma_1, \ldots, \sigma_n)$ patterned on
$\opn{sd}^k \mbf{I}^3$ represents an element
$\sum_i \sigma_i \in \mrm{C}_1(\opn{sk}_2 \opn{sd}^k \mbf{I}^3)$.
And a linear map $\tau : \mbf{I}^2 \to \mbf{I}^3$ patterned on
$\opn{sd}^k \mbf{I}^3$ represents an element
$\tau \in \mrm{C}_2(\opn{sk}_2 \opn{sd}^k \mbf{I}^3)$.

Recall the free monoid with involution $\opn{Wrd}^{\pm 1}(\bsym{s})$ from
Subsection
\ref{subec:freemon}.

\begin{lem} \label{lem:20}
Let 
$\bigl( (\sigma_1, \tau_1), \ldots, (\sigma_n, \tau_n) \bigr)$
be a sequence of kites patterned on $\opn{sk}_2 \opn{sd}^k \mbf{I}^3$, for some
$k \geq 0$. Write
\[ a_i := [\partial (\sigma_i, \tau_i)] \in 
\bsym{\pi}_1(\opn{sk}_1 \opn{sd}^k \mbf{I}^3) . \]
Let $w(\bsym{s}) \in \opn{Wrd}^{\pm 1}(s_1, \ldots, s_n)$ be a word such that
$w(a_1, \ldots, a_n) = 1$. 
Then the  $2$-chain 
\[ w(\tau_1, \ldots, \tau_n) \in \mrm{C}_2(\opn{sk}_2 \opn{sd}^k \mbf{I}^3) \]
is a cycle.
\end{lem}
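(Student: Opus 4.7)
The plan is to interpret the word $w$ via the natural abelianization, and then use Lemma \ref{lem:15} to transport the triviality of $w(a_1, \ldots, a_n)$ in the fundamental group to the vanishing of a $1$-chain, which is exactly the boundary we need to control.

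First I would unpack the notation. Write $w(\bsym{s}) = s_{i_1}^{\epsilon_1} \ast \cdots \ast s_{i_m}^{\epsilon_m}$ with $\epsilon_j \in \{ \pm 1 \}$, so that, interpreting the symbols as $2$-chains via $s_i \mapsto \tau_i$, we have
\[ w(\tau_1, \ldots, \tau_n) = \sum_{j = 1}^m \epsilon_j \tau_{i_j} \in \mrm{C}_2(\opn{sk}_2 \opn{sd}^k \mbf{I}^3) . \]
Consequently
\[ \partial \bigl( w(\tau_1, \ldots, \tau_n) \bigr) = \sum_{j = 1}^m \epsilon_j \partial \tau_{i_j} \in \mrm{C}_1(\opn{sk}_1 \opn{sd}^k \mbf{I}^3) , \]
and the goal is to show that this $1$-chain is zero.

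The key observation is that the $1$-chain associated to each boundary kite coincides with $\partial \tau_i$: writing $\partial (\sigma_i, \tau_i) = \sigma_i \ast \partial \tau_i \ast \sigma_i^{-1}$ as a closed string and passing to the associated $1$-chain, the contributions of $\sigma_i$ and $\sigma_i^{-1}$ cancel, so the $1$-chain represented by $\partial (\sigma_i, \tau_i)$ equals the $1$-chain represented by $\partial \tau_i$. Therefore the $1$-chain represented by the concatenated closed string
\[ \pi := w\bigl( \partial (\sigma_1, \tau_1), \ldots, \partial (\sigma_n, \tau_n) \bigr) \]
in $\opn{sk}_1 \opn{sd}^k \mbf{I}^3$ is precisely $\sum_{j = 1}^m \epsilon_j \partial \tau_{i_j}$.

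Now I would apply Lemma \ref{lem:15} to the closed path $\pi$: the edges of $\opn{sk}_1 \opn{sd}^k \mbf{I}^3$ serve as the alphabet, and by hypothesis $[\pi] = w(a_1, \ldots, a_n) = 1$ in $\bsym{\pi}_1(\opn{sk}_1 \opn{sd}^k \mbf{I}^3)$. Hence the corresponding word in $\opn{Wrd}^{\pm 1}$ of edges is cancellation equivalent to the empty word. Each cancellation move $\cdots \ast s \ast s^{-1} \ast \cdots \mapsto \cdots$ leaves the associated $1$-chain unchanged (it removes one copy of an oriented edge and one copy of the oppositely oriented edge, whose sum in $\mrm{C}_1$ is zero). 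Thus the $1$-chain of $\pi$ equals the $1$-chain of the empty word, which is $0$. Combining with the computation above gives $\partial \bigl( w(\tau_1, \ldots, \tau_n) \bigr) = 0$, as required.

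The only delicate point is checking the sign/orientation bookkeeping: that $s_i^{-1}$ truly corresponds to $-\tau_i$ and to $\partial (\sigma_i, \tau_i)^{-1}$ (traversed with reversed orientation), so that the map $w \mapsto (\text{its } 1\text{-chain})$ respects the passage through the abelianization. This is essentially formal once the conventions for the involution on $\opn{Wrd}^{\pm 1}$, for the string inverse (\ref{eqn13}), and for the orientations on $2$-cells and $1$-cells have been aligned, and it is where I would spend the bulk of the bookkeeping.
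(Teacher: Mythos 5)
Your proposal is correct and follows essentially the same route as the paper: express the concatenated closed boundary string as a word in the oriented $1$-cells, apply Lemma \ref{lem:15} to conclude it is cancellation equivalent to the empty word, and then use that cancellation moves preserve the associated additive $1$-chain together with the identity $\partial(\tau_i) = \partial(\sigma_i, \tau_i)$ in $\mrm{C}_1$ (the $\sigma_i$ and $\sigma_i^{-1}$ contributions cancel additively). The sign and orientation bookkeeping you flag is exactly the content of the paper's words $u_i(\bsym{t})$ expressing each $\partial(\sigma_i, \tau_i)$ in the chosen oriented edges, so nothing essential is missing.
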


\begin{proof}
Say $m$ is the number of $1$-cells in $\opn{sd}^k \mbf{I}^3$. 
We choose orientations, and arrange these oriented $1$-cells in a sequence
$(\rho_1, \ldots, \rho_m)$.
For every $i$ let $u_i(\bsym{t}) \in$ 
$\opn{Wrd}^{\pm 1}(t_1, \ldots, t_m)$
be the word such that
\[ \partial (\sigma_i, \tau_i) = u_i(\rho_1, \ldots, \rho_m) \]
as strings patterned on $\opn{sd}^k \mbf{I}^3$.
Define
\[ u(\bsym{t}) := w( u_1(\bsym{t}), \ldots, u_n(\bsym{t})) \in \opn{Wrd}^{\pm 1}
(\bsym{t}) . \]
Then $u(\rho_1, \ldots, \rho_m)$ is a closed string, and its homotopy class is
trivial in the group $\bsym{\pi}_1(\opn{sk}_1 \opn{sd}^k \mbf{I}^3)$.
According to Lemma \ref{lem:15} we have
$u(\bsym{t})  \sim_{\mrm{can}} 1$ in 
$\opn{Wrd}^{\pm 1}(\bsym{t})$.

On the other hand we have 
$\partial(\tau_i) = \partial (\sigma_i, \tau_i)$
as additive $1$-chains, i.e.\ as elements of
$\mrm{C}_1(\opn{sk}_2 \opn{sd}^k \mbf{I}^3)$.
Hence
\[ \partial(w(\tau_1, \ldots, \tau_n)) = 
w(\partial (\sigma_1, \tau_1), \ldots, \partial (\sigma_n, \tau_n))
= u(\rho_1, \ldots, \rho_m) = 0 \]
as elements of $\mrm{C}_2(\opn{sk}_2 \opn{sd}^k \mbf{I}^3)$.
\end{proof}

\begin{lem} \label{lem:21}
Recall the boundary of $\mbf{I}^3$ from Definition \tup{\ref{dfn:36}}.
For any $i$ let
\[ a^{\flat}_i := [\partial (\sigma^{\flat}_{i}, \tau^{\flat}_{i})] \in 
\bsym{\pi}_1(\opn{sk}_1 \opn{sd}^0 \mbf{I}^3) \]
and
\[ b^{\flat}_i := \tau^{\flat}_i \in 
\mrm{C}_2(\opn{sk}_1 \opn{sd}^0 \mbf{I}^3) . \]
Then:
\begin{enumerate}
\item The fundamental group 
$\bsym{\pi}_1(\opn{sk}_1 \opn{sd}^0 \mbf{I}^3)$ is generated by
the sequence of elements
$(a^{\flat}_1, \ldots, a^{\flat}_6)$, 
and there is one relation:
\[  \prod_{i = 1, \ldots, 6} \, a^{\flat}_i = 1 . \]
Thus $\bsym{\pi}_1(\opn{sk}_1 \opn{sd}^0 \mbf{I}^3)$ is a free group
of rank $5$.  
\item The homology group $\mrm{H}_2(\opn{sk}_2 \opn{sd}^0 \mbf{I}^3)$
is a free abelian group of rank $1$, with basis 
$[\sum_{i = 1, \ldots, 6} \, \tau_{i}]$.
\end{enumerate}
\end{lem}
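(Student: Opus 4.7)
Proof proposal. The two parts have quite different flavors: part (2) is a standard cellular homology computation, while part (1) requires some care to get from generation and a relation to a genuine presentation. The plan is to do part (2) first and then use its chain-level content as input to part (1).

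For part (2), I would observe that $\opn{sk}_2 \opn{sd}^0 \mbf{I}^3$ is exactly the topological boundary $\partial \mbf{I}^3$, which is homeomorphic to $S^2$. The cellular chain complex $C_{\bdot}$ of this CW structure has rank $8,12,6$ in degrees $0,1,2$. Since $H_1(S^2)=0$, we have $\operatorname{im}(\partial_2) = \ker(\partial_1)$; this kernel has rank $8-1=7$ subtracted from $12$, i.e.\ rank $5$, so $\ker(\partial_2)$ has rank $6-5=1$ and $H_2 \cong \Z$. The orientations in Definition \ref{dfn:36} are the outward-pointing ones (condition (b) in the construction of $\opn{tes}^1 \mbf{I}^3$), so adjacent faces induce opposite orientations on their shared edge, hence $\sum_i \tau^{\flat}_i \in \ker(\partial_2)$. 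Its coefficients are all $\pm 1$, so it is primitive in the rank-one free abelian group $H_2$, and therefore generates it.

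For part (1), the 1-skeleton is a connected graph with $V=8$ vertices and $E=12$ edges, so $\bsym{\pi}_1(\opn{sk}_1 \opn{sd}^0 \mbf{I}^3, v_0)$ is free of rank $E-V+1=5$. Applying van Kampen's theorem to the attachment of the six 2-cells gives
\[
1 \;=\; \bsym{\pi}_1(S^2) \;=\; \bsym{\pi}_1(\opn{sk}_1) \big/ \langle\!\langle a^{\flat}_1,\ldots,a^{\flat}_6\rangle\!\rangle,
\]
so the $a^{\flat}_i$ normally generate the free group $F_5$. To upgrade this to an honest generating set forming a basis minus one redundant element, I would use the geometric fact that the subcomplex $Y := \opn{sk}_2 \setminus \operatorname{int}(\tau^{\flat}_6)$ is homeomorphic to $D^2$, hence contractible. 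Its CW structure ($8$ vertices, $12$ edges, $5$ two-cells) is collapsible: one inductively collapses each $2$-cell across a free edge of its attaching circuit. Such a collapse sequence exhibits a Tietze transformation showing that the five attaching loops $a^{\flat}_1,\ldots,a^{\flat}_5$ form a free basis of $\bsym{\pi}_1(\opn{sk}_1,v_0)$.

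It then remains to identify $a^{\flat}_6$ in this basis as $(a^{\flat}_1\cdots a^{\flat}_5)^{-1}$. Abelianizing and applying part (2) already nails this down up to the commutator subgroup: the map $\partial_2\colon C_2 \to H_1(\opn{sk}_1)=\Z^5$ sends $\tau^{\flat}_i \mapsto \bar a^{\flat}_i$, and $\sum \tau^{\flat}_i$ lies in $\ker(\partial_2)$, so $\sum \bar a^{\flat}_i = 0$ in $\Z^5$. To sharpen this to the nonabelian equality $\prod a^{\flat}_i = 1$ in $F_5$, I would close the loop by a direct null-homotopy: the choice of $\sigma^{\flat}_i$ in Figure \ref{fig:46} is such that consecutive terms $(\sigma^{\flat}_i)^{-1} \cdot \sigma^{\flat}_{i+1}$ traverse a path that is itself a boundary of a sequence of 2-cells already used (because $\partial \partial \mbf{I}^3=0$ holds on the nose, not just up to homology, by construction of the boundary ordering). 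Concretely, one reduces $\prod_i \sigma^{\flat}_i \cdot \partial \tau^{\flat}_i \cdot (\sigma^{\flat}_i)^{-1}$ by iteratively absorbing each $\partial \tau^{\flat}_i$ into the concatenated path using the basis computation from the collapse of $Y$; after traversing all six faces the path returns to $v_0$ with every edge crossed once in each direction. The main obstacle is precisely this last step: purely group-theoretic normal generation does not force the clean single relation $\prod a^{\flat}_i = 1$, so the geometry of the chosen strings $\sigma^{\flat}_i$ must be used in an essential way, either through the explicit collapse bookkeeping or by computing everything in a spanning-tree basis and verifying the cancellation by hand.
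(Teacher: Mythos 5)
The paper's own proof of this lemma is one sentence: ``This is obvious from looking at the pictures.'' So your proposal is not just a different route --- it is essentially the only written argument on offer, and it fleshes out what ``looking at the pictures'' must actually deliver. Your homological computation for part (2) (Euler characteristic on $\partial \mbf{I}^3 \cong S^2$, rank bookkeeping, primitivity of $\sum_i \tau^{\flat}_i$) is sound; your part (1) correctly separates the soft claims (rank of $\pi_1$ via $E-V+1$, normal generation via van Kampen) from the hard one (the \emph{specific} relation $\prod a^{\flat}_i = 1$), which is genuinely progress over the paper's terse assertion.

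Two remarks on where your writeup needs tightening. First, the attribution of the outward orientation of the faces $\tau^{\flat}_i$ to ``condition (b) in the construction of $\opn{tes}^1 \mbf{I}^3$'' is a misreading: that condition concerns the $3$-cells of the tessellation, not the boundary faces of Definition \ref{dfn:36}, which are only specified by Figure \ref{fig:46}. So even the coherence of orientations --- the input to $\sum_i \tau^{\flat}_i$ being a cycle --- is a figure-level fact, not a consequence of a stated condition. Second, the step ``collapsibility of $Y$ exhibits $a^{\flat}_1,\ldots,a^{\flat}_5$ as a free basis via a Tietze transformation'' is the kind of statement that is believable but would need a sentence or two more: an elementary collapse across a free edge removes a $2$-cell and its free edge, and tracking this as a Nielsen move on the attaching words requires choosing a spanning tree and keeping book of conjugating paths. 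It is doable, and the paper itself uses exactly this kind of figure-reading later (see the proof of Lemma \ref{lem:39}, which asserts cancellation equivalence of $\partial\partial \mbf{I}^3$ to the empty string), but as written your argument simply relocates the ``look at the picture'' step rather than eliminating it. You are candid about that in your last paragraph, which is the right instinct; the lemma really is a combinatorial fact about six specific words in $F_5$, and the cleanest honest proof is the one you sketch: pick a spanning tree of the $12$-edge graph, write each $a^{\flat}_i$ in the resulting basis of $F_5$, and verify the single cancellation $\prod a^{\flat}_i = 1$ directly. Relative to what you had, that beats both the abstract normal-generation argument (which does not pin down \emph{which} relation holds) and the paper's own non-proof.
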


\begin{proof}
This is obvious from looking at the pictures.
\end{proof}

Given a balloon $(\sigma, \tau)$, its boundary 
$\partial(\sigma, \tau)$ is a sequence of $6$ kites, and
$\partial \partial(\sigma, \tau)$ is a sequence of $6$ closed strings.

\begin{lem} \label{lem:22} \mbox{}
\begin{enumerate}
\item The fundamental group 
$\bsym{\pi}_1(\opn{sk}_1 \opn{sd}^1 \mbf{I}^3)$ is generated by
the \linebreak sequence of closed strings 
\[ \partial \partial (\opn{tes}^1 \mbf{I}^3)
= \bigl( \partial \partial (\sigma^{k}_1, \tau^{k}_1) , \ldots, 
\partial \partial (\sigma^{k}_{8^k}, \tau^{k}_{8^k}) \bigr) .
\]
This sequence has length $48$. 
There are $20$ relations, and they are of two kinds:
\begin{enumerate}
\item For any two distinct kites 
$(\sigma_{i}, \tau_{i}), (\sigma_{j}, \tau_{j})$
in $\partial \partial (\opn{tes}^1 \mbf{I}^3)$
such that $\tau_i(\mbf{I}^2) = \tau_j(\mbf{I}^2)$, namely for any of
the $12$ interior faces of $\opn{sd}^1 \mbf{I}^3$, there is a relation
\[ [\partial (\sigma_{i}, \tau_{i})] = [g_i] \cdot
[\partial (\sigma_{j}, \tau_{j})^{-1}] \cdot [g_i^{-1}] \]
for some word $g_i$ in the $48$ generators.
\item For any of the $8$ balloons 
$(\sigma^{k}_i, \tau^{k}_i)$
in $\opn{tes}^1 \mbf{I}^3$ there is a
relation as in Lemma \tup{\ref{lem:21}(1)}.
\end{enumerate}
Thus $\bsym{\pi}_1(\opn{sk}_1 \opn{sd}^1 \mbf{I}^3)$ is a free group
of rank $28$.  
\item The homology group $\mrm{H}_2(\opn{sk}_2 \opn{sd}^1 \mbf{I}^3)$
is a free abelian group of rank $8$, with basis the $8$ boundaries of
the $8$ balloons in $\opn{tes}^1 \mbf{I}^3$.
\end{enumerate}
\end{lem}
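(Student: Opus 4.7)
\medskip

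For part (1), the plan is to begin with a direct rank count for the free group $\bsym{\pi}_1(\opn{sk}_1 \opn{sd}^1 \mbf{I}^3)$. The 1-skeleton is a finite connected graph with $3^3 = 27$ vertices (the lattice points $(i/2, j/2, k/2)$, $i,j,k \in \{0,1,2\}$) and $3 \cdot 3 \cdot 3 \cdot 2 = 54$ edges (18 in each axial direction). Since the fundamental group of a finite connected graph is free of rank $\abs{E} - \abs{V} + 1$, we get a free group of rank $54 - 27 + 1 = 28$. Next, I would verify that the 48 closed strings in $\partial \partial (\opn{tes}^1 \mbf{I}^3)$ do generate: choose a spanning tree $T$ of the 1-skeleton, and for each non-tree edge (there are exactly $28$), express the corresponding basis loop as a word in the proposed 48 generators. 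Then I would verify that the 20 stated relations hold. The 12 relations of type (a) arise because, when a $2$-face of $\opn{sd}^1 \mbf{I}^3$ is shared by two distinct small cubes, the two associated kite-boundaries traverse the face in opposite directions, so in the fundamental group they differ by inversion and conjugation by the product of their leash strings, i.e.\ by $g_i = [\sigma_i] \cdot [\sigma_j]^{-1}$ which is expressible in the 48 generators once the type (a) identifications are imposed. The 8 relations of type (b) follow by applying Lemma \ref{lem:21}(1) to each of the 8 balloons in $\opn{tes}^1 \mbf{I}^3$ after moving the base point from $v_0$ to the corresponding balloon vertex along $\sigma^k_i$.

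\medskip

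To complete part (1), I would show that the 20 relations give a complete presentation. The cleanest way is to abelianize: the 48 generators map to $\Z^{48}$, and the 20 relations (which are products/commutators in the free group) produce 20 explicit vectors in $\Z^{48}$. A direct linear algebra check, exploiting the symmetry of the cube and the near-block-diagonal form of the relation matrix (twelve simple ``pairing'' relations coming from shared faces, plus eight ``cube-closure'' relations), shows these 20 vectors are linearly independent, hence the abelianized presentation is $\Z^{28}$. Combined with the surjection from the presented group onto the free group $\bsym{\pi}_1(\opn{sk}_1 \opn{sd}^1 \mbf{I}^3)$ of rank 28, this forces the presentation to be faithful.

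\medskip

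For part (2), I would use cellular homology of the contractible polyhedron $\mbf{I}^3$ with its cellular structure $\opn{sd}^1 \mbf{I}^3$. The cellular chain complex
\[ 0 \to \mrm{C}_3 \xar{\partial_3} \mrm{C}_2 \xar{\partial_2} \mrm{C}_1 \xar{\partial_1} \mrm{C}_0 \to 0 \]
has ranks $8,\, 36,\, 54,\, 27$. Since $\mbf{I}^3$ is contractible, $\mrm{H}_q = 0$ for $q > 0$, so $\partial_3$ is injective (giving $\opn{rank}(\opn{im}\, \partial_3) = 8$) and $\ker(\partial_2) = \opn{im}(\partial_3)$. Now the $2$-skeleton has no $3$-cells, so $\mrm{H}_2(\opn{sk}_2 \opn{sd}^1 \mbf{I}^3) = \ker(\partial_2) = \opn{im}(\partial_3)$, which is free abelian of rank $8$. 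A basis is given by $\partial_3(\tau^k_i)$ for $i = 1, \ldots, 8$, where the $\tau^k_i$ are the 8 small 3-cubes — and these images are precisely the $2$-chains representing the boundaries of the 8 balloons in $\opn{tes}^1 \mbf{I}^3$.

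\medskip

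The main obstacle is the completeness of the presentation in part (1): verifying that exactly the 20 listed relations (and no more) generate the relation subgroup. The abelianization/linear-algebra argument above handles this, but one has to organize the $20 \times 48$ relation matrix cleanly — separating the twelve interior-face pairings from the eight cube-closure relations and exploiting the symmetry between the three axial directions — in order for the independence computation to remain transparent rather than devolving into a brute-force rank calculation.
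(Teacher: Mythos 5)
Your rank count ($27$ vertices, $54$ edges, so a free group of rank $54-27+1=28$), your verification that the listed relations hold, and your part (2) are all sound; in fact your cellular-homology computation of $\mrm{H}_2(\opn{sk}_2 \opn{sd}^1 \mbf{I}^3)$ as $\ker(\partial_2)=\opn{im}(\partial_3)\cong\Z^8$, with basis the images of the eight $3$-cells, is a clean substitute for the Mayer--Vietoris argument the paper invokes. The genuine gap is in the step you yourself single out as the main obstacle: completeness of the presentation in part (1). Your argument is that the presented group $P=\langle\, 48 \text{ generators} \mid 20 \text{ relations}\,\rangle$ has abelianization $\Z^{28}$ and surjects onto the free group $F_{28}=\bsym{\pi}_1(\opn{sk}_1\opn{sd}^1\mbf{I}^3)$, and that this ``forces the presentation to be faithful.'' That inference is false as a matter of group theory: a surjection from a deficiency-$n$ group with torsion-free abelianization of rank $n$ onto $F_n$ need not be injective. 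The Baumslag--Solitar group $\langle a,b \mid aba^{-1}b^{-2}\rangle$ has deficiency $1$ and abelianization $\Z$, and it surjects onto $F_1=\Z$, yet it is not free. Abelianization only shows that the kernel of $P\to F_{28}$ lies in the commutator subgroup; it cannot exclude a nontrivial kernel. (There is also a smaller issue: linear independence of the $20$ relation vectors gives rank $28$ but not torsion-freeness; you would need primitivity of the sublattice they span.) Note that completeness is exactly what is used later, in the proof of Lemma \ref{lem:24}: a word in the $48$ generators that dies in $\bsym{\pi}_1$ must be rewritable as a product of conjugates of the $20$ listed relators, so this step cannot be waved away.

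What has to replace the abelianization shortcut is a topological or combinatorial derivation of the presentation itself. The paper's route is gluing: each of the eight small cubes contributes the generators-and-one-relation presentation of Lemma \ref{lem:21}(1), and the pieces are assembled by the Van Kampen theorem along the shared interior faces, the twelve type (a) relations being precisely the amalgamation data identifying the two readings of each interior face; Mayer--Vietoris then handles $\mrm{H}_2$. An alternative correct argument along your lines would be: $\opn{sk}_2\opn{sd}^1\mbf{I}^3$ is simply connected (attaching the $3$-cells does not change $\bsym{\pi}_1$ and $\mbf{I}^3$ is contractible), so the kernel of the map from the free group on based attaching loops of the $36$ two-cells onto $\bsym{\pi}_1(\opn{sk}_1\opn{sd}^1\mbf{I}^3)$ is the normal closure of those loops, and one then reduces that relator set to the listed $20$ by explicit Tietze moves. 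Either way, some argument that actually produces the relation subgroup is required; the surjection-plus-abelianization count does not do it.
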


\begin{proof}
Use Lemma \ref{lem:21} together with the Van-Campen and Mayer- \lb Vietoris
theorems.
\end{proof}

\subsection{Inert Forms} \label{subsec:inert}
Let $(X, x_0)$ be a pointed polyhedron, and let
\[ \mbf{C} / X = (G, H, \Psi, \Phi_0, \Phi_X) \]
be a Lie quasi crossed module with additive feedback over $(X, x_0)$.
Recall that the additive feedback
$\Phi_X$ is an element of 
$\mcal{O}_{\mrm{pws}}(X) \otimes \opn{Hom}(\h, \g)$, so for any point
$x \in X$ we have a linear function
$\Phi_X(x) : \h \to \g$. 

Suppose $Z \subset X$ is some sub-polyhedron.
As in equation (\ref{eqn:140}), but restricting to $Z$, for every $p$ we have an
$\mcal{O}_{\mrm{pws}}(Z)$-linear homomorphism
\[ \Phi_X|_Z : \Omega^p_{\mrm{pws}}(Z) \otimes \h \to 
\Omega^p_{\mrm{pws}}(Z) \otimes \g . \]
Note that for $p = 0$, an element 
\[ f \in \Omega^0_{\mrm{pws}}(Z) \otimes \h   = 
\mcal{O}_{\mrm{pws}}(Z) \otimes \h \]
is a piecewise smooth function $f : Z \to \h$; and in this case
\[ \Phi_X|_Z(f)(z) = \Phi_X(z)(f(z)) \in \g \]
for any point $z \in Z$. 

\begin{dfn} Let $Z \subset X$ be a subpolyhedron.
\begin{enumerate}
\item A function $f \in \mcal{O}_{\mrm{pws}}(Z) \otimes \h$ is called an {\em
inert function} (relative to $\mbf{C} / X$) if $\Phi_X|_Z(f) = 0$.

\item A form $\gamma \in  \Omega^p_{\mrm{pws}}(Z) \otimes \h$ is called an 
{\em inert $p$-form} \index{Inert form}
(relative to $\mbf{C} / X$) if $\Phi_X|_Z(\gamma) = 0$.
\end{enumerate}
\end{dfn}

Let $\gamma \in \Omega^p_{\mrm{pws}}(X) \otimes \h$.
Choose some linear coordinate system 
$\bsym{s} = (s_1, \ldots, s_n)$ on $X$. Also choose a smoothing triangulation
$\{ X_j \}_{j \in J}$ for $\gamma$; so that 
$\gamma|_{X_j} \in \Omega^p(X_j) \otimes \h$ for every $j$. See Subsection 
\ref{subsec:pws} for details. For each index $j$ let 
$f_{j, \bsym{i}} \in \mcal{O}(X_j) \otimes \h$
be the coefficients of $\gamma|_{X_j}$ relative to $\bsym{s}$, in the sense of
Definition \ref{dfn:27}. Namely
\begin{equation} \label{eqn:222}
\gamma|_{X_j} = \sum_{\bsym{i}} \, f_{j, \bsym{i}} \cdot 
\d s_{i_1} \wedge \cdots \wedge \d s_{i_p} \in 
\Omega^p(X_j) \otimes \h ,
\end{equation}
where $\bsym{i} = (i_1, \ldots, i_p)$ runs over the set of strictly increasing
multi-indices in \lb $\{ 1, \ldots, n \}^p$.

\begin{lem} \label{lem:47}
In the situation above, the following are equivalent:
\begin{enumerate}
\rmitem{i} The form $\gamma \in \Omega^p_{\mrm{pws}}(X) \otimes \h$ is inert.
\rmitem{iii} The functions  
$f_{j, \bsym{i}} \in \mcal{O}(X_j) \otimes \h$ are all inert.
\end{enumerate}
\end{lem}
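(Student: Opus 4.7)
The plan is to reduce the question to a purely pointwise (actually $\mcal{O}$-linear) statement by exploiting two facts that are already in place: first, the map $\Phi_X|_Z$ is $\mcal{O}_{\mrm{pws}}(Z)$-linear for every subpolyhedron $Z$ (it is obtained by tensoring the $\R$-linear family $\Phi_X$ with the identity of $\Omega_{\mrm{pws}}(Z)$), and second, for each $n$-dimensional simplex $X_j$ the wedge products $\d s_{i_1} \wedge \cdots \wedge \d s_{i_p}$ appearing in (\ref{eqn:222}) form a free $\mcal{O}(X_j)$-basis of $\Omega^p(X_j)$.

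First I would note that a piecewise smooth form $\gamma$ vanishes if and only if its restriction $\gamma|_{X_j}$ vanishes for every simplex $X_j$ in a smoothing triangulation; this is immediate from the definition of $\Omega_{\mrm{pws}}(X;T)$ as a compatible collection $\{\alpha_j\}$. Applying this criterion to $\Phi_X|_X(\gamma)$, I see that $\gamma$ is inert if and only if $\Phi_X|_{X_j}(\gamma|_{X_j}) = 0$ in $\Omega^p(X_j) \otimes \g$ for every $j \in J$.

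Next I would apply $\Phi_X|_{X_j}$ to the expansion (\ref{eqn:222}). Using the $\mcal{O}(X_j)$-linearity, the right-hand side transforms to
\[ \Phi_X|_{X_j}(\gamma|_{X_j}) = \sum_{\bsym{i}} \Phi_X|_{X_j}(f_{j,\bsym{i}}) \cdot \d s_{i_1} \wedge \cdots \wedge \d s_{i_p} \in \Omega^p(X_j) \otimes \g. \]
Since $\{\d s_{i_1} \wedge \cdots \wedge \d s_{i_p}\}$ is a free $\mcal{O}(X_j)$-basis of $\Omega^p(X_j)$, and tensoring with the $\R$-vector space $\g$ preserves this freeness (so that the wedges remain an $\mcal{O}(X_j)$-basis of $\Omega^p(X_j) \otimes \g$ in the appropriate graded sense), the vanishing of this sum is equivalent to the coefficient-wise vanishing $\Phi_X|_{X_j}(f_{j,\bsym{i}}) = 0$ for every multi-index $\bsym{i}$. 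This is exactly the statement that each $f_{j,\bsym{i}}$ is inert.

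Combining these two equivalences yields (i) $\iff$ (ii). There is no genuine obstacle here: the proof is a bookkeeping exercise in $\mcal{O}$-linearity and basis freeness, and the only thing to be careful about is the implicit identification used when one restricts to each simplex $X_j$ of the smoothing triangulation, which is harmless since being inert is defined simplex-by-simplex through the definition of $\Omega^p_{\mrm{pws}}$.
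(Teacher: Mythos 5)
The paper offers no argument here (it says ``We omit the easy proof''), and your proposal supplies exactly the intended bookkeeping: check inertness simplex by simplex on the smoothing triangulation, use the $\mcal{O}$-linearity of $\Phi_X|_{X_j}$ to push it through the expansion (\ref{eqn:222}), and conclude by freeness of the wedge basis. The only micro-point to keep in mind is that $\Phi_X|_{X_j}(f_{j,\bsym{i}})$ is a priori only piecewise smooth (since $\Phi_X$ is), so the coefficient-wise vanishing step needs linear independence of the wedges over $\mcal{O}_{\mrm{pws}}(X_j)$ rather than $\mcal{O}(X_j)$ -- immediate by passing to a common refinement and using density of the top-dimensional cells -- which your freeness remark covers in substance.
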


We omit the easy proof. 

\begin{dfn}
The closed subgroup
\[ H_0 := \opn{Ker}(\Phi_0) \subset H \]
is called the {\em inertia subgroup} (relative to $\mbf{C} / X$).
Its Lie algebra
\[ \h_0 := \opn{Lie}(H_0) = \opn{Ker}(\opn{Lie}(\Phi_0)) \]
is called the {\em inertia subalgebra}.
\end{dfn}

\begin{prop}
The subgroup $H_0$ is central in $H$, and the subalgebra $\h_0$ is central
in $\h$.
\end{prop}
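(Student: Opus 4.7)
The plan is to derive both statements directly from the Pfeiffer-type condition ($*$) of Definition \ref{dfn:22}, namely $\Psi \circ \Phi_0 = \opn{Ad}_H$, by functoriality of the exponential and differentiation at the identity.

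For the group statement, I would take any $h_0 \in H_0 = \opn{Ker}(\Phi_0)$. Then $\Phi_0(h_0) = 1_G$, so applying $\Psi$ gives $\Psi(\Phi_0(h_0)) = \Psi(1_G) = \opn{id}_H$, the identity automorphism of the pointed manifold $H$. But condition ($*$) says this equals $\opn{Ad}_H(h_0)$; that is, conjugation by $h_0$ is the identity map on $H$. Thus $h_0 \cdot h \cdot h_0^{-1} = h$ for every $h \in H$, proving $h_0$ lies in the center of $H$.

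For the Lie algebra statement, I would differentiate the identity $\Psi \circ \Phi_0 = \opn{Ad}_H$ at $1_H$. By functoriality of $\opn{Lie}(-)$, this yields
\[
\opn{Lie}(\Psi) \circ \opn{Lie}(\Phi_0) = \opn{Lie}(\opn{Ad}_H) = \opn{ad}_{\h}
\]
as maps $\h \to \opn{End}(\h)$. Now for $X \in \h_0 = \opn{Ker}(\opn{Lie}(\Phi_0))$, the left side vanishes on $X$, so $\opn{ad}_{\h}(X) = 0$. This means $[X, Y] = 0$ for every $Y \in \h$, so $X$ lies in the center of $\h$.

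The only minor subtlety is the first step: one must note that $\Psi$ is regarded as a group homomorphism $G \to \opn{Aut}(H,1)$ (where $\opn{Aut}(H,1)$ consists of automorphisms of pointed analytic manifolds), and that $\opn{Lie}(\Psi)$ makes sense as the induced linear action $\Psi_{\h}: G \to \opn{GL}(\h)$ differentiated at $1_G$, whose derivative is a Lie algebra map $\g \to \opn{End}(\h)$. Neither step presents any obstacle; the proposition is an immediate consequence of the axiom ($*$).
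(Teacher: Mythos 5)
Your proof of the group statement (that $H_0 \subset \opn{Z}(H)$) is exactly the argument the paper uses: apply $\Psi$ to $\Phi_0(h_0)=1$ and invoke the Pfeiffer condition ($*$) to conclude $\opn{Ad}_H(h_0)=\opn{id}_H$. For the Lie algebra statement the paper takes a slightly different route: rather than differentiating the Pfeiffer condition directly, it deduces $\h_0 \subset \opn{Lie}(\opn{Z}(H)) \subset \opn{Z}(\h)$ from the group-level inclusion already established, adding a caveat that these inclusions need not be equalities when $H$ is disconnected. Your approach instead differentiates $\Psi \circ \Phi_0 = \opn{Ad}_H$ at the identity. To make this fully rigorous one should note (as you partly did) that $\opn{Aut}(H,1)$ is not a Lie group, so the differentiation is really done in two steps: first take the differential of each automorphism at $1 \in H$ to get $\Psi_{\h} \circ \Phi_0 = \opn{Ad}_{\h}$ as Lie group maps $H \to \opn{GL}(\h)$, then apply $\opn{Lie}(-)$ to obtain $\psi_{\h} \circ \opn{Lie}(\Phi_0) = \opn{ad}_{\h}$ on $\h$, after which $X \in \h_0$ gives $\opn{ad}_{\h}(X)=0$. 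This is a valid and somewhat more self-contained derivation that sidesteps the connectedness subtlety the paper flags. Both arguments are correct and equally short; the paper's is arguably cleaner in that it only uses the group-level consequence once, while yours makes the infinitesimal Pfeiffer relation explicit, which is occasionally useful in its own right.
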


\begin{proof}
Take $h \in H_0$, so $\Phi_0(h) = 1$. By the Pfeiffer condition (i.e.\
condition ($*$) of Definition \ref{dfn:22}) we have
\[ \opn{Ad}_H(h) = \Psi(\Phi_0(h)) = \Psi(1) = \opn{id}_H , \]
which says that 
\[ h \in \opn{Ker}(\opn{Ad}_H) = \opn{Z}(H) . \]

Since $H_0 \subset  \opn{Z}(H)$ it follows that 
\[ \h_0 \subset \opn{Lie}(\opn{Z}(H)) \subset \opn{Z}(\h) . \]
(Note that $H$ could be disconnected, in which case 
$\opn{Z}(\h)$ could be bigger than
$\opn{Lie}(\opn{Z}(H))$.)
\end{proof}

Recall the notion of tame connection (Definition \ref{dfn:46}).

\begin{lem} \label{lem:45}
Let $\alpha$ be a tame connection for $\mbf{C} / X$, let $\sigma$ be a closed
string in $X$ based at $x_0$, and let
$g := \opn{MI}(\alpha \vert \sigma) \in G$. Then for any
$\lambda \in \h_0$ one has
\[ \Psi_{\h}(g)(\lambda) = \lambda . \]
\end{lem}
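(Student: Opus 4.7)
The plan is to pull the problem back to the basic kite on $(\mbf{I}^2, v_0)$ via a piecewise linear contraction of $\sigma$, and then apply the Fundamental Relation (Theorem~\ref{thm:16}) together with the centrality of $\h_0$ in $\h$.

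Since $\alpha$ is tame, first fix a form $\beta$ making $(\alpha, \beta)$ a connection-curvature pair. After subdividing so that every piece of $\sigma = \sigma_1 * \cdots * \sigma_m$ is linear, I would reparameterize $\sigma$ as a single piecewise linear map $\tilde\sigma : \mbf{I}^1 \to X$ with $\tilde\sigma(v_0) = \tilde\sigma(v_1) = x_0$. Using the convexity of the polyhedron $X$, define the straight-line contraction
$$F : \mbf{I}^2 \to X, \qquad F(s, t) := (1-t)\, \tilde\sigma(s) + t \cdot x_0.$$
On each rectangular strip of $\mbf{I}^2$ where $\tilde\sigma$ is linear, $F$ is bilinear in $(s,t)$; splitting each such strip along a diagonal produces a triangulation of $\mbf{I}^2$ on which $F$ is linear, so $F$ is piecewise linear in the sense of Subsection~\ref{subsec:pws}. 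By construction, $F$ carries the bottom edge of $\partial \mbf{I}^2$ onto $\tilde\sigma$ and collapses the other three edges of $\partial \mbf{I}^2$ to the single point $x_0$; hence, as strings, $F \circ \partial \mbf{I}^2$ equals $\sigma$ up to constant pieces which contribute trivially to the $1$-dimensional multiplicative integral.

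By Proposition~\ref{prop:11}, the pair $(F^*\alpha, F^*\beta)$ is a connection-curvature pair for $F^*(\mbf{C}/X)$ over $(\mbf{I}^2, v_0)$. Apply the Fundamental Relation (Theorem~\ref{thm:16}) to the basic kite $(\emptyset, \opn{id}_{\mbf{I}^2})$ on $(\mbf{I}^2, v_0)$ for this pair, with the elements
$$g' := \opn{MI}(F^*\alpha \vert \partial \mbf{I}^2) \in G \quad \text{and} \quad h := \opn{MI}(F^*\alpha, F^*\beta \vert \mbf{I}^2) \in H,$$
to conclude $\Psi(g') = \opn{Ad}_H(h)$ as automorphisms of $H$; differentiating at $1$ yields $\Psi_{\h}(g') = \opn{Ad}_{\h}(h)$. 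Proposition~\ref{prop:12}, applied with $\Phi = \opn{id}_G$, identifies $g' = \opn{MI}(\alpha \vert F \circ \partial \mbf{I}^2) = \opn{MI}(\alpha \vert \sigma) = g$.

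Finally, since $\h_0$ lies in the center of $\h$ (as noted just before the lemma), $\opn{Ad}_{\h}(h)(\lambda) = \lambda$ for every $\lambda \in \h_0$, so $\Psi_{\h}(g)(\lambda) = \lambda$, which is the desired conclusion. The only genuinely nonformal step is the construction of the piecewise linear model $F$ that realizes the contractibility of $X$ in the paper's PL category; once this is in place, the argument reduces to a single application of the Fundamental Relation to the pulled-back connection-curvature pair, combined with the centrality of $\h_0$.
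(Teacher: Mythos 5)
Your overall strategy -- fill the closed string by a map of $\mbf{I}^2$ into $X$, pull back the connection-curvature pair, apply the $2$-dimensional Stokes machinery to the basic kite, and finish with the centrality of $\h_0$ -- is exactly the paper's. The gap is in the one step you yourself flag as nonformal: the straight-line contraction $F(s,t) = (1-t)\,\tilde\sigma(s) + t\, x_0$ is \emph{not} piecewise linear, and no subdivision of the strips into triangles makes it so. On a strip where $\tilde\sigma(s) = a + s b$ one has $F(s,t) = a + s b - t(a - x_0) - s t\, b$, and the bilinear term $s t\, b$ is not affine on any $2$-simplex of positive area (restricted to a generic line it is quadratic), unless $b = 0$. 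Consequently $F$ is only piecewise smooth; the pullbacks $F^*(\alpha)$, $F^*(\beta)$ and $F^*(\Phi_X)$ need not lie in the paper's piecewise smooth algebras at all (the preimage of the singular locus of a piecewise smooth form under a bilinear map is a union of quadric arcs, not a subcomplex of a linear triangulation), and Propositions \ref{prop:11} and \ref{prop:12} and Theorem \ref{thm:16}, which are stated for piecewise linear maps and piecewise smooth data over a pointed polyhedron, do not apply as invoked. There is also a smaller elision: you need to know that $\opn{MI}(\alpha \vert \tilde\sigma)$ for your single reparameterized map equals the string integral $\opn{MI}(\alpha \vert \sigma)$, which is not automatic from the definitions unless the pieces are matched with a binary subdivision.

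The repair is exactly what the paper does. Pad $\sigma$ by constant pieces so that it has $2^{k+2}$ pieces (this changes neither $g$ nor the string integral), let $\sigma'$ be the $k$-th subdivision of $\partial \mbf{I}^2$ (also $2^{k+2}$ pieces), and construct a genuinely piecewise linear map $f : \mbf{I}^2 \to X$ with $f(v_0) = x_0$ and $f \circ \sigma' = \sigma$ as strings, by assigning values at the vertices of a coned triangulation of $\mbf{I}^2$ and extending affinely on each triangle (convexity of $X$ is what you need here, not the straight-line homotopy). Then Propositions \ref{prop:12} and \ref{prop:5}(3) give $g = \opn{MI}(f^*(\alpha) \vert \sigma') = \opn{MI}(f^*(\alpha) \vert \partial \mbf{I}^2)$, Proposition \ref{prop:11} makes $(f^*(\alpha), f^*(\beta))$ a connection-curvature pair for $f^*(\mbf{C}/X)$, and Theorem \ref{thm:1} yields $g = \Phi_0(h)$ with $h = \opn{MI}(f^*(\alpha), f^*(\beta) \vert \mbf{I}^2)$; the Pfeiffer condition then gives $\Psi_{\h}(g) = \opn{Ad}_{\h}(h)$, which is the same conclusion you extract from Theorem \ref{thm:16}, and centrality of $\h_0$ finishes. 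So your ending is the paper's ending; only the construction of the filling map needs to be replaced by a correct piecewise linear one.
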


\begin{proof} Let $m$ be the number of pieces in $\sigma$, and choose $k$ large
enough such that $m \leq 2^{k+2}$. We may append to $\sigma$ a few
copies of the constant map $x_0$ at its end, so that now $\sigma$ has exactly 
$2^{k+2}$ pieces. The group element $g$ is unchanged. 

Denote by $\sigma'$ the closed string in
$\mbf{I}^2$ which is ``$k$-th subdivision'' of the boundary 
$\partial \mbf{I}^2$, based at $v_0$; so $\sigma'$ has 
$2^{k+2}= 4 \cdot 2^k$ pieces. We can construct a piecewise linear map
$f : \mbf{I}^2 \to X$ such that $f(v_0) = x_0$, and 
$f \circ \sigma' = \rho$ as strings. According to 
Propositions \ref{prop:12} and \ref{prop:5}(3) we have
\[ g = \opn{MI}(f^*(\alpha) \vert \sigma')
= \opn{MI}(f^*(\alpha) \vert \partial \mbf{I}^2)  . \]

Let $\beta \in \Omega^2_{\mrm{pws}}(X) \otimes \h$ be such that 
$(\alpha, \beta)$ is a connection-curvature pair for $\mbf{C} / X$.
By Proposition \ref{prop:11} the pair $(f^*(\alpha), f^*(\beta))$ is a
connection-curvature pair in $f^*(\mbf{C} / X)$.
Hence by Theorem \ref{thm:1} we have
$g = \Phi_0(h)$, where
\[ h := \opn{MI}(f^*(\alpha), f^*(\beta) \vert \mbf{I}^2) \in H . \]
Since $\lambda \in \opn{Lie}(\opn{Z}(H))$ it follows that
\[ \Psi_{\h}(g)(\lambda) = \Psi_{\h}(\Phi_0(h))(\lambda) = 
\opn{Ad}_{\h}(h)(\lambda) = \lambda . \]
\end{proof}

\begin{rem}
The lemma above shows that the holonomy group of a tame connection
$\alpha$ at $x_0$ acts trivially on the inertia subalgebra $\h_0$. Hence the
names.
\end{rem}

\begin{lem} \label{lem:46}
Let $\mbf{C} / X$ be a Lie quasi crossed module with additive feedback over
$(X, x_0)$, let $\alpha$ be a tame connection for $\mbf{C} / X$, and
let $Z \subset X$ be a subpolyhedron.
\begin{enumerate}
\item Let $f \in \mcal{O}_{\mrm{pws}}(Z) \otimes \h$ be an inert
function relative to $\mbf{C} / X$.
Then there is a unique function
\[ \Psi_{\h, \alpha}(f) : Z \to \h_0 \]
such that the following condition holds:
\begin{itemize}
\item[($*$)] Let $\sigma$ be a string in $X$, with initial point $x_0$
and terminal point $z := \sigma(v_1) \in Z$, and let
$g := \opn{MI}(\alpha \vert \sigma) \in G$. 
Then 
\[ \Psi_{\h, \alpha}(f)(z) = \Psi_{\h}(g)(f(z)) . \]
\end{itemize}
\item The operation $\Psi_{\h, \alpha}$
is linear over the ring $\mcal{O}_{\mrm{pws}}(Z)$.
\item The function $\Psi_{\h, \alpha}(f)$ is continuous.
\end{enumerate}
\end{lem}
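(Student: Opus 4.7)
Proof proposal for Lemma \ref{lem:46}:

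The plan is to define $\Psi_{\h, \alpha}(f)$ pointwise. For each $z \in Z$, since $X$ is (the image of) a convex polyhedron, I can pick a piecewise linear string $\sigma$ in $X$ from $x_0$ to $z$ (e.g.\ a straight line segment), set $g := \opn{MI}(\alpha \vert \sigma) \in G$, and tentatively define $\Psi_{\h, \alpha}(f)(z) := \Psi_{\h}(g)(f(z)) \in \h$. The first task is to verify that this element actually lies in $\h_0$. Here I would invoke the holonomy condition for the compatible connection $\alpha$ (Definition \ref{dfn:8}): the diagram
\[ \Phi_X(x_0) \circ \Psi_{\h}(g) = \opn{Ad}_{\g}(g) \circ \Phi_X(z) \]
commutes. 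Since $f$ is inert, $\Phi_X(z)(f(z)) = 0$, so
\[ \opn{Lie}(\Phi_0)\bigl( \Psi_{\h}(g)(f(z)) \bigr) = \Phi_X(x_0)\bigl( \Psi_{\h}(g)(f(z)) \bigr) = \opn{Ad}_{\g}(g)(0) = 0 , \]
showing that $\Psi_{\h}(g)(f(z)) \in \opn{Ker}(\opn{Lie}(\Phi_0)) = \h_0$.

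The second and main task is independence of the chosen string $\sigma$. Given two such strings $\sigma_1, \sigma_2$ from $x_0$ to $z$, put $g_i := \opn{MI}(\alpha \vert \sigma_i)$ and $\lambda_i := \Psi_{\h}(g_i)(f(z)) \in \h_0$. The concatenation $\rho := \sigma_1 \ast \sigma_2^{-1}$ is a closed string in $X$ based at $x_0$, with $\opn{MI}(\alpha \vert \rho) = g_1 \cdot g_2^{-1}$ by Proposition \ref{prop:1}. Here is where tameness of $\alpha$ enters: Lemma \ref{lem:45} (which in turn rests on the $2$-dimensional Stokes Theorem \ref{thm:1}) tells us that $\Psi_{\h}(g_1 g_2^{-1})$ acts as the identity on $\h_0$. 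Applying this to $\lambda_2 \in \h_0$ gives
\[ \lambda_2 = \Psi_{\h}(g_1 g_2^{-1})(\lambda_2) = \Psi_{\h}(g_1) \bigl( \Psi_{\h}(g_2^{-1})(\Psi_{\h}(g_2)(f(z))) \bigr) = \Psi_{\h}(g_1)(f(z)) = \lambda_1 , \]
establishing that $\Psi_{\h, \alpha}(f)(z)$ is well defined and that condition ($*$) characterizes it uniquely. This verifies part (1), and I expect this step to be the conceptual crux: the whole point is that inertness of the integrand (which lets us land in $\h_0$) combined with triviality of the holonomy on $\h_0$ (Lemma \ref{lem:45}) exactly balances the freedom in choosing a path.

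Part (2) is then formal: for any point $z$ and choice of string $\sigma$, the map $\Psi_{\h}(g) : \h \to \h_0$ obtained by restriction is $\R$-linear, so the pointwise formula immediately yields $\mcal{O}_{\mrm{pws}}(Z)$-linearity. For part (3), fix $z_0 \in Z$ and a string $\sigma_0$ from $x_0$ to $z_0$, and for $z$ in a small neighborhood of $z_0$ in $Z$ let $\pi_z$ be the straight line segment from $z_0$ to $z$ (which lives in the convex polyhedron $Z$). Then $\sigma_0 \ast \pi_z$ is a string from $x_0$ to $z$, so
\[ \Psi_{\h, \alpha}(f)(z) = \Psi_{\h}(g_0) \bigl( \Psi_{\h}(g_z)(f(z)) \bigr) , \]
where $g_0 := \opn{MI}(\alpha \vert \sigma_0)$ and $g_z := \opn{MI}(\alpha \vert \pi_z)$. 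As $z \to z_0$, we have $\opn{len}(\pi_z) \to 0$, so Proposition \ref{prop:1}(3) (or Proposition \ref{prop:21}(i)) gives $g_z \to 1$ in $G$, hence $\Psi_{\h}(g_z) \to \bsym{1}$ in $\opn{End}(\h)$. Combined with continuity of the piecewise smooth function $f$ and continuity of $\Psi_{\h}$, this yields $\Psi_{\h, \alpha}(f)(z) \to \Psi_{\h, \alpha}(f)(z_0)$, proving continuity at $z_0$.
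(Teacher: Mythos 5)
Your proposal is correct and follows essentially the same route as the paper's proof: membership in $\h_0$ via the holonomy condition ($\opn{Lie}(\Phi_0) = \Phi_X(x_0)$ together with inertness of $f$), independence of the string via Lemma \ref{lem:45} applied to the closed string $\sigma_1 * \sigma_2^{-1}$, pointwise linearity for part (2), and continuity via a short linear segment from $z_0$ to $z$ with $\Psi_{\h}(g_z) \to \bsym{1}$ by Proposition \ref{prop:21}. No gaps to report.
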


\begin{proof}
Take a point $z \in Z$. Choose any string $\sigma$ connecting $x_0$ to $z$,
and let 
\[ \lambda  := \Psi_{\h}(g)(f(z)) \in \h \]
as in condition ($*$). Since $\alpha$ is a compatible connection 
(see Definition \ref{dfn:8}), and since $f$ is inert, we know that 
\[ \opn{Lie}(\Phi_0)(\lambda) = \Phi_X(x_0)(\lambda) = 
\opn{Ad}_{\g}(g)(\Phi_X(z)(f(z))) = 0 . \]
This shows that $\lambda \in \h_0$. 

If we were to choose another string $\sigma'$ with the same initial and terminal
points, then for 
$g' := \opn{MI}(\alpha \vert \sigma')$ we would have
\[ (\Psi_{\h}(g') \circ \Psi_{\h}(g)^{-1})(\lambda) = \lambda , \]
this according to Lemma \ref{lem:45}.
Hence $\lambda =  \Psi_{\h}(g')(f(z))$. 
We see that $\lambda$ is independent of the string $\sigma$, and 
we can define 
$\Psi_{\h, \alpha}(f)(z) := \lambda$.
We obtain a function $\Psi_{\h, \alpha}(f) : Z \to \h_0$.
This proves part (1). 

Part (2) is true because the operator $\Psi_{\h}(g) : \h \to \h$ is linear.

It remains to prove part (3). We need to prove that the function \lb 
$\Psi_{\h, \alpha}(f) : Z \to \h_0$ is continuous. So let's fix a point
$z_0 \in Z$, and a string $\sigma$ starting at $x_0$ and ending at $z_0$. Let
$g := \opn{MI}(\alpha \vert \sigma)$.
For any point $z \in Z$ let $\sigma_z : \mbf{I}^1 \to Z$ be the unique linear
map with initial point $z_0$ and terminal point $z$, and let
$g_z := \opn{MI}(\alpha \vert \sigma_z)$. So
\[ \Psi_{\h, \alpha}(f)(z) = \Psi_{\h}(g \cdot g_z)(f(z)) 
=  (\Psi_{\h}(g) \circ \Psi_{\h}(g_z))(f(z)) . \]
But according to Proposition \ref{prop:21} we know that 
$\Psi_{\h}(g_z) \to \bsym{1} \in \opn{End}(\h)$ as $z \to z_0$;
and $f(z) \to f(z_0)$ by continuity of $f$. 
Therefore 
\[ \Psi_{\h, \alpha}(f)(z) \to \Psi_{\h, \alpha}(f)(z_0) \] 
as $z \to z_0$.
\end{proof}

To summarize, the lemma says that there is an 
$\mcal{O}_{\mrm{pws}}(Z)$-linear homomorphism
\begin{equation} \label{eqn:224}
\Psi_{\h, \alpha} : \{ \text{inert functions on } Z \} 
\to \mcal{O}_{\mrm{cont}}(Z) \otimes \h_0 ,
\end{equation}
where $\mcal{O}_{\mrm{cont}}(Z)$ is the ring of continuous functions 
$Z \to \R$. 

Recall the module $\Omega^p_{\mrm{pwc}}(X)$ of piecewise continuous
differential forms, from Subsection \ref{subsec:pwc}.

\begin{prop} \label{prop:18}
Let $\alpha$ be a tame connection, and let
$\gamma \in \Omega^p_{\mrm{pws}}(X) \otimes \h$ be an inert form.
Then there is a unique piecewise continuous form
\[ \Psi_{\h, \alpha}(\gamma) \in \Omega^p_{\mrm{pwc}}(X) \otimes \h_0 \]
with the following property:
\begin{enumerate}
\item[($*$)] Choose a linear coordinate system 
$\bsym{s} = (s_1, \ldots, s_n)$ on $X$, and a smoothing
triangulation $\{ X_j \}_{j \in J}$ for $\gamma$. 
Let $f_{j, \bsym{i}} \in \mcal{O}(X_j) \otimes \h$
be the coefficients of $\gamma|_{X_j}$, as in \tup{(\ref{eqn:222})}.
Then
\[ \Psi_{\h, \alpha}(\gamma)|_{X_j} = 
\sum_{\bsym{i}} \, \Psi_{\h, \alpha}(f_{j, \bsym{i}}) \cdot 
\d s_{i_1} \wedge \cdots \wedge \d s_{i_p} . \]
\end{enumerate}
\end{prop}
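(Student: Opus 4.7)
The plan is to construct $\Psi_{\h,\alpha}(\gamma)$ one simplex at a time using the already-established pointwise machinery, glue the pieces into a global piecewise continuous form, and then verify that the result is independent of the auxiliary choices made. The main content sits in the gluing step; the rest is bookkeeping.

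To begin, I would fix a linear coordinate system $\bsym{s} = (s_1,\dots,s_n)$ on $X$ and a smoothing triangulation $T = \{X_j\}_{j \in J}$ for $\gamma$, and consider the expansion in (\ref{eqn:222}). Lemma \ref{lem:47} says that every coefficient $f_{j,\bsym{i}} \in \mcal{O}(X_j) \otimes \h$ is inert, so Lemma \ref{lem:46} produces a continuous $\h_0$-valued function $\Psi_{\h,\alpha}(f_{j,\bsym{i}}) \in \mcal{O}_{\mrm{cont}}(X_j) \otimes \h_0$. The candidate local form $\Psi_{\h,\alpha}(\gamma)|_{X_j}$ is then forced by the formula in ($*$).

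The heart of the proof is the gluing: for a face inclusion $X_j \subset X_k$ one has $f_{j,\bsym{i}} = f_{k,\bsym{i}}|_{X_j}$, and the defining property ($*$) in Lemma \ref{lem:46}(1) is phrased purely in terms of strings in $X$ from the base point $x_0$ to the point $z$, so its value at $z \in X_j$ does not depend on whether we regard $z$ as a point of $X_j$ or of $X_k$. The uniqueness part of that lemma therefore gives $\Psi_{\h,\alpha}(f_{k,\bsym{i}})|_{X_j} = \Psi_{\h,\alpha}(f_{j,\bsym{i}})$, so the family $\{\Psi_{\h,\alpha}(f_{j,\bsym{i}})\}_{j \in J}$ is compatible and patches into a globally continuous function $F_{\bsym{i}} \in \mcal{O}_{\mrm{cont}}(X) \otimes \h_0$. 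Setting
\[
\Psi_{\h,\alpha}(\gamma) := \sum_{\bsym{i}} F_{\bsym{i}} \cdot \d s_{i_1} \wedge \cdots \wedge \d s_{i_p}
\]
produces an element of $\Omega^p_{\mrm{pwc}}(X) \otimes \h_0$ satisfying ($*$).

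It remains to verify uniqueness and independence. Uniqueness, once $T$ and $\bsym{s}$ are fixed, is immediate since the products $\d s_{i_1} \wedge \cdots \wedge \d s_{i_p}$ freely generate $\Omega^p_{\mrm{pwc}}(X_j) \otimes \h_0$ over $\mcal{O}_{\mrm{cont}}(X_j)$ (cf.\ (\ref{eqn:234})). Independence of $T$ follows by passing to a common refinement of two smoothing triangulations and invoking the same uniqueness clause of Lemma \ref{lem:46}(1) as above. For independence of $\bsym{s}$, the transition to another orthonormal linear coordinate system is given by a constant orthogonal matrix, so the new coefficients $f'_{j,\bsym{i}}$ are constant $\R$-linear combinations of the old $f_{j,\bsym{i}}$, and these constant combinations pass through $\Psi_{\h,\alpha}$ by the $\mcal{O}_{\mrm{pws}}(X_j)$-linearity statement of Lemma \ref{lem:46}(2); a direct check shows the resulting form is unchanged.

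The construction is essentially forced by Lemmas \ref{lem:46} and \ref{lem:47}, and there is no genuine analytic obstacle. The one point requiring care is to confirm that the pointwise formula ($*$) of Lemma \ref{lem:46} is intrinsically global on $X$ rather than local to a simplex, since this is exactly what supplies the compatibility needed for the coefficient functions to patch into a single continuous function on all of $X$.
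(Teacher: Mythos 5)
Your overall strategy (apply Lemma \ref{lem:47} to see the coefficients are inert, apply Lemma \ref{lem:46} on each simplex, and get uniqueness and independence of choices from the uniqueness of coefficients and the intrinsic, string-based characterization of $\Psi_{\h, \alpha}$) is the same as the paper's, which disposes of the proposition in one sentence. But your ``gluing'' step contains a genuine error. For a face inclusion $X_j \subset X_k$ it is \emph{not} true that $f_{j, \bsym{i}} = f_{k, \bsym{i}}|_{X_j}$: the compatibility condition defining a piecewise smooth form only says that the \emph{pullbacks} of $\gamma|_{X_k}$ and $\gamma|_{X_{k'}}$ to a common face agree, and pulling back to a lower-dimensional face destroys the information carried by the coefficients $f_{k, \bsym{i}}$ whose multi-index $\bsym{i}$ involves directions transverse to that face. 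Concretely, on $X = \mbf{I}^2$ triangulated into two triangles along the diagonal $D = \{ t_1 = t_2 \}$, the collection $\gamma|_{X_1} = \d t_1$, $\gamma|_{X_2} = \d t_2$ is a legitimate piecewise smooth $1$-form (both restrict to $\d t_1|_D$ on $D$), yet its $\d t_1$-coefficient is $1$ on $X_1$ and $0$ on $X_2$. Hence the functions $\Psi_{\h, \alpha}(f_{j, \bsym{i}})$ do \emph{not} patch into a globally continuous $F_{\bsym{i}} \in \mcal{O}_{\mrm{cont}}(X) \otimes \h_0$, and your global formula $\sum_{\bsym{i}} F_{\bsym{i}} \cdot \d s_{i_1} \wedge \cdots \wedge \d s_{i_p}$ does not exist in general; if it did, the same argument (drop $\Psi_{\h,\alpha}$) would show every piecewise smooth form has globally continuous coefficients, which the example refutes. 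The ``intrinsic globality'' of the formula in Lemma \ref{lem:46} ensures that $\Psi_{\h,\alpha}(f_{j,\bsym{i}})$ is independent of auxiliary choices, but it cannot reconcile inputs $f_{j,\bsym{i}}$, $f_{k,\bsym{i}}$ that already disagree on the overlap.

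What the construction actually produces is a continuous-coefficient form on each simplex separately --- which is exactly why the target is $\Omega^p_{\mrm{pwc}}(X) \otimes \h_0$ and not a module of forms with globally continuous coefficients. The point your argument should address instead is membership in $\Omega^p_{\mrm{pwc}}(X) \otimes \h_0$ in the sense of Definition \ref{dfn:40}, i.e.\ exhibiting a presentation as in (\ref{eqn:220}) with globally continuous functions multiplying global piecewise smooth forms. One way to do this is to note that the single $\opn{End}(\h)$-valued function $z \mapsto \Psi_{\h}\bigl( \opn{MI}(\alpha \vert \sigma_z) \bigr)$, with $\sigma_z$ the straight segment from $x_0$ to $z$ (available since $X$ is convex), is continuous on all of $X$ (by the argument of Lemma \ref{lem:46}(3), using Corollary \ref{cor:7}), and that applying it to $\gamma \in \Omega^p_{\mrm{pws}}(X) \otimes \h$ yields an element of $\Omega^p_{\mrm{pwc}}(X) \otimes \h$ whose restriction to each $X_j$ has the coefficients $\Psi_{\h, \alpha}(f_{j, \bsym{i}})$ required by ($*$), with values in $\h_0$ by Lemma \ref{lem:45}. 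With some such global presentation in place, your uniqueness and independence-of-choices arguments go through as written.
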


Note that by Lemma \ref{lem:47} the functions $f_{j, \bsym{i}}$ are inert, 
so the continuous functions
$\Psi_{\h, \alpha}(f_{j, \bsym{i}}) \in \mcal{O}_{\mrm{cont}}(X_j)$ are defined.

\begin{proof}
This is immediate from the uniqueness of the coefficients
$f_{j, \bsym{i}}$, and the properties of the homomorphism
$\Psi_{\h, \alpha}$ listed in Lemma \ref{lem:46}.
\end{proof}

\begin{rem}
Actually a lot more can be said here. Presumably, one can show that 
\[ \opn{Ker}(\Phi_X) \subset \mcal{O}_{\mrm{pws}}(X) \otimes \h  \]
is the set of piecewise smooth sections of a piecewise smooth
vector bundle $E$ over $X$, which is a sub-bundle of the
trivial vector bundle $X \times \h$. 
And the operation $\Psi_{\h, \alpha}$ corresponds to a piecewise smooth
isomorphism of vector bundles
\[ E \iso  X \times \h_0 . \]

If so, then it would follow that for any inert form 
$\gamma \in \Omega^p_{\mrm{pws}}(Z) \otimes \h$, the form 
$\Psi_{\h, \alpha}(\gamma)$ is actually piecewise smooth (not just piecewise
continuous). And we would have an $\mcal{O}_{\mrm{pws}}(X)$-linear bijection
\[ \Psi_{\h, \alpha} : \{ \text{inert $p$-forms on } Z \} 
\to \Omega^p_{\mrm{pws}}(Z) \otimes \h_0 . \]
However we did not verify these assertions.
\end{rem}

Suppose we are given a piecewise continuous form
$\delta \in \Omega^p_{\mrm{pwc}}(X) \otimes \h$
and a piecewise linear map $\tau : \mbf{I}^p \to X$.
Extending the formula (\ref{eqn:221}) linearly to $\h_0$-valued forms we obtain
$\int_{\tau} \delta \in \h_0$.

\begin{dfn}[Integration of Inert Forms] \label{dfn:43}
\index{Twisted multiplicative integral of inert form}
Let  $\mbf{C} / X$ be a Lie quasi crossed module with additive feedback
over $(X, x_0)$. Given a tame connection $\alpha$, an inert form
$\gamma \in \Omega^p_{\mrm{pws}}(X) \otimes \h$ and a piecewise linear map
$\tau : \mbf{I}^p \to X$, we define the {\em twisted multiplicative integral}
\[ \opn{MI}(\alpha, \gamma \vert \tau) \in H_0 \] 
as follows: 
\[ \opn{MI}(\alpha, \gamma \vert \tau) :=
\exp_H \Bigl( \int_{\tau} \Psi_{\h, \alpha}(\gamma) \Bigr) . \]
\end{dfn}

\begin{prop} \label{prop:20}
Let $f : Y \to X$ and $\tau : \mbf{I}^p \to Y$ be piecewise linear maps
between polyhedra, let 
$\alpha \in \Omega^1_{\mrm{pws}}(X) \otimes \g$ 
be a tame connection, and let
$\gamma \in \Omega^p_{\mrm{pws}}(X) \otimes \h$ be an inert form. Then 
$f^*(\alpha) \in \Omega^1_{\mrm{pws}}(Y) \otimes \g$ 
is a tame connection, 
$f^*(\gamma) \in \Omega^p_{\mrm{pws}}(Y) \otimes \h$ is an inert form, and
\[ \opn{MI}(\alpha, \gamma \vert f \circ \tau) = 
\opn{MI} \bigl( f^*(\alpha), f^*(\gamma) \vert \tau \bigl) . \]
\end{prop}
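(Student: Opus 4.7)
The plan is to reduce the proposition to a naturality statement about $\Psi_{\h,\alpha}$ under pullback, and then verify this naturality pointwise.

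First, I would dispose of the tameness and inertness assertions. Tameness of $f^*(\alpha)$ for $f^*(\mbf{C}/X)$ is immediate from Corollary \ref{cor:8}: pick $\beta$ such that $(\alpha, \beta)$ is a connection-curvature pair for $\mbf{C}/X$; then $(f^*\alpha, f^*\beta)$ is one for $f^*(\mbf{C}/X)$ by Proposition \ref{prop:11}(1). For inertness: the additive feedback of $f^*(\mbf{C}/X)$ is $f^*(\Phi_X)$ by (\ref{eqn:240}), and since pullback is compatible with the $\opn{Hom}(\h, \g)$ tensor structure, $f^*(\Phi_X)(f^*\gamma) = f^*(\Phi_X(\gamma)) = 0$.

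Next I would reduce the integral identity to an identity of piecewise continuous forms. By Definition \ref{dfn:43} and equation (\ref{eqn:221}),
\[
\opn{MI}(\alpha, \gamma \vert f \circ \tau) = \exp_H\Bigl(\int_{\mbf{I}^p} \tau^*\bigl(f^*\Psi_{\h,\alpha}(\gamma)\bigr)\Bigr),
\]
and likewise $\opn{MI}(f^*\alpha, f^*\gamma \vert \tau) = \exp_H\bigl(\int_{\mbf{I}^p}\tau^*\Psi_{\h,f^*\alpha}(f^*\gamma)\bigr)$. So it is enough to establish the equality
\[
f^*\bigl(\Psi_{\h,\alpha}(\gamma)\bigr) \;=\; \Psi_{\h,f^*\alpha}\bigl(f^*\gamma\bigr)
\]
in $\Omega^p_{\mrm{pwc}}(Y) \otimes \h_0$.

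For the pointwise comparison I would work at a generic point $y \in Y$ that is smooth for $f$, for $f^*\gamma$, and whose image $z := f(y)$ is smooth for $\gamma$; such points form a dense open subset, so continuity of both sides (Proposition \ref{prop:18}, Lemma \ref{lem:46}(3)) gives equality everywhere. Choose a string $\sigma$ in $Y$ from $y_0$ to $y$, and let $\sigma' := f \circ \sigma$. By Proposition \ref{prop:12} with $\Phi = \opn{id}_G$,
\[
g := \opn{MI}(f^*\alpha \vert \sigma) = \opn{MI}(\alpha \vert \sigma') \in G.
\]
Now unwinding the coefficient description of $\Psi_{\h,\alpha}$ from Proposition \ref{prop:18} (and the pointwise formula of Lemma \ref{lem:46}(1)), the value of $\Psi_{\h,\alpha}(\gamma)$ at $z$ is the constant form obtained by applying $\Psi_\h(g)$ to the $\h$-tensor factor of the constant form $\gamma(z)$ (Definition \ref{dfn:26}). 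Pulling back by $f$, and using that $\Psi_\h(g)$ acts only on the $\h$ factor while $f^*$ acts only on the $\Omega_{\mrm{const}}$ factor, the two commute, giving
\[
f^*\bigl(\Psi_{\h,\alpha}(\gamma)\bigr)(y) = \Psi_\h(g)\bigl(f^*(\gamma(z))\bigr) = \Psi_\h(g)\bigl((f^*\gamma)(y)\bigr),
\]
which is precisely $\Psi_{\h,f^*\alpha}(f^*\gamma)(y)$ by the same lemma.

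The only real obstacle is bookkeeping: Proposition \ref{prop:18} defines $\Psi_{\h,\alpha}(\gamma)$ through coefficients in a linear coordinate system on $X$, whereas $Y$ carries its own coordinate system, and pullback of coefficients mixes them. To make the pointwise calculation rigorous I would first record the coordinate-free reformulation, namely that at a smooth point $z$ the form $\Psi_{\h,\alpha}(\gamma)(z)$ equals the image of the constant form $\gamma(z) \in \Omega^p_{\mrm{const}} \otimes \h$ under the operator $\opn{id} \otimes \Psi_\h(g)$, with inertness of $\gamma$ ensuring the result lies in $\Omega^p_{\mrm{const}} \otimes \h_0$. Once this reformulation is in place, the commutation of $f^*$ (a tensor operation on the $\Omega_{\mrm{const}}$ factor) with $\Psi_\h(g)$ (acting on the $\h$ factor) is automatic, and the proof concludes.
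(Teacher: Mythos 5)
Your overall route is the same as the paper's: reduce everything to the naturality identity $f^*\bigl(\Psi_{\h,\alpha}(\gamma)\bigr)=\Psi_{\h,f^*\alpha}\bigl(f^*(\gamma)\bigr)$ together with the fact that integration of piecewise continuous forms commutes with pullback, with tameness coming from Corollary \ref{cor:8} and inertness from $f^*(\Phi_X)(f^*(\gamma))=f^*(\Phi_X(\gamma))=0$. The paper simply asserts the naturality identity (``easy to see directly from the definitions''), whereas you attempt to justify it pointwise via Proposition \ref{prop:12} and the observation that $f^*$ and $\Psi_{\h}(g)$ act on different tensor factors; that core mechanism is correct and is exactly what makes the identity true.

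The one step that does not survive scrutiny as written is the density claim: you assert that the points $y\in Y$ which are smooth for $f$ and $f^*(\gamma)$ \emph{and} whose image $f(y)$ is a smooth point of $\gamma$ form a dense open subset of $Y$. The paper's notion of smooth point of $\gamma$ requires $f(y)$ to lie in the interior of a top-dimensional simplex of $X$ on which $\gamma$ is smooth, and nothing prevents $f$ from mapping all of $Y$ (or a whole top-dimensional simplex of $Y$) into a lower-dimensional subpolyhedron of $X$, possibly inside the singular locus of $\gamma$; in that case your set of good points is empty and the continuity argument has nothing to propagate. The repair is to argue simplex by simplex: choose a triangulation of $Y$ on which $f$ is linear and $f^*(\alpha),f^*(\gamma)$ are smooth, fine enough that each simplex $Y_j$ maps into a simplex $W$ of a smoothing triangulation of $X$, and then apply the pointwise formula of Lemma \ref{lem:46} to the subpolyhedron $W$ and the restricted form $\gamma|_W$ (the lemma is stated for arbitrary subpolyhedra $Z\subset X$, and $\Psi_{\h,\alpha}$ is $\mcal{O}_{\mrm{pws}}$-linear, so restriction is compatible). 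With that adjustment your holonomy comparison $\opn{MI}(f^*(\alpha)\vert\sigma)=\opn{MI}(\alpha\vert f\circ\sigma)$ and the tensor-factor commutation go through verbatim, and the proof is complete.
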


\begin{proof}
The connection $f^*(\alpha)$ is tame for $f^*(\mbf{C} / X)$ by Corollary
\ref{cor:8}. It is easy to see directly from the definitions that $f^*(\gamma)$
is inert, and moreover 
\[ \Psi_{\h, f^*(\alpha)}(f^*(\gamma)) = 
f^*( \Psi_{\h, \alpha}(\gamma)) \in \Omega^p_{\mrm{pwc}}(Y) \otimes \h . \]
And Integration of piecewise continuous forms commutes with pullbacks (see
Subsection \ref{subsec:pwc}). 
\end{proof}

\subsection{Combinatorics and Integration} \label{subsec:combi3d}
In this subsection we work with the pointed polyhedron 
$(X, x_0) := \lb (\mbf{I}^3, v_0)$. We fix a Lie quasi crossed module
with additive feedback
\[ \mbf{C} / \mbf{I}^3 = (G, H, \Psi, \Phi_0, \Phi_X) , \] 
and a piecewise smooth connection-curvature pair 
$(\alpha, \beta)$ in $\mbf{C} / \mbf{I}^3$.
Let us also fix a euclidean norm $\norm{-}$ on $\h$, 
an open set $V_0(H)$ in $H$, and constants $\epsilon_0(H)$ and 
$c_0(H)$ as in Section \ref{sec:expon}.

\begin{lem} \label{lem:23}
Let $(\sigma, \tau)$ be a balloon in $(\mbf{I}^3, v_0)$. Then 
\[ \opn{MI} \bigl( \alpha, \beta \vert \partial (\sigma, \tau) \bigr)
\in H_0 . \]
\end{lem}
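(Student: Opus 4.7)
\medskip

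My plan is to show that $\Phi_0$ annihilates $\opn{MI}(\alpha, \beta \vert \partial(\sigma, \tau))$, which exactly says the element lies in $H_0 = \opn{Ker}(\Phi_0)$. By Definition \ref{dfn:35}, this multiplicative integral is the product $\prod_{i=1}^6 \opn{MI}(\alpha, \beta \vert \partial_i(\sigma, \tau))$, so since $\Phi_0$ is a group homomorphism I can apply the nonabelian Stokes Theorem in dimension $2$ (Theorem \ref{thm:1}) to each factor individually, obtaining
\[ \Phi_0 \bigl( \opn{MI}(\alpha, \beta \vert \partial(\sigma, \tau)) \bigr) = \prod_{i=1}^6 \opn{MI} \bigl( \alpha \vert \partial \partial_i(\sigma, \tau) \bigr) \]
in $G$. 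The task then reduces to proving this product of $1$-dimensional multiplicative integrals along six closed strings in $X$ is trivial.

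Next I would unravel the boundary combinatorics. Writing $\partial_i(\sigma, \tau) = (\sigma, \tau) \circ (\sigma^{\flat}_i, \tau^{\flat}_i)$ and expanding, one gets
\[ \partial \partial_i(\sigma, \tau) = \sigma * \bigl( \tau \circ \partial(\sigma^{\flat}_i, \tau^{\flat}_i) \bigr) * \sigma^{-1} . \]
Setting $g := \opn{MI}(\alpha \vert \sigma) \in G$ and applying Propositions \ref{prop:1} and \ref{prop:12}, the product collapses telescopically to
\[ g \cdot \opn{MI} \Bigl( \tau^*(\alpha) \,\Big\vert\, \prod_{i=1}^6 \partial(\sigma^{\flat}_i, \tau^{\flat}_i) \Bigr) \cdot g^{-1} , \]
where the inner argument is the concatenation (inside $\mbf{I}^3$) of the six closed strings $\partial(\sigma^{\flat}_i, \tau^{\flat}_i)$ based at $v_0$, each patterned on $\opn{sk}_1 \opn{sd}^0 \mbf{I}^3$.

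The final step is to show this inner multiplicative integral is trivial. By Lemma \ref{lem:21}(1), the product $\prod_{i=1}^6 a_i^{\flat} = 1$ in $\bsym{\pi}_1(\opn{sk}_1 \opn{sd}^0 \mbf{I}^3)$, so the concatenated closed string represents the trivial homotopy class. Then the homotopy-invariance of $1$-dimensional MI for strings patterned on a fixed graph, exactly as in Lemma \ref{lem:11}(1) (whose proof rests only on the cancellation argument of Lemma \ref{lem:15} applied to a universal cover, and hence is graph-independent), lets me conclude that this MI equals $1$. Therefore the whole expression equals $g \cdot 1 \cdot g^{-1} = 1$, completing the proof.

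The main obstacle, and the only place where one has to be slightly careful, is step 3: one must verify that Lemma \ref{lem:11}(1) applies to closed strings in the $1$-skeleton of $\mbf{I}^3$ rather than $\mbf{I}^2$. Inspection of its proof shows this is immediate, since the argument invokes Lemma \ref{lem:15} on arbitrary finite graphs and the multiplicativity/inversion properties of MI from Proposition \ref{prop:1}, none of which depend on the ambient dimension.
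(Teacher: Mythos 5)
Your proposal is correct and takes essentially the same route as the paper. The paper's own proof is terser: it applies Theorem \ref{thm:1} once to conclude $\Phi_0(h) = \opn{MI}(\alpha \vert \partial\partial(\sigma,\tau))$ (implicitly using that both sides are products over the six faces), and then invokes Lemma \ref{lem:15} directly --- viewing $\partial\partial\mbf{I}^3$ as a word $w(\rho_1,\dots,\rho_{12})$ in the twelve oriented edges of $\opn{sd}^0\mbf{I}^3$, observing it is nullhomotopic in $\opn{sk}_1\opn{sd}^0\mbf{I}^3$, and hence cancellation-equivalent to the empty word --- so that the $1$-dimensional MI evaluates to $1$ by Proposition \ref{prop:1}. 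Your detour through Lemma \ref{lem:11}(1) and the explicit telescoping of the conjugations by $g = \opn{MI}(\alpha\vert\sigma)$ is valid (and your remark that Lemma \ref{lem:11}(1) transposes verbatim from $\mbf{I}^2$ to $\mbf{I}^3$ because it rests only on Lemma \ref{lem:15} for arbitrary finite graphs is right), but it repackages the same cancellation argument the paper uses more directly.
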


\begin{proof}
Let us write
$h := \opn{MI} \bigl( \alpha, \beta \vert \partial (\sigma, \tau) \bigr) \in H$
and 
$g := \opn{MI} \bigl( \alpha \vert \partial \partial (\sigma, \tau) \bigr) \in
G$.
According to Theorem \ref{thm:1} we have 
$\Phi_0(h) = g$. 

On the other hand, consider the word
$w(\bsym{s}) \in \opn{Wrd}^{\pm 1}(s_1, \ldots, s_{12})$ such that 
\[ \partial \partial \mbf{I}^3 = w(\rho_1, \ldots, \rho_{12}) , \]
where $\rho_1, \ldots, \rho_{12}$ are the oriented $1$-cells of 
$\opn{sd}^0 \mbf{I}^3$. Then 
$[w(\rho_1, \ldots, \rho_{12})] = 1$ in \linebreak
$\bsym{\pi}_1(\opn{sk}_1 \opn{sd}^0 \mbf{I}^3)$. 
By Lemma \ref{lem:15} we know that
$w(\bsym{s}) \sim_{\mrm{can}} 1$ in $\opn{Wrd}^{\pm 1}(\bsym{s})$. 
Now writing $g_i := \opn{MI}(\alpha \vert \rho_i)$
we have
$g = w(g_1, \ldots, g_{12}) = 1$ in the group $G$. 
We see that 
$h \in  \opn{Ker}(\Phi_0) = H_0$.
\end{proof}

\begin{lem} \label{lem:24}
Take $k = 0, 1$. Let 
$\bsym{\rho} = \bigl( (\sigma_i, \tau_i) \bigr)_{i = 1, \ldots, n}$
be a sequence of kites in $(\mbf{I}^{3}, v_0)$, all patterned on 
$\opn{sd}^k \mbf{I}^3$. Write
\[ a_i := [\partial (\sigma_i, \tau_i)] \in 
\bsym{\pi}_1(\opn{sk}_1 \opn{sd}^k \mbf{I}^3) \]
and 
\[ h_i := \opn{MI} (\alpha, \beta \vert \sigma_i, \tau_i) \in H . \]
Let $w(\bsym{s}) \in \opn{Wrd}^{\pm 1}(s_1, \ldots, s_n)$ be a word such that
\[ w(a_1, \ldots, a_n) = 1 \]
in $\bsym{\pi}_1(\opn{sk}_1 \opn{sd}^k \mbf{I}^3)$ and
\[ [ w(\tau_1, \ldots, \tau_n) ] = 0 \]
in $\mrm{H}_2(\opn{sk}_2 \opn{sd}^k \mbf{I}^3)$. Then
\[ w(h_1, \ldots, h_n) = 1 \]
in $H$.
\end{lem}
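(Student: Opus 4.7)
The plan is to adapt the strategy of Theorem \ref{thm:7} to three dimensions, where the new ingredient accommodating the appearance of a nontrivial $\mrm{H}_2$ is the centrality of balloon-boundary multiplicative integrals granted by Lemma \ref{lem:23}. My outline parallels the algebraic structure of $\bsym{\pi}_1$ and $\mrm{H}_2$ given in Lemmas \ref{lem:21} and \ref{lem:22}.

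First I would normalize the kites $(\sigma_i, \tau_i)$ so that each is aligned with a fixed generating sequence of standard kites: for $k = 0$, one of the six kites $(\sigma^\flat_j, \tau^\flat_j)$ of $\partial \mbf{I}^3$, and for $k = 1$, one of the forty-eight kites of $\partial \partial (\opn{tes}^1 \mbf{I}^3)$. This uses Lemmas \ref{lem:14} and \ref{lem:16} (flip and turn), which extend to kites in $(\mbf{I}^3, v_0)$ via the pullback presentation of Proposition \ref{prop:14} together with the functoriality of Proposition \ref{prop:10}, combined with the base-point modifications furnished by Corollary \ref{cor:6}. Each move is bookkept simultaneously on the $\bsym{\pi}_1$ and $H$ sides, rewriting $w(\bsym{s})$ as a word $\tilde{w}$ on the standard generators such that $\tilde{w}(a^\sharp_j) = w(a_1,\ldots,a_n)$ in $\bsym{\pi}_1$ and $\tilde{w}(h^\sharp_j) = w(h_1,\ldots,h_n)$ in $H$, where $a^\sharp_j$ and $h^\sharp_j$ denote the $\bsym{\pi}_1$-class and MI-value of the $j$-th standard kite. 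This parallels Lemma \ref{lem:12}.

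The hypothesis $w(a_1,\ldots,a_n) = 1$ then says $\tilde{w}$ lies in the normal closure of the relations listed in Lemma \ref{lem:21} (for $k=0$) or Lemma \ref{lem:22} (for $k=1$). Write $\tilde{w} = \prod_\alpha c_\alpha r_\alpha^{\epsilon_\alpha} c_\alpha^{-1}$ as a finite product of conjugates of these relations. The twelve ``inner-face'' relations of Lemma \ref{lem:22}(a) compare two kites sharing a $2$-cell but lying in adjacent subcubes; the corresponding identity $h_i = H_i h_j^{-1} H_i^{-1}$ in $H$ follows directly from Lemmas \ref{lem:14}, \ref{lem:16} combined with Corollary \ref{cor:6} applied to the orientation flip and the differing strings, so these relations already evaluate to $1$ in $H$. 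The remaining eight ``balloon'' relations of Lemma \ref{lem:22}(b)---one per subcube---evaluate in $H$ to the balloon multiplicative integral $\opn{MI}(\alpha, \beta \vert \partial (\sigma^1_\nu, \tau^1_\nu))$, which by Lemma \ref{lem:23} lies in the central subgroup $H_0 \subset H$. The case $k = 0$ is analogous, with no type (a) relations and a single type (b) relation.

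Because each balloon-relation value $h^{(r)}$ is central in $H$, each conjugate $c \cdot r^\epsilon \cdot c^{-1}$ evaluates to $(h^{(r)})^{\epsilon}$ independently of $c$. Collecting occurrences yields
\[ w(h_1,\ldots,h_n) = \prod_r \bigl(h^{(r)}\bigr)^{N(r)}, \]
where $r$ ranges over the balloon relations and $N(r) \in \Z$ is the signed count of occurrences of $r$ in the expression of $\tilde{w}$. A direct computation in $C_2$---using that conjugation leaves abelianizations unchanged, and that the type (a) relations contribute the trivial $2$-chain (since their two kites have same image with opposite orientations) while each type (b) relation contributes its balloon $2$-cycle---identifies this signed count with the coefficient of the balloon boundary in $[w(\tau_1,\ldots,\tau_n)] \in \mrm{H}_2(\opn{sk}_2 \opn{sd}^k \mbf{I}^3)$. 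Since Lemmas \ref{lem:21}(2) and \ref{lem:22}(2) identify $\mrm{H}_2$ with the free abelian group on the balloon boundaries, the homology hypothesis $[w(\tau_1,\ldots,\tau_n)] = 0$ forces every $N(r) = 0$, hence $w(h_1,\ldots,h_n) = 1$. The main obstacle I anticipate is the normalization bookkeeping of step one: one must verify that the base-point conjugations produced by Corollary \ref{cor:6}---which, via the Fundamental Relation (Theorem \ref{thm:16}), produce genuine group automorphisms of $H$---match precisely the corresponding conjugations on the $\bsym{\pi}_1$ side, and for $k = 1$ the case enumeration of the $48$ generators and $20$ relations requires patience.
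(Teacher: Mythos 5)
Your proof is correct, and its overall skeleton is the same as the paper's: normalize the given kites against the standard generating kites by flip, turn and base-point conjugation (Lemmas \ref{lem:14}, \ref{lem:16}, Corollary \ref{cor:6}, with functoriality via Propositions \ref{prop:14} and \ref{prop:10}), and then play the presentations of $\bsym{\pi}_1(\opn{sk}_1 \opn{sd}^k \mbf{I}^3)$ from Lemmas \ref{lem:21} and \ref{lem:22} against the homology hypothesis. The difference is at the decisive combinatorial step. The paper forms one word $u$ in the standard generators with $u(a^{\flat}) = 1$, asserts that $u$ is cancellation-equivalent to a \emph{single} conjugate of a power $(t_1 \cdots t_6)^e$ of the product relator, kills $e$ by the homology hypothesis, and concludes $u(h^{\flat}) = 1$ by pure cancellation, never invoking Lemma \ref{lem:23}. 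You instead use only the standard fact that a word trivial in a presented group lies in the normal closure of the relators, i.e.\ is a product of conjugates of them, and you then need two inputs the paper's shortcut bypasses: that the face-matching relators of Lemma \ref{lem:22}(a) evaluate to $1$ in $H$ (flip/turn plus Corollary \ref{cor:6}; the conjugator-independence you flag is indeed supplied by Theorem \ref{thm:16} together with Lemma \ref{lem:11}(1), since $\opn{Ad}_H(g(h))$ depends only on the class $g(a)$ in $\bsym{\pi}_1$), and that the balloon relators evaluate into the central subgroup $H_0$ (Lemma \ref{lem:23}), so the conjugations collapse and the homology hypothesis forces every signed multiplicity $N(r)$ to vanish. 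What your route buys is robustness: a product of several conjugates of a relator with total exponent zero need not be cancellation-trivial, so centrality of the balloon integrals is exactly what bridges the gap between normal-closure membership and the conclusion --- and it explains what Lemma \ref{lem:23} is for, since the paper's written proof never uses it. What the paper's route buys, when its single-conjugate normal form applies, is a purely word-combinatorial conclusion with no appeal to centrality. Your remaining burden --- the normalization bookkeeping and the explicit check of the twelve type (a) relations for $k = 1$ --- is real but routine, and it is the same bookkeeping the paper's first step requires.
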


Observe that by Lemma \ref{lem:20} the chain 
$w(\tau_1, \ldots, \tau_n)$ is a cycle, so we can talk about its homology class.

\begin{proof}
First assume $k = 0$. Let $p_i \in \{ 1, \ldots, 6 \}$, 
$d_i \in \{ 0, 1 \}$ and $e_i \in \{ 0, 1, 2, 3 \}$ be such that
\[ \tau_i = \opn{flip}^{d_i} ( \opn{turn}^{e_i} (\tau^{\flat}_{p_i} )) . \]
as maps $\mbf{I}^2 \to \mbf{I}^3$.
Let
$a^{\flat}_i := [\partial (\sigma^{\flat}_{i}, \tau^{\flat}_{i})]$
and
$h^{\flat}_i := \opn{MI} (\alpha, \beta \vert \sigma^{\flat}_i,
\tau^{\flat}_i)$,
in the notation of Definition \ref{dfn:36}. There are words 
$v_i(\bsym{t}) \in \opn{Wrd}^{\pm 1}(t_1, \ldots, t_6)$
such that 
\[ [\sigma_i] * [\sigma^{\flat}_{p_i}]^{-1} = 
v_i(a^{\flat}_1, \ldots, a^{\flat}_6) \]
in the group 
$\bsym{\pi}_1(\opn{sk}_1 \opn{sd}^0 \mbf{I}^3)$.
According to Corollaries \ref{cor:2} and \ref{cor:6}, repeated, we have
\[ h_i = \opn{Ad}_H \bigl( v_i( h^{\flat}_1, \ldots, h^{\flat}_6 ) \bigr)
\bigl( (h^{\flat}_{p_i})^{(-1)^{d_i}} \bigr) . \]
Define
\[ u_i(\bsym{t}) := \opn{Ad}_{\opn{Wrd}^{\pm 1}(\bsym{t})}( v_i(\bsym{t}))
(y_{p_i}^{(-1)^{d_i}}) \in \opn{Wrd}^{\pm 1}(\bsym{t})  \]
using the conjugation operation from (\ref{eqn:249}). Then
$h_i = u_i(h^{\flat}_1, \ldots, h^{\flat}_6)$ and
$a_i = u_i(a^{\flat}_1, \ldots, a^{\flat}_6)$.

Let 
\[ u(\bsym{t}) := w \bigl( u_1(\bsym{t}), \ldots, u_n(\bsym{t}) \bigr) 
\in \opn{Wrd}^{\pm 1}(\bsym{t}) . \]
We know that
$u(a^{\flat}_1, \ldots, a^{\flat}_6) = 1$ in 
$\bsym{\pi}_1(\opn{sk}_1 \opn{sd}^0 \mbf{I}^3)$.
Hence by Lemmas \ref{lem:21}(1) and \ref{lem:15} we have
\[ u(\bsym{t}) \sim_{\mrm{can}} 
\opn{Ad}_{\opn{Wrd}^{\pm 1}(\bsym{t})} (v(\bsym{t})) \bigl( (y_1 \cdots y_6)^e 
\bigr)
\]
in $\opn{Wrd}^{\pm 1}(\bsym{t})$, for some word $v(\bsym{t})$ and integer $e$. 
Passing to the abelian group \linebreak 
$\mrm{H}_2(\opn{sk}_2 \opn{sd}^0 \mbf{I}^3)$, with additive notation, 
we get
\[ e \cdot [\tau^{\flat}_{1} + \cdots + \tau^{\flat}_{6}] = 
[u(\tau^{\flat}_{1} , \ldots , \tau^{\flat}_{6})] =
[w(\tau_1, \ldots, \tau_n)] = 0 . \]
Using Lemma \ref{lem:21}(2) we conclude that $e = 0$, and hence
$u(\bsym{t}) \sim_{\mrm{can}} 1$ in $\opn{Wrd}^{\pm 1}(\bsym{t})$.
Finally, evaluating in the group $H$ we get
\[ w(h_1, \ldots, h_n) = u(h^{\flat}_1, \ldots, h^{\flat}_6) = 1 . \]

For the case $k = 1$ the proof is the same, only using Lemma \ref{lem:22}
instead of Lemma \ref{lem:21}, and working with the  with words in the  monoid
$\opn{Wrd}^{\pm 1}(t_1, \ldots, t_{48})$ instead of 
$\opn{Wrd}^{\pm 1}(t_1, \ldots, t_{6})$.
\end{proof}

\subsection{Estimates} \label{subsec:est}
We continue with the setup of Subsection \ref{subsec:combi3d}.

Recall the constants $c_{2'}(\alpha, \beta)$ and $\epsilon_{2'}(\alpha, \beta)$
from Proposition \ref{prop:22}. \lb Among other things, these numbers satisfy
$c_{2'}(\alpha, \beta) \geq 1$ and $0 < \epsilon_{2'}(\alpha, \beta) \lb < 1$.

\begin{lem} \label{lem:26}
There are constants $c_6(\alpha, \beta)$ and
$\epsilon_6(\alpha, \beta)$ with these properties:
\begin{enumerate}
\rmitem{i} $c_6(\alpha, \beta) \geq c_{2'}(\alpha, \beta)$ and
\[ 0 < \epsilon_6(\alpha, \beta) \leq \min \bigl(
\smfrac{1}{6} \cdot c_6(\alpha, \beta)^{-1} \cdot \epsilon_0(H) , \
\epsilon_{2'}(\alpha, \beta) \bigr) \, . \]
%.
\rmitem{ii} Suppose $(\sigma, \tau)$ is a cubical balloon
$(\mbf{I}^3, v_0)$
such that 
$\opn{side}(\tau) < \epsilon_6(\alpha, \beta)$ 
and $\opn{len}(\sigma) \leq 6$. Then
\[ \opn{MI} \bigl( \alpha, \beta \vert 
\partial (\sigma, \tau) \bigr) \in V_0(H) \]
and
\[ \Norm{ \log_H \bigl( \opn{MI} \bigl( \alpha, \beta \vert 
\partial (\sigma, \tau) \bigr) \bigr) }
\leq c_6(\alpha, \beta) \cdot \opn{side}(\tau)^3  \, . \]
\end{enumerate}
\end{lem}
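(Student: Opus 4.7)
The plan is to express the boundary MI as a product over the six face kites, approximate each face factor by an exponential of a twisted integral of $\beta$ via Proposition \ref{prop:22}, show that the six twistings agree to leading order, and finally use the abelian Stokes theorem on the $2$-form $\beta$ over the solid cube $\tau(\mbf{I}^3)$ to gain the cubic decay. Set $\epsilon := \opn{side}(\tau)$ and $(\sigma_i, \tau_i) := \partial_i(\sigma, \tau)$ for $i = 1, \ldots, 6$. Then $\opn{side}(\tau_i) = \epsilon$, and the string of $(\sigma_i, \tau_i)$ is $\sigma \ast (\tau \circ \sigma^\flat_i)$, of length at most $\opn{len}(\sigma) + 3 \epsilon \leq 7$. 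This is within the admissible range of Propositions \ref{prop:22}, \ref{prop:21}, \ref{prop:8} and \ref{prop:13} after harmlessly enlarging the $4 \cdot \opn{diam}(X)$ constant appearing there (which, as the paper notes, is rather arbitrary).

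For each $i$, write $g_i := \opn{MI}(\alpha \vert \sigma_i)$ and $g := \opn{MI}(\alpha \vert \sigma)$, so that $g_i = g \cdot g_i'$ with $g_i' := \opn{MI}(\alpha \vert \tau \circ \sigma^\flat_i)$ coming from a string of length at most $3 \epsilon$. Proposition \ref{prop:22} supplies the face approximation
\[ \Norm{ \log_H \bigl( \opn{MI}(\alpha, \beta \vert \sigma_i, \tau_i) \bigr) - \Psi_{\h}(g_i) \bigl( \int_{\tau_i} \beta \bigr) } \leq c_{2'}(\alpha, \beta) \cdot \epsilon^3 , \]
while Proposition \ref{prop:21} gives $\norm{\Psi_{\h}(g_i') - \bsym{1}} = O(\epsilon)$ and Proposition \ref{prop:13} bounds $\norm{\Psi_{\h}(g)}$. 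Combined with $\norm{\int_{\tau_i} \beta} = O(\epsilon^2)$, these let me replace each $\Psi_{\h}(g_i)$ by the common $\Psi_{\h}(g)$ at cost $O(\epsilon^3)$. Proposition \ref{prop:8}(1) gives $\norm{\log_H(\opn{MI}(\alpha, \beta \vert \sigma_i, \tau_i))} \leq c_1(\alpha, \beta) \cdot \epsilon^2$, so once $\epsilon_6$ is chosen small enough the total norm of the six face logarithms is bounded by $\epsilon_0(H)$, and Theorem \ref{thm:6}(ii) simultaneously yields the membership $\opn{MI}(\alpha, \beta \vert \partial(\sigma, \tau)) \in V_0(H)$ and the expansion
\[ \Norm{ \log_H \bigl( \opn{MI}(\alpha, \beta \vert \partial(\sigma, \tau)) \bigr) - \Psi_{\h}(g) \Bigl( \bosum_{i=1}^6 \int_{\tau_i} \beta \Bigr) } = O(\epsilon^3) , \]
the quadratic defect from Theorem \ref{thm:6}(ii) being $O(\epsilon^4)$.

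To finish, the kites $\tau^\flat_i$ in Figure \ref{fig:46} are arranged so that the oriented images $\tau_i(\mbf{I}^2)$ constitute the oriented boundary of the solid cube $\tau(\mbf{I}^3)$; Theorem \ref{thm:15} (abelian Stokes) therefore gives $\sum_{i=1}^6 \int_{\tau_i} \beta = \int_\tau \d (\beta)$, which is bounded in norm by $\epsilon^3 \cdot \norm{\beta}_{\mrm{Sob}}$. This is the source of the cubic power in the conclusion, and the constants $c_6(\alpha, \beta)$ and $\epsilon_6(\alpha, \beta)$ are obtained by gathering the named constants produced by the estimates above. The hard part will be the orientation bookkeeping in Figure \ref{fig:46}: verifying that the six specific choices of $(\sigma^\flat_i, \tau^\flat_i)$ really assemble into the outward-oriented boundary of $\mbf{I}^3$, with no hidden sign corrections. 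Everything else is a routine assembly of estimates from Section \ref{sec.MI} together with Theorem \ref{thm:6}.
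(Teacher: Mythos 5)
Your argument is correct and is essentially the paper's own: the proof given in the paper is a one-line reference to the proof of Lemma \ref{lem:9} with Proposition \ref{prop:22} in place of Proposition \ref{prop:16}, which unwinds to exactly your assembly (per-face comparison with the twisted abelian integral, alignment of the six twistings via Propositions \ref{prop:21} and \ref{prop:13}, the CBH estimates of Theorem \ref{thm:6}(ii), and the abelian Stokes Theorem \ref{thm:15} applied to $\beta$ over the solid cube to gain the cubic power). Your orientation worry is settled by the paper's own Lemma \ref{lem:21}(2): the chain $\sum_{i} \tau^{\flat}_i$ is a $2$-cycle generating $\mrm{H}_2$ of the boundary surface, hence equals the fundamental class of $\partial \mbf{I}^3$ up to sign, and the sign is irrelevant for the norm bound.
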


\begin{proof}
The proof is basically the same as that of Lemma \ref{lem:9}, but using
Proposition \ref{prop:22} instead of Proposition \ref{prop:16}.
\end{proof}

Suppose $(\sigma, \tau)$ is a nondegenerate cubical balloon in
$(\mbf{I}^3, v_0)$. Let us write $\epsilon := \opn{side}(\tau)$; 
$Z := \tau(\mbf{I}^3)$, which is an oriented cube in $\mbf{I}^3$; and 
$z_0 := \tau(\smfrac{1}{2}, \smfrac{1}{2}, \smfrac{1}{2})$, the midpoint of $Z$.
Let $\bsym{s} = (s_1, s_2, s_3)$ be the orthonormal linear coordinate system on
$Z$ such that 
$\tau^*(s_i) = \epsilon \cdot t_i$.

Assume that the forms $\alpha|_Z$ and $\beta|_Z$ are smooth. Let 
$\til{\alpha}_1, \til{\alpha}_2, \til{\alpha}_3 \in 
\mcal{O}(Z) \otimes \g$
be the coefficients of $\alpha|_Z$ relative to $\bsym{s}$, namely 
\[ \alpha|_Z = \sum_{1 \leq i \leq 3} \, \til{\alpha}_i \cdot \d s_i . \]
And let
$\til{\beta}_{1, 2}, \til{\beta}_{1, 3}, \til{\beta}_{2, 3}  \in 
\mcal{O}(Z) \otimes \h$
be the coefficients of $\beta|_Z$ relative to $\bsym{s}$, namely 
\[ \beta|_Z = \sum_{1 \leq i < j \leq 3} \, 
\til{\beta}_{i, j} \cdot \d s_i \wedge \d s_j . \]

Recall the Lie algebra map
\[ \psi_{\h} = \opn{Lie}(\Psi_{\h}) : \g \to \opn{End}(\h) . \]
For any $i$ we get a function
\[ \psi_{\h}(\til{\alpha}_i) \in \mcal{O}(Z) \otimes \opn{End}(\h) ; \]
so for any $i, j, k$ there is a function
\[ \psi_{\h}(\til{\alpha}_i)(\til{\beta}_{j, k}) \in 
\mcal{O}(Z) \otimes \h . \]
Define the function
\begin{equation} \label{eqn:197}
 \begin{aligned}
& \til{\gamma} := 
\smfrac{\partial}{\partial s_1}(\til{\beta}_{2, 3}) 
- \smfrac{\partial}{\partial s_2}(\til{\beta}_{1, 3})
+ \smfrac{\partial}{\partial s_3}(\til{\beta}_{1, 2}) \\
& \qquad \quad + \psi_{\h}(\til{\alpha}_1)(\til{\beta}_{2, 3}) 
- \psi_{\h}(\til{\alpha}_2)(\til{\beta}_{1, 3}) 
+ \psi_{\h}(\til{\alpha}_3)(\til{\beta}_{1, 2})
\in \mcal{O}(Z) \otimes \h .
\end{aligned}
\end{equation}

Let $\sigma_{\mrm{pr}}$ be the following string in $Z$:
\begin{equation} \label{eqn:204}
\sigma_{\mrm{pr}} := \tau \circ \bigl( v_0, (\smfrac{1}{2}, 0, 0) \bigr) * 
\bigl( (\smfrac{1}{2}, 0, 0), (\smfrac{1}{2}, \smfrac{1}{2}, 0) \bigr) *
\bigl( (\smfrac{1}{2}, \smfrac{1}{2}, 0), 
(\smfrac{1}{2}, \smfrac{1}{2}, \smfrac{1}{2}) \bigr) . 
\end{equation}
So $\sigma_{\mrm{pr}}$ has initial point $\tau(v_0)$ and
terminal point $z_0$; see Figure \ref{fig:50}. Define
\begin{equation} \label{eqn:215}
g_0 := \opn{MI}(\alpha \vert \sigma_{\mrm{pr}}) \ \text{ and } \
g := \opn{MI}(\alpha \vert \sigma) 
\end{equation}
in $G$. 

\begin{figure}
\includegraphics[scale=0.39]{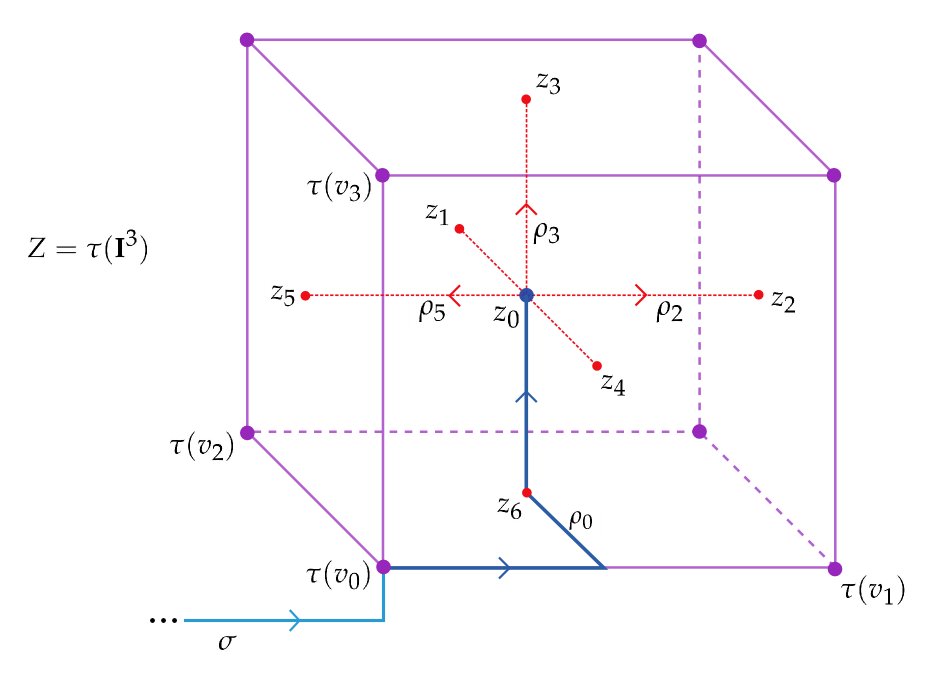}
\caption{Illustration for Subsection \ref{subsec:est}.} 
\label{fig:50}
\end{figure}

For $i \in \{ 1, \ldots, 6 \}$ we consider the points $z_i \in Z$ defined as
follows:
\[ z_1 := \tau(\smfrac{1}{2}, 1, \smfrac{1}{2}) , \
z_2 := \tau(1, \smfrac{1}{2}, \smfrac{1}{2}) , \ 
z_3 := \tau(\smfrac{1}{2}, \smfrac{1}{2}, 1) , \]
\[ z_4 := \tau(\smfrac{1}{2}, 0, \smfrac{1}{2}) , \
z_5 := \tau(0, \smfrac{1}{2}, \smfrac{1}{2}) , \ 
z_6 := \tau(\smfrac{1}{2}, \smfrac{1}{2}, 0) . \]
See Figure \ref{fig:50}. We define the strings
\begin{equation} \label{eqn:216}
\rho'_i := (\tau \circ \sigma^{\flat}_i) * 
(\tau \circ \tau^{\flat}_i \circ \sigma_{\mrm{pr}}) ,
\end{equation}
where $\sigma_{\mrm{pr}}$ is the probe in $\mbf{I}^2$; see formula 
(\ref{eqn:237}). 
So $\rho'_i$ has initial point $\tau(v_0)$ and terminal point $z_i$. 
Let
\[ g'_i := \opn{MI}(\alpha \vert \rho'_i) \in G . \]

For $i \in \{ 1, \ldots, 6 \}$ we define elements $\lambda_i \in \h$ as
follows:
\begin{equation} \label{eqn:217}
\begin{aligned}
& \lambda_1 := - \epsilon^2 \cdot \Psi_{\h}(g \cdot g'_1)
\bigl( \til{\beta}_{1, 3}(z_1) \bigr) , \\
& \lambda_2 := \epsilon^2 \cdot \Psi_{\h}(g \cdot g'_2)
\bigl( \til{\beta}_{2, 3}(z_2) \bigr) , \\
& \lambda_3 := \epsilon^2 \cdot \Psi_{\h}(g \cdot g'_3)
\bigl( \til{\beta}_{1, 2}(z_3) \bigr) , \\
& \lambda_4 := - \epsilon^2 \cdot \Psi_{\h}(g \cdot g'_4)
\bigl( \til{\beta}_{1, 3}(z_4) \bigr) , \\
&  \lambda_5 := \epsilon^2 \cdot \Psi_{\h}(g \cdot g'_5)
\bigl( \til{\beta}_{2, 3}(z_5) \bigr) , \\
& \lambda_6 :=  \epsilon^2 \cdot \Psi_{\h}(g \cdot g'_6)
\bigl( \til{\beta}_{1, 2}(z_6) \bigr) .
\end{aligned}
\end{equation}

It is easy to see from Definition \ref{dfn:11} that
\begin{equation} \label{eqn:198}
\exp_H(\lambda_i) = 
\opn{RP}_0(\alpha, \beta \vert \partial_i(\sigma, \tau)) .
\end{equation}

\begin{lem} \label{lem:27}
There are constants $c_7(\alpha, \beta)$ and
$\epsilon_7(\alpha, \beta)$ with these properties:
\begin{enumerate}
\rmitem{i} $c_7(\alpha, \beta) \geq c_6(\alpha, \beta)$ and
\[ 0 < \epsilon_7(\alpha, \beta) \leq 
\min \bigl( \epsilon_6(\alpha, \beta), \, 
\smfrac{1}{10} \epsilon_1(\alpha) \bigr) \, . \]
%.
\rmitem{ii} Let $(\sigma, \tau)$ be a nondegenerate cubical balloon in
$(\mbf{I}^3, v_0)$ with $\epsilon := \opn{side}(\tau)$. Assume that 
$\epsilon < \epsilon_7(\alpha, \beta)$,
$\opn{len}(\sigma) \leq 6$,
and the forms $\alpha|_Z$ and $\beta|_Z$ are smooth.
Then, with $\lambda_i$ as in \tup{(\ref{eqn:217})}, the following estimate
holds:
\[ \Norm{ \log_H \bigl( \opn{MI}(\alpha, \beta \vert 
\partial(\sigma, \tau)) \bigr)
- \bosum_{i = 1}^6 \lambda_i }
\leq c_7(\alpha, \beta)  \cdot \epsilon^4 \, . \]
\end{enumerate}
\end{lem}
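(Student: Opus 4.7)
The plan is to estimate the logarithm of $\opn{MI}(\alpha,\beta \vert \partial(\sigma,\tau)) = \prod_{i=1}^{6} \opn{MI}(\alpha,\beta \vert \partial_i(\sigma,\tau))$ by first replacing each factor with $\exp_H(\lambda_i)$ using Proposition \ref{prop:8}(2), and then converting the resulting product into the sum $\sum_{i=1}^6 \lambda_i$ by means of property (ii) of Theorem \ref{thm:6}. The remainders on both steps are of order $\epsilon^4$, which is exactly what the lemma asserts.

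First I would fix the constant $\epsilon_7(\alpha,\beta)$ to be small enough so that, for any balloon $(\sigma,\tau)$ satisfying the hypotheses, each face-kite $\partial_i(\sigma,\tau)$ is $(\alpha,\beta)$-tiny in the sense of Section \ref{sec.MI} (so that Proposition \ref{prop:8} applies), and so that the combined norms that will appear stay within the convergence radius $\epsilon_0(H)$ of Theorem \ref{thm:6}. Since $\opn{side}(\partial_i\tau) = \epsilon$ and the string of $\partial_i(\sigma,\tau)$ has length at most $\opn{len}(\sigma) + \opn{len}(\sigma^{\flat}_i) + 1 \leq 6 + 3 + 1 = 10$, after (if necessary) enlarging $X$ conceptually or shrinking $\epsilon_7(\alpha,\beta)$, the constants $c_1(\alpha,\beta)$, $c_2(\alpha,\beta)$ from Lemmas \ref{lem:4} and \ref{lem:5} are available. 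Because $\alpha|_Z$ and $\beta|_Z$ are smooth, the restriction of $\beta$ to each face $\partial_i\tau(\mbf{I}^2) \subset Z$ is also smooth, so Proposition \ref{prop:8}(2) gives
\[
\Norm{ \opn{log}_H \bigl( \opn{MI}(\alpha,\beta \vert \partial_i(\sigma,\tau)) \bigr) - \lambda_i } \leq c_2(\alpha,\beta) \cdot \epsilon^4,
\]
using the identification (\ref{eqn:198}) that $\lambda_i = \opn{log}_H(\opn{RP}_0(\alpha,\beta \vert \partial_i(\sigma,\tau)))$.

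Next I would combine these six individual estimates. Writing $\mu_i := \opn{log}_H(\opn{MI}(\alpha,\beta \vert \partial_i(\sigma,\tau))) - \lambda_i$, so $\norm{\mu_i} \leq c_2(\alpha,\beta)\epsilon^4$, and bounding $\norm{\lambda_i} \leq c \cdot \epsilon^2$ for a constant $c$ obtained from the operator-norm estimate $\norm{\Psi_{\h}(g\cdot g'_i)} \leq \exp(c_4(\alpha,\Psi_{\h}) \cdot 10)$ (Proposition \ref{prop:13}) together with $\norm{\til{\beta}_{j,k}(z_i)} \leq \norm{\beta}_{\mrm{Sob}}$, the hypothesis $\epsilon < \epsilon_7$ guarantees
\[
\bosum_{i=1}^{6} \bigl( \norm{\lambda_i} + \norm{\mu_i} \bigr) \leq 6 c \epsilon^2 + 6 c_2 \epsilon^4 < \epsilon_0(H),
\]
so the hypotheses of property (iv) of Theorem \ref{thm:6} are satisfied. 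Applying that property gives
\[
\Norm{ \opn{log}_H \bigl( \boprod_{i=1}^{6} \exp_H(\lambda_i + \mu_i) \bigr) - \opn{log}_H \bigl( \boprod_{i=1}^{6} \exp_H(\lambda_i) \bigr) } \leq c_0(H) \cdot \bosum_{i=1}^{6} \norm{\mu_i} \leq 6 c_0(H) c_2(\alpha,\beta) \epsilon^4.
\]

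Finally, property (ii) of Theorem \ref{thm:6} applied to the elements $\lambda_1,\ldots,\lambda_6$ yields
\[
\Norm{ \opn{log}_H \bigl( \boprod_{i=1}^{6} \exp_H(\lambda_i) \bigr) - \bosum_{i=1}^{6} \lambda_i } \leq c_0(H) \cdot \bigl( \bosum_{i=1}^{6} \norm{\lambda_i} \bigr)^2 \leq c_0(H) \cdot 36 c^2 \epsilon^4.
\]
Adding the two estimates via the triangle inequality, and noting that by (\ref{eqn:198}) the product $\prod_i \exp_H(\lambda_i + \mu_i)$ is precisely $\opn{MI}(\alpha,\beta \vert \partial(\sigma,\tau))$ (see Definition \ref{dfn:35}), one obtains the bound of the lemma with
\[
c_7(\alpha,\beta) := \max \bigl( c_6(\alpha,\beta), \, 6 c_0(H) c_2(\alpha,\beta) + 36 c_0(H) c^2 \bigr).
\]
There is no serious obstacle here; the main thing to watch is the bookkeeping of constants, in particular making sure that $\epsilon_7(\alpha,\beta)$ is small enough so that the length bounds on the face-kites and the convergence-radius bound for Theorem \ref{thm:6} both hold simultaneously, and that the chosen $c_7$ dominates $c_6$ as required by clause (i).
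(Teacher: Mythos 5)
Your proposal is correct and takes essentially the same route as the paper's proof: Proposition \ref{prop:8}(2) to compare each face integral with $\exp_H(\lambda_i)$, Proposition \ref{prop:13} to bound $\norm{\lambda_i}$ by a constant times $\epsilon^2$, and the estimates of Theorem \ref{thm:6} to pass from the sixfold product to the sum (the paper uses property (ii) on the logs of the six $\opn{MI}$'s followed by a termwise triangle inequality, while you use property (iv) and then (ii); this is an immaterial reshuffling). Only remember, when fixing the constant, to also impose $\epsilon_7(\alpha, \beta) \leq \smfrac{1}{10} \epsilon_1(\alpha)$ as clause (i) requires -- this costs nothing, since shrinking $\epsilon_7(\alpha, \beta)$ never harms part (ii).
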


\begin{proof}
Let us set
\[ c := 12 \cdot \max \bigl( c_2(\alpha, \beta), \, 
\exp ( c_4(\alpha, \Psi_{\h}) \cdot 9 ) \cdot 
\norm{\beta}_{\mrm{Sob}} \bigr) \, . \]
According to Proposition \ref{prop:8}(2), if
$\epsilon < \epsilon_2(\alpha, \beta)$ then 
\begin{equation} \label{eqn:199}
\Norm{ \log_H \bigl( \opn{MI}(\alpha, \beta \vert \partial_i(\sigma, \tau)) 
\bigr) - \lambda_i } 
\leq c_2(\alpha, \beta) \cdot \epsilon^4 
\leq \smfrac{1}{12} c \cdot \epsilon^4 \, . 
\end{equation}
And by Proposition \ref{prop:13} we know that
\begin{equation*} \label{eqn:200}
\norm{\lambda_i} \leq \exp \bigl( c_4(\alpha, \Psi_{\h}) \cdot 9 \bigr) \cdot 
\norm{\beta}_{\mrm{Sob}} \cdot \epsilon^2 
\leq \smfrac{1}{12} c \cdot \epsilon^2 \, .  
\end{equation*}
Hence
\begin{equation} \label{eqn:201}
\Norm{ \log_H \bigl( 
\opn{MI}(\alpha, \beta \vert \partial_i(\sigma, \tau)) \bigr) }
\leq \smfrac{1}{6}  c \cdot  \epsilon^2 \, .  
\end{equation}
We will take 
\[ \epsilon_7(\alpha, \beta) := \min \bigl( 
\epsilon_6(\alpha, \beta), \, 
c^{- 1/2} \cdot \epsilon_0(H)^{1/2} \bigr) \, . \]

Now assume that
$\epsilon <  \epsilon_7(\alpha, \beta)$. Then by property (ii) of Theorem
\ref{thm:6}, used in conjunction with the bound (\ref{eqn:201}), we get
\[ \begin{aligned}
& \Norm{ \log_H \bigl( \boprod_{i = 1}^6  
\opn{MI}(\alpha, \beta \vert \partial_i(\sigma, \tau)) \bigr)
- \bosum_{i = 1}^6  
\opn{MI}(\alpha, \beta \vert \partial_i(\sigma, \tau)) } \\
& \qquad \leq 
c_0(H) \cdot (c \cdot \epsilon^2)^2 = 
c_0(H) \cdot c^2 \cdot \epsilon^4 \, . 
\end{aligned} \]
Combining this with (\ref{eqn:199}) we obtain 
\begin{equation} \label{eqn:202}
\begin{aligned}
& \Norm{ \log_H \bigl( \boprod_{i = 1}^6  
\opn{MI}(\alpha, \beta \vert \partial_i(\sigma, \tau)) \bigr)
- \bosum_{i = 1}^6 \lambda_i } \\
& \qquad \leq 
\bigl( c_0(H) \cdot c^2 + \smfrac{1}{2} c \bigr) \cdot \epsilon^4 \, . 
\end{aligned} 
\end{equation}
Thus the constant
\[ c_7(\alpha, \beta) := \max \bigl( 
c_0(H) \cdot c^2 + \smfrac{1}{2} c, \, c_6(\alpha, \beta) \bigr)  \]
works.
\end{proof}

\begin{dfn}
\begin{enumerate}
\item Let $(\sigma, \tau)$ be a square kite in $(\mbf{I}^3, v_0)$. We
will say that $(\sigma, \tau)$ is
{\em $(\alpha, \beta)$-tiny} (in this section) if 
$\opn{side}(\tau) < \epsilon_7(\alpha, \beta)$ and
$\opn{len}(\sigma) \leq 9$. 
\item Let $(\sigma, \tau)$ be a cubical balloon in $(\mbf{I}^3, v_0)$. We
will say that $(\sigma, \tau)$ is
{\em $(\alpha, \beta)$-tiny} if 
$\opn{side}(\tau) < \epsilon_7(\alpha, \beta)$ and
$\opn{len}(\sigma) \leq 6$. 
\end{enumerate}
\end{dfn}

Note that if $(\sigma, \tau)$ is an $(\alpha, \beta)$-tiny cubical balloon,
then the kites $\partial_i(\sigma, \tau)$ are all $(\alpha, \beta)$-tiny.

Let $(\sigma, \tau)$  be a cubical balloon in $(\mbf{I}^2, v_0)$.
For $i \in \{ 1, \ldots, 6 \}$ let $\rho_i$ be the linear map 
$\mbf{I}^1 \to Z$ such that $\rho_i(v_0) = z$ and 
$\rho_i(v_1) = z_i$. Recall the string $\sigma_{\mrm{pr}}$ from
(\ref{eqn:204}). 
For every $i \in \{ 1, \ldots, 6 \}$ the string
$\sigma_{\mrm{pr}} * \rho_i$ has initial point $\tau(v_0)$ and terminal point
$z_i$.
Let
\[ g_i := \opn{MI}(\alpha \vert \rho_i) \in G . \]

The form $\alpha|_Z$ is smooth, and hence we have
\[ \alpha' := \psi_{\h}(\alpha|_Z) \in 
\Omega^1(Z) \otimes \opn{End}(\h) .  \] 
As in Definition \ref{dfn:26} there is an associated constant form at $z_0$:
\[ \alpha'(z_0) \in \Omega^1_{\mrm{const}}(Z) \otimes \opn{End}(\h) . \]
In this way for every $i$ we get 
\[ \int_{\rho_i} \alpha'(z_0) \in \opn{End}(\h) . \]

\begin{lem} \label{lem:44}
There is a constant $c_8(\alpha, \beta)$ with this property:
\begin{enumerate}
\rmitem{$*$} Let $(\sigma, \tau)$ be a nondegenerate $(\alpha, \beta)$-tiny 
cubical balloon in $(\mbf{I}^3, v_0)$,
with $\epsilon := \opn{side}(\tau)$. 
Then, in the notation above, the estimate
\[ \Norm{ \Psi_{\h}(g \cdot g'_i) - 
\Psi_{\h}(g \cdot g_0) \circ 
\bigl( \bsym{1} + \int_{\rho_i} \alpha'(z_0) \bigr) }
\leq c_8(\alpha, \beta) \cdot \epsilon^2  \]
holds for every $i \in \{ 1, \ldots, 6 \}$.
\end{enumerate}
\end{lem}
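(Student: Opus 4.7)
The plan is to factor the operator $\Psi_{\h}(g \cdot g'_i)$ through the base point $z_0$ of the probe and then linearize the remaining short string. Concretely, I would introduce the element
\[ h_i := g_0^{-1} \cdot g'_i = \opn{MI}(\alpha \vert \sigma_{\mrm{pr}}^{-1} * \rho'_i) \in G, \]
so that $\Psi_{\h}(g \cdot g'_i) = \Psi_{\h}(g \cdot g_0) \circ \Psi_{\h}(h_i)$. The string $\sigma_{\mrm{pr}}^{-1} * \rho'_i$ lies entirely inside the cube $Z$ (where $\alpha$ is smooth by the standing assumption of Subsection \ref{subsec:est}), goes from $z_0$ to $z_i$, and has length bounded by a fixed multiple of $\epsilon$ (a short calculation shows $5\epsilon$ suffices).

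Next I would apply Proposition \ref{prop:21}(iii) to the representation $\Psi_{\h} : G \to \opn{GL}(\h)$ and the string $\sigma_{\mrm{pr}}^{-1} * \rho'_i$, after first shrinking $\epsilon_7(\alpha, \beta)$, if necessary, so that its length is below $\epsilon_5(\alpha, \Psi_{\h})$. This yields
\[ \Norm{ \Psi_{\h}(h_i) - \bigl( \bsym{1} + \int_{\sigma_{\mrm{pr}}^{-1} * \rho'_i} \alpha'(z_0) \bigr) } \leq c_5(\alpha, \Psi_{\h}) \cdot (5\epsilon)^2 . \]

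The key observation is now that $\alpha'(z_0)$ is a \emph{constant} form on $Z$, hence in particular closed; and both $\sigma_{\mrm{pr}}^{-1} * \rho'_i$ and $\rho_i$ are piecewise linear strings in the convex polyhedron $Z$ with identical initial point $z_0$ and terminal point $z_i$. By the abelian Stokes theorem (Theorem \ref{thm:15}) applied to a polygon bounded by the two strings, we conclude
\[ \int_{\sigma_{\mrm{pr}}^{-1} * \rho'_i} \alpha'(z_0) = \int_{\rho_i} \alpha'(z_0) . \]
Substituting this identity into the previous estimate gives the same bound with $\rho_i$ in place of $\sigma_{\mrm{pr}}^{-1} * \rho'_i$.

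Finally, I would use Proposition \ref{prop:13} to bound $\norm{\Psi_{\h}(g \cdot g_0)}$ by a constant $C$ depending only on $(\alpha, \beta)$ (since $\opn{len}(\sigma * \sigma_{\mrm{pr}}) \leq 6 + \smfrac{3}{2}\epsilon_7(\alpha, \beta) \leq 7$), and then read off
\[ \Norm{ \Psi_{\h}(g \cdot g'_i) - \Psi_{\h}(g \cdot g_0) \circ \bigl( \bsym{1} + \int_{\rho_i} \alpha'(z_0) \bigr) } = \Norm{ \Psi_{\h}(g \cdot g_0) \circ \Bigl( \Psi_{\h}(h_i) - \bigl( \bsym{1} + \int_{\rho_i} \alpha'(z_0) \bigr) \Bigr) } , \]
bounded by $25 \cdot C \cdot c_5(\alpha, \Psi_{\h}) \cdot \epsilon^2$. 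Setting $c_8(\alpha, \beta)$ equal to this constant completes the proof. There is no real obstacle here: the argument is a transparent application of the three main linear-approximation tools already established in dimension one (Propositions \ref{prop:13} and \ref{prop:21}), the only nontrivial idea being the path-independence of the integral of the constant form $\alpha'(z_0)$, which is what replaces the somewhat asymmetric path $\sigma_{\mrm{pr}}^{-1} * \rho'_i$ by the direct segment $\rho_i$ appearing in the statement.
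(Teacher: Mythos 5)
Your argument is correct, and it reaches the estimate by a somewhat different route than the paper. The paper first compares $g'_i$ with $g_0 \cdot g_i$, where $g_i := \opn{MI}(\alpha \vert \rho_i)$ is the integral along the straight segment: the discrepancy is controlled by the closed-loop estimate of Corollary \ref{cor:7} (length $\leq 10\epsilon$, area $\leq 10\epsilon^2$, whence an $O(\epsilon^2)$ bound on $\norm{\Psi_{\h}(g'_i)-\Psi_{\h}(g_0 \cdot g_i)}$), and only the short segment $\rho_i$ is then linearized via Proposition \ref{prop:21}(iii). You instead use the exact factorization $g'_i = g_0 \cdot h_i$ with $h_i = \opn{MI}(\alpha \vert \sigma_{\mrm{pr}}^{-1} * \rho'_i)$, linearize the whole crooked string by Proposition \ref{prop:21}(iii), and then replace $\int_{\sigma_{\mrm{pr}}^{-1} * \rho'_i}\alpha'(z_0)$ by $\int_{\rho_i}\alpha'(z_0)$. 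This buys you a proof that bypasses Corollary \ref{cor:7} (and hence Proposition \ref{prop:16}) altogether; what it costs is that the string fed into Proposition \ref{prop:21} now has length a fixed multiple of $\epsilon$ (closer to $\smfrac{11}{2}\epsilon$ than $5\epsilon$, depending on $\opn{len}(\sigma^{\flat}_i)$) rather than $\smfrac{1}{2}\epsilon$, so you must either verify that this is still below $\epsilon_5(\alpha,\Psi_{\h})$ or, as you say, shrink $\epsilon_7(\alpha,\beta)$ -- which is harmless, since every later use of ``tiny'' only requires the threshold to be small enough, but the adjustment should be recorded where the constants are fixed. Two small touch-ups: the path-independence step is cleaner (and safer) if justified by the fact that the constant form $\alpha'(z_0)$ is exact, so its integral over any string in $Z$ depends only on the endpoints $z_0$ and $z_i$ -- the two strings need not bound an embedded polygon, so invoking Theorem \ref{thm:15} is not quite the right justification; and Proposition \ref{prop:21}(iii) is being applied with $\alpha$ restricted to $Z$, where it is smooth (the same implicit restriction the paper makes for $\rho_i$), which is legitimate since $\norm{\alpha|_Z}_{\mrm{Sob}} \leq \norm{\alpha}_{\mrm{Sob}}$ lets the constants be chosen uniformly.
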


\begin{proof}
Consider such a balloon. 
Recall the strings $\rho'_i$ from formula (\ref{eqn:216}). 
Since 
$\epsilon < \epsilon_7(\alpha, \beta) \leq 
\smfrac{1}{10} \epsilon_1(\alpha)$, we see that for every $i$ 
the closed string $\rho'_i * (\sigma_{\mrm{pr}} * \rho_i)^{-1}$ has 
\[ \opn{len}(\rho'_i * (\sigma_{\mrm{pr}} * \rho_i)^{-1}) <
10 \cdot \epsilon \leq \epsilon_1(\alpha) . \]
And this closed string bounds an area less than 
$10 \cdot \epsilon^2$. Hence by Corollary \ref{cor:7} we have
\[ \Norm{ \log_G \bigl( g'_i \cdot (g_0 \cdot g_i)^{-1} \bigr) } \leq
c_0(G) \cdot c_1(\alpha)^2 \cdot (10 \cdot \epsilon)^2 +
(10 \cdot \epsilon^2) \cdot \norm{\alpha}_{\mrm{Sob}} \, . \]
Therefore there is a constant $c'$, depending only on $\alpha$, such that 
\begin{equation} \label{eqn:205}
\Norm{ \Psi_{\h}(g'_i) - \Psi_{\h}(g_0 \cdot g_i) } \leq c' \cdot 
\epsilon^2 \, .
\end{equation}
By Proposition \ref{prop:13} we know that
\begin{equation*} \label{eqn:206}
\norm{ \Psi_{\h}(g) } \leq \exp \bigl( c_4(\alpha, \Psi_{\h}) \cdot 9 \bigr)
\cdot \norm{\alpha}_{\mrm{Sob}}  
\end{equation*}
and
\[ \norm{ \Psi_{\h}(g_0) } \leq \exp \bigl( 
c_4(\alpha, \Psi_{\h}) \cdot 2 \epsilon \bigr)
\cdot \norm{\alpha}_{\mrm{Sob}}  \, . \]
Therefore
\begin{equation*} \label{eqn:207}
\begin{aligned}
& \Norm{ \Psi_{\h}(g \cdot g'_i) - \Psi_{\h}(g \cdot g_0 \cdot g_i) } = 
\Norm{ \Psi_{\h}(g) \circ \bigl( \Psi_{\h}(g'_i) - 
\Psi_{\h}(g_0 \cdot g_i) \bigr) } \\
& \qquad \leq 
\Norm{ \Psi_{\h}(g) } \cdot 
\Norm{ \Psi_{\h}(g'_i) - \Psi_{\h}(g_0 \cdot g_i) } \\
& \qquad \leq 
\exp \bigl( c_4(\alpha, \Psi_{\h}) \cdot 9 \bigr)
\cdot \norm{\alpha}_{\mrm{Sob}} \cdot 
 c' \cdot \epsilon^2 \, .
\end{aligned}
\end{equation*}
On the other hand, by Proposition \ref{prop:21} we have the estimate
\begin{equation*} \label{eqn:208}
\Norm{ \Psi_{\h}(g_i) - \bigl( \bsym{1} + 
\int_{\rho_i} \alpha'(z_0) \bigr) }
\leq c_5(\alpha, \Psi_{\h}) \cdot \smfrac{1}{4} \epsilon^2 \, .
\end{equation*}
We conclude that the constant 
\[ \begin{aligned}
& c_8(\alpha, \beta) := 
\exp \bigl( c_4(\alpha, \Psi_{\h}) \cdot 9 \bigr)
\cdot \norm{\alpha}_{\mrm{Sob}} \cdot  c' \\
& \qquad  \qquad +
c_5(\alpha, \Psi_{\h}) \cdot \smfrac{1}{4} \cdot 
\exp \bigl( c_4(\alpha, \Psi_{\h}) \cdot 12 \bigr)
\cdot \norm{\alpha}_{\mrm{Sob}}^2 \, . 
\end{aligned} \]
works.
\end{proof}

\begin{lem} \label{lem:43}
There is a constant $c_9(\alpha, \beta)$ with this property:
\begin{enumerate}
\rmitem{$*$} Let $(\sigma, \tau)$ be a nondegenerate $(\alpha, \beta)$-tiny 
cubical balloon in $(\mbf{I}^3, v_0)$, with 
$\epsilon := \opn{side}(\tau)$. 
Then, in the notation above, the following estimate holds:
\[ \Norm{ \bosum_{i = 1}^6 \lambda_i
- \epsilon^3 \cdot \Psi_{\h}(g \cdot g_0)(\til{\gamma}(z_0)) } 
\leq \epsilon^4 \cdot c_9(\alpha, \beta) \, . \]
\end{enumerate}
\end{lem}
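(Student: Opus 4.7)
The plan is to compute $\sum_{i=1}^6 \lambda_i$ by a bilinear expansion, treating the operator $\Psi_{\h}(g\cdot g'_i)$ and the vector $\til{\beta}_{jk}(z_i)$ simultaneously to second order in $\epsilon$. First I would rewrite, for each $i$, using Lemma~\ref{lem:44},
\[
\Psi_{\h}(g\cdot g'_i) \;=\; \Psi_{\h}(g\cdot g_0)\circ\Bigl(\bsym{1} + \int_{\rho_i}\alpha'(z_0)\Bigr) \;+\; E_i,
\]
with the operator norm $\norm{E_i}\leq c_8(\alpha,\beta)\cdot\epsilon^2$, and note that by the definitions of the $\rho_i$ the linear term evaluates to $\pm(\epsilon/2)\,\psi_{\h}(\til{\alpha}_k(z_0))$ for the unique index $k$ such that $z_i-z_0$ points along the $s_k$-axis. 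In parallel, the second-order Taylor expansion of $\til{\beta}_{jk}$ at $z_0$ along the linear segment $\rho_i$ yields
\[
\til{\beta}_{jk}(z_i) \;=\; \til{\beta}_{jk}(z_0) \;+\; (z_i-z_0)\cdot\nabla\til{\beta}_{jk}(z_0) \;+\; R_i,
\]
with $\norm{R_i}\leq\norm{\beta}_{\mrm{Sob}}\cdot(\epsilon/2)^2$, the linear term being $\pm(\epsilon/2)\,\partial_{s_k}\til{\beta}_{jk}(z_0)$.

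Multiplying the two expansions inside $\lambda_i=\pm\epsilon^2\,\Psi_{\h}(g\cdot g'_i)\bigl(\til{\beta}_{jk}(z_i)\bigr)$ and discarding cross terms bounded by a constant times $\epsilon^4$ (using that $\norm{\Psi_{\h}(g\cdot g_0)}$ is bounded by Proposition~\ref{prop:13}, since $\opn{len}(\sigma*\sigma_{\mrm{pr}})\leq 9$, and that $\norm{\til{\beta}_{jk}}\leq\norm{\beta}_{\mrm{Sob}}$), I would split
\[
\lambda_i \;\equiv\; \lambda_i^{(0)} + \lambda_i^{(\partial)} + \lambda_i^{(\mrm{tw})} \pmod{O(\epsilon^4)},
\]
where $\lambda_i^{(0)}$ is of order $\epsilon^2$ and the derivative and twist pieces $\lambda_i^{(\partial)},\lambda_i^{(\mrm{tw})}$ are of order $\epsilon^3$, each carrying the sign prescribed by (\ref{eqn:217}).

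Summing over the six faces, I would pair opposite faces $\{1,4\}$, $\{2,5\}$, $\{3,6\}$. The zeroth-order pieces cancel in pairs because opposite faces carry opposite effective signs in (\ref{eqn:217}) (this is the discrete analogue of $\int_{\partial Z}(\text{constant})=0$). For each pair the two Taylor contributions combine, via $(z_i-z_0)=-(z_{i'}-z_0)$ and the matching signs, into a single $\epsilon^3$ term of the form $\pm\,\Psi_{\h}(g\cdot g_0)\bigl(\partial_{s_k}\til{\beta}_{jk}(z_0)\bigr)$; summing the three pairs reproduces the first three summands of $\til{\gamma}$ in (\ref{eqn:197}). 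Analogously the three pairs of twist contributions combine into $\pm\,\Psi_{\h}(g\cdot g_0)\bigl(\psi_{\h}(\til{\alpha}_k(z_0))\til{\beta}_{jk}(z_0)\bigr)$, reproducing the remaining three summands of $\til{\gamma}$. Hence $\sum_i\lambda_i=\epsilon^3\cdot\Psi_{\h}(g\cdot g_0)(\til{\gamma}(z_0))$ modulo $O(\epsilon^4)$.

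The constant $c_9(\alpha,\beta)$ is then assembled from $c_8(\alpha,\beta)$, $\norm{\beta}_{\mrm{Sob}}$, $\norm{\alpha}_{\mrm{Sob}}$, and the uniform bound $\exp\bigl(c_4(\alpha,\Psi_{\h})\cdot 9\bigr)$ on $\norm{\Psi_{\h}(g\cdot g_0)}$. The main obstacle is not analytic---all the estimates are of the standard ``multiply two linear approximations and bound the quadratic remainder'' type---but rather the sign and index bookkeeping in matching the six pair contributions to the six summands of $\til{\gamma}$; this must be done with care, respecting the orientation conventions baked into Definition~\ref{dfn:36} and the probe $\sigma_{\mrm{pr}}$ of (\ref{eqn:204}), but involves no further ideas beyond Lemma~\ref{lem:44} and Taylor expansion.
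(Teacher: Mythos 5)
Your proposal is correct and follows essentially the same route as the paper's proof: apply Lemma \ref{lem:44} to replace $\Psi_{\h}(g\cdot g'_i)$ by $\Psi_{\h}(g\cdot g_0)\circ(\bsym{1}+\int_{\rho_i}\alpha'(z_0))$, Taylor-expand the coefficients $\til{\beta}_{j,k}$ at $z_0$, pair opposite faces $(1,4)$, $(2,5)$, $(3,6)$ so the order-$\epsilon^2$ terms cancel, and match the surviving order-$\epsilon^3$ derivative and twist terms with the six summands of $\til{\gamma}$ in (\ref{eqn:197}), with the error constant assembled from $c_8(\alpha,\beta)$, the Sobolev norms, and the bound $\exp(c_4(\alpha,\Psi_{\h})\cdot 9)$ on $\norm{\Psi_{\h}(g\cdot g_0)}$. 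The paper packages the two first-order corrections into explicit approximants $\mu_i$ rather than splitting $\lambda_i$ into three pieces, but this is the same computation; the sign bookkeeping you flag as the delicate point is indeed where the paper's own displayed formulas require care.
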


\begin{proof}
The strategy is to try to estimate the elements $\lambda_i$.

We begin with $i = 1$. Define
\[ \begin{aligned}
& \mu_1 := \epsilon^2 \cdot \Psi_{\h}(g \cdot g_0) \Bigl(
- \til{\beta}_{1, 3}(z_0) 
- \smfrac{1}{2} \cdot \epsilon \cdot
\psi_{\h}(\til{\alpha}_2(z_0))(\til{\beta}_{1, 3}(z_0)) \\
& \qquad \qquad
- \smfrac{1}{2} \cdot \epsilon \cdot 
(\smfrac{\partial}{\partial s_2} \til{\beta}_{1, 3})(z_0) \Bigr) .
\end{aligned} \]
The Taylor expansion to first order of the function $\til{\beta}_{1, 3}$ around
$z_0$ gives us
\begin{equation} \label{eqn:210}
\Norm{ \til{\beta}_{1, 3}(z_1) - \bigl( \til{\beta}_{1, 3}(z_0)
- \smfrac{1}{2} \epsilon \cdot 
(\smfrac{\partial}{\partial s_2} \til{\beta}_{1, 3})(z_0) \bigr) }
\leq \smfrac{1}{4} \epsilon^2 \cdot \norm{\beta}_{\mrm{Sob}} \, . 
\end{equation}
And Lemma \ref{lem:44}, for $i = 1$, gives
\begin{equation} \label{eqn:211}
\Norm{ \Psi_{\h}(g \cdot g'_i) - 
\Psi_{\h}(g \cdot g_0) \circ 
\bigl( \bsym{1} - \smfrac{1}{2} \epsilon \cdot \til{\alpha}_2(z_0) \bigr) }
\leq c_8(\alpha, \beta) \cdot \epsilon^2 \, . 
\end{equation}
Let
\[ \begin{aligned}
& d := \exp \bigl( c_4(\alpha, \Psi_{\h}) \cdot 9 \bigr)
\cdot \smfrac{1}{4} \cdot \norm{\beta}_{\mrm{Sob}} +
c_8(\alpha, \beta) \cdot 2 \cdot \norm{\beta}_{\mrm{Sob}} \\
& \qquad  + \exp \bigl( c_4(\alpha, \Psi_{\h}) \cdot 9 \bigr) \cdot 
\smfrac{1}{2} \cdot \norm{\alpha'}_{\mrm{Sob}} \cdot
\norm{\beta}_{\mrm{Sob}} \, . 
\end{aligned} \]
By combining the estimates (\ref{eqn:210}) and (\ref{eqn:211})  we obtain
\begin{equation} \label{eqn:212}
\norm{ \lambda_1 - \mu_1 } \leq d \cdot \epsilon^4 \, .
\end{equation}

We do the same thing for $\lambda_4$: let 
\[ \begin{aligned}
& \mu_4 := \epsilon^2 \cdot \Psi_{\h}(g \cdot g_0) \Bigl(
\til{\beta}_{1, 3}(z_0) 
- \smfrac{1}{2} \cdot \epsilon \cdot
\psi_{\h}(\til{\alpha}_2(z_0))(\til{\beta}_{1, 3}(z_0)) \\
& \qquad \qquad  
- \smfrac{1}{2} \cdot \epsilon \cdot 
(\smfrac{\partial}{\partial s_2} \til{\beta}_{1, 3})(z_0) \Bigr) .
\end{aligned} \]
The same sort of calculation that got us (\ref{eqn:212}), now gives the
inequality
\begin{equation} \label{eqn:213}
\norm{ \lambda_4 - \mu_4 } \leq d \cdot \epsilon^4
\end{equation}
And clearly 
\[ \mu_1 + \mu_4 = 
\epsilon^3 \cdot \Psi_{\h}(g \cdot g_0) \bigl( 
- \psi_{\h}(\til{\alpha}_2(z_0))(\til{\beta}_{1, 3}(z_0)) 
- (\smfrac{\partial}{\partial s_2} \til{\beta}_{1, 3})(z_0) \bigr) . \]
Plugging inequalities  (\ref{eqn:212}) and (\ref{eqn:213}) into this 
we get
\begin{equation} \label{eqn:214}
\begin{aligned}
& \Norm{ (\lambda_1 + \lambda_4) - 
\bigl( \epsilon^3 \cdot \Psi_{\h}(g \cdot g_0) \bigl( 
- \psi_{\h}(\til{\alpha}_2(z_0))(\til{\beta}_{1, 3}(z_0)) 
- (\smfrac{\partial}{\partial s_2} \til{\beta}_{1, 3})(z_0) \bigr) \bigr) } \\
& \qquad \leq \epsilon^4 \cdot 2 d \, . 
\end{aligned}
\end{equation}

By a similar calculation for the pairs of indices $(2, 5)$ and $(3, 6)$ 
we get these analogues of formula (\ref{eqn:214}): 
\[ \begin{aligned}
& \Norm{ (\lambda_2 + \lambda_5) - 
\bigl( \epsilon^3 \cdot \Psi_{\h}(g \cdot g_0) \bigl( 
\psi_{\h}(\til{\alpha}_1(z_0))(\til{\beta}_{2, 3}(z_0)) 
+ (\smfrac{\partial}{\partial s_1} \til{\beta}_{2, 3})(z_0) \bigr) \bigr) } \\
& \qquad \leq \epsilon^4 \cdot 2 d  
\end{aligned} \]
and 
\[ \begin{aligned}
& \Norm{ (\lambda_3 + \lambda_6) - 
\bigl( \epsilon^3 \cdot \Psi_{\h}(g \cdot g_0) \bigl( 
\psi_{\h}(\til{\alpha}_3(z_0))(\til{\beta}_{1, 2}(z_0)) 
+ (\smfrac{\partial}{\partial s_3} \til{\beta}_{1, 2})(z_0) \bigr) \bigr) } \\
& \qquad \leq \epsilon^4 \cdot 2 d \, . 
\end{aligned} \]
Thus, plugging in the value of $\til{\gamma}(z_0)$ from (\ref{eqn:197}) we get
\[ \Norm{ \bosum_{i = 1}^6 \lambda_i - 
\epsilon^3 \cdot \Psi_{\h}(g \cdot g_0) (\til{\gamma}(z_0)) }
\leq \epsilon^4 \cdot 6 d \, . \]
To finish we take
$c_8(\alpha, \beta) :=  6 d$.
\end{proof}

The last two lemmas combined give us:

\begin{lem} \label{lem:49}
Let $(\sigma, \tau)$ be a nondegenerate $(\alpha, \beta)$-tiny 
cubical balloon in $(\mbf{I}^3, v_0)$, with 
$\epsilon := \opn{side}(\tau)$,
$Z := \tau(\mbf{I}^3)$ and 
$z := (\smfrac{1}{2}, \smfrac{1}{2}, \smfrac{1}{2})$. 
Assume that $\alpha|_Z$ and $\beta|_Z$ are smooth. Let
$\til{\gamma} \in \mcal{O}(Z) \otimes \h$ be the function from 
equation \tup{(\ref{eqn:197})}, and let $g, g_0 \in G$ be the group elements
from equation \tup{(\ref{eqn:215})}. Then
\[ \Norm{ \log_H \bigl( \opn{MI}(\alpha, \beta \vert 
\partial(\sigma, \tau)) \bigr) 
- \epsilon^3 \cdot \Psi_{\h}(g \cdot g_0)(\til{\gamma}(z_0)) } 
\leq \epsilon^4 \cdot c_{10}(\alpha, \beta) \, , \]
where
\[ c_{10}(\alpha, \beta) := c_8(\alpha, \beta) + c_9(\alpha, \beta) . \]

If, moreover, $\til{\gamma}$ happens to be an inert function, then 
\[ \Psi_{\h}(g \cdot g_0) (\til{\gamma}(z_0)) = 
\Psi_{\h, \alpha}(\til{\gamma})(z_0) , \]
and therefore
\[ \Norm{ \log_H \bigl( \opn{MI}(\alpha, \beta \vert 
\partial(\sigma, \tau)) \bigr) 
- \epsilon^3 \cdot \Psi_{\h, \alpha}(\til{\gamma})(z_0) } 
\leq \epsilon^4 \cdot c_{10}(\alpha, \beta) \, . \]
\end{lem}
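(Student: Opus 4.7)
The statement is essentially an assembly result: Lemmas \ref{lem:27} and \ref{lem:43} together already bound the two ``halves'' of the discrepancy, so the main job is to glue them by a triangle inequality, and then to identify $\Psi_{\h}(g\cdot g_0)(\til\gamma(z_0))$ with $\Psi_{\h,\alpha}(\til\gamma)(z_0)$ in the inert case.

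First I would invoke Lemma \ref{lem:27} to control the ``Riemann product vs.\ MI'' discrepancy: under the present hypotheses (the balloon is $(\alpha,\beta)$-tiny, so $\epsilon<\epsilon_7(\alpha,\beta)$ and $\opn{len}(\sigma)\le 6$, and both $\alpha|_Z$ and $\beta|_Z$ are smooth) we get
\[
\Norm{\log_H\bigl(\opn{MI}(\alpha,\beta\vert\partial(\sigma,\tau))\bigr)-\bosum_{i=1}^6\lambda_i}\le c_7(\alpha,\beta)\cdot\epsilon^4,
\]
where the $\lambda_i\in\h$ are the elements defined in \tup{(\ref{eqn:217})}.  Then I would apply Lemma \ref{lem:43} to control the ``sum vs.\ curvature evaluation'' discrepancy:
\[
\Norm{\bosum_{i=1}^6\lambda_i-\epsilon^3\cdot\Psi_{\h}(g\cdot g_0)(\til\gamma(z_0))}\le c_9(\alpha,\beta)\cdot\epsilon^4.
\]
A triangle inequality combines these into the first assertion, with $c_{10}(\alpha,\beta)$ taken to be $c_7(\alpha,\beta)+c_9(\alpha,\beta)$ (replacing the sum $c_8+c_9$ written above, which appears to be a typo for $c_7+c_9$).

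For the ``moreover'' clause, suppose $\til\gamma$ is inert.  Observe that the concatenated string $\sigma * \sigma_{\mrm{pr}}$ runs from the base point $v_0$ to the midpoint $z_0\in Z$, and by Proposition \ref{prop:1}(1) its multiplicative integral is
\[
\opn{MI}(\alpha\vert\sigma*\sigma_{\mrm{pr}})=g\cdot g_0.
\]
Since $\alpha$ is a tame connection (being half of a connection-curvature pair, cf.\ Definition \ref{dfn:46}), the function $\Psi_{\h,\alpha}(\til\gamma):Z\to\h_0$ is defined by Lemma \ref{lem:46}.  Condition \tup{($*$)} of that lemma, applied to the string $\sigma*\sigma_{\mrm{pr}}$ with terminal point $z_0$, reads exactly
\[
\Psi_{\h,\alpha}(\til\gamma)(z_0)=\Psi_{\h}(g\cdot g_0)\bigl(\til\gamma(z_0)\bigr).
\]
Substituting this identity into the first estimate yields the second.

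I do not foresee any real obstacle: the hard analytic work has been done in Lemmas \ref{lem:27}, \ref{lem:44} and \ref{lem:43}, and the only subtlety is the bookkeeping identification of $\Psi_{\h}(g\cdot g_0)(\til\gamma(z_0))$ with $\Psi_{\h,\alpha}(\til\gamma)(z_0)$, which is immediate from the defining property of $\Psi_{\h,\alpha}$ once one notes that $g\cdot g_0$ is precisely the holonomy of $\alpha$ along a string from $v_0$ to $z_0$.
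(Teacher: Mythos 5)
Your proof is correct and takes essentially the same route as the paper, which states this lemma with no further argument as "the last two lemmas combined": the substance is exactly your triangle-inequality gluing of Lemma \ref{lem:27} with Lemma \ref{lem:43}, plus the identification $\Psi_{\h}(g \cdot g_0)(\til{\gamma}(z_0)) = \Psi_{\h, \alpha}(\til{\gamma})(z_0)$ via Proposition \ref{prop:1}(1) and condition ($*$) of Lemma \ref{lem:46}, using that $\alpha$ is tame. Your remark about the constant is also apt: the bound on $\log_H(\opn{MI}) - \sum_i \lambda_i$ comes with $c_7(\alpha,\beta)$, so the combined constant should be $c_7(\alpha,\beta) + c_9(\alpha,\beta)$, the paper's "$c_8 + c_9$" being a constant-naming slip (compare the last line of the proof of Lemma \ref{lem:43}).
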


\subsection{Stokes Theorem}

Here again we are in the general situation: 
$(X, x_0)$ is a pointed polyhedron, 
\[ \mbf{C} / X = (G, H, \Psi, \Phi_0, \Phi_X) \]
is a Lie quasi crossed module with additive feedback, and
$(\alpha, \beta)$ is a piecewise smooth connection-curvature pair in 
$\mbf{C} / X$.

The Lie group map
$\Psi_{\h} : G \to \opn{GL}(\h)$ induces a Lie algebra map
\[ \psi_{\h} := \opn{Lie}(\Psi_{\h}) : \g \to \mfrak{gl}(\h) = 
\opn{End}(\h) . \]
By tensoring with $\Omega_{\mrm{pws}}(X)$ this induces a map of DG Lie algebras
\[ \psi_{\h} : \Omega_{\mrm{pws}}(X) \otimes \g \to 
\Omega_{\mrm{pws}}(X) \otimes \opn{End}(\h) . \]
In this way from the pair $\alpha \in \Omega^1_{\mrm{pws}}(X) \otimes \g$
and 
$\beta \in \Omega^2_{\mrm{pws}}(X) \otimes \h$ we get
\[ \psi_{\h}(\alpha)(\beta) \in \Omega^3_{\mrm{pws}}(X) \otimes \h . \]

\begin{dfn} \label{dfn:32}
\index{$3$-curvature}
Let $(X, x_0)$ be a pointed polyhedron, let
\[ \mbf{C} / X = (G, H, \Psi, \Phi_0, \Phi_X) \]
be a Lie quasi crossed module with additive feedback, and
let $(\alpha, \beta)$ be a piecewise smooth connection-curvature pair in 
$\mbf{C} / X$. The {\em $3$-curvature of $(\alpha, \beta)$} is the form
\[ \gamma := \d(\beta) + \psi_{\h}(\alpha)(\beta) \in 
\Omega_{\mrm{pws}}^3(X) \otimes \h . \]
\end{dfn}

Recall the notion of orientation of a polyhedron (Subsection
\ref{subsec:orient}). If $Z$ is an oriented cube, and 
$(s_1, s_2, s_3)$ is a positively oriented orthonormal linear coordinate system
on $Z$, then 
\[ \opn{or}(Z) = \d s_1 \wedge \d s_2 \wedge \d s_3 . \] 

\begin{lem} \label{lem:48}
Assume that $(X, x_0) = (\mbf{I}^3, v_0)$, so that we are in the situation of
Subsection \tup{\ref{subsec:est}}. 
Let $Z \subset \mbf{I}^3$ be an oriented nondegenerate cube, such that 
$\alpha|_Z$ and $\beta|_Z$ are smooth. Let
$\til{\gamma} \in \mcal{O}(Z) \otimes \h$ be the function from formula
\tup{(\ref{eqn:197})}. Then $\til{\gamma}$ is the coefficient of $\gamma|_Z$;
Namely
\[ \gamma|_Z = \til{\gamma} \cdot \opn{or}(Z) . \]
\end{lem}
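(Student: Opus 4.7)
The plan is purely a local coordinate computation: expand both summands in the definition $\gamma = \d(\beta) + \psi_{\h}(\alpha)(\beta)$ in the positively oriented orthonormal coordinate system $\bsym{s} = (s_1, s_2, s_3)$ on $Z$, collect terms, and compare with the formula (\ref{eqn:197}) for $\til{\gamma}$.

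First I would choose $(s_1, s_2, s_3)$ so that $\opn{or}(Z) = \d s_1 \wedge \d s_2 \wedge \d s_3$; this is possible since $Z$ is oriented. Then write
\[ \alpha|_Z = \bosum_{i=1}^3 \til{\alpha}_i \cdot \d s_i, \qquad
\beta|_Z = \bosum_{1 \le i < j \le 3} \til{\beta}_{i,j} \cdot \d s_i \wedge \d s_j, \]
which are the definitions of the coefficients $\til{\alpha}_i$ and $\til{\beta}_{i,j}$ used in Subsection \ref{subsec:est}. By the graded Leibniz rule applied to each summand of $\beta|_Z$, the only contributions to $\d(\beta|_Z)$ that are of top degree come from the partial derivatives in the \emph{complementary} direction, and a bookkeeping of the signs of $\d s_k \wedge \d s_i \wedge \d s_j$ relative to $\d s_1 \wedge \d s_2 \wedge \d s_3$ yields
\[ \d(\beta|_Z) = \Bigl( \smfrac{\partial}{\partial s_1}(\til{\beta}_{2,3})
- \smfrac{\partial}{\partial s_2}(\til{\beta}_{1,3})
+ \smfrac{\partial}{\partial s_3}(\til{\beta}_{1,2}) \Bigr) \cdot \opn{or}(Z) . \]

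Next, using the $\Omega(X)$-bilinearity of the map $\psi_{\h} : \Omega_{\mrm{pws}}(X) \otimes \g \to \Omega_{\mrm{pws}}(X) \otimes \opn{End}(\h)$ and the formula $[\eta \otimes X, \mu \otimes Y] \mapsto (\eta \wedge \mu) \otimes \psi_{\h}(X)(Y)$ for the induced action on $\Omega(X) \otimes \h$, the form $\psi_{\h}(\alpha)(\beta)|_Z$ equals
\[ \bosum_{i=1}^{3}\, \bosum_{1 \le j < k \le 3}\,
\psi_{\h}(\til{\alpha}_i)(\til{\beta}_{j,k}) \cdot
\d s_i \wedge \d s_j \wedge \d s_k . \]
Again only the three index choices $(i,(j,k)) \in \{(1,(2,3)), (2,(1,3)), (3,(1,2))\}$ survive, and the same sign bookkeeping yields
\[ \psi_{\h}(\alpha)(\beta)|_Z = \bigl( \psi_{\h}(\til{\alpha}_1)(\til{\beta}_{2,3})
- \psi_{\h}(\til{\alpha}_2)(\til{\beta}_{1,3})
+ \psi_{\h}(\til{\alpha}_3)(\til{\beta}_{1,2}) \bigr) \cdot \opn{or}(Z) . \]

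Adding these two expressions gives $\gamma|_Z = \til{\gamma} \cdot \opn{or}(Z)$ with $\til{\gamma}$ exactly as defined in (\ref{eqn:197}), which is the claim. No serious obstacle is expected; this is a routine unpacking of definitions. The only points of mild care are (a) remembering that the bracket on $\Omega \otimes \g$ together with its action on $\Omega \otimes \h$ encodes a wedge on forms together with the representation $\psi_{\h}$ on coefficients, and (b) tracking the signs arising from reordering three 1-forms to match $\d s_1 \wedge \d s_2 \wedge \d s_3$.
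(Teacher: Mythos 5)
Your proposal is correct and is exactly the paper's argument: the paper's proof of this lemma is the one-line remark that it amounts to expanding the formula in Definition \ref{dfn:32} into coordinates, which is precisely the computation you carry out (with the right sign bookkeeping for $\d(\beta|_Z)$ and for $\psi_{\h}(\alpha)(\beta)|_Z$).
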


\begin{proof}
This amounts to expanding the formula in Definition \ref{dfn:32} into
coordinates.
\end{proof}

\begin{lem} \label{lem:39}
Let $(\sigma, \tau)$ be a balloon in $(X, x_0)$. Then
\[ \opn{MI} \bigl(\alpha, \beta \vert \partial (\sigma, \tau) \bigr) 
\in H_0 . \]
\end{lem}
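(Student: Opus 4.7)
The plan is to mirror the proof of Lemma \ref{lem:23}, which handled the special case $(X, x_0) = (\mbf{I}^3, v_0)$, and transfer it to a general pointed polyhedron via functoriality of the multiplicative integrals. The whole argument splits into a ``$2$-dimensional Stokes'' step and a ``combinatorial cancellation'' step; the first is an immediate application of Theorem \ref{thm:1}, while the second amounts to showing that $\partial\partial\mbf{I}^3$ represents the identity in the fundamental group of the $1$-skeleton.

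First, apply Theorem \ref{thm:1} to each of the six boundary kites $\partial_i(\sigma, \tau)$. This yields
\[ \Phi_0 \bigl( \opn{MI}(\alpha, \beta \vert \partial_i(\sigma, \tau)) \bigr)
= \opn{MI} \bigl( \alpha \vert \partial\partial_i(\sigma, \tau) \bigr) \]
for each $i \in \{1, \ldots, 6\}$. Since $\Phi_0$ is a group homomorphism, taking the product and using Definition \ref{dfn:35} gives
\[ \Phi_0 \bigl( \opn{MI}(\alpha, \beta \vert \partial(\sigma, \tau)) \bigr)
= \opn{MI}\bigl( \alpha \vert \partial\partial(\sigma, \tau) \bigr) , \]
where the right hand side is the multiplicative integral of $\alpha$ along the concatenation of the six closed strings $\partial\partial_i(\sigma, \tau)$. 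It therefore suffices to prove this right hand side equals $1$ in $G$.

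Next, unpack each $\partial_i(\sigma, \tau) = (\sigma \ast (\tau \circ \sigma^{\flat}_i),\, \tau \circ \tau^{\flat}_i)$, so that
\[ \partial\partial_i(\sigma, \tau) =
\sigma \ast (\tau \circ \sigma^{\flat}_i) \ast \partial(\tau \circ \tau^{\flat}_i) \ast (\tau \circ \sigma^{\flat}_i)^{-1} \ast \sigma^{-1} . \]
Consecutive $\sigma^{-1}$ and $\sigma$ between adjacent kites cancel under $\opn{MI}$ by Proposition \ref{prop:1}(1,2), so combined with Proposition \ref{prop:12} applied to the linear map $\tau : \mbf{I}^3 \to X$, I reduce to proving
\[ \opn{MI}\bigl(\tau^*(\alpha) \,\big\vert\, \partial\partial\mbf{I}^3 \bigr) = 1 \quad \text{in } G , \]
where $\partial\partial\mbf{I}^3$ is the closed string in $\opn{sk}_1 \opn{sd}^0 \mbf{I}^3$ based at $v_0$ obtained by concatenating $\sigma^{\flat}_i \ast \partial\tau^{\flat}_i \ast (\sigma^{\flat}_i)^{-1}$ for $i = 1, \ldots, 6$.

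Finally, Lemma \ref{lem:21}(1) identifies the unique relation in $\bsym{\pi}_1(\opn{sk}_1 \opn{sd}^0 \mbf{I}^3)$ as $\prod_{i = 1}^{6} a^{\flat}_i = 1$, which is exactly the statement $[\partial\partial\mbf{I}^3] = 1$. Writing $\partial\partial\mbf{I}^3$ as a word $w(\bsym{s})$ in the twelve oriented $1$-cells of $\opn{sd}^0 \mbf{I}^3$, Lemma \ref{lem:15} gives $w(\bsym{s}) \sim_{\mrm{can}} 1$, and Proposition \ref{prop:1}(1,2) then yields $\opn{MI}(\tau^*(\alpha) \vert \partial\partial\mbf{I}^3) = 1$ by cancellation in $G$. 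Combining the steps gives $\Phi_0(\opn{MI}(\alpha, \beta \vert \partial(\sigma, \tau))) = 1$, i.e., the integral lies in $\opn{Ker}(\Phi_0) = H_0$. The conceptual heart of the argument is the combinatorial identity $[\partial\partial\mbf{I}^3] = 1$ in $\bsym{\pi}_1$: the $1$-skeleton of the cube is only a graph whose fundamental group is free of rank $5$, so this identity is nontrivial and relies precisely on the specific way in which the six face boundaries of the cube are concatenated; everything else is a matter of applying existing functoriality and cancellation results.
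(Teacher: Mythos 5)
Your proof is correct and follows essentially the same route as the paper: apply Theorem \ref{thm:1} to each of the six boundary kites and then kill the resulting product $\opn{MI}(\alpha \vert \partial\partial(\sigma,\tau))$ by cancellation of the doubled boundary, via Proposition \ref{prop:1}. The only (harmless) difference is in how the combinatorial step is justified: the paper observes directly from Figure \ref{fig:46} that $\partial\partial(\sigma,\tau)$ is cancellation equivalent to the empty string in the monoid of strings in $X$, whereas you first pull back along $\tau$ to $\mbf{I}^3$ using Proposition \ref{prop:12} and then run the $\bsym{\pi}_1$-argument of Lemmas \ref{lem:21}(1) and \ref{lem:15}, exactly as in the paper's proof of Lemma \ref{lem:23}.
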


\begin{proof}
By definition we have
\[ \opn{MI} \bigl(\alpha, \beta \vert \partial (\sigma, \tau) \bigr) = 
\prod_{i = 1, \ldots, 6} \,
\opn{MI} \bigl(\alpha, \beta \vert (\sigma, \tau) \circ 
(\sigma^{\flat}_i, \tau^{\flat}_i) \bigr)  \]
in the notation of Definition \ref{dfn:36}.
For every index $i$ we have, by Theorem \ref{thm:1}, an equality
\[ \Phi_0 \bigl( \opn{MI} \bigl(\alpha, \beta \vert (\sigma, \tau) \circ 
(\sigma^{\flat}_i, \tau^{\flat}_i) \bigr) =
\opn{MI} \bigl(\alpha \vert \partial \bigl(
(\sigma, \tau) \circ (\sigma^{\flat}_i, \tau^{\flat}_i) \bigr) \bigr)  \]
in the group $G$. 
Now from Figure \ref{fig:46} we see that the closed string
\[ \partial \partial \mbf{I}^3 =
\prod_{i = 1, \ldots, 6} \, \partial (\sigma^{\flat}_i, \tau^{\flat}_i) \]
is cancellation equivalent to the empty string in the monoid of strings
in $\mbf{I}^3$. Therefore the closed string
\[ \partial \partial (\sigma, \tau) = 
\prod_{i = 1, \ldots, 6} \, \partial \bigl(
(\sigma, \tau) \circ (\sigma^{\flat}_i, \tau^{\flat}_i) \bigr) \]
is cancellation equivalent to the empty string in the monoid of strings in 
$X$. By Proposition \ref{prop:1} we can conclude that 
\[ \prod_{i = 1, \ldots, 6} \,
\opn{MI} \bigl(\alpha, \beta \vert \partial \bigl(
(\sigma, \tau) \circ (\sigma^{\flat}_i, \tau^{\flat}_i) \bigr) \bigr) = 1 \]
in $G$. Hence
\[ \Phi_0 \bigl( 
\opn{MI} \bigl(\alpha, \beta \vert \partial (\sigma, \tau) \bigr)
\bigr) = 1 . \]
\end{proof}

\begin{lem} \label{lem:40}
Let $(\sigma, \tau)$ be a balloon in $(X, x_0)$. Then there is a piecewise
linear map $f : (\mbf{I}^3, v_0) \to (X, x_0)$, and a cubical balloon 
$(\sigma', \tau')$ in $(\mbf{I}^3, v_0)$, such that 
$\opn{len}(\sigma') \leq 2$, $f|_{\tau'(\mbf{I}^3)}$ is linear, and
\[ (\sigma, \tau) = f \circ (\sigma', \tau') \]
as balloons in $(X, x_0)$.
\end{lem}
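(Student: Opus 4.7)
The plan is to imitate the construction in Proposition \ref{prop:14}, adapted from dimension $2$ to dimension $3$. The trivial case is when $\sigma$ is empty: take $\sigma' := \emptyset$, $\tau' := \opn{id}_{\mbf{I}^{3}}$, and $f := \tau$. Otherwise $\sigma = (\sigma_1, \ldots, \sigma_m)$ with $m \geq 1$. First I would pick the sub-cube $Y := [\smfrac{1}{2}, 1]^{3} \subset \mbf{I}^{3}$ and let $\tau' : \mbf{I}^{3} \to \mbf{I}^{3}$ be the positively oriented linear map with $\tau'(v_0) = p := (\smfrac{1}{2}, \smfrac{1}{2}, \smfrac{1}{2})$ and $\tau'(\mbf{I}^{3}) = Y$. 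Let $Z$ be the oriented segment in $\mbf{I}^{3}$ from $v_0$ to $p$, divided into $m$ equal pieces $Z_1, \ldots, Z_m$; let $\sigma'_i : \mbf{I}^{1} \to Z$ be the positively oriented linear map with image $Z_i$, and put $\sigma' := (\sigma'_1, \ldots, \sigma'_m)$. Then $\opn{len}(\sigma') = \opn{len}(Z) = \smfrac{1}{2}\sqrt{3} < 2$, as required.

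Next I would define $f|_{Y} : Y \to X$ to be the unique linear map satisfying $f|_{Y} \circ (\tau'|_{Y}) = \tau$, and for each $i$ define $f|_{Z_i} : Z_i \to X$ to be the unique piecewise linear map satisfying $f|_{Z_i} \circ \sigma'_i = \sigma_i$. Since $Y \cap Z = \{p\}$, and both definitions send $p$ to $\tau(v_0) = \sigma(v_1)$, these glue to a piecewise linear map $f|_{Y \cup Z} : Y \cup Z \to X$. To finish, if I can produce a piecewise linear retraction $g : \mbf{I}^{3} \to Y \cup Z$, then setting $f := f|_{Y \cup Z} \circ g$ will give $f|_{\tau'(\mbf{I}^{3})} = f|_{Y}$ linear and $f \circ (\sigma', \tau') = (\sigma, \tau)$.

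The main obstacle is thus constructing the retraction $g$; in dimension $2$ this was exhibited in Figure \ref{fig:43}, but in dimension $3$ it must be assembled with more care. My plan is to subdivide $\mbf{I}^{3}$ by the binary subdivision $\opn{sd}^{1} \mbf{I}^{3}$ and specify $g$ separately on each of its eight sub-cubes: the identity on $Y$; on each of the three sub-cubes sharing a $2$-face with $Y$, the projection onto that face (collapsing the single coordinate perpendicular to the face to $\smfrac{1}{2}$); on each of the three sub-cubes sharing only an edge with $Y$, the projection onto that edge (collapsing the two perpendicular coordinates to $\smfrac{1}{2}$); and on the remaining ``corner'' sub-cube $[0, \smfrac{1}{2}]^{3}$, the radial map $(x_1, x_2, x_3) \mapsto ( \max_i x_i, \max_i x_i, \max_i x_i )$, which lands in $Z$ and sends each of its three faces $x_i = \smfrac{1}{2}$ to $p$. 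The remaining (routine) work is to verify that the seven local formulas agree on every shared face and edge of the subdivision; each verification reduces to a one-line coordinate identity, the only slightly delicate ones being at the three boundaries between an edge sub-cube and the corner sub-cube, where both formulas collapse to $p$.
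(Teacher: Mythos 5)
Your proposal is correct and follows essentially the same route as the paper, whose proof of this lemma is simply ``just like the proof of Proposition \ref{prop:14}'': take the sub-cube $Y$ and the diagonal segment $Z$ from $v_0$ to $(\smfrac{1}{2},\smfrac{1}{2},\smfrac{1}{2})$, glue the obvious map on $Y \cup Z$, and compose with a piecewise linear retraction $g : \mbf{I}^3 \to Y \cup Z$. The only difference is that you spell out the $3$-dimensional retraction explicitly (the paper leaves it implicit, analogous to its Figure \ref{fig:43} in dimension $2$), and your cube-by-cube formulas do glue correctly, so the argument goes through.
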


\begin{proof}
Just like the proof of Proposition \ref{prop:14}.
\end{proof}

\begin{lem} \label{lem:25}
Let $(\sigma, \tau)$ be a balloon in $(X, x_0)$. 
Take any $k \geq 0$. Then 
\[ \opn{MI} \bigl( \alpha, \beta \vert  
\partial(\opn{tes}^k (\sigma, \tau)) \bigr) =
\opn{MI} \bigl( \alpha, \beta \vert  \partial (\sigma, \tau ) \bigl) . \]
\end{lem}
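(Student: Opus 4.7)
The plan is to reduce via induction and functoriality to a combinatorial identity in the $k = 1$ case, and then to invoke Lemma \ref{lem:24}.

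First I would use Lemma \ref{lem:40} together with Propositions \ref{prop:10} and \ref{prop:11}(1) to reduce to the basic cubical balloon $(\sigma^0_1, \tau^0_1)$ in $(\mbf{I}^3, v_0)$, since both $\opn{tes}^k$ and $\partial$ are natural with respect to piecewise linear maps that are linear on the relevant cells. Then I would induct on $k$, the case $k = 0$ being tautological. For the inductive step, the recursion $\opn{tes}^{k+1}\mbf{I}^3 = (\opn{tes}^1\mbf{I}^3) \circ (\opn{tes}^k\mbf{I}^3)$ together with associativity of kite composition express
\[ \opn{tes}^{k+1}(\sigma^0_1, \tau^0_1) = \bigl( \opn{tes}^k(B_i) \bigr)_{i = 1, \ldots, 8} \]
as a concatenated sequence, where $B_i := \opn{tes}^1_i(\sigma^0_1, \tau^0_1)$. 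Applying the inductive hypothesis to each $B_i$ reduces the problem to the single identity
\[ \prod_{i = 1}^{8} \opn{MI}(\alpha, \beta \vert \partial B_i) = \opn{MI}(\alpha, \beta \vert \partial(\sigma^0_1, \tau^0_1)) , \]
which is the $k = 1$ case.

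For this case, I would first bring both sides to a common combinatorial footing in $\opn{sd}^1 \mbf{I}^3$. The left side is already a product over $48$ kites patterned on $\opn{sd}^1 \mbf{I}^3$. On the right, each face kite $\partial_i \mbf{I}^3$ can be replaced by its $2$-dimensional binary tessellation into four sub-kites by Corollary \ref{cor:2} without changing the MI, and the prefix strings $\sigma^{\flat}_i$ can be refined by splitting each edge into two halves by Proposition \ref{prop:1}(1), again without changing the MI; after these refinements the right side is a product over $24$ kites patterned on $\opn{sd}^1 \mbf{I}^3$. I would then invoke Lemma \ref{lem:24} (with $k = 1$) applied to the concatenated sequence $\bsym{\rho}$ of all $72$ kites and the word
\[ w(\bsym{s}) := s_1 * \cdots * s_{48} * s_{72}^{-1} * \cdots * s_{49}^{-1} \in \opn{Wrd}^{\pm 1}(s_1, \ldots, s_{72}) . \]

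Two combinatorial conditions must then be verified. The homology condition is the easy one: as an element of $\mrm{C}_2(\opn{sk}_2 \opn{sd}^1 \mbf{I}^3)$, the sum of the $48$ oriented cells from the left part equals the sum of the outward-oriented outer cells of $\opn{sd}^1 \mbf{I}^3$, because each of the $12$ interior $2$-cells is swept twice with opposite orientations by the two adjacent sub-cubes; the sum of the $24$ cells from the refined right part equals the very same chain, so $w(\tau_1, \ldots, \tau_{72}) = 0$ already in $\mrm{C}_2$, hence certainly in $\mrm{H}_2$. The main obstacle is the fundamental group condition $w(a_1, \ldots, a_{72}) = 1$ in $\bsym{\pi}_1(\opn{sk}_1 \opn{sd}^1 \mbf{I}^3)$; I would verify it by showing that $a_1 \cdots a_{48} = 1$ and $a_{49} \cdots a_{72} = 1$ separately. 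For each sub-balloon $B_i$ the six kite-boundary classes multiply, in the appropriate order, to $[\sigma^1_i] \cdot [\partial \partial \tau^1_i] \cdot [\sigma^1_i]^{-1}$, and $[\partial \partial \tau^1_i] = 1$ by the purely combinatorial cancellation of Lemma \ref{lem:15}, exactly as in the proof of Lemma \ref{lem:23}; similarly, for each refined face kite the four sub-kite classes multiply to the original face class by Lemma \ref{lem:15} in two dimensions, and the six face classes then multiply to $[\partial \partial \mbf{I}^3] = 1$ by the same lemma applied to the outer cube. With both conditions established, Lemma \ref{lem:24} yields $w(h_1, \ldots, h_{72}) = 1$ in $H$, which is precisely the desired equality.
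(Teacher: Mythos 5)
Your overall strategy is the paper's own: reduce to the basic balloon in $(\mbf{I}^3, v_0)$, dispose of general $k$ by the recursive structure of the tessellations, and settle $k=1$ by feeding a suitable word into Lemma \ref{lem:24} after refining the six face kites through the $2$-dimensional tessellation. Your $k=1$ core is correct and in fact more detailed than the paper's three-line proof: after halving the face strings all $72$ kites are patterned on $\opn{sd}^1 \mbf{I}^3$; the $2$-chain of your word vanishes because the $12$ interior faces cancel and both halves give the same outer-face chain; and the $\bsym{\pi}_1$ condition holds because each sub-balloon contributes a conjugate of the cancellation-trivial class of $\partial \partial \mbf{I}^3$, while on the other side the four sub-kite classes recombine to each face class and the six face classes multiply to $1$. (Two citation quibbles: the nullhomotopy of $\partial\partial$ of a cube is Lemma \ref{lem:21}(1), or the direct cancellation used in the proof of Lemma \ref{lem:39} --- Lemma \ref{lem:15} goes in the opposite direction; and Corollary \ref{cor:2} is stated over $(\mbf{I}^2, v_0)$, so Proposition \ref{prop:3} is the cleaner reference for refining a face kite. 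Also, organize the induction so that the hypothesis at level $k$ is the statement for \emph{all} balloons, since you apply it to the sub-balloons $B_i$, which are not basic.)

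The genuine gap is the very first reduction. Lemma \ref{lem:40} together with Propositions \ref{prop:10} and \ref{prop:11}(1) does \emph{not} get you to the basic balloon: it only produces a cubical balloon $(\sigma', \tau')$ in $(\mbf{I}^3, v_0)$ with $\opn{len}(\sigma') \leq 2$ and $\tau'(\mbf{I}^3)$ a proper sub-cube, and a balloon with a nonempty string is never of the form $f \circ (\sigma^0_1, \tau^0_1)$ with $f$ linear on the cube. This matters, because your entire $k=1$ argument hinges on the hypothesis of Lemma \ref{lem:24} that the kites be patterned on $\opn{sd}^1 \mbf{I}^3$, and the prefix string produced by (the analogue of) Proposition \ref{prop:14} runs along a diagonal, so it is not patterned on any binary subdivision. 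To kill the string you must move the base point along $\sigma'$ using Theorem \ref{thm:14} --- exactly what the paper does by invoking the first paragraph of the proof of Lemma \ref{lem:41} --- noting that $\Psi(g) : H^g \to H$ is a group isomorphism, so the desired equality of ordered products in $H$ is equivalent to the corresponding equality in $H^g$ for the kites with the prefix removed; one further application of Proposition \ref{prop:10} along the linear map $\tau'$ then rescales to the basic balloon. With that step inserted, your argument is complete and coincides with the paper's.
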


\begin{proof}
Due to the recursive nature of the tessellations it is enough to consider the
case $k=1$. As in the first paragraph in the proof of Lemma \ref{lem:41}, and
using Lemma \ref{lem:40}, we can assume that 
$(X, x_0) = (\mbf{I}^2, v_0)$ and 
$(\sigma, \tau) = (\sigma^0_1, \tau^0_1)$, the basic balloon.

The assertion is now an easy consequence of Lemma \ref{lem:24} and Corollary 
\ref{cor:2}.
\end{proof}

\begin{thm}[Nonabelian $3$-dimensional Stokes Theorem]
\label{thm:10}
\index{Nonabelian Stokes Theorem for cubes}
Let $(X, x_0)$ be a pointed polyhedron, let
$\mbf{C} / X$ be a Lie quasi crossed module with additive feedback, 
let $(\alpha, \beta)$ be a piecewise smooth connection-curvature pair in 
$\mbf{C} / X$, and let $\gamma$ be the $3$-curvature of $(\alpha, \beta)$.
Then:
\begin{enumerate}
\item The form $\gamma$ is inert. 
\item For any balloon $(\sigma, \tau)$ in $(X, x_0)$ one has
\[  \opn{MI} \bigl( \alpha, \beta \vert \partial (\sigma, \tau)
\bigr) = 
\opn{MI} (\alpha, \gamma \vert \tau)  \]
in $H$.
\end{enumerate}
\end{thm}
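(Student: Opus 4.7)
The plan for part (1) is to prove $\Phi_X(\gamma) = 0$ by an algebraic Bianchi-style computation. First I would derive the \emph{infinitesimal form} of the holonomy condition for $\alpha$: by applying the condition of Definition \ref{dfn:8} to a short linear path starting at a smooth point in direction $v$ and expanding $\Psi_{\h}(\opn{MI}(\alpha \vert \sigma))$ and $\opn{Ad}_{\g}(\opn{MI}(\alpha \vert \sigma))$ to first order via Proposition \ref{prop:21}, I would obtain the identity
\[ \d \Phi_X \;=\; \Phi_X \circ \psi_{\h}(\alpha) \;-\; \opn{ad}_{\g}(\alpha) \circ \Phi_X \]
in $\Omega^1_{\mrm{pws}}(X) \otimes \opn{Hom}(\h, \g)$. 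Combined with the graded Leibniz rule $\d(\Phi_X(\beta)) = (\d\Phi_X)(\beta) + \Phi_X(\d\beta)$, the curvature equation $\Phi_X(\beta) = \d\alpha + \frac{1}{2}[\alpha, \alpha]$, and the Jacobi identity $[\alpha, [\alpha, \alpha]] = 0$, a short calculation collapses to $\Phi_X(\gamma) = \d(\Phi_X(\beta)) + [\alpha, \Phi_X(\beta)] = [\d\alpha, \alpha] + [\alpha, \d\alpha] = 0$.

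For part (2), the plan is to mirror the strategy of Theorem \ref{thm:1}: reduce to a cubical balloon in $(\mbf{I}^3, v_0)$ and then to arbitrarily tiny cubical subballoons via binary tessellation. Lemma \ref{lem:40} supplies a piecewise linear map $f : (\mbf{I}^3, v_0) \to (X, x_0)$ and a cubical balloon $(\sigma', \tau')$ with $(\sigma, \tau) = f \circ (\sigma', \tau')$; Propositions \ref{prop:10}, \ref{prop:11}, and \ref{prop:20} ensure that both sides of the desired equality pull back correctly under $f$, and $f^*\gamma$ is the 3-curvature of $(f^*\alpha, f^*\beta)$ by inspection of Definition \ref{dfn:32}. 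Next, setting $(\sigma_i, \tau_i) := \opn{tes}^k_i(\sigma, \tau)$, Lemma \ref{lem:25} gives the factorization $\opn{MI}(\alpha, \beta \vert \partial(\sigma, \tau)) = \prod_i \opn{MI}(\alpha, \beta \vert \partial(\sigma_i, \tau_i))$, while additivity of ordinary integration combined with the abelian structure of $H_0$ (so that $\exp_H : \h_0 \to H_0$ is a group homomorphism) yields the matching factorization $\opn{MI}(\alpha, \gamma \vert \tau) = \prod_i \opn{MI}(\alpha, \gamma \vert \tau_i)$.

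For sufficiently large $k$ each $(\sigma_i, \tau_i)$ is $(\alpha, \beta)$-tiny. I would then set $\lambda_i := \log_H \opn{MI}(\alpha, \beta \vert \partial(\sigma_i, \tau_i))$ and $\mu_i := \int_{\tau_i} \Psi_{\h, \alpha}(\gamma)$, both elements of $\h_0$ by Lemmas \ref{lem:26}, \ref{lem:39} and part (1). For good indices (those with $\alpha, \beta$ smooth on $\tau_i(\mbf{I}^3)$), Lemma \ref{lem:49} gives $\norm{\lambda_i - \epsilon_i^3 \Psi_{\h, \alpha}(\til{\gamma})(z_i)} = O(\epsilon_i^4)$ with $\epsilon_i = \epsilon/2^k$, and a midpoint Riemann sum estimate for the smooth (on smooth regions) integrand $\Psi_{\h, \alpha}(\gamma)$ gives the same main term for $\mu_i$, so $\norm{\lambda_i - \mu_i} = O(\epsilon_i^4)$. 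For bad indices I would use Lemma \ref{lem:26} together with a trivial bound on $\mu_i$ to get $\norm{\lambda_i - \mu_i} = O(\epsilon_i^3)$, and a 3-dimensional analog of Lemma \ref{lem:51} (the singular loci of $\alpha, \beta$ are at most 2-dimensional) bounds the number of bad indices by $O(4^k)$. Summing gives
\[ \Norm{\bosum_i \lambda_i - \bosum_i \mu_i} \;\leq\; 8^k \cdot O(\epsilon_i^4) + 4^k \cdot O(\epsilon_i^3) \;=\; O(\epsilon^3/2^k) \;\to\; 0. \]
Since $\sum_i \mu_i = \int_\tau \Psi_{\h, \alpha}(\gamma)$ is independent of $k$, and $\exp_H(\sum_i \lambda_i) = \opn{MI}(\alpha, \beta \vert \partial(\sigma, \tau))$ for every $k$, continuity of $\exp_H$ forces $\opn{MI}(\alpha, \beta \vert \partial(\sigma, \tau)) = \exp_H(\int_\tau \Psi_{\h, \alpha}(\gamma)) = \opn{MI}(\alpha, \gamma \vert \tau)$.

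The main conceptual obstacle will be the derivation of the infinitesimal holonomy identity in part (1); it is the crucial bridge between the piecewise smooth $\Phi_X$ and the connection $\alpha$, and must be handled with some care because the holonomy condition is a global, path-dependent constraint while the conclusion is a pointwise differential identity. The main technical hurdle in part (2) is tight bookkeeping of orders of magnitude: the scaling $8^k \cdot \epsilon_i^4 = \epsilon^4/2^k$ is only marginally enough to control the good-index error, and the bound $O(4^k)$ on bad indices is exactly what is required for the non-smooth contributions to vanish in the limit.
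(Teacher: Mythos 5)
Your part (2) follows essentially the same route as the paper's proof: reduction to a cubical balloon in $(\mbf{I}^3, v_0)$ via Lemma \ref{lem:40} and functoriality (Propositions \ref{prop:10} and \ref{prop:20}), factorization over the binary tessellation via Lemma \ref{lem:25} and additivity of the abelian twisted integral, and then the good/bad index comparison using Lemma \ref{lem:49} for good indices, Lemma \ref{lem:26} for bad ones, and an $O(4^k)$ bound on bad indices, with the sum $8^k\cdot O(\epsilon_i^4) + 4^k\cdot O(\epsilon_i^3) \to 0$; the only cosmetic differences are that you compare against the exact integrals $\int_{\tau_i}\Psi_{\h,\alpha}(\gamma)$ and use centrality of $\h_0$ to write the product of exponentials as one exponential, where the paper compares against a midpoint Riemann sum and invokes Theorem \ref{thm:6}(iv); your $O(\epsilon_i^4)$ midpoint claim needs Lipschitz control of $\Psi_{\h,\alpha}(\gamma)$ on smooth cubes (available from Proposition \ref{prop:21}), though even a modulus-of-continuity bound suffices since the limit is $k\to\infty$ at fixed $\epsilon$. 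Part (1), however, is a genuinely different argument. The paper never derives the differential form of the holonomy condition (it explicitly declines to, in the remark following Definition \ref{dfn:17}); instead it proves inertness with the same tiny-balloon estimates that drive part (2): $\log_H \opn{MI}(\alpha,\beta \vert \partial(\sigma,\tau))$ lies in $\h_0$ by Lemmas \ref{lem:39} and \ref{lem:26}, Lemma \ref{lem:49} shows this logarithm equals $\epsilon^3\cdot\Psi_{\h}(g\cdot g_0)(\til{\gamma}(z))$ up to $O(\epsilon^4)$, and since $\h_0$ is closed and $\Psi_{\h}(g\cdot g_0)^{-1}$ is uniformly bounded, shrinking the cube around $z$ forces $\til{\gamma}(z)\in\opn{Ker}(\Phi_X(z))$. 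Your Bianchi-style computation is more conceptual and purely local, but it requires you to actually establish the identity $\d\Phi_X = \Phi_X\circ\psi_{\h}(\alpha) - \opn{ad}_{\g}(\alpha)\circ\Phi_X$, which is real extra work the paper sidesteps: the holonomy condition of Definition \ref{dfn:8} only concerns strings starting at $x_0$, so you must first deduce the two-point relation $\Phi_X(x_1)\circ\Psi_{\h}(g_\rho)=\opn{Ad}_{\g}(g_\rho)\circ\Phi_X(x_2)$ by composing a fixed string from $x_0$ to $x_1$ with the short segment $\rho$ and cancelling $\opn{Ad}_{\g}$ of the fixed holonomy (using Proposition \ref{prop:1}(1)); then the first-order expansions of Proposition \ref{prop:21}, applied to the representations $\Psi_{\h}$ and $\opn{Ad}_{\g}$, together with Taylor expansion of $\Phi_X$ on the interiors of the simplices of a common smoothing triangulation for $\alpha$, $\beta$ and $\Phi_X$, give the identity on a dense open subset of each simplex, and it extends by continuity; with that in hand your computation $\Phi_X(\gamma)=\d(\Phi_X(\beta))+[\alpha,\Phi_X(\beta)]=0$ is correct. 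In short: your route buys a clean local identity at the price of proving a differential equation the paper deliberately avoids, while the paper's route stays entirely within its already-established integral estimates.
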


Note that assertion (1) of the theorem is a {\em generalized Bianchi identity}. 

\begin{proof}
(1) Since $\gamma$ is a $3$-form, it is enough to prove that the form 
$\gamma|_Z \in \Omega_{\mrm{pws}}^3(Z) \otimes \h$
is inert for every cube $Z$ in $X$. 
Given such a cube $Z$, choose a linear map $f : \mbf{I}^3 \to X$
such that $Z = f(\mbf{I}^3)$. Then $\gamma|_Z$ is inert if and only if
$f^*(\gamma) \in \Omega^3_{\mrm{pws}}(\mbf{I}^3) \otimes \h$
is inert, with respect to the induced Lie quasi crossed module with additive
feedback $f^*(\mbf{C} / X)$. Hence we might as well assume that 
$(X, x_0) = (\mbf{I}^3, v_0)$. 

Now the singular locus of $\alpha$ and the singular locus of $\beta$ are
contained in a finite union of polygons in $\mbf{I}^3$. So by continuity it is
enough to show that $\gamma|_Z$ is inert for cubes
$Z \subset \mbf{I}^3$ such that $\alpha|_Z$ and $\beta|_Z$ are smooth. 
In this case $\gamma$ is also smooth. 

Given such a cube $Z$, choose an orientation on it, and let
$\til{\gamma} \in \mcal{O}(Z) \otimes \h$ be the coefficient of 
$\gamma|_Z$ (in the sense of Definition \ref{dfn:24}). 
We have to prove that the function $\til{\gamma}$ is inert; namely that 
$\til{\gamma}(z) \in \opn{Ker}(\Phi_X(z))$ for every $z \in Z$. 
Again by continuity, it is enough to look at $z \in \opn{Int} Z$.

We are allowed to move the cube $Z$ around the point $z$ and to shrink it. 
Hence it is enough
to take an  $(\alpha, \beta)$-tiny balloon $(\sigma, \tau)$, and 
to show that $\til{\gamma}(z) \in \opn{Ker}(\Phi_X(z))$ for the midpoint
$z := \tau(\smfrac{1}{2}, \smfrac{1}{2}, \smfrac{1}{2})$ of
$Z := \tau(\mbf{I}^3)$. 

Let $g$ and $g_0$ be the group elements from equation
(\ref{eqn:215}). By Lemma \ref{lem:39} we have 
\[ \opn{MI} \bigl( \alpha, \beta \vert \partial (\sigma, \tau) \in H_0 , \]
and by Lemma \ref{lem:26} we have
\[ \opn{MI} \bigl( \alpha, \beta \vert 
\partial (\sigma, \tau) \bigr) \in V_0(H) . \]
Taking logarithms we see that 
\[ \log_H \bigl( \opn{MI} \bigl( \alpha, \beta \vert \partial (\sigma, \tau)
\bigr) \in \h_0 . \]
By Lemma \ref{lem:49} it follows that the distance of the
element \lb $\Psi_{\h}(g \cdot g_0)(\til{\gamma}(z))$ from the linear subspace
$\h_0$ is at most $c_{10} \cdot \epsilon^4$.

Now by Proposition \ref{prop:13}(1) there is a uniform bound on the norm of the
operator $\Psi_{\h}(g \cdot g_0)^{-1}$; say $c'$. So the distance of
$\til{\gamma}(z)$
from the subspace
$\opn{Ker}(\Phi_X(z)) \subset \h$
is at most $c' \cdot c \cdot \epsilon$.

Since we can make $\epsilon$ arbitrarily small, we conclude that
$\til{\gamma}(z) \in \lb \opn{Ker}(\Phi_X(z))$. 

\medskip \noindent
(2) By the functoriality of 
$\opn{MI} \bigl( \alpha, \beta \vert \partial (\sigma, \tau)\bigr)$
and 
$\opn{MI} (\alpha, \gamma \vert \tau)$
(see Propositions \ref{prop:10} and \ref{prop:20}), and using the construction
of Lemma \ref{lem:40}, we can assume that 
$(X, x_0) = (\mbf{I}^3, v_0)$ and 
$\opn{len}(\sigma) \leq 2$. 

Take any $k \geq 0$. Consider the sequence
\[ \opn{tes}^k(\sigma, \tau) = 
\bigl( (\sigma, \tau) \circ (\sigma^{k}_i, \tau^{k}_i) 
\bigr)_{i = 1, \ldots, 8^k} \]
which was defined in Subsection \ref{subsec:ballbound}.
The twisted multiplicative integration of inert forms is multiplicative:
\[ \opn{MI} (\alpha, \gamma \vert \tau) = 
\prod_{i = 1, \ldots, 8^k} \, 
\opn{MI} \bigl( \alpha, \gamma \vert \tau \circ \tau^k_i) . \]
On the other hand, by Lemma \ref{lem:25} we know that
\[ \opn{MI} \bigl( \alpha, \beta \vert \partial (\sigma, \tau) \bigr) = 
\prod_{i = 1, \ldots, 8^k} \, 
\opn{MI} \bigl( \alpha, \beta \vert \partial
((\sigma, \tau) \circ (\sigma^{k}_i, \tau^{k}_i)) \bigr) . \]
So it suffices to prove that 
\[ \opn{MI} \bigl( \alpha, \beta \vert \partial
((\sigma, \tau) \circ (\sigma^{k}_i, \tau^{k}_i)) \bigr) =
\opn{MI} \bigl( \alpha, \gamma \vert \tau \circ \tau^k_i) \]
for every $i \in \{ 1, \ldots, 8^k \}$. 

If $k$ is large enough then all the balloons in
$\opn{tes}^k (\sigma, \tau)$
are $(\alpha, \beta)$-tiny. We conclude that it suffices to prove 
the equality
\[  \opn{MI} \bigl( \alpha, \beta \vert \partial (\sigma, \tau)
\bigr) = 
\opn{MI} (\alpha, \gamma \vert \tau)  \]
for an $(\alpha, \beta)$-tiny balloon $(\sigma, \tau)$ in $(\mbf{I}^3, v_0)$.

Suppose we are given an $(\alpha, \beta)$-tiny balloon $(\sigma, \tau)$, and a
natural number $k$. Let $\epsilon := \opn{side}(\tau)$. 
For any index $i \in \{ 1, \ldots, 8^k \}$ let
$Z_i := \lb (\tau \circ \tau^k_i)(\mbf{I}^3)$, which is an oriented cube in
$\mbf{I}^3$ of side $(\smfrac{1}{2})^k \cdot \epsilon$, and let
$z_i :=  (\tau \circ \tau^k_i)(\smfrac{1}{2}, \smfrac{1}{2}, \smfrac{1}{2})$.
Define
\[ \opn{good}(\tau, k) := \{ i \mid \alpha|_{Z_i} \textup{ and } 
\beta|_{Z_i} \textup { are smooth } \} , \]
and define $\opn{bad}(\tau, k)$ to be the complement of $\opn{good}(\tau, k)$ in
$\{ 1, \ldots, 8^k \}$.
Since the singular loci of $\alpha$ and $\beta$ are contained in a finite union
of polygons, it follows that 
\[ \abs{ \opn{bad}(\tau, k) } \leq 
a_2(\alpha, \beta) \cdot 4^k + a_0(\alpha, \beta) \]
for some constants $a_0(\alpha, \beta), a_2(\alpha, \beta)$, that are
independent of $k$; cf.\ Lemma \ref{lem:51}.

For $i \in \opn{good}(\tau, k)$ let
$\til{\gamma}_i \in \mcal{O}(Z_i) \otimes \h$ be the coefficient of 
$\gamma|_{Z_i}$, in the sense of Definition \tup{\ref{dfn:24}}, 
and let
\[ \mu_i := (\smfrac{1}{2})^{3k} \cdot \epsilon^3 \cdot 
\Psi_{\h, \alpha}(\til{\gamma}_i)(z_i) \in \h_0  . \]
According to Lemmas \ref{lem:49} and \ref{lem:48} we know that 
\[ \Norm{ \log_H \bigl( \opn{MI} \bigl(\alpha, \beta \vert 
\partial ( (\sigma, \tau) \circ (\sigma^k_i, \tau^k_i) ) \bigr) \bigr)
- \mu_i } \leq 
(\smfrac{1}{2})^{4k} \cdot \epsilon^4 \cdot c_{10}(\alpha, \beta) \, .\]

On the other hand, for $i \in \opn{bad}(\tau, k)$ let
$\mu_i := 0 \in \h$. 
According to Lemma \ref{lem:26} we have the inequality
\[ \Norm{ \log_H \bigl( \opn{MI} \bigl(\alpha, \beta \vert 
\partial ( (\sigma, \tau) \circ (\sigma^k_i, \tau^k_i) ) \bigr) \bigr)
- \mu_i } \leq 
(\smfrac{1}{2})^{3k} \cdot \epsilon^3 \cdot c_6(\alpha, \beta) \, .\]

Let 
\[ \opn{RS}_k(\alpha, \gamma \vert \tau) := 
\sum_{i = 1}^{8^k} \, \mu_i \in \h_0 . \]
Applying property (iv) of Theorem \ref{thm:6} with the inequalities above we
get
\[ \begin{aligned}
& \Norm{ \log_H \bigl( \opn{MI} \bigl(\alpha, \beta \vert \sigma, \tau \bigr)
\bigr) - \opn{RS}_k(\alpha, \gamma \vert \tau) } \\
& \qquad \leq 
c_0(H) \cdot \bigl( 
8^k \cdot (\smfrac{1}{2})^{4k} \cdot \epsilon^4 \cdot c_{10}(\alpha, \beta) 
\\
& \qquad \qquad +
(a_2(\alpha, \beta) \cdot 4^k + a_0(\alpha, \beta)) \cdot 
(\smfrac{1}{2})^{3k} \cdot \epsilon^3 \cdot c_6(\alpha, \beta) \bigr) .
\end{aligned} \]
We see that the difference tends to $0$ as $k \to \infty$. But
\[ \lim_{k \to  \infty} \opn{RS}_k(\alpha, \gamma \vert \tau) = 
\log_H \bigl( \opn{MI}(\alpha, \gamma \vert \tau) \bigr) . \]
\end{proof}

\begin{rem}
Part (1) of Theorem \ref{thm:10} is a variant of the Bianchi identity.
\end{rem}

%% section %%
\cleardoublepage
\section{Multiplicative Integration on Triangular Kites}
\label{sec:simpl}

In this section $(X, x_0)$ is a pointed manifold. 
Recall that by Convention \ref{conv:500} this means that $X$ is a smooth
manifold with sharp
corners (cf.\ Definition \ref{dfn:505}); so it could be a polyhedron.
We introduce triangular kites in $(X, x_0)$ and the corresponding multiplicative
integration. 

\subsection{Triangular Kites and balloons}
%\index{Triangular kites and Balloons}
Recall that the polyhedra $\mbf{I}^1$ and $\bsym{\Delta}^1$ are identified via
the linear isomorphism $\bsym{\Delta}^1 \iso \mbf{I}^1$ that on vertices is
$(v_0, v_1) \mapsto (v_0, v_1)$. 

A {\em piecewise smooth path} in $X$ is a 
piecewise smooth map $\sigma: \mbf{I}^1 \to X$. 
When convenient we shall view such a path as a piecewise smooth map 
$\bsym{\Delta}^1 \to X$ using the identification above.

Suppose $\sigma_1, \sigma_2 : \mbf{I}^1 \to X$ are piecewise smooth paths
satisfying $\sigma_1(v_1) = \sigma_2(v_0)$. 
Their {\em product} is the piecewise smooth path
\[ \sigma_1 * \sigma_2 : \mbf{I}^1 \to X \]
defined as follows:
\[ (\sigma_1 * \sigma_2)(a) := 
\begin{cases}
\sigma_1(2 a) & \text{ if } 0 \leq a \leq \smfrac{1}{2} \\
\sigma_2(2 a - 1) & \text{ if } \smfrac{1}{2} \leq a \leq 1 .
\end{cases} \]
Note that this is the standard product used in homotopy theory, and it is
distinct from the concatenation operation on strings. In particular this product
is not associative nor unital.

The {\em inverse} of a piecewise smooth path
$\sigma : \mbf{I}^1 \to X$ is the piecewise smooth path
$\sigma^{-1} : \mbf{I}^1 \to X$ defined by
\[ \sigma^{-1}(a) := \sigma(1 - a) . \]

Suppose $Z$ is a polyhedron, $f : Z \to X$ is a piecewise smooth map, and 
$\sigma = (\sigma_1, \ldots, \sigma_m)$ is a
string in $Z$, with $m \geq 1$. This data gives rise to a piecewise smooth path
$f \circ \sigma$ in $X$ defined as follows:
\begin{equation} \label{eqn:227}
f \circ \sigma := \bigl( (f \circ \sigma_1) * (f \circ \sigma_2) \bigr)
* \cdots * (f \circ \sigma_m) .
\end{equation}
In this formula we view each $f \circ \sigma_i$ as a piecewise smooth path in
$X$, and the multiplication $*$ is that of paths. 
For the empty string $\sigma$ (i.e.\ $m = 0$) the path 
$f \circ \sigma$ is not always defined; but if $X$ has a base point $x_0$, then
we usually define $f \circ \sigma$ to be the constant path $x_0$.

\begin{dfn}
A {\em piecewise smooth triangular kite}
\index{Piecewise smooth triangular kite}
in $(X, x_0)$ is a pair 
$(\sigma, \tau)$, consisting of piecewise smooth maps
$\sigma : \mbf{I}^1 \to X$ and
$\tau : \bsym{\Delta}^2 \to X$, 
satisfying $\sigma(v_0) = x_0$ and
$\sigma(v_1) = \tau(v_0)$.
\end{dfn}

See Figure \ref{fig:80} for an illustration.

The {\em boundary} of $\bsym{\Delta}^2$ is the string
\begin{equation} \label{eqn:160}
\partial \bsym{\Delta}^2 := (v_0, v_1) * (v_1, v_2) * (v_2, v_0) 
\end{equation}
(consisting of $3$ pieces) in $\bsym{\Delta}^2$.

\begin{dfn} \label{dfn:31}
Let $(\sigma, \tau)$ be a piecewise smooth triangular kite in \lb $(X, x_0)$. 
Its {\em boundary} is the piecewise smooth path
\[ \partial (\sigma, \tau) := \bigl( \sigma * 
(\tau \circ \partial \bsym{\Delta}^2) \bigr) * \sigma^{-1} . \]
\end{dfn}

See Figure \ref{fig:81}.

\begin{dfn}
A {\em piecewise smooth triangular balloon} 
\index{Piecewise smooth triangular balloon}
in $(X, x_0)$ is a pair 
$(\sigma, \tau)$, consisting of piecewise smooth maps
$\sigma : \mbf{I}^1 \to X$ and
$\tau : \bsym{\Delta}^3 \to X$, 
satisfying $\sigma(v_0) = x_0$ and
$\sigma(v_1) = \tau(v_0)$.
\end{dfn}

See Figure \ref{fig:82} for an illustration.

Let $(Z, z_0)$ be a pointed polyhedron. 
By {\em linear triangular kite} in $(Z, z_0)$ we mean the obvious variant of
linear quadrangular kite. Namely this is a pair 
$(\sigma, \tau)$, consisting of a string $\sigma$ in $Z$ and a linear map
$\tau : \bsym{\Delta}^3 \to X$. These must satisfy 
$\sigma(v_0) = z_0$ and
$\sigma(v_1) = \tau(v_0)$.
Likewise we define {\em linear triangular balloons}.

\begin{dfn}
The {\em boundary} of $\bsym{\Delta}^3$ is the sequence of linear triangular
kites
\[ \partial \bsym{\Delta}^3 =
\bigl( \partial_1 \bsym{\Delta}^3, 
\partial_2 \bsym{\Delta}^3, 
\partial_3 \bsym{\Delta}^3, 
\partial_4 \bsym{\Delta}^3 \bigr) \]
in $(\bsym{\Delta}^3, v_0)$
defined as follows.
\begin{itemize}
\item Let $\sigma^{\flat}_2, \sigma^{\flat}_3, \sigma^{\flat}_4$ be the
empty strings in $\bsym{\Delta}^3$.  And let 
$\sigma^{\flat}_1 : \mbf{I}^1 \to \bsym{\Delta}^3$ 
be the linear map defined on vertices by
\[ \sigma^{\flat}_1(v_0, v_1) := (v_0, v_1) . \]

\item Let 
$\tau^{\flat}_i : \bsym{\Delta}^2 \to \bsym{\Delta}^3$
be the linear maps given on vertices by:
\[ \tau^{\flat}_1(v_0, v_1, v_2) := (v_1, v_2, v_3) , \]
\[ \tau^{\flat}_2(v_0, v_1, v_2) := (v_0, v_1, v_3) , \]
\[ \tau^{\flat}_3(v_0, v_1, v_2) := (v_0, v_3, v_2) , \]
\[ \tau^{\flat}_4(v_0, v_1, v_2) := (v_0, v_2, v_1) . \]

\item The kite are 
\[ \partial_i \bsym{\Delta}^3 := (\sigma^{\flat}_i, \tau^{\flat}_i) . \]
\end{itemize}
\end{dfn}

See Figure \ref{fig:83}.

Warning: the kites $(\sigma^{\flat}_i, \tau^{\flat}_i)$ above should be confused
with the quadrangular kites from Definition \ref{dfn:36}, despite the shared
notation. 

\begin{figure} 
\includegraphics[scale=0.3]{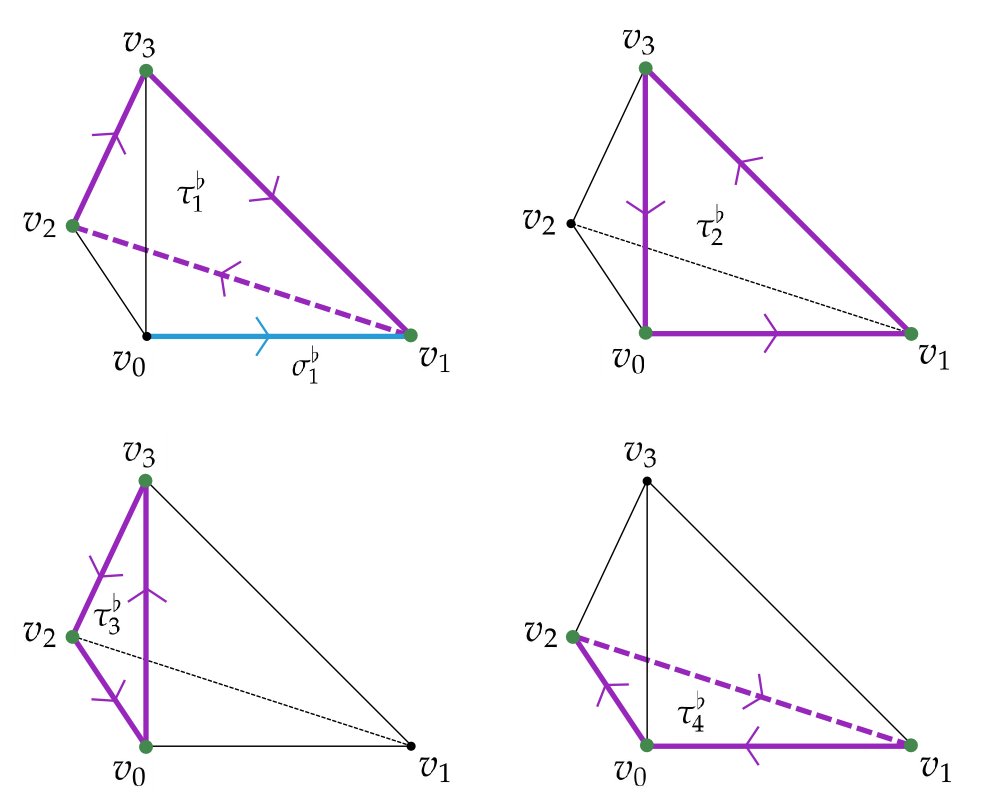}
\caption{The boundary of $\mbf{\Delta}^3$.} 
\label{fig:83}
\end{figure}

\begin{dfn} \label{dfn:30}
Let $(\sigma, \tau)$ be a piecewise smooth triangular balloon in $(X, x_0)$. Its
{\em
boundary} is the sequence of piecewise smooth triangular kites
\[ \partial (\sigma, \tau) :=
\bigl( \partial_1 (\sigma, \tau), \, \partial_2 (\sigma, \tau), \,
\partial_3 (\sigma, \tau), \, \partial_4 (\sigma, \tau) \bigr) \]
in $(X, x_0)$, where
\[ \partial_i (\sigma, \tau) :=  
(\sigma, \tau)  \circ (\sigma^{\flat}_i, \tau^{\flat}_i) . \]
\end{dfn}

See Figure \ref{fig:86} for an illustration.

\subsection{MI on Triangular Kites}
As before, $(X, x_0)$ is a pointed manifold.

Let
\[ \mbf{C} := (G, H, \Psi, \Phi_0) \]
be a Lie quasi crossed module (Definition \ref{dfn:22}). As usual we write
$\g := \opn{Lie}(G)$ and $\h := \opn{Lie}(H)$. 
An additive feedback for 
$\mbf{C}$ over $(X, x_0)$ is an element
\[ \Phi_X \in \mcal{O}(X) \otimes \opn{Hom}(\h, \g) \]
satisfying condition ($**$) of Definition \ref{dfn:21}.
Just like in Definition \ref{dfn:6}, we call the data 
\[ \mbf{C} / X := (G, H, \Psi, \Phi_0, \Phi_X) \]
a {\em Lie quasi crossed module with additive feedback} over $(X, x_0)$.

\begin{dfn}
Let $\alpha \in \Omega^1(X) \otimes \g$, and let $\sigma$ be a piecewise smooth
path in $X$.
Consider the piecewise smooth differential form
\[ \alpha' := \sigma^*(\alpha) \in 
\Omega^1_{\mrm{pws}}(\mbf{I}^1) \otimes \g . \]
We define
\[ \opn{MI}(\alpha \vert \sigma) := 
\opn{MI}(\alpha' \vert \mbf{I}^1) \in G  \]
(cf.\ Definition \ref{dfn:28}).
\end{dfn}

\begin{prop} \label{prop:23}
Let $\alpha \in \Omega^1(X) \otimes \g$.
\begin{enumerate}
\item Suppose $\sigma_1$ and $\sigma_2$ are piecewise smooth paths in $X$ such
that 
$\sigma_1 * \sigma_2$ is defined. Then
\[ \opn{MI}(\alpha \vert \sigma_1 * \sigma_2) = 
\opn{MI}(\alpha \vert \sigma_1) \cdot \opn{MI}(\alpha \vert \sigma_2) . \]

\item Let $\sigma$ be a piecewise smooth path in $X$. Then
\[ \opn{MI}(\alpha \vert \sigma^{-1}) = \opn{MI}(\alpha \vert \sigma)^{-1} . \]

\item Suppose $\sigma'$ is a string in a polyhedron $Z$, $f : Z \to X$ is a
piecewise smooth map, and 
$\sigma := f \circ \sigma'$ is the path in $X$ gotten by the operation
\tup{(\ref{eqn:227})}. Then
\[ \opn{MI}(\alpha \vert \sigma) = 
\opn{MI}(f^*(\alpha) \vert \sigma') . \]
\end{enumerate}
\end{prop}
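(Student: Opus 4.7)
The whole proposition is a compatibility statement between the path-version of $\opn{MI}$ just introduced, the string-version of $\opn{MI}$ from Section \ref{sec:dim1}, and pullback; so the plan is to reduce everything to the polyhedral results already proved (Propositions \ref{prop:5}, \ref{prop:1}, \ref{prop:6}, \ref{prop:12}) via a careful choice of reparametrizations of $\mbf{I}^1$.

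For part (1), I will use the linear maps $\rho_1, \rho_2 : \mbf{I}^1 \to \mbf{I}^1$ given on vertices by $\rho_1(v_0, v_1) = (v_0, \smfrac{1}{2})$ and $\rho_2(v_0, v_1) = (\smfrac{1}{2}, v_1)$; these are precisely the pieces of the first binary tessellation $\opn{tes}^1 \mbf{I}^1$ of Definition \ref{dfn:41}. Setting $\alpha' := (\sigma_1 * \sigma_2)^*(\alpha) \in \Omega^1_{\mrm{pws}}(\mbf{I}^1) \otimes \g$, Proposition \ref{prop:5}(3) with $k = 1$ gives
\[ \opn{MI}(\alpha' \vert \mbf{I}^1) = \opn{MI}(\alpha' \vert \rho_1) \cdot \opn{MI}(\alpha' \vert \rho_2) . \]
By the definition of the product path one has $(\sigma_1 * \sigma_2) \circ \rho_1 = \sigma_1$ and $(\sigma_1 * \sigma_2) \circ \rho_2 = \sigma_2$ as piecewise smooth maps $\mbf{I}^1 \to X$, so by the Definition of $\opn{MI}(\alpha \vert -)$ for paths together with Definition \ref{dfn:12} one has $\opn{MI}(\alpha' \vert \rho_i) = \opn{MI}(\alpha \vert \sigma_i)$ for $i=1,2$.

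For part (2) I will use the involution $\opn{flip} : \mbf{I}^1 \to \mbf{I}^1$, $\opn{flip}(v_0, v_1) = (v_1, v_0)$. The identity $\sigma^{-1} = \sigma \circ \opn{flip}$ holds as piecewise smooth maps, so with $\alpha' := \sigma^*(\alpha)$ one has $(\sigma^{-1})^*(\alpha) = \opn{flip}^*(\alpha')$, and then Proposition \ref{prop:1}(2) (whose proof rests exactly on the orientation-reversing behaviour of $\opn{flip}$) yields the assertion after unwinding definitions.

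For part (3), say $\sigma' = (\sigma'_1, \ldots, \sigma'_m)$. The path $\sigma = f \circ \sigma'$ was defined in (\ref{eqn:227}) as an iterated $*$-product of the piecewise smooth paths $f \circ \sigma'_i$. Applying part (1) inductively (which is valid regardless of the parenthesization, since the $*$-product collapses on the right-hand side) gives
\[ \opn{MI}(\alpha \vert \sigma) = \prod_{i = 1}^{m} \opn{MI}(\alpha \vert f \circ \sigma'_i) . \]
Proposition \ref{prop:12} (or equivalently Proposition \ref{prop:6}, applied piece-by-piece) identifies each factor with $\opn{MI}(f^*(\alpha) \vert \sigma'_i)$, and Definition \ref{dfn:10} then assembles the product into $\opn{MI}(f^*(\alpha) \vert \sigma')$. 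The only mild nuisance is that the $*$-product is not associative, so some care is needed in phrasing the induction in part (1), but this is a formality and not a real obstacle; the substantive work has already been done in Section \ref{sec:dim1}.
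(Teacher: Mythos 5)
Your proposal is correct and follows essentially the same route as the paper: part (1) via the two pieces of $\opn{tes}^1 \mbf{I}^1$ and Proposition \ref{prop:5}(3), part (2) by reducing to Proposition \ref{prop:1}(2) through the flip, and part (3) by part (1) plus induction on the pieces of $\sigma'$ together with the functoriality results of Section \ref{sec:dim1}.
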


\begin{proof}
(1) We could use the $2$-dimensional Stokes Theorem; but there is an elementary
proof. Let 
$\sigma^1_1, \sigma^1_2 : \mbf{I}^1 \to \mbf{I}^1$ be the linear maps belonging
to $\opn{tes}^1 \mbf{I}^1$, as in Definition \ref{dfn:41}. 
Let $\sigma : \mbf{I}^1 \to X$ be the piecewise smooth map such that
$\sigma \circ \sigma^1_i = \sigma_i$
for $i = 1, 2$. And let $\alpha' := \sigma^*(\alpha)$. 
Now by definition of $*$ we have
$\sigma = \sigma_1 * \sigma_2$ as paths in $X$. Hence
\[ \opn{MI}(\alpha \vert \sigma_1 * \sigma_2) = 
\opn{MI}(\alpha \vert \sigma) = \opn{MI}(\alpha' \vert \mbf{I}^1) . \]
Likewise
\[ \opn{MI}(\alpha \vert \sigma_i) = 
\opn{MI}(\alpha \vert \sigma \circ \sigma^1_i) = 
\opn{MI}(\alpha' \vert \sigma^1_i) . \]
On the other hand, by Proposition \ref{prop:5}(3) we know that
\[ \opn{MI}(\alpha' \vert \mbf{I}^1) = 
\opn{MI}(\alpha' \vert \sigma^1_1) \cdot \opn{MI}(\alpha' \vert \sigma^1_2)  .
\]

\medskip \noindent
(2) This is immediate from Proposition \ref{prop:1}(2).

\medskip \noindent
(3) This follows part (1) and induction on the number of pieces in the string
$\sigma'$.
\end{proof}

In order to define MI on triangular kites we shall need the following geometric
construction. Consider the linear triangular kite 
$(\sigma', \tau')$ in $(\mbf{I}^2, v_0)$, where the string $\sigma'$ has one
linear piece $\sigma' : \mbf{I}^1 \to \mbf{I}^2$ defined on vertices by
\[ \sigma'(v_0, v_1) := (v_0, (\smfrac{1}{2}, \smfrac{1}{2}) ) . \]
The linear map 
$\tau' : \bsym{\Delta}^2 \to \mbf{I}^2$ is defined on vertices by
\[ \tau'(v_0, v_1, v_2) := ( (\smfrac{1}{2}, \smfrac{1}{2}),
(1, \smfrac{1}{2}), (\smfrac{1}{2}, 1) ) . \]
We also need the linear quadrangular kite 
$(\sigma', \tau'')$ in $(\mbf{I}^2, v_0)$, where the linear map 
$\tau'' : \mbf{I}^2 \to \mbf{I}^2$ is defined on vertices by
\[ \tau''(v_0, v_1, v_2) := ( (\smfrac{1}{2}, \smfrac{1}{2}),
(1, \smfrac{1}{2}), (\smfrac{1}{2}, 1) ) . \]
We write
$Z := \sigma'(\mbf{I}^1)$ and $Y := \tau'(\bsym{\Delta}^2)$. 

Consider the  canonical  linear embedding 
$\bsym{\Delta}^2 \to  \mbf{I}^2$ 
which on vertices is
\[ (v_0, v_1, v_2) \mapsto (v_0, v_1, v_2) . \]
Let $h : \mbf{I}^2 \to \bsym{\Delta}^2$ be the piecewise linear retraction
which is linear on $\bsym{\Delta}^2$ and on the the triangle complementary to
it, and satisfies $h(1, 1) = v_1$. See Figure \ref{fig:55}.

\begin{lem}
There exists a piecewise linear retraction
$g : \mbf{I}^2 \to Y \cup Z$, such that
\[ g \circ \tau'' = \tau' \circ h \]
as piecewise linear maps $\mbf{I}^2 \to \mbf{I}^2$.
\end{lem}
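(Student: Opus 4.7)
The plan is to construct $g$ by pieces, forced on the quarter square $\tau''(\mbf{I}^2)$ by the required identity $g \circ \tau'' = \tau' \circ h$, and then extended over the complementary L-shape by suitable projections onto $Y \cup Z$.

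First I would set $g|_{\tau''(\mbf{I}^2)} := \tau' \circ h \circ (\tau'')^{-1}$, which is piecewise linear because $\tau''$ is a linear isomorphism onto its image and both $\tau'$ and $h$ are piecewise linear. The nontrivial check here is that this map restricted to $Y$ is the identity; but $\tau''|_{\bsym{\Delta}^2}$ coincides with $\tau'$ (same action on $v_0, v_1, v_2$), and $h$ is the identity on $\bsym{\Delta}^2$, so $g$ fixes $Y$ pointwise. Geometrically, $g$ collapses the complementary triangle of the quarter square (with vertices $(1,\tfrac12),(1,1),(\tfrac12,1)$) onto the hypotenuse of $Y$.

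Next I would partition the L-shape $\mbf{I}^2 \setminus \opn{Int}\, \tau''(\mbf{I}^2)$ into three pieces and define $g$ linearly on each:
\begin{itemize}
\item On the bottom-right rectangle $[\tfrac12,1]\times[0,\tfrac12]$, use vertical projection $(x,y)\mapsto (x,\tfrac12)$, landing on the edge of $Y$ from $w$ to $(1,\tfrac12)$.
\item On the top-left rectangle $[0,\tfrac12]\times[\tfrac12,1]$, use horizontal projection $(x,y)\mapsto (\tfrac12,y)$, landing on the edge of $Y$ from $w$ to $(\tfrac12,1)$.
\item On the bottom-left square $[0,\tfrac12]\times[0,\tfrac12]$, split it by the diagonal $Z$ and project each triangle onto $Z$: the lower triangle by $(x,y)\mapsto (x,x)$ and the upper triangle by $(x,y)\mapsto (y,y)$.
\end{itemize}

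Finally I would verify consistency and the retraction property. On the two segments where the quarter square meets the L-shape, $g$ is the identity from both sides (these are edges of $Y$, fixed by both definitions). On the internal seams inside the L-shape — namely $x=\tfrac12$ for $0\le y\le\tfrac12$ and $y=\tfrac12$ for $0\le x\le\tfrac12$ — both adjacent formulas collapse the seam to the single point $w$, and on the diagonal $Z$ the two triangle formulas both give the identity, so the pieces glue into a well-defined piecewise linear map. Since each piece lands in $Y \cup Z$ and fixes $Y \cup Z$ pointwise, $g$ is a retraction. The relation $g\circ\tau'' = \tau'\circ h$ holds by construction on $\mbf{I}^2$, because $\tau''$ carries $\mbf{I}^2$ bijectively onto the quarter square where the definition of $g$ was imposed precisely to give this equality. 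The only delicate point is the glueing check, which is straightforward once the pieces are set up symmetrically with respect to $Z$ and with respect to the two edges of $Y$ meeting $w$; no further obstacle arises.
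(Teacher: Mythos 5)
Your construction is correct: the required identity forces $g$ on the quarter square $\tau''(\mbf{I}^2)$ to be $\tau' \circ h \circ (\tau'')^{-1}$ (which fixes $Y$ since $\tau''|_{\bsym{\Delta}^2} = \tau'$ and $h|_{\bsym{\Delta}^2} = \opn{id}$), and your collapse of the remaining L-shape onto the two edges of $Y$ through $w$ and onto $Z$ glues consistently along all seams, giving a piecewise linear retraction with $g \circ \tau'' = \tau' \circ h$ by construction. This is essentially the same approach as the paper, which leaves the lemma as an easy exercise pointing to Figure \ref{fig:84} and the analogous retraction built in the proof of Proposition \ref{prop:14}; you have simply written out the details.
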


\begin{proof}
Easy exercise. Cf.\ proof of Proposition \ref{prop:14}. And see Figure
\ref{fig:84}.
\end{proof}

\begin{figure} 
\includegraphics[scale=0.23]{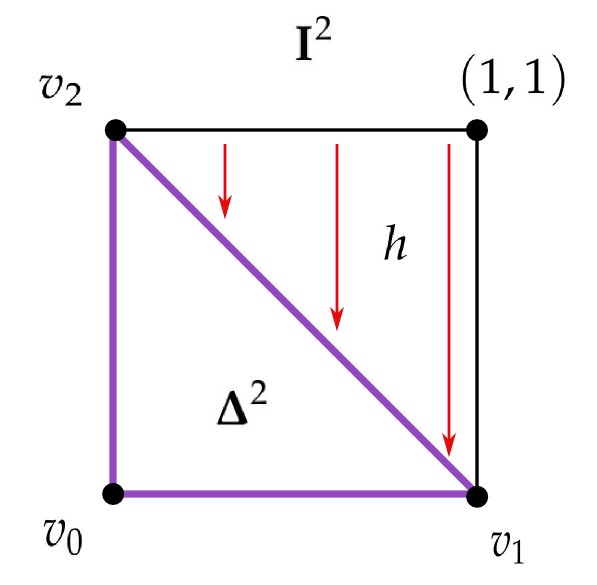}
\caption{The piecewise linear retraction $h : \mbf{I}^2 \to \bsym{\Delta}^2$.} 
\label{fig:55}
\end{figure}

\begin{figure} 
\includegraphics[scale=0.29]{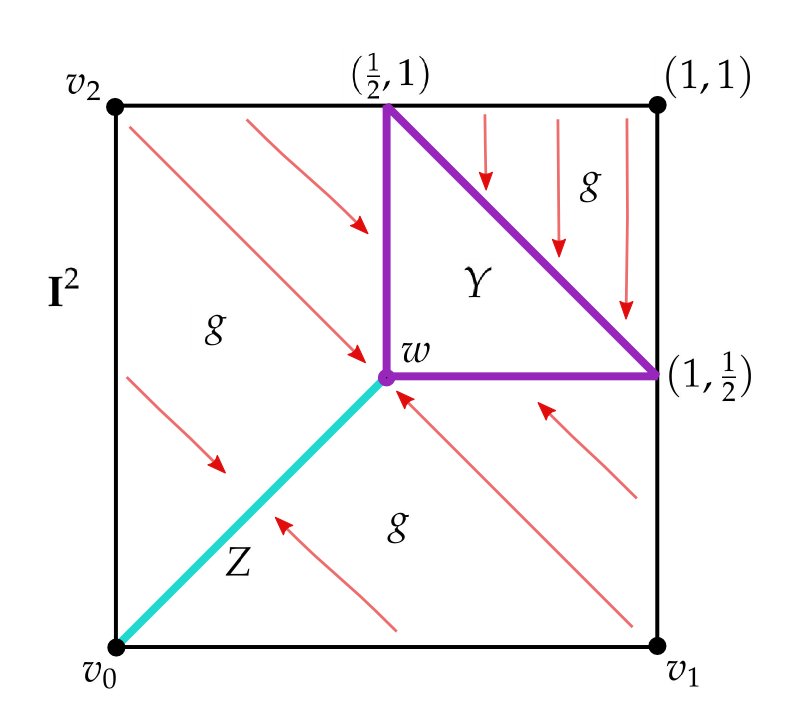}
\caption{The piecewise linear retraction $g : \mbf{I}^2 \to Y \cup Z$.
Here $w := (\smfrac{1}{2}, \smfrac{1}{2})$.} 
\label{fig:84}
\end{figure}

\begin{figure} 
\includegraphics[scale=0.35]{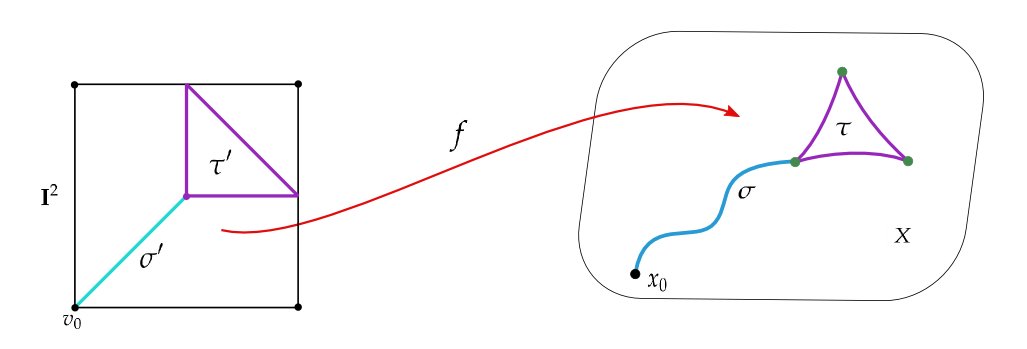}
\caption{The piecewise smooth map $f : \mbf{I}^2 \to X$
such that 
$f \circ (\sigma', \tau') = (\sigma, \tau)$.} 
\label{fig:85}
\end{figure}

Let us fix such a retraction $g$, which we also view as a piecewise linear map
$g : \mbf{I}^2 \to \mbf{I}^2$. 

\begin{dfn} \label{dfn:42}
\index{Multiplicative integral on triangular kites}
Let $\alpha \in \Omega^1(X) \otimes \g$ and 
$\beta \in \Omega^1(X) \otimes \h$. Given a piecewise smooth triangular kite 
$(\sigma, \tau)$ in $(X, x_0)$ we define its {\em multiplicative integral}
\[  \opn{MI}(\alpha, \beta \vert \sigma, \tau) \in H \]
as follows. Let $f : \mbf{I}^2 \to X$ be the unique piecewise smooth map such
that:
\begin{itemize}
\item $f \circ \sigma'  = \sigma$ as maps
$\mbf{I}^1 \to X$.
\item $f \circ \tau'  = \tau$ as maps
$\bsym{\Delta}^2 \to X$.
\item $f = f \circ g$ as maps $\mbf{I}^2 \to X$, where 
$g : \mbf{I}^2 \to \mbf{I}^2$ is the chosen retraction.
\end{itemize}
(See Figure \ref{fig:85}.) 
We get differential forms
\[ \alpha' := f^*(\alpha) \in 
\Omega^1_{\mrm{pws}}(\mbf{I}^2) \otimes \g \]
and
\[ \beta' := f^*(\beta) \in 
\Omega^2_{\mrm{pws}}(\mbf{I}^2) \otimes \h . \]
We define
\[ \opn{MI}(\alpha, \beta \vert \sigma, \tau) :=
\opn{MI}(\alpha', \beta' \vert \sigma', \tau'') , \]
where $(\sigma', \tau'')$ is the linear quadrangular kite in
$(\mbf{I}^2, v_0)$ defined above, and \lb
$\opn{MI}(\alpha', \beta' \vert \sigma', \tau'')$
is the multiplicative integral from Definition \ref{dfn:16}.
\end{dfn}

This definition might seem strange; but we shall soon see that it has all the
expected good properties.

\begin{prop}[Functoriality in $X$]
Let $e : (Y, y_0) \to (X, x_0)$ be a map of pointed manifolds, let
$\alpha \in \Omega^1(X) \otimes \g$, let
$\beta \in \Omega^2(X) \otimes \h$, 
and let $(\sigma, \tau)$ be a piecewise smooth triangular kite in $(Y, y_0)$.
Then
\[ \opn{MI}(\alpha, \beta \vert e \circ \sigma, e \circ \tau) = 
\opn{MI} \bigl( e^*(\alpha), e^*(\beta) \vert  \sigma, \tau \bigl) , \]
where the latter is calculated with respect to the Lie quasi crossed module
with additive feedback $e^*(\mbf{C} / X)$.
\end{prop}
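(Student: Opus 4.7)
The plan is to reduce the statement to the functoriality property already proved for linear quadrangular kites, namely Proposition \ref{prop:10}, by exploiting the uniqueness of the auxiliary piecewise smooth map $f$ that enters Definition \ref{dfn:42}.

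First, I would unpack both sides using Definition \ref{dfn:42}. For the left-hand side, let $f^{X} : \mbf{I}^2 \to X$ be the unique piecewise smooth map that satisfies $f^{X} \circ \sigma' = e \circ \sigma$, $f^{X} \circ \tau' = e \circ \tau$, and $f^{X} = f^{X} \circ g$; then by definition
\[ \opn{MI}(\alpha, \beta \vert e \circ \sigma, e \circ \tau) = \opn{MI}\bigl((f^{X})^*(\alpha),\, (f^{X})^*(\beta) \,\vert\, \sigma', \tau''\bigr) . \]
For the right-hand side, let $f^{Y} : \mbf{I}^2 \to Y$ be the unique piecewise smooth map associated to $(\sigma, \tau)$ by the same three conditions; then
\[ \opn{MI}(e^*(\alpha), e^*(\beta) \vert \sigma, \tau) = \opn{MI}\bigl((f^{Y})^* e^*(\alpha),\, (f^{Y})^* e^*(\beta) \,\vert\, \sigma', \tau''\bigr) . \]

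Next, I would verify that $e \circ f^{Y}$ satisfies the three defining conditions for $f^{X}$: indeed $(e \circ f^{Y}) \circ \sigma' = e \circ \sigma$, $(e \circ f^{Y}) \circ \tau' = e \circ \tau$, and $(e \circ f^{Y}) \circ g = e \circ (f^{Y} \circ g) = e \circ f^{Y}$. By the uniqueness clause of Definition \ref{dfn:42}, this forces $f^{X} = e \circ f^{Y}$ as piecewise smooth maps $\mbf{I}^2 \to X$. Consequently $(f^{X})^*(\alpha) = (f^{Y})^* e^*(\alpha)$ and $(f^{X})^*(\beta) = (f^{Y})^* e^*(\beta)$ as piecewise smooth Lie-algebra-valued forms on $\mbf{I}^2$.

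Finally, the two multiplicative integrals that we want to compare are now literally the multiplicative integral, in the sense of Definition \ref{dfn:16}, of the same integrand along the same linear quadrangular kite $(\sigma', \tau'')$ in $(\mbf{I}^2, v_0)$, so they agree. (If one prefers to emphasize the change-of-crossed-module point of view, this last step is exactly an application of Proposition \ref{prop:10} to the identity map of $(\mbf{I}^2, v_0)$ and the forms $e^*(\alpha), e^*(\beta)$, composed with the equality of pullbacks obtained above.) There is no real obstacle in this argument; the only thing one has to be slightly careful about is to notice that the retraction $g$ used in Definition \ref{dfn:42} is fixed once and for all, independently of the target manifold, so that the uniqueness step comparing $f^{X}$ and $e \circ f^{Y}$ really does go through verbatim.
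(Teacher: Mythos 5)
Your proposal is correct and follows essentially the same route as the paper, whose proof is the one-line observation that the claim is immediate from Definition \ref{dfn:42} together with $(e \circ f)^* = f^* \circ e^*$; you have simply spelled out the uniqueness argument identifying $f^X = e \circ f^Y$ that the paper leaves implicit.
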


\begin{proof}
This is immediate from the definition, since 
$(e \circ f)^* = f^* \circ e^*$ for a piecewise smooth map 
$f : \mbf{I}^2 \to Y$.
\end{proof}

\begin{prop}[Comparison to Quadrangular Kites] \label{prop:24}
Let $(Z, z_0)$ be a pointed polyhedron, 
$e : (Z, z_0) \to (X, x_0)$ a piecewise smooth map preserving base points, 
$(\sigma, \tau)$ a piecewise smooth triangular kite in $(X, x_0)$, 
$(\sigma', \tau')$ a  linear quadrangular kite in $(Z, z_0)$, 
$\alpha \in \Omega^1(X) \otimes \g$ and $\beta \in \Omega^2(X) \otimes \h$.
Assume that $\sigma = e \circ \sigma'$ as paths in $X$, and 
$\tau \circ h = e \circ \tau'$ as maps $\mbf{I}^2 \to X$, where
$h : \mbf{I}^2 \to \bsym{\Delta}^2$ is the retraction in
Figure \tup{\ref{fig:55}}.
Then
\[ \opn{MI}(\alpha, \beta \vert \sigma, \tau) =
\opn{MI}(e^*(\alpha), e^*(\beta) \vert \sigma', \tau') , \]
where the latter is calculated with respect to the Lie quasi crossed module
with additive feedback $e^*(\mbf{C} / X)$.
\end{prop}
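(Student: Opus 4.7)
The plan is to reduce both sides of the equality to a common expression of the form $\opn{MI}(\phi^*\alpha, \phi^*\beta \vert \sigma_0, \tau_0)$ on $(\mbf{I}^2, v_0)$, for a suitable piecewise smooth $\phi : \mbf{I}^2 \to X$, and then to verify termwise equality of the associated Riemann products.

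First I would apply Proposition \ref{prop:14} to the linear quadrangular kite $(\sigma', \tau')$ in $(Z, z_0)$. Inspecting the construction in the proof of that proposition, one can arrange that the output square kite $(\til\sigma, \til\tau)$ in $(\mbf{I}^2, v_0)$ satisfies $\til\tau = \tau''$, where $\tau''$ is the specific linear map from Definition \ref{dfn:42}; one may moreover use the same retraction $g : \mbf{I}^2 \to Y \cup Z_0$ in both constructions, producing a piecewise linear map $F : \mbf{I}^2 \to Z$, linear on $\til\tau(\mbf{I}^2)$, with $F \circ (\til\sigma, \til\tau) = (\sigma', \tau')$ and $F = F \circ g$. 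Note that $\til\sigma$ need not equal the single-piece string $\sigma'$ of Definition \ref{dfn:42}: both strings traverse the same segment $Z_0 \subset \mbf{I}^2$ from $v_0$ to $(\smfrac{1}{2}, \smfrac{1}{2})$, but with possibly different combinatorial decompositions. The functoriality Proposition \ref{prop:10} applied to $F$ then yields
\[
\opn{MI}(e^*\alpha, e^*\beta \vert \sigma', \tau')
= \opn{MI}\bigl((e \circ F)^*\alpha, (e \circ F)^*\beta \vert \til\sigma, \tau''\bigr).
\]

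Next I would set $\til f := e \circ F$ and compare with the unique map $f$ of Definition \ref{dfn:42}. Using the hypotheses $\sigma = e \circ \sigma'$ and $\tau \circ h = e \circ \tau'$ together with the identity $\tau' \circ h = g \circ \tau''$ from Definition \ref{dfn:42}, a direct calculation shows that $\til f \circ \tau'' = \tau \circ h$ as maps $\mbf{I}^2 \to X$. Together with $\til f = \til f \circ g$, this forces $\til f$ to coincide with $f$ on the sub-polyhedron $\tau''(\mbf{I}^2)$. The two maps may genuinely differ on the segment $Z_0$, since $f \circ \sigma' = \sigma$ on the nose whereas $\til f \circ \til\sigma = \sigma$ holds only after the $\ast$-concatenation of (\ref{eqn:227}); these produce two different piecewise smooth parametrizations of the same image path $\sigma$ in $X$.

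Finally I would argue that this discrepancy is invisible to the multiplicative integral. Each Riemann product approximating either side decomposes, at every level of binary tessellation, as a product of basic Riemann products, each of which involves (i) a one-dimensional multiplicative integral of $\alpha$ along a path in $\mbf{I}^2$ pushed forward by $f$ or $\til f$, and (ii) an evaluation of the two-form $\beta$ on a sub-square inside $\tau''(\mbf{I}^2)$. For (ii), the agreement of $f$ and $\til f$ on $\tau''(\mbf{I}^2)$ gives identical contributions. For (i), Proposition \ref{prop:23}(1) and Proposition \ref{prop:1}(1) together imply that the one-dimensional multiplicative integral depends only on the image path in $X$, not on the particular piecewise smooth parametrization; so the segment-portion of the twisting path contributes the same element of $G$ on both sides. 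Passing to the limit in $k$ yields the required equality. The hard part will be the verification in this last paragraph: matching the Riemann products at every level, given that the binary tessellations $\opn{tes}^k(\sigma', \tau'')$ and $\opn{tes}^k(\til\sigma, \tau'')$ are formally distinct sequences of sub-kites (their strings differ). The argument must show that after composition with the respective maps $f$ and $\til f$, the two sequences of sub-kites project to the same geometric data in $X$, a statement that ultimately hinges on the parametrization-independence of the one-dimensional nonabelian holonomy.
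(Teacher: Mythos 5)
Your argument is correct, but it routes the comparison differently from the paper. The paper's own proof is a two-line reduction in the opposite direction: it factors the map $f : \mbf{I}^2 \to X$ of Definition \ref{dfn:42} as $f = e \circ f'$ for a piecewise linear map $f' : (\mbf{I}^2, v_0) \to (Z, z_0)$ and then invokes Proposition \ref{prop:10} once; the residual mismatch between $f'$ composed with the one-piece universal string and the given string of the quadrangular kite in $Z$ is left implicit there. You instead transport the right-hand side up to $(\mbf{I}^2, v_0)$ via Propositions \ref{prop:14} and \ref{prop:10} and then compare two integrals over the same square map $\tau''$ termwise in the Riemann products; this works, and it makes explicit exactly the point the paper suppresses. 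Moreover, the step you flag as hard is in fact routine: kite composition only concatenates strings, so the sub-kites $\opn{tes}^k_i$ of the two kites have identical square maps $\tau'' \circ \tau^k_i$ and strings differing only in the leading segment; hence the basic Riemann products can differ only through the twisting elements, and these coincide because the $1$-dimensional holonomies along the two leading strings agree, $\opn{MI}(\alpha \vert \sigma) = \prod_i \opn{MI}(\alpha \vert e \circ \sigma'_i) = \opn{MI}(e^*(\alpha) \vert \sigma')$, by Proposition \ref{prop:23}(1) and (3) -- you need only these specific equalities, not a general parametrization-independence of the $1$-dimensional MI, which the paper never states or proves. Two minor repairs: the retraction in the proof of Proposition \ref{prop:14} retracts onto the square union the segment, not onto the triangle union the segment fixed in Definition \ref{dfn:42}, so one cannot literally use the same retraction in both constructions -- but your argument only needs $f = f \circ g$ and $F$ linear on $\tau''(\mbf{I}^2)$, so nothing is lost; and the degenerate case of an empty string $\sigma'$ (where Proposition \ref{prop:14} outputs the identity map rather than $\tau''$) needs a one-line separate remark. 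Finally, you are right to work at the level of Riemann products rather than via Theorem \ref{thm:14}: the latter requires $\alpha$ to be a compatible connection, and no such hypothesis is made in this proposition.
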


\begin{proof}
The piecewise smooth map $f : \mbf{I}^2 \to X$ used in Definition \ref{dfn:42}
can be factored as $f = e \circ f'$ for a piecewise linear map 
$f' : (\mbf{I}^2, v_0) \to (Z, z_0)$ of pointed polyhedra.
The assertion now follows from Proposition \ref{prop:10}.
\end{proof}

The notion of compatible connection from Definition \ref{dfn:8} makes sense
here too, only we have to consider 
$\alpha \in \Omega^1(X) \otimes \g$
(and replace ``string'' with ``piecewise smooth path''). 

\begin{prop}[Moving the Base Point] \label{prop:25}
Let $\alpha \in \Omega^1(X) \otimes \g$
be a connection compatible with $\mbf{C} / X$, and let
$\rho$ be a piecewise smooth path in $X$, with initial point $x_0$ and terminal
point $x_1$. Define
$g := \opn{MI}(\alpha \vert \rho)$, 
and let
\[ \mbf{C}^g / X = (G, H^g, \Psi, \Phi_0^g, \Phi_X) \]
be the Lie quasi crossed module with additive feedback over $(X, x_1)$
constructed in Subsection \tup{\ref{subsec:moving}}.
Given a form 
$\beta \in \Omega^2(X) \otimes \h$ and a piecewise smooth triangular kite
$(\sigma, \tau)$ in  $(X, x_1)$, consider the element
\[ \opn{MI}^g(\alpha, \beta \vert \sigma, \tau) \in H^g  \]
calculated with respect to the Lie quasi crossed module
with additive feedback $\mbf{C}^g / X$. Then 
\[ \Psi(g) \bigl( \opn{MI}^g(\alpha, \beta \vert \sigma, \tau) \bigr)  =
\opn{MI}(\alpha, \beta \vert \rho * \sigma, \tau)  \]
in $H$. 
\end{prop}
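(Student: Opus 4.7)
The strategy is to reduce to Theorem \ref{thm:14}, the moving-base-point statement for quadrangular kites, by unpacking Definition \ref{dfn:42} on both sides and invoking the comparison Proposition \ref{prop:24}. Denote by $\tilde{f}: (\mbf{I}^2, v_0) \to (X, x_0)$ the piecewise smooth map determined by the triangular kite $(\rho * \sigma, \tau)$ via Definition \ref{dfn:42}, and by $f: (\mbf{I}^2, v_0) \to (X, x_1)$ the one determined by $(\sigma, \tau)$. Set $w' := (\smfrac{1}{4}, \smfrac{1}{4})$, the midpoint of $Z = \sigma'(\mbf{I}^1)$. Let $\rho'': \mbf{I}^1 \to \mbf{I}^2$ be the linear map from $v_0$ to $w'$, and $\sigma^\circ: \mbf{I}^1 \to \mbf{I}^2$ the linear map from $w'$ to $(\smfrac{1}{2}, \smfrac{1}{2})$. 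Consider the linear quadrangular kite $(\sigma'', \tau'')$ in $(\mbf{I}^2, v_0)$ where $\sigma'' := (\rho'', \sigma^\circ)$ is the two-piece string.

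First I would apply Proposition \ref{prop:24} with $(Z, z_0) = (\mbf{I}^2, v_0)$ and $e = \tilde{f}$: the hypotheses $\rho * \sigma = \tilde{f} \circ \sigma''$ (via the string-to-path convention \tup{(\ref{eqn:227})}) and $\tau \circ h = \tilde{f} \circ \tau''$ hold by construction of $\tilde{f}$, giving
\[ \opn{MI}(\alpha, \beta \vert \rho * \sigma, \tau) = \opn{MI}(\tilde{f}^*(\alpha), \tilde{f}^*(\beta) \vert \sigma'', \tau''). \]
Since $\tilde{f}^*(\alpha)$ is a compatible connection for the pulled-back Lie quasi crossed module over $(\mbf{I}^2, v_0)$ (by the same argument as Proposition \ref{prop:15}, which carries over to the piecewise smooth setting via Proposition \ref{prop:23}(3)), Theorem \ref{thm:14} applies to this quadrangular integral with base-moving string $\rho''$ and residual kite $(\sigma^\circ, \tau'')$ in $(\mbf{I}^2, w')$, yielding
\[ \opn{MI}(\tilde{f}^*(\alpha), \tilde{f}^*(\beta) \vert \sigma'', \tau'') = \Psi(\bar{g}) \bigl( \opn{MI}^{\bar{g}}(\tilde{f}^*(\alpha), \tilde{f}^*(\beta) \vert \sigma^\circ, \tau'') \bigr), \]
where $\bar{g} := \opn{MI}(\tilde{f}^*(\alpha) \vert \rho'')$. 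Proposition \ref{prop:23}(3) together with the identity $\tilde{f} \circ \rho'' = \rho$ of piecewise smooth paths gives $\bar{g} = g$.

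Next I would apply Proposition \ref{prop:24} once more, this time in the $\mbf{C}^g / X$-setting over $(X, x_1)$, with $(Z, z_0) = (\mbf{I}^2, w')$, $e = \tilde{f}|_{(\mbf{I}^2, w')}$ (which sends $w'$ to $x_1$ since $\tilde{f}(w') = (\rho * \sigma)(\smfrac{1}{2}) = x_1$), and the linear quadrangular kite $(\sigma^\circ, \tau'')$. The required identities $\tilde{f} \circ \sigma^\circ = \sigma$ and $\tilde{f} \circ \tau'' = \tau \circ h$ hold: the first by direct reparameterization, since $\tilde{f}(\sigma^\circ(a)) = (\rho * \sigma)(\smfrac{1}{2} + \smfrac{a}{2}) = \sigma(a)$; and the second because $\tilde{f} = f$ on $\tau''(\mbf{I}^2) = [\smfrac{1}{2}, 1]^2$ (both maps factor through the same retraction $g$, which sends this quarter into $Y$, and both agree on $Y$ since they are determined there by $\tau$), while $f \circ \tau'' = \tau \circ h$ by definition of $f$. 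Hence
\[ \opn{MI}^g(\tilde{f}^*(\alpha), \tilde{f}^*(\beta) \vert \sigma^\circ, \tau'') = \opn{MI}^g(\alpha, \beta \vert \sigma, \tau). \]
Chaining the three displayed equalities yields the desired identity.

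\textbf{Main obstacle.} The principal technical point is the verification $\tilde{f} = f$ on $\tau''(\mbf{I}^2)$, which hinges on the observation that both piecewise smooth maps are built from the same triangle data $\tau$ and share the common retraction $g: \mbf{I}^2 \to Y \cup Z$ of Definition \ref{dfn:42}; hence they must coincide off the probe-segment $Z$ where they diverge. A secondary subtlety is extending the compatibility transfer Proposition \ref{prop:15} from the piecewise linear setting to the piecewise smooth map $\tilde{f}$ into a manifold, which however follows by the same proof using Proposition \ref{prop:23}(3).
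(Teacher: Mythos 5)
Your proof takes essentially the same route as the paper's: reduce the triangular statement to the quadrangular Theorem \ref{thm:14} by pulling everything back to a polyhedron and invoking the comparison Proposition \ref{prop:24} twice. The paper's proof is terser -- it simply asserts the existence of a suitable polyhedron $Z$, a string $\rho'$ and a quadrangular kite $(\sigma',\tau')$ pulling back to the given triangular data -- whereas you supply a concrete choice ($Z = \mathbf{I}^2$ with the string $\sigma''$ split at $w' = (\smfrac{1}{4},\smfrac{1}{4})$), which is a useful verification of that existence claim.

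Two remarks on the execution. First, the step you flag as the ``main obstacle'' is not one: you already observed, and correctly so, that $\tilde{f} \circ \tau'' = \tilde{f} \circ g \circ \tau'' = \tilde{f} \circ \tau' \circ h = \tau \circ h$ directly from the defining equations $\tilde{f} \circ \tau' = \tau$ and $\tilde{f} = \tilde{f} \circ g$, together with the lemma $g \circ \tau'' = \tau' \circ h$. The detour through the auxiliary map $f$ and the claim ``$\tilde{f} = f$ on $\tau''(\mathbf{I}^2)$'' is redundant; you can discard it. Second, there is a genuine (though small) gap in the final chaining: the middle quantity $\opn{MI}^{g}(\tilde{f}^*(\alpha), \tilde{f}^*(\beta) \mid \sigma^{\circ}, \tau'')$ coming out of Theorem \ref{thm:14} is computed with respect to $(\tilde{f}^*(\mathbf{C}/X))^{g}$ over $(\mathbf{I}^2, w')$, while the one coming out of your second application of Proposition \ref{prop:24} is computed with respect to $e^*(\mathbf{C}^{g}/X)$. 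To chain the equalities you need the identity $e^*(\mathbf{C}^{g}/X) = (\tilde{f}^*(\mathbf{C}/X))^{g}$ of Lie quasi crossed modules with additive feedback over $(\mathbf{I}^2, w')$; this holds because both equal $(G, H^{g}, \Psi, \Phi_0^{g}, \tilde{f}^*(\Phi_X))$, but it must be stated, and indeed the paper's proof spells this out explicitly before applying Proposition \ref{prop:24}.
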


\begin{proof}
It is possible to find a polyhedron $Z$, points  $z_0, z_1 \in Z$, a string
$\rho'$ in $Z$ with initial point $z_0$ and terminal point $z_1$, 
a linear quadrangular kite $(\sigma', \tau')$ in 
$(Z, z_1)$ and a piecewise smooth map 
$e : Z \to X$, such that $\rho = e \circ \rho'$ as paths and 
$(\sigma, \tau) = e \circ (\sigma', \tau')$ as kites. 
Define
$\alpha' := e^*(\alpha)$ and $\beta' := e^*(\beta)$.

Let $\mbf{C}' / Z := e^*(\mbf{C} / X)$ be the induced 
Lie quasi crossed module with additive feedback over the pointed polyhedron
$(Z, z_0)$. By Proposition \ref{prop:23}(3) we know that 
$\opn{MI}(\alpha' \vert \rho') = g$. Examining the definition of
$\mbf{C}^g / X$ in Subsection \ref{subsec:moving} we see that
\[ e^*(\mbf{C}^g / X) = (\mbf{C}')^g / Z \]
as Lie quasi crossed modules with additive feedback over the pointed polyhedron
$(Z, z_1)$. Therefore 
$\opn{MI}^g(\alpha', \beta' \vert \sigma', \tau')$
is unambiguous. 

By Proposition \ref{prop:24} we know that
\[ \opn{MI}^g(\alpha', \beta' \vert \sigma', \tau') = 
\opn{MI}^g(\alpha, \beta \vert \sigma, \tau ) \]
and
\[ \opn{MI}(\alpha', \beta' \vert \rho' * \sigma', \tau') = 
\opn{MI}(\alpha, \beta \vert  \rho * \sigma, \tau ) . \]
Finally, by Theorem \ref{thm:14} we have
\[ \opn{MI}^g(\alpha', \beta' \vert \sigma', \tau') = 
\opn{MI}(\alpha', \beta' \vert \rho' * \sigma', \tau') . \]
\end{proof}

\subsection{Stokes Theorems}
We continue with the setup from before: $(X, x_0)$ is a pointed manifold, and
\[ \mbf{C} / X = (G, H, \Psi, \Phi_0, \Phi_X) \]
be a Lie quasi crossed module with additive feedback. 

Just like in Definition \ref{dfn:19}, a {\em connection-curvature pair} for
$\mbf{C} / X$ is a pair $(\alpha, \beta)$, with 
$\alpha \in \Omega^1(X) \otimes \g$ and 
$\beta \in \Omega^1(X) \otimes \h$,
such that $\alpha$ is a compatible connection, and  the differential equation
\[ \Phi_X(\beta) = \d(\alpha) + \smfrac{1}{2} [\alpha, \alpha] \]
holds in $\Omega^2(X) \otimes \g$.

Recall that for a piecewise smooth triangular kite $(\sigma, \tau)$, its
boundary
$\partial (\sigma, \tau)$ was defined in Definition \ref{dfn:31}.

\begin{thm}[Stokes Theorem for Triangles] \label{thm:520}
\index{Nonabelian Stokes Theorem for triangles}
Let $(X, x_0)$ be a pointed manifold, let
\[ \mbf{C} / X := (G, H, \Psi, \Phi_0, \Phi_X) \]
be a Lie quasi crossed module with additive feedback over $(X, x_0)$,
let $(\alpha, \beta)$ be a \lb connection-curvature pair for
$\mbf{C} / X$, and let
$(\sigma, \tau)$ be a piecewise smooth triangular kite in $(X, x_0)$. Then
\[ \Phi_0 \bigl( \opn{MI}(\alpha, \beta \vert \sigma, \tau) \bigr) = 
\opn{MI}(\alpha, \beta \vert \partial (\sigma, \tau)) . \]
\end{thm}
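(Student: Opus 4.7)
The plan is to reduce everything, via the construction that defined the triangular MI in the first place, to the quadrangular Stokes theorem already proved (Theorem \ref{thm:1}). Unfolding Definition \ref{dfn:42}, write
\[ \opn{MI}(\alpha, \beta \vert \sigma, \tau) = \opn{MI}(\alpha', \beta' \vert \sigma', \tau'') , \]
where $\alpha' := f^*(\alpha)$, $\beta' := f^*(\beta)$, the piecewise smooth map $f : \mbf{I}^2 \to X$ is the one of Definition \ref{dfn:42} (so that $f \circ \sigma' = \sigma$, $f \circ \tau' = \tau$, and $f \circ g = f$), and $(\sigma', \tau'')$ is the linear quadrangular kite in $(\mbf{I}^2, v_0)$ from that definition. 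By Proposition \ref{prop:11}(1), $(\alpha', \beta')$ is a connection-curvature pair for the pulled-back Lie quasi crossed module with additive feedback $f^*(\mbf{C}/X)$ over the pointed polyhedron $(\mbf{I}^2, v_0)$.

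Next, apply Theorem \ref{thm:1} (the $2$-dimensional nonabelian Stokes theorem for piecewise smooth connection-curvature pairs and linear quadrangular kites) to $(\alpha', \beta')$ and $(\sigma', \tau'')$, obtaining
\[ \Phi_0 \bigl( \opn{MI}(\alpha', \beta' \vert \sigma', \tau'') \bigr) = \opn{MI}\bigl( \alpha' \vert \partial(\sigma', \tau'') \bigr) \quad \text{in } G . \]
Combined with the first display, the LHS of the theorem equals $\opn{MI}(\alpha' \vert \partial(\sigma', \tau''))$. It remains to identify this element with $\opn{MI}(\alpha \vert \partial(\sigma, \tau))$, the right-hand side.

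For this, decompose $\partial \mbf{I}^2 = \pi_1 * \pi_2 * \pi_3 * \pi_4$ into its four linear pieces (in the order used in (\ref{eqn:126})). Using the identity $f \circ \tau'' = f \circ g \circ \tau'' = f \circ \tau' \circ h = \tau \circ h$, and Proposition \ref{prop:12}, one gets
\[ \opn{MI}(\alpha' \vert \tau'' \circ \pi_i) = \opn{MI}(\alpha \vert \tau \circ h \circ \pi_i) \]
for each $i$. Now trace through the retraction $h : \mbf{I}^2 \to \bsym{\Delta}^2$: since $h(v_0)=v_0$, $h(v_1)=v_1$, $h(1,1)=v_1$, $h(v_2)=v_2$ and $h$ is linear on the two triangles, the pieces $h \circ \pi_1$, $h \circ \pi_3$, $h \circ \pi_4$ are the three edges of $\partial \bsym{\Delta}^2$, while $h \circ \pi_2$ is the constant path at $v_1$. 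Since the MI on a constant path is $1$, multiplicativity (Proposition \ref{prop:1}(1) on the quadrangular side, Proposition \ref{prop:23}(1) on the triangular side) yields
\[ \prod_{i=1}^{4} \opn{MI}(\alpha' \vert \tau'' \circ \pi_i) = \opn{MI}(\alpha \vert \tau \circ \partial \bsym{\Delta}^2) . \]
Sandwiching with $\opn{MI}(\alpha' \vert \sigma') = \opn{MI}(\alpha \vert \sigma)$ and its inverse (Proposition \ref{prop:23}(2)), and invoking Definition \ref{dfn:31}, we conclude
\[ \opn{MI}(\alpha' \vert \partial(\sigma', \tau'')) = \opn{MI}(\alpha \vert \sigma) \cdot \opn{MI}(\alpha \vert \tau \circ \partial \bsym{\Delta}^2) \cdot \opn{MI}(\alpha \vert \sigma)^{-1} = \opn{MI}(\alpha \vert \partial(\sigma, \tau)) . \]

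The only real obstacle is bookkeeping the two different meanings of the symbol $*$: on the quadrangular side $\partial(\sigma', \tau'')$ is a \emph{string} (concatenation in the sense of (\ref{eqn12})), whereas on the triangular side $\partial(\sigma, \tau)$ is a \emph{piecewise smooth path} (the homotopy-theoretic product $*$). The reconciliation is exactly the content of Proposition \ref{prop:23}(1) together with the rule (\ref{eqn:227}), which ensures that the string-by-string MI in $\mbf{I}^2$ collapses, after $f$-pushforward and elimination of the degenerate piece $h \circ \pi_2$, into the path-theoretic MI along $\partial(\sigma, \tau)$. Once this identification is made carefully, nothing further is required.
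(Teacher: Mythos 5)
Your proof is correct and follows essentially the same route as the paper's: unwind the definition of the triangular $\opn{MI}$ to the quadrangular kite $(\sigma',\tau'')$ over $(\mbf{I}^2,v_0)$, apply Theorem \ref{thm:1} to the pulled-back pair, and match the two boundaries using the multiplicativity and functoriality of the $1$-dimensional $\opn{MI}$ (the paper packages this last step as Propositions \ref{prop:24} and \ref{prop:23}(3), while you verify the boundary identification by hand through the retraction $h$ and the degenerate edge). The only cosmetic slips are citations: the pullback steps should invoke Proposition \ref{prop:23}(3) rather than Proposition \ref{prop:12} (which is for piecewise linear maps between polyhedra), and the claim that $(f^*(\alpha),f^*(\beta))$ is a connection-curvature pair over $(\mbf{I}^2,v_0)$ needs the piecewise-smooth-to-manifold variant of Proposition \ref{prop:11}(1), exactly as the paper itself acknowledges in the proof of Theorem \ref{thm:22}.
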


\begin{proof}
As in the proof of Proposition \ref{prop:25}, we can find a pointed
polyhedron $(Z, z_0)$, a linear quadrangular kite $(\sigma', \tau')$ in 
$(Z, z_0)$ and a piecewise smooth map $f : Z \to X$, such that  
$(\sigma, \tau) = f \circ (\sigma', \tau')$ as kites. 
Define
$\alpha' := f^*(\alpha)$ and $\beta' := f^*(\beta)$.
According to Proposition \ref{prop:23}(3) we have
\[ \opn{MI}(\alpha, \beta \vert \partial (\sigma, \tau)) = 
\opn{MI}(\alpha' \vert \partial (\sigma', \tau')) . \]
And by Proposition \ref{prop:24} we have
\[ \opn{MI}(\alpha, \beta \vert \sigma, \tau)  = 
\opn{MI}(\alpha', \beta' \vert \sigma', \tau')  . \]
Finally by Theorem \ref{thm:1} we know that
\[ \opn{MI}(\alpha' \vert \partial (\sigma', \tau')) = 
\Phi_0 \bigl( \opn{MI}(\alpha', \beta' \vert \sigma', \tau') \bigr) . \]
\end{proof}

For the $3$-dimensional Stokes Theorem we shall need an auxiliary  geometric
construction, similar to the one in the
previous subsection. Consider the linear triangular balloon  
$(\sigma', \tau')$ in $(\mbf{I}^3, v_0)$, where the string $\sigma'$ has one
linear piece $\sigma' : \mbf{I}^1 \to \mbf{I}^3$ defined on vertices by
\[ \sigma'(v_0, v_1) := \bigl( v_0, (\smfrac{1}{2}, \smfrac{1}{2}, 0) \bigr) .
\]
The linear map 
$\tau' : \bsym{\Delta}^3 \to \mbf{I}^3$ is defined on vertices by
\[ \tau'(v_0, v_1, v_2, v_3) := \bigl( (\smfrac{1}{2}, \smfrac{1}{2}, 0),
(1, \smfrac{1}{2}, 0), (\smfrac{1}{2}, 1, 0), 
(\smfrac{1}{2}, \smfrac{1}{2}, \smfrac{1}{2}) \bigr) . \]
We also need the linear quadrangular balloon 
$(\sigma', \tau'')$ in $(\mbf{I}^3, v_0)$, where the linear map 
$\tau'' : \mbf{I}^3 \to \mbf{I}^3$ is defined on vertices by the same formula
as $\tau'$. We let 
$Z := \sigma'(\mbf{I}^1)$ and $Y := \tau'(\bsym{\Delta}^3)$.
See Figure \ref{fig:87}.

\begin{figure} 
\includegraphics[scale=0.44]{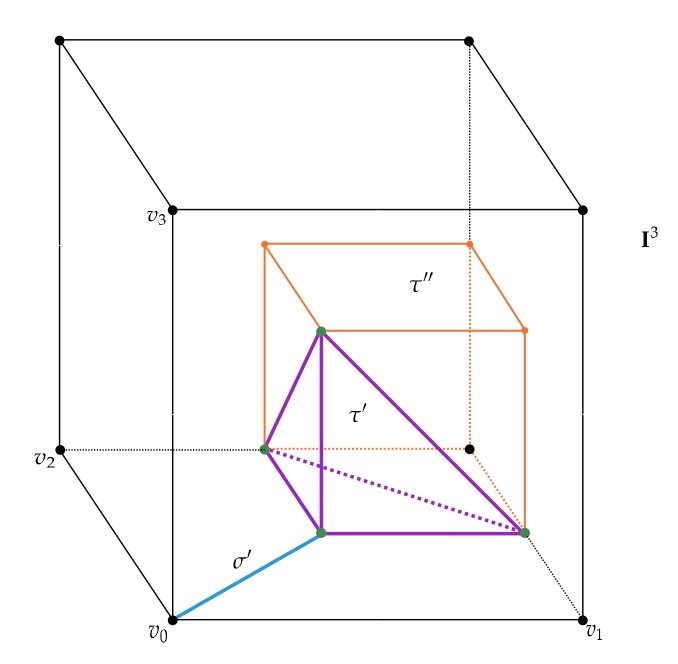}
\caption{The triangular linear balloon $(\sigma', \tau')$ and the 
quadrangular linear balloon $(\sigma', \tau'')$ in the 
pointed polyhedron $(\mbf{I}^3, v_0)$.} 
\label{fig:87}
\end{figure}
 
Consider the canonical linear embedding 
$\bsym{\Delta}^3 \to  \mbf{I}^3$ 
which on vertices is
\[ (v_0, \ldots, v_3) \mapsto (v_0, \ldots, v_3) . \]
Let $h : \mbf{I}^3 \to \bsym{\Delta}^3$ be the piecewise linear retraction
$h := h_2 \circ h_1$, where $h_1$ and $h_2$ are the retractions shown in Figure
\ref{fig:57}.

\begin{figure} 
\includegraphics[scale=0.32]{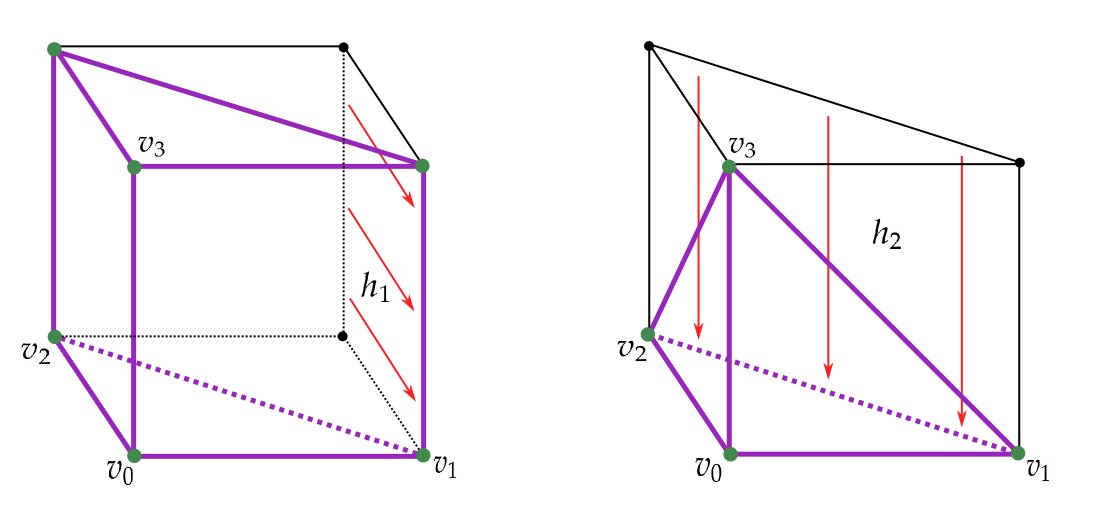}
\caption{The piecewise linear map $h_1$ retracts the cube $\mbf{I}^3$ to a
prism. The piecewise linear map $h_2$ retracts the prism to the tetrahedron
$\bsym{\Delta}^3$.} 
\label{fig:57}
\end{figure}

\begin{lem}
There is a piecewise linear retraction
$g : \mbf{I}^3 \to Z \cup Y$, such that
\[ g \circ \tau'' = \tau' \circ h \]
as maps $\mbf{I}^3 \to \mbf{I}^3$.  
\end{lem}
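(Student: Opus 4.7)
The plan is to build $g$ in two stages: first define the correct map on the union $W := Z \cup \tau''(\mbf{I}^3)$, and then extend to all of $\mbf{I}^3$ by composing with a piecewise linear retraction $\pi : \mbf{I}^3 \to W$. This mirrors (one dimension higher) the construction used in the proof of Proposition \ref{prop:14}, where a retraction $\mbf{I}^2 \to Y \cup Z$ is built via a similar two-step procedure illustrated in Figure \ref{fig:43}.

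First I would define $\til{g} : W \to Z \cup Y$ by setting $\til{g}|_Z := \opn{id}_Z$ and $\til{g}(\tau''(x)) := \tau'(h(x))$ for $x \in \mbf{I}^3$. The second formula is unambiguous because $\tau''$ is a linear bijection of $\mbf{I}^3$ onto the parallelepiped $\tau''(\mbf{I}^3)$, so its inverse is well-defined and piecewise linear on the image. The intersection $Z \cap \tau''(\mbf{I}^3)$ is the single point $(\smfrac{1}{2}, \smfrac{1}{2}, 0) = \tau''(v_0)$, and at that point both recipes give $(\smfrac{1}{2}, \smfrac{1}{2}, 0)$, since $h(v_0) = v_0$ and $\tau'(v_0) = (\smfrac{1}{2}, \smfrac{1}{2}, 0)$. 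Thus $\til{g}$ is a well-defined piecewise linear map. Next I would check that $\til{g}$ is a retraction onto $Z \cup Y$: the restriction to $Z$ is the identity by construction, while for $y \in Y$ one writes $y = \tau'(a) = \tau''(a)$ for a unique $a \in \bsym{\Delta}^3 \subset \mbf{I}^3$ (recall that $\tau'$ and $\tau''$ are defined by the same values on $v_0, \ldots, v_3$), and since $h$ is the identity on $\bsym{\Delta}^3$ we obtain $\til{g}(y) = \tau'(h(a)) = \tau'(a) = y$.

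Next I would produce a piecewise linear retraction $\pi : \mbf{I}^3 \to W$. Here $W$ is the closed subpolyhedron of $\mbf{I}^3$ consisting of the parallelepiped $\tau''(\mbf{I}^3)$ with the segment $Z$ attached at its vertex $(\smfrac{1}{2}, \smfrac{1}{2}, 0)$. The complement of $W$ in $\mbf{I}^3$ is a union of finitely many convex polyhedral cells, each sharing a face with $W$; so one can triangulate $\mbf{I}^3$ compatibly with $W$ and define $\pi$ cell by cell, projecting each external cell onto the adjacent face of $W$. This is the direct three-dimensional analogue of the retraction $\mbf{I}^2 \to Y \cup Z$ from Proposition \ref{prop:14}, and the construction is routine.

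Finally, I would set $g := \til{g} \circ \pi$. Since $\pi$ fixes $W$ pointwise and $W \supseteq Z \cup Y$, while $\til{g}$ fixes $Z \cup Y$ pointwise, the composition $g$ is a piecewise linear retraction of $\mbf{I}^3$ onto $Z \cup Y$. The desired identity then follows immediately: for every $x \in \mbf{I}^3$ the point $\tau''(x)$ lies in $W$, whence $\pi(\tau''(x)) = \tau''(x)$ and therefore
\[
g(\tau''(x)) \;=\; \til{g}(\tau''(x)) \;=\; \tau'(h(x)),
\]
i.e.\ $g \circ \tau'' = \tau' \circ h$ as maps $\mbf{I}^3 \to \mbf{I}^3$. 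The only nontrivial ingredient is the construction of $\pi$, and even this is purely combinatorial piecewise linear geometry — no estimates or analytic input are needed.
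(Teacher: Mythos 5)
Your proof is correct, and it cleanly organizes the argument. Since the paper's own proof of this lemma is literally the one-line remark ``Nice exercise in piecewise linear geometry,'' there is nothing in the text to compare your argument against, so the more useful comparison is with Proposition \ref{prop:14}, whose strategy you explicitly invoke. Your factorization $g = \til{g} \circ \pi$ is a genuine simplification of the problem: by defining $\til{g}$ on $W := Z \cup \tau''(\mbf{I}^3)$ via $\til{g}|_Z = \opn{id}$ and $\til{g}(\tau''(x)) = \tau'(h(x))$, you turn the compatibility condition $g \circ \tau'' = \tau' \circ h$ into an automatic consequence of $\pi|_W = \opn{id}_W$, leaving only the ``bare'' existence of a piecewise linear retraction $\pi : \mbf{I}^3 \to W$ to construct. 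Your verification of well-definedness at the single overlap point $(\smfrac{1}{2}, \smfrac{1}{2}, 0) = \tau''(v_0)$, and your check that $\til{g}$ fixes $Y$ pointwise using $\tau'(a) = \tau''(a)$ for $a \in \bsym{\Delta}^3 \subset \mbf{I}^3$ and $h|_{\bsym{\Delta}^3} = \opn{id}$, are both correct.

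The only place that remains informal is the construction of $\pi$ itself, and there your description (``triangulate compatibly and project each external cell onto an adjacent face of $W$'') is a bit loose: the closure of $\mbf{I}^3 \setminus W$ is not a disjoint union of convex cells each meeting $W$ in a face, since $Z$ is only attached to the small cube $\tau''(\mbf{I}^3)$ at a single vertex and has no $2$-dimensional faces to project onto. Nonetheless such a retraction certainly exists (for instance, first retract $\mbf{I}^3$ onto the union of $\tau''(\mbf{I}^3)$ with the bottom face $\mbf{I}^2 \times \{0\}$ by a fibrewise PL retraction in the $t_3$-direction, and then retract the bottom face onto $(\text{its square face of } \tau''(\mbf{I}^3)) \cup Z$ as in Figure \ref{fig:43}), and this is exactly the same level of informality that the paper itself allows in the proofs of Proposition \ref{prop:14} and Lemma \ref{lem:40}. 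So I would accept your argument as a correct solution of the exercise, with the caveat that an explicit formula for $\pi$ would make it fully rigorous.
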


\begin{proof}
Nice exercise in piecewise linear geometry.
\end{proof}

Let us fix such a retraction$g$, which we also view as a piecewise linear map
$g : \mbf{I}^3 \to \mbf{I}^3$.

The notions of {\em tame connection} and 
{\em inert differential form}, from Subsections \ref{subsec:ccpair}
\ref{subsec:inert} respectively, make sense here, and all pertinent results
hold. Given a tame connection 
$\alpha \in \Omega^1(X) \otimes \g$, 
an inert form $\gamma \in \Omega^3(X) \otimes \h$
and  a piecewise smooth balloon $(\sigma, \tau)$ in $(X, x_0)$, we define the
{\em twisted
multiplicative integral}
\[ \opn{MI}(\alpha, \gamma \vert \sigma, \tau) \in H_0  \] 
as follows. Let $f : \mbf{I}^3 \to X$ be the unique piecewise smooth map
satisfying:
\begin{itemize}
\item $f \circ \sigma'  = \sigma$ as maps
$\mbf{I}^1 \to X$.
\item $f \circ \tau'  = \tau$ as maps
$\bsym{\Delta}^3 \to X$.
\item $f = f \circ g$ as maps $\mbf{I}^3 \to X$, where 
$g: \mbf{I}^3 \to \mbf{I}^3$ is the chosen retraction.
\end{itemize}
Now 
\[ \mbf{C}' / \mbf{I}^3 := f^*(\mbf{C} / X) = (G, H, \Psi, \Phi_0, f^*(\Phi_X))
\]
is a Lie quasi crossed module with additive feedback
over $(\mbf{I}^3, v_0)$,
\[ \alpha' := f^*(\alpha) \in \Omega^1_{\mrm{pws}}(\mbf{I}^3) \otimes \g \]
is a tame connection for $\mbf{C}' / X$, and 
\[ \gamma' := f^*(\gamma) \in \Omega^3_{\mrm{pws}}(\mbf{I}^3) \otimes \h \]
is an inert differential form. We define
\begin{equation} \label{eqn:161}
\opn{MI}(\alpha, \gamma \vert \sigma, \tau) :=
\opn{MI} \bigl( \alpha', \gamma' \vert \tau'') ,
\end{equation}
where $\tau'' : \mbf{I}^3 \to \mbf{I}^3$ is the linear map described above,
and 
$\opn{MI} \bigl( \alpha', \gamma' \vert \tau'')$
is the twisted multiplicative integral from Definition \ref{dfn:43}.

\begin{rem}
The reason that the path $\sigma$ is explicit in the expression \lb
$\opn{MI}(\alpha, \gamma \vert \sigma, \tau)$ 
is that this operation could depend on the homotopy class of $\sigma$, in case
the manifold $X$ is not simply connected. Since polyhedra are always simply
connected, we did not have to worry about strings in Definition \ref{dfn:43}.

It is easy to see that $\opn{MI}(\alpha, \gamma \vert \sigma, \tau)$ 
makes sense for an inert $p$-form $\gamma$ and a ``$p$-dimensional balloon''
$(\sigma, \tau)$, for any $p \geq 0$. Moreover, for $p = 2$ this coincides with
the nonabelian MI of Definition \ref{dfn:42}.
\end{rem}

Recall the boundary $\partial (\sigma, \tau)$ of a balloon 
$(\sigma, \tau)$ in $(X, x_0)$, from Definition \ref{dfn:30}.
As is our convention for the multiplicative integral of a sequence, we write
\[ \opn{MI}(\alpha, \beta \vert \partial (\sigma, \tau)) :=
\prod_{i = 1, \ldots, 4} \ 
\opn{MI}(\alpha, \beta \vert \partial_i(\sigma, \tau) ) . \]

As in Definition \ref{dfn:32}, the $3$-curvature of $(\alpha, \beta)$ is
the form
\[ \gamma := \d(\beta) + \psi_{\h}(\alpha)(\beta) \in 
\Omega^3(X) \otimes \h . \]

Here is the second version of the main result of the paper (the first version
was Theorem \ref{thm:10}.)

\begin{thm}[Stokes Theorem for Tetrahedra] \label{thm:22}
\index{Nonabelian Stokes Theorem for tetrahedra}
Let $(X, x_0)$ be a pointed manifold, let
$\mbf{C} / X$ be a Lie quasi crossed module with additive feedback, 
let $(\alpha, \beta)$ be a connection-curvature pair for 
$\mbf{C} / X$, and let $\gamma$ be the $3$-curvature of $(\alpha, \beta)$.
Then:
\begin{enumerate}
\item The differential form $\gamma$ is inert. 
\item For any piecewise smooth balloon $(\sigma, \tau)$ in $(X, x_0)$ one has
\[  \opn{MI} \bigl( \alpha, \beta \vert \partial (\sigma, \tau)
\bigr) = 
\opn{MI} (\alpha, \gamma \vert \sigma, \tau)  \]
in $H$.
\end{enumerate}
\end{thm}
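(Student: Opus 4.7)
The plan is to deduce both parts of Theorem \ref{thm:22} from the polyhedral/quadrangular version, Theorem \ref{thm:10}, via the transfer map $f\colon \mbf{I}^3\to X$ introduced just above the statement of the theorem. The whole point of the geometric gadget (the retraction $g\colon\mbf{I}^3\to Z\cup Y$ satisfying $g\circ\tau''=\tau'\circ h$) is precisely to allow such a reduction, exactly as Definition \ref{dfn:42} and Proposition \ref{prop:24} did one dimension lower.

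For part (1), inertness is the pointwise condition $\gamma(x)\in \opn{Ker}(\Phi_X(x))\otimes \bwedge^3 \mrm{T}^*_x X$, so it suffices to verify it at an arbitrary point $x\in X$. Choose any smooth immersion $f\colon U\to X$ from an open neighborhood $U$ of $v_0$ in $\mbf{I}^3$, sending $v_0$ to $x$, and restrict to a small closed cube $K\subset U$ containing $v_0$. Then $f^*(\mbf{C}/X)$ is a Lie quasi crossed module with additive feedback over the pointed polyhedron $(K,v_0)$; by Proposition \ref{prop:11}(1), $(f^*\alpha,f^*\beta)$ is a connection-curvature pair for it, and its $3$-curvature is $f^*(\gamma)$. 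Theorem \ref{thm:10}(1) tells us $f^*(\gamma)$ is inert on $K$, and since $f$ is an immersion near $v_0$, this forces $\gamma(x)$ to annihilate after composition with $\Phi_X(x)$ -- i.e.\ $\gamma$ is inert at $x$.

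For part (2), let $f\colon \mbf{I}^3\to X$ be the piecewise smooth map from the construction preceding the theorem, with $f\circ\sigma'=\sigma$, $f\circ\tau'=\tau$, and $f=f\circ g$. Set $\alpha':=f^*\alpha$, $\beta':=f^*\beta$, $\gamma':=f^*\gamma$. By Proposition \ref{prop:11}, $(\alpha',\beta')$ is a piecewise smooth connection-curvature pair for $f^*(\mbf{C}/X)$, with $3$-curvature $\gamma'$. Definition \eqref{eqn:161} gives
\[
\opn{MI}(\alpha,\gamma\vert\sigma,\tau)=\opn{MI}(\alpha',\gamma'\vert\tau''),
\]
and Theorem \ref{thm:10}(2), applied to the cubical balloon $(\sigma',\tau'')$ in $(\mbf{I}^3,v_0)$, identifies this with $\opn{MI}(\alpha',\beta'\vert\partial(\sigma',\tau''))$. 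What remains is the identity
\[
\opn{MI}(\alpha',\beta'\vert\partial(\sigma',\tau''))
=\opn{MI}(\alpha,\beta\vert\partial(\sigma,\tau)).
\]

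The hard part is this last identity, which is the 3-dimensional analog of Proposition \ref{prop:24} and constitutes the main combinatorial obstacle. The boundary $\partial(\sigma',\tau'')$ is a sequence of six quadrangular face kites in $(\mbf{I}^3,v_0)$. Because $f=f\circ g$ and $g$ retracts $\mbf{I}^3$ onto $Z\cup Y$, where $Y=\tau'(\bsym{\Delta}^3)$ has exactly four triangular faces, four of the six quadrangular face kites get sent by $f$ to configurations matching (via Proposition \ref{prop:24} applied face-by-face to the restriction of $f$ to each face of $\mbf{I}^3$, using $g\circ\tau''_i=\tau'_i\circ h$ on each face) the four triangular boundary kites $\partial_i(\sigma,\tau)$, while the remaining two faces of $\mbf{I}^3$ are collapsed by $g$ onto lower-dimensional pieces of $Z\cup Y$, so that $f$ on them factors through a $1$-dimensional set and the corresponding multiplicative integrals are trivial. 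The ordering of the six factors need not literally match the ordering of the four triangular boundary factors; but both products represent the same element of $\bsym{\pi}_1$ of the $1$-skeleton of a suitable subdivision of $\mbf{I}^3$, so the combinatorial technology of Section \ref{sec:puzzles} (especially Lemma \ref{lem:24} and Theorem \ref{thm:7}, applied now to the image in $X$ or after a further pullback) reconciles the two orderings. Working this matching out carefully -- identifying which square face maps to which triangular face, with what orientation and conjugating string -- is the one calculation that really needs to be carried out in the proof.
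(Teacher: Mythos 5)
Your proposal follows essentially the same route as the paper: part (1) by pulling back along maps $\mbf{I}^3\to X$ and invoking Theorem \ref{thm:10}(1) (your immersion remark just makes explicit why the pullback statement suffices), and part (2) by the transfer map $f$, Definition (\ref{eqn:161}), Theorem \ref{thm:10}(2) applied to the cubical balloon $(\sigma',\tau'')$, and then the face-matching identity between the quadrangular boundary of $(\sigma',\tau'')$ and the triangular boundary of $(\sigma,\tau)$. The one calculation you defer is, in the paper, a direct check from the chosen boundary conventions: with $\partial\mbf{I}^3$ as in Definition \ref{dfn:36} and $\partial\bsym{\Delta}^3$ as defined in Section \ref{sec:simpl}, the first two quadrangular face kites are collapsed by the retraction and give $\opn{MI}=1$, while $\partial_3(\sigma',\tau''),\ldots,\partial_6(\sigma',\tau'')$ yield, via Proposition \ref{prop:24} applied face by face, exactly $\partial_1(\sigma',\tau'),\ldots,\partial_4(\sigma',\tau')$ \emph{in the same order}; so the two products agree on the nose, and the ordering-reconciliation machinery from Section \ref{sec:puzzles} that you hold in reserve (Lemma \ref{lem:24}, Theorem \ref{thm:7}) is not needed.
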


\begin{proof}
(1) It suffices to prove that for any piecewise smooth map 
$f : \mbf{I}^3 \to X$ the form
\[ \gamma' := f^*(\gamma) \in \Omega^3_{\mrm{pws}}(\mbf{I}^3) \otimes \h \]
is inert. Define
\[ \alpha' := f^*(\alpha) \in \Omega^1_{\mrm{pws}}(\mbf{I}^3) \otimes \g \]
and
\[ \beta' := f^*(\beta) \in \Omega^2_{\mrm{pws}}(\mbf{I}^3) \otimes \h . \]
Then $(\alpha', \beta')$ is a connection curvature pair in 
$\mbf{C}' / \mbf{I}^3$ (this is a variant of Proposition \ref{prop:11}(1)), and
$\gamma'$ is the $3$-curvature of $(\alpha', \beta')$.
But according to Theorem \ref{thm:10}(1) the form $\gamma'$ is inert.

\medskip \noindent
(2) Let $f : (\mbf{I}^3, v_0) \to (X, x_0)$ be the pointed piecewise smooth map
constructed just before (\ref{eqn:161}). 
By definition we have
\[ \opn{MI}(\alpha, \gamma \vert \sigma, \tau) :=
\opn{MI} \bigl( \alpha', \gamma' \vert \tau'') . \]
By Proposition \ref{prop:24} we know that
\[ \opn{MI} \bigl( \alpha, \beta \vert \partial (\sigma, \tau) \bigr) = 
\opn{MI} \bigl( \alpha', \beta' \vert \partial (\sigma', \tau') \bigr) . \]
And by Theorem \ref{thm:10}(2) we know that
\[ \opn{MI} \bigl( \alpha', \beta' \vert \partial (\sigma', \tau'') \bigr) =
\opn{MI} \bigl( \alpha', \gamma' \vert \tau'') . \]
It remains to prove that
\begin{equation} \label{eqn:228}
\opn{MI} \bigl( \alpha', \beta' \vert \partial (\sigma', \tau') \bigr) = 
\opn{MI} \bigl( \alpha', \beta' \vert \partial (\sigma', \tau'') \bigr) .
\end{equation}

By definition we have
\[ \opn{MI} \bigl( \alpha', \beta' \vert \partial (\sigma', \tau') \bigr) = 
\prod_{i = 1, \ldots, 4} \ 
\opn{MI}(\alpha, \beta \vert \partial_i(\sigma', \tau') )  \]
and
\[ \opn{MI} \bigl( \alpha', \beta' \vert \partial (\sigma', \tau'') \bigr) = 
\prod_{i = 1, \ldots, 6} \ 
\opn{MI}(\alpha, \beta \vert \partial_i(\sigma', \tau'') ) . \]
Using Proposition \ref{prop:24} and looking at Figures
\ref{fig:87}, \ref{fig:83} and \ref{fig:46} we see that
\[ \opn{MI} \bigl( \alpha', \beta' \vert \partial_1(\sigma', \tau'') \bigr) = 1 
, \]
\[ \opn{MI} \bigl( \alpha', \beta' \vert \partial_2(\sigma', \tau'') \bigr) = 1
, \]
\[ \opn{MI} \bigl( \alpha', \beta' \vert \partial_3(\sigma', \tau'')  \bigr) = 
\opn{MI} \bigl( \alpha', \beta' \vert \partial_1(\sigma', \tau')  \bigr)  , \]
\[ \opn{MI} \bigl( \alpha', \beta' \vert \partial_4(\sigma', \tau'')  \bigr) = 
\opn{MI} \bigl( \alpha', \beta' \vert \partial_2(\sigma', \tau')  \bigr)  , \]
\[ \opn{MI} \bigl( \alpha', \beta' \vert \partial_5(\sigma', \tau'')  \bigr) = 
\opn{MI} \bigl( \alpha', \beta' \vert \partial_3(\sigma', \tau')  \bigr)   \]
and
\[ \opn{MI} \bigl( \alpha', \beta' \vert \partial_6(\sigma', \tau'')  \bigr) = 
\opn{MI} \bigl( \alpha', \beta' \vert \partial_4(\sigma', \tau')  \bigr) .  \]
Thus equation (\ref{eqn:228}) is true. 
\end{proof}

\subsection{Rationality of the Multiplicative Integral} \label{subsec:ration}
Let $\K$ be a subfield of $\R$. As common in algebraic geometry, let us denote
by $\mbf{A}^n(\K)$ the set of $\K$-rational points of $\mbf{A}^n(\R)$; namely
those points $x \in \mbf{A}^n(\R)$ whose coordinates satisfy $t_i(x) \in \K$,
$i = 1, \ldots, n$. 

By (embedded) {\em polyhedron defined over $\K$} we mean a polyhedron 
$X \subset \mbf{A}^n(\R)$, such that all the
vertices of $X$ belong to $\mbf{A}^n(\K)$. 
For such $X$, and for every field $L$ such that $\K \subset L \subset \R$, we
can talk about the set $X(L)$ of $L$-rational points of $X$.
The real polyhedron is of course $X(\R)$.

Suppose $Y$ is another polyhedron defined over $\K$. It makes sense to talk
about linear maps $f : X \to Y$ defined over $\K$. The condition is of course
that for any vertex $v \in X$ the point $f(v)$ is in $Y(\K)$. 

The simplices $\bsym{\Delta}^p$ are defined over $\mbb{Q}$. Therefore we have
the notion of linear triangulation of $X$ defined over $\K$. This lets 
us consider piecewise linear maps $f : Y \to X$ defined over $\K$.
In particular we have {\em kites in $X$ defined over
$\K$}. 

The polyhedron $X$ has a $\K$-subalgebra 
$\mcal{O}_{\mrm{alg}}(X(\K)) \subset \mcal{O}(X)$
of {\em $\K$-valued algebraic functions on $X(\K)$}, which is the restriction
to  $X(\K)$ of the polynomial algebra 
$\K[t_1, \ldots, t_n] \subset \mcal{O}(\mbf{A}^n(\R))$. 
Similarly one can define the DG algebra
$\Omega_{\mrm{alg}}(X(\K))$ of {\em $\K$-valued algebraic differential forms},
which is contained in $\Omega(X)$.
Using linear triangulations defined over $\K$ we can also consider the 
$\K$-algebra $\mcal{O}_{\mrm{pwa}}(X(\K))$ of {\em $\K$-valued piecewise
algebraic functions}, and the DG
algebra $\Omega_{\mrm{pwa}}(X(\K))$ of {\em $\K$-valued piecewise algebraic
differential forms}, which is a DG subalgebra of $\Omega_{\mrm{pws}}(X)$.

Suppose $X$ is a polyhedron and $Y \subset \mbf{A}^m$ is an
affine algebraic variety, both defined over $\K$. 
A map $f : X \to Y$ is said to be algebraic and defined over $\K$ if
for every $i$ the function $t_i \circ f : X(\K) \to \R$ belongs to
$\mcal{O}_{\mrm{alg}}(X(\K))$. Equivalently, $f$ extends to a map of algebraic
varieties $\til{f} : \mbf{A}^n \to \mbf{A}^m$ that's defined over $\K$.
Using linear triangulations defined over $\K$ we can also consider 
piecewise algebraic maps $f : X \to Y$ defined over $\K$. 

Now assume that we are given a Lie quasi crossed module with additive feedback
\[ \mbf{C}  = (G, H, \Psi, \Phi_0, \Phi_X) \]
in which $(X, x_0)$ is a pointed polyhedron defined over $\K$, 
the groups $G$ and $H$ are {\em affine unipotent} linear algebraic
groups defined over $\K$, the maps
$\Psi : G \times H \to H$ and $\Phi_0 : H \to G$ are maps of algebraic
varieties defined over $\K$, and 
\[ \Phi_X \in \mcal{O}_{\mrm{pwa}}(X(\K)) \otimes_{\K} 
\opn{Hom}_{\K}(\h(\K), \g(\K)). \]
We say that $\mbf{C} / X$ is an {\em algebraic unipotent quasi crossed
module with additive feedback defined over $\K$}. 
Note that the Lie groups in this setup are $G(\R)$ and
$H(\R)$. Also note that exponential maps $\exp_G : \g \to G$ and
$\exp_H : \h \to H$ are isomorphisms of algebraic varieties defined over $\K$
-- this is because the Lie algebras $\h$ and $\g$ are nilpotent.

The polyhedron $\mbf{I}^1$ is defined over $\mbb{Q}$. 
Suppose we are given a piecewise algebraic differential form
\[ \alpha \in \Omega^p_{\mrm{pwa}}(\mbf{I}^1(\K)) \otimes_{\K} \g(\K) . \]
It is not hard to show (using the ODE (\ref{eqn:225}), and the fact that 
$\exp_G$ is algebraic) that the map
$g : \mbf{I}^1 \to G$  of (\ref{eqn:252}) is piecewise algebraic and defined
over $\K$.  In particular we get
\[ g(1) = \opn{MI}(\alpha \vert \mbf{I}^1) \in G(\K) . \]
From this it follows that for any polyhedron $X$ defined over $\K$, any
\[ \alpha \in \Omega^1_{\mrm{alg}}(X(\K)) \otimes_{\K} \g(\K)  \]
and any piecewise algebraic string $\sigma$ in $X$ defined over $\K$, one has
\[ \opn{MI}(\alpha \vert \sigma) \in G(\K) . \]

We are led to make the following conjecture. A connection-curvature pairs
$(\alpha, \beta)$ is said to be algebraic if 
$\alpha \in \Omega^1_{\mrm{alg}}(X(\K)) \otimes_{\K} \g(\K)$ and
$\alpha \in \Omega^2_{\mrm{alg}}(X(\K)) \otimes_{\K} \h(\K)$.

\begin{conj} \label{conj:520}
Let $\mbf{C} / X$ be an algebraic unipotent quasi crossed
module with additive feedback defined over $\K$, let 
$(\alpha, \beta)$ be an algebraic connection-curvature pair in $\mbf{C} / X$, 
and let $(\sigma, \tau)$ be a piecewise algebraic kite in $(X, x_0)$.
Then 
\[ \opn{MI}(\alpha, \beta \vert \sigma, \tau) \in H(\K) \ . \]
\end{conj}

It seems that the very recent paper \cite{BGNT} may have a proof of this
conjecture in the special case when $\mbf{C} / X$ is a {\em crossed module} and 
{\em the $3$-curvature of $(\alpha, \beta)$ vanishes}. See Subsection
\ref{subsec:PDEs}.

\subsection{A Conjecture on Descent Data} \label{subsec:conj.desc}
Suppose $\f$ is a  cosimplicial nilpotent quantum type DG Lie algebra, just
like in Subsection \ref{subsec:CosDGLie}. Take an MC element 
$\til{\omega} = \{ \til{\omega}^{p, q} \}$ in the
Thom-Sullivan normalization $\til{\mrm{N}}(\f)$.
Then 
\[ \til{\omega}^{0, 0} \in \mcal{O}(\bsym{\Delta}^0) \otimes \f^{0, 1}
= \f^{0, 1} , \] %\vspace{1em}
\[ \til{\omega}^{1, 1} \in \Omega^1(\bsym{\Delta}^1) \otimes \f^{1, 0} \]
and 
\[ \til{\omega}^{2, 2} \in \Omega^2(\bsym{\Delta}^2) \otimes \f^{2, -1} . \]
Let us write 
\[ \omega := \til{\omega}^{0, 0} \in \f^{0, 1}  \]
(this notation is different from the one on Subsection \ref{subsec:CosDGLie}),
\[ g := \opn{MI}( \til{\omega}^{1, 1} \vert \bsym{\Delta}^1) \in 
\opn{exp}(\f^{1, 0}) \]
and 
\[ a :=  \opn{MI}( \til{\omega}^{2, 1} , \til{\omega}^{2, 2} 
\vert \bsym{\Delta}^2 ) \in 
\opn{exp}(\f^{2, -1})_{\til{\omega}^{2, 0}(v_0)}  . \]
This last multiplicative integral takes place in the Lie quasi crossed module
with additive feedback 
\[  \mbf{C}_{\til{\omega}^{2, 0}(v_0)}   / \bsym{\Delta}^2  =  
\bigl( G, H_{\til{\omega}^{2, 0}(v_0)}, 
\Psi, \Phi_{\til{\omega}^{2, 0}(v_0)}, \Phi_{\bsym{\Delta}^2} \bigr)  \]
as in (\ref{eqn:522}). In this way we assign to each 
$\til{\omega} \in \opn{MC}(\til{\mrm{N}}(\f))$ a triple 
\begin{equation} \label{eqn:523}
\opn{MI}(\til{\omega} \vert \bsym{\Delta}) := (\omega, g, a) .
\end{equation}

On the other hand, for every $p$ the nilpotent quantum type DG Lie algebra 
$\f^{p, \bdot}$ has its {\em Deligne $2$-groupoid} 
$\opn{Del}(\f^{p, \bdot})$, also known as the Deligne crossed groupoid; see 
\cite[Section 6]{Ye5}. The collection 
$\{ \opn{Del}(\f^{p, \bdot}) \}_{p \in \N}$ is then a cosimplicial crossed
groupoid. As explained in \cite[Section 5]{Ye3}, there is the set of
descent data 
$\opn{Desc}(\opn{Del}(\f))$, and its quotient set 
$\ol{\opn{Desc}}(\opn{Del}(\f))$.

Consider an MC element $\til{\omega} \in \opn{MC}(\til{\mrm{N}}(\f))$, 
and the triple 
$\opn{MI}(\til{\omega} \vert \bsym{\Delta}) = (\omega, g, a)$ from
(\ref{eqn:523}). The $2$-dimensional Stokes Theorem 
(Theorem \ref{thm:520}) implies that $(\omega, g, a)$ satisfies the ``failure
of $1$-cocycle'' condition of \cite[Definition 5.2]{Ye3}.
The $3$-dimensional Stokes Theorem (Theorem \ref{thm:22}) implies that 
$(\omega, g, a)$ satisfies the ``twisted $2$-cocycle'' condition of loc.\ cit.
This means that 
$(\omega, g, a) \in \opn{Desc}(\opn{Del}(\f))$.

Recall that the quotient set of $\opn{MC}(\til{\mrm{N}}(\f))$ by the action of
the gauge group $\opn{exp}(\til{\mrm{N}}(\f)^0)$ is denoted by 
$\ol{\opn{MC}}(\til{\mrm{N}}(\f))$.

\begin{conj} \label{conj:521}
Let $\f$ be a cosimplicial nilpotent quantum type DG Lie algebra. Then the
function 
\[ \opn{MI}(- \vert \bsym{\Delta}) : \opn{MC}(\til{\mrm{N}}(\f)) \to
\opn{Desc}(\opn{Del}(\f)) \]
induces a bijection 
\[ \ol{\opn{MC}}(\til{\mrm{N}}(\f)) \to \ol{\opn{Desc}}(\opn{Del}(\f)) . \]
\end{conj}

\cleardoublepage

\cleardoublepage                              
\printindex

\end{document}